\numberwithin{equation}{section}
\numberwithin{figure}{section}
\theoremstyle{plain}
\newtheorem*{thm*}{\protect\theoremname}
\theoremstyle{plain}
\newtheorem{thm}{\protect\theoremname}[section]
\theoremstyle{plain}
\newtheorem{lem}[thm]{\protect\lemmaname}
\theoremstyle{plain}
\newtheorem{rem}[thm]{\protect\remarkname}
\theoremstyle{plain}
\theoremstyle{plain}
\theoremstyle{plain}
\newtheorem*{prop*}{\protect\propositionname}
\theoremstyle{plain}
\newtheorem{prop}[thm]{\protect\propositionname}
\theoremstyle{plain}
\newtheorem*{cor*}{\protect\corollaryname}
\theoremstyle{plain}
\newtheorem{cor}[thm]{\protect\corollaryname}
\theoremstyle{plain}
\theoremstyle{plain}
\newtheorem{defn}[thm]{Definition}
\theoremstyle{plain}
\providecommand{\corollaryname}{Corollary}
\providecommand{\lemmaname}{Lemma}
\providecommand{\propositionname}{Proposition}
\providecommand{\remarkname}{Remark}
\providecommand{\theoremname}{Theorem}
\providecommand{\conjecturename}{Conjecture}
\providecommand{\notename}{Note}
\providecommand{\examplename}{Example}
\begin{document}


\global\long\def\sF{\mathcal{F}}
\global\long\def\sZ{\mathcal{Z}}
\global\long\def\sD{\mathcal{D}}
\global\long\def\sC{\mathcal{C}}
\global\long\def\sL{\mathcal{L}}
\global\long\def\sA{\mathcal{A}}
\global\long\def\sR{\mathcal{R}}
\global\long\def\sS{\mathcal{S}}
\global\long\def\sP{\mathcal{P}}
\global\long\def\sM{\mathcal{M}}

\global\long\def\bR{\mathbb{R}}
\global\long\def\bRpos{\bR_{> 0}}
\global\long\def\bRnn{\bR_{\geq 0}}
\global\long\def\bZ{\mathbb{Z}}
\global\long\def\bN{\mathbb{N}}
\global\long\def\bZpos{\mathbb{Z}_{> 0}}
\global\long\def\bZnn{\mathbb{Z}_{\geq 0}}
\global\long\def\bQ{\mathbb{Q}}
\global\long\def\bC{\mathbb{C}}

\global\long\def\PR{\mathbb{P}}
\global\long\def\EX{\mathbb{E}}

\global\long\def\one{\scalebox{1.1}{\textnormal{1}} \hspace*{-.75mm} \scalebox{0.7}{\raisebox{.3em}{\bf |}} }

\global\long\def\bD{\mathbb{D}}
\global\long\def\bH{\mathbb{H}}
\global\long\def\re{\Re\mathfrak{e}}
\global\long\def\im{\Im\mathfrak{m}}
\global\long\def\arg{\mathrm{arg}}
\global\long\def\ii{\mathfrak{i}}
\global\long\def\domain{D}
\global\long\def\bdry{\partial}
\global\long\def\cl#1{\overline{#1}}
\global\long\def\confmap{\varphi}
\global\long\def\zbar{\bar{z}}

\newcommand{\invbreve}[1]{\overset{\rotatebox{180}{$\breve{}\,$}}{#1}}

\global\long\def\diam{\mathrm{diam}}
\global\long\def\dist{\mathrm{dist}}

\global\long\def\OO{\mathcal{O}}
\global\long\def\oo{\mathit{o}}

\global\long\def\ud{\mathrm{d}}
\global\long\def\der#1{\frac{\ud}{\ud#1}}
\global\long\def\pder#1{\frac{\partial}{\partial#1}}
\global\long\def\pdder#1{\frac{\partial^{2}}{\partial#1^{2}}}
\global\long\def\pddder#1{\frac{\partial^{3}}{\partial#1^{3}}}

\global\long\def\SLE{\mathrm{SLE}}
\global\long\def\SLEk{\mathrm{SLE}_{\kappa}}

\global\long\def\caglad{\smash\gamma^\flat}
\global\long\def\cadlag{\smash\gamma^\sharp}
\global\long\def\newmathringcadlag{\mathring{\gamma}^\sharp}

\newcommand{\wt}[1]{\smash{\scalebox{.5}{$\widetilde{\scalebox{2}{$#1$}}$}}}

\global\long\def\Poisson{N}
\global\long\def\PoissonComp{\smash{\wt{\Poisson}}}

\global\long\def\PreGirsB{B}
\global\long\def\GirsB{\breve{B}}
\global\long\def\PreGirsN{N}
\global\long\def\GirsN{\breve{N}}
\global\long\def\GirsPR{\breve{\PR}}
\global\long\def\GirsEX{\breve{\EX}}
\global\long\def\RN{R}

\global\long\def\mgle{\smash{\check{M}}}

\global\long\def\microDriver{\smash{\wt{W}}}
\global\long\def\macroDriver{W}
\global\long\def\jump{v}

\global\long\def\growing{\xi^{\text{out}}}
\global\long\def\grown{\xi^{\text{in}}}
\global\long\def\growingpt{z^{\text{out}}}
\global\long\def\grownpt{z^{\text{in}}}

\global\long\def\maxjumpH#1{\smash{\lambda_{#1}^{\textnormal{h\"ol}}}}
\global\long\def\maxjumpT#1{\smash{\lambda_{#1}^{\textnormal{tr}}}}

\newcommand{\ThetaHmax}[2]{\smash{\theta_{#1,#2}^{\textnormal{h\"ol}}}}
\newcommand{\ThetaTmax}[2]{\smash{\theta_{#1,#2}^{\textnormal{tr}}}}
\newcommand{\ThetaTmaxbeta}[2]{\smash{\vartheta_{#1,#2}^{\textnormal{tr}}}}
\newcommand{\maxbetaT}[2]{\smash{\alpha_{#1,#2}^{\textnormal{tr}}}}

\global\long\def\rparamH{r}
\global\long\def\rparamT{r}

\global\long\def\Grid{\mathcal{G}}


\allowdisplaybreaks



\author{E.~Peltola and A.~Schreuder}

\

\vspace{2.5cm}

\begin{center}
\LARGE \bf \scshape{
Loewner traces driven by L\'evy processes
}
\end{center}

\vspace{0.75cm}

\begin{center}
{\large \scshape Eveliina Peltola}\\
{\footnotesize{\tt eveliina.peltola@hcm.uni-bonn.de}}\\
{\small{Department of Mathematics and Systems Analysis,}}\\
{\small{P.O. Box 11100, FI-00076, Aalto University, Finland}}\\
{\small{and}}\\
{\small{Institute for Applied Mathematics, University of Bonn}}\\
{\small{Endenicher Allee 60, D-53115 Bonn, Germany}}\\
\bigskip{} \bigskip{} 
{\large \scshape Anne Schreuder}\\
{\footnotesize{\tt as3068@cam.ac.uk}}\\
{\small{Department of Pure Mathematics and Mathematical Statistics, University of Cambridge}}\\
{\small{Wilberforce Road, Cambridge CB3 0WA, United Kingdom}}
\end{center}

\vspace{0.75cm}

\begin{center}
\begin{minipage}{0.85\textwidth} \footnotesize
{\scshape Abstract.}
Loewner chains with L\'evy drivers have been proposed 
as models for random dendritic growth in two dimensions, and 
as candidates for finding extremal multifractal spectra in problems in classical function theory. 
These processes are not scale-invariant in general, but they do enjoy a natural domain Markov property thanks to the stationary independent increments of L\'evy processes. 
The associated Loewner hulls feature remarkably intricate topological properties, 
of which very little is known rigorously.

\qquad
We prove that a chordal Loewner chain driven by a L\'evy process $W$ satisfying mild regularity conditions (including stable processes) is a.s.~generated by a c\`adl\`ag curve. 
Specifically, if the diffusivity parameter of the driving process $W$ is $\kappa \in [0,8)$, then the jump measure of $W$ is required to be locally (upper) Ahlfors regular near the origin, while if $\kappa > 8$, no constraints are imposed.  
In particular, we show that the associated Loewner hulls are a.s.~locally connected and path-connected. 
We also show that, the complements of the hulls are a.s.~H\"older domains when $\kappa \neq 4$ (which is not expected to hold when $\kappa=4$), without any regularity assumptions. 
The proofs of these results mainly rely on careful derivative estimates for both the forward and backward Loewner maps obtained using delicate but robust enough supermartingale domination arguments. 
As one cannot control the jump accumulation of general Levy processes, we must circumvent all reasoning that would use continuity. To prove the local connectedness, we use an extension of part of the Hahn-Mazurkiewicz theorem: hulls generated by c\`adl\`ag curves are locally connected even when jumps would occur at infinite intensity.
\end{minipage}
\end{center}

\newpage

\setcounter{tocdepth}{2}
\tableofcontents

\newpage

\bigskip{}
\section{\label{sec: intro}Introduction}
In this work, we consider the geometry of growing Loewner hulls whose driving function is a general L\'evy process.
When the driving function is a standard Brownian motion with diffusivity parameter $\kappa \geq 0$, the hulls are generated by 
a random continuous fractal curve (Loewner trace)~\cite{LSW:Conformal_invariance_of_planar_LERW_and_UST, Rohde-Schramm:Basic_properties_of_SLE}:
the celebrated Schramm-Loewner evolution ($\SLE_\kappa$),
which has turned out to be a universal and remarkably useful object in probability theory and mathematical physics\footnote{For instance, $\SLE_\kappa$ processes have played a key role in establishing rigorous results for scaling limits of many critical lattice models, e.g., 
in~\cite{Schramm:Scaling_limits_of_LERW_and_UST, Smirnov:Critical_percolation_in_the_plane, LSW:Conformal_invariance_of_planar_LERW_and_UST, Schramm:ICM, Schramm-Sheffield:Contour_lines_of_2D_discrete_GFF,  CDHKS:Convergence_of_Ising_interfaces_to_SLE},
and for important questions in probability theory and conformal geometry: 
Brownian intersection exponents 
\cite{Duplantier-Kwon:Conformal_invariance_and_intersections_of_random_walks, LSW:Brownian_intersection_exponents1, LSW:Brownian_intersection_exponents2, LSW:Brownian_intersection_exponents3, Werner:Girsanovs_transformation_for_SLE_kappa_rho_intersection_exponents_and_hiding_exponents} 
and Hausdorff dimension of the Brownian frontier 
\cite{LSW:The_dimension_of_the_planar_Brownian_frontier_is_four_thirds},
constructions of conformal restriction measures 
\cite{LSW:Conformal_restriction_the_chordal_case},
couplings with the Gaussian free field~\cite{Dubedat:SLE_and_free_field, Sheffield-Miller:Imaginary_geometry1, Sheffield-Miller:Imaginary_geometry2, Sheffield-Miller:Imaginary_geometry3, Sheffield-Miller:Imaginary_geometry4},
constructions of random metric or measure  spaces 
\cite{DMS:Liouville_quantum_gravity_as_mating_of_trees, BGS:Permutons_meanders_and_SLE-decorated_Liouville_quantum_gravity} (see also references therein),
and recent results concerning the relationship of fractal objects in random geometry (such as $\SLEk$ type paths) with conformal field theory, see~\cite{Peltola:Towards_CFT_for_SLEs, ARS:FZZ_formula_of_boundary_Liouville_CFT_via_conformal_welding} and references therein.}.
Its applications are manifestly conformally invariant, and hence rather special, while the purpose of the present work is to relax this constraint slightly.

Namely, the wide applicability of random Loewner evolutions motivates one to consider more general driving functions than the ubiquitous Brownian motion. 
For example, Loewner evolutions driven by certain Bessel processes were used in~\cite{Sheffield:Exploration_trees_and_CLEs}
to construct a random continuum exploration tree, pertaining to the scaling limit of an exploration tree in critical percolation encoding a loop ensemble of its interfaces. 
One can also approximate Bessel processes by ones with random jumps, as in~\cite[Section~3.2]{Sheffield:Exploration_trees_and_CLEs}.
This process is still conformally invariant, as Bessel processes satisfy the Brownian scaling property. 
A~related conformally invariant excursion process was considered 
in~\cite{Pete-Wu:Conformally_invariant_growth_process_of_SLE_excursions}.
In contrast, 
in~\cite{ROKG:Stochastic_Loewner_evolution_driven_by_Levy_processes}
a generalization of $\SLEk$ involving a stable L\'evy process was suggested to be useful for describing models related to diffusion-limited aggregation~\cite{Hastings-Levitov:Laplacian_growth_as_one-dimensional_turbulence, Carleson-Makarov:Aggregation_in_the_plane_and_Loewners_equation}
(although we have not been able to identify the inferred separate publication by the authors addressing this application), 
as well as to general branching, dendritic, possibly off-critical (non-scale invariant) models.
Such a model was also investigated in~\cite{Johansson-Sola:Rescaled_LLE_hulls_and_random_growth}, where it was related to a growth process similar to a Hastings-Levitov model.
See also~\cite{Sheffield-Miller:QLE,NST:Scaling_limits_for_planar_aggregation_with_subcritical_fluctuations} for other examples of general growth models.
As further generalizations, let us mention that 
Loewner hulls whose driving function is composed with a random time change were considered in~\cite{KLS:Effect_of_random_time_changes_on_Loewner_hulls}, 
where it was shown that a time-changed Brownian motion process does not always generate a simple curve; 
while in~\cite{MSY:On_Loewner_chains_driven_by_semimartingales_and_complex_Bessel-type_SDEs} it was shown that Loewner hulls with a certain class of regular enough continuous semimartingale drivers are generated by continuous curves.
There has also been some interest in generalizations of Loewner evolutions to complex drivers~\cite{Tran:Loewner_equation_driven_by_complex-valued_functions, 
Lind-Utley:Phase_transition_for_a_family_of_complex-driven_Loewner_hulls, Gwynne_Pfeffer:Loewner_evolution_driven_by_complex_Brownian_motion}.

A crucial property of SLE curves, which allows one to relate them to many statistical physics models, is 
that they can be generated by adding infinitesimal independent and stationary increments together in a scale-invariant manner. 
More precisely, requiring this property results in a characterization that the growth of an SLE curve must be governed by a scalar multiple of Brownian motion (possibly with a drift, which one can exclude by reflection symmetry). 
Relaxing the scale-invariance but keeping the requirement of independent, stationary increments, one is naturally led to consider growth processes arising from L\'evy drivers. 
(In some special cases, such as for stable L\'evy processes, the hulls have a specific scaling property that also changes the spatial scale of the process, see~\cite[Section~3.2]{Chen-Rohde:SLE_driven_by_symmetric_stable_processes}.) 
Nonetheless, the independent and stationary increments of the driving function translate into a domain Markov property of the corresponding Loewner evolutions, similar to that for 
$\SLE_\kappa$. 
In turn, jumps in the driving function induce branching behavior in the Loewner hulls. 
As a result, L\'evy Loewner evolutions are highly fractal trees that come with a natural embedding into the complex plane, and which satisfy the domain Markov property. 
As such, they are natural candidates for scaling limits of planar statistical mechanics models displaying treelike behavior in the limit.

In general, replacing Brownian motion by a L\'evy process 
yields hulls which can have a very complicated structure: 
they can be tree-like, forest-like, or looptree-like, say,
with several or countably infinitely many components --- 
and it is not clear at all that these would even be path-connected (for an illustration, see the simulations in~\cite[Section~3]{ROKG:Stochastic_Loewner_evolution_driven_by_Levy_processes}). 
These growth processes thus provide descriptions of much more general random models than SLEs. 
The interest in them among mathematicians was also originally motivated by the belief that they could produce objects with large multifractal spectra~\cite{Beliaev-Smirnov:Harmonic_measure_and_SLE, Chen-Rohde:SLE_driven_by_symmetric_stable_processes}  
(possibly towards problems around Brennan's conjecture~\cite{Carleson-Jones:On_coefficient_problems_for_univalent_functions_and_conformal_dimension,
Pommerenke:Boundary_behaviour_of_conformal_maps,
Bertilsson:On_Brennans_conjecture_in_conformal_mapping}, as explained in~\cite[Page~2]{Chen-Rohde:SLE_driven_by_symmetric_stable_processes}),
and for their relationship with the problem of Bieberbach coefficients of conformal mappings~\cite{Loutsenko:SLE_correlation_functions_in_the_coefficient_problem, Loutsenko-Yermolayeva:New_exact_results_in_spectra_of_stochastic_Loewner_evolution, DNNZ:The_coefficient_problem_and_multifractality_of_whole-plane_SLE_and_LLE}. 
Some results towards understanding phase transitions of special cases of these processes 
have also been obtained in the literature~\cite{ROKG:Stochastic_Loewner_evolution_driven_by_Levy_processes,
ORGK:Global_properties_of_SLE_driven_by_Levy_processes, Guan-Winkel:SLE_and_aSLE_driven_by_Levy_processes}.

The goal of the present work is to extend the family of random planar fractal curves by showing that also for the case of a general L\'evy driver (under a mild regularity assumption, see Definition~\ref{def: Ahlfors regular intro}), the Loewner hulls are in fact generated by a c\`adl\`ag function with path-connected but branching growth profile (when $\kappa \neq 8$). 
This proves part of~\cite[Conjecture~1]{Guan-Winkel:SLE_and_aSLE_driven_by_Levy_processes}. 
Moreover, the associated hulls are locally connected and, when $\kappa \neq 4$, they bound H\"older domains. 
While our proof for the existence of the trace builds on the technique of deriving derivative estimates for moments of the associated conformal (Loewner) maps, 
in order to establish these estimates, we cannot follow the usual ``pointwise-in-time'' approach that relies on interpolation from dyadic (countably many) times (applicable for Brownian motion, which admits a strong modulus of continuity~\cite[Section~3.2]{Rohde-Schramm:Basic_properties_of_SLE}). 
Indeed, the needed estimates are rather subtle in general, due to the occurrence of accumulating jumps in the driving process, causing significant technical problems dealt with throughout this article. 
In addition, the results appearing in the literature~\cite{LSW:Conformal_invariance_of_planar_LERW_and_UST,
Rohde-Schramm:Basic_properties_of_SLE,
Guan:Cadlag_curves_of_SLE_driven_by_Levy_processes, Guan-Winkel:SLE_and_aSLE_driven_by_Levy_processes, 
Chen-Rohde:SLE_driven_by_symmetric_stable_processes} 
thus far rely on very specific martingales, that are unavailable for the general case. 
We therefore must content ourselves with supermartingale domination arguments to carry out the present work.
Thus, the estimates that we obtain are not expected or attempted to be sharp.
Nevertheless, our results include all of the prior known cases: the continuous $\SLE_\kappa$ curves~\cite{Rohde-Schramm:Basic_properties_of_SLE} (with $\kappa \neq 8$) as well as Loewner evolutions driven by symmetric $\alpha$-stable processes~\cite{Guan:Cadlag_curves_of_SLE_driven_by_Levy_processes, Guan-Winkel:SLE_and_aSLE_driven_by_Levy_processes, Chen-Rohde:SLE_driven_by_symmetric_stable_processes}\footnote{The article~\cite{Guan:Cadlag_curves_of_SLE_driven_by_Levy_processes}  announces results which follow as special cases of our main results.  
Unfortunately,~\cite{Guan:Cadlag_curves_of_SLE_driven_by_Levy_processes} has never been published, and we have not been able to verify the arguments presented there.}.

\noindent 
\begin{figure}
\includegraphics[width=.8\textwidth]{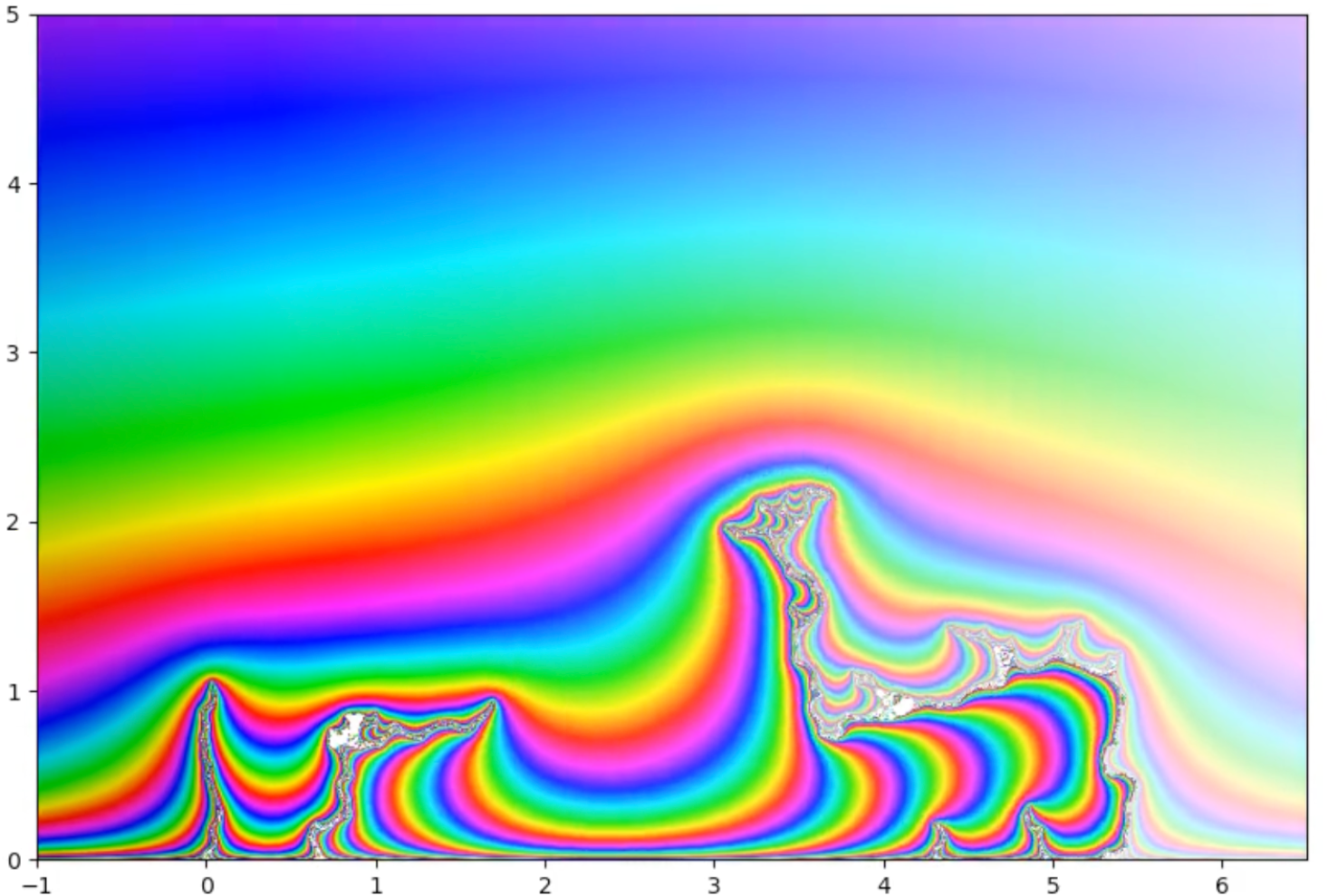}
\caption{\label{fig: illustration_LLE}
Illustration of the Loewner hull driven by a c\`adl\`ag function, 
made by Toby Cathcart Burn from his GitHub {\protect\url{https://github.com/penteract/sle}}.}
\end{figure}

\medskip{}
{\bf Statements of main results.}
Classical theory of Charles Loewner can be used to construct families of compact hulls in the plane growing in time.
More precisely, in the present work we are interested in \emph{L\'evy-Loewner hulls} 
whose growth is governed by random c\`adl\`ag driving functions of the form
\begin{align} \label{eq: general DF}
\macroDriver(t) = a t + \sqrt{\kappa} B(t) + \int_{|\jump| \leq 1} \jump \, \PoissonComp(t, \ud \jump) + \int_{|\jump| > 1} \jump \, \Poisson(t, \ud \jump) , \qquad
a \in \bR, \; \kappa \geq 0 ,
\end{align}
where $B$ is a standard one-dimensional Brownian motion,
$\Poisson = \smash{\Poisson_\nu}$ is an independent Poisson point process with L\'evy intensity measure $\nu$ (see below), and 
$\smash{\PoissonComp}(t, \ud \jump) := \Poisson(t, \ud \jump) - t \nu(\ud \jump)$ is the related compensated Poisson point process. 
The first term in~\eqref{eq: general DF} is a linear drift,
the second term is the diffusion component, 
and the last two terms represent the microscopic (small) and macroscopic (large) jumps, respectively. 
On any compact time interval, the process $W$ has only finitely many jumps of size greater than one, so the fourth term in~\eqref{eq: general DF} is a compound Poisson process (that is, a random finite sum of jumps).

Let us recall that a L\'evy measure is a non-negative Borel measure $\nu$ on $\bR$ such that $\nu(\{0\}) = 0$ and
\begin{align*}
\int_\bR (1 \wedge \jump^2) \, \nu(\ud \jump) < \infty.
\end{align*}
Note that $\nu$ may have infinite total mass (activity), in which case the process $W$ can have infinitely many small jumps (of size smaller than one) on compact time intervals.
As the small jumps might not be summable, one adds the compensator term $t \nu$ in $\smash{\PoissonComp}$ to ensure that~\eqref{eq: general DF} is well-defined and, moreover, 
the thus obtained compensated sum of microscopic jumps, i.e., the third term in~\eqref{eq: general DF}, is an $L^2$-martingale\footnote{Throughout, we work with a filtered probability space satisfying the usual conditions
(i.e., the filtration is right-continuous and the probability space is completed). 
All (in)equalities should be read as ``up to indistinguishability''.}.

Equation~\eqref{eq: general DF} is the \emph{L\'evy-It\^o decomposition} of a L\'evy process
(see~\cite[Theorem~2.4.16]{Applebaum:Levy_processes_and_stochastic_calculus}
or~\cite[Theorem~13.5.9]{Cohen-Elliott:Stochastic_calculus_and_applications}):
$\macroDriver$ is a c\`adl\`ag (right-continuous with left limits) stochastic process with independent and stationary increments.  
This property makes martingale arguments amenable for the analysis of Loewner chains driven by L\'evy processes. In particular, the Loewner chains satisfy a domain Markov property (see Figure~\ref{fig: Loewner maps}). 
However, the Brownian scaling property (and thus, conformal invariance) is lost when $\nu$ is non-trivial. The main technical difficulty compared to the case of Brownian motion is that the sample paths of L\'evy processes are almost surely discontinuous and can in particular have a dense set of jumps. 
Also, many computations in SLE theory crucially rely on the scale-invariance, 
which cannot be extended naturally to the present case. 
These difficulties explain the substantial technical work needed to carry out the proofs of the basic properties of L\'evy-Loewner hulls of the present article.

Given a driving process $W$ as in~\eqref{eq: general DF}, the Loewner equation (involving the right derivative $\partial_t^{+}$)
\begin{align*}
\partial_t^{+} g_t(z) = \frac{2}{g_t(z) - W(t)} \qquad \textnormal{with initial condition} \qquad g_0(z) = z 
\end{align*}
is an ordinary differential equation in time, which has a unique absolutely continuous solution $t \mapsto g_t(z)$ for each fixed point $z \in \bH$ in the upper half-plane $\bH = \{ z \in \bC \; | \; \im(z) > 0 \}$.
The solution is not in general defined for all times -- it exists up to the first time when the denominator of the Loewner equation is zero (see Section~\ref{sec: preli} for more details).
Importantly, for each time $t$ the map $g_t \colon \bH \setminus K_t \to \bH$ is a conformal bijection defined on a simply connected subset of $\bH$, 
which is the complement of a compact hull $K_t \subset \overline{\bH}$. 
The hulls $(K_t)_{t \geq 0}$ define a growth process in the (closure of the) upper half-plane.

In the special case of $\SLE_\kappa$ processes, i.e., $W = \sqrt{\kappa} B$, a crucial feature for many applications is that the associated growing hulls are generated by a \emph{continuous random curve}, the $\SLE_\kappa$ trace (in the sense detailed below). 
This is also --- almost --- the case for L\'evy-Loewner chains, as we shall prove in the present work: even though discontinuous, the trace will have left and right limits at all times, which makes it amenable to analysis.
The discontinuities in the driving function correspond to branching behavior in the Loewner trace. 
Moreover, the left-right-continuity ensures good topological properties of the hulls: path-connectedness and local path-connectedness 
(phrased in Proposition~\ref{prop: locally connected} and Corollary~\ref{cor: locally path-connected}).

\begin{defn} 
We say that the Loewner chain $(g_t)_{t \geq 0}$ with associated hulls 
$(K_t)_{t \geq 0}$
is \emph{generated} by a function $\eta \colon [0,\infty) \to \overline{\bH}$ if, for each $t \geq 0$, the set $\bH \setminus K_t$ is 
the unbounded connected component of $\bH \setminus \eta[0,t]$. 
\textnormal{(}In the literature, it is often assumed that $\eta$ is continuous, which we will not assume here.\textnormal{)}

If $\eta$ is c\`agl\`ad \textnormal{(}left-continuous with right limits\textnormal{)}, then we write $\eta = \caglad$ and say that 
the Loewner chain is \emph{generated by the c\`agl\`ad curve}~$\caglad$. 
In this case, we also use the term 
\emph{``generated by the c\`adl\`ag curve''}~$\cadlag$ \textnormal{(}right-continuous with left limits\textnormal{)}, that is the counterpart of $\caglad$ in the sense that
\begin{align*} 
\cadlag \colon [0,\infty) \to \overline{\bH} ,
\qquad \cadlag(t) := \lim_{s \to t+} \caglad(s) ,
\qquad \textnormal{and} \qquad 
\caglad(t) = \lim_{s \to t-} \cadlag(s) .
\end{align*}
Note that
$\cadlag[0,t] \cup \caglad[0,t] = \overline{\cadlag[0,t]} = \overline{\caglad[0,t]}$. 
We call either $\cadlag$ or $\caglad$ the \emph{Loewner trace}. 
\end{defn}

\begin{restatable}{prop}{LocConnProp} 
\label{prop: locally connected}
Let $(g_t)_{t \geq 0}$ be a Loewner chain with associated hulls $(K_t)_{t \geq 0}$ generated by a c\`adl\`ag curve $\cadlag$ \textnormal{(}viz.~a c\`agl\`ad curve $\caglad$\textnormal{)}. 
Then, for each $t \geq 0$, the set $\bdry (\bH \setminus K_t)$ is locally connected.
\end{restatable}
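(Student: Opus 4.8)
The plan is to reduce local connectedness of $\bdry(\bH \setminus K_t)$ to a statement about continuous images of compact intervals, via a version of the Hahn--Mazurkiewicz theorem adapted to c\`adl\`ag curves. Fix $t \geq 0$. Since $\bH \setminus K_t$ is the unbounded connected component of $\bH \setminus \cadlag[0,t]$, its boundary is contained in $\bR \cup \cadlag[0,t] \cup \caglad[0,t] = \bR \cup \overline{\cadlag[0,t]}$. The set $\overline{\cadlag[0,t]}$ is compact, and the real line contributes only a (relatively closed) piece, so the task is essentially to show that the trace set together with the relevant boundary arc on $\bR$ forms a locally connected continuum. First I would recall that a compact, connected, metrizable space is locally connected if and only if it is the continuous image of $[0,1]$ (Hahn--Mazurkiewicz), and more usefully, that local connectedness is equivalent to the property that for every $\varepsilon > 0$ the space can be written as a finite union of connected closed sets of diameter at most $\varepsilon$. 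This is the criterion I would verify directly.

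The key step is to promote the c\`adl\`ag curve $\cadlag \colon [0,t] \to \overline{\bH}$ to a genuine \emph{continuous} parametrization of the set $\overline{\cadlag[0,t]}$ by ``filling in'' the jumps. Concretely, for each jump time $s$ of $\cadlag$ (there may be countably many, possibly accumulating), the left limit $\caglad(s)$ and the value $\cadlag(s)$ are two points on $\bdry K_s$ that lie on the boundary of the same connected component of $\bH \setminus K_{s-}$ (this is exactly the mechanism by which a jump in the driver produces branching); hence they can be joined by an arc inside that component, which one can take inside $\overline{\cadlag[0,t]}$ using that the hull's complement is a Jordan-type domain locally, or more robustly by a limiting argument from the c\`adl\`ag structure. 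Splicing in such connecting arcs at every jump and reparametrizing, one obtains a continuous surjection $\tilde\eta \colon [0,1] \to \overline{\cadlag[0,t]}$. The subtle point, and the one I expect to be the main obstacle, is handling \emph{accumulating} jumps: when jump times are dense, one must control the diameters of the inserted arcs so that they shrink to zero along any accumulating sequence, which in turn requires an a priori continuity-type estimate on the hull (e.g.\ that $\diam K_s - \diam K_{s-} \to 0$ appropriately, or a uniform modulus for the harmonic measure of small arcs) — precisely the kind of estimate the paper obtains via its supermartingale domination arguments. Granting such an estimate, the total-variation-style bound on inserted arc lengths forces $\tilde\eta$ to be uniformly continuous, hence continuous on the compact interval.

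Once $\overline{\cadlag[0,t]}$ is exhibited as a continuous image of $[0,1]$, it is locally connected by Hahn--Mazurkiewicz. To finish, I would note that attaching the boundary segment of $\bR$ (itself an interval, locally connected) along the two ``landing points'' where the trace meets the real axis again yields a continuous image of $[0,1]$ — one simply concatenates the parametrizations — so $\bR_{\text{(relevant part)}} \cup \overline{\cadlag[0,t]}$ is locally connected. Finally, $\bdry(\bH \setminus K_t)$ is a closed subset of this set that is itself connected (being the boundary of a simply connected domain) and is ``locally connected relative to'' the ambient continuum; more directly, the outer boundary of the unbounded component of the complement of a locally connected continuum in the plane is again a locally connected continuum (a standard fact from plane topology, e.g.\ via the Carath\'eodory extension: the complementary domain is a John-type, or at least conformally ``nice'', domain whose Riemann map extends continuously to the circle precisely when the boundary is locally connected). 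Tracking the logic in the other direction: local connectedness of the full continuum $\Rightarrow$ the unbounded complementary component has locally connected boundary. This gives the claim. The only genuinely new ingredient beyond classical plane topology is the jump-filling construction with diameter control, which is where the ``extension of part of the Hahn--Mazurkiewicz theorem'' advertised in the abstract does its work.
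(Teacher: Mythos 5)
Your proposal has a genuine gap, and the route it takes is quite different from (and, as written, weaker than) the paper's. The jump-filling construction is the heart of your argument, but you have not justified that it can be carried out. When $\cadlag$ jumps at time $s$, the points $\caglad(s)$ and $\cadlag(s)$ need not be joinable by any arc lying \emph{inside} $\overline{\cadlag[0,t]}$; indeed $\overline{\cadlag[0,t]}$ can be disconnected across a jump, and the only a priori arcs you have are in $\bH \setminus K_{s-}$, which lies in the \emph{complement} of the trace set, not inside it. Even if such internal arcs existed, you would need their diameters to tend to $0$ along accumulating jumps, which does not follow from the jump sizes tending to $0$: think of small jumps that ``spiral'' so that the shortest connecting path inside the set stays of order one. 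You acknowledge this and propose to import ``the kind of estimate the paper obtains via its supermartingale domination arguments,'' but this is both circular and category-confused: Proposition~\ref{prop: locally connected} is a purely deterministic statement about an arbitrary Loewner chain generated by a c\`adl\`ag curve, with no L\'evy measure, no diffusivity parameter, and no probability space in its hypotheses. The supermartingale estimates enter elsewhere, to verify the hypothesis of Proposition~\ref{prop: sufficient condition curve} (that the chain is in fact generated by a c\`adl\`ag curve); they cannot be used as input to prove this purely topological consequence of that hypothesis. A further, smaller issue: you conflate $\overline{\cadlag[0,t]}$ with $\bdry(\bH\setminus K_t)$, but the latter is generally a proper subset of $\caglad[0,t]\cup\bR$ (portions of the curve get swallowed into $\mathrm{int}\,K_t$), and it is the boundary, not the full trace, that is asserted to be locally connected.

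The paper's proof avoids all of this. Rather than building a continuous parametrization, it argues by contradiction directly with annulus crossings: if $A_t := \bdry(\bH\setminus K_t)$ failed local connectedness somewhere, it would make infinitely many distinct crossings of some fixed annulus $\overline{\mathbb{A}}(z_0, r_0/2, r_0)$. Taking $T_0$ to be the first time this happens yields two incompatible conclusions: on the one hand $A_{T_0}$ cannot itself make infinitely many crossings (because for $s < T_0$ the crossings would be concentrated in $A_{T_0}\setminus A_s \subset \caglad(s,T_0]$, contradicting left-continuity of $\caglad$ once $T_0 - s$ is small); on the other hand $A_{T_0}$ must make infinitely many (the analogous argument with $A_{t_n}\setminus A_{T_0}\subset \overline{\cadlag(T_0,t_n]}$ for $t_n\downarrow T_0$ contradicts right-continuity of $\cadlag$). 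This uses only the two one-sided moduli of continuity that the c\`adl\`ag/c\`agl\`ad hypotheses actually give you, needs no arc insertion, no diameter bookkeeping, and no probabilistic input. That is what makes the proposition a clean, stand-alone extension of the Hahn--Mazurkiewicz principle, rather than a corollary of the analytic estimates.
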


Proposition~\ref{prop: locally connected} could be regarded as an extension of part of the Hahn-Mazurkiewicz theorem. 
We prove it in Section~\ref{subsec: connectedness}.
It also implies that $K_t \cup \bR$ is path-connected for any hull generated by a c\`adl\`ag curve. 

\medskip

{\textbf{Setup.}}
Next, our main results are summarized in Theorems~\ref{thm: LLE curve if kappa not 8} and~\ref{thm: LLE Holder if kappa not 4}, both of which are proven in Section~\ref{subsec: conclusion}.
We work under one of the following assumptions:
\begin{enumerate}[label=\textnormal{\bf{Ass.~\arabic*.}}, ref=Ass.~\arabic*.]
\item \label{item: ass1 intro}
either the diffusivity parameter $\kappa > 8$,

\medskip

\item \label{item: ass2 intro}
or the diffusivity parameter $\kappa \in [0,8)$, and 
the variance measure of the L\'evy measure $\nu$ is locally (upper) Ahlfors regular near the origin in the sense of Definition~\ref{def: Ahlfors regular intro} below.
\end{enumerate}

For each L\'evy measure $\nu$, 
define a Borel measure $\mu_\nu$ by $\mu_\nu(A) := \int_A \jump^2 \, \nu(d\jump)$ for all Borel sets $A \subset \bR$. 
We call $\mu_\nu$ the \emph{variance measure} of the L\'evy measure $\nu$.

\begin{defn} \label{def: Ahlfors regular intro}
We say that the variance measure $\mu_\nu$ of the L\'evy measure $\nu$ is \emph{locally (upper) Ahlfors regular near the origin} if the following holds.
There exists $\epsilon_\nu \in (0,1/2)$ 
only depending on $\nu$ such that  
the restriction of $\mu_\nu$ to $[-\epsilon_\nu, \epsilon_\nu]$ is upper Ahlfors regular:
there exist constants 
$\alpha_\nu, c_\nu \in (0,\infty)$ 
and $\rho_\nu \in (0,1)$ 
only depending on $\nu$ such that 
for any $x \in [-\epsilon_\nu, \epsilon_\nu]$ and 
for any $\rho < \rho_\nu$, we have
\begin{align} \label{eq: Ahlfors regularity intro}
\mu_\nu ((x-\rho,x+\rho) \cap [-\epsilon_\nu, \epsilon_\nu]) 
\; = \;  \int_{x-\rho}^{x+\rho} \jump^2 \, \one_{[-\epsilon_\nu, \epsilon_\nu]}(\jump) \, \nu(\ud \jump)
\; \leq \; c_\nu \, \rho^{\alpha_\nu} .
\end{align}
Note that this implies that $\mu_\nu$ is dominated by the Lebesgue measure near the origin 
\textnormal{(}in particular, $\nu$ does not have atoms accumulating to the origin, but it may have atoms elsewhere\textnormal{)}.
\end{defn}

\begin{restatable}{thm}{MainCurveThm} 
\label{thm: LLE curve if kappa not 8}
Fix $T > 0$, $\kappa \in [0,\infty) \setminus\{8\}$, $a \in \bR$, and a L\'evy measure $\nu$. Suppose that either~\ref{item: ass1 intro} or~\ref{item: ass2 intro} holds. 
Then, the following hold almost surely for the Loewner chain driven by $\macroDriver$ on $[0,T]$.
\begin{enumerate}[label=\textnormal{(\alph*):}, ref=(\alph*)]
\item \label{item: LLE curve if kappa not 8}
The Loewner chain is generated by a c\`adl\`ag curve on $[0, T]$.

\medskip

\item \label{item: LLE locally conn if kappa not 8}
For each $t \in [0, T]$, the map $z \mapsto g_t^{-1}(z)$ extends to a continuous map from $\overline{\bH}$ onto $\overline{\bH \setminus K_t}$. 
\end{enumerate}
\end{restatable}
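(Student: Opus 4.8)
The plan is to reduce the statement to derivative estimates for the forward and backward Loewner maps, in the spirit of Rohde--Schramm, but replacing the pointwise-in-time interpolation over dyadic times (which fails because the driver has a dense set of jumps) with a uniform-in-time supermartingale argument. The target estimates are of the form: for suitable $\beta > 0$ and all $y \in (0,1]$, $x$ in a compact set, and $t \le T$, one has $\EX[|g_t'(x + \ii y)|^{\beta}] \lesssim y^{-\lambda}$ for the forward map (controlling the regularity of $\caglad$), and a matching moment bound for $|(f_t)'|$ where $f_t = g_t^{-1}$, giving equicontinuity of the boundary extension. The standard route is to differentiate the Loewner flow, set up the process $\Phi_t = |g_t'(z)|^{\beta} \, y_t^{q} \, (\text{distortion factors})$ for well-chosen exponents, and apply the It\^o--L\'evy formula to show that, under the regularity hypothesis \ref{item: ass2 intro} (or unconditionally when $\kappa > 8$, where the relevant exponents make the jump contributions harmless), $\Phi_t$ is a supermartingale. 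The Ahlfors-regularity bound \eqref{eq: Ahlfors regularity intro} on $\mu_\nu$ enters precisely here: the compensated small-jump integral produces a term $\int (\,|g_t'|^{\beta}\text{-increment}\,)\,\nu(\ud\jump)$ that must be dominated by the negative drift coming from the Brownian part, and \eqref{eq: Ahlfors regularity intro} is what makes this integral finite and controllably small near the origin.

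Concretely, the steps I would carry out are: \textbf{(i)} establish the one-point forward derivative estimate by the supermartingale domination argument, uniformly over $t \in [0,T]$ and over $z = x + \ii y$ with $y$ small; \textbf{(ii)} upgrade to the analogous estimate for the \emph{backward} Loewner flow $h_t := \hat g_t^{-1}$ (equivalently, control $|(f_t)'|$ via the reversed flow, using stationarity of increments of $W$ so that the backward flow is again driven by a L\'evy process with the same law up to sign); \textbf{(iii)} from the backward estimate and a Borel--Cantelli / chaining argument over a countable dense set of times and scales, deduce that $f_t$ extends continuously to $\overline{\bH}$ simultaneously for all $t \le T$ with a modulus of continuity that is uniform in $t$ on $[0,T]$ — this gives part \ref{item: LLE locally conn if kappa not 8} once injectivity on the boundary is handled; \textbf{(iv)} define $\caglad(t)$ as the image of the appropriate boundary point under $f_t$ (the driving point $W(t-)$), check that $t \mapsto f_t(W(t-))$ is c\`agl\`ad using right-continuity of $W$ and of the flow together with the uniform modulus, so its c\`adl\`ag modification $\cadlag$ exists; \textbf{(v)} verify that $\cadlag$ generates the Loewner chain, i.e.\ that $\bH \setminus K_t$ is the unbounded component of $\bH \setminus \cadlag[0,t]$ — here one uses that $f_t$ is a homeomorphism $\overline{\bH} \to \overline{\bH \setminus K_t}$ so $\bdry K_t \subseteq \cadlag[0,t] \cup \caglad[0,t] \cup \bR$, and conversely that the trace is contained in $\overline{K_t}$. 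The exclusion $\kappa \ne 8$ is forced because at $\kappa = 8$ the exponent bookkeeping in step (i) degenerates, exactly as for classical $\SLE_8$.

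The main obstacle is step \textbf{(i)}–\textbf{(ii)}: making the supermartingale domination work uniformly in time. For continuous drivers one differentiates, finds an exact local martingale, and optimizes exponents; here the It\^o--L\'evy expansion of $|g_t'(z)|^{\beta}$ acquires jump terms $|1 + (\text{jump in } \log g_t')|^{\beta} - 1 - \beta(\cdots)$ that are not sign-definite and must be bounded using the geometry of the Loewner flow (how a jump of $W$ displaces $g_t'(z)$ depends on the position of $g_t(z)$ relative to $W(t-)$). The crux is to show the resulting compensator integral against $\nu$ is (a) finite and (b) dominated by the strictly negative Brownian drift after choosing the exponents, which is where \eqref{eq: Ahlfors regularity intro} is indispensable and where the argument must be ``robust but not sharp.'' A secondary difficulty is that, unlike the continuous case, one cannot pass from dyadic times to all times by continuity of $t \mapsto \caglad(t)$; instead the uniform-in-$t$ supermartingale bound must be combined with a stopping-time argument and a quantitative continuity estimate for the backward flow as a function of the endpoint time, so that the dense-set conclusion genuinely propagates to every $t \in [0,T]$.

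Once \ref{item: LLE curve if kappa not 8} holds, part \ref{item: LLE locally conn if kappa not 8} is essentially Carath\'eodory's theorem applied with the uniform modulus from step (iii): $\bH \setminus K_t$ is a simply connected domain whose boundary is locally connected (by Proposition~\ref{prop: locally connected}, since $K_t$ is generated by the c\`adl\`ag curve $\cadlag$), hence $f_t = g_t^{-1}$ extends to a continuous surjection $\overline{\bH} \to \overline{\bH \setminus K_t}$; the equicontinuity estimate promotes this to a homeomorphism by standard compactness, and injectivity on $\bR$ follows from the fact that prime ends of a locally connected boundary are in bijection with boundary points.
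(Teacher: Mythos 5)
Your overall framing is sound --- supermartingale domination with tuned exponents, the role of Ahlfors regularity in controlling the jump compensator, the degeneration at $\kappa=8$, and the endgame via Proposition~\ref{prop: locally connected} and Carath\'eodory --- but the route from moment estimates to the curve has a genuine gap exactly where the paper warns it must. In steps (ii)--(iii) you propose to transfer to the \emph{backward} flow $h_t$ and then obtain simultaneous-in-$t$ control of $f_t'=(g_t^{-1})'$ by chaining over a countable dense set of times. That is the classical Rohde--Schramm mechanism, and it is precisely what fails here: the distributional identity between $\tilde f_t'$ and $h_t'$ (Lemma~\ref{lem: f' = h' in distribution}) holds only for a single fixed $t$, so bounds on $\max_t |h_t'|$ do not control $\max_t |f_t'|$; and the ``quantitative continuity estimate as a function of the endpoint time'' you invoke at the end of your last-but-one paragraph is exactly the c\`adl\`ag modulus of continuity that general L\'evy drivers do not possess. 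The paper therefore abandons the backward flow entirely for part~\ref{item: LLE curve if kappa not 8} and works with the \emph{forward} flow directly, via the grid approximation of Section~\ref{subsec: grid preli}: if $|\tilde f_t'(\ii\delta)|$ is large at some random time $t$, Koebe distortion produces a \emph{deterministic} grid point $z_0$ with $|g_t'(z_0)|$ small and $g_t(z_0)-W(t)$ near $\ii\delta$ (Lemma~\ref{lem: grid}); the forward supermartingale is then evaluated at a stopping time adapted to this event, and summing over the grid and over scales $2^{-n}$ gives the uniform-in-$t$ bound~\eqref{eq: needed uniform bound for f'}. Your proposal contains no analogue of this union-over-a-spatial-grid-plus-stopping-time device, which is the key new idea replacing the dyadic-times interpolation.

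A second omission is the small-jump/large-jump decomposition. You run the supermartingale argument against the full L\'evy measure, but the Ahlfors bound~\eqref{eq: Ahlfors regularity intro} only constrains $\mu_\nu$ near the origin, and even among small jumps one must tune the cutoff $\varepsilon$ so that $\lambda_\varepsilon$ falls below an explicit threshold (e.g.\ $\maxjumpT{\kappa}$) for the drift in the supermartingale computation to be non-positive. The paper therefore splits $W$ into a martingale driver with jumps of size $\le\varepsilon$ plus a compound Poisson process of jumps of size $>\varepsilon$; the supermartingale argument handles only the former, and the latter is folded in by concatenating traces across the a.s.~finitely many large-jump times via the domain Markov property. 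This concatenation is where Proposition~\ref{prop: locally connected} is used \emph{inside} the proof of part~\ref{item: LLE curve if kappa not 8}, not merely afterwards for part~\ref{item: LLE locally conn if kappa not 8}: one must know that $f_{\tau_\ell}$ extends continuously to $\bR$ at each large-jump time $\tau_\ell$ before the concatenated curve is even well-defined, and that continuous extension is obtained precisely from the local connectedness of the hull generated on $[0,\tau_\ell]$.
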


In short, the proof of this result relies on 
a decomposition of the L\'evy process into parts with large (easy to manage) and small (hard to manage) jumps, 
careful estimates for moments of derivatives of the \emph{forward} Loewner flow 
(comprising Section~\ref{sec: forward bounds})
and local connectedness\footnote{Note that here, is it crucial that the boundaries $\bdry(\bH \setminus K_t)$ are locally connected simultaneously for all times $t$, which cannot be established from pointwise in time almost sure arguments.} 
of the associated hulls (Proposition~\ref{prop: locally connected}). 
The needed arguments are carried out in several carefully set-up steps (Sections~\ref{sec: forward bounds}--\ref{sec: main results}).

A common approach to prove the existence of the trace uses the \emph{backward} Loewner flow (discussed in Section~\ref{sec: backward bounds}), which gives estimates for the derivative of the inverse map 
$g_t^{-1}$ \emph{pointwise in time}. However, for Loewner chains driven by general L\'evy processes, such pointwise-in-time estimates do not seem sufficient because general L\'evy processes do not seem to admit a c\`adl\`ag modulus of continuity which would be equally strong as that of Brownian motion (see~\cite{Durand:Singularity_sets_of_Levy_processes, Jaffard:The_multifractal_nature_of_Levy_processes, Balanca:Fine_regularity_of_Levy_processes_and_linear_fractional_stable_motion}).
We therefore derive estimates \emph{uniformly} in time, which can only be achieved by considering the \emph{forward} flow directly. 
To this end, we utilize a discrete grid approximation of the forward flow, similar to the recent~\cite{MSY:On_Loewner_chains_driven_by_semimartingales_and_complex_Bessel-type_SDEs, Yuan:Refined_regularity_of_SLE}. 
Combining it with extremely careful tuning of the various parameters appearing in the estimates in Section~\ref{sec: forward bounds}, we are able to obtain the  necessary control under the above assumptions 
(\ref{item: ass1 intro} or~\ref{item: ass2 intro}). 

To explain the usage of these assumptions, 
let us note that the essential problem in the forward flow estimates seems to occur when the hulls swallow small regions (``bubbles'') by a jumping mechanism. 
More precisely, the possibility of 
closing bubbles by accumulated jumps rather than by a continuous trace makes a term in the stochastic differential of our observable process of Section~\ref{subsec: Supermartingale bounds} difficult to control.
Our local upper Ahlfors regularity assumption kills such behavior sufficiently well.
In the case where the diffusivity parameter $\kappa > 8$ (the anticipated space-filling regime), this assumption is not needed. 
Also, in the case where the diffusivity parameter $\kappa \in [0,4)$ (the anticipated regime where the trace should be simple), it seems possible to argue the existence of the trace by utilizing the property that the hulls have an empty interior, similarly as in~\cite[Theorem~7.1]{Chen-Rohde:SLE_driven_by_symmetric_stable_processes} and~\cite[Theorem~1.3(i)]{Guan-Winkel:SLE_and_aSLE_driven_by_Levy_processes}. 
However, because this argument cannot hold in the cases where $\kappa \in (4,8)$ anyway, while our estimates are quite robust, we shall only present the case $\kappa < 8$ under the local upper Ahlfors regularity assumption~(\ref{item: ass2 intro}).

A special case of Theorem~\ref{thm: LLE curve if kappa not 8} where $W$ is a symmetric stable pure jump process was established in~\cite[Theorem~7.1]{Chen-Rohde:SLE_driven_by_symmetric_stable_processes},
where Chen \& Rohde first prove the local connectedness of the hulls for all times, and then use this to conclude that the trace exists and is c\`adl\`ag.
Their proof of the local connectedness relies on the fact that the hulls have an empty interior, proven by Guan \& Winkel~\cite[Theorem~1.3(i)]{Guan-Winkel:SLE_and_aSLE_driven_by_Levy_processes},
and a result from complex analysis due to Warschawski from the 1950s (concerning the modulus of continuity of conformal maps of the disc). 
This argument is not available for the general case considered in the present work. 
Our proof proceeds, in a sense, in converse order: 
we first show the existence of a c\`adl\`ag generating curve by careful derivative estimates for the associated Loewner maps, 
and we then prove, using the generating curve, that the hulls are indeed locally (path-)connected (Proposition~\ref{prop: locally connected}).

Let us finally remark that the estimates leading to Theorem~\ref{thm: LLE curve if kappa not 8}~\ref{item: LLE curve if kappa not 8}  
are not strong enough to show that 
the Loewner chain is generated by a c\`adl\`ag curve when $\kappa = 8$, 
although we believe that this is the case. In the case where no jumps are allowed, 
it is already known from~\cite{LSW:Conformal_invariance_of_planar_LERW_and_UST} that $\SLE_8$ driven by $\sqrt{8} B$ is the scaling limit of an uniform spanning tree (UST) Peano curve, which shows a posteriori that it is indeed generated by a continuous curve almost surely. 
(Recently, analytical proofs for this latter fact were given 
using couplings of SLE with the Gaussian free field~\cite{KMS:Regularity_of_the_SLE4_uniformizing_map_and_the_SLE8_trace, 
Ambrosio-Miller:Continuous_proof_of_the_existence_of_SLE8_curve}.)  
Unfortunately, this approach seems not useful for the more general case of drivers with jumps, for instance because no such coupling is known --- nor expected --- to exist. Note also that the modulus of continuity for the $\SLE_8$ curve (in the capacity parameterization) is logarithmic~\cite{Alvisio-Lawler:Modulus_of_continuity_of_SLE8, KMS:Regularity_of_the_SLE4_uniformizing_map_and_the_SLE8_trace}, and we expect that adding jumps will not improve the regularity. 

\medskip

Next, our second main result, which is the pointwise-in-time H\"older regularity of the inverse map $g_t^{-1}$, 
can be obtained from estimates for moments of derivatives for the \emph{backward} Loewner flow (comprising Section~\ref{sec: backward bounds}) 
combined with 
a decomposition of the L\'evy process into parts with large and small jumps as before, 
and the fact that compositions of H\"older continuous maps are still H\"older continuous (though with unknown H\"older exponent; thus the constants in Theorem~\ref{thm: LLE Holder if kappa not 4} are random). 
The proof of the following Theorem~\ref{thm: LLE Holder if kappa not 4} is completed in Section~\ref{sec: main results} using the estimates from Section~\ref{sec: backward bounds}.

\begin{restatable}{thm}{MainRegThm} 
\label{thm: LLE Holder if kappa not 4}
Fix $t > 0$, $\kappa \in [0,\infty) \setminus\{4\}$, $a \in \bR$, and a L\'evy measure $\nu$.
Then, the following hold almost surely for the Loewner chain driven by $\macroDriver$.
\begin{enumerate}[label=\textnormal{(\alph*):}, ref=(\alph*)]
\item \label{item: LLE Holder if kappa not 4}
$\bH \setminus K_t$ is a H\"older domain, 
meaning that there exist random constants $\theta(\kappa, \nu ,t) \in (0,1]$ and $H(\theta,t) \in (0,\infty)$ such~that
\begin{align*} 
| g_t^{-1}(z) - g_t^{-1}(w) | \leq H(\theta,t) \, \max \{ | z-w |^{\theta} , | z-w | \} \qquad \textnormal{for all } z,w \in \bH .
\end{align*}
In particular, the map $z \mapsto g_t^{-1}(z)$ extends to a continuous map from $\overline{\bH}$ onto $\overline{\bH \setminus K_t}$. 

\medskip

\item \label{item: LLE H-dim if kappa not 4}
The Hausdorff dimension of $\bdry K_t$ satisfies $\mathrm{dim} (\bdry K_t) < 2$, and we have $\mathrm{area} (\bdry K_t) = 0$. 
\end{enumerate}
\end{restatable}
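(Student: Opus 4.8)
The plan is to reduce the pointwise‑in‑time statement to a single derivative estimate for the \emph{backward} Loewner flow with compactly supported jump measure, and then to deduce part~\ref{item: LLE H-dim if kappa not 4} from the fact that Hölder domains in the plane have boundary of dimension below two. First I would invoke the standard reversibility of the Loewner flow at a fixed time: for fixed $t$, the map $z \mapsto g_t^{-1}(z)$ has the same law as the time‑$t$ backward Loewner map $\hat g_t$ driven by the reversed process $\tilde W(s) := W(t) - W((t-s)^{-})$, $s\in[0,t]$, which by stationarity and independence of increments is again a Lévy process with diffusivity $\kappa$ and a Lévy measure obtained from $\nu$ (up to an irrelevant reflection). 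It therefore suffices to prove the Hölder estimate for $\hat g_t$. Next, as in the proof of Theorem~\ref{thm: LLE curve if kappa not 8}, I would split $\tilde W$ into its macroscopic and microscopic parts: on $[0,t]$ there are a.s.\ only finitely many jumps of size $>1$, and across each such jump the backward Loewner map composes only with a real translation, which does not affect Hölder continuity. Hence $\hat g_t = \phi_N \circ \cdots \circ \phi_1$, where each $\phi_j$ is a backward Loewner map driven (up to translation) by a Lévy process whose jump measure is supported in $[-1,1]$, and which therefore has exponential moments of all orders. Since a composition of finitely many Hölder maps is again Hölder — with a smaller, random exponent and random constant, which is exactly the generality allowed in the statement — it is enough to establish the Hölder estimate when $\mathrm{supp}\,\nu \subseteq [-1,1]$.

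In that case I would run the supermartingale‑domination machinery of Section~\ref{sec: backward bounds}. Writing $Z_s = X_s + \ii Y_s := \hat g_s(z) - \tilde W(s)$ and $h_s := |\hat g_s'(z)|$, the goal is a bound of the form
\begin{align*}
\EX\big[\, h_t^{\,\lambda}\, \one_{\{Y_t \geq 1\}} \big] \;\lesssim\; (\im z)^{-\beta}
\end{align*}
for a power $\lambda > 0$ that stays bounded away from $0$ \emph{uniformly} in $z$ with $\im z$ small, together with the correct scaling exponent $\beta$. The compact support of $\nu$ makes all drift terms in the Dynkin formula for the candidate observable $Y_s^{a} h_s^{b}$ integrable, and one dominates the remaining jump contributions by the Brownian and drift parts, as there are no sharp martingales available in the general case. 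The hypothesis $\kappa \neq 4$ enters precisely here: the sign of the relevant drift coefficient — the same quantity that controls the regularity of the forward flow near the tip — can be exploited, and the largest admissible exponent $\lambda$ degenerates to $0$ as $\kappa \to 4$, mirroring the known borderline behaviour of $\SLE_4$ (whose complement is not expected to be a Hölder domain).

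With the moment bound in hand, a Markov inequality, the Koebe distortion theorem, and a Borel--Cantelli argument over a dyadic grid of points $x + \ii\, 2^{-n}$ upgrade it to the a.s.\ pathwise statement that $\hat g_t$ is Hölder continuous on $\{\im z \geq \delta\}$ with constants independent of $\delta$, hence extends Hölder‑continuously to $\overline{\bH}$ with the displayed two‑regime modulus (the linear regime for $|z-w|$ large handling the neighbourhood of $\infty$). Transferring back through the law identity yields the estimate for $g_t^{-1}$ itself, proving part~\ref{item: LLE Holder if kappa not 4}; that the continuous extension is in fact a homeomorphism onto $\overline{\bH\setminus K_t}$ then follows by the usual Carathéodory‑type argument for boundary extensions of conformal maps, just as in the continuous‑driver case.

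For part~\ref{item: LLE H-dim if kappa not 4}, once $\bH\setminus K_t$ is known to be a Hölder domain I would invoke the theorem of Jones and Makarov that the boundary of a Hölder domain in the plane has Hausdorff dimension strictly less than $2$ (applied locally around the compact set $\partial K_t$, or after a Möbius reduction to a bounded domain). Since $\partial K_t \setminus \bR \subseteq \partial(\bH\setminus K_t)$ while $\partial K_t \cap \bR$ has dimension at most $1$, this gives $\mathrm{dim}(\partial K_t) < 2$, and then $\mathrm{area}(\partial K_t) = 0$ is immediate. The main obstacle is the backward‑flow derivative estimate under only the exponential‑moment (bounded‑jump) assumption: lacking exact martingales, one must design a sufficiently robust supermartingale whose drift can still be dominated in the presence of (possibly densely accumulating) small jumps, and whose best admissible moment exponent remains strictly positive for every $\kappa \neq 4$ — and one must check that this positivity survives the finite composition over the macroscopic jumps.
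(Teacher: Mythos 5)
Your plan reproduces the paper's own strategy: reduce $g_t^{-1}$ (pathwise equal to the backward Loewner flow with reversed driver, hence equal in law to the mirror backward flow $h_t$ driven by a fresh copy of $W$, the paper's Lemma~\ref{lem: f' = h' in distribution}), split $W$ into small-jump and large-jump parts, write $h_t$ as a finite composition of translated small-jump backward flows across the finitely many large jumps via the domain Markov property of $h$ (Proposition~\ref{prop: BLE micro jumps Holder still containing epsilon}), prove H\"older continuity of each factor by supermartingale domination on $|h_s'|^p Y_s^q (\sin\arg Z_s)^{-2r}$ (Propositions~\ref{prop: M supermgle},~\ref{prop: BLE micro jumps Holder}), and obtain~\ref{item: LLE H-dim if kappa not 4} from Jones--Makarov. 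This is the same route the paper takes in Section~\ref{sec: main results}.

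There is, however, one genuine gap in the reduction. You declare it sufficient to treat the case $\mathrm{supp}\,\nu \subseteq [-1,1]$, reasoning that compact support gives exponential moments. But exponential moments are not what the supermartingale bound needs: the drift in Lemma~\ref{lem: SDE M} is controlled only when the retained small-jump variance $\lambda_\varepsilon = \int_{|v|\le\varepsilon} v^2\,\nu(dv)$ is strictly below $\maxjumpH{\kappa}$ of~\eqref{eq: lambdatildemax}, and $\maxjumpH{\kappa}\to 0$ as $\kappa\to 4$. Taking $\varepsilon=1$ therefore fails for $\kappa$ near $4$, and also for perfectly ordinary measures (e.g.\ symmetric stable normalizations with $\lambda_1 \ge 1$). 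One must instead choose $\varepsilon=\varepsilon(\kappa,\nu)$ small, move the jumps of size in $(\varepsilon,1]$ into the ``macroscopic'' pile, and absorb the resulting compensator shift into the drift parameter $a$. That drift then has to be handled: the paper does so by Girsanov when $\kappa>0$ (Propositions~\ref{prop: BLE micro jumps Holder with drift kappa > 0},~\ref{prop: LLE micro jumps curve with drift}) and by a separate $\kappa=0$ supermartingale with the observable of~\eqref{eq: general martingale candidate kappa = 0} (Section~\ref{subsec: kappa is zero}, Proposition~\ref{prop: BLE micro jumps Holder with drift kappa = 0}), because Girsanov's change of measure is unavailable when there is no Brownian component. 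Your proposal does not address the drift at all. A second, smaller point: since the large-jump times are random stopping times, the H\"older estimate for each micro-driven factor must be uniform over $[0,T]$, not just at a single deterministic time; this is precisely what the Borel--Cantelli argument in Corollary~\ref{cor: uniform spatial tail estimate for h} and Proposition~\ref{prop: BLE micro jumps Holder} supplies.
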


Let us cautiously observe that item~\ref{item: LLE Holder if kappa not 4} of Theorem~\ref{thm: LLE Holder if kappa not 4} only implies that $g_t^{-1}$ extends to the real line for each \emph{fixed} $t$, whereas item~\ref{item: LLE locally conn if kappa not 8} of Theorem~\ref{thm: LLE curve if kappa not 8} holds \emph{simultaneously} for all $t \in [0,T]$. The latter result uses the assumptions \ref{item: ass1 intro} or~\ref{item: ass2 intro}, while the former needs not.

It is known~\cite{GMS-Almost_sure_multifractal_spectrum_of_SLE} that for $\SLE_4$ driven by $\sqrt{4} B$, the complements $\bH \setminus K_t$ of the hulls are not H\"older domains (see also Remark~\ref{rem: inverse map param holder optimal}). 
We do not expect item~\ref{item: LLE Holder if kappa not 4} of Theorem~\ref{thm: LLE Holder if kappa not 4} to hold with $\kappa = 4$, while item~\ref{item: LLE H-dim if kappa not 4} of Theorem~\ref{thm: LLE Holder if kappa not 4} should hold with $\kappa = 4$ as well. (We shall not, however, pursue this here.)

Let us finally recall that the boundary of any H\"older domain is conformally removable~\cite[Corollary~2]{Jones-Smirnov:Removability_theorems_for_Sobolev_functions_and_quasiconformal_maps},
but this is not at all clear for other kinds of fractals. 
For $\SLEk$ with $\kappa = 4$, it was proven only very recently~\cite{KMS:Conformal_removability_of_SLE4} 
that the $\SLEk$ curve is indeed conformally removable, using couplings of SLE with the Gaussian free field (GFF).
We do not foresee that those techniques could be adapted as such to the present setup because there are no couplings with the GFF available. 
In the general case, the jumps in $W$ might or might not make a difference: the particular property of $\SLE_4$ curves that they come arbitrarily close to themselves while still being simple curves makes their analysis harder and is at the heart of the failure of the H\"older property for the complementary domains. 
Introducing jumps could, in principle, prevent the curve managing to approach arbitrarily close to itself, so that one could expect its trace to be conformally removable. 
Conversely, jumps could also introduce additional ``dust'' behavior in the hulls, rendering the conformal removability impossible. However, 
knowing the fact that our results imply that 
such dust does not affect the local connectedness of L\'evy-Loewner hulls (at least under the assumptions \ref{item: ass1 intro} or~\ref{item: ass2 intro}), 
to us it appears plausible that conformal removability would also hold for general L\'evy-Loewner hulls with diffusivity parameter $\kappa=4$.

\medskip{}
{\bf Acknowledgments.}
We thank Zhen-Qing Chen, Steffen Rohde, and Jeff Steif for useful correspondences, 
and especially Yizheng Yuan for discussions concerning~\cite{Yuan:Refined_regularity_of_SLE}.
We thank the anonymous referee for very useful comments. 

A.S. is funded by the Cantab Capital Institute for the Mathematics of Information.

While carrying out this project, 
E.P.~has been supported 
by the Deutsche Forschungsgemeinschaft (DFG, German Research Foundation) under Germany's Excellence Strategy EXC-2047/1-390685813, 
the DFG collaborative research centre ``The mathematics of emerging effects'' CRC-1060/211504053,
as well as by 
the Academy of Finland Centre of Excellence Programme grant number 346315 ``Finnish centre of excellence in Randomness and STructures (FiRST)'',  
the Academy of Finland grant number 340461 ``Conformal invariance in planar random geometry'', 
and the European Research Council (ERC) under the European Union's Horizon 2020 research and innovation programme (101042460): 
ERC Starting grant ``Interplay of structures in conformal and universal random geometry'' (ISCoURaGe).

Some of this work was carried out while E.P. participated in a program hosted by the Mathematical Sciences Research Institute (MSRI) in Berkeley, California, during Spring 2022, 
supported by the National Science Foundation
under grant number DMS-1928930.

\bigskip{}
\section{\label{sec: preli}Loewner chains: basic properties and topology of Loewner hulls}
We will consider simply connected domains $\domain$ regarded as 
subsets of the Riemann sphere $\smash{\hat{\bC}} = \bC \cup \{\infty\}$, 
which we endow with either the Euclidean or the spherical metric depending on the context. 
Throughout, we denote the upper half-plane by $\bH = \{ z \in \bC \; | \; \im(z) > 0 \}$. 
By a conformal map we always refer to a biholomorphic function between two domains in the complex plane. 
For basic notions, readers may consult, e.g., the textbooks~\cite{Pommerenke:Boundary_behaviour_of_conformal_maps,
Lawler:Conformally_invariant_processes_in_the_plane, Kemppainen:SLE_book, Beliaev:Conformal_maps_and_geometry}.

The purpose of this section is to gather some terminology and  results for Loewner chains. 
\begin{itemize}[leftmargin=*]
\item
Section~\ref{subsec: conformal maps preli} concerns basic properties of 
conformal maps on the upper half-plane --- 
we, e.g., gather standard distortion estimates (Lemma~\ref{lem: Koebe}) 
and recall a well-known condition for H\"older continuity in terms of a derivative estimate
(Lemma~\ref{lem: global Holder continuity}). 

\smallskip

\item 
In Section~\ref{subsec: LC preli}, we collect basic notions of Loewner chains driven by c\`adl\`ag functions.

\smallskip

\item 
In Section~\ref{subsec: grid preli}, we derive estimates for such a Loewner flow in terms of a discrete approximating grid. 
While the content Sections~\ref{subsec: conformal maps preli},~\ref{subsec: LC preli}, and the subsequent Section~\ref{subsec: trace preli} are quite standard, 
to our knowledge only the recent~\cite{MSY:On_Loewner_chains_driven_by_semimartingales_and_complex_Bessel-type_SDEs} and \cite{Yuan:Refined_regularity_of_SLE} 
really develop a systematic usage of the grid approximation in Section~\ref{subsec: grid preli} to prove existence of Loewner traces with continuous driving functions.

\smallskip

\item 
Lastly, in Section~\ref{subsec: trace preli} we give a slight generalization of a well-known criterion for the Loewner chain to be generated by a trace --- in our case, a c\`adl\`ag curve (Proposition~\ref{prop: sufficient condition curve}).

\smallskip

\item 
In the final Section~\ref{subsec: connectedness}, we consider important topological properties of Loewner hulls: we prove in particular that hulls generated by a c\`adl\`ag curve are path-connected and locally (path-)connected. To us, the general results in Proposition~\ref{prop: locally connected} and Corollary~\ref{cor: locally path-connected} appear to be new. 

\end{itemize}

\subsection{Hulls and conformal maps}
\label{subsec: conformal maps preli}

We call a closed subset $K \subset \overline{\bH}$ a \emph{hull} 
if $K$ is bounded for the Euclidean metric and 
$\bH \setminus K$ is simply connected.
We write $\bdry K \subset \overline{\bH}$ and $\mathrm{int} (K) \subset \overline{\bH}$ 
respectively for the boundary and interior of the hull in the relative topology,
and $\bdry_{\mathrm{in}} K := \bdry K \cap \bH \subset \bH$. 
Riemann's mapping theorem implies that for each hull $K$, 
there exists a unique conformal map 
\begin{align} \label{eq: mof Laurent exp}
g_K \colon \bH \setminus K \to \bH ,
\qquad \qquad
g_K(z) = z + \sum_{n = 1}^{\infty} a_n(K) \, z^{-n} , \qquad |z| \to \infty ,
\end{align}
with real coefficients $a_n(K)$. We call $g_K$ the \emph{mapping-out function} of $K$ (normalized at $\infty$). 
The first coefficient $\mathrm{hcap}(K) := a_1(K) \geq 0$ in~\eqref{eq: mof Laurent exp} is 
always non-negative, and we call it the \emph{half-plane capacity} 
of the hull $K$. 
Intuitively, the half-plane capacity describes the size of $K$ as seen from $\infty$,
and it is an increasing function in the sense that 
$\mathrm{hcap}(K) \leq \mathrm{hcap}(K')$ for $K \subset K'$.

\subsubsection{Carath\'eodory convergence of hulls}
The following notion of convergence plays well with Loewner theory 
(see, e.g.,~\cite[Section~3.3]{Beliaev:Conformal_maps_and_geometry} and~\cite{Pommerenke:Boundary_behaviour_of_conformal_maps} for details). 
Let $(K_n)_{n \in \bZnn}$ be a sequence of hulls, which are uniformly bounded, i.e., there exists $R \in (0,\infty)$ such that $K_n \subset B(0,R)$ for all~$n$.
We say that $K_n$ converge in the \emph{Carath\'eodory sense} to a hull $K$ if
$\smash{g_{K_n}^{-1}}$ converge uniformly on all compact subsets of $\bH$ to $\smash{g_K^{-1}}$.
(Note that here, we assume that the limit is not trivial, i.e., $K$ is a hull, which is the only setup we will need later.)
Geometrically (see, e.g.,~\cite[Theorem~1.8]{Pommerenke:Boundary_behaviour_of_conformal_maps}),
this is equivalent to the \emph{Carath\'eodory kernel convergence}
of the complementary domains $\bH \setminus K_n$ to $\bH \setminus K$ 
with respect to any interior point $w_0 \in \bH \setminus B(0,R)$ in the following sense:
\begin{itemize}[leftmargin=2em]
\item $w_0 \in \bH \setminus K$, 

\smallskip

\item some neighborhood of every point $z \in \bH \setminus K$ belongs to $\bH \setminus K_n$ for sufficiently large $n$, and 

\smallskip

\item for each point $z \in \bdry (\bH \setminus K)$, there exists a sequence $z_n \in \bdry (\bH \setminus K_n)$ such that $z_n \to z$ as $n \to \infty$. 
\end{itemize}

\subsubsection{Distortion estimates}
To begin, we gather some standard estimates for distortion of conformal maps needed later. 
For two non-negative quantities $a,b$, we use the shorthand notation
\begin{align*}
& a \asymp b \qquad \Longleftrightarrow \qquad 
C^{-1} \, a \leq b \leq C \, a ,  && \textnormal{with $C \in (0,\infty)$ a universal constant}, \\
& a \lesssim b \qquad \Longleftrightarrow \qquad 
a \leq C \, b , &&\textnormal{with $C \in (0,\infty)$ a universal constant}.
\end{align*}

Recall that \emph{Koebe distortion and $1/4$ theorems} 
(see, e.g.,~\cite[Theorem~1.3~\&~Corollary~1.4]{Pommerenke:Boundary_behaviour_of_conformal_maps})
show that, for any conformal map $\phi$ 
on the unit disc $\bD = \{ z \in \bC \; | \; |z| = 1 \}$, we have 
\begin{align} 
\label{eq: Koebe quarter theorem}
\tfrac{1}{4} (1 - |z|)^2 \, |\phi'(z)| \, & \leq \, \dist( \phi(z), \bdry \phi(\bD)) \, \leq \, (1 - |z|)^2 \, |\phi'(z)| , 
\\
\label{eq: Koebe distortion theorem distance}
|\phi'(0)| \, \frac{|z|}{(1 + |z|)^2}
\, & \leq \, |\phi(z) - \phi(0)| \, \leq \,
|\phi'(0)| \, \frac{|z|}{(1 - |z|)^2} , 
\\
\label{eq: Koebe distortion theorem}
|\phi'(0)| \, \frac{1 - |z|}{(1 + |z|)^3}
\, & \leq \, |\phi'(z)| \, \leq \,
|\phi'(0)| \, \frac{1 + |z|}{(1 - |z|)^3} , \qquad z \in \bD .
\end{align}
We will use the following consequences of Koebe theorems for conformal maps on $\bH$.

\newpage

\begin{lem}[Koebe distortion in $\mathbb{H}$] \label{lem: Koebe}
For any conformal map $\confmap$ on $\bH$, the following hold.
\begin{enumerate}[label=\textnormal{(\alph*):}, ref=(\alph*)]
\item \label{item: Koebe1}
$|\confmap'(\ii y)| \asymp |\confmap'(\ii a y)|$ for all $y > 0$ and $a \in [1/2, 2]$.

\medskip

\item \label{item: Koebe2}
$|\confmap'(y(x + \ii))| \lesssim (1+x^2)^{3} |\confmap'(\ii y)|$ for all $y > 0$ and $x \in \bR$.

\medskip

\item \label{item: Koebe3}
For each fixed $w_0 = x_0 + \ii y_0 \in \bH$, the inverse map satisfies
\begin{align*}
| \confmap^{-1}(z) - w_0 | \, \leq \, \tfrac{1}{2} \, y_0
\qquad \textnormal{and} \qquad
\frac{48}{125}
\, \leq \, \frac{|(\confmap^{-1})'(z)| }{|(\confmap^{-1})'(\confmap(w_0))|} 
\, \leq \,
\frac{80}{27}
\end{align*}
for all $z \in B \big( \confmap(w_0), \tfrac{1}{8} \, y_0 \, | \confmap'(w_0) | \big)$.
\end{enumerate}
\end{lem}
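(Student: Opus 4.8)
The plan is to derive everything from the classical Koebe estimates \eqref{eq: Koebe quarter theorem}--\eqref{eq: Koebe distortion theorem}, by transporting a conformal map $\confmap$ on $\bH$ to the unit disc via a M\"obius map. Concretely, for a basepoint $\ii y \in \bH$ fix the automorphism $\psi_y \colon \bD \to \bH$ with $\psi_y(0) = \ii y$, for instance $\psi_y(\zeta) = \ii y \tfrac{1+\zeta}{1-\zeta}$, and apply the disc estimates to $\phi := \confmap \circ \psi_y$. One computes $\psi_y'(0) = 2\ii y$, and more generally $\psi_y'(\zeta) = 2\ii y/(1-\zeta)^2$, so that $|\confmap'(\psi_y(\zeta))| = |\phi'(\zeta)|\,|1-\zeta|^2/(2y)$. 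The whole lemma is then a matter of identifying, for each target point in $\bH$, the corresponding preimage $\zeta \in \bD$ and plugging into \eqref{eq: Koebe distortion theorem}.

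For part \ref{item: Koebe1}, note that $\ii a y = \psi_y(\zeta_a)$ with $\zeta_a = (a-1)/(a+1)$, which for $a \in [1/2, 2]$ satisfies $|\zeta_a| \le 1/3$; hence $1 \pm |\zeta_a|$ are bounded above and below by universal constants, and \eqref{eq: Koebe distortion theorem} gives $|\phi'(\zeta_a)| \asymp |\phi'(0)|$, i.e.\ $|\confmap'(\ii a y)| \asymp |\confmap'(\ii y)|$, the factor $|1-\zeta_a|^2$ being likewise bounded. For part \ref{item: Koebe2}, the point $y(x+\ii) = \psi_y(\zeta_x)$ has $\zeta_x = x/(x + 2\ii)$, so $|\zeta_x|^2 = x^2/(x^2+4)$ and thus $1 - |\zeta_x| \asymp (1-|\zeta_x|^2) = 4/(x^2+4) \asymp (1+x^2)^{-1}$; also $|1-\zeta_x|^2 = |2\ii/(x+2\ii)|^2 = 4/(x^2+4) \asymp (1+x^2)^{-1}$. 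Feeding these into \eqref{eq: Koebe distortion theorem} yields $|\phi'(\zeta_x)| \lesssim |\phi'(0)|(1+x^2)^{3}$ (the $(1-|\zeta_x|)^3$ in the denominator producing the main power), and then $|\confmap'(y(x+\ii))| = |\phi'(\zeta_x)|\,|1-\zeta_x|^2/(2y) \lesssim (1+x^2)^{2}|\confmap'(\ii y)|/1$; one checks the bookkeeping gives at worst the stated exponent $3$ (and it is harmless to state it with a larger constant power since the lemma only claims $\lesssim (1+x^2)^3$).

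For part \ref{item: Koebe3} I would work with $\confmap^{-1}$ directly, setting $w_0 = x_0 + \ii y_0$ and $\zeta_0 := \confmap(w_0) \in \bH$. The image $\confmap^{-1}(B(\zeta_0, \tfrac18 y_0 |\confmap'(w_0)|))$ is contained in a ball around $w_0$: by the Koebe $1/4$ theorem applied to the disc-transported map (with basepoint $w_0$), the conformal radius of $\confmap^{-1}$ at $\zeta_0$ relative to $\bH$ is comparable to $\dist(w_0, \bR) = y_0$, and since $|(\confmap^{-1})'(\zeta_0)| = |\confmap'(w_0)|^{-1}$, a ball of radius $\tfrac18 y_0 |\confmap'(w_0)|$ around $\zeta_0$ maps into a ball of radius at most $\tfrac12 y_0$ around $w_0$ — this is the first claim, and it is the quantitative input that keeps the target points away from $\bR$ so that the distortion theorem applies. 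The two-sided bound on $|(\confmap^{-1})'(z)|/|(\confmap^{-1})'(\zeta_0)|$ then follows from \eqref{eq: Koebe distortion theorem}: transport $\confmap^{-1}$ via $\psi \colon \bD \to \bH$ with $\psi(0) = \zeta_0$; the ball $B(\zeta_0, \tfrac18 y_0|\confmap'(w_0)|) = B(\zeta_0, \tfrac18 \dist(\zeta_0,\bR)\cdot r)$ with $r \le 1$ (indeed $\tfrac18 y_0 |\confmap'(w_0)|$ is comparable to $\tfrac18$ times the conformal radius, hence a definite fraction of $\dist(\zeta_0,\bR)$) pulls back under $\psi$ to a sub-disc $\{|\zeta| \le \rho_0\}$ with $\rho_0$ an explicit absolute constant $< 1$; then \eqref{eq: Koebe distortion theorem} gives $\dfrac{1-\rho_0}{(1+\rho_0)^3} \le \dfrac{|(\confmap^{-1}\circ\psi)'(\zeta)|}{|(\confmap^{-1}\circ\psi)'(0)|} \le \dfrac{1+\rho_0}{(1-\rho_0)^3}$, and converting back (the factors $|\psi'|$ cancel in the ratio up to a further bounded Koebe factor) produces the numerical constants $\tfrac{48}{125}$ and $\tfrac{80}{27}$ after optimizing $\rho_0$.

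The main obstacle is purely bookkeeping: getting the explicit numerical constants in \ref{item: Koebe3} right, i.e.\ tracking exactly how the radius $\tfrac18 y_0|\confmap'(w_0)|$ translates into a disc radius $\rho_0$ in $\bD$ and how the leftover M\"obius-derivative factors $|\psi'(\zeta)/\psi'(0)| = |1-\zeta|^{-2}$ contribute (these are themselves controlled by Koebe-type bounds, $\tfrac{1-\rho_0}{1+\rho_0} \le \cdots$), so that the product of all bounds matches $\tfrac{48}{125}$ and $\tfrac{80}{27}$. For parts \ref{item: Koebe1} and \ref{item: Koebe2} there is essentially no obstacle — they are immediate once the correct preimages $\zeta_a$, $\zeta_x$ are computed. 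I would present \ref{item: Koebe1} and \ref{item: Koebe2} in a sentence or two each and devote the bulk of the written proof to the constant-chasing in \ref{item: Koebe3}.
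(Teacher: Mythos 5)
Your handling of parts~\ref{item: Koebe1} and~\ref{item: Koebe2} is correct and essentially matches what the paper means by ``standard applications of~\eqref{eq: Koebe distortion theorem}'': transport via the explicit M\"obius map, identify the preimage points $\zeta_a = (a-1)/(a+1)$, $\zeta_x = x/(x+2\ii)$, and plug into the distortion bound. Your observation that part~\ref{item: Koebe2} actually delivers exponent $2$ (the stated $3$ being a harmless weakening) is a fine sanity check.

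Part~\ref{item: Koebe3} is where your write-up goes wrong in a way that matters. First, the codomain confusion: you write ``transport $\confmap^{-1}$ via $\psi\colon\bD\to\bH$ with $\psi(0)=\zeta_0$'' and later refer to $\dist(\zeta_0,\bR)$, but $\zeta_0 = \confmap(w_0)$ lives in $\confmap(\bH)$, an arbitrary simply connected domain, not $\bH$; the relevant quantity is $\dist(\zeta_0,\bdry\confmap(\bH))$, which is what the left inequality in~\eqref{eq: Koebe quarter theorem} controls. Second and more consequentially, your proposed M\"obius reparametrization has non-constant derivative, so after rewriting $(\confmap^{-1})'$ through it you pick up a nontrivial factor $|\psi'(\zeta)/\psi'(0)|$ in the ratio; you acknowledge this (``up to a further bounded Koebe factor'') but that extra factor would spoil the exact constants $\tfrac{48}{125},\tfrac{80}{27}$, and there is no ``optimization'' to do. The clean route is: apply Koebe $1/4$ to $\phi = \confmap\circ\psi$, where $\psi\colon\bD\to\bH$ is the full half-plane M\"obius map with $\psi(0)=w_0$ and $|\psi'(0)|=2y_0$, giving the inner radius $R = \tfrac12 y_0|\confmap'(w_0)|$ (note the factor $2$, which you would miss if you only used a ball $B(w_0,y_0)$); then parametrize $B(\zeta_0,R)\subset\confmap(\bH)$ by the \emph{affine} map $\tilde\psi(\zeta)=\zeta_0+R\zeta$, whose derivative is constant, so $G:=\confmap^{-1}\circ\tilde\psi$ satisfies $|G'(\zeta)|/|G'(0)| = |(\confmap^{-1})'(z)|/|(\confmap^{-1})'(\zeta_0)|$ with no leftover factors. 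The hypothesis places $z$ in the sub-disc $|\zeta|\le\tfrac{1/8}{1/2}=\tfrac14$, and then~\eqref{eq: Koebe distortion theorem} gives exactly $\tfrac{1-1/4}{(1+1/4)^3}=\tfrac{48}{125}$ and $\tfrac{1+1/4}{(1-1/4)^3}=\tfrac{80}{27}$, while the right inequality in~\eqref{eq: Koebe distortion theorem distance} applied to $G$ (with $|G'(0)|=\tfrac12 y_0$) gives $|\confmap^{-1}(z)-w_0|\le\tfrac12 y_0\cdot\tfrac{1/4}{(3/4)^2}=\tfrac29 y_0\le\tfrac12 y_0$. So the outline is right and you correctly identified the constants, but the decisive points — the factor $2$ from $|\psi'(0)|=2y_0$, and the use of an affine (not M\"obius) reparametrization so the derivative factors cancel exactly — are not in your argument, and $\dist(\zeta_0,\bR)$ should be $\dist(\zeta_0,\bdry\confmap(\bH))$ throughout.
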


\begin{proof}
Items~\ref{item: Koebe1} and~\ref{item: Koebe2} are standard applications of~\eqref{eq: Koebe distortion theorem}. 
Item~\ref{item: Koebe3} can be derived in a straightforward manner from the left inequality in~\eqref{eq: Koebe quarter theorem}, 
the right inequality in~\eqref{eq: Koebe distortion theorem distance}, and the estimate~\eqref{eq: Koebe distortion theorem}.   
\end{proof}

\subsubsection{H\"older continuity}

It is well known that controlling the derivative of a conformal map gives an estimate for its local H\"older continuity modulus. We will use this result in the following form.

\begin{lem}
\label{lem: global Holder continuity}
Fix\footnote{We use the convention that for $R = +\infty$, the domain is the upper half-plane $(-R,R) \times \ii (0,\infty) = \bH$.} $R \in [1,+\infty]$. 
Let $\confmap$ be a conformal map on $(-R,R) \times \ii (0,\infty)$,  
and let $\theta \in (0, 1]$.
The following are qualitatively equivalent, meaning that
$H(\theta,R)$ only depends on $\theta, R$, and $C(\theta,R)$, and $C(\theta,R)$ only depends on $\theta,R$ and $H(\theta,R)$.
\begin{enumerate}[label=\textnormal{(\alph*):}, ref=(\alph*)]

\item \label{item: global Holder continuity}
$\confmap$ is H\"older continuous with exponent $\theta$:  
there exists a constant 
$H(\theta,R) \in (0,\infty)$ such that 
\begin{align} \label{eq: global Holder continuity} 
| \confmap(z) - \confmap(w) | \leq H(\theta,R) \, 
\big( \, | z-w |^{\theta} \, \vee \, | z-w | \, \big) 
\qquad \textnormal{for all } z,w \in (-R,R) \times \ii (0,\infty) .
\end{align}

\medskip

\item \label{item: global derivative estimate}
There exists a constant $C(\theta,R) \in (0,\infty)$ such that
\begin{align} \label{eq: needed bound for derivative of confmap}
|\confmap'(z)| \leq C(\theta,R) \, 
\big( \, (\im(z))^{\theta - 1}\, \vee \, 1 \, \big) 
\qquad
\textnormal{for all } z \in (-R,R) \times \ii (0,\infty) .
\end{align}
\end{enumerate}
In particular, if either property holds, the map $\confmap$  extends to a continuous function on $\smash{\overline{(-R,R) \times \ii (0,\infty)}}$. 
\end{lem}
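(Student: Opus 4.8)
The plan is to prove the two implications $\ref{item: global derivative estimate} \Rightarrow \ref{item: global Holder continuity}$ and $\ref{item: global Holder continuity} \Rightarrow \ref{item: global derivative estimate}$ separately, and then observe that the last assertion (continuous extension to the closure) is immediate once~\eqref{eq: global Holder continuity} holds, since a uniformly continuous function on a dense subset of a metric space extends uniquely and continuously to its closure. Throughout I write $D := (-R,R) \times \ii(0,\infty)$ for the domain.

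For $\ref{item: global derivative estimate} \Rightarrow \ref{item: global Holder continuity}$, i.e.\ deriving H\"older continuity from the derivative bound, I would connect two points $z, w \in D$ by an explicit rectilinear path $\gamma$: rise vertically from $z$ and from $w$ to the common height $h := \max(\im(z), \im(w)) + |\re(z) - \re(w)|$, then join the two tops by a horizontal segment. All three segments keep the real part inside $(-R,R)$, so $\gamma$ stays in $D$ for \emph{any} $R$, and its length is at most $5|z-w|$. Then $|\confmap(z) - \confmap(w)| \le \int_\gamma |\confmap'|$, and I would bound this using~\eqref{eq: needed bound for derivative of confmap} after splitting into cases according to the minimal height $m := \min(\im(z), \im(w))$ of $\gamma$ relative to $|z-w|$: when $m \gtrsim |z-w|$, the bound $|\confmap'| \lesssim (t^{\theta-1} \vee 1)$ is essentially constant along $\gamma$ (as $t \ge m$ there), and multiplying by the length yields $\lesssim C(\theta,R)\,(|z-w|^{\theta} \vee |z-w|)$ after using $m > |z-w|$ and $\theta \le 1$; when $m \lesssim |z-w|$, the path rises only to height $h \lesssim |z-w|$, and the direct computation $\int_0^{h} t^{\theta-1}\,\ud t = h^{\theta}/\theta$ on the vertical portions, together with the horizontal portion at height $h$, again gives $\lesssim C(\theta,R)\,(|z-w|^{\theta}\vee|z-w|)$. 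The truncations $\vee 1$ and $\vee|z-w|$ are precisely what makes this uniform in $|z-w|$ for both small and large separations.

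For $\ref{item: global Holder continuity} \Rightarrow \ref{item: global derivative estimate}$, i.e.\ deriving the derivative bound from H\"older continuity, I would use Cauchy's integral formula: for $z \in D$ and $\rho := \tfrac{1}{2}\dist(z, \bdry D)$, the disc $B(z,\rho)$ lies in $D$, so
\begin{align*}
|\confmap'(z)|
&= \Bigl| \frac{1}{2\pi\ii} \oint_{\bdry B(z,\rho)} \frac{\confmap(\zeta) - \confmap(z)}{(\zeta - z)^2} \, \ud\zeta \Bigr|
\; \le \; \frac{1}{\rho} \sup_{|\zeta - z| = \rho} |\confmap(\zeta) - \confmap(z)| \\
&\le \; \frac{H(\theta,R)}{\rho} \, \bigl( \rho^{\theta} \vee \rho \bigr)
\; = \; H(\theta,R) \, \bigl( \rho^{\theta-1} \vee 1 \bigr) ,
\end{align*}
using~\eqref{eq: global Holder continuity}. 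For the half-plane ($R = +\infty$) one has $\dist(z,\bdry\bH) = \im(z)$, so $\rho = \tfrac12 \im(z)$ and this is exactly~\eqref{eq: needed bound for derivative of confmap} with $C(\theta,+\infty)$ a fixed multiple of $H(\theta,+\infty)$; for finite $R$ one runs the same argument with $\dist(z,\bdry D) = \min(\im(z),\, R - |\re(z)|)$, if needed combined with the Koebe estimate of Lemma~\ref{lem: Koebe}.

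The step I expect to need the most care is the second implication in the strip case $R < \infty$: near the vertical sides $\{\re(z) = \pm R\}$ the distance to $\bdry D$ is governed by $R - |\re(z)|$ rather than by $\im(z)$, so one has to argue that the bound in terms of $\im(z)$ alone still holds there --- this is where the normalization $R \ge 1$ and the geometry of conformal maps on a half-strip enter --- whereas the half-plane case $R = +\infty$, which is the one used in Theorem~\ref{thm: LLE Holder if kappa not 4}, is clean. In the first implication, the only points requiring attention are the bookkeeping of constants through the case split and checking that the chosen path never leaves $D$.
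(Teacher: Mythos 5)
Your argument for $\ref{item: global derivative estimate}\Rightarrow\ref{item: global Holder continuity}$ is correct and is an elementary alternative to the paper's approach (the paper cites Kinneberg's Lemma~2.7, where the bound is integrated along hyperbolic geodesics; your rectilinear path does the same job and, since it never leaves the strip, yields constants depending only on $\theta$ and $C(\theta,R)$ exactly as the statement requires). Your argument for the continuous extension is also correct and is in fact simpler than the paper's: once~\eqref{eq: global Holder continuity} is available, uniform continuity on the dense open set $D$ gives the extension immediately, whereas the paper re-derives this from~\eqref{eq: needed bound for derivative of confmap} via a dyadic decomposition near the real line.

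The gap is in $\ref{item: global Holder continuity}\Rightarrow\ref{item: global derivative estimate}$ for finite $R$, and it is not the kind you can close with Koebe. Cauchy (or Koebe) on $B(z,\rho)$ with $\rho=\tfrac12\dist(z,\bdry D)$ yields $|\confmap'(z)|\lesssim H\,(\rho^{\theta-1}\vee 1)$, where for finite $R$ one has $\dist(z,\bdry D)=\min\bigl(\im(z),\,R-|\re(z)|\bigr)$. Near a vertical side this is governed by $R-|\re(z)|$ and gives a bound that is strictly weaker than~\eqref{eq: needed bound for derivative of confmap}, which is stated in terms of $\im(z)$ alone. This is not merely a bookkeeping issue: a conformal map behaving like $z\mapsto (z-z_*)^{\theta}$ near an interior point $z_*$ of a vertical side satisfies~\eqref{eq: global Holder continuity} but has $|\confmap'|$ blowing up there while $\im(z)$ stays bounded away from $0$, so~\eqref{eq: needed bound for derivative of confmap} as written can fail. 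You should not expect "the geometry of conformal maps on a half-strip" to rescue this; the implication as stated only holds cleanly in the half-plane case $R=+\infty$, or when $\confmap$ is additionally known to be conformal on a neighbourhood of the vertical sides (as is the case for the maps $h_t$, $f_t$ to which the lemma is applied later, since those are conformal on all of $\bH$). The paper leans on the citation to Kinneberg for this direction and, throughout, only ever invokes $\ref{item: global derivative estimate}\Rightarrow\ref{item: global Holder continuity}$ for finite $R$, so the practical impact is nil --- but your own suspicion that this step "needs the most care" should be upgraded to "does not hold as stated for the strip without a further hypothesis."
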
 

\begin{proof}
The equivalence of~\ref{item: global Holder continuity} and~\ref{item: global derivative estimate}
is a standard application of Koebe distortion theorem, as detailed, e.g., in~\cite[Lemma~2.7]{Kinneberg-Loewner_chains_and_Holder_geometry}.
The implication
\ref{item: global Holder continuity}~$\Rightarrow$~\ref{item: global derivative estimate}
is almost immediate, while for the implication
\ref{item: global derivative estimate}~$\Rightarrow$~\ref{item: global Holder continuity}, 
 the idea is to integrate the bound~\eqref{eq: needed bound for derivative of confmap} along hyperbolic geodesics to obtain~\eqref{eq: global Holder continuity}.

To show that  $\confmap$ extends continuously to the boundary, 
first fix $0 < y_1 < y_2 \leq y \leq 1$ and integrate~\eqref{eq: needed bound for derivative of confmap} to obtain
\begin{align*}
|\confmap(x + \ii y_2) - \confmap(x + \ii y_1)| 
\; \leq \; \int_{y_1}^{y_2} |\confmap'(x + \ii u)| \, \ud u
\; \leq \; C(\theta,R) \int_{0}^{y} u^{\theta-1} \, \ud u 
\; \leq \; \frac{C(\theta,R)}{\theta} \, y^{\theta} .
\end{align*}
Taking $y \to 0+$, we see that the radial limit $\smash{\confmap(x) := \underset{y \to 0+}{\lim} \confmap(x + \ii y)}$ exists for all $x \in (-R,R)$.  Next, fix 
\begin{align*}
n \in \bZpos \textnormal{ and } x_1, x_2 \in (-R,R) \textnormal{ such that } x_1 \leq x_2 \textnormal{ and  } |x_2 - x_1 | \leq 2^{-n} , \textnormal{ and  }  y_1, y_2 \in [0,2^{-n}] . 
\end{align*}
Then, using the bound~\eqref{eq: needed bound for derivative of confmap} similarly as above, we have
\begin{align*}
& \; \; |\confmap(x_2 + \ii y_2) - \confmap(x_1 + \ii y_1)| \\
\leq & \; \;  |\confmap(x_2 + \ii y_2) - \confmap(x_2 + \ii 2^{-n})| 
\; + \; |\confmap(x_2 + \ii 2^{-n}) - \confmap(x_1 + \ii 2^{-n})| 
\; + \; |\confmap(x_1 + \ii 2^{-n}) - \confmap(x_1 + \ii y_1)| \\
\leq & \; \; \frac{2 \, C(\theta,R)}{\theta} \, 2^{-n \theta} 
\; + \; \int_{x_1}^{x_2} |\confmap'(u + \ii 2^{-n})| \, \ud u
\; + \; \frac{C(\theta,R)}{\theta} \, 2^{-n \theta} 
\; \leq \; \Big( \frac{2}{\theta} + 1 \Big) C(\theta,R) \, 2^{-n \theta}  ,
\end{align*}
which implies that $\confmap$ extends to a continuous function on $\smash{\overline{(-R,R) \times \ii (0,\infty)}}$. 
\end{proof}

\subsubsection{Boundary behavior}
Next we briefly discuss boundary behavior of conformal maps $\confmap \colon \bH \to \domain$
onto a simply connected domain $\domain \subsetneq \smash{\hat{\bC}}$. 
For extensive literature on this rather delicate subject, see the textbook~\cite[Chapter~2]{Pommerenke:Boundary_behaviour_of_conformal_maps}
and the more recent~\cite[Chapter~2]{Beliaev:Conformal_maps_and_geometry}. 
Let us recall a few basic notions:
\begin{itemize}[leftmargin=*]
\item
A \emph{crosscut} in $\domain$ is an open Jordan arc $S \subset \domain$ which touches the boundary
at its endpoints $a,b \in \bdry \domain$ (which may coincide): $\overline{S} = S \cup \{ a,b \} \subset \overline{\domain}$. 

\smallskip

\item
A \emph{null-chain} $(S_n)_{n \in \bZnn}$ is a sequence of nested crosscuts such that for all $n$, we have 
$S_n \cap S_{n+1} = \emptyset$,
the crosscut $S_n$ separates $S_0$ and $S_{n+1}$, and $\diam(S_n) \to 0$ as $n \to \infty$.

\smallskip

\item 
Two null-chains $(S_n)_{n \in \bZnn}$ and $(S_n')_{n \in \bZnn}$ are 
equivalent if and only if
for each sufficiently large $m$, the crosscut $S_m$ (resp.~$S_m'$) separates all but finitely many $S_n'$ from $S_{m-1}$ (resp.~$S_n$ from $S_{m-1}'$).

\smallskip

\item
A \emph{prime end} $\xi$ of $\domain$ is an equivalence class of null-chains.

\smallskip

\item The \emph{impression} of a prime end $\xi$ of $\domain$ is defined as 
\begin{align*}
I(\xi) := \bigcap_{n \in \bZnn} \overline{\mathrm{int}_{\mathrm{in}} (S_n)} ,
\end{align*}
where $\mathrm{int}_{\mathrm{in}} (S_n)$ is 
the interior of the connected component of $\domain \setminus S_n$ not containing $S_0$.
Note that $I(\xi)$ is a non-empty compact connected set, whence it is either a single point or a continuum. If $I(\xi)$ is a single point, then it is a boundary point of $\domain$ and we say that the prime end $\xi$ is \emph{degenerate}.

\smallskip

\item A set $A \subset \bC$ is (uniformly) \emph{locally connected} if for every $\varepsilon > 0$ there exists $\delta > 0$ such that, for any pair of points $z,w \in A$ such that $|z-w| < \delta$, there exists a closed connected set $S$ such that $z,w \in S \subset A$ and $\diam(S) < \varepsilon$.
By~\cite[Lemma~6.7]{Sagan:Space_filling_curves}, a sufficient condition for this 
is that $A$ is compact, connected, and locally connected at every point $z \in A$, that is, 
for every $z \in A$ and $\varepsilon > 0$, there exists a radius $r_{z,\varepsilon} > 0$ such that for every $w \in A \cap B(z,r_{z,\varepsilon})$, there exists a closed connected set $S$ such that $z,w \in S \subset A \cap B(z,\varepsilon)$.

\end{itemize}

Carath\'eodory's theorem (see~\cite[Chapter~2]{Pommerenke:Boundary_behaviour_of_conformal_maps})
implies that a conformal map $\confmap \colon \bH \to \domain$ 
extends to a homeomorphism $\overline{\bH} \to \overline{\domain}$ if and only if $\bdry \domain$ is a Jordan curve. 
Also, $\confmap$ has a continuous extension to $\overline{\bH}$
if and only if $\bdry \domain$ is locally connected, which
is also equivalent to $\bdry \domain$ being a continuous curve, 
but perhaps not an injection (in which case $\confmap$ has no inverse on $\bdry \domain$).
In any case, the conformal map $\confmap$ always induces a one-to-one correspondence between 
the boundary points of $\bH$ (also including $\infty \in \bdry \bH \subset \smash{\hat{\bC}}$) 
and the prime ends $\xi$ of $\domain$
(cf.~\cite[Theorem~2.15]{Pommerenke:Boundary_behaviour_of_conformal_maps}). 
We write $\xi = \smash{\invbreve{\confmap}}(x) \in \smash{\invbreve{\bdry}} \domain$ for the prime end $\xi$ corresponding to the boundary point $x \in \bdry \bH$, and $\smash{\invbreve{\bdry}} \domain = \smash{\invbreve{\confmap}}(\bdry \bH)$ for the boundary of $\domain$ comprising its prime ends. 
In particular, for any null-chain $\smash{(S_n)_{n \in \bZnn}}$ representing the prime end $\xi$ in $\domain$, its inverse image $\smash{(\confmap^{-1}(S_n))_{n \in \bZnn}}$ is a null-chain in $\bH$ that shrinks to $x = \smash{\invbreve{\confmap}}\smash{{}^{-1}}(\xi)$:
\begin{align*}
\{x\} = \bigcap_{n \in \bZnn} \overline{\mathrm{int}_{\mathrm{in}} (\confmap^{-1}(S_n))} .
\end{align*}

We say that a prime end $\xi$ is \emph{accessible} if,
for any interior point $w \in \domain$, there exists a Jordan arc $J$ in $\overline{\domain}$ starting at $w$ which lies entirely in $\domain$ except at its endpoint in $I(\xi) \cap \bdry \domain$. 
In this case, we say that $J$ accesses the prime end $\xi$, and the endpoint of $J$ is an accessible point.  
By~\cite[Proposition~2.14]{Pommerenke:Boundary_behaviour_of_conformal_maps}, $\confmap^{-1} (J)$ is then a curve in $\overline{\bH}$ which lies entirely in $\bH$ except at its endpoint in $\bdry \bH$.
Furthermore, if $J_1$ and $J_2$ are two Jordan arcs accessing two distinct prime ends of $\domain$,
then the curves $\confmap^{-1} (J_1)$ and $\confmap^{-1} (J_2)$ also have distinct endpoints in $\bdry \bH$.
(Here, it is crucial that the image domain of $\confmap^{-1}$ is nice, e.g., $\bH$.)

For any boundary point $x \in \bdry \bH$,
by~\cite[Corollary~2.17 and Exercise~2.5.5]{Pommerenke:Boundary_behaviour_of_conformal_maps},
if the (unrestricted) limit of $\confmap$ at $x$,
\begin{align} \label{eq: unrestricted limit}
\confmap(x) := \lim_{z \to x} \confmap(z) \, \in \, \bdry \domain \qquad \textnormal{along} \quad z \in \bH ,
\end{align}
exists, then the prime end $\xi = \invbreve{\confmap}(x)$ is degenerate and accessible, and we have $I(\xi) = \{ \confmap(x) \}$.

Conversely, if $J \colon [0,1) \to \domain$ is a Jordan arc accessing a prime end $\xi$ of $\domain$, then the limit of $\confmap$ exists along the curve $L := \confmap^{-1} \circ J \colon [0,1) \to \bH$ by~\cite[Corollary~2.17 and Exercise~5]{Pommerenke:Boundary_behaviour_of_conformal_maps}:
\begin{align} \label{eq: radial limit}
J(1) = \lim_{s \to 1-} \confmap(L(s)) \, \in \, \bdry \domain \qquad \textnormal{along} \quad L[0,1) \subset \bH ,
\end{align}
which is also equivalent to the existence of a \emph{radial limit} of $\confmap$ at $\xi$~\cite[Corollary~2.17(i)]{Pommerenke:Boundary_behaviour_of_conformal_maps}. 
(However, this does not guarantee the existence of the unrestricted limit~\eqref{eq: unrestricted limit}.)

\subsection{Loewner chains}
\label{subsec: LC preli}
 
Let $W \colon [0, \infty) \to \bR$ be a c\`adl\`ag function 
(i.e., right-continuous with left limits). 
A (chordal) \emph{Loewner chain} driven by $W$ 
(or, with driving function $W$)
is a family $(g_t)_{t \geq 0}$ of mapping-out functions 
which solve, for each $z \in \overline{\bH}$, the Loewner differential equation\footnote{Note that via Schwarz reflection, each mapping-out function $g_t = g_{K_t}$ extends to a conformal map on $\smash{\hat{\bC}} \setminus (K_t \cup K_t^*)$, where $K_t^*$ is the complex conjugate of $K_t$. Thus, $g_t = g_{K_t}$ is well-defined on $\bR \setminus K_t$.}
\begin{align} \label{eq: LE}
\tag{LE}
\partial_t^{+} g_t(z) = \frac{2}{g_t(z) - W(t)} \qquad \textnormal{with initial condition} \qquad g_0(z) = z ,
\end{align}
where $\partial_t^{+}$ denotes the right derivative 
and $t \mapsto g_t(z)$ is the unique absolutely continuous solution to~\eqref{eq: LE} defined up to the blow-up time
\begin{align} \label{eq: blow-up time}
\tau(z) := \sup \big\{ s \geq 0 \; | \; \inf_{u \in [0, s]} | g_u(z) - W(u) | > 0 \big\} \; \in \; [0,\infty] .
\end{align}

\begin{rem}
\textnormal{(}See also~\cite[Chapter~6]{Pommerenke:Univalent_functions}.\textnormal{)}
\label{rem: existence and uniqueness ODE}
The existence and uniqueness of an absolutely continuous solution $t \mapsto g_t(z)$ to~\eqref{eq: LE} follows from general ODE theory~\cite[Chapter~I.5.,~Theorems~5.1--5.3]{Hale:Ordinary_differential_equations}.
Indeed, the existence follows from checking the Carath\'eodory conditions:
for $(t, w) \in [0,\infty) \times \bH$ such that $|w - W(t)| > 0$, 
the map $\smash{t \mapsto \frac{2}{w - W(t)}}$ is measurable (namely, c\`adl\`ag),
the map $\smash{w \mapsto \frac{2}{w - W(t)}}$ is continuous, and 
$\smash{\frac{2}{|w - W(t)|}}$ is bounded on compacts.
The uniqueness follows since $\smash{w \mapsto \frac{2}{w - W(t)}}$ is locally Lipschitz.
Furthermore, the map $(t, z) \mapsto g_t(z)$ is also jointly continuous on
$\{ (t, z) \in  [0, \infty) \times \bH \; | \; t < \tau(z) \}$.
We gather some further properties of the mapping-out functions in Appendix~\ref{app: inverse Loewner chain}.
\end{rem}

One can show that the growing hulls associated to the Loewner chain have the form 
\begin{align*}
K_t = \{ z \in \overline{\bH} \, | \,  \tau(z) \leq t \} , \qquad
g_t = g_{K_t} \colon \bH \setminus K_t \to \bH ,
\end{align*}
and in particular,~\eqref{eq: LE} implies that for each $t \geq 0$, the conformal map $g_t$ is the associated mapping-out function normalized at $\infty$.
By the choice of the constant ``$2$'' in~\eqref{eq: LE},
the Loewner chain is parameterized by capacity, i.e., 
we have $\mathrm{hcap}(K_t) = 2 t$ for all $t \geq 0$.
For each fixed $z \in \overline{\bH}$, 
the blow-up time $\tau(z)$ of~\eqref{eq: LE} is the first time when the given point $z$ satisfies one of the following mutually exclusive properties:~it~is 
\begin{itemize}
\item either \emph{swallowed} by the growing hulls at time $\tau(z)$, i.e., we have
\begin{align*}
z \in \mathrm{int}(K_{\tau(z)}) \setminus \bigcup_{s < \tau(z)} K_s ,
\end{align*}
in which case we necessarily have
$\smash{\underset{t \to \tau(z)-}{\liminf}} \; |g_t(z) - W(t)| = 0$; 

\medskip

\item or \emph{hit} by the growing hulls at time $\tau(z)$, i.e., we have 
\begin{align*}
z \in (\bdry K_{\tau(z)}) \setminus \underset{s < \tau(z)}{\bigcup} K_s 
\qquad \textnormal{and} \qquad 
\liminf_{t \to \tau(z)-} \; |g_t(z) - W(t)| = 0 ;
\end{align*}

\medskip

\item or a \emph{branch point} at time $\tau(z)$, i.e., we have 
\begin{align*}
z \in \bdry K_{\tau(z)} \cup (\bR \setminus K_{\tau(z)}) 
\qquad \textnormal{but} \qquad 
\liminf_{t \to \tau(z)-} \; |g_t(z) - W(t)| > 0 ,
\end{align*}
in which case $W$ has a jump at time $\tau(z)$ and $g_{\tau(z)}(z) = W(\tau(z)+)$.
\end{itemize}
Note that swallowed points are never accessible from $\bH \setminus K_t$, while hit and branch points can be accessible or inaccessible from $\bH \setminus K_t$.

\begin{lem} \label{lem: some markov property}
\textnormal{(}See Figure~\ref{fig: Loewner maps}.\textnormal{)}
Fix $\sigma \geq 0$ and define for each $t \geq 0$ the sets
\begin{align*}
\big( \mathring{K}^\sigma_{t} \big)_{t \geq 0}
:= & \; \big( \overline{g_\sigma(K_{\sigma + t} \setminus K_\sigma) - W(\sigma)} \big)_{t \geq 0} .
\end{align*}
Then, $\smash{\mathring{K}^\sigma_{t}}$ are hulls parameterized by capacity driven by $\smash{\mathring{W}^\sigma}(t) := W(\sigma+t) - W(\sigma)$, and the associated mapping-out functions $\smash{\mathring{g}^\sigma_{t}}(z) 
= ( g_{\sigma+t} \circ g_\sigma^{-1} )(z + W(\sigma)) - W(\sigma)$ solve~\eqref{eq: LE} with driving function $\smash{\mathring{W}^\sigma}$. 
\end{lem}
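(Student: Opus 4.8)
The plan is to verify, in order, the three assertions of the lemma: that each $\mathring{K}^\sigma_{t}$ is a hull parameterized by capacity, that $\mathring{g}^\sigma_{t}$ is its mapping-out function, and that $(\mathring{g}^\sigma_{t})_{t \geq 0}$ solves~\eqref{eq: LE} with driving function $\mathring{W}^\sigma$; the identification of the hulls generated by this new Loewner chain with the sets $\mathring{K}^\sigma_{t}$ then follows from the description of hulls via blow-up times recorded above (together with uniqueness of the solution to~\eqref{eq: LE}, cf.~Remark~\ref{rem: existence and uniqueness ODE}).

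First I would set up Step~1 (the hull bookkeeping). Since $K_\sigma \subseteq K_{\sigma+t}$, the set $\bH \setminus K_{\sigma+t}$ is an open, simply connected subset of the domain $\bH \setminus K_\sigma$ of $g_\sigma$, and it contains a neighborhood of $\infty$ in $\bH$. As $g_\sigma$ is a conformal bijection of $\bH \setminus K_\sigma$ onto $\bH$ which is close to the identity near $\infty$ by~\eqref{eq: mof Laurent exp}, the image $g_\sigma(\bH \setminus K_{\sigma+t})$ is open, simply connected, and has bounded complement in $\bH$; hence that complement, translated by $-W(\sigma)$, is a hull, and one checks it equals $\mathring{K}^\sigma_{t} = \overline{g_\sigma(K_{\sigma+t} \setminus K_\sigma) - W(\sigma)}$ (this is the standard hull-composition picture, see e.g.~\cite[Chapter~4]{Lawler:Conformally_invariant_processes_in_the_plane}). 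Consequently $\mathring{g}^\sigma_{t}(z) = (g_{\sigma+t} \circ g_\sigma^{-1})(z + W(\sigma)) - W(\sigma)$ is a conformal bijection of $\bH \setminus \mathring{K}^\sigma_{t}$ onto $\bH$, and composing the Laurent expansions~\eqref{eq: mof Laurent exp} of $g_\sigma^{-1}$ (whose leading coefficient is $-\mathrm{hcap}(K_\sigma) = -2\sigma$) and of $g_{\sigma+t}$ (leading coefficient $2(\sigma+t)$), then translating by $\pm W(\sigma)$ (which has real coefficients), yields $\mathring{g}^\sigma_{t}(z) = z + 2t\,z^{-1} + O(z^{-2})$ as $|z| \to \infty$. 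By uniqueness of the mapping-out function normalized at $\infty$, this gives $\mathring{g}^\sigma_{t} = g_{\mathring{K}^\sigma_{t}}$ and $\mathrm{hcap}(\mathring{K}^\sigma_{t}) = 2t$.

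Next, for Step~2 (the Loewner equation), I would fix $z$ and put $w := g_\sigma^{-1}(z + W(\sigma)) \in \bH \setminus K_\sigma$; crucially $w$ does not depend on $t$, and $z \notin \mathring{K}^\sigma_{t}$ exactly when $t < \tau(w) - \sigma$, where $\tau$ is the blow-up time~\eqref{eq: blow-up time}. For such $t$, the chain rule for the absolutely continuous solution $s \mapsto g_s(w)$ of~\eqref{eq: LE} (Remark~\ref{rem: existence and uniqueness ODE}), combined with $g_{\sigma+t}(w) = \mathring{g}^\sigma_{t}(z) + W(\sigma)$ and $W(\sigma+t) = \mathring{W}^\sigma(t) + W(\sigma)$, gives
\begin{align*}
\partial_t^{+} \mathring{g}^\sigma_{t}(z) \; = \; \partial_t^{+} g_{\sigma+t}(w) \; = \; \frac{2}{g_{\sigma+t}(w) - W(\sigma+t)} \; = \; \frac{2}{\mathring{g}^\sigma_{t}(z) - \mathring{W}^\sigma(t)} ,
\end{align*}
while $\mathring{g}^\sigma_{0}(z) = g_\sigma(g_\sigma^{-1}(z + W(\sigma))) - W(\sigma) = z$. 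Since $\mathring{W}^\sigma$ is plainly càdlàg, this is exactly~\eqref{eq: LE} with driving function $\mathring{W}^\sigma$. Finally, the same change of variables shows that the blow-up time of this new equation at $z$ is $\tau(g_\sigma^{-1}(z + W(\sigma))) - \sigma$, so the hulls $\{z \in \overline{\bH} : \text{blow-up} \le t\}$ of the new chain are precisely $\mathring{K}^\sigma_{t}$.

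The only genuinely delicate point I anticipate is the bookkeeping in Step~1: verifying that $\overline{g_\sigma(K_{\sigma+t} \setminus K_\sigma) - W(\sigma)}$ coincides with the complement in $\overline{\bH}$ of the image domain $g_\sigma(\bH \setminus K_{\sigma+t}) - W(\sigma)$ requires a little care, because $K_{\sigma+t} \setminus K_\sigma$ is not closed and need not lie entirely in the domain of $g_\sigma$, so the closure genuinely matters and the boundary of $K_\sigma$ must be handled. Everything else is a direct computation with the Loewner ODE and the hydrodynamic normalization.
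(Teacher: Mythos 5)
Your proposal is correct and spells out the same computation the paper alludes to in its one-line proof: you verify the half-plane capacity via the composed Laurent expansions (their ``uniqueness of the expansion~\eqref{eq: mof Laurent exp}'') and derive~\eqref{eq: LE} for $\mathring{g}^\sigma_t$ by the change of variables $w = g_\sigma^{-1}(z+W(\sigma))$ (their ``a computation''). The extra care you flag about identifying $\bH\setminus\mathring{K}^\sigma_t$ with $g_\sigma(\bH\setminus K_{\sigma+t})-W(\sigma)$ and about blow-up times is appropriate but standard, and your treatment is sound.
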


\begin{proof}
The formula 
$\smash{\mathring{g}^\sigma_{t}}(z) 
= \smash{g_{\mathring{K}^\sigma_{t}}} (z)
= ( g_{\sigma+t} \circ g_\sigma^{-1} )(z + W(\sigma)) - W(\sigma)$
and the Loewner equation~\eqref{eq: LE} 
for the mapping-out functions follow from a computation
and the uniqueness of the expansion~\eqref{eq: mof Laurent exp}.
\end{proof}

We see from Lemma~\ref{lem: some markov property} also that the inverse maps $\smash{\mathring{f}^\sigma_{t}} :=(\smash{\mathring{g}^\sigma_{t}})^{-1} = (\smash{g_{\mathring{K}^\sigma_{t}}})^{-1}$ 
and $f_s := g_s^{-1}$ satisfy
\begin{align} \label{eq: inverse markov property}
f_{\sigma+t}(z) = f_\sigma \big( \mathring{f}^\sigma_{t}(z - W(\sigma)) + W(\sigma) \big) , \qquad t \geq 0 .
\end{align}

\noindent 
\begin{figure}
\includegraphics[width=\textwidth]{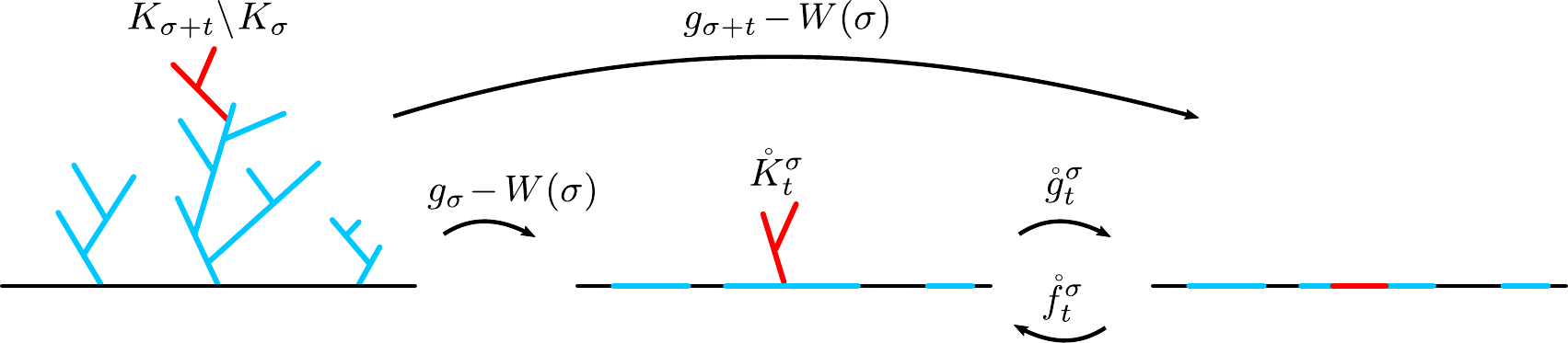}
\caption{\label{fig: Loewner maps}
Illustration of the maps and hulls in Lemma~\ref{lem: some markov property}.
}
\end{figure}

\subsubsection{Local growth}
The Loewner hulls are ``bilaterally'' locally growing in the sense that 
at each time $t \geq 0$, the following properties hold\footnote{In the literature, 
e.g.,~\cite[Theorem~2.6]{LSW:Brownian_intersection_exponents1} and~\cite[Chapter~4]{Kemppainen:SLE_book}, 
one usually considers~\eqref{eq: LE} with continuous driving functions, in 
which case the local growth property reads as follows:  
for all $\varepsilon > 0$ there exists $\delta = \delta(\varepsilon) > 0$ such that for each $t$, 
there exists a 
crosscut $S_{\delta} \subset \bH \setminus K_t$
with $\diam(S_{\delta}) < \varepsilon$ separating $K_{t + \delta} \setminus K_t$ from $\infty$ in $\bH \setminus K_t$. 
In particular, $\delta$ is uniform over $t$.
However, such uniformity fails for discontinuous driving functions, for instance when $K_t = \gamma[0,t]$ for a continuous curve $\gamma$ that crosses itself. 
In this case, the conditions involving $S_{\delta}^{\textnormal{out}}$ and $S_{\delta}^{\textnormal{in}}$ still hold.}:
for all $\varepsilon > 0$ there exist 
$\delta = \delta(\varepsilon, t) > 0$ and two crosscuts 
$S_{\delta}^{\textnormal{out}} \subset \bH \setminus K_t$ and 
$S_{\delta}^{\textnormal{in}} \subset \bH \setminus K_{t-\delta}$ 
with $ \diam(S_{\delta}^{\textnormal{out}}) , \diam(S_{\delta}^{\textnormal{in}}) < \varepsilon$
such that $S_{\delta}^{\textnormal{out}}$ separates $K_{t + \delta} \setminus K_t$ from $\infty$ in 
$\bH \setminus K_t$, and $S_{\delta}^{\textnormal{in}}$ separates $K_{t} \setminus K_{t-\delta}$ from $\infty$ in $\bH \setminus K_{t-\delta}$.
This can be proven analogously to the standard proof for the case of~\eqref{eq: LE} with continuous driving functions, see, e.g.,~\cite[Chapter~4.2.2]{Kemppainen:SLE_book}. 
The shrinking crosscuts
$S_{\delta}^{\textnormal{out}}$ and $S_{\delta}^{\textnormal{in}}$ correspond respectively 
to the right and left limits of the driving function $W$ at time $t$, which might be distinct 
(for $W$ is only assumed to be c\`adl\`ag; see also~\cite{PS:In_prep}):
\begin{align} \label{eq: shrink to points}
\bigcap_{\delta > 0} \overline{g_t( \mathrm{int}_{\mathrm{in}} (S_{\delta}^{\textnormal{out}}) )} 
= \{W(t)\} 
\qquad \textnormal{and} \qquad
\bigcap_{\delta > 0} \overline{g_t( \mathrm{int}_{\mathrm{in}} (S_{\delta}^{\textnormal{in}}) )} 
= \{W(t-)\} .
\end{align}
More precisely, at each fixed time $t$, the local growth gives rise to two null-chains\footnote{Note that if
$\smash{S_{\delta}^{\textnormal{in}} \subset \bH \setminus K_{t-\delta}}$ separates $K_{t} \setminus K_{t-\delta}$ from $\infty$ in $\bH \setminus K_{t-\delta}$, then $\smash{S_{\delta}^{\textnormal{in}} \subset \bH \setminus K_{t}}$ is also a crosscut in $\bH \setminus K_{t}$.} 
$\smash{(S_{\delta_n}^{\textnormal{out}})_{n \in \bZnn}}$ 
and $\smash{(S_{\delta_n}^{\textnormal{in}})_{n \in \bZnn}}$ in $\bH \setminus K_{t}$, with $\delta_n = \delta_n(t) \to 0+$ as $n \to \infty$, which 
represent two unique prime ends in $\bH \setminus K_t$~\cite[Theorem~2.15]{Pommerenke:Boundary_behaviour_of_conformal_maps}:
\begin{align*}
\smash{\invbreve{f}_t}(W(t-)) = \grown_t \in \smash{\invbreve{\bdry}} (\bH \setminus K_t) 
\qquad \textnormal{and} \qquad
\smash{\invbreve{f}_t}(W(t)) = \growing_t  \in \smash{\invbreve{\bdry}} (\bH \setminus K_t) .
\end{align*}
We call $\growing_t$ the \emph{growing end} for the Loewner chain at time $t$, 
and $\grown_t$ the \emph{grown end} at time $t$. 
Also, by the term \emph{growing point} at time $t$ we refer to points in the impression $I(\growing_t)$, and by the term \emph{grown point} at time $t$ we refer to points 
that are swallowed at time $t$ or belong to the impression $I(\grown_t)$. 
Note that the hulls might be generated by a 
self-crossing or self-touching curve $\gamma$, in which case a grown point $z$ might also be a double-point of the curve, i.e., $z = \gamma(s) = \gamma(t) \in K_s \cap K_t$ for some $s < t$.

\begin{rem} \label{rem: shrinking hulls}
In the setup of Lemma~\ref{lem: some markov property}
\textnormal{(}here, we consider the fixed time $\sigma$ equaling $t$ or $t-\delta$ and denote by $\delta$ the small time parameter\textnormal{)},
by the local growth and Wolff's lemma 
\textnormal{(}e.g.,~\cite[Lemma~4.6]{Kemppainen:SLE_book}\textnormal{)}, 
we have $\smash{\diam(\mathring{K}^t_\delta)} \to 0$ and $\smash{\diam(\mathring{K}^{t-\delta}_\delta)} \to 0$ as $\delta \to 0+$. 
Using this and~\cite[Lemma~4.5]{Kemppainen:SLE_book}, we see that
\begin{align} 
\label{eq: shrinking hulls out}
\Big(
\sup_{z \in \bH \setminus \mathring{K}^t_\delta}
|\mathring{g}^t_\delta(z) - z|
\Big) \, \vee \, \Big( \sup_{w \in \bH}
|\mathring{f}^t_\delta(w) - w| \Big)
\lesssim \; & \diam(\mathring{K}^t_\delta) 
\quad \quad \overset{\delta \to 0+}{\longrightarrow} \quad
0 , \\
\label{eq: shrinking hulls in}
\Big(
\sup_{z \in \bH \setminus \mathring{K}^{t-\delta}_\delta}
|\mathring{g}^{t-\delta}_\delta(z) - z|
\Big) \, \vee \, \Big( \sup_{w \in \bH}
|\mathring{f}^{t-\delta}_\delta(w) - w| \Big)
\lesssim \; & \diam(\mathring{K}^{t-\delta}_\delta) 
\quad \overset{\delta \to 0+}{\longrightarrow} \quad
0 .
\end{align}
\end{rem}

\begin{rem}
In fact, one can also show conversely that any locally growing family of hulls parameterized by capacity gives rise to a c\`adl\`ag driving function via~\eqref{eq: shrink to points} such that the associated mapping-out functions solve~\eqref{eq: LE}.
It also follows from the local growth and distortion estimates 
\textnormal{(}see Lemma~\ref{lem: FTC for f} in Appendix~\ref{app: inverse Loewner chain} for an analogous argument\textnormal{)} 
that, for each $z \in \bH$, the mapping-out function $t \mapsto g_t(z)$ is continuous \textnormal{(}up to the blow-up time $\tau(z)$\textnormal{)}. 
Hence, for each $z \in \bH$, since the right-hand side of~\eqref{eq: LE} as a function of $t$ is Lebesgue-integrable on any compact sub-interval of $[0,\tau(z))$, the map $t \mapsto g_t(z)$ is absolutely continuous and
\begin{align*}
g_t(z) \; = \; z + \int_0^t \partial_s^{+} g_s(z) \, \ud s 
\; = \; z + \int_0^t \frac{2 \, \ud s }{g_{s}(z) - W(s-)} , \qquad t \in [0,\tau(z)) .
\end{align*}
A short computation gives
\begin{align} \label{eq: CRg}
\frac{\im(g_t(z))}{|g_t'(z)|}
\; = \; \; & \im(z) \, \exp \bigg( - \int_0^t \frac{4 \, |\im(g_{s}(z))|^2 \ud s}{|g_{s}(z) - W(s-)|^2} \bigg) , \qquad t \in [0,\tau(z)) ,
\end{align}
which implies in particular that $|g_t'(z)| \geq \frac{\im(g_t(z))}{\im(z)}$ for all $t \in [0,\tau(z))$. 
\end{rem}

\subsection{Grid of points of interest for forward flow estimates}
\label{subsec: grid preli}

Let $(g_t)_{t \geq 0}$ be a Loewner chain driven by a c\`adl\`ag function $W$. 
Also, let $f_t:= g_t^{-1}$ be the inverse Loewner chain, and set
\begin{align*} 
\tilde{f}_t(w) := f_t(w + W(t)) , \qquad w \in \bH .
\end{align*}
Note that for each fixed time instant $t$, this map is just $f_t$ pre-composed with a translation.

We next consider a useful approximating grid 
for the (forward) Loewner flow $(t,z) \mapsto g_t(z) - W(t)$ (Lemma~\ref{lem: grid}). 
Such an approach was developed systematically very recently~\cite{MSY:On_Loewner_chains_driven_by_semimartingales_and_complex_Bessel-type_SDEs} \&~\cite{Yuan:Refined_regularity_of_SLE}. 
In particular, from Koebe distortion one can derive estimates for the derivative $|g_t'(z)|$ \emph{uniformly in time}, which will be important for verifying the existence of the Loewner trace. 
Perhaps the most common approach to prove the existence of the trace uses the \emph{backward} Loewner flow (discussed in Section~\ref{sec: backward bounds}), which gives estimates for the derivative of the inverse map $f_t:= g_t^{-1}$ \emph{pointwise in time}. However, for Loewner chains driven by general L\'evy processes, such pointwise in time estimates do not seem sufficient.

\begin{defn} \label{def: grid}
For $a, R, T > 0$, define a grid of mesh size $a/8$ as 
\begin{align*}
\Grid (a, T, R)
:= \Big\{ z \in \bH \;\; | \;\; 
\re(z) = & \; \tfrac{a}{8} \, \ell \, \in \, [-R, R] , \; 
\ell \in \bZ , 
\textnormal{ and }
\\ 
\im(z) = & \; \tfrac{a}{8} \, (k + 8) \, \in \, [a , \sqrt{1+4T}] , \; 
k \in \bZnn 
\Big\} 
\quad \subset \quad [-R, R] \times \ii [a , \sqrt{1+4T} ]  .
\end{align*}
\end{defn} 

We will need to estimate the size of the grid $\Grid (a, T, R)$ (put parameters $q=r=0$ in Lemma~\ref{lem: sum over grid}), 
and to compute the sum over the grid of the initial value $\im(z_0)^{q - 2r} |z_0|^{2r}$ of a certain process 
in Section~\ref{sec: forward bounds}.

\begin{lem}
\label{lem: sum over grid}
Fix $a \in (0,1]$, $R, T > 0$, and $r,q \in \bR$. 
There exists a constant 
$c_{\rm grid}(q, r, T, R) \in (0,\infty)$, 
that depends polynomially on $T$ and $R$, 
such that
\begin{align} 
\nonumber 
\sum_{z_0 \in \Grid (a, T, R) } \im(z_0)^{q-2r} \, |z_0|^{2r} 
\leq \; & c_{\rm grid}(q, r, T, R) \, \chi_{q, r}(a) , \\[1em]
\label{eq: grid chi}
\textnormal{where} \qquad 
\chi_{q, r}(a) =  \; &
\begin{cases}
a^{q} , & r < -1/2 , \; q + 2 < 0 , \\[.5em] 
a^{-2} ( \log (1/a) \vee 1 ) , & r < -1/2 , \; q + 2 = 0 , \\[.5em]
a^{-2} , & r < -1/2 , \; q + 2 > 0 , \\[.5em]
a^{q} (\log (1/a) \vee 1) , & r = -1/2 , \; q + 2 < 0 , \\[.5em] 
a^{-2} (\log (1/a) \vee 1)^2 , & r = -1/2 , \; q + 2 = 0 , \\[.5em]
a^{-2} (\log (1/a) \vee 1) , & r = -1/2 , \; q + 2 > 0 , \\[.5em]
a^{q - 2r - 1} , & r > -1/2 , \; q - 2r + 1 < 0 , \\[.5em] 
a^{-2} (\log (1/a) \vee 1) , & r > -1/2 , \; q - 2r + 1 = 0 , \\[.5em]
a^{-2} , & r > -1/2 , \; q - 2r + 1 > 0 .
\end{cases}
\end{align}
\end{lem}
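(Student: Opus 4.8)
\textbf{Proof plan for Lemma~\ref{lem: sum over grid}.}
The plan is to reduce the two-dimensional sum over the grid to a product of two one-dimensional sums — one over the real direction indexed by $\ell$, one over the imaginary direction indexed by $k$ — and then estimate each by comparison with an integral. Write a grid point as $z_0 = \frac{a}{8}(\ell + \ii(k+8))$, so that $\im(z_0) = \frac{a}{8}(k+8) =: y$ ranges over $[a, \sqrt{1+4T}]$ and $\re(z_0) = \frac{a}{8}\ell =: x$ ranges over $[-R,R]$. The key observation is that $|z_0|^2 = x^2 + y^2 \asymp \max\{x^2,y^2\}$, so the summand satisfies
\begin{align*}
\im(z_0)^{q-2r}|z_0|^{2r} \asymp y^{q-2r}\,(x^2+y^2)^{r} \asymp y^{q-2r}\bigl(y^{2r} + |x|^{2r}\mathbf{1}_{|x|>y}\bigr)
\end{align*}
up to a constant depending only on the sign of $r$. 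This splits the sum into a ``bulk'' part where $|x| \le y$ (contributing $\asymp y^q$) and a ``tail'' part where $|x| > y$; in both regimes one is left with sums that factor cleanly. First I would handle the $x$-sum: for fixed $y$ (i.e.\ fixed $k$), the number of admissible $\ell$ with $|x| = \frac{a}{8}|\ell| \le R$ is $\asymp R/a$, and $\sum_{|x|\le R,\,|x|>y}|x|^{2r}$ is estimated by $\int$-comparison, giving (depending on the sign of $2r+1$) either $\asymp a^{-1}R^{2r+1}$, or $\asymp a^{-1}(\log(1/a)\vee 1)$ when $2r+1=0$, or $\asymp y^{2r+1}a^{-1}$ when $2r+1<0$ — i.e.\ the cutoff at $|x|\sim y$ controls the divergence. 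This is where the three top-level cases $r<-1/2$, $r=-1/2$, $r>-1/2$ in~\eqref{eq: grid chi} originate.

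Next I would carry out the $y$-sum. After performing the $x$-sum, the remaining sum over $k\in\bZnn$ with $y=\frac{a}{8}(k+8)\in[a,\sqrt{1+4T}]$ is of the form $\sum_k y^{p}$ for an exponent $p$ that equals $q$ in the $r<-1/2$ case (and in the bulk contribution for all $r$), equals $q+2r+1$ in the $r>-1/2$ tail case, and picks up an extra $\log$ factor in the boundary case $r=-1/2$. Again by integral comparison, $\sum_k y^p \asymp a^{-1}\int_a^{\sqrt{1+4T}} u^{p}\,\ud u$, which is $\asymp a^{-1}(1+4T)^{(p+1)/2}$ if $p+1>0$; $\asymp a^{-1}(\log(1/a)\vee 1)$ if $p+1=0$; and $\asymp a^{-1}\cdot a^{p+1} = a^p$ if $p+1<0$. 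Combining the $a^{-1}$ from the $x$-sum (which is always present) with the output of the $y$-sum, and matching $p$ to $q$, $q+2r+1$, etc., reproduces exactly the nine cases listed in~\eqref{eq: grid chi}, with the constant $c_{\rm grid}(q,r,T,R)$ collecting all powers of $R$ and $(1+4T)$ — hence depending polynomially on $T$ and $R$ as claimed. The logarithmic cases arise precisely when an exponent hits the critical value $-1$ in one of the integral comparisons; the double-log case $r=-1/2$, $q+2=0$ is where both one-dimensional sums are simultaneously at their critical exponent.

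The bookkeeping — rather than any genuine difficulty — is the main obstacle: one must keep careful track of which of the two sums contributes the $a^{-1}$, whether the $|x|>y$ restriction is active, and how the boundary exponents $q+2=0$ (bulk) versus $q-2r+1=0$ (tail) interact with the boundary value $r=-1/2$ of the splitting parameter. A clean way to organize this, which I would adopt, is: (i) prove the pointwise comparison $\im(z_0)^{q-2r}|z_0|^{2r}\asymp \im(z_0)^q + \im(z_0)^{q-2r}|\re(z_0)|^{2r}\mathbf{1}_{|\re(z_0)|\ge\im(z_0)}$; (ii) bound the first (bulk) term by $\bigl(\#\{\ell\}\bigr)\cdot\sum_k y^q \lesssim (R/a)\cdot a^{-1}\int_a^{\sqrt{1+4T}}u^q\,\ud u$, reading off the $q+2$ trichotomy — note $a^{-2}\int_a^{\cdot}u^q\,\ud u$ gives $a^q$, $a^{-2}(\log(1/a)\vee1)$, $a^{-2}$ in the three subcases $q+2<0$, $=0$, $>0$; (iii) for the tail term, sum over $\ell$ first with $|x|\in[y,R]$ using $\sum_{|x|\in[y,R]}|x|^{2r}\lesssim a^{-1}\cdot(\text{integral of }u^{2r}\text{ from }y\text{ to }R)$, distinguishing $r\lessgtr -1/2$ and $r=-1/2$, then sum the result over $k$; (iv) take the maximum of the two contributions and simplify, observing that the bulk term dominates when $r>-1/2$ and $q-2r+1>0$ is not the binding constraint, and so forth, until the list~\eqref{eq: grid chi} emerges. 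No delicate analysis is needed beyond elementary integral-versus-sum estimates, monotonicity of the relevant power functions on $[a,\sqrt{1+4T}]$, and the universal equivalence $x^2+y^2\asymp\max\{x^2,y^2\}$.
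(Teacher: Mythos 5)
The paper itself does not give a proof of this lemma --- it defers entirely to Yuan's earlier work --- so there is no paper-internal argument to compare against, and your direct computation is the natural way to establish it. The overall strategy you lay out (compare $|z_0|^{2r}$ to the larger of $|\re(z_0)|^{2r}$ and $\im(z_0)^{2r}$, split the sum into a bulk $|x|\le y$ and a tail $|x|>y$, do the $x$-sum first by integral comparison to locate the $r\lessgtr -1/2$ trichotomy, then do the $y$-sum to locate the remaining trichotomy) is indeed the argument that produces the nine cases. However, two details as written are off. First, the claimed pointwise equivalence in step~(i), $\im(z_0)^{q-2r}|z_0|^{2r}\asymp \im(z_0)^q + \im(z_0)^{q-2r}|\re(z_0)|^{2r}\mathbf{1}_{|\re(z_0)|\ge\im(z_0)}$, is a genuine two-sided comparison only for $r\ge 0$; for $r<0$ and $|x|>y$ the left side is $\asymp y^{q-2r}|x|^{2r}$, which is \emph{strictly smaller} than the right side $\asymp y^q$, so you only have the one-sided bound. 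The upper bound is what you need, so this alone is harmless --- but it matters for the second point.

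Second, and more importantly, the bulk term in step~(ii) is mis-counted. Once you have dropped the restriction $|x|\le y$ from the $y^q$ term, you are summing $y^q$ over \emph{all} $\ell$ with $|x|\le R$, which costs a factor $\asymp R/a$; combined with $\sum_k y^q\asymp a^{-1}\int_a^{\sqrt{1+4T}}u^q\,\ud u$, the bulk then contributes $\asymp a^{-2}\int_a^{\cdot} u^q\,\ud u$, which for $q<-1$ gives $a^{q-1}$. This is \emph{strictly larger} than the claimed $a^q$ when $q+2<0$, so the estimate as you wrote it fails to prove the lemma. (You then assert that $a^{-2}\int u^q\,\ud u$ yields the $q+2$ trichotomy with answer $a^q$ in the subcritical case, which is internally inconsistent --- that integral has a $q+1$ trichotomy and gives $a^{q-1}$ there.) The fix is to keep the indicator: the number of $\ell$ with $|x|=\tfrac{a}{8}|\ell|\le y$ is $\asymp y/a$, not $R/a$, so the bulk is $\sum_k (y/a)\,y^q = a^{-1}\sum_k y^{q+1}\asymp a^{-2}\int_a^{\sqrt{1+4T}}u^{q+1}\,\ud u$, which correctly produces the $q+2$ trichotomy and the value $a^q$ when $q+2<0$. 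The tail estimate you sketch (restricting to $|x|>y$, splitting on $2r+1$, then doing the $k$-sum) is sound and, once the bulk is corrected, the maximum of the two contributions does recover the nine cases.
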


\begin{proof}
This is a relatively straightforward computation --- 
see~\cite[Lemma~2.6]{Yuan:Refined_regularity_of_SLE}.
\end{proof}

We also obtain useful estimates for the derivative of the Loewner chain (Lemma~\ref{lem: grid}). Note that $|g_t'(z)|$ large implies $|f_t'(z)|$ small. 

\begin{lem} \label{lem: grid}
Fix $T > 0$, $u > 0$, and $\delta \in (0,1)$, and write 
\begin{align*}
R(T) := \sup_{t \in [0,T]} | W(t) | .
\end{align*}
If $\smash{|\tilde{f}_t'(\ii \, \delta)|} \geq u$ 
for some $t \in [0,T]$, then there exists a grid point $z_0 \in \Grid (u \, \delta, T, R(T)) \setminus K_t$ 
such that 
\begin{align*}
| g_t(z_0) - W(t) - \ii \, \delta | \, \leq \, \frac{\delta}{2} 
\qquad \textnormal{and} \qquad 
| g_t'(z_0) | \, \leq \, \frac{80}{27} \, \frac{1}{u} .
\end{align*}
\end{lem}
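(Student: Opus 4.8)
The plan is to reverse-engineer the grid point from the hypothesis using the Koebe-type estimates already collected in Lemma~\ref{lem: Koebe}, applied to the conformal map $\smash{\tilde f_t}$ on $\bH$. First I would set $w_0 := \ii\,\delta$ and apply Lemma~\ref{lem: Koebe}\ref{item: Koebe3} with $\confmap = \smash{\tilde f_t}$: the point $z := \smash{\tilde f_t}(\ii\,\delta) = f_t(\ii\,\delta + W(t))$ lies within Euclidean distance $\tfrac12\,\delta$ of $\ii\,\delta$, so in particular $\im(z) \geq \tfrac12\,\delta > 0$ and $z \in \bH \setminus K_t$; moreover $|z| \leq |W(t)| + |\ii\delta - \tilde f_t(\ii\delta)| + \delta \leq R(T) + \tfrac32\delta$ is controlled, and since $t \leq T$ one has $\im(z) \leq \im(\ii\delta) + \tfrac12\delta \leq \tfrac32\delta$, which is $\leq \sqrt{1+4T}$ for $\delta < 1$. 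The mesh of $\Grid(u\delta, T, R(T))$ is $u\delta/8$, so I can pick a grid point $z_0 \in \Grid(u\delta, T, R(T))$ with $|z_0 - z| \leq \tfrac{u\delta}{8}\cdot\sqrt2$ --- here I must check that $z$ lies inside the region $[-R(T),R(T)] \times \ii[u\delta, \sqrt{1+4T}\,]$ swept by the grid, which follows from the bounds just noted together with $u\delta \leq |\tilde f_t'(\ii\delta)|\,\delta$ being small enough relative to $\im(z) \geq \tfrac12\delta$; this is the step where one must be slightly careful about which of $\delta$ or $u\delta$ dominates, but since $\im(z_0) \geq u\delta$ by definition of the grid and $|g_t'(z)|$ being large forces $u$ to be at most of order $1$, the inclusion holds.

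Next I would translate this into a statement about $g_t = \smash{\tilde f_t{}^{-1}}\!\!\circ(\,\cdot\,) \,$; more precisely, writing $\confmap = \smash{\tilde f_t}$, the point $\ii\delta = \confmap^{-1}(z) - 0$ in the notation of Lemma~\ref{lem: Koebe}\ref{item: Koebe3} (with $w_0$ there playing the role of $\ii\delta$ and $z$ there playing the role of $\confmap(\ii\delta) = z$). The key numerical input is that $z_0$ should land inside the ball $B\big(\confmap(w_0),\,\tfrac18\,y_0\,|\confmap'(w_0)|\big) = B\big(z,\,\tfrac18\,\delta\,|\tilde f_t'(\ii\delta)|\big)$ on which Lemma~\ref{lem: Koebe}\ref{item: Koebe3} gives the two-sided derivative comparison. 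Since $|\tilde f_t'(\ii\delta)| \geq u$, this ball contains $B(z, \tfrac18 u\delta)$, and our grid point satisfies $|z_0 - z| \leq \tfrac{\sqrt2}{8}u\delta < \tfrac18 u\delta$ --- wait, $\sqrt2 > 1$, so I should instead choose $z_0$ to be a \emph{nearest} grid point, giving $|z_0 - z|_\infty \leq \tfrac{u\delta}{16}$ in each coordinate after a suitable re-indexing, hence $|z_0 - z| \leq \tfrac{\sqrt2}{16}u\delta < \tfrac18 u\delta$; alternatively one halves the relevant mesh constant. Granting this, Lemma~\ref{lem: Koebe}\ref{item: Koebe3} applied to $\smash{\tilde f_t}$ yields
\begin{align*}
|\tilde f_t{}'(z_0)| \;\geq\; \frac{48}{125}\,|\tilde f_t{}'(\ii\delta)| \;\geq\; \frac{48}{125}\,u \quad\text{(this direction not needed)},
\qquad
|\tilde f_t{}'(z_0)| \;\geq\; \frac{48}{125}\,u,
\end{align*}
but what I actually want is a bound on $|g_t'(z_0)|$; since $g_t'(z_0) = 1/\tilde f_t{}'(\tilde f_t{}^{-1}(z_0 + W(t)))$ I instead apply Lemma~\ref{lem: Koebe}\ref{item: Koebe3} in the form comparing $|(\varphi^{-1})'|$: with $\varphi = \tilde f_t$, the estimate reads $\tfrac{48}{125} \leq |(\varphi^{-1})'(z_0)| / |(\varphi^{-1})'(\varphi(\ii\delta))| \leq \tfrac{80}{27}$, and $(\varphi^{-1})'(\varphi(\ii\delta)) = 1/\varphi'(\ii\delta) = 1/\tilde f_t{}'(\ii\delta)$, while $(\varphi^{-1})'(z_0) = g_t'(z_0)$ up to the harmless translation by $W(t)$ (which has derivative $1$). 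Hence $|g_t'(z_0)| \leq \tfrac{80}{27}\,|\tilde f_t{}'(\ii\delta)|^{-1} \leq \tfrac{80}{27}\cdot\tfrac1u$, which is the second claimed inequality.

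Finally, for the first claimed inequality $|g_t(z_0) - W(t) - \ii\delta| \leq \tfrac{\delta}{2}$: by construction $g_t(z_0) - W(t) = \tilde f_t{}^{-1}(z_0)$ (again the translation by $W(t)$ cancels), and since $|z_0 - \tilde f_t(\ii\delta)| \leq \tfrac18 u\delta \leq \tfrac18\,\delta\,|\tilde f_t{}'(\ii\delta)|$, the point $z_0$ lies in the ball on which Lemma~\ref{lem: Koebe}\ref{item: Koebe3} controls $\tilde f_t{}^{-1}$; the first inequality there, $|\varphi^{-1}(z) - w_0| \leq \tfrac12 y_0$, gives exactly $|\tilde f_t{}^{-1}(z_0) - \ii\delta| \leq \tfrac12\delta$. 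The main obstacle I anticipate is purely bookkeeping: one must verify carefully that the rescaled grid $\Grid(u\delta, T, R(T))$ genuinely covers the location of $z = \tilde f_t(\ii\delta)$ --- in particular that $\im(z) \geq u\delta$ (so the grid extends low enough) and that $\re(z), \im(z)$ stay within the prescribed boxes --- and that the nearest-grid-point distance is strictly below the $\tfrac18$-radius threshold of Lemma~\ref{lem: Koebe}\ref{item: Koebe3}; these hinge on the elementary observation that if $|g_t'(z_0)|$ is to be a meaningful bound then $u \lesssim 1$, combined with $\im(z) \in [\tfrac12\delta, \tfrac32\delta]$ and $u\delta \leq |\tilde f_t{}'(\ii\delta)|\delta$. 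No deep idea is needed beyond the distortion theorems already in hand.
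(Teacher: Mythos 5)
Your overall strategy --- locate $z = \tilde f_t(\ii\delta)$, find a grid point $z_0$ nearby, and invoke Lemma~\ref{lem: Koebe}\ref{item: Koebe3} to conclude --- is the same as the paper's, and the final Koebe-distortion step is essentially correct (modulo a harmless slip: since $\tilde f_t^{-1}(z) = g_t(z) - W(t)$, the argument should be $z_0$, not $z_0 + W(t)$). However, the verification that $z$ actually lies inside the grid box $[-R(T),R(T)] \times \ii\,[u\delta,\sqrt{1+4T}\,]$ --- precisely the step you flag as ``bookkeeping'' --- rests on a genuine misreading of Lemma~\ref{lem: Koebe}\ref{item: Koebe3}. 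That lemma bounds $|\varphi^{-1}(z) - w_0|$ for $z$ in a small ball around $\varphi(w_0)$; it says nothing about the distance $|\varphi(w_0) - w_0|$ from a point to its own image. Your opening claim that $\tilde f_t(\ii\delta)$ lies within $\frac12\delta$ of $\ii\delta$ is therefore unjustified and, in general, false: the map $\tilde f_t$ pushes $\ii\delta$ up towards the tip of the hull, and $\im(\tilde f_t(\ii\delta))$ can be of order $\sqrt{T}$. Consequently your derived bounds $\im(z) \in [\tfrac12\delta, \tfrac32\delta]$ and $|z| \leq R(T)+\tfrac32\delta$ do not hold, and the hand-wave offered to repair this (``$|g_t'(z_0)|$ being large forces $u$ to be at most of order $1$'') is circular --- the bound on $|g_t'(z_0)|$ is what is being proved, and nothing restricts $u$.

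The three containment bounds each need a different, specific tool that does not appear in your argument. The crucial \emph{lower} bound $\im(\tilde f_t(\ii\delta)) \geq u\delta$ (without which a sufficiently close grid point may not exist, since the grid only starts at height $u\delta$) follows from the Schwarz lemma applied to $\tilde f_t$, which gives $|\tilde f_t'(\ii\delta)| \leq \tfrac{1}{\delta}\,\im(\tilde f_t(\ii\delta))$, so the hypothesis $|\tilde f_t'(\ii\delta)| \geq u$ yields the bound. The \emph{upper} bound $\im(\tilde f_t(\ii\delta)) \leq \sqrt{1+4T}$ comes from the imaginary-part growth estimate~\eqref{eq: imf sqrt bound} for the backward Loewner flow. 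The horizontal bound $|\re(\tilde f_t(\ii\delta))| \leq R(T)$ comes from~\eqref{eq: ref bound}. Without these three inputs, the existence of $z_0 \in \Grid(u\delta,T,R(T))$ with $|z_0 - \tilde f_t(\ii\delta)| \leq \tfrac18 u\delta$ is not established, and the rest of the proof has nothing to stand on.
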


Note that the width of the grid $\Grid (u \, \delta, T, R(T))$
depends on the Loewner driving function $\smash{(W(t))_{t \in [0,T]}}$. 

\begin{proof}
Fix $t \in [0,T]$ such that $\smash{|\tilde{f}_t'(\ii \, \delta)|} \geq u$. 
Using basic properties of Loewner flows from Appendix~\ref{app: inverse Loewner chain}, we see that, on the one hand  
\begin{align} \label{eq: f derivative Schwarz lemma bound} 
|\tilde{f}_t'(\ii \, \delta)| 
\leq \tfrac{1}{\delta} \, \im(\tilde{f}_t(\ii \, \delta)) 
\leq \tfrac{1}{\delta} \, \sqrt{\delta^2 + 4t} 
\leq \tfrac{1}{\delta} \, \sqrt{\delta^2 + 4T} 
\end{align} 
--- the first inequality follows from Schwarz lemma (cf.~\cite[Chapter~3.4, Theorem~13~\&~Exercise~2]{Ahlfors:Complex_analysis}), the second from Equation~\eqref{eq: imf sqrt bound}, and the third since $t \leq T$. 
On the other hand, we also have
\begin{align*}
|\re(\tilde{f}_t(\ii \, \delta))| 
\, \leq \, \underset{s \in [0,t]}{\sup} | W(s) | 
\, \leq \, R(T) ,
\end{align*}
by Equation~\eqref{eq: ref bound} from Appendix~\ref{app: inverse Loewner chain}. 
Thus, we conclude that
\begin{align*}
\tilde{f}_t(\ii \, \delta) := f_t(\ii \, \delta + W(t)) \; \in \; [-R(T), R(T)] \times [\ii \, u \, \delta , \ii \, \sqrt{1+4T}] .
\end{align*}
In particular, by the choice of the grid, there exists a point 
$z_0 \in \Grid (u \, \delta, T, R(T))$ 
in it such that
\begin{align*}
|z_0 - \tilde{f}_t(\ii \, \delta) | \leq \tfrac{1}{8} \, \delta \, u ,
\end{align*}
which especially implies that
\begin{align*}
z_0 \in B \big( \tilde{f}_t(\ii \, \delta), \tfrac{1}{8} \, \delta \, u \big)
\subset B \big( \tilde{f}_t(\ii \, \delta), \tfrac{1}{8} \, \delta \, | \tilde{f}_t'(\ii \, \delta) | \big) 
\subset B \big( \tilde{f}_t(\ii \, \delta), \tfrac{1}{4} \, \delta \, | \tilde{f}_t'(\ii \, \delta) | \big) \subset \bH \setminus K_t 
\end{align*}
by Koebe $1/4$ theorem (left-hand side of~\eqref{eq: Koebe quarter theorem}). 
Thus, we may conclude using Koebe distortion: 
indeed, using item~\ref{item: Koebe3} of Lemma~\ref{lem: Koebe} with $\confmap^{-1} = g_t$, and $w_0 = W(t) + \ii \, \delta$, and $w = z_0$, we see that
\begin{align*}
| g_t(z_0) - W(t) - \ii \, \delta | \, \leq \, \frac{\delta}{2} ,
\end{align*}
and
\begin{align*}
u \, | g_t'(z_0) | 
\, \leq \, |\tilde{f}_t'(\ii \, \delta)| \, | g_t'(z_0) |
\, = \, \frac{| g_t'(z_0) |}{ | g_t'(\tilde{f}_t(\ii \, \delta)) | } 
\, \leq \, \frac{80}{27} ,
\end{align*}
which is what we sought to prove. 
\end{proof}

\subsection{Sufficient condition for the existence of a Loewner trace}
\label{subsec: trace preli}

Let $(g_t)_{t \geq 0}$ be a Loewner chain driven by a c\`adl\`ag function $W$, and let $f_t:= g_t^{-1}$ be the inverse Loewner chain.
We now consider a basic question: the existence of a trace for the Loewner chain. We allow the trace to have discontinuities but require continuity from both sides.

\begin{defn} \label{def: Loewner trace}
We say that the Loewner chain $(g_t)_{t \geq 0}$ with associated hulls 
$(K_t)_{t \geq 0}$
is \emph{generated} by a function $\eta \colon [0,\infty) \to \overline{\bH}$ if, for each $t \geq 0$, the set $\bH \setminus K_t$ is 
the unbounded connected component of $\bH \setminus \eta[0,t]$. 
In the literature, it is often assumed that $\eta$ is continuous, which we will not assume here.

If $\eta$ is c\`agl\`ad \textnormal{(}left-continuous with right limits\textnormal{)}, then we write $\eta = \caglad$ and say that 
the Loewner chain is \emph{generated by the c\`agl\`ad curve}~$\caglad$. 
In this case, we also use the term 
\emph{``generated by the c\`adl\`ag curve''}~$\cadlag$ \textnormal{(}right-continuous with left limits\textnormal{)}, that is the counterpart of $\caglad$ in the sense that
\begin{align} \label{eq: caglad_cadlag}
\cadlag \colon [0,\infty) \to \overline{\bH} ,
\qquad \cadlag(t) := \lim_{s \to t+} \caglad(s) ,
\qquad \textnormal{and} \qquad 
\caglad(t) = \lim_{s \to t-} \cadlag(s) .
\end{align}
Note that
$\cadlag[0,t] \cup \caglad[0,t] = \overline{\cadlag[0,t]} = \overline{\caglad[0,t]}$. 
We call either $\cadlag$ or $\caglad$ the \emph{Loewner trace}.
\end{defn}

We will give a sufficient condition for the existence of the trace for the Loewner chain, in terms of an estimate for the derivative of the inverse map $f_t$ near the driving point $W(t)$ uniformly in time. It is similar in spirit to the one used in the literature for proving the existence of the $\SLE_\kappa$ trace, in the case where $W = B$ is a standard Brownian motion (in particular, continuous), see~\cite[Theorem~6.4]{Kemppainen:SLE_book} and~\cite[Theorem~4.1]{Rohde-Schramm:Basic_properties_of_SLE}. 
The crucial difference is that the required estimate~\eqref{eq: needed uniform bound for f'} for the derivative of $f_t$ is stronger than what one needs for Brownian motion. Namely, the modulus of continuity for Brownian motion guarantees that for the existence of the $\SLE_\kappa$ trace, it is sufficient to derive the derivative estimate~\eqref{eq: needed uniform bound for f'} at dyadic times. In the present case where $W$ is allowed to be a L\'evy process, however, it appears that $W$ (barely) fails an analogous c\`adl\`ag modulus of continuity property, rendering the usage of~\eqref{eq: needed uniform bound for f'} only at dyadic times insufficient.

\begin{prop} \label{prop: sufficient condition curve}
Fix $T > 0$. 
Suppose that there exists a constant $\theta \in (0, 1)$ such that
\begin{align} \label{eq: needed uniform bound for f'}
|f_t'(W(t) + \ii 2^{-n})| \lesssim 2^{n (1-\theta)}  \qquad 
\textnormal{for all } n \in \bZpos \textnormal{ and } t \in [0,T] .
\end{align}
Then, the following hold for the Loewner chain $(g_t)_{t \geq 0}$ and the inverse chain $(f_t)_{t \geq 0}$ on $[0,T]$. 
\begin{enumerate}[label=\textnormal{(\alph*):}, ref=(\alph*)]
\item \label{item: curve}
The following limit defines a c\`adl\`ag curve $\cadlag \colon [0,T] \to \overline{\bH}$\textnormal{:}
\begin{align} \label{eq: limit curve cadlag}
\cadlag(t) := \lim_{y \to 0+} f_t(W(t) + \ii y) \qquad \textnormal{for all } t \in [0, T] .
\end{align}
\end{enumerate}

\begin{enumerate}[label=\textnormal{(a'):}, ref=(a')] 
\item  \label{item: curve caglad}  
The following limit defines a c\`agl\`ad curve $\caglad \colon [0,T] \to \overline{\bH}$\textnormal{:}
\begin{align} \label{eq: limit curve caglad}
\caglad(t) := \lim_{y \to 0+} f_t(W(t-) + \ii y) \qquad \textnormal{for all } t \in [0, T] .
\end{align}
The curves $\cadlag$ and $\caglad$ are each others' c\`adl\`ag-c\`agl\`ad counterparts as in~\eqref{eq: caglad_cadlag}. 
\end{enumerate}

\begin{enumerate}[label=\textnormal{(\alph*):}, ref=(\alph*)]
 \setcounter{enumi}{1} 
\item \label{item: generated caglad}
Either curve $\cadlag$ or $\caglad$ generates the Loewner chain on $[0,T]$.

\medskip

\item \label{item: locally connected}
For each $t \in [0,T]$, the set $\bdry (\bH \setminus K_t)$ is locally connected.
\end{enumerate}
\end{prop}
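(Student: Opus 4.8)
The plan is to run the classical Rohde--Schramm argument for the existence of the trace, but carefully rearranging it so that continuity of $W$ is never used. The derivative bound~\eqref{eq: needed uniform bound for f'} is assumed to hold \emph{for all} $t\in[0,T]$ simultaneously (not just dyadic $t$), which is precisely what compensates for the lack of a c\`adl\`ag modulus of continuity; so the first step is to upgrade~\eqref{eq: needed uniform bound for f'} to a genuine uniform-in-time modulus of continuity for the maps $y\mapsto \tilde f_t(\ii y)=f_t(W(t)+\ii y)$ on $(0,1]$. Concretely, I would fix $t$, apply the Koebe distortion estimates of Lemma~\ref{lem: Koebe}\ref{item: Koebe1}--\ref{item: Koebe2} to transfer the bound at dyadic heights $2^{-n}$ to all nearby points of $\bH$ (both in the imaginary and real directions), and then integrate $|\tilde f_t'|$ along a vertical segment to get $|\tilde f_t(\ii y)-\tilde f_t(\ii y')|\lesssim (y\vee y')^{\theta}$ for $0<y'<y\le 1$, with an implicit constant uniform in $t\in[0,T]$. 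This shows the limit in~\eqref{eq: limit curve cadlag} exists for every $t$, so $\cadlag(t)$ is well defined, and likewise~\eqref{eq: limit curve caglad} using $W(t-)$ in place of $W(t)$ for $\caglad(t)$.

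Next I would establish the regularity in $t$. For right-continuity of $\cadlag$ at a time $t$: given $\varepsilon>0$, pick $y$ small so that $|\tilde f_t(\ii y)-\cadlag(t)|<\varepsilon/3$ using the modulus just obtained; then for $s>t$ close to $t$ write $f_s(W(s)+\ii y)$ in terms of the Markov decomposition~\eqref{eq: inverse markov property} of Lemma~\ref{lem: some markov property}, namely $f_s(z)=f_t(\mathring f^{\,t}_{s-t}(z-W(t))+W(t))$, and use~\eqref{eq: shrinking hulls out} from Remark~\ref{rem: shrinking hulls} to control $\mathring f^{\,t}_{s-t}$ uniformly (this is where c\`adl\`ag-ness of $W$ enters: $W(s)\to W(t)$ as $s\to t+$, and $\diam(\mathring K^{\,t}_{s-t})\to0$), combined with the distortion bound near the fixed point $W(t)+\ii y$ from Lemma~\ref{lem: Koebe}\ref{item: Koebe3} to see that $f_s(W(s)+\ii y)$ is close to $\tilde f_t(\ii y)$; finally the modulus of continuity controls the remaining gap $|f_s(W(s)+\ii y)-\cadlag(s)|$. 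For the left limit: the same scheme applied around the driving point $W(t-)$ and using~\eqref{eq: shrinking hulls in} shows $\lim_{s\to t-}\cadlag(s)$ exists and equals $\caglad(t)$; this also verifies that $\cadlag$ and $\caglad$ are each other's c\`adl\`ag--c\`agl\`ad counterparts in the sense of~\eqref{eq: caglad_cadlag}, giving items~\ref{item: curve} and~\ref{item: curve caglad}.

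For item~\ref{item: generated caglad}, that the curve generates the chain, I would use the characterization via the local-growth crosscuts recorded in~\eqref{eq: shrink to points}: the prime end $\growing_t=\invbreve f_t(W(t))$ of $\bH\setminus K_t$ is accessible via the curve $y\mapsto \tilde f_t(\ii y)$, whose endpoint is exactly $\cadlag(t)$, so $\cadlag(t)\in\partial K_t$; an induction (or a direct Carath\'eodory-kernel-convergence argument as in~\cite[Theorem~6.4]{Kemppainen:SLE_book}) on the decomposition of $K_t$ into the pieces $\mathring K^{\,s}$ then shows $\bH\setminus K_t$ is the unbounded component of $\bH\setminus\cadlag[0,t]$ for every $t$, using that the fibers $K_t\setminus\bigcup_{s<t}K_s$ shrink. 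Item~\ref{item: locally connected} then follows from~\ref{item: generated caglad}: since $K_t$ is generated by a c\`adl\`ag curve, local connectedness of $\partial(\bH\setminus K_t)$ is exactly the content of Proposition~\ref{prop: locally connected} (whose proof is independent of this one), so I would simply invoke it --- alternatively, one can get continuity of $f_t$ up to $\bR$ directly from the uniform modulus of continuity in the spatial variable established in the first step, applied with $\varphi=f_t$ on a suitable half-strip and Lemma~\ref{lem: global Holder continuity}.

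The main obstacle is the right-continuity argument of the second paragraph, where one must quantify that $f_s(W(s)+\ii y)\to\tilde f_t(\ii y)$ as $s\to t+$ \emph{uniformly enough} that it beats the $\varepsilon/3$ budget before one is forced to send $y\to0$: the subtlety is that the error from~\eqref{eq: shrinking hulls out} is in terms of $\diam(\mathring K^{\,t}_{s-t})$ which degrades as $y$ shrinks (the base point $W(t)+\ii y$ approaches the hull), so the order of quantifiers --- fix $y$ depending on $\varepsilon$ first, then shrink $s-t$ --- must be handled with care, exactly as in the continuous case but now without being able to appeal to any modulus of continuity of $W$ itself. Everything else is a careful but routine assembly of Koebe distortion, the Markov property, and the local-growth structure already set up in Section~\ref{subsec: LC preli}.
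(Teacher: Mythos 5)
Your plan is essentially the paper's proof: Koebe distortion plus integration of the assumed bound gives a modulus $|f_t(W(t)+\ii y_2)-f_t(W(t)+\ii y_1)|\lesssim y^{\theta}$ uniform in $t\in[0,T]$, which exhibits $\cadlag$ as a uniform limit of the approximants $t\mapsto f_t(W(t)+\ii y)$ and hence (once those are known to be c\`adl\`ag) c\`adl\`ag itself, and items (b) and (c) follow by a hitting-time argument and an invocation of Proposition~\ref{prop: locally connected}. The ``main obstacle'' you flag in the right-continuity step does not actually arise: the paper has already proved Lemma~\ref{lem: f is continuous} (joint continuity of $(t,z)\mapsto f_t(z)$, shown in the appendix via precisely the Markov-decomposition-plus-distortion argument you sketch), so for each fixed $y>0$ the approximant $t\mapsto f_t(W(t)+\ii y)$ is automatically c\`adl\`ag with no quantifier gymnastics needed, and for item~\ref{item: generated caglad} the paper makes your ``the fibers shrink'' step precise through Lemma~\ref{lem: radial limit exists}, which identifies $\bdry_{\mathrm{in}}K_{\sigma_r}\cap\overline{B}_r$ with the single accessible point $\caglad(\sigma_r)\in\bdry B_r$, so that $\caglad(\sigma_r)\to z$ as $r\to 0$.
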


We will need uniqueness of radial limits in the following sense (cf.~\cite[Proposition~2.14]{Pommerenke:Boundary_behaviour_of_conformal_maps} and~\cite[Lemma~6.5]{Kemppainen:SLE_book}).
Even though the idea is standard, for completeness we present a proof for the needed result in the case of possibly discontinuous driving functions, relying on further results outlined in Appendix~\ref{app: inverse Loewner chain}. 

\begin{lem} \label{lem: radial limit exists}
Fix $z_0 \in \bH$ and  
$0 < r_0 < \mathrm{dist}(z_0 , \bR)$, 
and write $B := B(z_0,r_0) \subset \bH$.
Suppose that $\overline{B}$ is hit but not swallowed by $K_t$, that is, 
$K_t \cap \overline{B} \neq \emptyset$, and 
$\overline{B} \setminus K_t \neq \emptyset$, and
$K_s \cap \overline{B} = \emptyset$ for all $s < t$.  
Then, $\bdry_{\mathrm{in}} K_t \cap \overline{B} = \{ \eta(t) \}$
is given by the radial limit\footnote{Recall that $\bdry_{\mathrm{in}} K_t := \bdry K_t \cap \bH$.}
\begin{align} \label{eq: limit map radial caglad}
\eta(t) := \lim_{y \to 0+} f_t(W(t-) + \ii y) \; \in \; \bdry B .
\end{align}
\end{lem}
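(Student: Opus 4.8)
The plan is to exploit the domain-Markov property from Lemma~\ref{lem: some markov property} together with the local-growth/shrinking-hulls estimates from Remark~\ref{rem: shrinking hulls} to pin down the unique boundary point where $\overline{B}$ is hit, and then upgrade the convergence of hulls to convergence of the conformal maps along a radial segment. First I would record the setup: by hypothesis $\overline{B}$ is hit by $K_t$ but not swallowed and untouched before time $t$, so $\bdry_{\mathrm{in}} K_t \cap \overline{B}$ is nonempty. The key observation is that the point of contact must lie in the impression of the \emph{grown} end $\grown_t$ (not the growing end), because $\overline{B}$ is compactly contained in $\bH$ and is hit exactly at time $t$ from the ``already-grown'' side; more precisely, the crosscuts $S_\delta^{\textnormal{in}} \subset \bH \setminus K_{t-\delta}$ separating $K_t \setminus K_{t-\delta}$ from $\infty$ shrink to a point that must be the contact point with $\overline{B}$, since $\overline{B}$ was disjoint from $K_{t-\delta}$ for all $\delta > 0$ and the only new hull material added between $t-\delta$ and $t$ is $K_t \setminus K_{t-\delta}$.

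Next I would identify this contact point with the radial limit in~\eqref{eq: limit map radial caglad}. By~\eqref{eq: shrink to points}, $\{W(t-)\} = \bigcap_{\delta>0} \overline{g_{t-\delta}(K_t \setminus K_{t-\delta})}$, and the null-chain $(S_{\delta_n}^{\textnormal{in}})_n$ represents the prime end $\grown_t = \smash{\invbreve{f}_t}(W(t-))$ in $\bH \setminus K_t$. Since $\overline{B}$ is a Euclidean ball in $\bH$ with $K_{t-\delta} \cap \overline{B} = \emptyset$ for all $\delta$, the contact point of $K_t$ with $\overline{B}$ is an \emph{accessible} point of $\bdry(\bH \setminus K_t)$: one can access it by a path through $\bH \setminus K_t$ running along the part of $\bdry B$ that is still in $\bH \setminus K_t$ (nonempty since $\overline{B} \setminus K_t \neq \emptyset$). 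Accessibility then gives existence of the radial limit at $\grown_t$ by the boundary-behavior facts recalled after~\eqref{eq: radial limit}, i.e., $\lim_{y\to 0+} f_t(W(t-) + \ii y)$ exists and equals that accessible contact point, which lies on $\bdry B$. Uniqueness — that $\bdry_{\mathrm{in}} K_t \cap \overline{B}$ is this single point — follows because any two distinct contact points would both be accessible from $\bH \setminus K_t$ via $\bdry B$, hence correspond to distinct prime ends with distinct preimages on $\bdry\bH$ under $f_t^{-1}=g_t$; but by~\eqref{eq: grown and growing points} both preimages would equal $W(t-)$, a contradiction. (One must also rule out the contact point being in the growing end $\growing_t$ or a swallowed point; swallowed points are interior to $K_t$ so cannot lie on $\bdry B$ with $\overline{B}\setminus K_t \neq \emptyset$, and the growing-end contribution $K_{t+\delta}\setminus K_t$ only appears \emph{after} time $t$, so it cannot be responsible for a contact already present at time $t$.)

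The step I expect to be the main obstacle is the careful bookkeeping that the contact happens precisely through the grown end and that the radial limit $\lim_{y\to0+} f_t(W(t-)+\ii y)$ actually converges (rather than merely clustering) to the contact point — this requires combining the accessibility argument with the fact, from Appendix~\ref{app: inverse Loewner chain} and Remark~\ref{rem: shrinking hulls}, that $f_t$ composed with $g_{t-\delta}$-type maps behaves well, and invoking~\eqref{eq: shrinking hulls in} so that $\mathring{K}^{t-\delta}_\delta$ shrinks to a point whose $f_{t-\delta}$-image is the contact point on $\bdry B$. Once accessibility is established, the existence and value of the radial limit is a direct citation of~\cite[Corollary~2.17 and Exercise~5]{Pommerenke:Boundary_behaviour_of_conformal_maps} as used around~\eqref{eq: radial limit}, so the genuinely delicate part is purely the topological/combinatorial identification of \emph{which} prime end is hit and the simultaneous exclusion of the swallowed and growing cases using the hypotheses $K_s\cap\overline{B}=\emptyset$ for $s<t$ and $\overline{B}\setminus K_t\neq\emptyset$.
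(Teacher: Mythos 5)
Your high-level plan matches the paper's: show the contact set is a single accessible boundary point, show the corresponding prime end is $\grown_t$ so that $g_t$ along any accessing arc tends to $W(t-)$, and use~\cite[Proposition~2.14]{Pommerenke:Boundary_behaviour_of_conformal_maps} to get uniqueness. However, there is a genuine gap: you never actually establish that $\bdry_{\mathrm{in}} K_t \cap \overline{B} \subset \bdry B$. This is not a formality — a point of $\bdry_{\mathrm{in}} K_t$ in $\mathrm{int}(B)$ is not a swallowed point (it lies on $\bdry K_t$, not in $\mathrm{int}(K_t)$), so your parenthetical exclusion of swallowed points does not rule it out. Your replacement argument via the shrinking crosscuts $S_\delta^{\textnormal{in}}$ does not suffice either: $\diam(S_\delta^{\textnormal{in}})\to 0$, but small-diameter crosscuts can bound regions of large diameter, and the impression of $\grown_t$ may well be a nondegenerate continuum, so there is no sense in which the crosscuts ``shrink to a point that must be the contact point.'' The paper's proof closes this gap with an argument you do not have: by Lemma~\ref{lem: f is continuous}, $f_s \to f_t$ locally uniformly as $s\to t-$, so the domains $\bH\setminus K_{t_n}$ converge to $\bH\setminus K_t$ in the Carath\'eodory kernel sense; then any $z\in\bdry_{\mathrm{in}} K_t\cap\overline{B}$ is a limit of points $z_n\in\bdry_{\mathrm{in}} K_{t_n}$, which necessarily lie outside $\overline{B}$ (since $K_{t_n}\cap\overline{B}=\emptyset$), forcing $z\in\bdry B$.

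Once that inclusion is known, the rest of your argument can be repaired, but note a secondary issue in your choice of accessing path. You propose to access the contact point by running along $\bdry B\setminus K_t$, but $\bdry B$ may meet $K_t$ at several points, so a path confined to the circle is not guaranteed to be a Jordan arc in $\bH\setminus K_t$ terminating at your target. The paper instead accesses via a radial segment from the center $z_0$: from $\bdry_{\mathrm{in}}K_t\cap\overline{B}\subset\bdry B$ and connectedness of $\mathrm{int}(B)$ one gets $\mathrm{int}(B)\subset\bH\setminus K_t$, so the radial segment from $z_0$ to any contact point on $\bdry B$ is a valid accessing arc. With that, the citations to~\cite[Corollary~2.17]{Pommerenke:Boundary_behaviour_of_conformal_maps}, to~\eqref{eq: grown and growing points}, and to~\cite[Proposition~2.14]{Pommerenke:Boundary_behaviour_of_conformal_maps} proceed as you sketched. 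Also, your incidental remark that $\mathring{K}^{t-\delta}_\delta$ has an $f_{t-\delta}$-image that is the contact point is off — that image is $K_t\setminus K_{t-\delta}$, which does not shrink — but you do not rely on it.
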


Note that the limit~\eqref{eq: limit map radial caglad} is the unique point 
in
$\smash{\underset{s<t}{\bigcap} \overline{K_t \setminus K_s}}$ 
accessible from $\bH \setminus K_t$,
a grown point at time $t$ for the Loewner chain. 

\begin{proof}
Note that $\bdry_{\mathrm{in}} K_t \cap \overline{B} \neq \emptyset$ by the assumptions of the lemma.
By Lemma~\ref{lem: f is continuous} (from Appendix~\ref{app: inverse Loewner chain}), 
if $s \to t-$, 
then $f_s \to f_t$ uniformly on all compact subsets of $\bH$. Hence, 
the Carath\'eodory kernel convergence theorem 
(see, e.g.,~\cite[Theorem~1.8]{Pommerenke:Boundary_behaviour_of_conformal_maps})
shows that,
for each $z \in \bdry_{\mathrm{in}} K_t \cap \overline{B}$, we can find a sequence $z_n \in \bdry_{\mathrm{in}} K_{t_n}$ such that $z_n \to z$ as $n \to \infty$, where $t_n \to t-$. Since $z_n \notin \overline{B}$ for all $n$, we see that $z \in \bdry B$, which shows that $\bdry_{\mathrm{in}} K_t \cap \overline{B} = \bdry_{\mathrm{in}} K_t \cap \bdry B$. 
Now, each point in this set is a grown point accessible from $\bH \setminus K_t$ by an arc from $z_0$.
We conclude from~\eqref{eq: shrink to points} that the radial limit of $g_t$ exists: 
\begin{align} \label{eq: radial limit proof of lemma}
\lim_{s \to 1-} g_t(J(s)) = W(t-) .
\end{align}
By~\cite[Proposition~2.14]{Pommerenke:Boundary_behaviour_of_conformal_maps}, if $J_1$ and $J_2$ are two Jordan arcs accessing two distinct boundary points in $\bH \setminus K_t$,
then the curves $g_t (J_1)$ and $g_t (J_2)$ also have distinct endpoints in $\bdry \bH$, which shows by~\eqref{eq: radial limit proof of lemma} that the set $\bdry_{\mathrm{in}} K_t \cap \overline{B} = \{ w \}$ must be a singleton.
Finally, we see from~(\ref{eq: radial limit},~\ref{eq: radial limit proof of lemma}) that also the radial limit of $f_t$ at $W(t-)$ exists and equals $w = \eta(t)$.
This proves the lemma. 
\end{proof}

\begin{rem}
Note that Lemma~\ref{lem: radial limit exists} does not imply that $\eta$ is a c\`agl\`ad curve. 
This needs a separate argument addressing the behavior of~\eqref{eq: limit map radial caglad} when the time $t$ is varied. 
For instance, an estimate of type~\eqref{eq: needed uniform bound for f'} assumed in Proposition~\ref{prop: sufficient condition curve} 
gives \emph{uniform} convergence that guarantees that $\eta$ is c\`agl\`ad.
\end{rem}

\begin{proof}[Proof of Proposition~\ref{prop: sufficient condition curve}.]
By Lemma~\ref{lem: f is continuous} (from Appendix~\ref{app: inverse Loewner chain}), the map $t \mapsto f_t(W(t) + \ii y)$
is c\`adl\`ag for each fixed $y > 0$. Hence, for item~\ref{item: curve} it suffices to show that 
the assumed bound~\eqref{eq: needed uniform bound for f'} implies that 
the limit~\eqref{eq: limit curve cadlag}  
exists and is approached uniformly on $[0, T] \ni t$.

By Koebe distortion (Lemma~\ref{lem: Koebe}) and the assumed bound~\eqref{eq: needed uniform bound for f'}, we have
\begin{align*}
\sup_{t \in [0, T]}
|f_t'(W(t) + \ii y)| \lesssim y^{\theta - 1} \qquad \textnormal{for all } y \in (0, 1) ,
\end{align*}
To get the limit~\eqref{eq: limit curve cadlag}, we fix $0 < y_1 < y_2 \leq y \leq 1$ and integrate this bound:
\begin{align*}
|f_t(W(t) + \ii y_2) - f_t(W(t) + \ii y_1)| 
\;\leq \;\; & \int_{y_1}^{y_2} |f_t'(W(t) + \ii u)| \, \ud u
\; \lesssim \; \int_{0}^{y} u^{\theta - 1} \, \ud u 
\; = \; \frac{y^{\theta}}{\theta}
\quad \overset{y \to 0+}{\longrightarrow} \quad
0 ,
\end{align*}
which implies that the limit~\eqref{eq: limit curve cadlag}  
exists and is approached uniformly on $[0, T] \ni t$, thus proving~\ref{item: curve}.

Item~\ref{item: curve caglad} follows from item~\ref{item: curve}  by noticing that $\caglad$ is the c\`agl\`ad counterpart of $\cadlag$ as in~\eqref{eq: caglad_cadlag}:
\begin{align*}
\lim_{s \to t-} \cadlag(s) 
= \lim_{s \to t-} \lim_{y \to 0+} f_s(W(s) + \ii y)
= \lim_{y \to 0+} f_t(W(t-) + \ii y) 
=: \caglad(t) \qquad \textnormal{for all } t \in [0, T] ,
\end{align*}
by Lemma~\ref{lem: f is continuous} and the fact that 
$y \to 0+$ is approached uniformly on $[0, T] \ni t$ by the proof of item~\ref{item: curve}.

To prove item~\ref{item: generated caglad}, 
we will show that the set $\bH \setminus K_t$ is 
the unbounded component of $\smash{\bH \setminus \caglad[0,t]}$.
Since $\smash{\caglad[0,t] \subset K_t \cup \bR}$, by topological considerations it suffices to verify that $\smash{\bdry_{\mathrm{in}} K_t := \bdry K_t \cap \bH \subset \caglad[0,t]}$. 
To this end, we consider a point $z \in \bdry_{\mathrm{in}} K_t$ and show that $\smash{z \in \caglad[0,t]}$. 
We let $0 < r < \mathrm{dist}(z , \bR)$, write $B_r := B(z,r) \subset \bH$ as in Lemma~\ref{lem: radial limit exists}, and set
$\sigma_r := \inf \{ s \geq 0 \; | \; K_s \cap \overline{B}_r \neq \emptyset \} \leq t$.
Since $z \in \bdry_{\mathrm{in}} K_t \subset \bH$, 
all assumptions of Lemma~\ref{lem: radial limit exists} are satisfied at time $\sigma_r$: indeed, by compactness and local growth, we have 
$K_{\sigma_r} \cap \overline{B}_r \neq \emptyset$, and we clearly have 
$\overline{B}_r \setminus K_{\sigma_r} \supset \overline{B}_r \setminus K_t \neq \emptyset$, and
$K_s \cap \overline{B}_r = \emptyset$ for all $s < {\sigma_r}$.
Hence, Lemma~\ref{lem: radial limit exists} 
and the left-continuity of $\caglad$ from item~\ref{item: curve} together imply 
that $z = \smash{\underset{r \to 0}{\lim} \; \caglad(\sigma_r) \, \in \, \caglad[0,t]}$.
Thus, we conclude that $\smash{\bdry_{\mathrm{in}} K_t \subset \caglad[0,t]}$, which implies item~\ref{item: generated caglad}.

Item~\ref{item: locally connected} follows from item~\ref{item: generated caglad} combined with Proposition~\ref{prop: locally connected}, which we prove in the next section. 
\end{proof}

\subsection{Topological properties of c\`adl\`ag Loewner hulls}
\label{subsec: connectedness}

In this section, we analyze the topology of growing Loewner hulls.
In particular, the local connectedness of c\`adl\`ag hulls (Proposition~\ref{prop: locally connected})
will be needed for the proof of the existence of the Loewner trace for drivers which include macroscopic jumps in Section~\ref{sec: main results} (Proposition~\ref{prop: generated by a curve still containing epsilon}). 
This fact, albeit perhaps intuitive, is quite subtle --- as boundary behavior of planar fractals in general\footnote{Local connectedness of fractals is a very subtle topic --- e.g., it is believed that the Mandelbrot set is locally connected, 
but despite of several breakthrough results, there is no proof as of today (this problem is known as the MLC conjecture).}.
The key is that the c\`adl\`ag hulls are generated by a function that possesses both left and right limits.
To us, Proposition~\ref{prop: locally connected} appears to be new, and therefore we shall include a detailed discussion here. It also implies path-connectedness of the hulls (Corollary~\ref{cor: locally path-connected}).

As a warning example, consider the comb space 
\textnormal{(}see also Figure~\ref{fig: comb}\textnormal{)} 
\begin{align} \label{eq: comb space}
K := \{ \ii y \; | \; 0 \leq y \leq 1\} \cup \{ 2^{-n} + \ii y \; | \;  0 \leq y \leq 1, \, n \in \bZnn \} \; \subset \; \overline{\bH} .
\end{align}
The union $K \cup \bR$ of this comb with the real line is a path-connected set which is not locally connected.
It can be constructed in various ways, e.g., by using a continuous driving function, or using a c\`adl\`ag driving function. It coincides with  
the graph of the function $\eta \colon [0,3) \to \overline{\bH}$ 
\textnormal{(}a curve having no left limit at time $t=3$, which is however c\`adl\`ag elsewhere\textnormal{)}, 
\begin{align*}
\eta(t) :=
\begin{cases}
\ii t , & 0 \leq t < 1 , \\ 
1 + \ii (t-1) , & 1 \leq t < 2 , \\ 
2^{-n} + \ii \big( 2 + 2^n (t-3) \big) , 
& 3 - 2^{1-n} \leq t < 3 - 2^{-n} , \, n \in \bZpos . 
\end{cases}
\end{align*}

\begin{wrapfigure}{R}{5cm}
\centering
\includegraphics[scale=0.4]{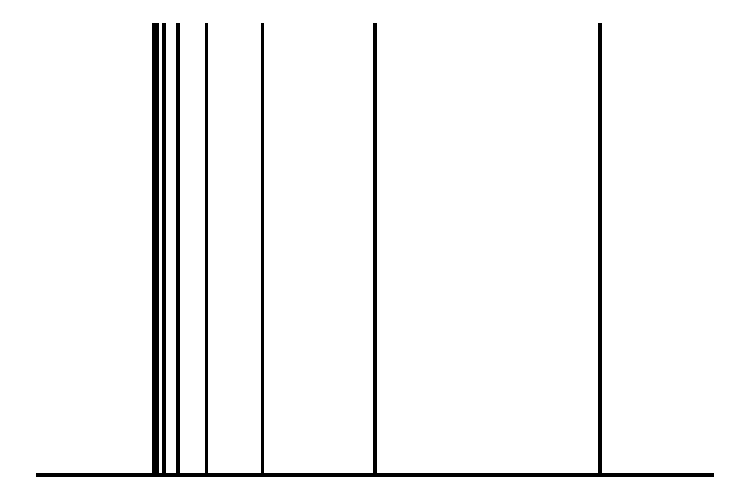}
\caption{\label{fig: comb}
Illustration of a path connected but not locally connected comb.
}
\end{wrapfigure}

The comb space~\eqref{eq: comb space} shows that, first of all,
for Loewner chains with c\`adl\`ag (or even continuous) driving functions, local connectedness may fail, 
and second of all, 
not all Loewner chains with c\`adl\`ag driving functions are generated by c\`adl\`ag curves (by Proposition~\ref{prop: locally connected}). 
Indeed, it is not hard to check that that the hulls

\begin{minipage}{5cm}
\begin{align*} 
K_t :=
\begin{cases}
\overline{\eta[0,t]} , & 0 \leq t < 3 , \\ 
K , & t = 3 , 
\end{cases}
\end{align*}
\end{minipage}

are locally growing on $[0,3] \ni t$, which shows that their driving function $W$ is actually c\`adl\`ag --- in particular it
has a unique left limit as $t \to 3-$, 
the image of the point $\ii$ under the conformal map $g_3 \colon \bH \setminus K \to \bH$.
\textnormal{(}This is in contrast to the fact that the curve $\eta$ itself has no left limit as $t \to 3-$.\textnormal{)} 
However, the boundary $\bdry (\bH \setminus K_3)$ of the complement of the hull $K_3 = K$ is not locally connected.
The impression of the prime end of $\bH \setminus K_3$ containing $\ii$ is the right side of the segment $\{ \ii y \; | \; 0 \leq y \leq 1\}$. 
This prime end may be both grown and growing at time $t=3$, while the only accessible point from $\bH \setminus K_3$ in its impression is $\ii$.

\medskip

Marshall \&~Rohde~\cite{Marshall-Rohde:The_loewner_differential_equation_and_slit_mappings} constructed a logarithmic spiral 
spinning around the unit circle as an example of a Loewner chain which has a (H\"older-$1/2$) continuous driving function, but which is not generated by a continuous (or even c\`adl\`ag) curve and
for which local connectedness fails.
See also~\cite[Figure~5.9]{Beliaev:Conformal_maps_and_geometry} for another example, and \cite{KNK:Exact_solutions_for_Loewner_evolutions, Lind:Sharp_condition_for_Loewner_equation_to_generate_slits}  for related results.

\subsubsection{Local connectedness}

By the Hahn-Mazurkiewicz theorem (cf.~\cite[Theorem~2, page~256]{Kuratowski:Topology}), 
if a Loewner chain is generated by a continuous  curve $\gamma$, then the boundary of the corresponding domain $\bH \setminus K_t$ is locally connected for each time $t$. 
Note also that local connectedness of the image of the generating curve is a priori a different property than local connectedness of the boundaries of the associated domains $\bH \setminus K_t$: 
\cite[Figure~5.10]{Beliaev:Conformal_maps_and_geometry} gives an example of a right-continuous curve generating a Loewner chain, 
whose driving function is continuous, and
for which the curve itself is not locally connected but the boundaries of the associated domains $\bH \setminus K_t$ are still locally connected.

Motivated by the Hahn-Mazurkiewicz theorem, 
one could ask whether the property that the Loewner chain is generated by a \emph{c\`adl\`ag} (or c\`agl\`ad) curve would also imply local connectedness. We will answer this questions affirmatively in this section. 
This rather natural result appears to us to be new. 

\LocConnProp* 

This result was proven for the special case of a Loewner chain driven by a symmetric stable pure jump process in~\cite[Proposition~7.2]{Chen-Rohde:SLE_driven_by_symmetric_stable_processes}. 
In that case, one could use the property that the hulls have empty interiors~\cite[Theorem~1.3(i)]{Guan-Winkel:SLE_and_aSLE_driven_by_Levy_processes} in combination with a result from complex analysis due to Warschawski from the 1950s (concerning the modulus of continuity of conformal maps of the disc).
The empty interior property ensures that one can \emph{bootstrap} the local connectedness at a fixed time $T$ (obtained from the backward Loewner chain) to local connectedness at all times $t \in [0,T]$, since the hulls have \emph{empty interior}.
In the general case, however, the hulls can be much more complicated and this proof does not apply. We provide a direct proof below, by considering crossings of annuli 
\begin{align*}
\overline{\mathbb{A}}(z_0,r_0,R_0) = \{ w \in \bC \; | \; r_0 \leq |w - z_0| \leq R_0 \} . 
\end{align*}
The key to guarantee local connectedness is that the hulls are generated by a \emph{c\`adl\`ag curve}, that in particular possesses both left and right limits, which implies that annulus crossings are controlled. 

\begin{proof}[Proof of Proposition~\ref{prop: locally connected}]
We prove the claim by contradiction. 
Suppose that for some $t \geq 0$, the set $A_t := \bdry (\bH \setminus K_t)$ is not locally connected. 
Then, there exists a point $z \in A_t$, radius $r > 0$, and points $z_n \to z$ as $n \to \infty$ such that all of $z_n$ and $z$ lie in different connected components of $U(z,r) := A_t \cap B(z,r)$. 
We may furthermore assume that all of the points $z_n$ are inside $B(z,\frac{r}{10})$. 
Since $A_t$ is a connected subset of $\overline{\cadlag[0, t)} \cup \bR = \caglad[0, t] \cup \bR$, 
the points $z_n \in A_t \cap B(z,\frac{r}{10})$ must all be connected together in $A_t$ outside of $U(z,r)$. 
In particular, the set $A_t$ makes infinitely many distinct crossings across the annulus $\smash{\overline{\mathbb{A}}(z,\tfrac{r}{2},r)}$,
that is, the set $A_t \cap \smash{\overline{\mathbb{A}}(z,\tfrac{r}{2},r)}$ has infinitely many distinct connected components (note that we do not yet know that $A_t$ is path-connected).  
We prove that this is impossible by the right-continuity of $\cadlag$ and the left-continuity of $\caglad$.

To this end, fix a point $z_0 \in \overline{\bH}$ and two radii $0 < r_0 < R_0$. Denote
$\smash{\overline{\mathbb{A}}_0 = \overline{\mathbb{A}}(z_0,r_0,R_0)}$.
Consider the time 
\begin{align*}
T_0 = T(z_0,r_0,R_0) 
:= \; & \inf \{ s \geq 0 \; | \; \textnormal{$A_t$ makes infinitely many distinct crossings across $\smash{\overline{\mathbb{A}}_0}$} \} \\
= \; & \inf \{ s \geq 0 \; | \; \textnormal{$A_t \cap \smash{\overline{\mathbb{A}}_0}$ has infinitely many connected components} \\
& \qquad\qquad\quad
\textnormal{touching both $\bdry B(z_0,r_0)$ and $\bdry B(z_0,R_0)$} \} \\
= \; & \inf \{ s \geq 0 \; | \; \textnormal{$A_t \cap \smash{\overline{\mathbb{A}}_0}$ has infinitely many connected components $(S_j)_{j \in J}$ such that} \\
& \qquad\qquad\quad
\textnormal{for all $j \in J$, we have $S_j \cap \bdry B(z_0,r_0) \neq \emptyset$ and $S_j \cap \bdry B(z_0,R_0) \neq \emptyset$} \} .
\end{align*}
Suppose that $T_0 < \infty$. 
Then, the following mutually contradictory properties~\ref{item: contradictory 1}~\&~\ref{item: contradictory 2} hold.
\begin{enumerate}[label=\textnormal{(\alph*):}, ref=(\alph*)]
\item \label{item: contradictory 1}
First, the set $A_{T_0}$ cannot make infinitely many distinct crossings across $\smash{\overline{\mathbb{A}}_0}$.
Indeed, if this would be the case, then for any strictly smaller time $s < T_0$, the set 
\begin{align*}
A_{T_0} \setminus A_s 
\; = \; \bdry (\bH \setminus K_{T_0}) \setminus (K_s \cup \bR) 
\; \subset \; \caglad[0, T_0] \setminus K_s 
\; \subset \; \caglad(s, T_0]
\end{align*}
would make infinitely many crossings across the annulus $\smash{\overline{\mathbb{A}}_0}$.
This violates the left-continuity of $\caglad$, since there exists $\delta = \delta(r_0, R_0) > 0$ such that, taking $s = T_0 - \delta$, we arrive at a contradiction:
\begin{align*}
\sup_{ u,v \in ( T_0 - \delta , T_0 ] } | \caglad(u) - \caglad(v) | \; < \; \tfrac{1}{2}(R_0 - r_0) .
\end{align*}

\item \label{item: contradictory 2}
Second, consider a sequence $t_n \to T_0+$ as $n \to \infty$ such that each set $A_{t_n}$ makes infinitely many distinct crossings across $\smash{\overline{\mathbb{A}}_0}$.
If the set $A_{T_0}$ only makes finitely many distinct crossings across $\smash{\overline{\mathbb{A}}_0}$, then the set 
\begin{align*}
A_{t_n} \setminus A_{T_0} 
\; = \; \bdry (\bH \setminus K_{t_n}) \setminus (K_{T_0} \cup \bR) 
\; \subset \; \overline{\cadlag[0, t_n)} \setminus K_{T_0} 
\; \subset \; \overline{\cadlag(T_0, t_n]}
\end{align*}
makes infinitely many crossings across the annulus $\smash{\overline{\mathbb{A}}_0}$.
However, this violates the right-continuity of $\cadlag$, since there exists $\delta = \delta(r_0, R_0) > 0$ such that, for $t_n \leq T_0 + \delta$, we arrive at a contradiction:
\begin{align*}
\sup_{ u,v \in [ T_0, T_0  + \delta ) } | \cadlag(u) - \cadlag(v) | \; < \; \tfrac{1}{2}(R_0 - r_0) .
\end{align*}
Hence, the set $A_{T_0}$ must make infinitely many distinct crossings across $\smash{\overline{\mathbb{A}}_0}$.
\end{enumerate}
In summary, as $z_0 \in \overline{\bH}$ and $0 < r_0 < R_0$ were arbitrary, 
we have $T(z_0,r_0,R_0) = \infty$, which shows in particular that 
$A_t$ makes only finitely many distinct crossings across $\smash{\overline{\mathbb{A}}(z,\tfrac{r}{2},r)}$ --- which contradicts our earlier observation (first paragraph). 
Hence, the set $A_t$ is locally connected for all $t \geq 0$, as claimed.  
\end{proof}

\subsubsection{Path-connectedness}

A set $A \subset \bC$ is said to be (uniformly) \emph{locally path-connected} 
if for every $\varepsilon > 0$ there exists $\delta > 0$ such that, for any pair of points $z,w \in A$ such that $|z-w| < \delta$, there exists a continuous path $\gamma$ connecting $z$ and $w$ in $A \cap B(z,\varepsilon) \cap B(w,\varepsilon)$.

\begin{cor} \label{cor: locally path-connected}
Let $(g_t)_{t \geq 0}$ be a Loewner chain with associated hulls $(K_t)_{t \geq 0}$ generated by a c\`adl\`ag curve $\cadlag$ \textnormal{(}viz.~a c\`agl\`ad curve $\caglad$\textnormal{)}. 
Then, for each $t \geq 0$, the set $K_t \cup \bR$ is uniformly locally path-connected.
In particular, $K_t \cup \bR$ is path-connected.
\end{cor}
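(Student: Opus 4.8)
The plan is to reduce the statement to the classical fact that every \emph{Peano continuum} --- a compact, connected, locally connected metric space --- is uniformly locally path-connected, in fact locally arcwise connected; see, e.g.,~\cite{Kuratowski:Topology} or~\cite{Sagan:Space_filling_curves}. Proposition~\ref{prop: locally connected} provides the key input, namely that $A_t := \bdry(\bH \setminus K_t)$ is locally connected (the proof of that proposition already uses, as is standard, that $A_t$ is connected). The real work is then to transfer connectedness and local connectedness from the \emph{boundary} $A_t$ to the full set $K_t \cup \bR = \overline{\bH} \setminus (\bH \setminus K_t)$, whose relative interior in $\overline{\bH}$ can contain ``filled bubbles'' about which Proposition~\ref{prop: locally connected} says nothing directly.

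The mechanism I would use is a \textbf{straight-line retraction onto $A_t$}. A short computation shows that the relative boundary of $K_t \cup \bR$ in $\overline{\bH}$ equals $A_t$. Given a point $z$ in the relative interior of $K_t \cup \bR$ and any $z' \in \overline{\bH}$, the segment $[z,z']$ lies in $\overline{\bH}$ by convexity, and following it from $z$ until it first meets $A_t$ produces a point $z'' \in A_t$ with $[z,z''] \subseteq K_t \cup \bR$. Choosing $z'$ on $\bR$ shows that every point of $K_t \cup \bR$ is joined, inside $K_t \cup \bR$, to the connected set $A_t$, so $K_t \cup \bR$ is connected. For local connectedness at a point $z \in K_t \cup \bR$: if $z$ lies in the relative interior, a small ball works; if $z \in A_t$, a nearby point $w$ is joined by such a segment to a point $w'' \in A_t$ inside $\overline{B(z,|z-w|)}$, and $w''$ is in turn joined to $z$ inside a small connected subset of $A_t$ furnished by the local connectedness of $A_t$. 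Using the uniform local connectedness of $A_t$, this is uniform on bounded subsets.

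It remains to pass to a compact model and patch. Fix $R$ with $K_t \subset B(0,R)$. Then $(K_t \cup \bR) \cap \overline{B(0,2R)} = K_t \cup [-2R,2R]$ is compact and connected, and it is locally connected: near the only two new boundary points $\pm 2R$ it coincides with a line segment, and at every other point local connectedness is inherited from that of $K_t \cup \bR$. Hence it is a Peano continuum and so uniformly locally path-connected by the classical theorem. Since $(K_t \cup \bR) \setminus B(0,2R)$ consists of two half-lines, which are trivially uniformly locally path-connected and are joined to the central part, a routine patching of the uniform moduli on the two overlapping regions yields that $K_t \cup \bR$ is uniformly locally path-connected; being also connected, it is path-connected. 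The hull generated by $\caglad$ is the same set $K_t \cup \bR$, so nothing further is needed.

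I expect the \textbf{main obstacle} to be precisely this boundary-to-bulk step: ruling out that the filled bubbles destroy local connectedness, something Proposition~\ref{prop: locally connected} alone does not address. The straight-line retraction dispatches it cleanly exactly because $\overline{\bH}$ is convex and the relative boundary of $K_t \cup \bR$ equals $A_t$; the remaining points --- bookkeeping of relative versus ambient interiors and boundaries, the role of $\bR$ (and of $\infty$, should one prefer to compactify in $\hat{\bC}$), and the compactness patching needed for the uniform statement --- are routine.
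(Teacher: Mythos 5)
Your proposal is correct, and its macroscopic outline coincides with the paper's: obtain local connectedness from Proposition~\ref{prop: locally connected}, truncate to a compact continuum, apply Hahn--Mazurkiewicz (\cite[Theorem~6.7.2]{Sagan:Space_filling_curves}), and let $R \to \infty$. The genuine value added is that you make explicit the \emph{boundary-to-bulk} step. The paper's proof asserts that Proposition~\ref{prop: locally connected} directly yields local connectedness of $K_t \cup [-R,R]$, but that proposition concerns $\bdry(\bH \setminus K_t)$, which is a strict subset of $K_t \cup \bR$ whenever the hull has non-empty interior (as happens after swallowing, and in particular throughout the anticipated space-filling regime $\kappa \geq 8$ covered by the hypotheses). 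Moreover, the argument proving Proposition~\ref{prop: locally connected} hinges on the inclusion $\bdry(\bH \setminus K_t) \subset \caglad[0,t] \cup \bR$, which fails for $K_t$ itself, so that proof cannot simply be re-run on the filled set. Your straight-line retraction --- using convexity of $\overline{\bH}$ and the identification of the relative boundary of $K_t \cup \bR$ in $\overline{\bH}$ with $\bdry(\bH \setminus K_t)$, so that any point of $K_t \cup \bR$ is joined by a segment inside $K_t \cup \bR$ to a nearby first-exit point on $\bdry(\bH \setminus K_t)$ --- correctly transfers both connectedness and (pointwise, hence uniformly on compacta) local connectedness from the boundary to $K_t \cup \bR$. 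This is a clean and elementary way to close the step that the published proof treats as immediate; the remaining compactification and patching bookkeeping is routine and matches the paper.
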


\begin{proof}
Note that any connected and locally path-connected set is path-connected, so it suffices to show the former properties. 
The set $K_t \cup [-R,R]$ is compact and connected for any $R > 0$ large enough such that $K_t \subset B(0,R)$. 
Proposition~\ref{prop: locally connected} implies that this set is also locally connected.
Therefore,~\cite[Theorem~6.7.2]{Sagan:Space_filling_curves} implies that $K_t \cup [-R,R]$ is uniformly locally path-connected\footnote{According to~\cite{Sagan:Space_filling_curves}, this was proven by Hahn;
see also the Mazurkiewicz-Moore-Menger theorem~\cite[page~254]{Kuratowski:Topology}.}. 
Taking $R \to \infty$ clearly does not chance this property, so we see that $K_t \cup \bR$ is uniformly locally path-connected as well.
\end{proof}

\bigskip{}
\section{\label{sec: backward bounds}Estimates for mirror backward Loewner flow with martingale L\'evy drivers}
This section is concerned with estimates needed to prove Theorem~\ref{thm: LLE Holder if kappa not 4}: local H\"older continuity of the inverse Loewner maps driven by a L\'evy process with diffusion parameter $\kappa \in [0,\infty) \setminus\{4\}$.
The main results of this section are Propositions~\ref{prop: BLE micro jumps Holder}~\&~\ref{prop: BLE micro jumps Holder with drift kappa = 0}, which give the local H\"older continuity of solutions to backward Loewner flows when the jumps of the L\'evy process are small enough. 
To this end, it suffices to derive derivative estimates to fulfill the property~\eqref{eq: needed bound for derivative of confmap} in item~\ref{item: global derivative estimate} of Lemma~\ref{lem: global Holder continuity}.
Our estimates are quite rough, though, and do not seem to give optimal H\"older exponents (cf.~Remark~\ref{rem: inverse map param holder optimal}).

\subsection{Mirror backward Loewner flow}

One of our main tools to investigate the geometry of the Loewner hulls is 
the (mirror\footnote{The phrasing ``mirror'' just refers to the choice of the sign of the driving process, which is different from the ``usual'' backward Loewner equation, and much more convenient for computations.}) 
backward Loewner equation driven by a c\`adl\`ag function $W$: 
\begin{align} \label{eq: mBLE}
\tag{mBLE}
\partial_t^{+} h_t(z) = \frac{-2}{h_t(z) + W(t)} \qquad \textnormal{with initial condition} \qquad h_0(z) = z  ,
\end{align}
which, similarly to~\eqref{eq: LE}, has a unique absolutely continuous solution $t \mapsto h_t(z)$ for each $z \in \bH$.
In addition, this solution exists for all times $t \geq 0$, since the imaginary part of the solution to~\eqref{eq: mBLE} is increasing in $t$ (see~\eqref{eq: imh}). 
One can also show (see Lemma~\ref{lem: FTC for f} in Appendix~\ref{app: inverse Loewner chain} for an analogous argument) that, for each $z \in \bH$, the function $t \mapsto h_t(z)$ is continuous. 
Hence, for each $z \in \bH$, since the right-hand side of~\eqref{eq: mBLE} as a function of $t$ is Lebesgue-integrable on any compact sub-interval of $[0,\infty)$, the map $t \mapsto h_t(z)$ is absolutely continuous, and
\begin{align*}
h_t(z) \; = \; z + \int_0^t \partial_s^{+} h_s(z) \, \ud s 
\; = \; z - \int_0^t \frac{2 \, \ud s }{h_{s}(z) + W(s-)} , \qquad t \geq 0 .
\end{align*}
The differential equation~\eqref{eq: mBLE} also implies that
\begin{align*}
h_t'(z) = \exp \bigg( \int_0^t \frac{2 \, \ud s}{(h_{s}(z) + W(s-))^2} \bigg) , \qquad t \geq 0 \; \textnormal{ and } \; z \in \bH .
\end{align*}
The imaginary part of~\eqref{eq: mBLE} gives
\begin{align} \label{eq: imh}
\im(h_t(z))
\; = \; \; & \im(z) \, \exp \bigg( \int_0^t \frac{2 \, \ud s}{|h_{s}(z) + W(s-)|^2} \bigg) ,
\end{align}
which implies in particular that
$\im(h_t(z)) \geq \im(z)$ for all $t \geq 0$ and $z \in \bH$.
Hence, we see that 
\begin{align} 
\nonumber
|h_t'(z)| 
\; \leq \; & \exp \bigg( \int_0^t \frac{2 \, \ud s}{|h_{s}(z) + W(s-)|^2} \bigg) 
\; \leq \; \exp \bigg( \int_0^t \frac{2 \, \ud s }{ | \im(h_{s}(z)) |^2} \bigg) \\
\label{eq: derh bound}
\; \leq \; & \exp \bigg( \frac{2 \, t }{| \im(z) |^2} \bigg)  , \qquad t \geq 0 \; \textnormal{ and } \; z \in \bH .
\end{align}

The following domain Markov property will be needed in Section~\ref{sec: main results} (proof of Proposition~\ref{prop: BLE micro jumps Holder still containing epsilon}).

\begin{lem} \label{lem: some markov property for h}
Let $(h_t)_{t \geq 0}$ be the solution to~\eqref{eq: mBLE} with c\`adl\`ag driving function $W$. 
Fix $\sigma \geq 0$. Then, 
$\smash{\mathring{h}^\sigma_{t}}(z) 
:= ( h_{\sigma + t} \circ h_{\sigma}^{-1} )(z - W(\sigma)) + W(\sigma)$ solve~\eqref{eq: mBLE} with driving function $\smash{\mathring{W}^\sigma}(t) := W(\sigma+t) - W(\sigma)$.
\end{lem}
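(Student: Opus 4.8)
The plan is to verify directly that the family $(\mathring{h}^\sigma_t)_{t \geq 0}$ defined in the statement satisfies~\eqref{eq: mBLE} with driving function $\mathring{W}^\sigma$, in exact analogy with the proof of Lemma~\ref{lem: some markov property} for the forward flow. First I would record the preliminary facts that make the composition meaningful: the solution $t \mapsto h_t(z)$ to~\eqref{eq: mBLE} exists for all $t \geq 0$ and every $z \in \bH$; each $h_\sigma$ is a conformal embedding of $\bH$ into itself, in particular a bijection onto its image $h_\sigma(\bH) \subset \bH$ (injectivity follows, as usual, by a Gr\"onwall estimate applied to the difference $h_s(z_1) - h_s(z_2)$, whose logarithmic derivative is controlled since $\im(h_s(z_i)) > 0$ keeps $h_s(z_i) + W(s)$ away from $0$); and $\im(h_\sigma(z)) \geq \im(z)$ by~\eqref{eq: imh}. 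Hence $h_\sigma^{-1}$ is a well-defined conformal map on $h_\sigma(\bH)$, and $\mathring{h}^\sigma_t$ is defined and $\bH$-valued on the shifted domain $h_\sigma(\bH) + W(\sigma) \subset \bH$.

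Next, fix $z$ in this domain and set $w := h_\sigma^{-1}(z - W(\sigma)) \in \bH$, so that $\mathring{h}^\sigma_t(z) = h_{\sigma+t}(w) + W(\sigma)$ for all $t \geq 0$. The initial condition is immediate: $\mathring{h}^\sigma_0(z) = h_\sigma(w) + W(\sigma) = z$. For the differential equation, observe that $t \mapsto h_{\sigma+t}(w)$ is absolutely continuous, being the time-shift by the constant $\sigma$ of the absolutely continuous map $s \mapsto h_s(w)$; hence so is $t \mapsto \mathring{h}^\sigma_t(z)$, and by the chain rule for right-derivatives (the reparametrization $t \mapsto \sigma+t$ is affine and increasing, and $W(\sigma)$ is a constant)
\begin{align*}
\partial_t^{+} \mathring{h}^\sigma_t(z)
\; = \; \partial_t^{+} h_{\sigma+t}(w)
\; = \; \big( \partial_s^{+} h_s(w) \big) \big|_{s = \sigma+t}
\; = \; \frac{-2}{h_{\sigma+t}(w) + W(\sigma+t)} .
\end{align*}
Substituting $h_{\sigma+t}(w) = \mathring{h}^\sigma_t(z) - W(\sigma)$ and $W(\sigma+t) = \mathring{W}^\sigma(t) + W(\sigma)$ collapses the denominator to $\mathring{h}^\sigma_t(z) + \mathring{W}^\sigma(t)$, so $(\mathring{h}^\sigma_t)_{t \geq 0}$ indeed solves~\eqref{eq: mBLE} driven by $\mathring{W}^\sigma$. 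Invoking uniqueness of the absolutely continuous solution (cf.~Remark~\ref{rem: existence and uniqueness ODE}) then pins down $\mathring{h}^\sigma_t$ as \emph{the} solution.

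I do not expect any genuine obstacle: the lemma is a routine conjugation identity, and, unlike the forward case of Lemma~\ref{lem: some markov property}, there is no hull bookkeeping, since each $h_t$ is simply a conformal self-embedding of $\bH$ with no blow-up. The only points needing a modicum of care are the legitimacy of composing with $h_\sigma^{-1}$ (which reduces to injectivity of $h_\sigma$ and all-time existence of the backward flow, both standard) and the commutation of $\partial_t^{+}$ with the constant time-shift and spatial translation by $W(\sigma)$, which is immediate.
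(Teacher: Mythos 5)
Your proof is correct and takes the same route as the paper, which simply asserts that the lemma follows from a straightforward computation analogous to the forward-flow Markov property (Lemma~\ref{lem: some markov property}); you have just written out that computation explicitly. The substitution $w := h_\sigma^{-1}(z - W(\sigma))$, the cancellation $h_{\sigma+t}(w) + W(\sigma+t) = \mathring{h}^\sigma_t(z) + \mathring{W}^\sigma(t)$, and the appeal to uniqueness from Remark~\ref{rem: existence and uniqueness ODE} are exactly what is intended.
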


\begin{proof}
This is a straightforward computation, analogous to Lemma~\ref{lem: some markov property}. 
\end{proof}

The main reason why solutions of~\eqref{eq: mBLE} are useful is the observation that derivative estimates for the
inverse Loewner chain $f_t := g_t^{-1}$ can be obtained from corresponding estimates for $h_t$ (see Lemma~\ref{lem: f' = h' in distribution}).
The latter, in turn, can be derived by using suitable local martingales bounded by supermartingales (see Proposition~\ref{prop: M supermgle} in Section~\ref{subsec: derivative local martingale}).
An important caveat here is that the estimates for $f_t$ thus obtained only hold \emph{pointwise} in time, due to this restriction for the equality in distribution in the following Lemma~\ref{lem: f' = h' in distribution}. 
To simplify the statement, we will write
\begin{align*} 
\tilde{f}_t(z) := f_t(z + W(t)) , \qquad z \in \bH .
\end{align*}
Note that for each fixed $t$, this map is just $f_t$ pre-composed with a translation.

\begin{lem} \label{lem: f' = h' in distribution}
Let $W$ be a L\'evy process. 
Fix $t \geq 0$. Then, the following hold.
\begin{enumerate}[label=\textnormal{(\alph*):}, ref=(\alph*)]
\item \label{item: f = h in distribution}
The map 
$z \mapsto \smash{\tilde{f}_t(z)} - W(t)$ 
has the same distribution as $z \mapsto h_t(z)$. 

\medskip

\item \label{item: f' = h' in distribution}
The map 
$z \mapsto \smash{\tilde{f}_t'(z)}$ 
has the same distribution as $z \mapsto h_t'(z)$.
\end{enumerate}
\end{lem}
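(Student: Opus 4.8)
The plan is to exploit the time-reversal symmetry of the Loewner flow together with the fact that $W$ is a L\'evy process, so that the reversed increments form another L\'evy process with the same law. First I would recall the standard fact (valid here because the driving function is c\`adl\`ag and the flow is absolutely continuous in time for each fixed $z$) that running the forward Loewner equation~\eqref{eq: LE} from time $0$ to time $t$ and then inverting is the same as running a \emph{backward} Loewner-type flow along the time-reversed driving function. Concretely, fix $t\geq 0$ and set $\hat W(s) := W(t) - W((t-s)-)$ for $s\in[0,t]$; since $W$ has stationary, independent increments and c\`adl\`ag paths, $(\hat W(s))_{s\in[0,t]}$ has the same law as $(W(s))_{s\in[0,t]}$ (this is the only place the L\'evy property enters, and it is why the identity holds only for a fixed $t$). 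The key computation is to verify, via the chain rule applied to $g_s \circ g_t^{-1}$ and the change of variables $s \mapsto t-s$, that
\begin{align*}
s \;\longmapsto\; \bigl( g_{t-s} \circ g_t^{-1} \bigr)(z + W(t)) - W(t)
\end{align*}
solves an ODE of the form~\eqref{eq: mBLE} driven by $\hat W$. Indeed, differentiating $g_{t-s}\circ g_t^{-1}$ in $s$ picks up a factor $-1$ from the time reversal and the Loewner vector field $2/(\,\cdot\, - W(t-s))$, and shifting by $W(t)$ turns $W(t-s)$ into $-\hat W(s)$ plus the running argument, producing exactly the $-2/(h_s(z) + \hat W(s))$ right-hand side of~\eqref{eq: mBLE} with the correct initial condition at $s=0$. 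Setting $s=t$ gives $\tilde f_t(z) - W(t) = f_t(z+W(t)) - W(t)$ on the left; on the right we get $h_t^{\hat W}(z)$, the solution of~\eqref{eq: mBLE} driven by $\hat W$. Since $\hat W \overset{d}{=} W$ (as processes on $[0,t]$) and the solution map of~\eqref{eq: mBLE} is a deterministic measurable functional of the driving path, $h_t^{\hat W}(z) \overset{d}{=} h_t^{W}(z) = h_t(z)$ as functions of $z\in\bH$. This proves item~\ref{item: f = h in distribution}.

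For item~\ref{item: f' = h' in distribution}, I would simply differentiate the identity from~\ref{item: f = h in distribution} in the spatial variable $z$: the map $z\mapsto \tilde f_t'(z)$ is the derivative of $z\mapsto \tilde f_t(z) - W(t)$ (the translation by $W(t)$ is $z$-independent), and passing to derivatives is a continuous operation on holomorphic functions that commutes with equality in distribution (both sides are random holomorphic functions on $\bH$, and the derivative is a measurable functional of the function in the topology of local uniform convergence). Hence $z\mapsto \tilde f_t'(z)$ has the same law as $z\mapsto h_t'(z)$, the latter being given by the explicit exponential formula recalled just before the lemma.

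The main obstacle is the careful justification of the time-reversal identity at the level of the ODEs: one must check that $(g_{t-s}\circ g_t^{-1})$ is indeed absolutely continuous in $s$ with the claimed right-derivative, handling the fact that $W$ is only c\`adl\`ag (so that one should work with $W((t-s)-)$ and left limits, matching the integral form of~\eqref{eq: LE} and~\eqref{eq: mBLE} stated in the excerpt), and that the relevant points stay in the domain of $g_{t-s}$ for all $s\in[0,t]$ so that no blow-up occurs along the reversed flow. This is the kind of bookkeeping that is routine for continuous drivers but needs a line or two of extra care here; everything else is the chain rule and the distributional invariance of L\'evy increments under reversal.
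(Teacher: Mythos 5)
Your proposal is correct and follows essentially the same approach as the paper: both arguments time-reverse the forward Loewner flow (your $g_{t-s}\circ g_t^{-1}$ is identical to the paper's solution $k_s$ of the backward Loewner equation), shift by $W(t)$ to obtain a process solving~\eqref{eq: mBLE} driven by the reversed increments, and then invoke the distributional invariance of L\'evy increments under time reversal to identify the law with that of $h_t$; item~\ref{item: f' = h' in distribution} is obtained in both cases by passing to the spatial derivative. Your use of $W((t-s)-)$ for the reversed driver is the sharper c\`adl\`ag formulation (the paper writes $W(t-s)$, which agrees a.e.\ in $s$ and hence in the integral form of the ODE), and your remark about monotone growth of the imaginary part ruling out blow-up along the reversed flow is the correct justification of the point the paper leaves implicit.
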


Lemma~\ref{lem: f' = h' in distribution} is a generalization of~\cite[Lemma~3.1]{Rohde-Schramm:Basic_properties_of_SLE}, where the result was shown when $W$ is a scalar multiple of Brownian motion. 
The proof only relies on the property that the driving function $W$ has stationary and identically distributed increments, which holds for any L\'evy process.

\begin{proof}
First, we show that the map $z \mapsto f_t(z)$ is equal in distribution with $z \mapsto k_t(z)$, 
the (absolutely continuous) solution to the backward Loewner equation
(see, e.g.,~\cite[Lemma~4.10]{Kemppainen:SLE_book})
\begin{align*}
\partial_s^{+} k_s(z) = \frac{-2}{k_s(z) - W(t-s)} \qquad \textnormal{with initial condition} \qquad k_0(z) = z  .
\end{align*}
Indeed, the function $s \mapsto k_{t-s}(z)$ solves~\eqref{eq: LE} on $[0, t]$ with initial condition $k_t(z)$. Hence, we have $z = k_0(z) = g_{t}(k_{t}(z))$, 
and after applying $f_t$ to both sides of this equation, we find that $f_t(z) = k_t(z)$.

By the Markov property of the L\'evy process, $W(t) - W(t - s)$ equals $W(s)$ in distribution for each $s \leq t$. Therefore, a short computation shows that, for each $z \in \bH$ and $s \in [0,t]$, the map 
\begin{align*}
s \mapsto k_s(z + W(t)) - W(t)
\end{align*}
solves~\eqref{eq: mBLE}, 
so it equals $s \mapsto h_s(z)$ in distribution. 
This implies both items~\ref{item: f = h in distribution} and~\ref{item: f' = h' in distribution}.
\end{proof}

\subsection{Derivative supermartingale}
\label{subsec: derivative local martingale}

For the estimates needed to analyze the growing hulls,
the key players in the driving function~\eqref{eq: general DF} will be the diffusion part (i.e., Brownian motion) 
and the compensated sum of microscopic jumps.
Hence, we shall work with a cutoff $\varepsilon > 0$ throughout this section. 
We fix a L\'evy measure $\nu$ 
and consider martingale driving functions of the form
\begin{align} \label{eq: BM with micro}
\microDriver_\varepsilon^{\kappa}(t) := \sqrt{\kappa} B(t) + \int_{|\jump| \leq \varepsilon} \jump \, \PoissonComp(t, \ud \jump) , \qquad
\kappa \geq 0 , \, \varepsilon > 0 ,
\end{align}
where $B$ is a standard Brownian motion and 
$\smash{\PoissonComp}(t, \ud \jump) := \Poisson(t, \ud \jump) - t \nu(\ud \jump)$ is the compensated Poisson point process
of a Poisson point process $\Poisson$ independent of $B$ with L\'evy intensity measure $\nu$.
We denote the variance of the jumps of the random driving function~\eqref{eq: BM with micro} by
\begin{align} \label{eq: jump variance}
\lambda_\varepsilon  = \lambda_\varepsilon(\nu) 
:= \int_{|\jump| \leq \varepsilon} \jump^2 \, \nu(\ud \jump) \; \geq 0 .
\end{align}

\begin{rem} \label{rem: tune epsilon}
Using continuity of measures,  
we see\footnote{Define a Borel measure $\mu$ on $[-1, 1]$ by $\mu(A) := \int_A \jump^2 \nu(d\jump)$ for all Borel sets $A \subset [-1, 1]$. Since $\nu$ is a L\'evy measure, $\mu$ is a finite measure. 
Thus, by~\cite[Theorem 3.2]{Bauer-Measure_and_integration_theory} we have $\underset{\varepsilon \to 0+}{\lim} \mu([-\varepsilon, \varepsilon]) = \mu(\{0\}) = 0$.} that,
for each L\'evy measure $\nu$ and for each constant $\lambda > 0$, we can find a cutoff $\varepsilon_\lambda = \varepsilon_\lambda(\nu) > 0$ such that the variance~\eqref{eq: jump variance} satisfies
\begin{align} \label{eq: jump variance bound}
\varepsilon < \varepsilon_\lambda
\qquad \Longrightarrow \qquad
\lambda_\varepsilon 
< \lambda .
\end{align}
We will use this fact repeatedly.
For example, if $\alpha \in (0,2)$ 
and $\nu(\ud \jump) = |\jump|^{-1-\alpha} \, \ud \jump$ 
is the jump measure of a symmetric $\alpha$-stable L\'evy process, 
then we have
\begin{align*}
\lambda_\varepsilon 
= \frac{2 \, \varepsilon^{2-\alpha}}{2-\alpha} 
< \lambda
\qquad \Longleftrightarrow \qquad
\varepsilon < \varepsilon_\lambda
= \Big(\frac{2 - \alpha}{2} \Big)^{\frac{1}{2-\alpha}}  \lambda^{\frac{1}{2-\alpha}} .
\end{align*}
\end{rem}

Let $(h_t)_{t \geq 0}$ be the solution to~\eqref{eq: mBLE} driven by $W = \smash{\microDriver_\varepsilon^{\kappa}}$. 
Fix a starting point $z_0 = x_0 + \ii y_0 \in \bH$ implicitly throughout, and define the processes 
\begin{align}
\nonumber
Z(t) = Z_\varepsilon^{\kappa}(t,z_0) := \; & h_t(z_0) + \microDriver_\varepsilon^{\kappa}(t) =: X(t) + \ii Y(t) , \\
\nonumber
X(t) = X_\varepsilon^{\kappa}(t,z_0) := \; & \re \big( Z_\varepsilon^{\kappa}(t,z_0) \big) , \\
\nonumber
Y(t) = Y_\varepsilon^{\kappa}(t,z_0) := \; & \im \big( Z_\varepsilon^{\kappa}(t,z_0) \big) , \\
\label{eq: general martingale candidate}
M(t) = M_{p,q,r}(t,z_0) := \; & |h_t'(z_0)|^p \,Y(t)^{q} \, ( \sin \arg Z(t) )^{-2r} , \qquad 
p, q \in \bR,\textnormal{ and } r > 0 .
\end{align}
Note that $M(0) = y_0^{q - 2r} |z_0|^{2r}$. 
For general L\'evy driving processes, $M$ is not a martingale, but 
we will show in Proposition~\ref{prop: M supermgle} that with judicious choices of the parameters, $M$ is dominated by a supermartingale. 
Hence, we can use it to derive pointwise-in-time H\"older continuity results\footnote{Alternatively, the forward process discussed in Section~\ref{sec: forward bounds} can give H\"older continuity results for some range of $\kappa$, but not, for instance, for the range $\kappa \in [4,8]$.  
Hence, we use the backward flow instead.}.
Indeed, the supermartingale allows us to bound expectations of the form $\EX \big[ |h_t'(z_0)|^p \big]$ for specific values of $p$. 
By Lemma~\ref{lem: f' = h' in distribution}, we obtain the same bounds for derivatives of the inverse Loewner map at fixed time instants.
In particular, we may deduce that for all fixed $t \geq 0$, the map $z \mapsto f_t(z + W(t))$ is H\"older continuous.

\begin{rem}
If $W$ is a Brownian motion with variance $\kappa > 0$, then~\eqref{eq: general martingale candidate} reduces to the process used in~\cite{Rohde-Schramm:Basic_properties_of_SLE} to prove the existence of the trace of Schramm's $\SLEk$:   
if $p,q,r$, and $\kappa$ satisfy the relations
\begin{align*} 
q = p - \tfrac{1}{2} r \kappa \qquad \textnormal{and}  \qquad
p = \tfrac{1}{2} r ( \kappa + 4 - \kappa r ) 
\end{align*}
and $W = \sqrt{\kappa} B$, 
then $M$ is a local martingale for every $r, \kappa > 0$, with
\begin{align*}
\ud M(t) = 2 r \sqrt{\kappa} \, \frac{X(t)}{X(t)^2 + Y(t)^2} \, M(t) \, \ud B(t) .
\end{align*}
Furthermore, the time-changed process $\hat{M}(s) := M(\sigma(s))$, with
\begin{align} \label{eq: time change}
\sigma(s) = S^{-1}(s) , \qquad 
S(t) = \int_0^t \frac{\ud u}{X(u-)^2 + Y(u-)^2}
= \int_0^t \frac{\ud u}{|Z(u-)|^2} ,
\end{align}
is a true martingale \textnormal{(}see, e.g.,~\cite[Theorem~5.5]{Kemppainen:SLE_book}\textnormal{)}.
Rohde \&~Schramm 
used this martingale to derive bounds for the derivative of the inverse Loewner chain pointwise in time and then to deduce the existence of the $\SLEk$ trace using the modulus of continuity of the driving Brownian motion~\cite{Rohde-Schramm:Basic_properties_of_SLE}.

Let us note, however, that the process~\eqref{eq: general martingale candidate} seems not sufficient to derive the existence of general Loewner traces driven by L\'evy processes, 
because unlike for Brownian motion, whose modulus of continuity is well-understood, one cannot interpolate a L\'evy process between two consecutive dyadic points due to its possibly  uncontrolled number of small jumps (after some investigations, we concluded that even very precise tail estimates for L\'evy processes do not seem sufficient for this). 
For this reason, new techniques are needed in the case of present interest. In Section~\ref{sec: forward bounds}, we use another process derived from the \emph{forward} Loewner flow to obtain estimates uniform in time and sufficient to conclude the existence of the trace.
\end{rem}

\begin{lem} \label{lem: SDE M}
Fix $\kappa \geq 0$, a L\'evy measure $\nu$, and $\varepsilon > 0$. 
Let $(h_t)_{t \geq 0}$ be the solution to~\eqref{eq: mBLE} driven by 
$\smash{\microDriver_\varepsilon^{\kappa}}$~\eqref{eq: BM with micro},
fix $z_0 \in \bH$, 
and consider the process $M$ defined in~\eqref{eq: general martingale candidate}.
Then, for each $t \geq 0$, we have
\begin{align*}
M(t) = \; & \mgle(t) 
\; + \; \int_0^t M(s-) \; \bigg( D_{r}(s)
+ 2 p \, \frac{X(s-)^2 - Y(s-)^2}{|Z(s-)|^4} 
+ \frac{2 q}{|Z(s-)|^2} \bigg) \, \ud s  , \\[1em]
\textnormal{where} \qquad 
D_{r}(s) = \; & 
\frac{ r (\kappa (2 r - 1) - 8) X(s-)^2 + r \kappa Y(s-)^2}{|Z(s-)|^4} \\ 
\; & 
+ \; \int_{|\jump| \leq \varepsilon}   \bigg( \bigg| \frac{Z(s-) + \jump}{Z(s-)} \bigg|^{2r} - 1 - \frac{2r \jump X(s-)}{|Z(s-)|^2} \bigg) \nu(\ud \jump) , \qquad 
p, q \in \bR, \textnormal{ and } r > 0 ,
\end{align*}
and where 
\begin{align} 
\begin{split} 
\label{eq: supermgle N}
\mgle(t) := \; & 
M(0) \; + \;
2 r \sqrt{\kappa} \int_0^t M(s-) \; \frac{X(s-)}{|Z(s-)|^2} \, \ud B(s) \\
\; & + \; \int_0^t M(s-) \int_{|\jump| \leq \varepsilon} \bigg( \bigg| \frac{Z(s-) + \jump}{Z(s-)} \bigg|^{2r} - 1 \bigg) \, \PoissonComp(\ud s, \ud \jump)  .
\end{split} 
\end{align}
\end{lem}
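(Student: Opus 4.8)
The plan is to reduce the whole computation to a single application of It\^o's formula for semimartingales with jumps, after rewriting $M$ as a product of a continuous finite-variation factor and the factor $|Z|^{2r}$, whose jump calculus is explicit. Throughout I fix $z_0 = x_0 + \ii y_0 \in \bH$ and work on a fixed interval $[0,t]$. I would first rewrite $M$: since $\sin \arg Z(s) = Y(s)/|Z(s)|$ and, as the driver $\microDriver_\varepsilon^{\kappa}$ is real-valued, $Y(s) = \im Z(s) = \im h_s(z_0)$ is nondecreasing by~\eqref{eq: imh} with $Y(s) \geq y_0 > 0$, one may write
\begin{align*}
M(s) = |h_s'(z_0)|^p \, Y(s)^{q-2r} \, |Z(s)|^{2r} = A(s) \, P(s) , \qquad A(s) := |h_s'(z_0)|^p \, Y(s)^{q-2r} , \quad P(s) := |Z(s)|^{2r} .
\end{align*}
The factor $A$ is continuous and of finite variation: the exponential formula for $h_s'(z_0)$ recalled before the lemma gives $\partial_s^+ \log|h_s'(z_0)| = \re \tfrac{2}{Z(s)^2} = \tfrac{2(X(s)^2 - Y(s)^2)}{|Z(s)|^4}$, and the imaginary part of~\eqref{eq: mBLE} (equivalently~\eqref{eq: imh}) gives $\partial_s^+ \log Y(s) = \tfrac{2}{|Z(s)|^2}$, so
\begin{align*}
\ud A(s) = A(s) \left( 2p \, \frac{X(s)^2 - Y(s)^2}{|Z(s)|^4} + \frac{2(q-2r)}{|Z(s)|^2} \right) \ud s .
\end{align*}

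Next I would treat $P = |Z|^{2r}$. By the integral form of~\eqref{eq: mBLE} together with~\eqref{eq: BM with micro}, the process $Z$ is the complex It\^o--L\'evy process $\ud Z(s) = \tfrac{-2}{Z(s-)}\,\ud s + \sqrt{\kappa}\,\ud B(s) + \int_{|\jump| \leq \varepsilon} \jump \, \PoissonComp(\ud s, \ud \jump)$, whose continuous martingale part satisfies $\ud [\re Z]^c = \kappa \, \ud s$ and $\ud [\im Z]^c = 0$. Applying It\^o's formula (see, e.g.,~\cite[Chapter~4]{Applebaum:Levy_processes_and_stochastic_calculus} or~\cite{Cohen-Elliott:Stochastic_calculus_and_applications}) to $\phi(z) = |z|^{2r}$, which is $C^2$ on $\bH$ and for which $Z(s-) + \jump$ stays in $\bH$ for all $|\jump| \leq \varepsilon$, with $\partial_x \phi = 2rx|z|^{2r-2}$, $\partial_y \phi = 2ry|z|^{2r-2}$, $\partial_{xx} \phi = 2r|z|^{2r-4}\big((2r-1)x^2 + y^2\big)$, and splitting the jump sum into a $\PoissonComp$-martingale and its $\nu(\ud \jump)\,\ud s$ compensator, the first-order drift $\nabla\phi(Z(s-)) \cdot \big(\tfrac{-2}{Z(s-)}\big) = \tfrac{4r(Y(s-)^2 - X(s-)^2)}{|Z(s-)|^4}\,P(s-)$ and the second-order term $\tfrac{\kappa}{2}\partial_{xx}\phi(Z(s-)) = \tfrac{\kappa r((2r-1)X(s-)^2 + Y(s-)^2)}{|Z(s-)|^4}\,P(s-)$ combine to give
\begin{align*}
\ud P(s) = {}& P(s-) \bigg( 2r\sqrt{\kappa} \, \frac{X(s-)}{|Z(s-)|^2} \, \ud B(s) + \int_{|\jump| \leq \varepsilon} \Big( \big| \tfrac{Z(s-)+\jump}{Z(s-)} \big|^{2r} - 1 \Big) \PoissonComp(\ud s, \ud \jump) \bigg) \\
& + P(s-) \bigg( \frac{r(\kappa(2r-1)-4) X(s-)^2 + r(\kappa+4) Y(s-)^2}{|Z(s-)|^4} + \int_{|\jump| \leq \varepsilon} \Big( \big| \tfrac{Z(s-)+\jump}{Z(s-)} \big|^{2r} - 1 - \tfrac{2r \jump X(s-)}{|Z(s-)|^2} \Big) \nu(\ud \jump) \bigg) \ud s .
\end{align*}

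Then, since $A$ is continuous and of finite variation, integration by parts gives $\ud M = A\,\ud P + P\,\ud A$ with no covariation term. Substituting the two previous displays, the martingale part of $\ud M$ is $M(s-)$ times the bracket $2r\sqrt{\kappa}\,\tfrac{X(s-)}{|Z(s-)|^2}\,\ud B(s) + \int_{|\jump| \leq \varepsilon}(\big|\tfrac{Z(s-)+\jump}{Z(s-)}\big|^{2r}-1)\,\PoissonComp(\ud s, \ud \jump)$, which together with the initial value $M(0)$ is exactly $\ud \mgle$ from~\eqref{eq: supermgle N}; and the $\ud s$-drift is $M(s-)$ times (all quantities evaluated at $s-$)
\begin{align*}
\frac{r(\kappa(2r-1)-4) X^2 + r(\kappa+4) Y^2}{|Z|^4} + \int_{|\jump| \leq \varepsilon} \Big( \big| \tfrac{Z+\jump}{Z} \big|^{2r} - 1 - \tfrac{2r \jump X}{|Z|^2} \Big) \nu(\ud \jump) + 2p \, \frac{X^2 - Y^2}{|Z|^4} + \frac{2(q-2r)}{|Z|^2} .
\end{align*}
Writing $\tfrac{2(q-2r)}{|Z|^2} = \tfrac{2q}{|Z|^2} - \tfrac{4r(X^2+Y^2)}{|Z|^4}$ and absorbing $-\tfrac{4r(X^2+Y^2)}{|Z|^4}$ into the first fraction replaces $r(\kappa(2r-1)-4)X^2 + r(\kappa+4)Y^2$ by $r(\kappa(2r-1)-8)X^2 + r\kappa Y^2$, so the drift becomes precisely $M(s-)\big(D_{r}(s) + 2p\,\tfrac{X(s-)^2-Y(s-)^2}{|Z(s-)|^4} + \tfrac{2q}{|Z(s-)|^2}\big)$. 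Integrating from $0$ to $t$ then yields the asserted identity.

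The only point needing real care --- and the place where fixing $z_0$ and the cutoff $\varepsilon$ is used --- is justifying that the stochastic integrals are genuine (local) martingales on $[0,t]$ and that the splitting $\Poisson = \PoissonComp + \nu\,\ud s$ above is legitimate. Since $Y(s) = \im h_s(z_0) \geq y_0$, one has $|Z(s)| \geq y_0$ and $|Z(s) + \jump| \geq y_0$ for all $|\jump| \leq \varepsilon$, while $|h_s'(z_0)| \leq \exp(2t/y_0^2)$ on $[0,t]$ by~\eqref{eq: derh bound} and $\lambda_\varepsilon < \infty$. These bounds make $M(s-)$ locally bounded, the integrand $\big|\tfrac{Z+\jump}{Z}\big|^{2r} - 1 - \tfrac{2r\jump X}{|Z|^2}$ uniformly $O(\jump^2)$ on $|\jump| \leq \varepsilon$ (hence $\nu$-integrable), and $\jump \mapsto \big|\tfrac{Z+\jump}{Z}\big|^{2r} - 1$ square-integrable against $\nu$ on $|\jump| \leq \varepsilon$ (hence the $\PoissonComp$-integral is a well-defined $L^2$-martingale). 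With these estimates in hand the jump It\^o formula and the compensation used above are standard, so there is no conceptual obstacle; the work is entirely in organizing the calculation and handing these integrability bounds.
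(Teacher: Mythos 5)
Your proof is correct, and it follows the same overall strategy as the paper's --- decompose $M$ into factors, apply It\^o's formula, and recombine by the product rule --- but with a reorganization that is slightly more economical. The paper factors $M = |h'|^p \cdot Y^q \cdot (\sin\arg Z)^{-2r}$ and computes the SDE for the third factor by a two-step It\^o calculation (first for $\sin\arg Z$ in Lemma~\ref{lem: SDE sin(arg(Z))}, then for its power in Lemma~\ref{lem: SDE sin(arg(Z)) power}, carried out in Appendix~\ref{app: Ito calculus}). You instead exploit the identity $(\sin\arg Z)^{-2r} = Y^{-2r}|Z|^{2r}$ to write $M = A\cdot P$ with $A = |h'|^p Y^{q-2r}$ continuous and of finite variation and $P = |Z|^{2r}$; then all stochastic and jump behavior is concentrated in $P$, a single It\^o application to the clean function $\phi(z) = |z|^{2r}$ suffices, and the product rule has no covariation term. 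This is a genuine simplification of the bookkeeping: the derivatives of $|z|^{2r}$ are tidier than those of $(y/|z|)^{-2r}$, and you avoid iterating It\^o. Your final algebraic reorganization of the drift (absorbing $-4r(X^2+Y^2)/|Z|^4$ from the $2(q-2r)/|Z|^2$ term into the quadratic numerator) correctly recovers the stated form of $D_r$, and the integrability/justification paragraph at the end covers precisely the points the paper delegates to the It\^o--D\"oblin assumptions in Theorem~\ref{thm: Ito}.
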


\begin{proof}
By~\eqref{eq: mBLE} and a straightforward application of It\^o's formula, we have
\begin{align*}
|h_t'(z_0)|^p 
\; = \; \; & 1 \; + \; 2 p \int_0^t |h_{s}'(z_0)|^{p} \; \frac{X(s-)^2 - Y(s-)^2}{|Z(s-)|^4} \, \ud s , \\
Y(t)^q
\; = \; \; & y_0 \; + \; 2 q \int_0^t  \frac{Y(s-)^q}{|Z(s-)|^2} \, \ud s .
\end{align*}
A more involved application of It\^o's formula (see Lemma~\ref{lem: SDE sin(arg(Z)) power} in Appendix~\ref{app: Ito calculus} with $a=0$) gives
\begin{align*}
( \sin \arg Z(t) )^{-2r} 
\; = \; \; & ( \sin \arg z_0 )^{-2r} 
\; + \; 2 r \sqrt{\kappa} \int_0^t ( \sin \arg Z(s-) )^{-2r} \; \frac{X(s-)}{|Z(s-)|^2} \, \ud B(s) \\
\; \; & 
+ \; \int_0^t \int_{|\jump| \leq \varepsilon} ( \sin \arg Z(s-)  )^{-2r} \; \bigg( \bigg| \frac{Z(s-) + \jump}{Z(s-)} \bigg|^{2r} - 1 \bigg) \, \PoissonComp(\ud s, \ud \jump) \\
\; \; & 
+ \; \int_0^t( \sin \arg Z(s-) )^{-2r}
\; D_{r}(s) \, \ud s .
\end{align*} 
Combining these, we obtain the asserted identity for $M$. 
\end{proof}

Recall that we denote the variance of the jumps of the random driving function~\eqref{eq: BM with micro} by
\begin{align*} 
\lambda_\varepsilon := \int_{|\jump| \leq \varepsilon} \jump^2 \, \nu(\ud \jump) \; \geq 0 .
\end{align*}

\begin{prop} \label{prop: M supermgle}
Fix $\kappa \geq 0$, a L\'evy measure $\nu$, and $\varepsilon > 0$. 
Fix $r \in (0, 1]$ and set
\begin{align} \label{eq: p and q alternative}
p = p(\kappa, r ) 
:= \tfrac{1}{2} r ( \kappa + 4 - \kappa r \big) 
\qquad \textnormal{and}  \qquad
q = q(\kappa, \lambda_\varepsilon, r ) 
:= p(\kappa, r ) - \tfrac{1}{2} r (\kappa + \lambda_\varepsilon) .
\end{align}
Let $(h_t)_{t \geq 0}$ be the solution to~\eqref{eq: mBLE} driven by 
$\smash{\microDriver_\varepsilon^{\kappa}}$~\eqref{eq: BM with micro},
fix $z_0 \in \bH$, and consider the processes 
$M$ defined in~\eqref{eq: general martingale candidate}
and $\mgle$ defined in~\eqref{eq: supermgle N}. 
Then, we have $M(0) = \mgle(0)$ and $M(t) \leq \mgle(t)$ for all $t \geq 0$. 
\end{prop}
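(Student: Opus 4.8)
The plan is to reduce the whole statement to a single pointwise-in-time inequality for the drift coefficient coming out of Lemma~\ref{lem: SDE M}. That lemma gives
\begin{align*}
M(t) - \mgle(t) = \int_0^t M(s-) \, \mathcal{D}(s) \, \ud s , \qquad
\mathcal{D}(s) := D_{r}(s) + 2p\,\frac{X(s-)^2 - Y(s-)^2}{|Z(s-)|^4} + \frac{2q}{|Z(s-)|^2} ,
\end{align*}
and $M(0) = \mgle(0)$ is immediate from~\eqref{eq: general martingale candidate} and~\eqref{eq: supermgle N}. Since $h_t'(z_0) \neq 0$, $Y(t) = \im(h_t(z_0)) > 0$, and $\sin\arg Z(t) \in (0,1]$ for all $t$, the process $M$ is strictly positive, so $M(s-) \geq 0$; hence it suffices to show $\mathcal{D}(s) \leq 0$ for every $s$.

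First I would separate the jump integral
\begin{align*}
J(s) := \int_{|\jump|\leq\varepsilon}\left(\left|\frac{Z(s-)+\jump}{Z(s-)}\right|^{2r} - 1 - \frac{2r\jump X(s-)}{|Z(s-)|^2}\right)\nu(\ud\jump)
\end{align*}
from $D_r(s)$ and collect the remaining terms over $|Z(s-)|^4$. Writing $X,Y,|Z|^2$ for $X(s-),Y(s-),|Z(s-)|^2 = X^2+Y^2$, one gets $\mathcal{D}(s) = J(s) + |Z|^{-4}(c_X X^2 + c_Y Y^2)$ with $c_X = r(\kappa(2r-1)-8) + 2p + 2q$ and $c_Y = r\kappa - 2p + 2q$. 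Substituting $2p = r(\kappa+4-\kappa r)$ and $2q = 2p - r(\kappa+\lambda_\varepsilon)$ from~\eqref{eq: p and q alternative}, a direct computation gives $c_X = c_Y = -r\lambda_\varepsilon$ — this cancellation is exactly the point of the special choice of $p$ and $q$ — so the continuous part of $\mathcal{D}(s)$ collapses to $-r\lambda_\varepsilon(X^2+Y^2)|Z|^{-4} = -r\lambda_\varepsilon |Z|^{-2}$. Thus $\mathcal{D}(s) = J(s) - r\lambda_\varepsilon |Z|^{-2}$, and it remains to prove $J(s) \leq r\lambda_\varepsilon |Z|^{-2}$.

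For the jump part, note that for real $\jump$ and $Z := Z(s-)\in\bH$ we have $Z+\jump \neq 0$ and $|Z+\jump|^2 = |Z|^2 + 2\jump X + \jump^2$, so with $u := (\jump^2 + 2\jump X)/|Z|^2 > -1$ we may write
\begin{align*}
\left|\frac{Z+\jump}{Z}\right|^{2r} - 1 - \frac{2r\jump X}{|Z|^2} = (1+u)^r - 1 - ru + \frac{r\jump^2}{|Z|^2} ,
\end{align*}
using $2r\jump X/|Z|^2 + r\jump^2/|Z|^2 = ru$. Since $r\in(0,1]$, the map $t\mapsto t^r$ is concave on $(0,\infty)$ and hence lies below its tangent line at $t=1$, i.e.\ $(1+u)^r \leq 1 + ru$; therefore the integrand is at most $r\jump^2/|Z|^2$, and integrating against $\nu$ over $\{|\jump|\leq\varepsilon\}$ yields $J(s) \leq r|Z|^{-2}\int_{|\jump|\leq\varepsilon}\jump^2\,\nu(\ud\jump) = r\lambda_\varepsilon|Z|^{-2}$. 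Consequently $\mathcal{D}(s) \leq 0$, so $M(t) \leq \mgle(t)$.

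I do not expect any genuine analytic obstacle here. The only real ``work'' is the bookkeeping in the middle step verifying that both coefficients reduce to $-r\lambda_\varepsilon$ (which is what forces the formulas~\eqref{eq: p and q alternative}), combined with the observation that the entire jump correction is absorbed by the elementary estimate $(1+u)^r \leq 1+ru$ valid precisely because $r\leq 1$; the convergence of $J(s)$ is a non-issue, since its integrand is $O(\jump^2)$ near $\jump=0$ and this is already implicit in Lemma~\ref{lem: SDE M}.
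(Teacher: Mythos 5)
Your proof is correct and follows essentially the same route as the paper: both start from Lemma~\ref{lem: SDE M}, control the jump integral via the concavity bound $(1+u)^r \leq 1+ru$ for $r \in (0,1]$, and then verify that the parameter choices~\eqref{eq: p and q alternative} make the resulting drift vanish. The only cosmetic difference is the order of operations — you first collapse the coefficients $c_X, c_Y$ to $-r\lambda_\varepsilon$ and then match this against the bound $J(s) \leq r\lambda_\varepsilon|Z|^{-2}$, whereas the paper applies the jump bound first and then plugs in $p,q$ to see the combined drift is zero.
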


With parameters~\eqref{eq: p and q alternative}, 
the process $\mgle$ in~\eqref{eq: supermgle N} 
is a non-negative local martingale, thus a supermartingale~\cite[Lemma~5.6.8]{Cohen-Elliott:Stochastic_calculus_and_applications}.
Hence, Proposition~\ref{prop: M supermgle} shows by the supermartingale property that
\begin{align*}
\EX[M(t)] \; \leq \; \EX[\mgle(t)] \; \leq \; \mgle(0) \; = \; M(0) \quad \textnormal{ for all } t \geq 0 . 
\end{align*}

\begin{proof}
Using the fact that $(1+x)^r \leq 1 + rx$ for all $x \geq -1$ and $r \in (0,1]$, we see that
\begin{align*}
\bigg| \frac{Z(s-) + \jump}{Z(s-)} \bigg|^{2r} - 1 - \frac{2r \jump X(s-)}{|Z(s-)|^2} \;
= \;\; & \bigg( 1 + \frac{2 \jump X(s-) + \jump^2}{|Z(s-)|^2} \bigg)^r - 1 - \frac{2r \jump X(s-)}{|Z(s-)|^2}  \\
\leq \; & \frac{r \jump^2 }{|Z(s-)|^2} .
\end{align*}
Therefore, using Lemma~\ref{lem: SDE M} and the definition~\eqref{eq: jump variance} of the variance $\lambda_\varepsilon$, we obtain
\begin{align*}
M(t) 
\; \leq \; \; & \mgle(t)
\; + \; \int_0^t M(s-) \; \bigg( A_{r}(s)
+ \frac{2 p X(s-)^2 - 2 p Y(s-)^2}{|Z(s-)|^4}
+ \frac{2 q}{|Z(s-)|^2} \bigg) \, \ud s ,
\end{align*}
where 
\begin{align*}
A_{r}(s) \; = \; \; & 
\frac{ r (\kappa (2 r - 1) - 8) X(s-)^2 + r \kappa Y(s-)^2}{|Z(s-)|^4} 
\; + \; \frac{r \lambda_\varepsilon}{|Z(s-)|^2} .
\end{align*}
Plugging in the assumed identities~\eqref{eq: p and q alternative}, we see that the drift equals zero, so we have $M(t) \leq \mgle(t)$.
\end{proof}

\subsection{Derivative tail estimate}
\label{subsec: main estimate}

The goal of this section is to use the processes from Proposition~\ref{prop: M supermgle} 
to derive bounds for the probabilities 
$\smash{\PR \big[ |h_t'(z_0)| \geq \zeta \big]}$ for $\zeta > 0$, where
$(h_t)_{t \geq 0}$ is the solution to~\eqref{eq: mBLE} 
driven by $\smash{\microDriver_\varepsilon^{\kappa}}$ defined in~\eqref{eq: BM with micro}, 
and $z_0 = x_0 + \ii y_0 \in \bH$ is fixed. 
As before, we write 
\begin{align*}
Z(t) = Z_\varepsilon^{\kappa}(t,z_0) := \; & h_t(z_0) + \microDriver_\varepsilon^{\kappa}(t) =: X(t) + \ii Y(t) .
\end{align*}  
We will also use the time-changed processes 
\begin{align*}
\hat{X}(s) := X(\sigma(s))  , \qquad 
\hat{Y}(s) := Y(\sigma(s))  , \qquad 
\hat{Z}(s) := Z(\sigma(s))  , \qquad
\hat{h}_s := h_{\sigma(s)} , \qquad 
\hat{M}(s) := M(\sigma(s)) ,
\end{align*}
where $\sigma$ is given by~\eqref{eq: time change}.
The imaginary part of~\eqref{eq: mBLE} gives $Y(0) = y_0$ and 
\begin{align} \label{eq: Y}
Y(t) 
\; = \; \; & y_0 \, \exp \bigg( \int_0^t \frac{2}{|Z(s-)|^2} \, \ud s \bigg) 
\; = \; y_0 \; + \; 2 \int_0^t \frac{Y(s-)}{|Z(s-)|^2} \, \ud s ,
\end{align}
which implies in particular that
\begin{align*} 
0 < y_0 \leq Y(t) \leq \sqrt{y_0^2 + 4t}  \qquad \textnormal{and} \qquad
\hat{Y}(t) = y_0 \, e^{2t}
\qquad \textnormal{for all } t \geq 0 .
\end{align*} 
Also,~\eqref{eq: time change} can be written as
$\smash{S(t) = \frac{1}{2} \log \big( \frac{Y(t)}{y_0} \big)}$, 
so we see that $\underset{t \to \infty}{\lim} \sigma(t) = \infty$ almost surely.

The following fundamental bound holds uniformly in $t$ in compact sub-intervals of $[0,\infty)$.
It is a (slightly modified) generalization of~\cite[Corollary~3.5]{Rohde-Schramm:Basic_properties_of_SLE}, where the result was shown when $W$ is a scalar multiple of Brownian motion. 
(See also~\cite[Corollary~5.1]{Kemppainen:SLE_book}.)

\begin{lem} \label{lem: bound for derivative of h}
Fix $T > 0$, $\kappa \geq 0$, a L\'evy measure $\nu$, and $\varepsilon > 0$. 
Let $(h_t)_{t \geq 0}$ be the solution to~\eqref{eq: mBLE} driven by 
$\smash{\microDriver_\varepsilon^{\kappa}}$~\eqref{eq: BM with micro}. 
Fix $r \in (0, 1]$ and set $p$ and $q$ as in~\eqref{eq: p and q alternative}. 
Then, for any $x_0 \in \bR$, we have
\begin{align*}
\PR \big[ |h_t'(x_0 + \ii y_0)| \geq \zeta \big]
\leq \; &
c_0 \, \Big(\frac{|x_0 + \ii y_0|}{y_0}\Big)^{2r} \zeta^{-p} \, \chi_r(y_0, \zeta) \qquad
\textnormal{for all } t \in [0,T] , \;  y_0 \in (0,1] ,
\textnormal{ and } 
\zeta \in (0, 1/y_0] , \\[1em]
\textnormal{where} \qquad 
\chi_r(y_0, \zeta) =  \; &
\begin{cases}
\zeta^{\frac{r}{2} (\kappa + \lambda_\varepsilon) - p} , & \kappa r < 4 - \lambda_\varepsilon , \\[.5em]
1 - \log (\zeta y_0)  , & \kappa r = 4 - \lambda_\varepsilon , \\[.5em]
y_0^{p - \frac{r}{2} (\kappa + \lambda_\varepsilon)}  , & \kappa r > 4 - \lambda_\varepsilon ,
\end{cases}
\qquad \textnormal{and} \qquad 
p = \tfrac{1}{2} r (\kappa + 4 - \kappa r) ,
\end{align*}
and where $c_0 = c_0(\kappa, \lambda_\varepsilon, r, T) \in (0,\infty)$ 
is a constant. 
\end{lem}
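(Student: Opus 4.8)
The plan is to combine the supermartingale domination of Proposition~\ref{prop: M supermgle} with Markov's inequality, after \emph{upgrading} the event $\{|h_t'(z_0)|\ge\zeta\}$ by one elementary deterministic comparison. Fix $t\in[0,T]$, $y_0\in(0,1]$, $\zeta\in(0,1/y_0]$ and $x_0\in\bR$, write $z_0=x_0+\ii y_0$ and $Z(t)=X(t)+\ii Y(t)$ as in~\eqref{eq: general martingale candidate}, and take $p,q$ as in~\eqref{eq: p and q alternative}, so that $q=\tfrac{1}{2}r(4-\kappa r-\lambda_\varepsilon)$. Record three preliminary facts. (i) Since $r\le 1$ and $\kappa\ge 0$, one has $\kappa+4-\kappa r=\kappa(1-r)+4\ge 4$, hence $p>0$. (ii) By Proposition~\ref{prop: M supermgle}, $M(t)\le\mgle(t)$ with $\mgle$ a non-negative supermartingale started from $\mgle(0)=M(0)=y_0^{q-2r}|z_0|^{2r}$; therefore $M(t)\ge 0$ and $\EX[M(t)]\le y_0^{q-2r}|z_0|^{2r}$. (iii) Combining the bound $|h_t'(z_0)|\le\exp\!\big(\int_0^t 2|Z(s-)|^{-2}\,\ud s\big)$ from~\eqref{eq: derh bound} (valid since $h_{s-}(z_0)+\microDriver_\varepsilon^{\kappa}(s-)=Z(s-)$ by definition of $Z$) with the exact identity~\eqref{eq: Y} for $Y$, one gets the deterministic inequality $|h_t'(z_0)|\le Y(t)/y_0$; finally $\sin\arg Z(t)=Y(t)/|Z(t)|\le 1$, so $(\sin\arg Z(t))^{-2r}\ge 1$.

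Next I argue on the event $E_\zeta:=\{|h_t'(z_0)|\ge\zeta\}$. By (iii) together with the capacity bound $Y(t)\le\sqrt{y_0^2+4T}$, on $E_\zeta$ we have $y_0\zeta\le Y(t)\le\sqrt{y_0^2+4T}$, and hence, using (i) and the fact that $Y(t)^q>0$,
\[
M(t)=|h_t'(z_0)|^p\,Y(t)^q\,(\sin\arg Z(t))^{-2r}\ \ge\ \zeta^p\,Y(t)^q \qquad \text{on } E_\zeta .
\]
Now split according to the sign of $q$, equivalently according to whether $\kappa r$ is $<$, $=$ or $>$ than $4-\lambda_\varepsilon$. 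If $q\ge 0$ then $Y(t)^q\ge(y_0\zeta)^q$, so $M(t)\ge y_0^q\zeta^{p+q}$ on $E_\zeta$; if $q<0$ then $Y(t)^q\ge(y_0^2+4T)^{q/2}$, so $M(t)\ge(y_0^2+4T)^{q/2}\zeta^p$ on $E_\zeta$. Since $M(t)\ge 0$ is bounded below on $E_\zeta$ by these quantities while $\EX[M(t)]\le y_0^{q-2r}|z_0|^{2r}$, we obtain $\PR[E_\zeta]\le(|z_0|/y_0)^{2r}\zeta^{-(p+q)}$ when $q\ge 0$, and $\PR[E_\zeta]\le(|z_0|/y_0)^{2r}\,y_0^q\,(y_0^2+4T)^{-q/2}\,\zeta^{-p}$ when $q<0$. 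It remains to identify $\chi_r$: when $\kappa r<4-\lambda_\varepsilon$ ($q>0$) one has $-q=\tfrac{r}{2}(\kappa+\lambda_\varepsilon)-p$, so the bound is $(|z_0|/y_0)^{2r}\zeta^{-p}\chi_r(y_0,\zeta)$; when $\kappa r=4-\lambda_\varepsilon$ ($q=0$) the bound is $(|z_0|/y_0)^{2r}\zeta^{-p}$, which since $\zeta y_0\le 1$ gives $1-\log(\zeta y_0)\ge 1$ is $\le(|z_0|/y_0)^{2r}\zeta^{-p}\chi_r(y_0,\zeta)$; when $\kappa r>4-\lambda_\varepsilon$ ($q<0$) we use $y_0\le 1$ to get $(y_0^2+4T)^{-q/2}\le(1+4T)^{-q/2}$ and note $y_0^q=y_0^{p-\frac{r}{2}(\kappa+\lambda_\varepsilon)}=\chi_r(y_0,\zeta)$, so the bound is $\le(1+4T)^{-q/2}(|z_0|/y_0)^{2r}\zeta^{-p}\chi_r(y_0,\zeta)$. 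Hence the claim holds with $c_0:=(1+4T)^{|q|/2}$, depending only on $\kappa,\lambda_\varepsilon,r,T$.

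The only step that is not bookkeeping — and hence the one to emphasise — is obtaining the \emph{sharp} power of $\zeta$ in the subcritical regime $\kappa r<4-\lambda_\varepsilon$. A crude Markov bound on $M(t)$ using only $Y(t)\ge y_0$ gives merely $\zeta^{-p}$, whereas the statement demands $\zeta^{-(p+q)}$; the extra decay $\zeta^{-q}$ (with $q>0$) is produced precisely by observation (iii), the deterministic comparison $|h_t'(z_0)|\le Y(t)/y_0$, which forces $Y(t)\ge y_0\zeta$ on $E_\zeta$ and so lets the factor $Y(t)^q$ actively contribute. Everything else reduces to quoting Proposition~\ref{prop: M supermgle}, recognising that the three regimes of the statement are exactly the sign cases of $q$, and absorbing the capacity bound $Y(t)\le\sqrt{y_0^2+4T}$ into the constant via $y_0\le 1$. (As a side remark, in the critical case $q=0$ this argument in fact delivers the logarithm-free bound $(|z_0|/y_0)^{2r}\zeta^{-p}$, which a fortiori implies the stated one.)
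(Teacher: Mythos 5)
Your proof is correct, and it takes a genuinely different and in fact more elementary route than the paper. The paper's proof performs the logarithmic time change $s\mapsto\sigma(s)$, derives the moment bound $\EX\big[|\hat h_s'(z_0)|^p\big]\le e^{-2sp}e^{sr(\kappa+\lambda_\varepsilon)}(|z_0|/y_0)^{2r}$ from the supermartingale domination, applies Markov's inequality at each time-changed instant, and then bounds $\sup_{0\le s\le L}|\hat h_s'(z_0)|$ by a union over integer $j$ using $|\hat h_{s+u}'|\le e^{2u}|\hat h_s'|$, summing the resulting geometric series; the three regimes of $\chi_r$ arise from the three behaviours of that series. You avoid the time change and the union bound entirely: you apply Markov's inequality directly to $M(t)$ at the single time $t$, and the decisive extra input is the deterministic comparison $|h_t'(z_0)|\le Y(t)/y_0$ (equivalently, that $\Re(1/Z^2)\le 1/|Z|^2$ integrated against the exact formula~\eqref{eq: Y}), which forces $y_0\zeta\le Y(t)\le\sqrt{y_0^2+4T}$ on $\{|h_t'(z_0)|\ge\zeta\}$ and thereby lets $Y(t)^q$ deliver the regime-dependent power. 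This is exactly the observation the paper exploits in the later Lemma~\ref{lem: uniform spatial tail estimate for h} (in the form $|h_t'(z_0)|\le e^{2S(t)}=Y(t)/y_0$), but applied here it yields a cleaner argument: you avoid the discrete-time union bound, the constant is explicit, and in the critical case $\kappa r=4-\lambda_\varepsilon$ you in fact obtain the log-free bound $(|z_0|/y_0)^{2r}\zeta^{-p}$, which is strictly sharper than the paper's $1-\log(\zeta y_0)$ factor.
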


\begin{proof}
Fix $t \in [0,T]$, $z_0 = x_0 + \ii y_0 \in \bH$ with $y_0 \in (0,1]$, and $\zeta \in (0, 1/y_0] $. 
We first work with the time-changed processes.
Note that $r \in (0, 1]$ implies $p > 0$ by~\eqref{eq: p and q alternative}.
Hence, by Markov's inequality, 
\begin{align} \label{eq: use Chebyshev}
\PR \big[ |\hat{h}_s'(z_0)| \geq \zeta \big] 
\leq \zeta^{-p} \; \EX \big[ |\hat{h}_s'(z_0)|^p \big] , \qquad s \geq 0 .
\end{align} 
From Proposition~\ref{prop: M supermgle}, we see that
$\EX [ \hat{M}(s) ] \leq \hat{M}(0)$, so 
\begin{align}
\nonumber
\EX \big[ |\hat{h}_s'(z_0)|^p \big] 
\; = \; \; & \EX \big[ \hat{Y}(s)^{-q} \, \big( \sin \arg \hat{Z}(s) \big)^{2r} \hat{M}(s) \big] \\
\nonumber
\; \leq \; \; & \EX \big[ \hat{Y}(s)^{-q} \, \hat{M}(s) \big] 
\, = \; y_0^{-q} \, e^{-2sq} \; \EX \big[ \hat{M}(s) \big] \\
\label{eq: aux moment bound}
\; \leq \; \; & y_0^{-q} \, e^{-2sq} \; \hat{M}(0)
\; = \; y_0^{-q} \, e^{-2sq} \; y_0^{q} \, \bigg(\frac{|z_0|}{y_0} \bigg)^{2r} 
\; = \; e^{-2sp} e^{s r (\kappa + \lambda_\varepsilon)} \bigg(\frac{|z_0|}{y_0} \bigg)^{2r} ,
\end{align}
also using the bound $(\sin \arg \hat{Z}(s))^{2r} \leq 1$ for $r \geq 0$ 
and the relation~\eqref{eq: p and q alternative} to write $q = p - \frac{r}{2} (\kappa + \lambda_\varepsilon)$.

Next, since $Y(t) \leq \sqrt{y_0^2 + 4t} \leq \sqrt{1 + 4T}$, writing $L := \frac{1}{2} \log \big(\sqrt{1 + 4T}/y_0 \big)$
and using the bound
\begin{align} \label{eq: aux bound for h}
|\hat{h}_{s+u}'(z_0)| \leq e^{2u} |\hat{h}_{s}'(z_0)| \qquad \textnormal{for all } u \geq 0 ,
\end{align}
which follows from~\eqref{eq: mBLE}, we obtain
\begin{align*}
\PR \big[ |h_t'(z_0)| \geq \zeta \big]
\leq \; &  
\PR \Big[ \sup_{0 \leq s \leq L} |\hat{h}_s'(z_0)| \geq \zeta \Big] \\
\leq \; & \sum_{j = 0}^{\lfloor L \rfloor} 
\PR \Big[ |\hat{h}_j'(z_0)| \geq e^{-2} \zeta  \Big]	
 \; \leq  \; \sum_{j = 0 \vee \lceil \log \sqrt{\zeta} - 1 \rceil}^{\lfloor L \rfloor}
\PR \Big[ |\hat{h}_j'(z_0)| \geq e^{-2} \zeta  \Big]
\qquad && \textnormal{[using~\eqref{eq: aux bound for h}]} \\
\leq \; & e^{2p} \zeta^{-p} \bigg(\frac{|z_0|}{y_0} \bigg)^{2r} 
\sum_{j = 0 \vee \lceil \log \sqrt{\zeta} - 1 \rceil}^{\lfloor L \rfloor}
e^{-2jp} e^{j r (\kappa + \lambda_\varepsilon)} 
\qquad && \textnormal{[by~(\ref{eq: use Chebyshev},~\ref{eq: aux moment bound})]} \\[.5em]
\leq \; & 
c_0(\kappa, \lambda_\varepsilon, r, T) \, \zeta^{-p} \Big(\frac{|z_0|}{y_0}\Big)^{2r}  
\times
\begin{cases}
\zeta^{\frac{r}{2} (\kappa + \lambda_\varepsilon) - p} , & 2p > r (\kappa + \lambda_\varepsilon) , \\[.5em]
1 - \log (\zeta y_0)  , & 2p = r (\kappa + \lambda_\varepsilon) , \\[.5em]
y_0^{p - \frac{r}{2} (\kappa + \lambda_\varepsilon)}  , & 2p < r (\kappa + \lambda_\varepsilon) .
\end{cases}
\end{align*}
Finally, using the relation~\eqref{eq: p and q alternative} to write $2 p = (\kappa + 4)r - \kappa r^2 $, we obtain the asserted estimate.
\end{proof}

\subsection{Uniform H\"older continuity}
\label{subsec: Holder continuity}

In this section, we prove that the Loewner chain $(h_t)_{t \geq 0}$ solving~\eqref{eq: mBLE} driven by $\smash{\microDriver_\varepsilon^{\kappa}}$ is uniformly H\"older continuous for $t$ in compact time intervals
(Proposition~\ref{prop: BLE micro jumps Holder}), unless $\kappa = 4$. For this purpose, we derive a boundary estimate for the derivative of $h_t$.

Making use of Remark~\ref{rem: tune epsilon}, we may choose $\lambda_\varepsilon$ arbitrarily small by picking a small enough cutoff $\varepsilon > 0$.  
For deriving the boundary estimate, we shall make a Borel-Cantelli argument (see Corollary~\ref{cor: uniform spatial tail estimate for h}), 
where for summability of certain probabilities, it is necessary for our argument that 
\begin{align*}
p(\kappa, r ) + q(\kappa, \lambda_\varepsilon, r )
= \tfrac{1}{2} r (\kappa - \lambda_\varepsilon + 8) - \kappa r^2 
> 1 + 2r 
\end{align*}
with suitably chosen parameter $r \in (0,1]$ and cutoff $\varepsilon > 0$. 
It turns out that this is possible whenever $\kappa \neq 4$ (see Lemma~\ref{lem: alphaprime and thetaprime}).
Note that the function $r \mapsto p(\kappa, r ) + q(\kappa, 0, r ) - 2r$ 
has a unique maximum at $r = \frac{1}{4} + \frac{1}{\kappa}$.
This motivates the following choices (which are not optimal, but sufficiently convenient)\footnote{Note also that the constant $1/6$ obtained in~\cite[Theorem~5.2]{Chen-Rohde:SLE_driven_by_symmetric_stable_processes} coincides with our $\ThetaHmax{0}{0}$ with $\kappa \to 0$ and $\lambda_\varepsilon \to 0$.}.

\begin{lem} \label{lem: alphaprime and thetaprime}
Fix $\kappa \geq 0$, a L\'evy measure $\nu$, and $\varepsilon > 0$. 
Assume that identities~\eqref{eq: p and q alternative} hold with
\begin{align} \label{eq: choice of r}
r = \rparamH(\kappa) := \big( \tfrac{1}{4} + \tfrac{1}{\kappa} \big) \wedge 1 \; \in \; (0,1] ,
\end{align}
with the convention that $\rparamH(0) = 1$, and define
\begin{align} \label{eq: lambdatildemax}
\maxjumpH{\kappa} 
:= (2 - \kappa) \vee \frac{(\kappa - 4)^2}{2(\kappa + 4)} \; \geq \; 0 .
\end{align}
Then, we have
$\maxjumpH{\kappa} = 0$ if and only if $\kappa = 4$. 
Define also
\begin{align} \label{eq: Thetatilde}
\ThetaHmax{\kappa}{\lambda_\varepsilon} 
:= \; &
\frac{p(\kappa, \rparamH(\kappa) ) + q(\kappa, \lambda_\varepsilon, \rparamH(\kappa) ) - 2 \rparamH(\kappa) - 1}{2 p(\kappa, \rparamH(\kappa) ) + q(\kappa, \lambda_\varepsilon, \rparamH(\kappa) )} 
\; = \; 
\begin{cases}
1 + \frac{10}{\kappa +\lambda_\varepsilon - 12} , &
0 < \kappa \leq 4/3 ,  \\[.3em]
\frac{2 (\kappa - 4 )^2 - 4 (\kappa + 4) \lambda_\varepsilon }{(\kappa + 4) (5 \kappa - 4 \lambda_\varepsilon + 36)}  , &
4/3 < \kappa . 
\end{cases}
\end{align}
Then, if $\kappa = 4$, then we have $\ThetaHmax{\kappa}{\lambda_\varepsilon} \leq 0$, and
otherwise, we have
\begin{align*}
\kappa \neq 4
\qquad \textnormal{and} \qquad
0 \leq \lambda_\varepsilon < \maxjumpH{\kappa}
\qquad \Longrightarrow \qquad
\begin{cases}
\ThetaHmax{\kappa}{\lambda_\varepsilon} \in (0,2/5) 
,  \\[.3em]
p(\kappa, \rparamH(\kappa) ) + q(\kappa, \lambda_\varepsilon, \rparamH(\kappa) ) 
> 1 + 2 \rparamH(\kappa) .
\end{cases}
\end{align*}
In particular, $0 \leq \lambda_\varepsilon < \maxjumpH{\kappa}$ implies that, 
for all $0 < \theta < \ThetaHmax{\kappa}{\lambda_\varepsilon}$, 
we have
\begin{align*}
(1 - 2\theta) \, p(\kappa, \rparamH(\kappa) ) 
+ (1-\theta) \, q(\kappa, \lambda_\varepsilon, \rparamH(\kappa) ) 
> 1 + 2 \rparamH(\kappa) .
\end{align*}
\end{lem}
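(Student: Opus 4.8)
The plan is to reduce the entire statement to elementary algebra by separating the two regimes dictated by~\eqref{eq: choice of r}: the range $0 \le \kappa \le 4/3$, where $\rparamH(\kappa) = 1$, and the range $\kappa > 4/3$, where $\rparamH(\kappa) = \frac{\kappa+4}{4\kappa}$ (the two values agreeing at $\kappa = 4/3$). In each regime I would first plug $r = \rparamH(\kappa)$ into~\eqref{eq: p and q alternative} to get closed forms for $p = p(\kappa,r)$, $q = q(\kappa,\lambda_\varepsilon,r)$, and then for the two combinations $2p+q$ and $p+q-2r-1$ entering the ratio defining $\ThetaHmax{\kappa}{\lambda_\varepsilon}$. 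For $\kappa \le 4/3$ this gives $p = 2$ and $q = 2 - \tfrac12(\kappa+\lambda_\varepsilon)$, hence $2p+q = \tfrac12(12 - \kappa - \lambda_\varepsilon)$ and $p+q-2r-1 = \tfrac12(2 - \kappa - \lambda_\varepsilon)$; dividing gives $\ThetaHmax{\kappa}{\lambda_\varepsilon} = \frac{2-\kappa-\lambda_\varepsilon}{12-\kappa-\lambda_\varepsilon} = 1 + \frac{10}{\kappa+\lambda_\varepsilon-12}$, the first branch of~\eqref{eq: Thetatilde}. For $\kappa > 4/3$ the same mechanical substitution (using $\kappa r = \frac{\kappa+4}{4}$) gives $p+q-2r-1 = \frac{(\kappa-4)^2 - 2\lambda_\varepsilon(\kappa+4)}{16\kappa}$ and $2p+q = \frac{(\kappa+4)(5\kappa - 4\lambda_\varepsilon + 36)}{32\kappa}$, whose quotient is the second branch of~\eqref{eq: Thetatilde}. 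This settles the displayed formula.

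Next I would record the elementary facts about~\eqref{eq: lambdatildemax}: since $\kappa \ge 0$ forces $\kappa+4 > 0$, the term $\frac{(\kappa-4)^2}{2(\kappa+4)}$ is $\ge 0$ and vanishes exactly when $\kappa = 4$, where $2-\kappa = -2 < 0$; hence $\maxjumpH{\kappa} = 0$ iff $\kappa = 4$. Moreover $2(2-\kappa)(\kappa+4) - (\kappa-4)^2 = \kappa(4-3\kappa)$, so $\maxjumpH{\kappa} = 2-\kappa$ on $[0,4/3]$ and $\maxjumpH{\kappa} = \frac{(\kappa-4)^2}{2(\kappa+4)}$ on $(4/3,\infty)$; this aligns the two cases for $\maxjumpH{\kappa}$ with the two cases for $\rparamH(\kappa)$, which keeps the bookkeeping in the next step clean.

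The key structural observation is that $p+q > 1+2r$, the positivity of $\ThetaHmax{\kappa}{\lambda_\varepsilon}$, and the final displayed inequality are all governed by the sign of the numerator $p+q-2r-1$ once the denominator $2p+q$ is known to be positive. Positivity of $2p+q$: for $\kappa \le 4/3$, $\lambda_\varepsilon < \maxjumpH{\kappa} = 2-\kappa$ gives $\kappa+\lambda_\varepsilon < 2 < 12$; for $\kappa > 4/3$, one checks $\maxjumpH{\kappa} < \frac{5\kappa+36}{4}$ by clearing denominators to $3\kappa^2 + 72\kappa + 112 > 0$, so $5\kappa - 4\lambda_\varepsilon + 36 > 0$. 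Given this, the closed forms for the numerator show that $\kappa \ne 4$ together with $\lambda_\varepsilon < \maxjumpH{\kappa}$ forces $p+q-2r-1 > 0$ (namely $\lambda_\varepsilon < 2-\kappa$ when $\kappa \le 4/3$, and $\lambda_\varepsilon < \frac{(\kappa-4)^2}{2(\kappa+4)}$ when $\kappa > 4/3$), i.e.\ exactly $p+q > 1+2r$ and $\ThetaHmax{\kappa}{\lambda_\varepsilon} > 0$; while for $\kappa = 4$ the numerator equals $-\lambda_\varepsilon/4 \le 0$ and the denominator is $\tfrac72 - \tfrac14\lambda_\varepsilon > 0$, giving $\ThetaHmax{4}{\lambda_\varepsilon} \le 0$. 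For the bound $\ThetaHmax{\kappa}{\lambda_\varepsilon} < 2/5$ I would use that $\ThetaHmax{\kappa}{\lambda_\varepsilon}$ is decreasing in $\lambda_\varepsilon$ — visibly so from $\frac{d}{ds}\frac{2-s}{12-s} = \frac{-10}{(12-s)^2}$ in the first regime, and because the numerator of the derivative of $\lambda \mapsto \frac{2(\kappa-4)^2 - 4\lambda(\kappa+4)}{(\kappa+4)(5\kappa-4\lambda+36)}$ works out to $-12\kappa^2 - 288\kappa - 448 < 0$ in the second — so it suffices to bound $\ThetaHmax{\kappa}{0}$, where $5(\kappa-4)^2 < (\kappa+4)(5\kappa+36)$ (equivalently $96\kappa + 64 > 0$) gives the claim. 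Finally, the last displayed implication is a one-line rearrangement: $(1-2\theta)p + (1-\theta)q - (1+2r) = (p+q-2r-1) - \theta(2p+q)$, which is $> 0$ precisely when $\theta < \frac{p+q-2r-1}{2p+q} = \ThetaHmax{\kappa}{\lambda_\varepsilon}$, as assumed.

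The only mildly delicate point — the "main obstacle," such as it is — is keeping the $\rparamH(\kappa)$ and $\maxjumpH{\kappa}$ case splits aligned and checking the boundary value $\kappa = 4/3$ is consistent from both sides; beyond that everything reduces to sign checks of polynomials in $\kappa$ and $\lambda_\varepsilon$ with manifestly positive coefficients after clearing denominators, so no genuine difficulty remains.
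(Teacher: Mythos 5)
Your proposal is correct and, if anything, more carefully argued than the paper's own proof, which is terse: the paper simply asserts monotonicity of $\lambda_\varepsilon \mapsto \ThetaHmax{\kappa}{\lambda_\varepsilon}$, records the vanishing at $\lambda_\varepsilon = \maxjumpH{\kappa}$, and for the upper bound $2/5$ appeals to monotonicity of $\kappa \mapsto \ThetaHmax{\kappa}{0}$ on either side of $\kappa=4$ together with the three limits $1/6$, $0$, $2/5$ (and to figures for visual support), then dismisses the remaining claims as "following directly." You instead carry out all of the underlying algebra explicitly: the two closed forms in \eqref{eq: Thetatilde} are verified by substituting $r=1$ (for $\kappa\le 4/3$) and $r=\frac{\kappa+4}{4\kappa}$ (for $\kappa>4/3$) into \eqref{eq: p and q alternative}; the $\vee$ in \eqref{eq: lambdatildemax} resolves along the same $\kappa=4/3$ split via the factorization $2(2-\kappa)(\kappa+4)-(\kappa-4)^2=\kappa(4-3\kappa)$; positivity of $2p+q$ is checked by clearing denominators ($3\kappa^2+72\kappa+112>0$); the sign of the numerator $p+q-2r-1$ is tied exactly to $\lambda_\varepsilon \lessgtr \maxjumpH{\kappa}$; and the final implication is the one-line rearrangement $(1-2\theta)p+(1-\theta)q-(1+2r)=(p+q-2r-1)-\theta(2p+q)$. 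The only difference in strategy is for the bound $\ThetaHmax{\kappa}{0}<2/5$: the paper uses monotonicity in $\kappa$ plus the limit as $\kappa\to\infty$, while you bound $\ThetaHmax{\kappa}{0}$ directly by the polynomial inequality $5(\kappa-4)^2<(\kappa+4)(5\kappa+36)$ ($\Leftrightarrow 96\kappa+64>0$). Both work; yours is more self-contained, the paper's makes the sharpness of $2/5$ conceptually visible.

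One small gap: your check of $\ThetaHmax{\kappa}{0}<2/5$ only covers the regime $\kappa>4/3$. For $0\le\kappa\le 4/3$ the formula is $1+\frac{10}{\kappa-12}$, which is decreasing in $\kappa$ and hence bounded by its value $1/6$ at $\kappa=0$, so the bound is immediate there too — but a complete argument should say so. Also worth noting (though this is equally latent in the paper's statement): your claim that the denominator $\tfrac72-\tfrac14\lambda_\varepsilon$ is positive for $\kappa=4$ tacitly requires $\lambda_\varepsilon<14$; this is always satisfied in the paper's applications where $\lambda_\varepsilon$ is taken small via Remark 3.2, but the lemma as stated places no constraint on $\lambda_\varepsilon$ when $\kappa=4$, so strictly the conclusion $\ThetaHmax{4}{\lambda_\varepsilon}\le 0$ needs that caveat.
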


\begin{proof}
The map $\lambda_\varepsilon \mapsto \ThetaHmax{\kappa}{\lambda_\varepsilon}$~\eqref{eq: Thetatilde} 
is decreasing when $\lambda_\varepsilon \geq 0$ and $\kappa \geq 0$. 
Moreover, we have 
\begin{align*}
\ThetaHmax{\kappa}{\lambda_\varepsilon} = 0 \qquad \Longleftrightarrow \qquad  
\lambda_\varepsilon = \maxjumpH{\kappa} .
\end{align*}
With $\lambda_\varepsilon = 0$, the map 
$\kappa \mapsto \ThetaHmax{\kappa}{0}$ 
is decreasing when $\kappa < 4$
and increasing when $\kappa > 4$, and 
\begin{align*}
\lim_{\kappa \to 0} \ThetaHmax{\kappa}{0} 
= 1/6 
, \qquad 
\lim_{\kappa \to 4} \ThetaHmax{\kappa}{0} = 0 ,
\qquad \textnormal{and} \qquad 
\lim_{\kappa \to \infty} \ThetaHmax{\kappa}{0}
= 2/5 . 
\end{align*}
Hence, for fixed $\kappa \neq 4$, 
the parameter~\eqref{eq: Thetatilde} satisfies $\ThetaHmax{\kappa}{\lambda_\varepsilon} \in (0,2/5)$ when $0 \leq \lambda_\varepsilon < \maxjumpH{\kappa}$,
See also Figures~\ref{fig: Alphaprime and Thetaprime 1}~\&~\ref{fig: Alphaprime and Thetaprime 2}. 
The other claims follow directly from the definition~\eqref{eq: Thetatilde} of $\ThetaHmax{\kappa}{\lambda_\varepsilon}$.
\end{proof}

\noindent 
\begin{figure}[ht!]
\includegraphics[width=.4\textwidth]{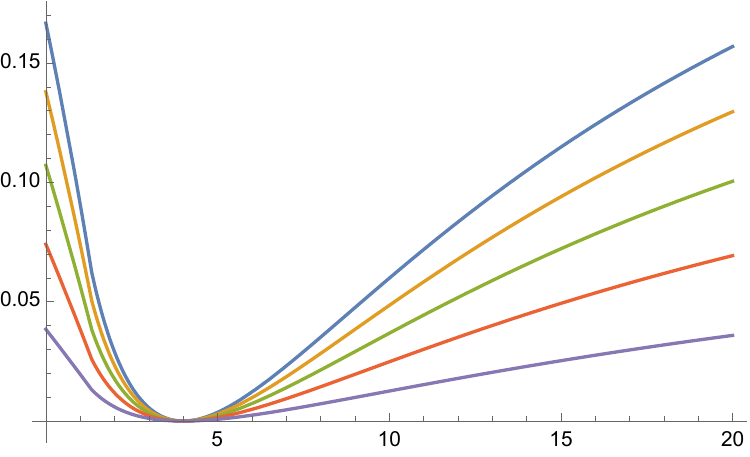}
\qquad
\includegraphics[width=.4\textwidth]{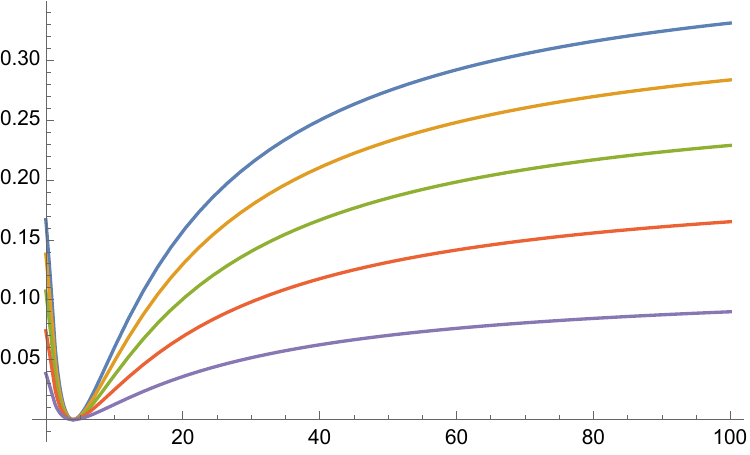}
\caption{\label{fig: Alphaprime and Thetaprime 1}
Illustrating quantities in Lemma~\ref{lem: alphaprime and thetaprime}: 
Plots of $\kappa \mapsto \ThetaHmax{\kappa}{\lambda_\varepsilon}$
with discrete values $\lambda_\varepsilon = c \, \maxjumpH{\kappa}$ for $c \in \{ 0, \frac{1}{5}, \frac{2}{5} ,\frac{3}{5} , \frac{4}{5} \}$. The largest plot (blue) has $\lambda_\varepsilon = 0$.
}
\end{figure}

\noindent 
\begin{figure}[ht!]
\includegraphics[width=.4\textwidth]{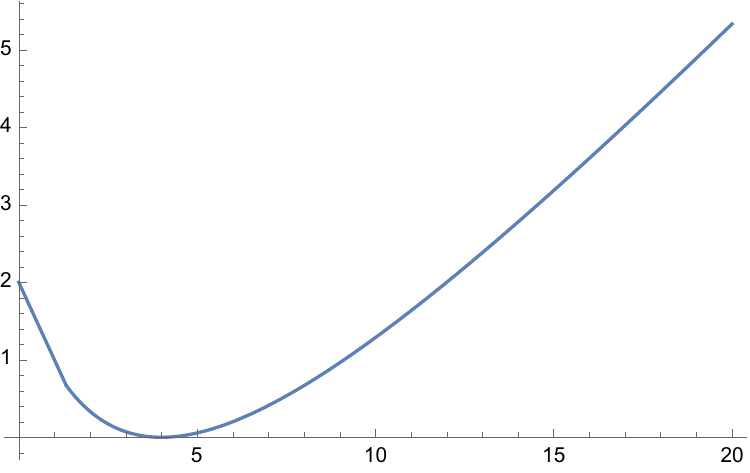}
\qquad
\includegraphics[width=.4\textwidth]{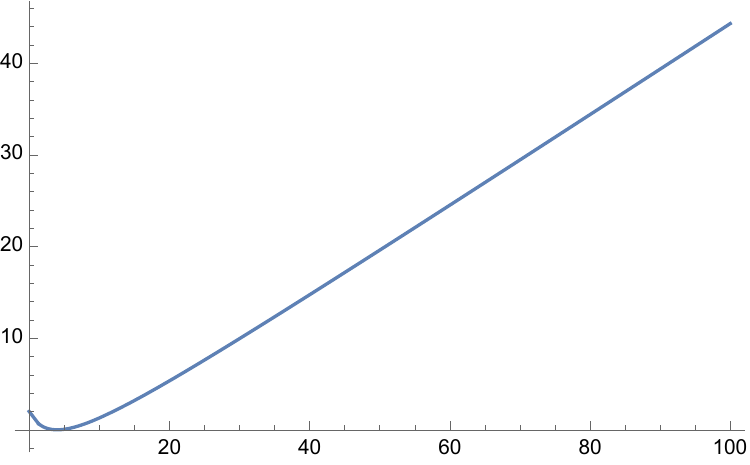}
\caption{\label{fig: Alphaprime and Thetaprime 2}
Illustrating quantities in Lemma~\ref{lem: alphaprime and thetaprime}: 
Plots of $\kappa \mapsto \maxjumpH{\kappa}$.
}
\end{figure}

The following lemma is a variant of~\cite[Lemma~5.1]{Chen-Rohde:SLE_driven_by_symmetric_stable_processes}.
Importantly, it gives a bound for the modulus of the derivative of $h_t$ uniformly in time, which will be needed later in the proof of Proposition~\ref{prop: BLE micro jumps Holder}.

\begin{lem} \label{lem: uniform spatial tail estimate for h}
Fix $T > 0$, $\kappa \in [0,\infty) \setminus\{4\}$, a L\'evy measure $\nu$, and $\varepsilon > 0$ such that 
$\lambda_\varepsilon < \maxjumpH{\kappa}$. 
Let $(h_t)_{t \geq 0}$ be the solution to~\eqref{eq: mBLE} driven by 
$\smash{\microDriver_\varepsilon^{\kappa}}$~\eqref{eq: BM with micro}. 
Fix $r = \rparamH(\kappa)$ as in~\eqref{eq: choice of r}. 
Then, for any $R > 0$,
for any $0 < \theta < \ThetaHmax{\kappa}{\lambda_\varepsilon}$ as in~\eqref{eq: Thetatilde}, 
and for any
\begin{align*}
x = \ell R 2^{-n} \in \sR_{n}^R := \{ \ell R 2^{-n} \; | \; \ell = 0,\pm 1,\pm 2,\ldots, \pm 2^{n} \} , \qquad 
\textnormal{ with }
n \in \bZnn ,
\end{align*}
we have
\begin{align} \label{eq: uniform spatial tail estimate for h}
\PR \Big[ \max_{t \in [0,T]}  |h_t'(\ell R 2^{-n} + \ii 2^{-n})| \geq  2^{n (1-\theta)} \Big] 
\leq \; & c_0 \, (\ell^2 R^2 + 1)^{r} \, 2^{-n \beta} ,
\end{align}
where $c_0 = c_0(\kappa, T) \in (0,\infty)$ is a constant, and
\begin{align} \label{eq: beta}
\beta = (1-2\theta) p + (1-\theta) q > 1 + 2 r , 
\end{align}
with $r = \rparamH(\kappa)$, and
$p = p( \kappa, \rparamH(\kappa))$, and
$q = q( \kappa, \lambda_\varepsilon, \rparamH(\kappa) )$ as in~\eqref{eq: p and q alternative}.
\end{lem}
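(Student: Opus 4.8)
The goal is to upgrade the pointwise-in-time tail estimate of Lemma~\ref{lem: bound for derivative of h} to a uniform-in-time statement over a finite grid of boundary points, at the cost of a slightly worse H\"older exponent $\theta$. The key device is the time change $\sigma$ from~\eqref{eq: time change}, under which the process $\hat h_s'$ is monotone in the sense of~\eqref{eq: aux bound for h}: $|\hat h_{s+u}'(z_0)| \leq e^{2u}|\hat h_s'(z_0)|$. Because $Y(t)\leq\sqrt{y_0^2+4t}\leq\sqrt{1+4T}$ and $\hat Y(s)=y_0 e^{2s}$, the relevant range of the time-changed parameter for $z_0 = x + \ii 2^{-n}$ is $s\in[0,L]$ with $L = L(n,T) = \tfrac12\log(\sqrt{1+4T}\,2^n)$, which grows only linearly in $n$. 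So I would first reduce $\max_{t\in[0,T]}|h_t'(z_0)|$ to $\sup_{0\leq s\leq L}|\hat h_s'(z_0)|$, then use~\eqref{eq: aux bound for h} to bound the supremum by $e^{2}\max_{0\leq j\leq\lfloor L\rfloor}|\hat h_j'(z_0)|$ over the integer mesh.

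**Main steps in order.** (1) Set $\zeta_n := 2^{n(1-\theta)}$ and $z_0 = \ell R 2^{-n} + \ii 2^{-n}$, so $y_0 = 2^{-n}$ and $\zeta_n y_0 = 2^{-n\theta}\leq 1$; check that $\zeta_n\in(0,1/y_0]$ so that Lemma~\ref{lem: bound for derivative of h} applies. (2) Apply the union bound over $j = 0,1,\dots,\lfloor L\rfloor$ (possibly starting the sum at a truncation index as in the proof of Lemma~\ref{lem: bound for derivative of h}) to $\PR[\,|\hat h_j'(z_0)|\geq e^{-2}\zeta_n\,]$, each term controlled by Lemma~\ref{lem: bound for derivative of h} with $r=\rparamH(\kappa)$ and $p,q$ as in~\eqref{eq: p and q alternative}; the prefactor is $(|z_0|/y_0)^{2r} = (\ell^2 R^2 + 1)^{r}$. (3) Since $\kappa\neq 4$ and $\lambda_\varepsilon<\maxjumpH{\kappa}$, Lemma~\ref{lem: alphaprime and thetaprime} gives, for $0<\theta<\ThetaHmax{\kappa}{\lambda_\varepsilon}$, the strict inequality $(1-2\theta)p + (1-\theta)q > 1 + 2r$, i.e. $\beta > 1+2r$ in~\eqref{eq: beta}; this is exactly what makes the geometric-type sum over $j$ converge and produces the power $\zeta_n^{-p}\cdot(\text{correction})\asymp 2^{-n\beta}$ after simplification, with a constant $c_0(\kappa,T)$ absorbing the finitely many logarithmic or boundary terms and the $\zeta^{\frac r2(\kappa+\lambda_\varepsilon)-p}$ vs $1-\log(\zeta y_0)$ vs $y_0^{p-\frac r2(\kappa+\lambda_\varepsilon)}$ case split in $\chi_r$. (4) Collect the bound into the form~\eqref{eq: uniform spatial tail estimate for h}.

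**Where the work is.** The genuinely delicate point is bookkeeping the exponent in step~(3): one must track how the three cases of $\chi_r(y_0,\zeta)$ in Lemma~\ref{lem: bound for derivative of h} combine with the factor $\zeta_n^{-p}$ and the summation over $j\in\{0,\dots,\lfloor L\rfloor\}$, verifying that in every case the net power of $2^{-n}$ is at least $\beta = (1-2\theta)p+(1-\theta)q$ up to a bounded (in $n$) logarithmic loss that can be absorbed into $c_0$ since $\beta$ is \emph{strictly} above $1+2r$ (so one has room to spare, e.g.\ by replacing $\theta$ with a slightly larger value still below $\ThetaHmax{\kappa}{\lambda_\varepsilon}$). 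The other mild nuisance is that $L$ depends on $n$, so the number of terms in the union bound grows; but it grows only linearly in $n$, which is killed by the exponential decay $2^{-n\beta}$, again at the cost of a constant. The monotonicity~\eqref{eq: aux bound for h} under the time change is what prevents the continuum-in-time supremum from being an obstacle at all; without it (e.g.\ trying to interpolate the L\'evy driver between dyadic times) the argument would fail, which is precisely the point flagged in the Remark after Lemma~\ref{lem: SDE M}.

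**Afterthought.** I would phrase step~(1) so that the truncation index $0\vee\lceil\log\sqrt{e^{-2}\zeta_n}-1\rceil$ is used exactly as in Lemma~\ref{lem: bound for derivative of h}'s proof, which makes the geometric sum start near $j\sim\tfrac12\log\zeta_n$ and ensures the dominant contribution comes from the top of the range $j\approx L$, giving the clean power $2^{-n\beta}$; this is a cosmetic improvement over a cruder bound but keeps the constant $c_0$ manifestly independent of $n$, $\ell$, and $R$.
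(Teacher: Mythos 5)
Your proposal is correct, but it takes a genuinely different route from the paper. The paper's proof does not pass through Lemma~\ref{lem: bound for derivative of h} at all: instead it performs a three-way split on the value of $Y(t)$ (namely $Y(t) < y_0^{\theta}$, $y_0^{\theta} \leq Y(t) \leq 1$, and $Y(t) \geq 1$), in the first regime bounds $|h_t'(z_0)|$ deterministically, and in the other two applies the Markov/supermartingale estimates~(\ref{eq: use Chebyshev},~\ref{eq: aux moment bound}) at exactly two deterministic time-changed instants, $s = \tfrac{\theta-1}{2}\log y_0$ and $s = -\tfrac{1}{2}\log y_0$. This lands directly on the exponent $\beta = (1-2\theta)p + (1-\theta)q$ with a constant depending only on $\kappa$, $\lambda_\varepsilon$, and $T$. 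Your route --- redoing the dyadic union bound from Lemma~\ref{lem: bound for derivative of h} with $\zeta_n = 2^{n(1-\theta)}$ and tracking the three branches of $\chi_r(y_0,\zeta)$ --- also works, and in fact produces a \emph{strictly better} exponent in every branch: $(1-\theta)(p+q) = \beta + \theta p$ when $q > 0$, $(1-\theta)p = \beta + \theta p$ (minus an $n$-linear log, absorbable since $\theta p > 0$) when $q = 0$, and $(1-\theta)p + q = \beta + \theta(p+q)$ when $q < 0$. Two small corrections to your write-up: the sum over $j$ is a \emph{finite} sum and converges unconditionally, so the inequality $\beta > 1 + 2r$ plays no role in that convergence --- its purpose is the Borel--Cantelli summability in Corollary~\ref{cor: uniform spatial tail estimate for h}; and ``$\asymp 2^{-n\beta}$'' should be ``$\lesssim 2^{-n\beta}$'' since your exponent overshoots $\beta$. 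Also note that with your route the constant $c_0$ picks up a $\theta$-dependence (from absorbing $1 + n\theta\log 2 \lesssim 2^{n\theta p}$ in the $q=0$ branch), whereas the paper's two-instant argument keeps $c_0$ free of $\theta$; this is harmless for the downstream use in Corollary~\ref{cor: uniform spatial tail estimate for h}, where $\theta$ is fixed, but is why the paper's statement can assert $c_0 = c_0(\kappa,T)$. Finally, your afterthought claiming the dominant contribution always comes from $j \approx L$ is only true in the $q < 0$ branch; when $q > 0$ the dominant term is at the truncation index $j_0 \approx \log\sqrt{\zeta_n}$.
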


\begin{proof}
Using~(\ref{eq: mBLE},~\ref{eq: Y}) and the time-change~\eqref{eq: time change}, we find
\begin{align*}
|h_t'(z_0)| 
= \; & \exp \bigg( 2 \int_0^t  \frac{X(s-)^2 - Y(s-)^2}{|Z(s-)|^4} \, \ud s \bigg)
= \exp \bigg( 2 \int_0^{S(t)}  \hat{U}(u) \, \ud u \bigg) , 
\end{align*}
where $\smash{S(t) = \frac{1}{2} \log \big( \frac{Y(t)}{y_0} \big)}$ and
\begin{align} \label{eq: less than one}
\hat{U}(u) := \frac{\hat{X}(u-)^2 - \hat{Y}(u-)^2}{\hat{X}(u-)^2 + \hat{Y}(u-)^2} 
\qquad \textnormal{satisfies} \qquad
|\hat{U}(u)|  \leq 1 .
\end{align} 
\begin{itemize}[leftmargin=*]
\item If $Y(t) < y_0^{\theta}$, then $|h_t'(z_0)| \leq e^{2 S(t)} = \frac{Y(t)}{y_0} < y_0^{\theta-1}$.

\medskip

\item If $y_0^{\theta} \leq Y(t) \leq 1$, then 
\begin{align*}
|h_t'(z_0)| 
= \; & \exp \bigg( 2 \int_0^{\frac{\theta-1}{2} \log y_0} \hat{U}(u) \, \ud u
\; + \; 2 \int_{\frac{\theta-1}{2} \log y_0}^{S(t)} \hat{U}(u) \, \ud u \bigg) \\
= \; &
|\hat{h}_{\frac{\theta-1}{2} \log y_0}'(z_0)| 
\, \exp \bigg( 2 \int_{\frac{\theta-1}{2} \log y_0}^{S(t)} \hat{U}(u) \, \ud u \bigg) 
&& \textnormal{[since $\hat{h}_s = h_{\sigma(s)}$ and $\sigma(s) = S^{-1}(s)$]} \\
\leq \; & |\hat{h}_{\frac{\theta-1}{2} \log y_0}'(z_0)| \, Y(t) \, y_0^{- \theta} 
 && \textnormal{[by~\eqref{eq: less than one}]} \\
\leq \; & |\hat{h}_{\frac{\theta-1}{2} \log y_0}'(z_0)|  \, y_0^{- \theta} .
 && \textnormal{[since $Y(t) \leq 1$]} 
\end{align*}

\medskip

\item If $Y(t) \geq 1$, then 
\begin{align*}
|h_t'(z_0)| 
= \; & \exp \bigg( 2 \int_0^{- \frac{1}{2} \log y_0} \hat{U}(u) \, \ud u
\; + \; 2 \int_{- \frac{1}{2} \log y_0}^{S(t)} \hat{U}(u) \, \ud u \bigg) \\
= \; &
|\hat{h}_{- \frac{1}{2} \log y_0}'(z_0)| 
\, \exp \bigg( 2 \int_{- \frac{1}{2} \log y_0}^{S(t)} \hat{U}(u) \, \ud u \bigg) 
&& \textnormal{[since $\hat{h}_s = h_{\sigma(s)}$ and $\sigma(s) = S^{-1}(s)$]} \\
\leq \; & |\hat{h}_{- \frac{1}{2} \log y_0}'(z_0)| \, Y(t)
 && \textnormal{[by~\eqref{eq: less than one}]} \\
\leq \; & |\hat{h}_{- \frac{1}{2} \log y_0}'(z_0)| \, \sqrt{y_0^2 + 4t} .
&& \textnormal{[since $Y(t) \leq \sqrt{y_0^2 + 4t}$]} 
\end{align*}
\end{itemize}
Using these bounds, we can estimate
\begin{align*} 
\PR \Big[ \max_{t \in [0,T]}  |h_t'(z_0)| \geq y_0^{\theta-1} \Big] 
\; \leq \;  \; & 
\PR \Big[  |\hat{h}_{\frac{\theta-1}{2} \log y_0}'(z_0)| \, y_0^{-\theta} \geq y_0^{\theta-1} \Big] 
\, + \, \PR \Big[ |\hat{h}_{- \frac{1}{2} \log y_0}'(z_0)| \, \sqrt{y_0^2 + 4T} \geq y_0^{\theta-1} \Big] .
\end{align*}
Now, using~(\ref{eq: use Chebyshev},~\ref{eq: aux moment bound}) with $y_0 \in (0,1]$ and parameters chosen as 
$r = \rparamH(\kappa)$, and $p = p( \kappa, \rparamH(\kappa) )$ as in~\eqref{eq: p and q alternative}, 
we find that the first term on the right-hand side 
can be bounded as
\begin{align} \label{eq: estimate got hhatprime 1}
\PR \Big[  |\hat{h}_{\frac{\theta-1}{2} \log y_0}'(z_0)| \geq y_0^{2\theta-1} \Big] 
\; \leq \; \; & y_0^{p (1-2\theta)} \, \EX \Big[ |\hat{h}_{\frac{\theta-1}{2} \log y_0}'(z_0)|^{p} \Big]  
\; \leq \; 
y_0^{\beta} \bigg(\frac{|z_0|}{y_0} \bigg)^{2r} ,
\end{align}
where $\beta = (1-2\theta) p + (1-\theta) q$ as in~\eqref{eq: beta}. 
Similarly, the second term can be bounded as
\begin{align} 
\nonumber
\PR \Big[ |\hat{h}_{- \frac{1}{2} \log y_0}'(z_0)| \, \sqrt{y_0^2 + 4T} \geq y_0^{\theta-1} \Big] 
\; \leq \; \; &  y_0^{p (1-\theta)} \, (y_0^2 + 4T)^{p/2} \, \EX \Big[  |\hat{h}_{- \frac{1}{2} \log y_0}'(z_0)|^{p} \Big] \\
\label{eq: estimate got hhatprime 2}
\; \leq \; \; &  
(y_0^2 + 4T)^{p/2} \, 
y_0^{\beta'} \bigg(\frac{|z_0|}{y_0} \bigg)^{2r} ,
\end{align}
where $\beta' = (1-\theta) p + q$. 
By the choice of $\theta < \ThetaHmax{\kappa}{\lambda_\varepsilon}$, 
Lemma~\ref{lem: alphaprime and thetaprime} shows that
$\beta' > \beta > 1 + 2 r$.

To finish, taking $z_0 = \ell R 2^{-n} + \ii 2^{-n}$, where $y_0 = 2^{-n} \leq 1$,
we obtain from~(\ref{eq: estimate got hhatprime 1},~\ref{eq: estimate got hhatprime 2}) the estimate 
\begin{align*}
\PR \Big[ \max_{t \in [0,T]}  |h_t'(\ell R 2^{-n} + \ii 2^{-n})| \geq 2^{n(1-\theta)} \Big] 
\leq \; & 
c_0(\kappa, r, T)  \, 
(\ell^2 R^2 + 1)^{r} \, 2^{-n \beta} ,
\end{align*}
with $r = \rparamH(\kappa)$ and $\beta = (1-2\theta) p + (1-\theta) q$ as in~\eqref{eq: beta}. 
This proves the asserted estimate~\eqref{eq: uniform spatial tail estimate for h}. 
\end{proof}

\begin{cor} \label{cor: uniform spatial tail estimate for h}
Fix $T > 0$, $\kappa \in [0,\infty) \setminus\{4\}$, a L\'evy measure $\nu$, 
and $\varepsilon > 0$ such that 
$\lambda_\varepsilon < \maxjumpH{\kappa}$.
Let $(h_t)_{t \geq 0}$ be the solution to~\eqref{eq: mBLE} driven by 
$\smash{\microDriver_\varepsilon^{\kappa}}$~\eqref{eq: BM with micro}. 
Then, for any $R > 0$ and for any $0 < \theta < \ThetaHmax{\kappa}{\lambda_\varepsilon}$ as in~\eqref{eq: Thetatilde},
there exist almost surely finite random constants 
$\smash{C_{\lambda_\varepsilon}^{\kappa}}(\theta, T, R)$ such that
\begin{align} \label{eq: needed bound for h' large radius}
\max_{t \in [0,T]} |h_t'(x + \ii 2^{-n})| \leq 
C_{\lambda_\varepsilon}^{\kappa}(\theta, T, R) \, 2^{n (1-\theta)}   \qquad 
\textnormal{for all } n \in \bZnn \textnormal{ and } x \in \sR_{n}^R  .
\end{align}
\end{cor}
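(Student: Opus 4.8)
The plan is to deduce Corollary~\ref{cor: uniform spatial tail estimate for h} from the tail estimate~\eqref{eq: uniform spatial tail estimate for h} of Lemma~\ref{lem: uniform spatial tail estimate for h} by a Borel--Cantelli argument. The only points requiring care are that the grid $\sR_{n}^R$ has $\asymp 2^n$ points and that the bound~\eqref{eq: uniform spatial tail estimate for h} carries a factor $(\ell^2 R^2 + 1)^r \lesssim 2^{2nr}$; both are absorbed by the gap $\beta - (1+2r) > 0$ in~\eqref{eq: beta}, provided one first relaxes the target exponent slightly.

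First I would fix $\theta \in (0, \ThetaHmax{\kappa}{\lambda_\varepsilon})$ and choose an intermediate exponent $\theta' := \tfrac{1}{2}\big(\theta + \ThetaHmax{\kappa}{\lambda_\varepsilon}\big) \in (\theta, \ThetaHmax{\kappa}{\lambda_\varepsilon})$. Applying Lemma~\ref{lem: uniform spatial tail estimate for h} with $\theta'$ in place of $\theta$, summing over $\ell \in \{0, \pm 1, \dots, \pm 2^n\}$, and using $\ell^2 R^2 + 1 \leq 2^{2n}(R^2+1)$ for $|\ell| \leq 2^n$, I get
\[
\PR \Big[ \exists\, x \in \sR_{n}^R : \max_{t \in [0,T]} |h_t'(x + \ii 2^{-n})| \geq 2^{n(1-\theta')} \Big]
\leq c_0 \, (2^{n+1}+1)\,(R^2+1)^r \, 2^{2nr} \, 2^{-n\beta'} ,
\]
where $\beta' = (1-2\theta')p + (1-\theta')q > 1 + 2r$ by~\eqref{eq: beta} (this strict inequality on the whole interval $\theta' < \ThetaHmax{\kappa}{\lambda_\varepsilon}$ is exactly what Lemma~\ref{lem: alphaprime and thetaprime} provides, using $\lambda_\varepsilon < \maxjumpH{\kappa}$). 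Since the exponent $1 + 2r - \beta'$ is strictly negative, the right-hand side is summable over $n \in \bZnn$, so by Borel--Cantelli there is an almost surely finite random index $N$ such that, for every $n \geq N$ and every $x \in \sR_{n}^R$,
\[
\max_{t \in [0,T]} |h_t'(x + \ii 2^{-n})| \; < \; 2^{n(1-\theta')} \; \leq \; 2^{n(1-\theta)} ,
\]
the last inequality since $\theta' > \theta$ and $n \geq 0$.

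It remains to treat the finitely many scales $0 \leq n < N$. For each such $n$ and each of the finitely many $x \in \sR_{n}^R$, the deterministic bound~\eqref{eq: derh bound} gives $\max_{t \in [0,T]} |h_t'(x + \ii 2^{-n})| \leq \exp(2T \cdot 2^{2n}) < \infty$ (and $t \mapsto |h_t'(x+\ii 2^{-n})|$ is continuous, so the maximum is attained). Hence I would set
\[
C_{\lambda_\varepsilon}^{\kappa}(\theta, T, R) \; := \; 1 \;\vee\; \max_{0 \leq n < N} \; \max_{x \in \sR_{n}^R} \; \frac{\max_{t \in [0,T]} |h_t'(x + \ii 2^{-n})|}{2^{n(1-\theta)}} ,
\]
a maximum of finitely many almost surely finite quantities (with $N$ itself a.s.\ finite by Borel--Cantelli), hence an almost surely finite random variable. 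By construction the bound~\eqref{eq: needed bound for h' large radius} then holds for all $n \in \bZnn$ and all $x \in \sR_{n}^R$.

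I do not expect any genuine obstacle: all the analytic content sits in Lemmas~\ref{lem: alphaprime and thetaprime} and~\ref{lem: uniform spatial tail estimate for h}, and the present step is routine. The one subtlety worth flagging is that Borel--Cantelli naturally yields a bound at level $2^{n(1-\theta')}$ rather than $2^{n(1-\theta)}$, which forces the introduction of the auxiliary exponent $\theta'$; this is harmless precisely because $\beta$ in~\eqref{eq: beta} remains strictly above $1+2r$ throughout the open interval $(0,\ThetaHmax{\kappa}{\lambda_\varepsilon})$.
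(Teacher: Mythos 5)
Your proof is correct and takes essentially the same route as the paper: apply Lemma~\ref{lem: uniform spatial tail estimate for h}, sum the tail bounds over the $\asymp 2^n$ grid points and over $n$, use the gap $\beta > 1+2r$ from Lemma~\ref{lem: alphaprime and thetaprime} for summability, and conclude by Borel--Cantelli, absorbing the finitely many exceptional $(n,x)$ into a random constant. One small remark: the intermediate exponent $\theta'$ is not actually forced --- applying the lemma directly at level $2^{n(1-\theta)}$ already yields a summable series (since $\beta(\theta) > 1+2r$ for every $\theta < \ThetaHmax{\kappa}{\lambda_\varepsilon}$), and the paper simply defines $C_{\lambda_\varepsilon}^{\kappa}(\theta,T,R) := \sup_{n,x} 2^{-n(1-\theta)} \max_{t\in[0,T]}|h_t'(x+\ii 2^{-n})|$, which is a.s.\ finite by Borel--Cantelli together with the pointwise bound~\eqref{eq: derh bound} that you also invoke.
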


\begin{proof}
We use Lemma~\ref{lem: uniform spatial tail estimate for h} with $r = \rparamH(\kappa)$ and $\beta > 1 + 2r$ as in~\eqref{eq: beta} to obtain
\begin{align*}
\sum_{n = 0}^\infty \sum_{x \in \sR_{n}^R } 
\PR \Big[ \max_{t \in [0,T]}  |h_t'(x + \ii 2^{-n})| \geq 2^{n (1-\theta)} \Big] 
\leq \; & 
c_0(\kappa, T) 
\sum_{n = 0}^\infty
\sum_{x \in \sR_{n}^R } (x^2 2^{2n} + 1)^{r} \,
2^{-n \beta} \\
\leq \; & 
c_1(\kappa, T, R) 
\sum_{n = 0}^\infty
\sum_{x \in \sR_{n}^R } 2^{-n ( \beta - 2 r )} \\
\leq \; & 
c_2(\kappa, T, R) 
\sum_{n = 0}^\infty
2^{-n ( \beta - 2 r - 1 )} 
\; < \; \infty . 
\end{align*} 
The Borel-Cantelli lemma now implies
that almost surely, we have $\smash{\underset{t \in [0,T]}{\max} |h_t'(x + \ii 2^{-n})| \leq 2^{n (1-\theta)}}$ 
except for possibly finitely many 
$n \in \bZnn$ and $x \in \sR_{n}^R$, so the formula 
\begin{align*}
C_{\lambda_\varepsilon}^{\kappa}(\theta, T, R) 
:= \sup_{n \in \bZnn , \; x \in \sR_{n}^R}
2^{-n (1 - \theta)} \max_{t \in [0,T]} |h_t'(x + \ii 2^{-n})| 
\end{align*}
gives the sought almost surely finite random constant.
\end{proof}

Using these estimates, we now conclude with an analogue of~\cite[Theorem~5.2]{Rohde-Schramm:Basic_properties_of_SLE}
(and~\cite[Theorem~5.2]{Chen-Rohde:SLE_driven_by_symmetric_stable_processes}): 
H\"older continuity of the mirror backward Loewner chain uniformly on compact time intervals.

\begin{prop} \label{prop: BLE micro jumps Holder}
Fix $T > 0$, $\kappa \in [0,\infty) \setminus\{4\}$, a L\'evy measure $\nu$, and $\varepsilon > 0$ such that 
$\lambda_\varepsilon < \maxjumpH{\kappa}$.
Let $(h_t)_{t \geq 0}$ be the solution to~\eqref{eq: mBLE} driven by 
$\smash{\microDriver_\varepsilon^{\kappa}}$~\eqref{eq: BM with micro}. 
Then, for any $R > 0$ and for any $0 < \theta < \ThetaHmax{\kappa}{\lambda_\varepsilon}$ as in~\eqref{eq: Thetatilde},
there exist almost surely finite random constants 
$\smash{H_{\lambda_\varepsilon}^{\kappa}}(\theta, T, R)$ 
such that 
\begin{align*} 
| h_t(z) - h_t(w) | \leq 
H_{\lambda_\varepsilon}^{\kappa}(\theta, T, R)  \, 
\big( \, | z-w |^{\theta} \, \vee \, | z-w | \, \big) 
\qquad \textnormal{for all } t \in [0, T]
\textnormal{ and } z,w \in (-R,R) \times \ii (0,\infty)  .
\end{align*}
In particular, almost surely, each $h_t$ extends to a continuous function on $\smash{\overline{(-R,R) \times \ii (0,\infty)}}$. 
\end{prop}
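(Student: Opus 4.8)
The plan is to deduce the proposition from the derivative criterion in item~\ref{item: global derivative estimate} of Lemma~\ref{lem: global Holder continuity}, after upgrading the dyadic-grid derivative bound of Corollary~\ref{cor: uniform spatial tail estimate for h} to an estimate valid at \emph{every} point of the strip via Koebe distortion. We may assume $R \geq 1$, replacing $R$ by $1 \vee R$ and restricting the domain if necessary. Fix $R \geq 1$ and $0 < \theta < \ThetaHmax{\kappa}{\lambda_\varepsilon}$. Since each $h_t$ is conformal on $\bH$, by the implication \ref{item: global derivative estimate}~$\Rightarrow$~\ref{item: global Holder continuity} of Lemma~\ref{lem: global Holder continuity} it suffices to find an almost surely finite random constant $C$ such that
\begin{align*}
|h_t'(z)| \; \leq \; C \, \big( (\im z)^{\theta - 1} \vee 1 \big) \qquad \textnormal{for all } t \in [0,T] \textnormal{ and } z \in (-R,R) \times \ii (0,\infty) ;
\end{align*}
indeed, Lemma~\ref{lem: global Holder continuity} then produces a H\"older constant $H_{\lambda_\varepsilon}^{\kappa}(\theta,T,R)$ depending only on $\theta$, $R$, and $C$, hence a.s.\ finite, and the ``in particular'' statement is precisely the final sentence of that lemma.

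To establish the displayed bound I would split on the height $y = \im z$. For $y \geq 1$, the crude estimate~\eqref{eq: derh bound} gives $|h_t'(z)| \leq \exp(2t/y^2) \leq e^{2T}$ for $t \in [0,T]$, which is admissible since $\theta - 1 < 0$. For $0 < y < 1$, choose $n \in \bZnn$ with $2^{-n-1} < y \leq 2^{-n}$ and a grid point $x' \in \sR_{n}^R$ with $|x - x'| \leq R \, 2^{-n}$ (available since $|x| < R$); then $|x-x'|/y < 2R$ and $y/2^{-n} \in (\tfrac12, 1] \subset [\tfrac12, 2]$. Applying item~\ref{item: Koebe2} of Lemma~\ref{lem: Koebe} to the conformal map $\zeta \mapsto h_t(\zeta + x')$ at the point $\zeta = (x - x') + \ii y$, and then item~\ref{item: Koebe1} of that lemma, gives
\begin{align*}
|h_t'(x + \ii y)| \; \lesssim \; (1 + 4R^2)^{3} \, |h_t'(x' + \ii y)| \; \asymp \; (1 + 4R^2)^{3} \, |h_t'(x' + \ii 2^{-n})|
\end{align*}
with universal implicit constants. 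Corollary~\ref{cor: uniform spatial tail estimate for h}, whose hypotheses ($\lambda_\varepsilon < \maxjumpH{\kappa}$ and $0 < \theta < \ThetaHmax{\kappa}{\lambda_\varepsilon}$) coincide with those of the proposition, supplies an a.s.\ finite random constant $C_{\lambda_\varepsilon}^{\kappa}(\theta,T,R)$ with $\max_{t \in [0,T]} |h_t'(x' + \ii 2^{-n})| \leq C_{\lambda_\varepsilon}^{\kappa}(\theta,T,R) \, 2^{n(1-\theta)}$; and since $y \leq 2^{-n}$ and $\theta - 1 < 0$ we have $2^{n(1-\theta)} = (2^{-n})^{\theta-1} \leq y^{\theta-1}$. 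Chaining these bounds over $t \in [0,T]$ yields the required derivative estimate, e.g.\ with $C$ a universal multiple of $(1+4R^2)^3 \, C_{\lambda_\varepsilon}^{\kappa}(\theta,T,R) + e^{2T}$.

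The step I expect to require the most care is the grid-to-everywhere interpolation: one must verify that the horizontal-shift factor in item~\ref{item: Koebe2} of Lemma~\ref{lem: Koebe} and the rescaling factor in item~\ref{item: Koebe1} stay bounded by constants depending only on $R$ — which works precisely because the mesh $R\,2^{-n}$ is comparable to the height $2^{-n}$ — and that the exponent bookkeeping $2^{n(1-\theta)} \leq (\im z)^{\theta - 1}$ runs in the favorable direction (it does, as $y \leq 2^{-n}$ and $\theta - 1 < 0$). The genuinely analytic content — positivity of the admissible $\theta$ when $\kappa \neq 4$, and the Borel--Cantelli summability behind the uniform-in-time bound on the grid — is already contained in Lemma~\ref{lem: alphaprime and thetaprime} and Corollary~\ref{cor: uniform spatial tail estimate for h}.
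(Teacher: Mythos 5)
Your proof follows essentially the same route as the paper's: reduce to a derivative bound via Lemma~\ref{lem: global Holder continuity}, handle $\im z \geq 1$ with~\eqref{eq: derh bound}, and for $\im z < 1$ interpolate from the dyadic grid bound of Corollary~\ref{cor: uniform spatial tail estimate for h} to arbitrary points using Koebe distortion (Lemma~\ref{lem: Koebe}). If anything you are slightly more careful with the horizontal bookkeeping, correctly using the mesh $R\,2^{-n}$ of $\sR_n^R$ and tracking the resulting $R$-dependent distortion constant, where the paper's proof tacitly treats the mesh as $2^{-n}$.
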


\begin{proof}
By Lemma~\ref{lem: global Holder continuity}, 
it suffices to verify the bound~\eqref{eq: needed bound for derivative of confmap} uniformly for all $h_t'$ with $t \in [0,T]$.
\begin{itemize}[leftmargin=2em]
\item 
On the one hand, 
for each point $z = x + \ii y$ with 
$(x,y) \in (-R,R) \times \ii (0, 1)$, take $n \in \bZpos$ and $x_0 \in \sR_{n}^R \cap (-R,R)$ 
such that $2^{-n} \leq y < 2^{-n+1}$ and $x_0 \leq x < x_0 + 2^{-n}$.
Then, by Koebe distortion (Lemma~\ref{lem: Koebe}) 
and the estimate~\eqref{eq: needed bound for h' large radius} from Corollary~\ref{cor: uniform spatial tail estimate for h}, 
we obtain  
\begin{align*}
\max_{t \in [0,T]} |h_t'(x + \ii y)| 
\lesssim \max_{t \in [0,T]}  |h_t'(x_0 + \ii 2^{-n})|
\leq C_{\lambda_\varepsilon}^{\kappa}(\theta, T, R) \, y^{\theta-1} \qquad \textnormal{for all } (x,y) \in (-R,R) \times \ii (0, 1) .
\end{align*}

\item 
On the other hand, 
the estimate~\eqref{eq: derh bound} shows that
\begin{align*}
\max_{t \in [0,T]} |h_t'(x + \ii y)|  \leq e^{2T} \qquad \textnormal{for all } (x,y) \in (-R,R) \times [1, \infty) .
\end{align*}
\end{itemize}
This implies~\eqref{eq: needed bound for derivative of confmap} with almost surely finite constant $C_{\lambda_\varepsilon}^{\kappa}(\theta, T, R) \vee e^{2T}$. 
\end{proof}

\subsection{Drivers involving microscopic jumps and a linear drift}
\label{subsec: kappa is zero}

In this section, we consider driving functions with no diffusion part $(\kappa=0)$ but allowing microscopic jumps and a linear drift:
\begin{align} \label{eq: BM with micro and drift kappa is zero}
\microDriver_\varepsilon^{0, a}(t) = a t + \int_{|\jump| \leq \varepsilon} \jump \, \PoissonComp(t, \ud \jump) , \qquad
a \in \bR, \, \varepsilon > 0 ,
\end{align}
where $\smash{\PoissonComp}(t, \ud \jump) := \Poisson(t, \ud \jump) - t \nu(\ud \jump)$ is the compensated Poisson point process
of a Poisson point process $\Poisson$ with L\'evy intensity measure $\nu$. 
We derive estimates analogous to those obtained in Sections~\ref{subsec: main estimate}--\ref{subsec: Holder continuity}.

\subsubsection{Supermartingale in the case of microscopic jumps with linear drift}

Let $(h_t)_{t \geq 0}$ be the solution to~\eqref{eq: mBLE} with driving function 
$\smash{\microDriver_\varepsilon^{0, a}}$~\eqref{eq: BM with micro and drift kappa is zero}. 
Fix $z_0 = x_0 + \ii y_0 \in \bH$ and define the process 
\begin{align*}
Z(t) = Z_\varepsilon^{0, a}(t,z_0) := h_t(z_0) + \microDriver_\varepsilon^{0, a}(t) =: X(t) + \ii Y(t) .
\end{align*}
Also, define the process 
\begin{align} \label{eq: general martingale candidate kappa = 0}
M(t) = M_{p}^a(t,z_0) := |h_t'(z_0)|^p \, ( \sin \arg Z(t) )^{-2} \, e^{-a^2t} , \qquad 
p \in \bR .
\end{align}
Note that $M(0) = y_0^{-2} |z_0|^{2}$. 
The following estimates will be used in the proof of Proposition~\ref{prop: LLE micro jumps curve with drift}.

\begin{lem} \label{lem: SDE M kappa = 0 with drift}
Fix $a \in \bR$, a L\'evy measure $\nu$, and $\varepsilon > 0$. 
Let $(h_t)_{t \geq 0}$ be the solution to~\eqref{eq: mBLE} driven by $\smash{\microDriver_\varepsilon^{0, a}}$~\eqref{eq: BM with micro and drift kappa is zero},
fix $z_0 \in \bH$, 
and consider the process $M$ defined in~\eqref{eq: general martingale candidate kappa = 0}.
Then, for each $t \geq 0$, we have
\begin{align*}
M(t) = \; & \mgle(t)
\; + \; \int_0^t M(s-) \; \bigg( D_{p}(s)
+ \frac{2 a X(s-)}{|Z(s-)|^2} - a^2
\bigg) \, \ud s  , \\[1em]
\textnormal{where} \qquad 
D_{p}(s)
= \; & \frac{(\lambda_\varepsilon - 8 + 2 p) X(s-)^2 + (\lambda_\varepsilon - 2 p) Y(s-)^2}{|Z(s-)|^4} , \qquad 
p \in \bR ,
\end{align*}
and where 
\begin{align} 
\label{eq: supermgle N kappa = 0}
\mgle(t) := \; & 
M(0) \; 
+ \; \int_0^t M(s-) \int_{|\jump| \leq \varepsilon} \bigg( \bigg| \frac{Z(s-) + \jump}{Z(s-)} \bigg|^{2} - 1 \bigg) \, \PoissonComp(\ud s, \ud \jump)  .
\end{align}
\end{lem}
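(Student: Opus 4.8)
The plan is to mirror the proof of Lemma~\ref{lem: SDE M}, specialized to $\kappa = 0$ and $r = 1$, while carrying along the two new ingredients here: the linear drift $a$ and the damping factor $e^{-a^2 t}$. Write $Z(t) = h_t(z_0) + \microDriver_\varepsilon^{0,a}(t) = X(t) + \ii Y(t)$. First I would record, from~\eqref{eq: mBLE} and $\ud \microDriver_\varepsilon^{0,a}(t) = a\,\ud t + \int_{|\jump| \leq \varepsilon} \jump\,\PoissonComp(\ud t, \ud \jump)$, that
\[
\ud Z(t) \;=\; \Big( a - \tfrac{2}{Z(t-)} \Big)\,\ud t \;+\; \int_{|\jump| \leq \varepsilon} \jump\,\PoissonComp(\ud t, \ud \jump) ,
\]
so that $\ud Y(t) = \tfrac{2 Y(t-)}{|Z(t-)|^2}\,\ud t$ (exactly as for~\eqref{eq: Y}) and $\ud X(t) = \big( a - \tfrac{2 X(t-)}{|Z(t-)|^2} \big)\,\ud t + \int_{|\jump| \leq \varepsilon} \jump\,\PoissonComp(\ud t, \ud \jump)$.

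Next I would decompose the three factors of $M(t) = |h_t'(z_0)|^p\,(\sin \arg Z(t))^{-2}\,e^{-a^2 t}$. The first, $|h_t'(z_0)|^p = \exp\big( 2p \int_0^t \tfrac{X(s-)^2 - Y(s-)^2}{|Z(s-)|^4}\,\ud s \big)$ by~\eqref{eq: mBLE}, and the third, $e^{-a^2 t}$, are both continuous and of finite variation, with It\^o differentials $2p\,|h_{t-}'(z_0)|^p \tfrac{X(t-)^2 - Y(t-)^2}{|Z(t-)|^4}\,\ud t$ and $-a^2 e^{-a^2 t}\,\ud t$ respectively. For the middle factor I would invoke the It\^o formula for $(\sin \arg Z(t))^{-2r}$ underlying Lemma~\ref{lem: SDE M} (i.e.\ Lemma~\ref{lem: SDE sin(arg(Z)) power} in Appendix~\ref{app: Ito calculus}), now with $\kappa = 0$, $r = 1$, and nonzero drift $a$: compared with the $a = 0$ case used in Lemma~\ref{lem: SDE M}, the only additional term is the contribution of the drift $a\,\ud t$ of $X$ through $\partial_X (\sin \arg Z)^{-2} = \tfrac{2 X}{|Z|^2}(\sin \arg Z)^{-2}$, i.e.\ an extra drift term $(\sin \arg Z(s-))^{-2}\,\tfrac{2 a X(s-)}{|Z(s-)|^2}$. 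Thus $(\sin \arg Z(t))^{-2}$ equals $(\sin \arg z_0)^{-2}$ plus the compensated-Poisson integral of $(\sin \arg Z(s-))^{-2}\big( |(Z(s-)+\jump)/Z(s-)|^2 - 1 \big)$ plus the $\ud s$-integral of $(\sin \arg Z(s-))^{-2}\big( D_1(s) + \tfrac{2 a X(s-)}{|Z(s-)|^2} \big)$, where $D_1(s) = -\tfrac{8 X(s-)^2}{|Z(s-)|^4} + \int_{|\jump| \leq \varepsilon} \big( |(Z(s-)+\jump)/Z(s-)|^2 - 1 - \tfrac{2 \jump X(s-)}{|Z(s-)|^2} \big)\,\nu(\ud \jump)$ is the $\kappa = 0$, $r = 1$ specialization of the drift $D_r$ in Lemma~\ref{lem: SDE M}.

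Then I would combine the three factors by the product rule. As two of them are continuous of finite variation, all quadratic covariations vanish, so $\ud M(t) = M(t-)\big[ \ud A(t)/A(t-) + \ud B(t)/B(t-) + \ud C(t)/C(t-) \big]$ with $A = |h'|^p$, $B = e^{-a^2 t}$, $C = (\sin \arg Z)^{-2}$; integrating gives $M(t) = \mgle(t) + \int_0^t M(s-)\big( 2p\,\tfrac{X(s-)^2 - Y(s-)^2}{|Z(s-)|^4} + D_1(s) + \tfrac{2 a X(s-)}{|Z(s-)|^2} - a^2 \big)\,\ud s$, with $\mgle$ exactly as in~\eqref{eq: supermgle N kappa = 0}. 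Finally I would simplify the drift: expanding $|Z(s-) + \jump|^2 = |Z(s-)|^2 + 2 X(s-)\jump + \jump^2$ gives $|(Z(s-)+\jump)/Z(s-)|^2 - 1 - \tfrac{2 \jump X(s-)}{|Z(s-)|^2} = \tfrac{\jump^2}{|Z(s-)|^2}$, so the $\nu$-integral in $D_1(s)$ equals $\tfrac{\lambda_\varepsilon}{|Z(s-)|^2} = \tfrac{\lambda_\varepsilon (X(s-)^2 + Y(s-)^2)}{|Z(s-)|^4}$ by~\eqref{eq: jump variance}; collecting the coefficients of $X(s-)^2$ and $Y(s-)^2$ then yields precisely $D_p(s) = \tfrac{(\lambda_\varepsilon - 8 + 2p) X(s-)^2 + (\lambda_\varepsilon - 2p) Y(s-)^2}{|Z(s-)|^4}$, and $M(0) = (\sin \arg z_0)^{-2} = |z_0|^2/y_0^2$ is immediate. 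The only delicate step is the jump-It\^o computation for $(\sin \arg Z(t))^{-2}$; but that is structurally identical to (and a specialization of) the one already carried out for Lemma~\ref{lem: SDE M}, so the rest is bookkeeping.
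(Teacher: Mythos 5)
Your proposal is correct and follows essentially the same route as the paper: you compute the It\^o differentials of $|h_t'(z_0)|^p$ and $e^{-a^2 t}$ directly, invoke Lemma~\ref{lem: SDE sin(arg(Z)) power} with $\kappa = 0$, $r = 1$ for $(\sin\arg Z)^{-2}$, combine via the product rule (correctly noting the quadratic covariations vanish since two factors are continuous of finite variation), and simplify the $\nu$-integral to $\lambda_\varepsilon (X^2+Y^2)/|Z|^4$ using $|Z+\jump|^2/|Z|^2 - 1 - 2\jump X/|Z|^2 = \jump^2/|Z|^2$. This is exactly the paper's argument, just written out slightly more explicitly at the product-rule step.
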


\begin{proof}
As in Section~\ref{subsec: derivative local martingale}, by~\eqref{eq: mBLE} and a straightforward application of It\^o's formula, we have
\begin{align*}
|h_t'(z_0)|^p 
\; = \; \; & 1 \; + \; 2 p \int_0^t |h_{s}'(z_0)|^{p} \; \frac{X(s-)^2 - Y(s-)^2}{|Z(s-)|^4} \, \ud s ,
\end{align*}
and a tedious application of It\^o's formula (see Lemma~\ref{lem: SDE sin(arg(Z)) power} in Appendix~\ref{app: Ito calculus} with $\kappa=0$ and $r=1$) gives
\begin{align*}
( \sin \arg Z(t) )^{-2} 
\; = \; \; & ( \sin\arg z_0 )^{-2} 
\; + \; 2 a \int_0^t ( \sin \arg Z(s-) )^{-2} \; \frac{X(s-)}{|Z(s-)|^2} \, \ud s \\
\; \; & 
+ \; \int_0^t \int_{|\jump| \leq \varepsilon} ( \sin \arg Z(s-) )^{-2} \; \bigg( \bigg| \frac{Z(s-) + \jump}{Z(s-)} \bigg|^{2} - 1 \bigg) \, \PoissonComp(\ud s, \ud \jump) \\
\; \; & 
- \; 8 \, \int_0^t( \sin \arg Z(s-) )^{-2}
\; \frac{X(s-)^2}{|Z(s-)|^4} 
\, \ud s \\
\; \; & 
+ \; \int_0^t \int_{|\jump| \leq \varepsilon} ( \sin \arg Z(s-) )^{-2} \; \bigg( \bigg| \frac{Z(s-) + \jump}{Z(s-)} \bigg|^{2} - 1 - \frac{2 \jump X(s-)}{|Z(s-)|^2} \bigg) \, \nu(\ud \jump) \ud s.
\end{align*} 
Moreover, note that 
\begin{align*}
\bigg| \frac{Z(s-) + \jump}{Z(s-)} \bigg|^{2}
\; = \; \frac{(X(s-) + \jump)^2 + Y(s-)^2}{|Z(s-)|^2} 
\; = \; 1 + \frac{2 \jump X(s-)}{|Z(s-)|^2} + \frac{\jump^2}{|Z(s-)|^2}.
\end{align*}
Combining these, we obtain the asserted identity for $M$.
\end{proof}

As in Section~\ref{subsec: derivative local martingale}, 
for a suitable $p$ and $\lambda_\varepsilon$ small enough, 
$M$ can be bounded by a supermartingale $\mgle$.
We set 
\begin{align} \label{eq: lambdamax kappa = 0}
\maxjumpH{p} := 7 - 2p .
\end{align}
Note also that when $p \in (3,7/2)$, the quantity $\maxjumpH{p}$ defined in~\eqref{eq: lambdamax kappa = 0} is maximized at $p=3$: $\maxjumpH{3} = 1$. 
We will use this value in Corollary~\ref{cor: LLE micro jumps Holder with drift}.

\begin{prop} \label{prop: M supermgle, kappa = 0}
Fix $a \in \bR$, a L\'evy measure $\nu$, and $\varepsilon > 0$.  
Fix $p \in (0,7/2)$ and $\varepsilon > 0$ such that 
\begin{align} \label{eq: lambdamax kappa = 0 bound}
\lambda_\varepsilon < 2p \wedge (7 - 2p) = 2p \wedge \maxjumpH{p} . 
\end{align}
Let $(h_t)_{t \geq 0}$ be the solution to~\eqref{eq: mBLE} driven by $\smash{\microDriver_\varepsilon^{0, a}}$~\eqref{eq: BM with micro and drift kappa is zero},
fix $z_0 \in \bH$, and consider the processes 
$M$ defined in~\eqref{eq: general martingale candidate kappa = 0}
and $\mgle$ defined in~\eqref{eq: supermgle N kappa = 0}. 
Then, we have $M(0) = \mgle(0)$ and $M(t) \leq \mgle(t)$ for all $t \geq 0$. 
\end{prop}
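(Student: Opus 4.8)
The plan is to derive the claim directly from the stochastic differential identity in Lemma~\ref{lem: SDE M kappa = 0 with drift}, together with the strict positivity of $M$. First I would record that, since the driving function $\smash{\microDriver_\varepsilon^{0, a}}$ is real-valued, the imaginary part of $Z(t) = h_t(z_0) + \smash{\microDriver_\varepsilon^{0, a}}(t)$ equals $\im(h_t(z_0)) > 0$, so $Z(t) \in \bH$ and hence
\[
M(t) = |h_t'(z_0)|^p \, (\sin \arg Z(t))^{-2} \, e^{-a^2 t} > 0 \qquad \textnormal{for all } t \geq 0 ;
\]
moreover $M(0) = y_0^{-2}|z_0|^2 = \mgle(0)$ is immediate from the definitions~\eqref{eq: general martingale candidate kappa = 0} and~\eqref{eq: supermgle N kappa = 0}.

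Next, Lemma~\ref{lem: SDE M kappa = 0 with drift} expresses $M(t) - \mgle(t)$ as $\int_0^t M(s-) \, \mathcal{D}(s) \, \ud s$ with drift coefficient $\mathcal{D}(s) = D_p(s) + 2aX(s-)/|Z(s-)|^2 - a^2$, so since $M(s-) > 0$ it suffices to show $\mathcal{D}(s) \leq 0$ pointwise. Writing $X = X(s-)$, $Y = Y(s-)$ and $|Z|^2 = X^2 + Y^2$, one combines the terms over the common denominator $|Z|^4$ to get $\mathcal{D}(s) = N/|Z|^4$ with
\[
N = (\lambda_\varepsilon - 8 + 2p) X^2 + (\lambda_\varepsilon - 2p) Y^2 + 2aX|Z|^2 - a^2|Z|^4 .
\]
The crucial manoeuvre is to isolate the drift contribution $2aX|Z|^2 - a^2|Z|^4$ --- the only part depending on $a$ --- and complete the square, $2aX|Z|^2 - a^2|Z|^4 = X^2 - (a|Z|^2 - X)^2 \leq X^2$, which yields
\[
N \; \leq \; (\lambda_\varepsilon - 7 + 2p) X^2 + (\lambda_\varepsilon - 2p) Y^2 .
\]
The hypothesis $\lambda_\varepsilon < 2p \wedge (7 - 2p)$ makes both coefficients strictly negative, so $N \leq 0$, hence $\mathcal{D}(s) \leq 0$ and therefore $M(t) \leq \mgle(t)$ for all $t \geq 0$.

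There is no genuine obstacle here once Lemma~\ref{lem: SDE M kappa = 0 with drift} is in hand: this proposition is the drift-analogue of Proposition~\ref{prop: M supermgle}, and the argument is in fact shorter, since with $\kappa = 0$ and $r = 1$ the only new feature is the linear drift term, which the completing-the-square step absorbs cleanly into the $X^2$-coefficient. I would also note in passing that this computation is what dictates the precise shape of condition~\eqref{eq: lambdamax kappa = 0 bound}: the constraint $\lambda_\varepsilon < 7 - 2p$ controls the $X^2$-coefficient (the $-8$ from the Loewner flow, the $+2p$ from the derivative term, and the $+1$ gained from completing the square), while $\lambda_\varepsilon < 2p$ controls the $Y^2$-coefficient.
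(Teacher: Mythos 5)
Your proof is correct and takes essentially the same approach as the paper's: the key step in both is completing the square on the $a$-dependent term $\frac{2aX}{|Z|^2} - a^2 = \frac{X^2}{|Z|^4} - \big(\frac{X}{|Z|^2} - a\big)^2 \leq \frac{X^2}{|Z|^4}$ to absorb the linear drift into the $X^2$-coefficient, after which the hypothesis $\lambda_\varepsilon < 2p \wedge (7-2p)$ makes both coefficients in the drift non-positive. Your closing observation about which inequality controls which coefficient is a nice gloss but doesn't change the route.
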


In particular, the process $\mgle$ in Proposition~\ref{prop: M supermgle, kappa = 0} 
is a non-negative local martingale, thus a supermartingale~\cite[Lemma~5.6.8]{Cohen-Elliott:Stochastic_calculus_and_applications}.
The supermartingale property yields
\begin{align*}
\EX[M(t)] \; \leq \; \EX[\mgle(t)] \; \leq \; \mgle(0) \; = \; M(0) \quad \textnormal{ for all } t \geq 0 . 
\end{align*}

\begin{proof}
First, we note that 
\begin{align*}
\frac{2a X(s-)}{|Z(s-)|^2} \; - \; a^2 
\; = \; \frac{X(s-)^2}{|Z(s-)|^4} \; - \; \bigg(\frac{X(s-)}{|Z(s-)|^2} - a \bigg)^2
\; \leq \; \frac{X(s-)^2}{|Z(s-)|^4} ,
\end{align*}
which implies by Lemma~\ref{lem: SDE M kappa = 0 with drift} that
\begin{align*}
M(t) 
\; \leq \; \; & \mgle(t) 
\; + \; \int_0^t M(s-) \; \bigg( \frac{(\lambda_\varepsilon - 7 + 2p) X(s-)^2 + (\lambda_\varepsilon - 2p) Y(s-)^2}{|Z(s-)|^4}
\bigg) \, \ud s.
\end{align*}
As in the proof of Proposition~\ref{prop: M supermgle}, 
the assumption~\eqref{eq: lambdamax kappa = 0 bound} shows that the drift is non-positive.
\end{proof}

\subsubsection{H\"older continuity in the case of microscopic jumps with linear drift}
\label{subsubsec: boundary estimate kappa = 0}

We next derive a uniform boundary estimate analogous to Lemma~\ref{lem: uniform spatial tail estimate for h} \&~Corollary~\ref{cor: uniform spatial tail estimate for h} involving the driving function $\smash{\microDriver_\varepsilon^{0, a}}$.

\begin{lem} \label{lem: uniform spatial tail estimate for h pure jump}
Fix $T > 0$, $a \in \bR$, and a L\'evy measure $\nu$. 
Fix $p \in (3,7/2)$ and $\varepsilon > 0$ such that 
$\lambda_\varepsilon < \maxjumpH{p}$ as in~\eqref{eq: lambdamax kappa = 0}. 
Then, the following hold 
for the solution $(h_t)_{t \geq 0}$ to~\eqref{eq: mBLE} driven by 
$\smash{\microDriver_\varepsilon^{0, a}}$~\eqref{eq: BM with micro and drift kappa is zero}.

\begin{enumerate}[label=\textnormal{(\alph*):}, ref=(\alph*)]
\item \label{item: uniform spatial tail estimate for h pure jump}
For any $R > 0$, for any $\theta \in (0,1)$, and for any 
$x = \ell R 2^{-n} \in \sR_{n}^R$ with $n \in \bZnn$, 
we have
\begin{align} \label{eq: uniform spatial tail estimate for h pure jump}
\PR \Big[ \max_{t \in [0,T]}  |h_t'(\ell R 2^{-n} + \ii 2^{-n})| \geq  2^{n (1-\theta)} \Big] 
\leq \; &
e^{a^2 T} \, (\ell^2 R^2 + 1) \, 2^{-n p (1 - \theta)} .
\end{align}

\medskip

\item \label{item: uniform spatial tail estimate for max h pure jump}
For any $R > 0$ and for any 
\begin{align} \label{eq: thetamax kappa = 0}
0 < \theta < \frac{p-3}{p} =: \theta(p) ,
\end{align} 
there exist almost surely finite random constants 
$\smash{C_{\lambda_\varepsilon}^{a}}(\theta, T, R)$
such that
\begin{align*}
\max_{t \in [0,T]} |h_t'(x + \ii 2^{-n})| \leq 
C_{\lambda_\varepsilon}^{a}(\theta, T, R) \, 2^{n (1-\theta)}   \qquad 
\textnormal{for all } n \in \bZnn \textnormal{ and }  x \in \sR_{n}^R  .
\end{align*}
\end{enumerate}
\end{lem}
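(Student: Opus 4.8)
The plan is to deduce both parts directly from the supermartingale domination of Proposition~\ref{prop: M supermgle, kappa = 0}, via Doob's maximal inequality for non-negative supermartingales. Unlike the diffusive setting of Lemma~\ref{lem: uniform spatial tail estimate for h}, here there is no Brownian component, so the time-change argument used there does not adapt cleanly; fortunately, applying Doob's inequality to $\mgle$ directly in the original time parameter is both simpler and suffices.

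\textbf{Part~\ref{item: uniform spatial tail estimate for h pure jump}.} Fix $z_0 = \ell R 2^{-n} + \ii 2^{-n}$, so $y_0 = 2^{-n}$ and, since $\sin\arg z_0 = y_0/|z_0|$, the process $M$ from~\eqref{eq: general martingale candidate kappa = 0} satisfies $M(0) = (\sin\arg z_0)^{-2} = |z_0|^2/y_0^2 = \ell^2 R^2 + 1$. The hypothesis $\lambda_\varepsilon < \maxjumpH{p}$ coincides with $\lambda_\varepsilon < 2p \wedge (7-2p)$ because $p \in (3,7/2) \subset (7/4,\infty)$ forces $2p \wedge (7-2p) = 7-2p = \maxjumpH{p}$; hence Proposition~\ref{prop: M supermgle, kappa = 0} applies and gives that $\mgle$ from~\eqref{eq: supermgle N kappa = 0} is a non-negative supermartingale with $\mgle(0) = M(0)$ and $M(t) \leq \mgle(t)$ for all $t \geq 0$. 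Using $0 \leq (\sin\arg Z(t))^2 \leq 1$ and $e^{a^2 t} \leq e^{a^2 T}$ for $t \in [0,T]$, the identity $|h_t'(z_0)|^p = M(t)\,(\sin\arg Z(t))^2\, e^{a^2 t}$ yields
\begin{align*}
|h_t'(z_0)|^p \; \leq \; e^{a^2 T}\, \mgle(t) \qquad \textnormal{for all } t \in [0,T] .
\end{align*}
Since $x \mapsto x^p$ is increasing, taking the supremum over $t \in [0,T]$ and invoking Doob's maximal inequality $\PR[\sup_{t \in [0,T]} \mgle(t) \geq \lambda] \leq \mgle(0)/\lambda$ for the non-negative supermartingale $\mgle$, we get, with $\zeta := 2^{n(1-\theta)}$,
\begin{align*}
\PR\Big[ \max_{t \in [0,T]} |h_t'(z_0)| \geq \zeta \Big]
\; & \leq \; \PR\Big[ \sup_{t \in [0,T]} \mgle(t) \geq e^{-a^2 T}\, \zeta^{p} \Big] \\
\; & \leq \; e^{a^2 T}\, \mgle(0)\, \zeta^{-p}
\; = \; e^{a^2 T}\, (\ell^2 R^2 + 1)\, 2^{-n p(1-\theta)} ,
\end{align*}
which is~\eqref{eq: uniform spatial tail estimate for h pure jump}.

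\textbf{Part~\ref{item: uniform spatial tail estimate for max h pure jump}.} Here I would run the Borel--Cantelli argument of Corollary~\ref{cor: uniform spatial tail estimate for h}. Summing the bound from part~\ref{item: uniform spatial tail estimate for h pure jump} over the grid points and using $\sum_{\ell=-2^{n}}^{2^{n}} (\ell^2 R^2 + 1) \lesssim (1+R^2)\, 2^{3n}$, one obtains
\begin{align*}
\sum_{n=0}^\infty \sum_{x \in \sR_{n}^R} \PR\Big[ \max_{t \in [0,T]} |h_t'(x + \ii 2^{-n})| \geq 2^{n(1-\theta)} \Big]
\; \lesssim \; e^{a^2 T}\,(1+R^2) \sum_{n=0}^\infty 2^{\,n(3 - p(1-\theta))} ,
\end{align*}
which is finite precisely when $p(1-\theta) > 3$, i.e.\ when $0 < \theta < \frac{p-3}{p} = \theta(p)$. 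By Borel--Cantelli, almost surely $\max_{t \in [0,T]} |h_t'(x + \ii 2^{-n})| < 2^{n(1-\theta)}$ for all but finitely many pairs $(n,x)$ with $x \in \sR_{n}^R$, so
\begin{align*}
C_{\lambda_\varepsilon}^{a}(\theta,T,R) := \sup_{n \in \bZnn,\, x \in \sR_{n}^R} 2^{-n(1-\theta)}\, \max_{t \in [0,T]} |h_t'(x + \ii 2^{-n})|
\end{align*}
is almost surely finite (the finitely many exceptional terms being finite by, e.g.,~\eqref{eq: derh bound}) and has the asserted property.

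The argument is essentially routine once Proposition~\ref{prop: M supermgle, kappa = 0} is available; the only point genuinely requiring care is the linear drift, which is precisely why the observable~\eqref{eq: general martingale candidate kappa = 0} carries the deterministic factor $e^{-a^2 t}$: being bounded on $[0,T]$, this factor is harmlessly absorbed into the constant $e^{a^2 T}$, which is also the reason the estimate must be stated on a compact time interval. I do not anticipate any further obstacle.
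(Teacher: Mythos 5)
Your proof is correct and takes essentially the same route as the paper: dominate $|h_t'(z_0)|^p$ by $e^{a^2 T}\mgle(t)$ via Proposition~\ref{prop: M supermgle, kappa = 0}, apply Doob's maximal $L^1$ inequality to the non-negative supermartingale $\mgle$ to get part~\ref{item: uniform spatial tail estimate for h pure jump}, then run Borel--Cantelli for part~\ref{item: uniform spatial tail estimate for max h pure jump}. The extra care you take in checking that $\lambda_\varepsilon < \maxjumpH{p}$ already implies $\lambda_\varepsilon < 2p \wedge \maxjumpH{p}$ when $p\in(3,7/2)$ is a useful clarification the paper leaves implicit.
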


\begin{proof}
For each $\zeta > 0$, using the supermartingale $\mgle$ from Proposition~\ref{prop: M supermgle, kappa = 0} we find that
\begin{align*}
\PR \Big[ \max_{t \in [0,T]}  |h_t'(z_0)| \geq \zeta \Big] 
\; = \; \; & \PR \Big[ \max_{t \in [0,T]} |h_t'(z_0)|^p \geq \zeta^p \Big] 
\; = \; \; \PR \Big[ \max_{t \in [0,T]} ( \sin \arg Z(t) )^2 \, e^{a^2 t} \, M(t) \geq \zeta^p \Big] \\
\; \leq \; \; & \PR \Big[ \max_{t \in [0,T]} M(t) \geq e^{-a^2 T} \, \zeta^p \Big] 
\; \leq \; \PR \Big[ \max_{t \in [0,T]} \mgle(t) \geq e^{-a^2 T} \, \zeta^p \Big] .
\end{align*}
From~\eqref{eq: supermgle N kappa = 0}, we see that $\mgle(0) = M(0) = ( \sin \arg Z(t) )^{-2} = \big( \frac{|z_0|}{y_0} \big)^2$, and Doob's $L^1$ supermartingale inequality
(e.g.~\cite[Theorem~5.1.2(i)]{Cohen-Elliott:Stochastic_calculus_and_applications}) 
implies that
\begin{align*}
\PR \Big[ \max_{t \in [0,T]} \mgle(t) \geq e^{-a^2 T} \, \zeta^p \Big] 
\; \leq \; \; &  e^{a^2 T} \,\zeta^{-p} \, \EX[ \mgle(0) ]
\; = \; e^{a^2 T} \, \zeta^{-p} \, \Big( \frac{|z_0|}{y_0} \Big)^2 .
\end{align*}
Taking $z_0 = \ell R 2^{-n} + \ii 2^{-n}$ and $\zeta = 2^{n (1-\theta)}$, we obtain the asserted~\eqref{eq: uniform spatial tail estimate for h pure jump}. 
Next, similarly as in the proof of Corollary~\ref{cor: uniform spatial tail estimate for h}, 
item~\ref{item: uniform spatial tail estimate for h pure jump}  shows that
\begin{align*}
\sum_{n = 0}^\infty \sum_{x \in \sR_{n}^R } 
\PR \Big[ \max_{t \in [0,T]}  |h_t'(x + \ii 2^{-n})| \geq 2^{n (1-\theta)} \Big] 
\lesssim \; & \sum_{n = 0}^\infty
2^{-n ( p (1 - \theta) - 3)}
<  \infty ,
\end{align*} 
convergent since $p (1 - \theta) - 3 > 0$ by~\eqref{eq: thetamax kappa = 0}.  
The Borel-Cantelli lemma now proves item~\ref{item: uniform spatial tail estimate for max h pure jump}.
\end{proof}

Using these estimates, we conclude with the uniform H\"older continuity.

\begin{prop} \label{prop: BLE micro jumps Holder with drift kappa = 0}
Fix $T > 0$, $a \in \bR$, and a L\'evy measure $\nu$.
Fix $p \in (3,7/2)$ and $\varepsilon > 0$ such that $\lambda_\varepsilon \leq \maxjumpH{p}$ as in~\eqref{eq: lambdamax kappa = 0}.
Let $(h_t)_{t \geq 0}$ be the solution to~\eqref{eq: mBLE} driven by 
$\smash{\microDriver_\varepsilon^{0, a}}$~\eqref{eq: BM with micro and drift kappa is zero}. 
Then, for any $R > 0$ and for any  
$0 < \theta < \theta(p)$ as in~\eqref{eq: thetamax kappa = 0}, 
there exist almost surely finite random constants 
$\smash{H_{\lambda_\varepsilon}^{a}}(\theta, T, R)$ 
such~that 
\begin{align*} 
| h_t(z) - h_t(w) | \leq 
H_{\lambda_\varepsilon}^{a}(\theta, T, R) \, 
\big( \, | z-w |^{\theta} \, \vee \, | z-w | \, \big) 
\qquad \textnormal{for all } t \in [0, T]
\textnormal{ and } z,w \in (-R,R) \times \ii (0,\infty)  .
\end{align*}
In particular, almost surely, each $h_t$ extends to a continuous function on $\overline{(-R,R) \times \ii (0,\infty)}$. 
\end{prop}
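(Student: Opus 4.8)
The plan is to follow \emph{verbatim} the proof of Proposition~\ref{prop: BLE micro jumps Holder}, simply replacing the Brownian-driven ingredients by their $\kappa=0$ counterparts developed in Section~\ref{subsubsec: boundary estimate kappa = 0}. By Lemma~\ref{lem: global Holder continuity}, it suffices to verify the derivative bound~\eqref{eq: needed bound for derivative of confmap} uniformly in $t\in[0,T]$: that is, to produce an almost surely finite constant $C=C(\theta,T,R)$ such that $\sup_{t\in[0,T]}|h_t'(z)|\leq C\,\big((\im z)^{\theta-1}\vee 1\big)$ for all $z\in(-R,R)\times\ii(0,\infty)$. Once this is in hand, Lemma~\ref{lem: global Holder continuity} immediately yields the asserted Hölder estimate and the continuous extension of each $h_t$ to $\overline{(-R,R)\times\ii(0,\infty)}$.

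First I would treat the region $\im z\in(0,1)$. Given $z=x+\ii y$ there, pick $n\in\bZpos$ with $2^{-n}\leq y<2^{-n+1}$ and a grid point $x_0\in\sR_{n}^R\cap(-R,R)$ with $x_0\leq x<x_0+2^{-n}$. Koebe distortion (items~\ref{item: Koebe1} and~\ref{item: Koebe2} of Lemma~\ref{lem: Koebe}) gives $|h_t'(x+\ii y)|\lesssim|h_t'(x_0+\ii 2^{-n})|$ with a universal implied constant, uniformly in $t$. Then Lemma~\ref{lem: uniform spatial tail estimate for h pure jump}\ref{item: uniform spatial tail estimate for max h pure jump} bounds $\max_{t\in[0,T]}|h_t'(x_0+\ii 2^{-n})|\leq C_{\lambda_\varepsilon}^{a}(\theta,T,R)\,2^{n(1-\theta)}$; since $2^{-n}\asymp y$, this is $\asymp y^{\theta-1}$, which is what we want. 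The hypothesis $0<\theta<\theta(p)$ is exactly what makes Lemma~\ref{lem: uniform spatial tail estimate for h pure jump}\ref{item: uniform spatial tail estimate for max h pure jump} applicable; and if in the Proposition one has the boundary case $\lambda_\varepsilon=\maxjumpH{p}$ rather than a strict inequality, one first replaces $p$ by a slightly smaller $p'\in(3,p]$ — legitimate because $\theta<(p-3)/p$ is strict and $\maxjumpH{\cdot}$ is decreasing, so $\theta<(p'-3)/p'$ and $\lambda_\varepsilon<\maxjumpH{p'}$ still hold — and applies the lemma with $p'$.

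Next I would treat the region $\im z\geq 1$ using the crude estimate~\eqref{eq: derh bound}, which gives $\max_{t\in[0,T]}|h_t'(x+\ii y)|\leq e^{2T}$ there. Combining the two regimes establishes~\eqref{eq: needed bound for derivative of confmap} with the almost surely finite constant $C_{\lambda_\varepsilon}^{a}(\theta,T,R)\vee e^{2T}$, completing the argument. I do not expect any genuine obstacle here: all the probabilistic content — the supermartingale domination of Proposition~\ref{prop: M supermgle, kappa = 0}, Doob's $L^1$ inequality, and the Borel--Cantelli summability of $\sum_n 2^{-n(p(1-\theta)-3)}$, where precisely the constraints $p\in(3,7/2)$, $\lambda_\varepsilon\leq 7-2p$, and $\theta<(p-3)/p$ enter — has already been packaged into Lemma~\ref{lem: uniform spatial tail estimate for h pure jump}, so the remaining work is the routine Koebe-distortion bookkeeping above, identical in structure to the $\kappa>0$ case.
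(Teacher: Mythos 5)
Your proposal is correct and matches the paper's own proof, which is simply the one-line remark ``proven like Proposition~\ref{prop: BLE micro jumps Holder}, using Lemma~\ref{lem: uniform spatial tail estimate for h pure jump}\ref{item: uniform spatial tail estimate for max h pure jump} instead of Corollary~\ref{cor: uniform spatial tail estimate for h}''; you have unpacked exactly that substitution, verifying~\eqref{eq: needed bound for derivative of confmap} on $\im z < 1$ via Koebe distortion and the grid estimate, on $\im z \geq 1$ via~\eqref{eq: derh bound}, and concluding with Lemma~\ref{lem: global Holder continuity}. Your observation that the statement's ``$\lambda_\varepsilon \leq \maxjumpH{p}$'' sits uneasily against the strict inequality required by Lemma~\ref{lem: uniform spatial tail estimate for h pure jump} (and by Proposition~\ref{prop: M supermgle, kappa = 0}) is a legitimate catch — this is almost certainly a typo for ``$<$'' in the paper, consistent with every other occurrence of $\maxjumpH{\cdot}$ in the text — and your perturbation of $p$ to a slightly smaller $p'$ is a clean way to cover the boundary case regardless.
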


\begin{proof}
This can be proven like Proposition~\ref{prop: BLE micro jumps Holder}, 
using Lemma~\ref{lem: uniform spatial tail estimate for h pure jump}~\ref{item: uniform spatial tail estimate for max h pure jump} 
instead of Corollary~\ref{cor: uniform spatial tail estimate for h}. 
\end{proof}

\bigskip{}
\section{\label{sec: forward bounds}Estimates for forward Loewner flow with martingale L\'evy drivers}
As in Section~\ref{subsec: derivative local martingale}, we fix a L\'evy measure $\nu$ 
and consider martingale driving functions of the form 
\begin{align} \label{eq: BM with micro again again} 
\microDriver_\varepsilon^{\kappa}(t) 
= \sqrt{\kappa} B(t) + \int_{|\jump| \leq \varepsilon} \jump \, \PoissonComp(t, \ud \jump) , \qquad
\kappa \geq 0 , \; 
\varepsilon > 0 ,
\end{align} 
where $B$ is a standard Brownian motion and 
$\smash{\PoissonComp}(t, \ud \jump) := \Poisson(t, \ud \jump) - t \nu(\ud \jump)$ is the compensated Poisson point process
of a Poisson point process $\Poisson$ independent of $B$ with L\'evy intensity measure $\nu$.

We also write 
\begin{align} \label{eq: BM with micro supremum} 
R_\varepsilon^{\kappa}(T) := \sup_{t \in [0,T]} | \microDriver_\varepsilon^{\kappa}(t) | .
\end{align}
Let $(g_t)_{t \geq 0}$ be a Loewner chain driven by $\smash{\microDriver_\varepsilon^{\kappa}}$ and let $(K_t)_{t \geq 0}$ be the corresponding hulls (obtained by solving the Loewner equation~\eqref{eq: LE}). 
Also, let $f_t:= g_t^{-1}$ be the inverse Loewner chain, and set
\begin{align*} 
\tilde{f}_t(w) := f_t(w + \microDriver_\varepsilon^{\kappa}(t)) , \qquad w \in \bH .
\end{align*}

In this section, we prove that the Loewner chain $(g_t)_{t \geq 0}$ 
is generated by a (c\`adl\`ag) trace (in the sense of Definition~\ref{def: Loewner trace}) 
under one of the following assumptions:
\begin{enumerate}[label=\textnormal{\bf{Ass.~\arabic*.}}, ref=Ass.~\arabic*]
\item \label{item: ass1}
either the diffusivity parameter $\kappa > 8$,

\medskip

\item \label{item: ass2}
or the diffusivity parameter $\kappa \in [0,8)$, and 
the variance measure of the L\'evy measure $\nu$ is locally (upper) Ahlfors regular near the origin in the sense of Definition~\ref{def: Ahlfors regular}.
\end{enumerate}
Our techniques are not strong enough to treat the case where the diffusivity parameter $\kappa = 8$. 
The reason for this is the same as for the case of a Brownian driver: the estimates~\cite{Rohde-Schramm:Basic_properties_of_SLE}, as do ours, fail when $\kappa = 8$. 
It is known that the modulus of continuity for the $\SLE_8$ curve (in the capacity parameterization) is logarithmic~\cite{Alvisio-Lawler:Modulus_of_continuity_of_SLE8, KMS:Regularity_of_the_SLE4_uniformizing_map_and_the_SLE8_trace}, 
and we expect that adding jumps will not improve the regularity.

\bigskip

For each L\'evy measure $\nu$, 
define a Borel measure $\mu_\nu$ by $\mu_\nu(A) := \int_A \jump^2 \, \nu(d\jump)$ for all Borel sets $A \subset \bR$. 
We call $\mu_\nu$ the \emph{variance measure} of the L\'evy measure $\nu$. 
Note that $\mu_\nu(\cdot)$ is finite for bounded Borel sets. 

\begin{defn} \label{def: Ahlfors regular}
We say that the variance measure $\mu_\nu$ of the L\'evy measure $\nu$ is \emph{locally (upper) Ahlfors regular near the origin} if the following holds.
There exists $\epsilon_\nu \in (0,1/2)$ 
only depending on $\nu$ such that  
the restriction of $\mu_\nu$ to $[-\epsilon_\nu, \epsilon_\nu]$ is upper Ahlfors regular:
there exist constants 
$\alpha_\nu, c_\nu \in (0,\infty)$ 
and $\rho_\nu \in (0,1)$ 
only depending on $\nu$ such that 
for any $x \in [-\epsilon_\nu, \epsilon_\nu]$ and 
for any $\rho < \rho_\nu$, we have
\begin{align} \label{eq: Ahlfors regularity}
\mu_\nu ((x-\rho,x+\rho) \cap [-\epsilon_\nu, \epsilon_\nu]) 
\; = \; \int_{x-\rho}^{x+\rho} \jump^2 \, \one_{[-\epsilon_\nu, \epsilon_\nu]}(\jump) \, \nu(\ud \jump)
\; \leq \; c_\nu \, \rho^{\alpha_\nu} .
\end{align}
Note that this implies that $\mu_\nu$ is dominated by the Lebesgue measure near the origin 
\textnormal{(}in particular, $\nu$ does not have atoms accumulating to the origin, but it may have atoms elsewhere\textnormal{)}.
\end{defn}

We define
\begin{align} \label{eq: lambda max}
\maxjumpT{\kappa} := 
\begin{cases}
\frac{7}{128} > 0 , & \kappa = 0 , \\[.5em]
\dfrac{\kappa}{2^{\frac{4}{\kappa} + \frac{3}{2}}} \, \dfrac{8 - \kappa}{3 \kappa +8} > 0 , & \kappa \in (0,8)  , \\[.8em]
\frac{1}{2}(\kappa-8) > 0 , & \kappa > 8 .
\end{cases}
\end{align}

\begin{restatable}{thm}{TraceExistsThm} 
\label{thm: LLE micro jumps curve}
Fix $T > 0$, $\kappa \in [0,\infty) \setminus\{8\}$, and a L\'evy measure $\nu$. Suppose that either~\ref{item: ass1} or~\ref{item: ass2} holds, and fix $\varepsilon > 0$ 
such that $\lambda_\varepsilon < \maxjumpT{\kappa}$ as in~\eqref{eq: lambda max} and $\varepsilon \in (0,\epsilon_\nu \wedge \tfrac{1}{2} \rho_\nu]$ under~\ref{item: ass2}.  
Then, the Loewner chain driven by $\microDriver_\varepsilon^{\kappa}$~\eqref{eq: BM with micro again} 
is almost surely generated by a c\`adl\`ag curve on $[0, T]$.
\end{restatable}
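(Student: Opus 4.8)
The plan is to verify the sufficient condition for the existence of a c\`adl\`ag trace given in Proposition~\ref{prop: sufficient condition curve}, namely the uniform-in-time derivative bound~\eqref{eq: needed uniform bound for f'}: there exists $\theta \in (0,1)$ such that $|f_t'(\microDriver_\varepsilon^{\kappa}(t) + \ii 2^{-n})| \lesssim 2^{n(1-\theta)}$ for all $n \in \bZpos$ and $t \in [0,T]$. Because general L\'evy processes do not admit a strong c\`adl\`ag modulus of continuity, it does not suffice to check this at dyadic times only; instead I must establish the bound uniformly over \emph{all} times $t\in[0,T]$ simultaneously. This forces the use of the \emph{forward} Loewner flow together with the grid approximation of Section~\ref{subsec: grid preli}: by Lemma~\ref{lem: grid}, if $|\tilde f_t'(\ii\delta)| \geq u$ for some $t\in[0,T]$, then there is a grid point $z_0 \in \Grid(u\delta, T, R_\varepsilon^{\kappa}(T))$ with $|g_t(z_0) - \microDriver_\varepsilon^{\kappa}(t) - \ii\delta| \leq \delta/2$ and $|g_t'(z_0)| \leq \tfrac{80}{27}\tfrac{1}{u}$. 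So the event $\{\sup_{t\in[0,T]}|\tilde f_t'(\ii 2^{-n})| \geq 2^{n\theta}\}$ is contained in a union, over the (polynomially many, by Lemma~\ref{lem: sum over grid}) grid points $z_0$, of events forcing $|g_t'(z_0)|$ to be small and $\im(g_t(z_0))$ to stay near $2^{-n}$ for some $t$ — and these are events about the \emph{forward} flow started from a fixed deterministic point.

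The core of the argument is therefore a tail estimate for the forward flow: for a fixed starting point $z_0 = x_0 + \ii y_0$ with $y_0 \in [a, \sqrt{1+4T}]$, one must bound the probability that there exists $t \in [0,T]$ with $|g_t'(z_0)|$ small while $\im(g_t(z_0))$ has not grown much past $y_0$. Following the pattern of Section~\ref{subsec: derivative local martingale}, I expect Section~\ref{sec: forward bounds} to construct a process of the form $|g_t'(z_0)|^p \, Y(t)^q \, (\sin\arg Z(t))^{-2r}$ built from the forward flow $Z(t) = g_t(z_0) - \microDriver_\varepsilon^{\kappa}(t)$, compute its It\^o differential (the Brownian part and the compensated-Poisson part being martingales, the drift controlled by an inequality of the type $(1+x)^r \leq 1 + rx$ plus the definition of $\lambda_\varepsilon$), and then, under the constraint $\lambda_\varepsilon < \maxjumpT{\kappa}$ and $\varepsilon$ small enough (and, when $\kappa < 8$, using the local Ahlfors regularity from Definition~\ref{def: Ahlfors regular} to absorb the remaining jump-measure contribution to the drift), show that this process is dominated by a non-negative supermartingale. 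A Doob / optional-stopping argument then yields a tail bound, one then sums over the grid using Lemma~\ref{lem: sum over grid} with the polynomial-in-$(T,R)$ constant $c_{\rm grid}$, controls $R_\varepsilon^{\kappa}(T)$ by standard maximal estimates for L\'evy martingales, and applies Borel--Cantelli over $n \in \bZpos$ to deduce that almost surely $\sup_{t\in[0,T]}|\tilde f_t'(\ii 2^{-n})| \leq 2^{n\theta}$ for all but finitely many $n$; after adjusting $\theta$ this is exactly~\eqref{eq: needed uniform bound for f'}. Proposition~\ref{prop: sufficient condition curve} then gives the c\`adl\`ag trace, finishing the proof.

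The decomposition into the three regimes of~\eqref{eq: lambda max} reflects where the difficulty lies. When $\kappa > 8$ (Ass.~1), the drift of the supermartingale candidate is favorable on its own — the ``$\kappa - 8$'' type term has the right sign — so no regularity of $\nu$ is needed and $\maxjumpT{\kappa} = \tfrac12(\kappa-8)$ just quantifies how much microscopic-jump variance the argument tolerates. When $\kappa \in [0,8)$, the analogous term has the wrong sign near the driving point, and the only place it can be beaten is where the forward image point $Z(s-)$ comes close to the real axis; there the jump-measure integral $\int (|\tfrac{Z+\jump}{Z}|^{2r} - 1 - \tfrac{2r\jump X}{|Z|^2})\,\nu(\ud\jump)$ is what threatens to blow up as ``bubbles'' get closed by accumulated jumps rather than by a continuous trace, and the local upper Ahlfors regularity~\eqref{eq: Ahlfors regularity} is precisely the hypothesis that tames this integral (it ensures $\mu_\nu$ is sub-Lebesgue near $0$, so the singular contribution of nearby jumps to the drift is integrable against the relevant kernel). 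I expect the main obstacle to be exactly this: obtaining the supermartingale domination for $\kappa \in [0,8)$ with an \emph{explicit} threshold $\maxjumpT{\kappa}$, which requires delicately balancing the exponents $p, q, r$ (as in Lemma~\ref{lem: alphaprime and thetaprime} for the backward flow, but now with the grid-sum exponents $q - 2r$ from Lemma~\ref{lem: sum over grid} also entering, so that the Borel--Cantelli sum over the grid and over $n$ still converges) against the constant $c_\nu, \alpha_\nu$ from the Ahlfors assumption — a somewhat intricate but, as the authors remark, robust (non-sharp) tuning. A secondary subtlety is handling the optional-stopping / supremum-over-$t$ step for a supermartingale with jumps (one works with the time-changed process and splits $[0,T]$ along dyadic values of $Y(t)$, as in the proof of Lemma~\ref{lem: uniform spatial tail estimate for h}), and keeping all constants polynomial in $T$ and $R_\varepsilon^{\kappa}(T)$ so the final grid sum converges.
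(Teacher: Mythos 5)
Your outline matches the paper's actual proof: Theorem~\ref{thm: LLE micro jumps curve} is exactly the combination of Proposition~\ref{prop: sufficient condition curve} with the uniform-in-time derivative bounds of Propositions~\ref{prop: needed uniform bound for f' 1}~\&~\ref{prop: needed uniform bound for f' 2}, which are proved via the grid reduction (Lemma~\ref{lem: grid}), the forward-flow process~\eqref{eq: general forward martingale candidate}, the supermartingale-type bounds of Propositions~\ref{prop: M forward supermgle case 1}~\&~\ref{prop: M forward supermgle case 2}, the stopping-time/OST estimate of Lemma~\ref{lem: forward bound}, the grid sum of Lemma~\ref{lem: sum over grid}, and Borel--Cantelli, exactly as you describe. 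The only slight looseness is in the $\kappa<8$ regime: the Ahlfors regularity does not let one absorb the offending jump drift into a supermartingale, but rather leaves a residual increasing process $L(t)$ (Proposition~\ref{prop: M forward supermgle case 2}) whose expectation at the stopping time $S_n$ must be estimated separately and shown summable over the grid (Propositions~\ref{prop: summability of event of interest 2}~\&~\ref{prop: summability of event of interest 3}) --- this is the genuinely intricate part you correctly flagged as the main obstacle.
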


In order to establish the existence of the Loewner trace, we will derive 
an estimate for the derivative of the inverse map $f_t$ near the driving point $\microDriver_\varepsilon^{\kappa}(t)$ uniformly in time (Propositions~\ref{prop: needed uniform bound for f' 1}~\&~\ref{prop: needed uniform bound for f' 2}).
The existence of the Loewner trace then follows immediately from standard Loewner theory.

\begin{proof}
This is a consequence of Proposition~\ref{prop: sufficient condition curve}: the required bound~\eqref{eq: needed uniform bound for f'} is 
given by~(\ref{eq: needed uniform bound for f' holds 1},~\ref{eq: needed uniform bound for f' holds 2}). 
\end{proof}

The remainder of this section is devoted to proving 
the estimates~(\ref{eq: needed uniform bound for f' holds 1},~\ref{eq: needed uniform bound for f' holds 2}) in Propositions~\ref{prop: needed uniform bound for f' 1}~\&~\ref{prop: needed uniform bound for f' 2} under the two conditions~\ref{item: ass1} and~\ref{item: ass2}, respectively. 
While the idea is similar to the strategy used already in Section~\ref{sec: backward bounds}, in the present case deriving the estimates is significantly harder. 
First of all, \emph{pointwise in time} estimates for the inverse map $f_t:= g_t^{-1}$ 
obtained by techniques from Section~\ref{sec: backward bounds} with the mirror backward Loewner flow seem not sufficient, since there is no guarantee for a general L\'evy process to satisfy 
a c\`adl\`ag modulus of continuity analogous to that of Brownian motion, rendering the usage of estimates only at countably many time instants insufficient\footnote{In contrast to the case of Brownian drivers~\cite{Rohde-Schramm:Basic_properties_of_SLE}.}. 
We hence work directly with the forward Loewner flow and derive estimates \emph{uniformly in time}.
To this end, we shall employ a the discrete grid approximation discussed in Section~\ref{subsec: grid preli}~\cite{MSY:On_Loewner_chains_driven_by_semimartingales_and_complex_Bessel-type_SDEs, Yuan:Refined_regularity_of_SLE}.

Even with the new methodology for the forward Loewner flow, regarding a general L\'evy process, 
the estimates seem to only go through in full generality in the special case 
where the diffusivity parameter $\kappa > 8$ (\ref{item: ass1}) --- 
which for $\SLE_\kappa$ curves corresponds to the space-filling region. 
We believe that the strong diffusion smoothens the frontier of the Loewner hull in the space-filling phase 
also in the case of arbitrary jumps being present, 
which roughly speaking implies that the derivative of the Loewner chain stays in good control 
with very high probability.
In contrast, when the diffusivity parameter $\kappa < 8$, our arguments rely on the additional local upper Ahlfors regularity condition (\ref{item: ass2}), which covers most examples of L\'evy measures, but which --- we believe --- could potentially be relaxed.

\subsection{Main derivative estimate}

Our aim now is to prove the estimate~\eqref{eq: needed uniform bound for f'} appearing as the key input in Proposition~\ref{prop: sufficient condition curve} for the derivative of $f_t$.
This implies the existence of the Loewner trace.

Under~\ref{item: ass1}, we define (see also Lemma~\ref{lem: alphaprimetilde and thetaprimetilde} and Figure~\ref{fig: Alphaprime and Thetaprime 3})
\begin{align} \label{eq: Thetatilde forward case 1}
\ThetaTmax{\kappa}{\lambda_\varepsilon} := \frac{2 (\kappa - 8) (\kappa - 2 (\lambda_\varepsilon + 4))}{(\kappa + 8) (3 \kappa + 8)} 
 \; \in \; (0,2/3) , 
\qquad \textnormal{ when } \; \kappa > 8 \; \textnormal{ and } \; \lambda_\varepsilon < \maxjumpT{\kappa} .
\end{align}

\begin{prop} 
\label{prop: needed uniform bound for f' 1}
Fix $T > 0$, $\kappa \in (8,\infty)$, a L\'evy measure $\nu$, and $\varepsilon > 0$ such that 
$\lambda_\varepsilon < \maxjumpT{\kappa}$ as in~\eqref{eq: lambda max}. 
Let $(f_t)_{t \geq 0} = (g_t^{-1})_{t \geq 0}$ be the inverse Loewner chain, where $(g_t)_{t \geq 0}$ is the solution to~\eqref{eq: LE} driven by $\smash{\microDriver_\varepsilon^{\kappa}}$~\eqref{eq: BM with micro again again}. 
Then, for any $0 < \theta < \ThetaTmax{\kappa}{\lambda_\varepsilon}$ as in~\eqref{eq: Thetatilde forward case 1}, 
there exist almost surely finite random constants $\smash{C_{\lambda_\varepsilon}^{\kappa}}(\theta, T)$ 
such that 
\begin{align} \label{eq: needed uniform bound for f' holds 1}
\max_{t \in [0,T]}
|f_t'(\microDriver_\varepsilon^{\kappa}(t) + \ii 2^{-n})| \leq 
C_{\lambda_\varepsilon}^{\kappa}(\theta, T) \, 2^{n (1-\theta)}  \qquad 
\textnormal{for all } n \in \bZpos . 
\end{align}
\end{prop}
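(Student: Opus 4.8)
The plan is to mimic the strategy that worked for the backward flow in Section~\ref{sec: backward bounds} --- namely, find a nonnegative process built from $|g_t'(z_0)|$, the imaginary part of the forward flow, and $\sin\arg$, which with a judicious choice of exponents $(p,q,r)$ is dominated by a supermartingale --- but now run directly on the \emph{forward} Loewner flow $(t,z)\mapsto g_t(z)-\microDriver_\varepsilon^{\kappa}(t)$ so that the resulting moment bounds are not pinned to a single time. Concretely, for a grid point $z_0\in\bH$ set $Z(t):=g_t(z_0)-\microDriver_\varepsilon^{\kappa}(t)=X(t)+\ii Y(t)$ and consider a process of the shape $M(t)=|g_t'(z_0)|^{p}\,Y(t)^{q}\,(\sin\arg Z(t))^{-2r}$ (the forward analogue of~\eqref{eq: general martingale candidate}). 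Using~\eqref{eq: LE} and It\^o's formula for the Brownian and compensated-Poisson parts, I would compute $\ud M$, isolate the drift, and use the elementary bound $(1+x)^{r}\le 1+rx$ for $r\in(0,1]$, $x\ge-1$ to control the jump contribution by a multiple of $\lambda_\varepsilon/|Z|^{2}$ exactly as in Proposition~\ref{prop: M supermgle}. The key difference from the backward case is the sign structure of the Loewner drift $2p\,\mathrm{Re}(1/Z^{2})$ and the $Y$-drift $2q/|Z|^{2}$: tuning $(p,q)$ to make the drift nonpositive leads to a relation of the form $p=\tfrac12 r(\kappa+4-\kappa r)$ combined with a condition forcing $\kappa>8$ (this is where the $\ThetaTmax{\kappa}{\lambda_\varepsilon}$ and $\maxjumpT{\kappa}$ thresholds in~\eqref{eq: Thetatilde forward case 1},~\eqref{eq: lambda max} come from), and one should verify $\lambda_\varepsilon<\maxjumpT{\kappa}$ gives a genuine gap, i.e.\ $p+q>1+2r$ (an analogue of Lemma~\ref{lem: alphaprime and thetaprime}, presumably stated as Lemma~\ref{lem: alphaprimetilde and thetaprimetilde}).

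Next I would convert the supermartingale estimate into a tail bound. From $\EX[M(t)]\le M(0)=\im(z_0)^{q-2r}|z_0|^{2r}$, together with $(\sin\arg Z)^{2r}\le 1$ and $Y(t)\le\sqrt{\im(z_0)^2+4T}$ uniformly on $[0,T]$, Markov's inequality gives a pointwise bound $\PR[|g_t'(z_0)|\ge\zeta]\lesssim$ (initial data)$\,\zeta^{-p}(\ldots)$. To upgrade to a \emph{time-uniform} statement one cannot use a modulus-of-continuity argument; instead one invokes the grid machinery of Section~\ref{subsec: grid preli}. The crucial link is Lemma~\ref{lem: grid}: if $|\tilde f_t'(\ii\delta)|\ge u$ at \emph{some} time $t\in[0,T]$, then there is a grid point $z_0\in\Grid(u\delta,T,R_\varepsilon^{\kappa}(T))$ with $|g_t'(z_0)|\le\tfrac{80}{27}u^{-1}$ and $|g_t(z_0)-\microDriver_\varepsilon^{\kappa}(t)-\ii\delta|\le\delta/2$; so large $|f_t'|$ forces a grid point whose forward derivative is \emph{small while its forward image stays near $\ii\delta$}. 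The subtle point --- and the one I expect to be the main obstacle --- is that a \emph{small} $|g_t'(z_0)|$ is exactly what the supermartingale does \emph{not} directly control (it controls large $|g_t'|$); the way around, following~\cite{MSY:On_Loewner_chains_driven_by_semimartingales_and_complex_Bessel-type_SDEs,Yuan:Refined_regularity_of_SLE}, is to track, along the forward flow, the hitting time at which $\im(g_s(z_0)-\microDriver_\varepsilon^{\kappa}(s))$ first descends to level $\delta$ and to estimate, via the \emph{same} supermartingale stopped at that time, the probability that at that hitting time $|g_s'(z_0)|$ is small --- i.e.\ one runs $M$ up to a stopping time adapted to the forward flow. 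This is where the delicate parameter tuning and, under~\ref{item: ass2}, the local upper Ahlfors regularity of $\mu_\nu$ (Definition~\ref{def: Ahlfors regular}) enters --- but here under~\ref{item: ass1} ($\kappa>8$) the strong diffusion makes the bubble-closing term harmless and no such hypothesis is needed.

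With the stopping-time moment bound in hand, I would run a Borel--Cantelli argument over the grid: summing $\PR[\exists t\in[0,T]:|\tilde f_t'(\ii 2^{-n})|\ge 2^{n(1-\theta)}]$ over $n$, each term is bounded (via Lemma~\ref{lem: grid} and the moment estimate) by a sum over $\Grid(2^{-n\theta},T,R_\varepsilon^{\kappa}(T))$ of the initial quantity $\im(z_0)^{q-2r}|z_0|^{2r}$ weighted by the $\zeta^{-p}$-type factor. The grid-sum estimate Lemma~\ref{lem: sum over grid} controls $\sum_{z_0\in\Grid}\im(z_0)^{q-2r}|z_0|^{2r}$ by $c_{\rm grid}(q,r,T,R)\,\chi_{q,r}(a)$ with $a=2^{-n\theta}$, and one checks that for $0<\theta<\ThetaTmax{\kappa}{\lambda_\varepsilon}$ the resulting exponent of $2^{-n}$ is summable --- this is exactly the content of choosing $\theta$ below the threshold. (One minor point: $R_\varepsilon^{\kappa}(T)=\sup_{[0,T]}|\microDriver_\varepsilon^{\kappa}|$ is random, so one either conditions on $\{R_\varepsilon^{\kappa}(T)\le R\}$ and lets $R\uparrow\infty$, or uses that $c_{\rm grid}$ is polynomial in $R$ together with finite moments of $R_\varepsilon^{\kappa}(T)$ to keep the sum finite a.s.) Borel--Cantelli then yields that a.s.\ $\max_{t\in[0,T]}|\tilde f_t'(\ii 2^{-n})|\le 2^{n(1-\theta)}$ for all but finitely many $n$, and defining
\begin{align*}
C_{\lambda_\varepsilon}^{\kappa}(\theta,T):=\sup_{n\in\bZpos} 2^{-n(1-\theta)}\max_{t\in[0,T]}|f_t'(\microDriver_\varepsilon^{\kappa}(t)+\ii 2^{-n})|
\end{align*}
gives the desired a.s.\ finite random constant, which is~\eqref{eq: needed uniform bound for f' holds 1}. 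The last step is routine; the real work is the forward-flow supermartingale computation and, especially, the stopping-time reformulation that makes ``small derivative'' accessible.
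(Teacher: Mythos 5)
Your high-level architecture matches the paper's: a forward process of the shape $|g_t'(z_0)|^p\,Y(t)^q\,(\sin\arg Z(t))^{-2r}$ dominated by a supermartingale (Lemma~\ref{lem: SDE forward M}, Proposition~\ref{prop: M forward supermgle case 1}), translated via Lemma~\ref{lem: grid} into grid events $E_n^\theta(z_0)$, a stopping time adapted to the forward flow, a grid-sum estimate (Lemma~\ref{lem: sum over grid}), Borel--Cantelli, and finally a union over the cutoff events $\{R_\varepsilon^\kappa(T)\le R\}$, $R\in\bN$; the closing definition of $C_{\lambda_\varepsilon}^\kappa(\theta,T)$ as a supremum over $n$ is also exactly right.

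There is, however, a genuine conceptual gap in how you treat small $|g_t'(z_0)|$. You write that the supermartingale ``controls large $|g_t'|$'' and that the stopping time is ``the way around,'' but a stopping time cannot change which tail a moment bound controls. What actually resolves this is the sign of $p$: the forward drift-cancellation in Proposition~\ref{prop: M forward supermgle case 1} forces $p=\tfrac12 r(4-\kappa+\kappa r)$ --- note the sign flip in the $\kappa$ terms relative to the backward formula $p=\tfrac12 r(\kappa+4-\kappa r)$ that you wrote --- and with the choice $r=\rparamT(\kappa)=\tfrac14-\tfrac2\kappa$ of Lemma~\ref{lem: alphaprimetilde and thetaprimetilde} this gives $p<0$ precisely when $\kappa>8$. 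It is $p<0$ that lets Markov's inequality run in the correct direction in Lemma~\ref{lem: forward bound}: one rewrites $\{|\hat g_{S_{n}}'(z_0)|\le\zeta\}=\{|\hat g_{S_{n}}'(z_0)|^p\ge\zeta^p\}$, so the probability is bounded by $\zeta^{-p}\,\EX[|\hat g_{S_{n}}'(z_0)|^p]$ with $-p>0$. The stopping time $S_n$ in~\eqref{eq: bounded stopping time taun} enters for a separate reason: the grid event is a conjunction (position near $\ii2^{-n}$ \emph{and} small derivative) at the same time instant, and $S_n$ is chosen so that $\hat M$ is evaluated exactly when the position condition holds, which in turn controls $(\sin\arg\hat Z)^{2r}$ via $|\hat X|\le\hat Y$. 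Relatedly, the summability criterion you state as $p+q>1+2r$ is the backward-flow condition; in the forward case you need $p+q<0$, and more precisely $(1-\theta)p+q+2\theta<0$, which is exactly what $\lambda_\varepsilon<\maxjumpT{\kappa}=\tfrac12(\kappa-8)$ and $\theta<\ThetaTmax{\kappa}{\lambda_\varepsilon}=\tfrac{p+q}{p-2}$ deliver (see the proof of Proposition~\ref{prop: summability of event of interest 1}). Carrying the backward arithmetic over uncritically would make the sums you set up divergent.
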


Under~\ref{item: ass2}, we define (see also Lemmas~\ref{lem: alphaprimetilde and thetaprimetilde again}~\&~\ref{lem: alphaprimetilde and thetaprimetilde again kappa=0} 
and Figures~\ref{fig: Alphaprime and Thetaprime 4}~\&~\ref{fig: Alphaprime and Thetaprime 5})
\begin{align} \label{eq: betamax forward 2}
\maxbetaT{\kappa}{\lambda_\varepsilon} := \; & 
\begin{cases}
14 , & \kappa = 0 , \\[.3em]
\dfrac{(8 - \kappa)^2}{2 \kappa^2 + 2^{\frac{4}{\kappa} +\frac{5}{2}} (3 \kappa +8) \lambda_\varepsilon} 
\Big(1 - \dfrac{\lambda_\varepsilon}{\maxjumpT{\kappa}}
\Big) \; > \; 0 , & \kappa \in (0,8) ,
\end{cases}
\qquad \textnormal{ when } \; \lambda_\varepsilon < \maxjumpT{\kappa} ,
\end{align} 
and 
\begin{align}
\label{eq: Thetatilde forward case 2}
\ThetaTmaxbeta{\alpha}{\kappa} := \; & 
\begin{cases}
\dfrac{8 \, \alpha}{15 (\alpha + 2)}
\; \in \; (0,8/15) , & \kappa = 0 , \\[.5em]
\Big( \dfrac{32 \, \alpha \,  (8 - \kappa ) }{( \kappa + 48 + 64/\kappa ) (2 \alpha \kappa + 8 - \kappa)} \Big) \wedge 1 
\; \in \; (0,1] , & \kappa \in (0,8) ,
\end{cases}
\qquad \textnormal{ when } \; \alpha < \maxbetaT{\kappa}{\lambda_\varepsilon} .
\end{align}

\begin{prop} 
\label{prop: needed uniform bound for f' 2}
Fix $T > 0$, $\kappa \in [0,8)$, 
and a L\'evy measure $\nu$ 
whose variance measure $\mu_\nu$ satisfies the local upper $\alpha_\nu$-Ahlfors regularity~\eqref{eq: Ahlfors regularity}. 
Fix $\varepsilon > 0$ such that $\lambda_\varepsilon < \maxjumpT{\kappa}$ as in~\eqref{eq: lambda max}. 
Let $(f_t)_{t \geq 0} = (g_t^{-1})_{t \geq 0}$ be the inverse Loewner chain, where $(g_t)_{t \geq 0}$ is the solution to~\eqref{eq: LE} driven by $\smash{\microDriver_\varepsilon^{\kappa}}$~\eqref{eq: BM with micro again again}. 
Then, for any $\alpha \in (0,\maxbetaT{\kappa}{\lambda_\varepsilon} \wedge \alpha_\nu)$ as in~\eqref{eq: betamax forward 2} and 
for any 
$0 < \theta < \ThetaTmaxbeta{\alpha}{\kappa}$ as in~\eqref{eq: Thetatilde forward case 2}, 
there exist almost surely finite random constants $\smash{C_{\lambda_\varepsilon}^{\kappa}}(\theta, T)$ 
such that 
\begin{align} \label{eq: needed uniform bound for f' holds 2}
\max_{t \in [0,T]}
|f_t'(\microDriver_\varepsilon^{\kappa}(t) + \ii 2^{-n})| \leq 
C_{\lambda_\varepsilon}^{\kappa}(\theta, T) \, 2^{n (1-\theta)}  \qquad 
\textnormal{for all } n \in \bZpos . 
\end{align}
\end{prop}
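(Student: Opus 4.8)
To prove Proposition~\ref{prop: needed uniform bound for f' 2}, the plan is to follow the blueprint of Section~\ref{sec: backward bounds}, but now working directly with the \emph{forward} Loewner flow and the grid approximation of Section~\ref{subsec: grid preli}, so as to obtain a derivative estimate \emph{uniform} in $t \in [0,T]$ (which, as explained in the introduction, cannot be reduced to countably many times for a general L\'evy driver). First I would fix $\delta = 2^{-n}$ and $u = u_n := C_\ast\, 2^{n(1-\theta)}$ with $C_\ast$ to be chosen, so the mesh of the relevant grid is $u\delta = C_\ast\, 2^{-n\theta}$. By Lemma~\ref{lem: grid}, on the event that $|\tilde f_t'(\ii\delta)| \geq u$ for some $t \in [0,T]$ there is a grid point $z_0 \in \Grid(u\delta, T, R_\varepsilon^\kappa(T))$ with $\tau(z_0) > t$ (the disc around $\tilde f_t(\ii\delta)$ lies in $\bH \setminus K_t$ by Koebe) such that $|g_t(z_0) - \microDriver_\varepsilon^\kappa(t) - \ii\delta| \leq \delta/2$ and $|g_t'(z_0)| \leq \tfrac{80}{27}u^{-1}$. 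So it suffices, for each $z_0$, to bound the probability that the forward flow from $z_0$ has derivative of modulus $\leq \tfrac{80}{27}u^{-1}$ while $g_t(z_0) - \microDriver_\varepsilon^\kappa(t)$ stays within $\delta/2$ of $\ii\delta$ at some $t < \tau(z_0)\wedge T$, then sum over the grid and over $n$.

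Next I would introduce the forward-flow observable: for fixed $z_0 \in \bH$, writing $Z(t) = g_t(z_0) - \microDriver_\varepsilon^\kappa(t) = X(t)+\ii Y(t)$ for $t < \tau(z_0)$,
\[
M(t) := |g_t'(z_0)|^{-p}\, Y(t)^{q}\, (\sin\arg Z(t))^{-2\rparamT}, \qquad p > 0,\ q\in\bR,\ \rparamT\in(0,1],
\]
the reciprocal-power counterpart of~\eqref{eq: general martingale candidate} (reciprocal since a \emph{large} $|f_t'|$ means a \emph{small} $|g_t'|$), with $M(0) = \im(z_0)^{q-2\rparamT}|z_0|^{2\rparamT}$. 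Using $\partial_t^+ g_t'(z_0) = -2g_t'(z_0)/Z(t)^2$ and the It\^o expansion of $(\sin\arg Z(t))^{-2\rparamT}$ (as in Lemma~\ref{lem: SDE sin(arg(Z)) power}), I would write $M$ as a local martingale plus a drift $\int_0^t M(s-)(\cdots)\,\ud s$, and choose $\rparamT$, $p=p(\kappa,\rparamT)$, $q=q(\kappa,\lambda_\varepsilon,\rparamT)$ making the drift non-positive, so that $M$ is dominated by a non-negative supermartingale $\mgle$ with $\mgle(0)=M(0)$. \textbf{The hard part is this drift bound.} In the forward flow $Z(s-)$ can approach $0$ (exactly when $z_0$ is about to be swallowed), so the jump part of the drift,
\[
\int_{|\jump|\leq\varepsilon}\Big(\Big|\tfrac{Z(s-)+\jump}{Z(s-)}\Big|^{-2\rparamT} - 1 + \tfrac{2\rparamT\jump X(s-)}{|Z(s-)|^2}\Big)\,\nu(\ud\jump),
\]
must be controlled uniformly in $|X(s-)|$; this term is precisely where jumps could close a ``bubble'', and it is here that the local upper $\alpha_\nu$-Ahlfors regularity~\eqref{eq: Ahlfors regularity} of $\mu_\nu$ is used --- by decomposing the integral according to the dyadic distance of $\jump$ from $-X(s-)$ and bounding each piece's $\mu_\nu$-mass by $c_\nu\rho^{\alpha_\nu}$, one gets a bound of order $|Z(s-)|^{-2}$ times a quantity controllable by $\lambda_\varepsilon$ and $c_\nu$, which (after tuning $\varepsilon$ small and $\rparamT$ suitably) is absorbed by the negative diffusion part of the drift. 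The resulting admissible parameter ranges, including the thresholds $\maxbetaT{\kappa}{\lambda_\varepsilon}$ and $\ThetaTmaxbeta{\alpha}{\kappa}$, would come out of the companion Lemmas~\ref{lem: alphaprimetilde and thetaprimetilde again}~\&~\ref{lem: alphaprimetilde and thetaprimetilde again kappa=0}, with $\kappa=0$ and $\kappa\in(0,8)$ treated separately.

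Finally I would assemble the estimate. On the event from Step~1, $\sin\arg Z(t)\asymp1$ and $Y(t)\asymp\delta$, while $|g_t'(z_0)|^{-p}\gtrsim u^p$, so $M(t)\gtrsim u^p\delta^q$ there; Doob's maximal inequality for $\mgle$ (run to $\tau(z_0)\wedge T$) gives a per-grid-point bound $\lesssim u^{-p}\delta^{-q}\,\im(z_0)^{q-2\rparamT}|z_0|^{2\rparamT}$. Summing over $z_0\in\Grid(u\delta,T,R)$ by Lemma~\ref{lem: sum over grid} yields $\lesssim u^{-p}\delta^{-q}\,c_{\rm grid}(q,\rparamT,T,R)\,\chi_{q,\rparamT}(u\delta)$, which for $\delta=2^{-n}$ and $u=C_\ast 2^{n(1-\theta)}$ is $\lesssim n^{O(1)}\,c_{\rm grid}(q,\rparamT,T,R)\,2^{-n\beta}$ with $\beta=\beta(\theta,\kappa,\lambda_\varepsilon,\alpha_\nu)$; the companion lemmas show $\beta>1$ exactly when $\alpha<\maxbetaT{\kappa}{\lambda_\varepsilon}\wedge\alpha_\nu$ and $0<\theta<\ThetaTmaxbeta{\alpha}{\kappa}$ (this monotone dependence is why the admissible $\theta$-range shrinks with $\alpha_\nu$). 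To deal with the random width $R_\varepsilon^\kappa(T)$ of the grid I would intersect with $\{R_\varepsilon^\kappa(T)\leq m\}$, $m\in\bZpos$: on this event the relevant part of the grid sits inside $\Grid(u\delta,T,m)$, the bound becomes deterministic, and $\sum_n n^{O(1)}2^{-n\beta}<\infty$, so Borel--Cantelli gives that a.s.\ on $\{R_\varepsilon^\kappa(T)\leq m\}$ one has $|\tilde f_t'(\ii 2^{-n})|<C_\ast 2^{n(1-\theta)}$ for all but finitely many $n$; since $R_\varepsilon^\kappa(T)<\infty$ a.s., this holds a.s., and $C_{\lambda_\varepsilon}^\kappa(\theta,T):=\sup_{n}2^{-n(1-\theta)}\max_{t\in[0,T]}|\tilde f_t'(\ii 2^{-n})|$ is then the desired a.s.\ finite constant, using $|f_t'(\microDriver_\varepsilon^\kappa(t)+\ii 2^{-n})|=|\tilde f_t'(\ii 2^{-n})|$.
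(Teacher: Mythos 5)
Your outline correctly identifies the main ingredients --- the grid reduction via Lemma~\ref{lem: grid} to the events~\eqref{eq: event of interest}, the forward-flow observable $M = |g_t'|^p Y^q (\sin\arg Z)^{-2r}$ evaluated at the stopping time~\eqref{eq: bounded stopping time taun}, the Taylor estimate on the jump drift using Ahlfors regularity, and the Borel--Cantelli argument with a cutoff on $R_\varepsilon^\kappa(T)$. But a central claim in the middle step is false, and it is exactly the point that makes the $\kappa<8$ case hard. You assert that $\rparamT$, $p$, $q$ can be chosen ``making the drift non-positive, so that $M$ is dominated by a non-negative supermartingale $\mgle$ with $\mgle(0)=M(0)$'', and that the Ahlfors-controlled piece of the jump drift ``is absorbed by the negative diffusion part.'' It is not. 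The Ahlfors-regularity bound on $\int_{|v|\le\varepsilon} F_r(v)\,\nu(\ud v)$ produces, in the region $|X|\ge Y$, a residual contribution of order $\frac{\lambda_\varepsilon+c_\nu}{|Z|^2}\big(\tfrac{|Z|^2}{Y^2}\big)^{-r} Y^{\alpha\varsigma_r(\alpha)}$, and for $r<0$ the factor $\big(\tfrac{|Z|^2}{Y^2}\big)^{-r}\ge 1$ blows up as $|X|/Y\to\infty$, so it cannot be dominated by any multiple of $X^2/|Z|^4$ or $1/|Z|^2$. The correct conclusion (Proposition~\ref{prop: M forward supermgle case 2}) is only $M(t)\le\mgle(t)+L(t)$ with an explicit increasing process $L$, and the per-grid-point bound then has two pieces: $\hat M(0)$ \emph{and} $\EX[\hat L(S_n)]$, each of which must be summed over the grid with its own exponent count (Term~1 and Term~2 of Proposition~\ref{prop: summability of event of interest 2}). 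Your sketch omits the $L$-contribution entirely, so the argument as written does not close.

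Related to this, your declared parameter range $\rparamT\in(0,1]$ is wrong for $\kappa<8$; the paper uses $r=\rparamT(\kappa)=\tfrac14-\tfrac2\kappa<0$ for $\kappa\in(0,8)$ and $r=-1$ when $\kappa=0$. The two issues are two sides of the same coin. With $r\in(0,1]$ the elementary bound $(1+x)^r\le1+rx$ gives $F_r(v)\le rv^2/|Z|^2$, so the jump drift \emph{is} absorbed without any Ahlfors hypothesis --- this is precisely Proposition~\ref{prop: M forward supermgle case 1}, used for $\kappa>8$. But for $\kappa<8$ and $r>0$ the exponents come out with the wrong signs: with either parameter choice (\eqref{eq: p and q forward alternative} or \eqref{eq: p and q forward nonzero kappa}) one gets $p>0$, so Markov's inequality for $|g_t'|^p$ no longer bounds the event $\{|g_t'|\lesssim 2^{-n(1-\theta)}\}$, and in addition $p+q>0$ so the grid sum in Lemma~\ref{lem: sum over grid} diverges. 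Only $r<0$ produces $p<0$ and $p+q<0$ simultaneously; and it is only for $r<0$ that $r(r-1)>0$ and the second-derivative Taylor estimate on $F_r$ in the proof of Proposition~\ref{prop: M forward supermgle case 2} yields a usable (though not sign-definite) bound, which is where the local upper Ahlfors regularity genuinely enters. If you revise the proposal to take $r<0$, allow the residual $L$ in the domination, and carry $\EX[\hat L(S_n)]$ through the summability, it will line up with the paper's argument.
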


\begin{proof}[Proof of Propositions~\ref{prop: needed uniform bound for f' 1} and~\ref{prop: needed uniform bound for f' 2}]
First of all, Lemma~\ref{lem: grid} with $u = 2^{n (1-\theta)}$ and $\delta = 2^{-n}$ shows that 
\begin{align*}
\PR \Big[ \max_{t \in [0,T]} |f_t'(\microDriver_\varepsilon^{\kappa}(t) + \ii 2^{-n})| \geq 2^{n (1-\theta)} \Big] 
\; \leq \; \PR \bigg[ \bigcup_{z_0 \in \Grid_\varepsilon^{\kappa} } E_n^\theta(z_0) \bigg] ,
\end{align*}
where the union is taken over the grid with mesh size $\tfrac{1}{8} \, 2^{-n \theta}$ (Definition~\ref{def: grid}), 
\begin{align*}
\Grid_\varepsilon^{\kappa}
= \Grid_\varepsilon^{\kappa} (2^{-n \theta}, T, R_\varepsilon^{\kappa}(T)) 
:= \Big\{ z \in \bH \;\; | \;\; 
\re(z) = & \; \tfrac{1}{8} \, 2^{-n \theta} \, \ell \, \in \, [-R_\varepsilon^{\kappa}(T), R_\varepsilon^{\kappa}(T)] 
, \ell \in \bZ ,\textnormal{ and }
\\ 
\im(z) = & \; \tfrac{1}{8} \, 2^{-n \theta} \, (k + 8) \, \in \, [2^{-n \theta} , \sqrt{1+4T}] , \; 
k \in \bZnn \Big\}  ,
\end{align*}
and where the events of interest are
\begin{align} 
\label{eq: event of interest}
E_n^\theta(z_0)
= E_n^\theta(z_0, T) \; := \; \big\{ & \; \textnormal{there exists }  t \in [0,T] 
\textnormal{ such that } z_0 \in \bH \setminus K_t \textnormal{ and } 
 \\
 \nonumber
& \; | g_t(z_0) - \microDriver_\varepsilon^{\kappa}(t) - \ii 2^{-n} | \leq 2^{-n-1}
\textnormal{ and } 
| g_t'(z_0) | \leq \tfrac{80}{27} \, 2^{-n (1-\theta)} \big\} ,
\qquad n \in \bN .
\end{align}
Note that the grid 
$\smash{\Grid_\varepsilon^{\kappa} (2^{-n \theta}, T, R_\varepsilon^{\kappa}(T))}$ 
is random, 
and its width $\smash{2 R_\varepsilon^{\kappa}(T)}$ 
depends on the Loewner driving function 
$\smash{\microDriver_\varepsilon^{\kappa}}$ via~\eqref{eq: BM with micro supremum}.   
However, since the driving function is c\`adl\`ag, 
$\smash{R_\varepsilon^{\kappa}(T)}$ is almost surely finite. 
Hence, taking a cutoff $R \in (0,\infty)$, we may restrict to the event $\{ R_\varepsilon^{\kappa}(T) \leq R \}$. 
We write 
\begin{align*}
\PR_{R}[ \; \cdot \; ] := \PR[ \; \cdot \; \cap \{ R_\varepsilon^{\kappa}(T) \leq R \} ]
\end{align*}
for the probability measure $\PR$ restricted to the cutoff event.
On this event, we can use the Borel-Cantelli lemma to deduce that if 
\begin{align*}
\sum_{n=1}^\infty \PR_{R} \Big[ \max_{t \in [0,T]} |f_t'(\microDriver_\varepsilon^{\kappa}(t) + \ii 2^{-n})| \geq 2^{n (1-\theta)} \Big] 
\; \leq \; 
\sum_{n=1}^\infty \sum_{z_0 \in \Grid_\varepsilon^{\kappa} (2^{-n \theta}, T, R) }
\PR_{R} [ E_n^\theta(z_0) ] 
\; < \; \infty ,
\end{align*}
then on the event $\{ R_\varepsilon^{\kappa}(T) \leq R \}$, 
there exists an almost surely finite random integer $\smash{N_{\lambda_\varepsilon}^{\kappa}(\theta, T, R)}$ such that 
\begin{align*}
\max_{t \in [0,T]}
|f_t'(\microDriver_\varepsilon^{\kappa}(t) + \ii 2^{-n})| \leq 2^{n (1-\theta)} 
\qquad \textnormal{ for all } n \geq N_{\lambda_\varepsilon}^{\kappa}(\theta, T, R) 
\quad \textnormal{on the event $\{ R_\varepsilon^{\kappa}(T) \leq R \}$} ,
\end{align*}
and thus, by taking the union bound over $R \in \bN$, we see that there exists an almost surely finite random integer $\smash{N_{\lambda_\varepsilon}^{\kappa}(\theta, T)}$ such that 
\begin{align*}
\max_{t \in [0,T]}
|f_t'(\microDriver_\varepsilon^{\kappa}(t) + \ii 2^{-n})| \leq 2^{n (1-\theta)} 
\qquad \textnormal{ for all } n \geq N_{\lambda_\varepsilon}^{\kappa}(\theta, T) .
\end{align*} 
Hence, we then conclude that almost surely, we have $\smash{\underset{t \in [0,T]}{\max} |f_t'(\microDriver_\varepsilon^{\kappa}(t) + \ii 2^{-n})| \leq 2^{n (1-\theta)}}$ 
except for possibly finitely many $n \in \bZnn$, so 
\begin{align*}
C_{\lambda_\varepsilon}^{\kappa}(\theta, T) 
:= \sup_{n \in \bZnn}
2^{-n (1-\theta)}
\max_{t \in [0,T]} |f_t'(\microDriver_\varepsilon^{\kappa}(t) + \ii 2^{-n})| 
\end{align*}
gives the sought almost surely finite random constant for either Proposition~\ref{prop: needed uniform bound for f' 1} and~\ref{prop: needed uniform bound for f' 2}. 
It thus remains to verify that the summability of the probabilities of the events~\eqref{eq: event of interest} of interest holds with cutoff $R$:
\begin{align} \label{eq: summability of event of interest}
\sum_{n=1}^\infty \sum_{z_0 \in \Grid_\varepsilon^{\kappa} (2^{-n \theta}, T, R) }
\PR_{R} [ E_n^\theta(z_0) ] 
\; < \; \infty .
\end{align}
This is the content of 
Propositions~\ref{prop: summability of event of interest 1},~\ref{prop: summability of event of interest 2}, and~\ref{prop: summability of event of interest 3},  
whose proofs comprise the rest of this section. 
\end{proof}

\subsection{Forward Loewner flow and estimates}

Fix a starting point $z_0 = x_0 + \ii y_0 \in \bH$ implicitly throughout, and consider the following processes:
\begin{align*}
Z(t) = Z_\varepsilon^{\kappa}(t,z_0) := \; & g_t(z_0) - \microDriver_\varepsilon^{\kappa}(t) =: X(t) + \ii Y(t) , \\
X(t) = X_\varepsilon^{\kappa}(t,z_0) := \; & \re \big( Z_\varepsilon^{\kappa}(t,z_0) \big) , \\
Y(t) = Y_\varepsilon^{\kappa}(t,z_0) := \; & \im \big( Z_\varepsilon^{\kappa}(t,z_0) \big) , 
\end{align*}
and up to the blow-up time~\eqref{eq: blow-up time}, define
\begin{align}
\label{eq: general forward martingale candidate}
M(t) = M_{p,q,r}(t,z_0) := \; & |g_t'(z_0)|^p \,Y(t)^{q} \, ( \sin \arg Z(t) )^{-2r} , \qquad 
p, q \in \bR, \textnormal{ and } r \leq 1 . 
\end{align}
The importance of the process~\eqref{eq: general forward martingale candidate} becomes clear in Lemma~\ref{lem: forward bound}.

\bigskip

Note that for $z_0 \in \Grid_\varepsilon^{\kappa} (2^{-n \theta}, T, R)$, on the event $E_n^\theta(z_0)$, we have 
$| Z(t) - \ii 2^{-n} | \leq 2^{-n-1}$ for some $t \in [0,T]$.
The next simple observation gives a bound for the absolute value of the real part of $Z$ in terms of its imaginary part. The latter behaves well under a time-change, which we will utilize in the analysis.  

\begin{lem} \label{lem: simple observation}
Fix $\delta \in (0,1)$. 
If $| Z(t) - \ii \delta | \leq \delta/2$, then 
\begin{align*}
Y(t) \in [ \delta/2 , 3 \delta / 2]
\qquad \textnormal{and} \qquad 
| X(t) | \leq Y(t) .
\end{align*}
\end{lem}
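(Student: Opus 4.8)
The claim is elementary: $Z(t)$ lies in a disc of radius $\delta/2$ centered at $\ii\delta$, and I just need to read off bounds on its coordinates.

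\medskip

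\textbf{Plan.} Write $Z(t) = X(t) + \ii Y(t)$. The hypothesis $|Z(t) - \ii\delta| \leq \delta/2$ means $X(t)^2 + (Y(t)-\delta)^2 \leq (\delta/2)^2$. First I would extract the imaginary part bound: since $(Y(t)-\delta)^2 \leq (\delta/2)^2$, we get $|Y(t) - \delta| \leq \delta/2$, hence $\delta/2 \leq Y(t) \leq 3\delta/2$, which is the first assertion. Next, for the real part: from $X(t)^2 \leq (\delta/2)^2 - (Y(t)-\delta)^2 \leq (\delta/2)^2$ we get $|X(t)| \leq \delta/2$. Combining with the already-established lower bound $Y(t) \geq \delta/2$ gives $|X(t)| \leq \delta/2 \leq Y(t)$, which is the second assertion.

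\medskip

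\textbf{Obstacle.} There is essentially no obstacle here — the statement is a one-line consequence of the geometry of a disc in the plane. The only thing to be mildly careful about is that the bound $|X(t)| \leq Y(t)$ requires using $Y(t) \geq \delta/2$ together with $|X(t)| \leq \delta/2$, rather than trying to compare $|X(t)|$ and $Y(t)$ directly; but this is immediate. I would present it as a two-sentence proof:

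\begin{proof}
By assumption, $X(t)^2 + (Y(t) - \delta)^2 \leq \delta^2/4$. In particular $(Y(t)-\delta)^2 \leq \delta^2/4$, so $\delta/2 \leq Y(t) \leq 3\delta/2$, and also $X(t)^2 \leq \delta^2/4$, so $|X(t)| \leq \delta/2 \leq Y(t)$.
\end{proof}
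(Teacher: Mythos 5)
Your proof is correct and follows essentially the same argument as the paper's: extract $|Y(t)-\delta|\le\delta/2$ from the disc condition to get the bounds on $Y(t)$, then bound $|X(t)|\le\delta/2$ and chain with the lower bound on $Y(t)$.
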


\begin{proof}
Indeed, we have $| Y(t) - \delta | \leq | Z(t) - \ii \delta | \leq \delta/2$, which shows the bounds for $Y(t)$. From the lower bound, 
it also follows easily that $| X(t) | \leq \delta/2 \leq Y(t)$.
\end{proof}

In particular, on the event $E_n^\theta(z_0)$ we also have
\begin{align*}
| X(t) | \leq Y(t) \in [ 2^{-n-1} , 3 \cdot 2^{-n-1} ] 
\qquad \textnormal{for some } t \in [0,T] .
\end{align*}

Because the event $E_n^\theta(z_0)$ involves  
three conditions that should hold at some common time $t \in [0,T]$, it will be useful to consider a particular 
stopping time (see Equation~\eqref{eq: forward bound}). 
This will be more manageable if we consider the time-changed processes 
\begin{align*}
\hat{X}(s) := X(\sigma(s))  , \qquad 
\hat{Y}(s) := Y(\sigma(s))  , \qquad 
\hat{Z}(s) := Z(\sigma(s))  , \qquad
\hat{g}_s := g_{\sigma(s)} , \qquad
\hat{M}(s) := M(\sigma(s)) , 
\end{align*}
where $\sigma$ is given by
\begin{align} \label{eq: forward time change}
\sigma(s) = S^{-1}(s) , \qquad 
S(t) = \int_0^t \frac{\ud u}{X(u-)^2 + Y(u-)^2} 
= \int_0^t \frac{\ud u}{|Z(u-)|^2} .
\end{align}
The imaginary part of~\eqref{eq: LE} gives $Y(0) = y_0$ and 
\begin{align} \label{eq: forward Y}
Y(t) 
\; = \; \; & y_0 \, \exp \bigg( - \int_0^t \frac{2}{|Z(s-)|^2} \, \ud s \bigg) 
\; = \; y_0 \; - \; 2 \int_0^t \frac{Y(s-)}{|Z(s-)|^2} \, \ud s ,
\end{align}
which implies in particular that 
\begin{align*} 
\hat{Y}(t) = y_0 \, e^{-2t} > 0
\qquad \textnormal{for all } t \in [0, S(\tau(z_0))) .
\end{align*} 
Also,~\eqref{eq: forward time change} can be written as
$\smash{S(t) = -\frac{1}{2} \log \big( \frac{Y(t)}{y_0} \big)}$, 
so we see that $\underset{t \to \infty}{\lim} \sigma(t) = \infty$ almost surely.

Now, note that the event of interest~\eqref{eq: event of interest} is (by Lemma~\ref{lem: simple observation})
\begin{align*} 
E_n^\theta(z_0) \; = \; \big\{ & \; \textnormal{there exists } t \in [0,T] 
\textnormal{ such that } z_0 \in \bH \setminus K_t \textnormal{ and } 
\\
& \; | Z(t) - \ii 2^{-n} | \leq 2^{-n-1} 
\textnormal{ and } 
| g_t'(z_0) | \leq \tfrac{80}{27} \, 2^{-n (1-\theta)} \big\} 
\\
\; \subset \; \big\{ & \; \textnormal{there exists } t \in [0,T] 
\textnormal{ such that } 
\\
& \; | X(t) | \leq Y(t) \in [ 2^{-n-1} , 3 \cdot 2^{-n-1} ] 
\textnormal{ and } 
| g_t'(z_0) | \leq \tfrac{80}{27} \, 2^{-n (1-\theta)} \big\} .
\end{align*}
Because both conditions must hold for the same time instant $t \in [0,T]$, it will be useful to consider the derivative 
$|\hat{g}_{S_{n}}'(z_0)|$ at the following (bounded) stopping time
involving the time-changed processes: 
\begin{align} \label{eq: bounded stopping time taun}
\begin{split}
S_{n} = S_{n}(y_0) := \; & \inf \big\{ s \geq 0 
\; \big| \; |\hat{X}(s)| \leq \hat{Y}(s) = y_0 \, e^{-2s} 
\in [ 2^{-n-1} , 3 \cdot 2^{-n-1} ] \big\} 
\, \wedge \, \log \sqrt{\tfrac{y_0}{2^{-n-1}}} \\
\in \; & \big[ \log \sqrt{\tfrac{y_0}{3 \cdot 2^{-n-1}}} , \; 
\log \sqrt{\tfrac{y_0}{2^{-n-1}} } \big] 
\end{split}
\end{align}
The probabilities of the events $E_n^\theta(z_0)$ defined in~\eqref{eq: event of interest} can now be bounded using the time-changed process~(\ref{eq: general forward martingale candidate},~\ref{eq: forward time change}) at the stopping time~\eqref{eq: bounded stopping time taun}.

\begin{lem} \label{lem: forward bound}
Fix $T > 0$, $\kappa \geq 0$, a L\'evy measure $\nu$, and $\varepsilon > 0$. 
Fix also $p < 0$, $q \in \bR$, and $r \leq 1$.
Then, for any $\theta \in (0,1)$, for any $n \in \bZpos$, for any $R > 0$, and for any $z_0 = x_0 + \ii y_0 \in \Grid_\varepsilon^{\kappa} (2^{-n \theta}, T, R)$, 
\begin{align} \label{eq: forward bound}
\PR [ E_n^\theta(z_0) ] 
\leq \; & c_0 \, 2^{n \beta} \, 
\EX \big[ \hat{M}(S_{n}) \; \one{\{|\hat{X}(S_{n})| \leq y_0 \, e^{-2S_{n}}\}} \big] ,
\end{align}
where $c_0 = c_0(p, q, r) \in (0,\infty)$ is a constant, 
$\beta = (1-\theta) p + q$.
\end{lem}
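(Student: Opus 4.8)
The plan is to dominate $\one{E_n^\theta(z_0)}$ pointwise by $c_0\,2^{n\beta}\,\hat M(S_n)\,\one{\{|\hat X(S_n)|\le y_0e^{-2S_n}\textnormal{ and }S_n\le S(T)\}}$ and then take expectations; here $S_n$ is precisely the bounded stopping time~\eqref{eq: bounded stopping time taun}, engineered so that the two conditions defining $E_n^\theta(z_0)$ — smallness of $|g_t'(z_0)|$ and $|Z(t)-\ii2^{-n}|\le 2^{-n-1}$ at a common time $t$ — translate into controlled information at $S_n$ once we pass to the time-changed clock~\eqref{eq: forward time change}. Throughout I would take $z_0$ with $\tau(z_0)>T$ (the standing assumption of this subsection; the general case is identical after replacing $T$ by a time $<\tau(z_0)$, since any time witnessing $E_n^\theta(z_0)$ is necessarily $<\tau(z_0)$).

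First I would unpack the event. Assume $E_n^\theta(z_0)$ holds, witnessed at some $t^\ast\in[0,T]$; by Lemma~\ref{lem: simple observation}, $|X(t^\ast)|\le Y(t^\ast)\in[2^{-n-1},3\cdot2^{-n-1}]$ and $|g_{t^\ast}'(z_0)|\le\tfrac{80}{27}2^{-n(1-\theta)}$. Put $s^\ast:=S(t^\ast)$. Since $\hat Y(s)=y_0e^{-2s}$ and $S(t)=-\tfrac12\log(Y(t)/y_0)$, we get $\hat Y(s^\ast)=Y(t^\ast)\in[2^{-n-1},3\cdot2^{-n-1}]$ and $|\hat X(s^\ast)|=|X(t^\ast)|\le\hat Y(s^\ast)$, so $s^\ast$ is admissible for the infimum in~\eqref{eq: bounded stopping time taun} and $s^\ast\le\log\sqrt{y_0/2^{-n-1}}$; hence $S_n\le s^\ast\le S(T)$, giving $\one{\{S_n\le S(T)\}}=1$. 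Because the admissible set in~\eqref{eq: bounded stopping time taun} is nonempty (it contains $s^\ast$), $\hat Y$ is continuous and decreasing, and $\hat X$ is c\`adl\`ag, the infimum is attained, so $S_n$ lies in the admissible set on $E_n^\theta(z_0)$; that is, $|\hat X(S_n)|\le\hat Y(S_n)=y_0e^{-2S_n}\in[2^{-n-1},3\cdot2^{-n-1}]$, and the second indicator also equals $1$. Moreover $S_n$ and $s^\ast$ both lie in $[\log\sqrt{y_0/(3\cdot2^{-n-1})},\log\sqrt{y_0/2^{-n-1}}]$, an interval of length $\tfrac12\log3$, so $|s^\ast-S_n|\le\tfrac12\log3$.

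Next I would lower-bound $\hat M(S_n)=|\hat g_{S_n}'(z_0)|^p\,\hat Y(S_n)^q\,(\sin\arg\hat Z(S_n))^{-2r}$ on $E_n^\theta(z_0)$. From~\eqref{eq: LE} and the time change~\eqref{eq: forward time change}, the map $s\mapsto\log|\hat g_s'(z_0)|$ is $2$-Lipschitz, so by the previous paragraph $|\hat g_{S_n}'(z_0)|\le e^{2|s^\ast-S_n|}|\hat g_{s^\ast}'(z_0)|\le 3\,|g_{t^\ast}'(z_0)|\le\tfrac{80}{9}2^{-n(1-\theta)}$; since $p<0$ this gives $|\hat g_{S_n}'(z_0)|^p\ge(\tfrac{80}{9})^{p}\,2^{-np(1-\theta)}$. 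Also $\hat Y(S_n)\in[2^{-n-1},3\cdot2^{-n-1}]$ yields $\hat Y(S_n)^q\ge c(q)\,2^{-nq}$, and $|\hat X(S_n)|\le\hat Y(S_n)$ forces $\sin\arg\hat Z(S_n)=\hat Y(S_n)/|\hat Z(S_n)|\ge\tfrac1{\sqrt2}$, hence $(\sin\arg\hat Z(S_n))^{-2r}\ge c(r)>0$. Multiplying the three estimates, $\hat M(S_n)\ge c(p,q,r)\,2^{-n((1-\theta)p+q)}=c(p,q,r)\,2^{-n\beta}$ on $E_n^\theta(z_0)$.

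Combining the two previous steps, on $E_n^\theta(z_0)$ one has $1\le c_0(p,q,r)\,2^{n\beta}\,\hat M(S_n)\,\one{\{|\hat X(S_n)|\le y_0e^{-2S_n}\textnormal{ and }S_n\le S(T)\}}$ pointwise, while the right-hand side is nonnegative off $E_n^\theta(z_0)$; taking expectations yields~\eqref{eq: forward bound} with $c_0=c_0(p,q,r)$. The two places I expect to require care are (i) the claim that the infimum in~\eqref{eq: bounded stopping time taun} is genuinely attained on $E_n^\theta(z_0)$, so that the second indicator equals $1$ there rather than merely $S_n\le s^\ast$ — this rests on the right-continuity of $\hat X$ and continuity of $\hat Y$; and (ii) the uniform bound $|s^\ast-S_n|\le\tfrac12\log3$, which is exactly what makes the Lipschitz comparison of $|\hat g_{S_n}'(z_0)|$ with $|g_{t^\ast}'(z_0)|$ lossless up to a universal factor. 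The remainder is routine bookkeeping with the parameters $p,q,r$.
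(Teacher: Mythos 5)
Your argument is correct and follows essentially the paper's route: show that on $E_n^\theta(z_0)$ the stopping time $S_n$ satisfies both indicator conditions and $|\hat{g}_{S_n}'(z_0)|\le\tfrac{80}{9}\,2^{-n(1-\theta)}$ (via the $2$-Lipschitz bound on $s\mapsto\log|\hat{g}_s'(z_0)|$ across the interval of length $\tfrac12\log3$ containing both $S_n$ and $s^\ast$), then absorb the controlled factors $\hat{Y}(S_n)^q$ and $(\sin\arg\hat{Z}(S_n))^{-2r}$ into the constant $c_0(p,q,r)$. The only difference is presentational: you state a pointwise domination of $\one{E_n^\theta(z_0)}$ by the observable, while the paper applies Markov's inequality to $|\hat{g}_{S_n}'(z_0)|^p$ after the event inclusion and then unfolds $\hat M(S_n)$, but the estimates and constants are identical.
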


\begin{proof}
Fix $z_0 = x_0 + \ii y_0 \in \Grid_\varepsilon^{\kappa} (2^{-n \theta}, T, R)$. 
Using~(\ref{eq: LE},~\ref{eq: forward Y}) and the time-change~\eqref{eq: forward time change}, we find
\begin{align*}
\partial_s^{+} \log |\hat{g}_s'(z_0)| 
= - 2 \, \frac{\hat{X}(s-)^2 - \hat{Y}(s-)^2}{\hat{X}(s-)^2 + \hat{Y}(s-)^2} 
\quad \in \quad [ -2, 2 ],
\end{align*}
which implies in particular that 
\begin{align} \label{eq: derivatives comparable}
e^{-2 u} \leq \frac{|\hat{g}_{s+u}'(z_0)| }{|\hat{g}_{s}'(z_0)| } \leq e^{2 u} .
\end{align}
At the stopping time~\eqref{eq: bounded stopping time taun}, 
we find
\begin{align*}
\frac{|\hat{g}_{S_{n}}'(z_0)| }{|\hat{g}_{s}'(z_0)| } \leq 3 \qquad 
\textnormal{for all } s \in 
\big[ \log \sqrt{\tfrac{y_0}{3 \cdot 2^{-n-1}}} , 
\log \sqrt{\tfrac{y_0}{2^{-n-1}} } \big]  .
\end{align*}
Using this, we obtain from Markov's inequality the bounds
\begin{align*} 
\PR [ E_n^\theta(z_0) ] 
\; \leq \; \; & 
\PR \big[ | \hat{g}_{S_{n}}'(z_0) | \leq \tfrac{80}{9} \, 2^{-n (1-\theta)} 
\textnormal{ and } |\hat{X}(S_{n})| \leq y_0 \, e^{-2S_{n}} \big] \\
\; \leq \; \; &
\big( \tfrac{80}{9} \big)^{-p} \, 2^{np (1-\theta)} 
\; \EX \big[ | \hat{g}_{S_{n}}'(z_0) |^{p} \; \one{ \{|\hat{X}(S_{n})| \leq y_0 \, e^{-2S_{n}}\}} \big] , \qquad p < 0 .
\end{align*} 
Using the definition~\eqref{eq: general forward martingale candidate} of the process $M$, and the fact that 
$(\sin \arg \hat{Z}(S_{n}))^{2r} \leq 2^{|r|}$ for all $r \leq 1$ on the event $\{|\hat{X}(S_{n})| \leq \hat{Y}(S_{n}) = y_0 \, e^{-2S_{n}}\}$, we obtain
\begin{align*}
\; & \EX \big[ | \hat{g}_{S_{n}}'(z_0) |^{p} \; \one{\{|\hat{X}(S_{n})| \leq y_0 \, e^{-2S_{n}}\}} \big] \\
\; = \; \; & \EX \big[ \hat{Y}(S_{n})^{-q} \, \big( \sin \arg \hat{Z}(S_{n}) \big)^{2r} \hat{M}(S_{n}) \; \one{\{|\hat{X}(S_{n})| \leq y_0 \, e^{-2S_{n}}\}} \big] \\
\; \leq \; \; & 2^{|r|} \, y_0^{-q} \; \EX \big[ e^{2 S_{n} q} \, \hat{M}(S_{n}) \; \one{\{|\hat{X}(S_{n})| \leq y_0 \, e^{-2S_{n}}\}} \big] , \qquad r \leq 1 .
\end{align*}
We can bound this as follows:
\begin{itemize}
\item If $q \leq 0$, then because $S_{n} \geq  \smash{\log \sqrt{\tfrac{y_0}{3 \cdot 2^{-n-1}}}}$, we see that
$y_0^{-q} e^{2 S_{n} q} \leq 3^{-q} \cdot 2^{q} \cdot 2^{n q}$. 

\medskip

\item If $q \geq 0$, then because $S_{n} \leq \smash{\log \sqrt{\tfrac{y_0}{2^{-n-1}}}}$, we see that
$y_0^{-q} e^{2 S_{n} q} \leq 2^{q} \cdot 2^{n q}$. 
\end{itemize}
Collecting these estimates together, we find the asserted bound~\eqref{eq: forward bound}. 
\end{proof}

Thus, our next aim will be to control the expected value 
of the time-changed process $\hat{M}(S_{n})$ 
at the stopping time $S_{n}$. 
First, we derive an SDE for $M$ 
in Lemma~\ref{lem: SDE forward M}, 
and we then estimate the drift part in Propositions~\ref{prop: M forward supermgle case 1}~\&~\ref{prop: M forward supermgle case 2}, 
which use~\ref{item: ass1} and~\ref{item: ass2}, respectively.
We then use these results in Sections~\ref{subsec: Summability 1}--\ref{subsec: Summability 3} to conclude with~sought 
summability~\eqref{eq: summability of event of interest} for the proof of Propositions~\ref{prop: needed uniform bound for f' 1} and~\ref{prop: needed uniform bound for f' 2}.

\subsection{Supermartingale bounds}
\label{subsec: Supermartingale bounds}

We begin with an analogue of Lemma~\ref{lem: SDE M}.

\begin{lem} \label{lem: SDE forward M} 
Fix $\kappa \geq 0$, a L\'evy measure $\nu$, and $\varepsilon > 0$. 
Let $(g_t)_{t \geq 0}$ be the solution to~\eqref{eq: LE} driven by 
$\smash{\microDriver_\varepsilon^{\kappa}}$~\eqref{eq: BM with micro again again},
fix $z_0 \in \bH$, and consider the process 
$M$ defined in~\eqref{eq: general forward martingale candidate}.
Then, we have
\begin{align*}
M(t) 
\; = \; \; & \mgle(t) 
\; + \; \int_0^t M(s-) \; \bigg( D_{r}(s)
- 2 p \, \frac{X(s-)^2 - Y(s-)^2}{|Z(s-)|^4} 
- \frac{2 q}{|Z(s-)|^2} \bigg) \, \ud s  , 
\qquad t \in [0,\tau(z_0)) , 
\\[1em]
\textnormal{where} \qquad 
D_{r}(s)
\; = \; \; & 
\frac{ r (\kappa (2 r - 1) + 8) X(s-)^2 + r \kappa Y(s-)^2}{|Z(s-)|^4} \\ 
\; \; & 
+ \; \int_{|\jump| \leq \varepsilon}   \bigg( \bigg| \frac{Z(s-) - \jump}{Z(s-)} \bigg|^{2r} - 1 + \frac{2r \jump X(s-)}{|Z(s-)|^2} \bigg) \nu(\ud \jump) , \qquad 
p, q \in \bR, \textnormal{ and } r \leq 1 ,
\end{align*}
and where $\mgle$ is the right-continuous local martingale 
\begin{align} \label{eq: forward supermgle N}
\begin{split}
\mgle(t) 
\; := \; \; & 
M(0) \; - \; 2 r \sqrt{\kappa} \int_0^t M(s-) \; \frac{X(s-)}{|Z(s-)|^2} \, \ud B(s) \\
\; \; & 
+ \;  \int_0^t M(s-) \int_{|\jump| \leq \varepsilon} \bigg( \bigg| \frac{Z(s-) - \jump}{Z(s-)} \bigg|^{2r} - 1 \bigg) \, \PoissonComp(\ud s, \ud \jump) , \qquad t \in [0,\tau(z_0)) .
\end{split}
\end{align}
\end{lem}

\begin{proof}
By~\eqref{eq: LE} and a straightforward application of It\^o's formula, we have
\begin{align*}
|g_t'(z_0)|^p 
\; = \; \; & 1 \; - \; 2 p \int_0^t |g_{s}'(z_0)|^{p} \; \frac{X(s-)^2 - Y(s-)^2}{|Z(s-)|^4} \, \ud s , \\
Y(t)^q
\; = \; \; & y_0 \; - \; 2 q \int_0^t  \frac{Y(s-)^q}{|Z(s-)|^2} \, \ud s .
\end{align*}
A more involved application of It\^o's formula (see Lemma~\ref{lem: SDE forward sin(arg(Z)) power} in Appendix~\ref{app: Ito calculus} with $a=0$) gives
\begin{align*}
( \sin \arg Z(t) )^{-2r} 
\; = \; \; & ( \sin \arg z_0 )^{-2r} 
\; - \; 2 r \sqrt{\kappa} \int_0^t ( \sin \arg Z(s-) )^{-2r} \; \frac{X(s-)}{|Z(s-)|^2} \, \ud B(s) \\
\; \; & 
+ \; \int_0^t \int_{|\jump| \leq \varepsilon} ( \sin \arg Z(s-)  )^{-2r} \; \bigg( \bigg| \frac{Z(s-) - \jump}{Z(s-)} \bigg|^{2r} - 1 \bigg) \, \PoissonComp(\ud s, \ud \jump) \\
\; \; & 
+ \; \int_0^t( \sin \arg Z(s-) )^{-2r}
\; D_{r}(s) \, \ud s .
\end{align*} 
Combining these, we obtain the asserted identity for $M$. 
\end{proof}

Following the same strategy as in Section~\ref{subsec: derivative local martingale}, 
we aim to bound the drift term 
\begin{align*}
M(t) - \mgle(t) = \; & 
\int_0^t \frac{M(s-)}{|Z(s-)|^4} \; 
\big( r (\kappa (2 r - 1) + 8) - 2 p - 2 q \big) X(s-)^2 \, \ud s \\
\; & \; + \; 
\int_0^t \frac{M(s-)}{|Z(s-)|^4} \; 
\big( r \kappa + 2 p - 2 q \big) Y(s-)^2 \, \ud s \\
\; & \; + \; \int_0^t M(s-) \; 
\int_{|\jump| \leq \varepsilon} F_{r}(\jump; X(s-), Y(s-)) \, \nu(\ud \jump) \, \ud s ,
\end{align*}
where for each $r \in (-\infty,1]$, the key will be to estimate the function
\begin{align} \label{eq: aux function drift}
F_{r}(\jump) = F_{r}(\jump; x, y) 
\; = \; \; & 
\bigg( \frac{(x-\jump)^2 + y^2}{x^2 + y^2} \bigg)^{r} - 1 + \frac{2 r \jump x}{x^2 + y^2} , \qquad \jump \in \bR , \; x \in \bR , \; y > 0 .
\end{align}

The following bound, Proposition~\ref{prop: M forward supermgle case 1}, 
which holds when $r \in (0, 1]$, will be sufficient to conclude the existence of the Loewner trace in the case where $\kappa > 8$ (\ref{item: ass1}).
As before, we denote the variance of the jumps of the random driving function~\eqref{eq: BM with micro again again} as in~\eqref{eq: jump variance}, 
\begin{align} \label{eq: jump variance again} 
\lambda_\varepsilon := \int_{|\jump| \leq \varepsilon} \jump^2 \, \nu(\ud \jump) \; \geq 0 .
\end{align}

\begin{prop} \label{prop: M forward supermgle case 1}
Fix $\kappa \geq 0$, a L\'evy measure $\nu$, and $\varepsilon > 0$. 
Fix $r \in (0, 1]$ and set 
\begin{align} \label{eq: p and q forward alternative}
p = p(\kappa, r ) 
:= \tfrac{1}{2} r (4 - \kappa + \kappa r ) 
\qquad \textnormal{and}  \qquad
q = q(\kappa, \lambda_\varepsilon, r ) 
:= p(\kappa, r ) + \tfrac{1}{2} r (\kappa + \lambda_\varepsilon) .
\end{align}
Let $(g_t)_{t \geq 0}$ be the solution to~\eqref{eq: LE} driven by 
$\smash{\microDriver_\varepsilon^{\kappa}}$~\eqref{eq: BM with micro again again}, 
fix $z_0 \in \bH$, and consider the processes 
$M$ defined in~\eqref{eq: general forward martingale candidate}
and $\mgle$ defined in~\eqref{eq: forward supermgle N}. 
Then, we have 
\begin{align} \label{eq: M forward supermgle case 1}
M(0) = \mgle(0) \qquad \textnormal{and} \qquad
M(t) \leq \mgle(t) \; \textnormal{ for all } t \in [0,\tau(z_0)).
\end{align}
\end{prop}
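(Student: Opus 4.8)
The plan is to feed the SDE for $M$ from Lemma~\ref{lem: SDE forward M} into the specific parameter choice~\eqref{eq: p and q forward alternative} and show that the resulting drift integrand is pointwise non-positive. Since $M(t) - \mgle(t)$ is written as a single time integral of $M(s-) \geq 0$ times a bracketed expression, and $M(0) = \mgle(0)$ is immediate from the definitions~\eqref{eq: general forward martingale candidate}~and~\eqref{eq: forward supermgle N} (both equal $y_0^{q-2r}|z_0|^{2r}$), it suffices to prove that the bracket
\begin{align*}
D_{r}(s) - 2p \, \frac{X(s-)^2 - Y(s-)^2}{|Z(s-)|^4} - \frac{2q}{|Z(s-)|^2}
\end{align*}
is $\leq 0$ for all $s$. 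Writing $X = X(s-)$, $Y = Y(s-)$, $Z = Z(s-)$, and grouping the $X^2/|Z|^4$, $Y^2/|Z|^4$, and jump-integral terms as in the displayed decomposition just before the statement, this reduces to showing the three coefficient/function conditions separately.

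First I would handle the two polynomial coefficients. With $p = \tfrac12 r(4 - \kappa + \kappa r)$ and $q = p + \tfrac12 r(\kappa + \lambda_\varepsilon)$, a direct computation gives the $X^2$ coefficient $r(\kappa(2r-1)+8) - 2p - 2q = -r\lambda_\varepsilon - 8r(\,?\,)$ — more precisely I expect it to collapse to exactly $-r\lambda_\varepsilon$ once the $\kappa$-dependent parts cancel against $-2p-2q$ — hence non-positive since $r, \lambda_\varepsilon \geq 0$; and the $Y^2$ coefficient $r\kappa + 2p - 2q = r\kappa - r(\kappa+\lambda_\varepsilon) = -r\lambda_\varepsilon \leq 0$. (These are the forward-flow analogues of the cancellations in Proposition~\ref{prop: M supermgle}, with the sign of $\kappa$ flipped in $p$; I would double-check the algebra but expect the design of~\eqref{eq: p and q forward alternative} to make both coefficients equal to $-r\lambda_\varepsilon$.) The remaining piece is the jump integral $\int_{|\jump|\le\varepsilon} F_r(\jump;X,Y)\,\nu(\ud\jump)$, where $F_r$ is defined in~\eqref{eq: aux function drift}.

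The key estimate, and the main obstacle, is bounding $F_r(\jump; x, y)$ from above by $r\jump^2/|Z|^2$ (or a comparable quantity) so that, after integrating against $\nu$, it is absorbed into the available slack $r\lambda_\varepsilon$ from the polynomial terms. For $r \in (0,1]$ one has the concavity inequality $(1+u)^r \leq 1 + ru$ for all $u \geq -1$; applying it with $u = \frac{-2\jump x + \jump^2}{x^2+y^2} \geq -1$ (which holds since $(x-\jump)^2 + y^2 \geq 0$) gives
\begin{align*}
F_r(\jump;x,y) = \Big(1 + \tfrac{-2\jump x + \jump^2}{x^2+y^2}\Big)^r - 1 + \tfrac{2r\jump x}{x^2+y^2}
\leq \tfrac{r(-2\jump x + \jump^2)}{x^2+y^2} + \tfrac{2r\jump x}{x^2+y^2} = \tfrac{r\jump^2}{x^2+y^2} = \tfrac{r\jump^2}{|Z|^2}.
\end{align*}
This is precisely the same move as in the proof of Proposition~\ref{prop: M supermgle}, just with $Z(s-) - \jump$ in place of $Z(s-) + \jump$, so the sign in the linear correction works out the same way. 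Then $\int_{|\jump|\le\varepsilon} F_r\,\nu(\ud\jump) \leq \frac{r}{|Z|^2}\int_{|\jump|\le\varepsilon}\jump^2\,\nu(\ud\jump) = \frac{r\lambda_\varepsilon}{|Z|^2}$ by~\eqref{eq: jump variance again}. Combining: the full drift integrand is at most $\frac{M(s-)}{|Z|^4}\big(-r\lambda_\varepsilon X^2 - r\lambda_\varepsilon Y^2\big) + M(s-)\frac{r\lambda_\varepsilon}{|Z|^2} = -\frac{r\lambda_\varepsilon M(s-)}{|Z|^2} + \frac{r\lambda_\varepsilon M(s-)}{|Z|^2} = 0$, so $M(t) - \mgle(t) \leq 0$. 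I would organize the writeup as: (i) verify $M(0) = \mgle(0)$ and recall the SDE; (ii) plug in~\eqref{eq: p and q forward alternative} and simplify the two polynomial coefficients to $-r\lambda_\varepsilon$; (iii) prove the $F_r \leq r\jump^2/|Z|^2$ bound via $(1+u)^r \leq 1+ru$; (iv) assemble and conclude non-positivity of the drift, hence~\eqref{eq: M forward supermgle case 1}. The only real subtlety to watch is that the argument of the power, $\big(\frac{(x-\jump)^2+y^2}{x^2+y^2}\big)$, is genuinely non-negative so that $u \geq -1$ and the inequality applies — which is automatic — and that no integrability issue arises, which is guaranteed because $F_r(\jump) = O(\jump^2)$ near $\jump = 0$ and $\nu$ is a Lévy measure.
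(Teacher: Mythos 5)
Your proof is correct and takes essentially the same route as the paper: apply the concavity inequality $(1+u)^r \leq 1+ru$ to get $F_r(\jump) \leq r\jump^2/|Z|^2$, integrate against $\nu$ to obtain $r\lambda_\varepsilon/|Z|^2$, and then observe that with the choice~\eqref{eq: p and q forward alternative} the resulting drift vanishes identically (the paper absorbs the $r\lambda_\varepsilon/|Z|^2$ term into both $X^2$ and $Y^2$ coefficients via $X^2 + Y^2 = |Z|^2$, which is the same algebra you perform). Your guess that both polynomial coefficients collapse to $-r\lambda_\varepsilon$ is exactly right, and the remaining verification is routine.
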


With parameters~\eqref{eq: p and q forward alternative}, 
the process $\mgle$ in~\eqref{eq: forward supermgle N} 
is a non-negative local martingale, thus a supermartingale~\cite[Lemma~5.6.8]{Cohen-Elliott:Stochastic_calculus_and_applications}.
Hence, Proposition~\ref{prop: M forward supermgle case 1} shows by the supermartingale property that
\begin{align*}
\EX[M(t)] \; \leq \; \EX[\mgle(t)] \; \leq \; \mgle(0) \; = \; M(0) \quad \textnormal{ for all } t \geq 0 . 
\end{align*}

\begin{proof}
The proof is very similar to that of Proposition~\ref{prop: M supermgle}.
Using the fact that $(1+x)^r \leq 1 + rx$ for all $x \geq -1$ and $r \in (0,1]$, we see that~\eqref{eq: aux function drift} can be bounded as
\begin{align*}
F_{r}(\jump) 
\; = \; \; & 
\bigg( 1 + \frac{-2 \jump x + \jump^2}{x^2 + y^2} \bigg)^r - 1 + \frac{2r \jump x}{x^2 + y^2} 
\; \leq \; \frac{r \jump^2 }{x^2 + y^2} .
\end{align*}
Therefore, using the definition~\eqref{eq: jump variance again} of the variance $\lambda_\varepsilon$, with $x = X(s-)$ and $y = Y(s-)$ we obtain
\begin{align*}
\int_{|\jump| \leq \varepsilon} F_{r}(\jump; X(s-), Y(s-)) \, \nu(\ud \jump) 
\leq \; & \frac{r \lambda_\varepsilon }{|Z(s-)|^2} .
\end{align*}
By Lemma~\ref{lem: SDE forward M}, we thus obtain
\begin{align*}
M(t) 
\; \leq \; \; & \mgle(t) \; + \; 
\int_0^t \frac{M(s-)}{|Z(s-)|^4} \; 
\big( r (\kappa (2 r - 1) + 8) - 2 p - 2 q + r \lambda_\varepsilon \big) X(s-)^2 \, \ud s \\
\; & \; + \; 
\int_0^t \frac{M(s-)}{|Z(s-)|^4} \; 
\big( r \kappa + 2 p - 2 q + r \lambda_\varepsilon \big) Y(s-)^2 \, \ud s , \qquad t \in [0,\tau(z_0)) . 
\end{align*}
Plugging in identities~\eqref{eq: p and q forward alternative}, we see that the drift equals zero, so we have $M(t) \leq \mgle(t)$.
\end{proof}

The following bound, which holds when $r \in (-\infty,0)$, 
will be needed to conclude the existence of the Loewner trace in the case where $\kappa < 8$. This is the only estimate that requires the additional condition,~\ref{item: ass2}, 
that the variance measure of the L\'evy measure $\nu$ is locally upper Ahlfors regular near the origin (in the sense of Definition~\ref{def: Ahlfors regular}).
(Compare also with~\cite[Theorem~27.7]{Sato:Levy_processes_and_infinitely_divisible_distributions}.) 
It might be possible to perform a more careful analysis and lift this assumption --- however, most common L\'evy measures, including those of $\alpha$-stable processes, already satisfy this assumption~\ref{item: ass2}.

\begin{prop} \label{prop: M forward supermgle case 2}
Fix $\kappa \in [0,8)$ and a L\'evy measure $\nu$ 
whose variance measure $\mu_\nu$ satisfies the local upper Ahlfors regularity~\eqref{eq: Ahlfors regularity} 
with constants 
$\epsilon_\nu \in (0,1/2)$, and 
$\alpha_\nu, c_\nu \in (0,\infty)$, and $\rho_\nu \in (0,1)$. 
Fix also 
$\varepsilon \in (0,\epsilon_\nu \wedge \tfrac{1}{2} \rho_\nu]$ and parameters $r \in (-\infty, 0)$ and $p < 0$ such that the following inequalities\footnote{We will see examples of such parameters in Sections~\ref{subsec: Summability 2} and~\ref{subsec: Summability 3}.} 
hold:
\begin{align} 
\label{eq: p and q forward inequality 1}
- 2 p - 2 q + r (\kappa (2 r - 1) + 8) + 2^{3-2r} r (r-1) \, \lambda_\varepsilon
\; & \leq 0 
\\
\label{eq: p and q forward inequality 2}
2 p - 2 q + r \kappa + 2^{3-2r} r (r-1) \, \lambda_\varepsilon 
\; & \leq 0 . 
\end{align}

Let $(g_t)_{t \geq 0}$ be the solution to~\eqref{eq: LE} driven by 
$\smash{\microDriver_\varepsilon^{\kappa}}$~\eqref{eq: BM with micro again again}, 
fix $z_0 \in \bH$, and consider the processes 
$M$ defined in~\eqref{eq: general forward martingale candidate}
and $\mgle$ defined in~\eqref{eq: forward supermgle N}. 
Fix $\alpha \in (0,\alpha_\nu]$. 
Then, we have $M(0) = \mgle(0)$ and 
\begin{align} \label{eq: M forward supermgle case 2}
M(t) \leq \; & \mgle(t) \; + \; L(t) \quad \textnormal{ for all } t \in [0, \tau(z_0)), 
\\ 
\nonumber
\textnormal{where} \qquad 
L(t) = L_{p,q,r,\alpha}(t,z_0) 
:= \; & 8  \, \frac{(\lambda_\varepsilon +  c_\nu)}{y_0^p} 
\int_0^t \frac{Y(s-)^{p + q + \alpha \, \varsigma_r(\alpha)}}{|Z(s-)|^2} 
 \, \ud s , 
 \qquad \textnormal{and} \qquad \varsigma_r(\alpha) = \frac{-2r}{\alpha - 2r}. 
\end{align}
\end{prop}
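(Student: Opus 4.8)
The plan is to start from the semimartingale decomposition of $M$ provided by Lemma~\ref{lem: SDE forward M} and to show that, under the stated hypotheses, the finite-variation (drift) part is dominated by the explicit term $L(t)$. The identity $M(0) = \mgle(0)$ is immediate from~\eqref{eq: general forward martingale candidate} and~\eqref{eq: forward supermgle N}. Writing $-\tfrac{2q}{|Z(s-)|^2} = -\tfrac{2q(X(s-)^2 + Y(s-)^2)}{|Z(s-)|^4}$, the non-jump part of the drift in Lemma~\ref{lem: SDE forward M} equals
\begin{align*}
\int_0^t \frac{M(s-)}{|Z(s-)|^4} \Big( \big( r(\kappa(2r-1)+8) - 2p - 2q \big) X(s-)^2 + \big( r\kappa + 2p - 2q \big) Y(s-)^2 \Big) \, \ud s ,
\end{align*}
so everything comes down to controlling the jump integral $\int_{|\jump| \le \varepsilon} F_r(\jump; X(s-), Y(s-)) \, \nu(\ud \jump)$, with $F_r$ as in~\eqref{eq: aux function drift}. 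I would first record the elementary identity $F_r(\jump; x, y) = \big[ (1+u)^r - 1 - ru \big] + \tfrac{r\jump^2}{x^2+y^2}$ with $u = u(\jump; x,y) := \tfrac{\jump^2 - 2\jump x}{x^2 + y^2}$ and $1 + u = \tfrac{(x-\jump)^2 + y^2}{x^2+y^2} > 0$; the term $\tfrac{r\jump^2}{x^2+y^2}$ is $\le 0$ (as $r < 0$) and can be discarded for an upper bound, while convexity of $u \mapsto (1+u)^r$ (valid since $r(r-1) > 0$) and Taylor's theorem with Lagrange remainder give $0 \le (1+u)^r - 1 - ru \le \tfrac{r(r-1)}{2}\, u^2 \big( 1 \vee (1+u)^{r-2} \big)$.

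Next I would split the $\jump$-integral into a \emph{bounded region}, where $1 + u$ is bounded below and $(1+u)^{r-2}$ is harmless, and a \emph{singular region}, where $Z(s-)$ is close to $\microDriver_\varepsilon^{\kappa}(s-) + \jump$ --- precisely the ``bubble-closing-by-jumps'' scenario flagged in the introduction. In the bounded region, using $u^2 = \tfrac{\jump^2(\jump-2x)^2}{(x^2+y^2)^2}$, the bound $(\jump - 2x)^2 \le 2\jump^2 + 8x^2$, and $\int_{|\jump|\le\varepsilon}\jump^4\,\nu(\ud\jump) \le \varepsilon^2\lambda_\varepsilon$, this part of the drift contributes a bounded multiple of $\lambda_\varepsilon$ times $\tfrac{X(s-)^2}{|Z(s-)|^4}$ and $\tfrac{Y(s-)^2}{|Z(s-)|^4}$; the constants $2^{3-2r}r(r-1)$ in~\eqref{eq: p and q forward inequality 1} and~\eqref{eq: p and q forward inequality 2} are exactly what is needed so that, after combining with the non-jump coefficients above, both the $X(s-)^2$- and $Y(s-)^2$-coefficients become $\le 0$, and hence the entire bounded part of the drift is $\le 0$. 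In the singular region, where $1+u$ is small, one necessarily has $\jump \approx x$ and also $y \ll |x|$ (otherwise $1+u \gtrsim 1$); in particular $\jump \asymp x$ is bounded away from $0$ and $\jump^2 \asymp x^2 \asymp |Z(s-)|^2$, which lets me rewrite $\nu(\ud\jump)$ as $\jump^{-2}\mu_\nu(\ud\jump) \asymp |Z(s-)|^{-2}\mu_\nu(\ud\jump)$ and invoke the local upper Ahlfors regularity~\eqref{eq: Ahlfors regularity} of $\mu_\nu$ (legitimate since $\varepsilon \le \epsilon_\nu \wedge \tfrac12\rho_\nu$). A layer-cake estimate of $\int (1+u)^r\,\nu(\ud\jump)$ over the singular shell, using $\mu_\nu(B(x,\rho)) \le c_\nu\rho^{\alpha}$ (with $\alpha \le \alpha_\nu$) and integration by parts in $\rho = \mathrm{dist}(\jump, x)$, then bounds this contribution.

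To bring the singular contribution into the stated form, I would bound $M(s-)$ crudely but uniformly: $(\sin\arg Z(s-))^{-2r} \le 1$ (since $-2r > 0$), and Schwarz--Pick applied to $f_t = g_t^{-1} \colon \bH \to \bH \setminus K_t$ gives $|g_s'(z_0)| \ge Y(s-)/y_0$, so $|g_s'(z_0)|^p \le y_0^{-p}Y(s-)^p \le (y_0^2 + 4T)^{-p/2}Y(s-)^p$ (recall $p < 0$), whence $M(s-) \le (y_0^2+4T)^{-p/2}Y(s-)^{p+q}$. Optimizing the threshold of the near/far split against $Y(s-)$ then produces exactly the exponent $p + q + \alpha\,\varsigma_r(\alpha)$ with $\varsigma_r(\alpha) = \tfrac{-2r}{\alpha - 2r}$, and tracking the constants gives the factor $8(\lambda_\varepsilon + c_\nu)$, yielding $M(t) \le \mgle(t) + L(t)$ as claimed. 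I expect the singular region to be the crux: making the heuristics ``$\jump \asymp x$'' and ``$y \ll |x|$'' quantitatively precise so that~\eqref{eq: Ahlfors regularity} applies uniformly in $(s, z_0)$, and tuning the split so that the power of $Y(s-)$ comes out exactly $\varsigma_r(\alpha)$, is a delicate optimization that, unlike in the Brownian case, cannot lean on any modulus-of-continuity input for the driver.
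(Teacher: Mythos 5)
Your high-level strategy — isolate the jump drift, split into a ``bounded'' region handled by a second-order Taylor bound yielding a $\lambda_\varepsilon$-multiple of $\tfrac{X^2}{|Z|^4}$ and $\tfrac{Y^2}{|Z|^4}$, and a ``singular'' region handled by Ahlfors regularity plus a balanced near/far split, then bound $M(s-)$ by Schwarz — is exactly the paper's strategy, and your treatment of the singular region (switching to $(1+u)^r$, using $\jump^2 \asymp x^2 \asymp |Z|^2$, tuning the split threshold to produce $\varsigma_r(\alpha)$) is sound.

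However, there is a genuine gap in your bounded-region step. You Taylor-expand $\phi(u) = (1+u)^r$ in $u$, getting a remainder of the form $\tfrac{r(r-1)}{2}u^2\big(1 \vee (1+u)^{r-2}\big)$. The problem is that this is a drastically bad upper bound when $|u|$ is large: the genuine quantity $(1+u)^r - 1 - ru$ grows at most linearly in $u$ (indeed $(1+u)^r \to 0$ for $u \to \infty$ since $r < 0$), while your remainder is quadratic. Your ``bounded region'' ($1+u$ bounded below) does \emph{not} exclude large $u$ — it happens whenever $|\jump| \gg |Z(s-)|$, which your constraints allow. Concretely, after expanding $u^2 = \tfrac{\jump^2(\jump-2x)^2}{(x^2+y^2)^2}$ and bounding $(\jump - 2x)^2 \le 2\jump^2 + 8x^2$, the $\jump^4$ piece produces $\int_{|\jump|\le\varepsilon}\jump^4\,\nu(\ud\jump)/(x^2+y^2)^2 \le \varepsilon^2\lambda_\varepsilon/|Z(s-)|^4$, which is \emph{not} a bounded multiple of $\lambda_\varepsilon(X(s-)^2+Y(s-)^2)/|Z(s-)|^4 = \lambda_\varepsilon/|Z(s-)|^2$: the ratio $\varepsilon^2/|Z(s-)|^2$ is unbounded as $|Z(s-)| \to 0$ (which does occur, since $Y(s)$ decreases to arbitrarily small values before $\tau(z_0)$). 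Thus the coefficients $2^{3-2r}r(r-1)\lambda_\varepsilon$ in~\eqref{eq: p and q forward inequality 1}--\eqref{eq: p and q forward inequality 2} cannot absorb your bounded-region contribution as claimed. The fix is to Taylor-expand $F_r$ in $\jump$ rather than in $u$, observing that $F_r(0) = F_r'(0) = 0$ and that $F_r''$ is bounded by a constant times $(x^2+y^2)^{-1}\big(\tfrac{x^2+y^2}{(x-\jump)^2+y^2}\big)^{1-r}$; this gives a native $\jump^2$ with no $\jump^4$ error. But $F_r''$ is only uniformly bounded on carefully restricted regions, so one still needs a finer decomposition than yours: the paper first splits $|x|\le y$ (where $\tfrac{x^2+y^2}{(x-\jump)^2+y^2} \le 2$ uniformly in $\jump$) from $y < |x|$, and in the latter case further splits $\{|\jump|\le|x|/2\}$ (where the $\xi$ in the Lagrange remainder stays away from $x$), $\{|\jump|>|x|/2,\ \mathrm{sgn}(\jump)\neq\mathrm{sgn}(x)\}$ (where $(x-\jump)^2 \ge x^2$ throughout the segment $[0,\jump]$), and the genuinely singular $\{|\jump|>|x|/2,\ \mathrm{sgn}(\jump)=\mathrm{sgn}(x)\}$. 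That finer case split — driven by the need to keep $F_r''$ uniformly bounded along the entire Taylor segment — is the missing ingredient.
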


Note that the process $L$ behaves well under the time-change~\eqref{eq: forward time change}, so that $\hat{Y}(u) = y_0 \, e^{-2u}$: 
\begin{align*}
\hat{L}(t) = \hat{L}_{p,q,r,\alpha}(t,z_0) 
= \; & 8  \, ( \lambda_\varepsilon +  c_\nu )
\, y_0^{q + \alpha \, \varsigma_r(\alpha)} 
\int_0^{S(t)} e^{-2u(p + q + \alpha \, \varsigma_r(\alpha))} \, \ud u .
\end{align*}

\begin{proof}
We Taylor expand the function~\eqref{eq: aux function drift} 
around $\jump = 0$. 
We have $F_{r}(0) = 0$ and $F_{r}'(0) = 0$, and
\begin{align*}
F_{r}''(\jump) 
= \; & 4 r (r-1) \frac{(x-\jump)^2}{(x^2 + y^2)^2} \bigg( \frac{x^2 + y^2}{(x-\jump)^2 + y^2} \bigg)^{2-r} 
\; + \; \underbrace{\frac{2 r }{x^2 + y^2}
\bigg( \frac{x^2 + y^2}{(x-\jump)^2 + y^2} \bigg)^{1-r}}_{\leq 0} \\
\leq \; & \frac{4 r (r-1) }{x^2 + y^2} 
\bigg( \frac{x^2 + y^2}{(x-\jump)^2 + y^2} \bigg)^{1-r} ,
\end{align*}
because $r < 0$. Note also that the exponent is $1-r > 1$ (it is in particular non-negative).

{\bf Case 1.}
If $|x| \leq y$, then for all $\jump \in \bR$, we have
\begin{align*}
\frac{x^2 + y^2}{(x-\jump)^2 + y^2} \leq 2 
\qquad \Longrightarrow \qquad
F_{r}''(\jump) 
\leq \; & 2^{3-r} \, \frac{r (r-1) }{x^2 + y^2} .
\end{align*}
Using Taylor's theorem, this shows that 
\begin{align*}
F_{r}(\jump) \leq \; & 2^{2-r} \, \frac{r (r-1) \, \jump^2 }{x^2 + y^2} , \qquad \jump \in \bR , \; |x| \leq y .
\end{align*}
Therefore, using the definition~\eqref{eq: jump variance again} of the variance $\lambda_\varepsilon$, we obtain
\begin{align} \label{eq: F integral bound x < y}
\int_{|\jump| \leq \varepsilon} F_{r}(\jump; x,y) \, \nu(\ud \jump) 
\leq \; & 
2^{2-r} \, \frac{r (r-1) \, \lambda_\varepsilon }{x^2 + y^2} , \qquad |x| \leq y .
\end{align}

{\bf Case 2.}
Assume then that $y \leq |x|$. We split the integration region $\{|\jump| \leq \varepsilon\}$ into three parts:
\begin{align*}
\{|\jump| \leq \varepsilon\} 
= \; & 
\underbrace{\{|\jump| \leq \tfrac{|x|}{2} \wedge \varepsilon\}}_{=: A_1(x)} \cup 
\underbrace{\big\{ \tfrac{|x|}{2} \leq |\jump| \leq \varepsilon \textnormal{ and } \mathrm{sgn}(v) \neq \mathrm{sgn}(x) \big\}}_{=: A_2(x)} 
\cup 
\underbrace{\big\{ \tfrac{|x|}{2} \leq |\jump| \leq \varepsilon \textnormal{ and } \mathrm{sgn}(v) = \mathrm{sgn}(x) \big\}}_{=: A_3(x)} ,
\end{align*}
so that
\begin{align*}
\int_{|\jump| \leq \varepsilon} F_{r}(\jump; x,y) \, \nu(\ud \jump) 
= \; & \underbrace{\int_{A_1(x)} F_{r}(\jump; x,y) \, \nu(\ud \jump)}_{=: I_1(x,y)} 
\; + \;
\underbrace{\int_{A_2(x)} F_{r}(\jump; x,y) \, \nu(\ud \jump)}_{=: I_2(x,y)}  
\; + \;
\underbrace{\int_{A_3(x)} F_{r}(\jump; x,y) \, \nu(\ud \jump)}_{=: I_3(x,y)}  .
\end{align*}
\begin{enumerate}[leftmargin=*, label=\textnormal{\arabic*.}, ref=\arabic*.]
\item 
The first integral $I_1(x,y)$ can be bounded similarly as before: 
with $|x-\jump| \geq \tfrac{1}{2} |x|$, we have 
\begin{align*}
\frac{x^2 + y^2}{(x-\jump)^2 + y^2} \leq 4 
\qquad \Longrightarrow \qquad 
F_{r}(\jump) 
\leq \; & 2^{4-2r} \, \frac{r (r-1) \, \jump^2 }{x^2 + y^2} .
\end{align*}
Therefore, using the definition~\eqref{eq: jump variance again} of the variance $\lambda_\varepsilon$, we obtain
\begin{align} \label{eq: F integral bound first x > y}
I_1(x,y)
= \int_{A_1(x)} F_{r}(\jump; x,y) \, \nu(\ud \jump) 
\leq \; & 
2^{3-2r} \, \frac{r (r-1) \, \lambda_\varepsilon }{x^2 + y^2} , \qquad y \leq |x| .
\end{align}

\item 
The second integral $I_2(x,y)$ can also be bounded similarly: 
with $|x-\jump|^2 \geq 2 x^2$, we have 
\begin{align*}
\frac{x^2 + y^2}{(x-\jump)^2 + y^2} \leq 1 
\qquad \Longrightarrow \qquad 
F_{r}(\jump) 
\leq \; & \frac{4 r (r-1) \, \jump^2 }{x^2 + y^2} .
\end{align*}
Therefore, using the definition~\eqref{eq: jump variance again} of the variance $\lambda_\varepsilon$, we obtain
\begin{align} \label{eq: F integral bound second x > y}
I_2(x,y)
= \int_{A_2(x)} F_{r}(\jump; x,y) \, \nu(\ud \jump) 
\leq \; & 
\frac{2 r (r-1) \, \lambda_\varepsilon }{x^2 + y^2} , \qquad y \leq |x| .
\end{align}

\item Lastly, to bound the third integral $I_3(x,y)$, we will use the local $\alpha_\nu$-Ahlfors regularity assumption~\eqref{eq: Ahlfors regularity}. 
Since $r < 0$ and $y \leq |x| \leq 2 \varepsilon \leq 2 \epsilon_\nu \wedge \rho_\nu \leq \rho_\nu < 1$, we have
\begin{align*}
0 < y^{\varsigma_r(\alpha)} \leq y^{\varsigma_r(\alpha_\nu)} \leq \rho_\nu^{\varsigma_r(\alpha_\nu)}  < 1 
\qquad \textnormal{ for all } \alpha \leq \alpha_\nu ,
\qquad \textnormal{ where } 
\varsigma_r(\alpha) = \frac{-2r}{\alpha - 2r} . 
\end{align*}
We split the interval $A_3(x)$ into regions inside and outside of $[x - y^{\varsigma_r(\alpha)}, x + y^{\varsigma_r(\alpha)}]$ thus:
\begin{align} \label{eq: integral split I3}
\begin{split} 
I_3(x,y) 
= \; & \int_{-\varepsilon}^{x-y^{\varsigma_r(\alpha)}}
F_{r}(\jump; x,y) \, \one_{A_3(x)}(\jump) \, \nu(\ud \jump) 
\; + \; 
\int_{x+y^{\varsigma_r(\alpha)}}^{\varepsilon} 
F_{r}(\jump; x,y) \, \one_{A_3(x)}(\jump) \, \nu(\ud \jump) 
\\
\; & \; + \; \int_{x-y^{\varsigma_r(\alpha)}}^{x+y^{\varsigma_r(\alpha)}} 
F_{r}(\jump; x,y) \, \one_{A_3(x)}(\jump) \, \nu(\ud \jump) ,
\end{split} 
\end{align}
Note that when $\mathrm{sgn}(v) = \mathrm{sgn}(x)$, we have 
\begin{align*}
\frac{2 r \jump x}{x^2 + y^2} < 0 , \qquad r < 0 ,
\end{align*}
and with $\tfrac{|x|}{2} \leq |\jump|$ and $y \leq |x|$, we have
\begin{align*}
\frac{x^2 + y^2}{ \jump^2 }  \leq 8 .
\end{align*}
Hence, we see that
\begin{align*}
F_{r}(\jump) 
= \; &
\bigg( \frac{x^2 + y^2}{(x-\jump)^2 + y^2} \bigg)^{-r} - 1 + \frac{2 r \jump x}{x^2 + y^2} 
\; \leq \; \frac{8  \jump^2 }{x^2 + y^2} 
\bigg( \frac{x^2 + y^2}{y^2} \bigg)^{-r} 
\bigg( \frac{y^2}{(x-\jump)^2 + y^2} \bigg)^{-r} .
\end{align*}
We can now bound the integrals in~\eqref{eq: integral split I3} as follows. 

\smallskip

\begin{itemize}[leftmargin=1em]
\item When $\jump \in A_3(x) \setminus [x - y^{\varsigma_r(\alpha)}, x + y^{\varsigma_r(\alpha)}]$, 
we have $|x-\jump| \geq y^{\varsigma_r(\alpha)}$, 
which implies that
\begin{align*}
\bigg( \frac{y^2}{(x-\jump)^2 + y^2} \bigg)^{-r} 
\leq y^{2 r ( \varsigma_r(\alpha) - 1)}
\qquad \Longrightarrow \qquad 
F_{r}(\jump) 
\leq \; & 8 y^{\alpha \, \varsigma_r(\alpha)} \, 
\frac{\jump^2 }{x^2 + y^2} 
\bigg( \frac{x^2 + y^2}{y^2} \bigg)^{-r} ,
\end{align*}
since $2 r ( \varsigma_r(\alpha) - 1) = \alpha \, \varsigma_r(\alpha)$.
Hence, 
the first two integrals in~\eqref{eq: integral split I3} 
can be bounded as 
\begin{align} 
\nonumber
\; & \int_{-\varepsilon}^{x-y^{\varsigma_r(\alpha)}}
F_{r}(\jump; x,y) \, \one_{A_3(x)}(\jump) \, \nu(\ud \jump) 
\; + \; 
\int_{x+y^{\varsigma_r(\alpha)}}^{\varepsilon} 
F_{r}(\jump; x,y) \, \one_{A_3(x)}(\jump) \, \nu(\ud \jump) 
\\ 
\label{eq: F integral bound third x > y}
\leq \; & \frac{8 \, \lambda_\varepsilon }{x^2 + y^2} 
\bigg( \frac{x^2 + y^2}{y^2} \bigg)^{-r} \, y^{\alpha \, \varsigma_r(\alpha)} ,
\end{align}
using the definition~\eqref{eq: jump variance again} of the variance $\lambda_\varepsilon$. 

\medskip

\item When $\jump \in A_3(x) \cap [x - y^{\varsigma_r(\alpha)}, x + y^{\varsigma_r(\alpha)}]$, 
we have $|x-\jump| \leq y^{\varsigma_r(\alpha)}$.
Hence, with $\alpha \leq \alpha_\nu$, 
the last integral in~\eqref{eq: integral split I3} can be bounded as 
\begin{align} 
\nonumber
\; & \int_{x-y^{\varsigma_r(\alpha)}}^{x+y^{\varsigma_r(\alpha)}} 
F_{r}(\jump; x,y) \, \one_{A_3(x)}(\jump) \, \nu(\ud \jump) \\
\nonumber
\leq \; & \frac{8}{x^2 + y^2} \bigg( \frac{x^2 + y^2}{y^2} \bigg)^{-r} 
\int_{x-y^{\varsigma_r(\alpha)}}^{x+y^{\varsigma_r(\alpha)}} \jump^2 \, \one_{[-\epsilon_\nu, \epsilon_\nu]}(\jump) \, \nu(\ud \jump) \\
\label{eq: F integral bound third x > y bad case}
\leq \; & \frac{8 \, c_\nu}{x^2 + y^2} \bigg( \frac{x^2 + y^2}{y^2} \bigg)^{-r} \, y^{\alpha_\nu \varsigma_r(\alpha)} 
\; \leq \; \frac{8 \, c_\nu}{x^2 + y^2} \bigg( \frac{x^2 + y^2}{y^2} \bigg)^{-r} \, y^{\alpha \, \varsigma_r(\alpha)} ,
\end{align}
using the local $\alpha_\nu$-Ahlfors regularity assumption~\eqref{eq: Ahlfors regularity}.
\end{itemize}
\end{enumerate}
After collecting all of the above estimates~(\ref{eq: F integral bound x < y},~\ref{eq: F integral bound first x > y},~\ref{eq: F integral bound second x > y},~\ref{eq: F integral bound third x > y},~\ref{eq: F integral bound third x > y bad case}) together, we conclude that
\begin{align*} 
\int_{|\jump| \leq \varepsilon} F_{r}(\jump; x,y) \, \nu(\ud \jump) 
\leq \; & 2^{3-2r} \, \frac{r (r-1) \, \lambda_\varepsilon }{x^2 + y^2}
\; + \; \frac{8 \, ( \lambda_\varepsilon +  c_\nu ) }{x^2 + y^2} \bigg( \frac{x^2 + y^2}{y^2} \bigg)^{-r} \, y^{\alpha \, \varsigma_r(\alpha)} .
\end{align*}
Taking $x = X(s-)$ and $y = Y(s-)$, and 
plugging in inequalities
(\ref{eq: p and q forward inequality 1},~\ref{eq: p and q forward inequality 2}), 
we see by Lemma~\ref{lem: SDE forward M} that 
\begin{align*}
M(t) 
\; \leq \; \; & \mgle(t) \; + \; 
\int_0^t \frac{M(s-)}{|Z(s-)|^4} \; 
\big( r (\kappa (2 r - 1) + 8) - 2 p - 2 q + 2^{3-2r} r (r-1) \, \lambda_\varepsilon  \big) X(s-)^2 \, \ud s \\
\; & \; + \; 
\int_0^t \frac{M(s-)}{|Z(s-)|^4} \; 
\big( r \kappa + 2 p - 2 q + 2^{3-2r} r (r-1) \, \lambda_\varepsilon  \big) Y(s-)^2 \, \ud s  \\
\; & \; + \; \int_0^t M(s-) \; 
\frac{8 \, ( \lambda_\varepsilon +  c_\nu ) }{|Z(s-)|^2} \bigg( \frac{|Z(s-)|^2}{Y(s-)^2} \bigg)^{-r} 
Y(s-)^{\alpha \, \varsigma_r(\alpha)} 
\, \ud s \\
\; \leq \; \; & \mgle(t) \; + \; 
\int_0^t M(s-) \; 
\frac{8 \, ( \lambda_\varepsilon +  c_\nu ) }{|Z(s-)|^2} \bigg( \frac{|Z(s-)|^2}{Y(s-)^2} \bigg)^{-r} 
Y(s-)^{\alpha \, \varsigma_r(\alpha)}
\, \ud s , \qquad t \in [0,\tau(z_0)) . 
\end{align*}
Using the identity 
$\sin \arg (z) = \frac{y}{|z|}$ for $\bH \ni z = x + \ii y$, 
we can write the last term in the form
\begin{align} \label{eq: drift last term}
\int_0^t |g_{s}'(z_0)|^p \; 
\frac{8 \, ( \lambda_\varepsilon +  c_\nu ) \, Y(s-)^{q}}{|Z(s-)|^2} 
Y(s-)^{\alpha \, \varsigma_r(\alpha)} \, \ud s .
\end{align}
It now remains to bound~\eqref{eq: drift last term}. 
From~\eqref{eq: CRg}, we obtain 
\begin{align*}
|g_s'(z_0)|^p \leq \frac{Y(s)^p}{y_0^p}
, \qquad p < 0 , \; s \in [0,\tau(z_0)) .
\end{align*}
This gives~\eqref{eq: M forward supermgle case 2}.
\end{proof}

\subsection{Summability: $\kappa > 8$}
\label{subsec: Summability 1}

In this section, we finish the proof of Proposition~\ref{prop: needed uniform bound for f' 1}. 
Making use of Remark~\ref{rem: tune epsilon}, we may choose $\lambda_\varepsilon$ arbitrarily small by picking a small enough cutoff $\varepsilon > 0$.  
For deriving the derivative estimate~\eqref{eq: needed uniform bound for f' holds 1}
in Proposition~\ref{prop: needed uniform bound for f' 1}, 
we used a Borel-Cantelli argument relying on the summability of the probabilities~\eqref{eq: summability of event of interest}, 
and the purpose of this section is to verify the remaining~\eqref{eq: summability of event of interest} in Proposition~\ref{prop: summability of event of interest 1}.
For this, it is necessary for our argument that 
\begin{align*}
p(\kappa, r) + q(\kappa, \lambda_\varepsilon, r) 
= \tfrac{1}{2} r ( 8 - \kappa + \lambda_\varepsilon) + \kappa r^2 
< 0 ,
\end{align*} 
with suitably chosen parameter $r \in (0,1]$ and cutoff $\varepsilon > 0$, 
where $\kappa > 8$ and $p = p(\kappa, r )$ and $q = q(\kappa, \lambda_\varepsilon, r )$ are given by~\eqref{eq: p and q forward alternative}.  
Note that the function $r \mapsto p(\kappa, r) + q(\kappa, 0, r)$ 
has a unique minimum at $r = \frac{1}{4} - \frac{2}{\kappa}$.
This motivates the following choices (which are not optimal, but sufficiently convenient).

\begin{lem} \label{lem: alphaprimetilde and thetaprimetilde}
Fix $\kappa \in (8,\infty)$, a L\'evy measure $\nu$, and $\varepsilon > 0$.
Assume that identities~\eqref{eq: p and q forward alternative} hold with 
\begin{align} \label{eq: choice of r forward}
r = \rparamT(\kappa) := \tfrac{1}{4} - \tfrac{2}{\kappa} \; \in \; (0,1/4) , \qquad \kappa > 8 ,
\end{align}
and define
\begin{align} \label{eq: lambdatildemax forward}
\maxjumpT{\kappa} := \tfrac{1}{2}(\kappa-8) \; > \; 0 , \qquad \kappa > 8 .
\end{align}
Define also
\begin{align} \label{eq: Thetatilde forward case 1 again}
\ThetaTmax{\kappa}{\lambda_\varepsilon} := \frac{p(\kappa, \rparamT(\kappa) ) + q(\kappa, \lambda_\varepsilon, \rparamT(\kappa) ) }{p(\kappa, \rparamT(\kappa) ) - 2} 
= \frac{2 (\kappa - 8) (\kappa - 2 (\lambda_\varepsilon + 4))}{(\kappa + 8) (3 \kappa + 8)} .
\end{align}
Then, we have
\begin{align*}
0 \leq \lambda_\varepsilon < \maxjumpT{\kappa}
\qquad \Longrightarrow \qquad
\ThetaTmax{\kappa}{\lambda_\varepsilon} \in (0,2/3) .
\end{align*}
In particular, $0 \leq \lambda_\varepsilon < \maxjumpT{\kappa}$ implies that, 
for any $0 < \theta < \ThetaTmax{\kappa}{\lambda_\varepsilon}$, 
we have
\begin{align*}
(1-\theta) \, p(\kappa, \rparamT(\kappa)) + q(\kappa, \lambda_\varepsilon, \rparamT(\kappa)) < - 2 \theta .
\end{align*} 
\end{lem}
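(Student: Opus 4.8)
The plan is to treat this lemma as a direct computation: substitute the explicit choice $r = \rparamT(\kappa) = \tfrac14 - \tfrac2\kappa = \tfrac{\kappa-8}{4\kappa}$ into the formulas~\eqref{eq: p and q forward alternative} for $p$ and $q$, simplify to closed form, and then read off the three asserted consequences in turn. There is no conceptual content; the only thing to get right is the sign bookkeeping.

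First I would compute $p := p(\kappa, \rparamT(\kappa))$ and $p + q := p(\kappa, \rparamT(\kappa)) + q(\kappa, \lambda_\varepsilon, \rparamT(\kappa))$ explicitly. Using $\kappa \rparamT(\kappa) = \tfrac\kappa4 - 2$, so that $4 - \kappa + \kappa \rparamT(\kappa) = 2 - \tfrac{3\kappa}{4}$, one obtains
\begin{align*}
p \; = \; \frac{\rparamT(\kappa)\,(8 - 3\kappa)}{8} \; = \; -\frac{(\kappa-8)(3\kappa-8)}{32\kappa} \; < \; 0 , \qquad\quad
p - 2 \; = \; -\frac{(3\kappa+8)(\kappa+8)}{32\kappa} \; < \; 0 ,
\end{align*}
where for the second identity I would use the factorization $(\kappa-8)(3\kappa-8) + 64\kappa = 3\kappa^2 + 32\kappa + 64 = (3\kappa+8)(\kappa+8)$ (readily seen by computing the discriminant). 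A parallel computation gives
\begin{align*}
p + q \; = \; 2p + \tfrac12 \rparamT(\kappa)(\kappa + \lambda_\varepsilon) \; = \; \frac{\rparamT(\kappa)}{4}\big(8 - \kappa + 2\lambda_\varepsilon\big) \; = \; -\frac{(\kappa-8)\big(\kappa - 2(\lambda_\varepsilon+4)\big)}{16\kappa} .
\end{align*}
Dividing the last display by $p - 2$ then yields exactly the closed form asserted for $\ThetaTmax{\kappa}{\lambda_\varepsilon}$ in~\eqref{eq: Thetatilde forward case 1 again}.

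With this closed form in hand, the two range claims are immediate. On $0 \le \lambda_\varepsilon < \maxjumpT{\kappa} = \tfrac12(\kappa-8)$, positivity of $\ThetaTmax{\kappa}{\lambda_\varepsilon}$ holds because for $\kappa > 8$ the factors $\kappa-8$, $3\kappa+8$, $\kappa+8$ are all positive and the remaining factor $\kappa - 2(\lambda_\varepsilon+4) = \kappa - 8 - 2\lambda_\varepsilon$ is positive precisely when $\lambda_\varepsilon < \maxjumpT{\kappa}$. For the bound $\ThetaTmax{\kappa}{\lambda_\varepsilon} < \tfrac23$, I would observe that $\lambda_\varepsilon \mapsto \ThetaTmax{\kappa}{\lambda_\varepsilon}$ is decreasing, so it suffices to treat $\lambda_\varepsilon = 0$, where the inequality reduces to $3(\kappa-8)^2 < (3\kappa+8)(\kappa+8)$, i.e.\ to $80\kappa - 128 > 0$, which holds for $\kappa > 8$. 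Finally, for $0 < \theta < \ThetaTmax{\kappa}{\lambda_\varepsilon}$, the asserted inequality $(1-\theta)p + q < -2\theta$ rearranges to $p + q < \theta(p-2)$; since $p - 2 < 0$, dividing by $p-2$ reverses the inequality and turns it into $\tfrac{p+q}{p-2} > \theta$, i.e.\ into $\ThetaTmax{\kappa}{\lambda_\varepsilon} > \theta$, which is the hypothesis.

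I do not expect any genuine obstacle: the argument is entirely elementary algebra. The only points demanding a little care are (i) tracking signs — in particular that $p < 0$ and $p - 2 < 0$, so that clearing the denominator $p-2$ in the last step flips the inequality — and (ii) spotting the factorization $(\kappa-8)(3\kappa-8) + 64\kappa = (3\kappa+8)(\kappa+8)$, which is what makes the closed form~\eqref{eq: Thetatilde forward case 1 again} collapse to the stated shape. If anything is "the hard part," it is simply writing the chain of substitutions without arithmetic slips.
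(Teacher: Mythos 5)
Your proposal is correct and follows essentially the same route as the paper: substitute $r = \rparamT(\kappa)$ into~\eqref{eq: p and q forward alternative}, reduce $\ThetaTmax{\kappa}{\lambda_\varepsilon}$ to the stated closed form, and read the sign/range claims off the factored expression. The one small divergence is in the upper bound $\ThetaTmax{\kappa}{\lambda_\varepsilon} < 2/3$: the paper argues via monotonicity of $\kappa \mapsto \ThetaTmax{\kappa}{0}$ and the limit as $\kappa \to \infty$, whereas you reduce to $\lambda_\varepsilon = 0$ and check the equivalent polynomial inequality $80\kappa - 128 > 0$ directly, which is a marginally more self-contained way of reaching the same conclusion.
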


\begin{proof}
The map $\lambda_\varepsilon \mapsto \ThetaTmax{\kappa}{\lambda_\varepsilon}$~\eqref{eq: Thetatilde forward case 1 again} 
is decreasing when $\lambda_\varepsilon \geq 0$ and $\kappa > 8$. Moreover, we have 
\begin{align*}
\ThetaTmax{\kappa}{\lambda_\varepsilon} = 0 \qquad \Longleftrightarrow \qquad  
\lambda_\varepsilon = \maxjumpT{\kappa} .
\end{align*}
With $\lambda_\varepsilon = 0$, the map 
$\kappa \mapsto \ThetaTmax{\kappa}{0}$
is increasing when $\kappa > 8$, and
\begin{align*}
\lim_{\kappa \to 8} \ThetaTmax{\kappa}{0} = 0
\qquad \textnormal{and} \qquad 
\lim_{\kappa \to \infty} \ThetaTmax{\kappa}{0}
= 2/3 .
\end{align*}
Hence, for fixed $\kappa > 8$, 
the parameter~\eqref{eq: Thetatilde forward case 1 again} satisfies $\ThetaTmax{\kappa}{\lambda_\varepsilon} \in (0,2/3)$ when $0 \leq \lambda_\varepsilon < \maxjumpT{\kappa}$.
See also Figure~\ref{fig: Alphaprime and Thetaprime 3}. 
The last claim follows directly from the definition~\eqref{eq: Thetatilde forward case 1 again} of $\ThetaTmax{\kappa}{\lambda_\varepsilon}$.
\end{proof}

\noindent 
\begin{figure}[ht!]
\includegraphics[width=.4\textwidth]{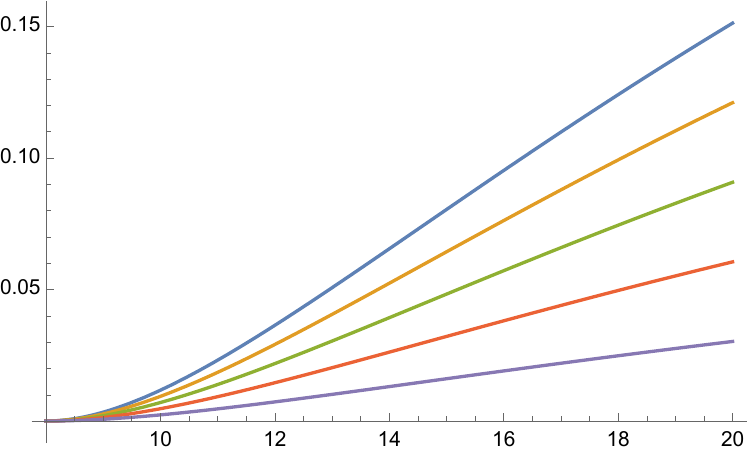}
\qquad
\includegraphics[width=.4\textwidth]{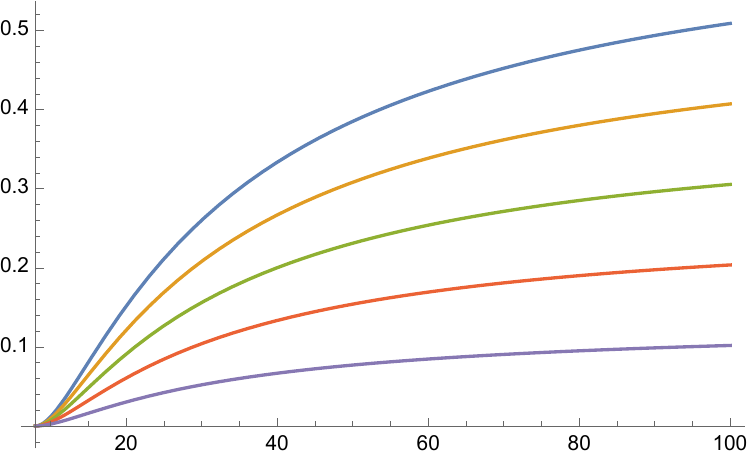}
\caption{\label{fig: Alphaprime and Thetaprime 3}
Illustrating quantities in Lemma~\ref{lem: alphaprimetilde and thetaprimetilde}: 
Plots of $\kappa \mapsto \ThetaTmax{\kappa}{\lambda_\varepsilon}$
with discrete values $\lambda_\varepsilon = c \, \maxjumpT{\kappa}$ for $c \in \{ 0, \frac{1}{5}, \frac{2}{5} ,\frac{3}{5} , \frac{4}{5} \}$. The largest plot (blue) has $\lambda_\varepsilon = 0$.
}
\end{figure}

\begin{prop} \label{prop: summability of event of interest 1}
Fix $T > 0$, $\kappa \in (8,\infty)$, a L\'evy measure $\nu$, and $\varepsilon > 0$ such that 
$\lambda_\varepsilon < \maxjumpT{\kappa}$ as in~\eqref{eq: lambdatildemax forward}. 
Then, for any  
$0 < \theta < \ThetaTmax{\kappa}{\lambda_\varepsilon}$ as in~\eqref{eq: Thetatilde forward case 1 again} and for any $R > 0$, 
on the event $\{ R_\varepsilon^{\kappa}(T) \leq R \}$, 
the probabilities of events~\eqref{eq: event of interest} are summable\textnormal{:}
\begin{align} \label{eq: summability of event of interest 1}
\sum_{n=1}^\infty \sum_{z_0 \in \Grid_\varepsilon^{\kappa} (2^{-n \theta}, T, R) }
\PR_{R} [ E_n^\theta(z_0) ] 
\; < \; \infty .
\end{align}
\end{prop}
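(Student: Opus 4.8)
The plan is to combine Lemma~\ref{lem: forward bound} with the supermartingale bound of Proposition~\ref{prop: M forward supermgle case 1}, choosing the parameters $r = \rparamT(\kappa)$ and $p = p(\kappa,r)$, $q = q(\kappa,\lambda_\varepsilon,r)$ as in~\eqref{eq: p and q forward alternative}. First I would check that these parameters are admissible: by Lemma~\ref{lem: alphaprimetilde and thetaprimetilde}, $r = \tfrac14 - \tfrac2\kappa \in (0,1/4)$, so $r \in (0,1]$ as required by Proposition~\ref{prop: M forward supermgle case 1}; moreover $p = \tfrac12 r(4-\kappa+\kappa r) < 0$ exactly because $\kappa > 8$ forces $4 - \kappa + \kappa r = 4 - \kappa + \kappa(\tfrac14 - \tfrac2\kappa) = \tfrac14\kappa - 6 < 0$ is \emph{not} quite right --- one must verify $p<0$ directly, which holds since $r>0$ and $4-\kappa+\kappa r = \tfrac{\kappa}{4}-4-2 = \tfrac{\kappa-24}{4}$... actually the cleanest route is to note $2p = r(4-\kappa+\kappa r)$ and $2(p+q) = r(8-\kappa+\lambda_\varepsilon) + 2\kappa r^2$; by Lemma~\ref{lem: alphaprimetilde and thetaprimetilde} the hypothesis $\lambda_\varepsilon < \maxjumpT{\kappa}$ gives $(1-\theta)p + q < -2\theta < 0$ for $0<\theta<\ThetaTmax{\kappa}{\lambda_\varepsilon}$, and in particular (taking $\theta \to 0$) $p+q \le 0$, with strict inequality; this is precisely the sign needed below. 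The requirement $p<0$ for Lemma~\ref{lem: forward bound} should be extracted from $\rparamT(\kappa) \in (0,1/4)$ together with $\kappa>8$, which I would spell out in one line.

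Next I would apply Lemma~\ref{lem: forward bound} with $\beta = (1-\theta)p + q$ to get, for each $n \in \bZpos$ and each $z_0 = x_0 + \ii y_0 \in \Grid_\varepsilon^{\kappa}(2^{-n\theta}, T, R)$,
\begin{align*}
\PR[E_n^\theta(z_0)]
\;\le\; c_0 \, 2^{n\beta}\,
\EX\big[\hat M(S_n)\,\one\{|\hat X(S_n)|\le y_0 e^{-2S_n}\text{ and }S_n \le S(T)\}\big].
\end{align*}
On the event inside the expectation the stopping time $S_n$ is bounded (by~\eqref{eq: bounded stopping time taun}) and $S_n \le S(T) < S(\tau(z_0))$, so $\hat M$ is a genuine process there; by Proposition~\ref{prop: M forward supermgle case 1}, $M(t) \le \mgle(t)$ for $t \in [0,T]$, and $\mgle$ is a non-negative local martingale, hence a supermartingale, so by optional stopping $\EX[\hat M(S_n)\,\one\{S_n\le S(T)\}] \le \EX[\hat\mgle(S_n\wedge S(T))] \le \mgle(0) = M(0) = y_0^{q-2r}|z_0|^{2r}$. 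Dropping the indicator only increases things, so
\begin{align*}
\PR[E_n^\theta(z_0)] \;\le\; c_0\, 2^{n\beta}\, y_0^{q-2r}\,|z_0|^{2r}.
\end{align*}
Summing this over the grid via Lemma~\ref{lem: sum over grid} (with exponents matching $q$ and $r$ there, and mesh $a = 2^{-n\theta}$), the grid sum is bounded by $c_{\rm grid}(q,r,T,R)\,\chi_{q,r}(2^{-n\theta})$, which is a power of $2^{-n\theta}$ up to logarithmic factors. Thus the double sum is dominated by $\sum_n 2^{n\beta} \cdot 2^{-n\theta \cdot (\text{exponent})}\cdot(\text{poly in }n)$, and I must check the net exponent of $2^n$ is strictly negative.

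The main obstacle — really the only nontrivial point — is the bookkeeping that makes the geometric series converge. One has $\beta = (1-\theta)p + q < -2\theta$ strictly, by the choice $\theta < \ThetaTmax{\kappa}{\lambda_\varepsilon}$ and Lemma~\ref{lem: alphaprimetilde and thetaprimetilde}. The grid sum contributes $\chi_{q,r}(2^{-n\theta})$; inspecting the cases in~\eqref{eq: grid chi} with the present values of $q,r$, one sees $\chi_{q,r}(2^{-n\theta})$ is at worst of order $2^{2n\theta}(\log)^{O(1)}$ (the ``$a^{-2}$'' branches), matching the threshold $-2\theta$ against $\beta$ — and the strict inequality $\beta < -2\theta$ is exactly what defeats the worst branch, with the logarithmic factors absorbed since the inequality is strict. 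In the more favorable branches of $\chi_{q,r}$ the exponent is milder and convergence is even easier. So the sum $\sum_n 2^{n(\beta+2\theta)}(\log n)^{O(1)} < \infty$, uniformly in the cutoff $R$ (the $R$-dependence sits only in the polynomial prefactor $c_{\rm grid}(q,r,T,R)$, which is harmless for a fixed $R$). I would close by noting that all constants depend only on $\kappa, \lambda_\varepsilon, \theta, T, R$, establishing~\eqref{eq: summability of event of interest 1}. The one genuine subtlety to be careful about is that the expectation bound via optional stopping needs $S_n \wedge S(T)$ to be a bounded stopping time in the time-changed filtration and $\hat\mgle$ to be a supermartingale there; this follows from the optional stopping theorem for non-negative supermartingales, so no uniform integrability argument is needed.
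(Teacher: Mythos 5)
Your overall route is the same as the paper's: invoke Lemma~\ref{lem: forward bound}, bound $\hat M(S_n)$ via Proposition~\ref{prop: M forward supermgle case 1} and optional stopping for the non-negative supermartingale $\mgle$, then sum over the grid with Lemma~\ref{lem: sum over grid} and close with Lemma~\ref{lem: alphaprimetilde and thetaprimetilde}. The optional-stopping step and the identification $\hat M(0)=\mgle(0)=y_0^{q-2r}|z_0|^{2r}$ are handled correctly.

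One point needs to actually be carried out rather than asserted. You claim that on inspection $\chi_{q,r}(2^{-n\theta})$ is ``at worst of order $2^{2n\theta}(\log)^{O(1)}$ (the $a^{-2}$ branches),'' but this presupposes that the branch $\chi_{q,r}(a)=a^{q-2r-1}$ with $q-2r-1<-2$ does not occur; if $q-2r<-1$, that branch would give a factor $2^{n\theta(2r+1-q)}$ with exponent exceeding $2\theta$, and the series could diverge even with $\beta+2\theta<0$. So you must verify $q-2r>-1$. With $r=\rparamT(\kappa)=\tfrac14-\tfrac2\kappa$ and $q=q(\kappa,\lambda_\varepsilon,r)$ from~\eqref{eq: p and q forward alternative} one finds
\begin{align*}
q-2r \;=\; \tfrac12 r(\kappa r+\lambda_\varepsilon)
\;=\; \frac{(\kappa-8)(\kappa-8+4\lambda_\varepsilon)}{32\kappa}\;>\;0\;>\;-1,
\end{align*}
which pins the relevant case in~\eqref{eq: grid chi} to $\chi_{q,r}(a)=a^{-2}$ (with $r>-1/2$ trivially since $r>0$), so the grid sum is exactly $\lesssim 2^{2n\theta}$ with no logarithmic factor. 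This is the computation the paper carries out and that you skipped. A second, smaller slip: your side-calculation of $4-\kappa+\kappa r$ contains arithmetic errors (it equals $2-\tfrac{3\kappa}{4}$, giving $p=-\tfrac{(\kappa-8)(3\kappa-8)}{32\kappa}<0$); you acknowledge the slip, but a reader would want this corrected, as $p<0$ is required for Lemma~\ref{lem: forward bound}. Also, your digression ``taking $\theta\to0$ gives $p+q\le0$'' is not actually used anywhere in the argument; the operative inequality is $(1-\theta)p+q+2\theta<0$. None of this changes the structure of your argument, which otherwise matches the paper.
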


\begin{proof}
Set $r = \rparamT(\kappa)$ as in~\eqref{eq: choice of r forward} and $p = p(\kappa, \rparamT(\kappa) )$ 
and $q = q(\kappa, \lambda_\varepsilon, \rparamT(\kappa) )$ 
as in~\eqref{eq: p and q forward alternative}:
\begin{align*}
p = \; & p(\kappa, \rparamT(\kappa) ) 
= - \frac{(\kappa - 8)(3 \kappa - 8)}{32 \kappa} < 0 , \\
q = \; & q(\kappa, \lambda_\varepsilon, \rparamT(\kappa) )
= \frac{(\kappa - 8)(\kappa + 8 + 4 \lambda_\varepsilon)}{32 \kappa} > 0 .
\end{align*}
Fix $z_0 = x_0 + \ii y_0 \in \Grid_\varepsilon^{\kappa}(2^{-n \theta}, T, R)$. 
Then, by Lemma~\ref{lem: forward bound}, Proposition~\ref{prop: M forward supermgle case 1} (with $\hat{M}(0) = \mgle(0)$), and 
the Optional stopping theorem (OST) (e.g.~\cite[Theorem~3.25]{LeGall:BM_book}), we obtain 
\begin{align*} 
\PR [ E_n^\theta(z_0) ] 
\leq \; & c_0(p, q, r) \, 2^{n \beta} \, 
\EX \big[ \hat{M}(S_{n}) \; \one{\{|\hat{X}(S_{n})| \leq y_0 \, e^{-2S_{n}}\}} \big] 
&& \textnormal{[by~\eqref{eq: forward bound}]} \\
\leq \; & c_0(p, q, r) \, 2^{n \beta} \, 
y_0^{q} \, \Big(\frac{|z_0|}{y_0}\Big)^{2r} ,
&& \textnormal{[by~\eqref{eq: M forward supermgle case 1} and OST]}
\end{align*}
where $\beta = (1-\theta) p + q$. 
Using Lemma~\ref{lem: sum over grid} with $a=2^{-n \theta}$, 
and $r = \rparamT(\kappa) > 0 > -1/2$, 
and
\begin{align*}
q - 2r = q(\kappa, \lambda_\varepsilon, \rparamT(\kappa) )  - 2\rparamT(\kappa) 
= \frac{(\kappa - 8) (\kappa + 4 \lambda_\varepsilon - 8)}{32 \kappa } \geq 0 > -1 ,
\end{align*}
we obtain 
\begin{align*} 
\sum_{n=1}^\infty \sum_{z_0 \in \Grid_\varepsilon^{\kappa}(2^{-n \theta}, T, R) }
\PR_{R} [ E_n^\theta(z_0) ] 
\; \leq \; \; &
c_0(p, q, r) \, 
\sum_{n=1}^\infty \hspace*{2mm} 2^{n \beta} \hspace*{-2mm}
\sum_{z_0 \in \Grid_\varepsilon^{\kappa}(2^{-n \theta}, T, R) }
y_0^{q} \, \Big(\frac{|z_0|}{y_0}\Big)^{2r} \\
\; \leq \; \; &
c_0(p, q, r) \, c_{\rm grid}(q, r, T, R) \, 
\sum_{n=1}^\infty 2^{n ((1-\theta) \, p + q + 2 \theta)} 
\; < \; \infty ,
\end{align*}
where by choices of $\theta < \ThetaTmax{\kappa}{\lambda_\varepsilon}$ and the other parameters, 
Lemma~\ref{lem: alphaprimetilde and thetaprimetilde} shows that
$(1-\theta) \, p + q + 2 \theta < 0$.
\end{proof}

\subsection{Summability: $\kappa \in (0,8)$}
\label{subsec: Summability 2}

In this section and the next one, we finish the proof of Proposition~\ref{prop: needed uniform bound for f' 2}. 
Making use of Remark~\ref{rem: tune epsilon}, we may choose $\lambda_\varepsilon$ arbitrarily small by picking a small enough cutoff $\varepsilon > 0$.  
For deriving the derivative estimate~\eqref{eq: needed uniform bound for f' holds 2}
in Proposition~\ref{prop: needed uniform bound for f' 2}, 
we used a Borel-Cantelli argument relying on the summability of the probabilities~\eqref{eq: summability of event of interest}, 
and the purpose of this section is to verify the remaining~\eqref{eq: summability of event of interest} in Propositions~\ref{prop: summability of event of interest 2} and~\ref{prop: summability of event of interest 3}. 
It is useful to choose suitable parameters $r \in (-\infty, 0)$ and 
\begin{align} \label{eq: p and q forward nonzero kappa}
p = p(\kappa, r ) 
:= \tfrac{1}{4} r ( 8 -\kappa + 2 \kappa r )
\qquad \textnormal{and}\qquad
q = q(\kappa, \lambda_\varepsilon, r ) 
:= p(\kappa, r ) + 4^{1-r} r (r-1) \,  \lambda_\varepsilon.
\end{align}
Note that we have $p = p(\kappa, r ) < 0$ when $r \in (\frac{\kappa - 8}{2 \kappa}, 0)$ 
and the inequalities
(\ref{eq: p and q forward inequality 1},~\ref{eq: p and q forward inequality 2}) appearing in the drift in the proof of Proposition~\ref{prop: M forward supermgle case 2} 
hold with the choices~\eqref{eq: p and q forward nonzero kappa}, equaling zero and $\kappa r < 0$, respectively.

It is necessary for our argument that, with suitably chosen parameter $r \in (\frac{\kappa - 8}{2 \kappa}, 0)$ and 
cutoff $\varepsilon > 0$, 
\begin{align*}
p(\kappa, r) + q(\kappa, \lambda_\varepsilon, r) 
= \tfrac{1}{2} r ( 8 - \kappa + 2^{3-2r} (r-1) \, \lambda_\varepsilon ) + \kappa r^2 
< 0 .
\end{align*} 
Note that the function $r \mapsto p(\kappa, r) + q(\kappa, 0, r)$ has a unique minimum at $r = \frac{1}{4} - \frac{2}{\kappa}$.
This motivates the following choices (which are not optimal, but sufficiently convenient).

\noindent 
\begin{figure}[ht!]
\includegraphics[width=.4\textwidth]{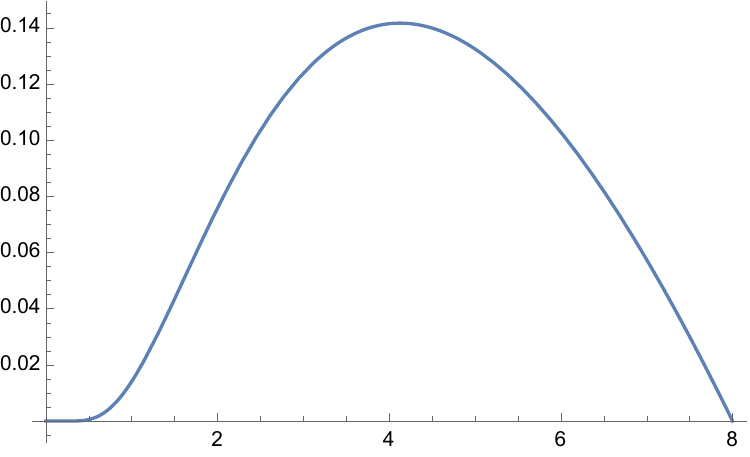}
\caption{\label{fig: Alphaprime and Thetaprime 6}
Illustrating quantities in Lemma~\ref{lem: alphaprimetilde and thetaprimetilde again}: 
Plot of $\kappa \mapsto \maxjumpH{\kappa}$.
}
\end{figure}

\noindent 
\begin{figure}[ht!]
\includegraphics[width=.4\textwidth]{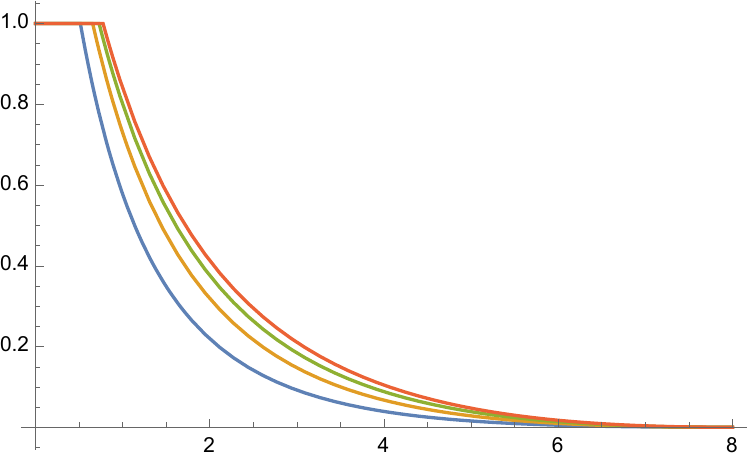}
\qquad
\includegraphics[width=.4\textwidth]{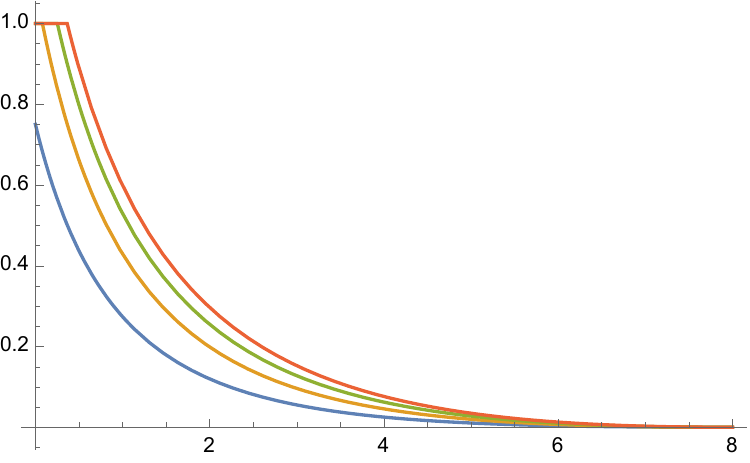}
\caption{\label{fig: Alphaprime and Thetaprime 4}
Illustrating quantities in Lemma~\ref{lem: alphaprimetilde and thetaprimetilde again}: 
Plots of $\kappa \mapsto \ThetaTmaxbeta{\alpha}{\kappa}$
with $\lambda_\varepsilon = 0$ (left) and
$\lambda_\varepsilon = \frac{1}{4} \, \maxjumpH{\kappa}$ (right), 
and discrete values $\alpha = c \, \maxbetaT{\kappa}{\lambda_\varepsilon}$ for $c \in \{ \frac{1}{5}, \frac{2}{5} ,\frac{3}{5} , \frac{4}{5} \}$.
}
\end{figure}

\begin{lem} \label{lem: alphaprimetilde and thetaprimetilde again}
Fix $\kappa \in (0,8)$ and a L\'evy measure $\nu$ 
whose variance measure $\mu_\nu$ satisfies the local upper Ahlfors regularity~\eqref{eq: Ahlfors regularity}. 
Assume that identities~\eqref{eq: p and q forward nonzero kappa} hold with
\begin{align} \label{eq: choice of r forward small kappa}
r = \rparamT(\kappa) := \tfrac{1}{4} - \tfrac{2}{\kappa}  \; \in \; (-\infty,0) , \qquad \kappa \in (0,8) , 
\end{align}  
and define
\begin{align} \label{eq: lambda max ass 2}
\maxjumpT{\kappa} := \frac{\kappa}{2^{\frac{4}{\kappa }+\frac{3}{2}}} \, \frac{8 - \kappa}{3 \kappa +8} 
\; > \; 0 , \qquad \kappa \in (0,8) .
\end{align}
See also Figure~\ref{fig: Alphaprime and Thetaprime 6}.
Define also\footnote{Note that $\ThetaTmaxbeta{\alpha}{\kappa}$ is independent of the cutoff parameter $\varepsilon$, but it may depend on the Ahlfors regularity parameter $\alpha_\nu$.} 
\begin{align} \label{eq: Thetatilde forward again small kappa}
\ThetaTmaxbeta{\alpha}{\kappa} 
:= \; & \frac{\alpha \, \varsigma_{\rparamT(\kappa)}(\alpha)}{2 - p(\kappa, \rparamT(\kappa) )} \wedge 1 
= \Big( \frac{32 \, \alpha \,  (8 - \kappa ) }{( \kappa + 48 + 64/\kappa ) (2 \alpha \kappa + 8 - \kappa)} \Big) \wedge 1 , 
\end{align}
where $\varsigma_{\rparamT(\kappa)}(\alpha) = \frac{-2 \rparamT(\kappa)}{\alpha - 2 \rparamT(\kappa)}$, 
and 
\begin{align} \label{eq: betamax forward}
\maxbetaT{\kappa}{\lambda_\varepsilon} 
:= \; & \frac{2 \rparamT(\kappa) ( p(\kappa, \rparamT(\kappa) ) + q(\kappa, \lambda_\varepsilon, \rparamT(\kappa) ) )}{p(\kappa, \rparamT(\kappa) ) + q(\kappa, \lambda_\varepsilon, \rparamT(\kappa) ) - 2 \rparamT(\kappa)} 
= \frac{(8 - \kappa)^2}{2 \kappa^2 + 2^{\frac{4}{\kappa} +\frac{5}{2}} (3 \kappa +8) \lambda_\varepsilon} 
\Big(1 - \frac{\lambda_\varepsilon}{\maxjumpT{\kappa}}
\Big) .
\end{align}
See also Figures~\ref{fig: Alphaprime and Thetaprime 4} and~\ref{fig: Alphaprime and Thetaprime 5}.
Then, we have
\begin{align*}
0 \leq \lambda_\varepsilon < \maxjumpT{\kappa}
\qquad \Longrightarrow \qquad
\maxbetaT{\kappa}{\lambda_\varepsilon} > 0 ,
\end{align*}
and 
\begin{align} \label{eq: alpha bounds for vartheta}
\begin{cases}
0 \leq \lambda_\varepsilon < \maxjumpT{\kappa} , \\[.3em]
0 < \alpha < \maxbetaT{\kappa}{\lambda_\varepsilon}
\end{cases}
\qquad \Longrightarrow \qquad
\begin{cases}
\ThetaTmaxbeta{\alpha}{\kappa} \in (0,1) , \\[.3em]
p(\kappa, \rparamT(\kappa) ) + q(\kappa, \lambda_\varepsilon, \rparamT(\kappa) ) + \alpha \, \varsigma_{\rparamT(\kappa)}(\alpha) < 0 .
\end{cases}
\end{align}
In particular, $0 \leq \lambda_\varepsilon < \maxjumpT{\kappa}$ and $0 < \alpha < \maxbetaT{\kappa}{\lambda_\varepsilon}$ together imply that, 
for any $0 < \theta < \ThetaTmaxbeta{\alpha}{\kappa}$, 
we have
\begin{align*}
\begin{cases}
\theta \, p(\kappa, \rparamT(\kappa) ) + \alpha \, \varsigma_{\rparamT(\kappa)}(\alpha) > 2 \theta , \\[.3em]
(1-\theta) \, ( p(\kappa, \rparamT(\kappa)) + q(\kappa, \lambda_\varepsilon, \rparamT(\kappa)) ) < 0 , \\[.3em]
(1-\theta) \, p(\kappa, \rparamT(\kappa)) + q(\kappa, \lambda_\varepsilon, \rparamT(\kappa)) < - 2 \theta .
\end{cases}
\end{align*} 
Moreover, these choices satisfy the assumptions $r \in (-\infty,0)$, 
$p < 0$, 
and \textnormal{(}\ref{eq: p and q forward inequality 1},~\ref{eq: p and q forward inequality 2}\textnormal{)} 
in Proposition~\ref{prop: M forward supermgle case 2}.
\end{lem}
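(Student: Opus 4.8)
The plan is to verify Lemma~\ref{lem: alphaprimetilde and thetaprimetilde again} by direct computation, following the same pattern as the proof of Lemma~\ref{lem: alphaprimetilde and thetaprimetilde}: substitute the explicit choice $r = \rparamT(\kappa) = \tfrac14 - \tfrac2\kappa$ into the formulas~\eqref{eq: p and q forward nonzero kappa} for $p$ and $q$, obtain closed-form expressions in $\kappa$ and $\lambda_\varepsilon$, and then read off the monotonicity and sign statements. First I would record the explicit values
\begin{align*}
p(\kappa, \rparamT(\kappa)) = -\tfrac{(8-\kappa)(3\kappa-8)}{16\kappa}, \qquad
q(\kappa, \lambda_\varepsilon, \rparamT(\kappa)) = p(\kappa, \rparamT(\kappa)) + 4^{1-\rparamT(\kappa)}\rparamT(\kappa)(\rparamT(\kappa)-1)\lambda_\varepsilon ,
\end{align*}
noting that $4^{1-\rparamT(\kappa)} = 2^{2 - 2\rparamT(\kappa)} = 2^{\frac32 + \frac4\kappa}$, and that $\rparamT(\kappa)(\rparamT(\kappa)-1) > 0$ since $\rparamT(\kappa) < 0$; this makes the $\lambda_\varepsilon$-coefficient in $q$ positive, so $q$ is increasing in $\lambda_\varepsilon$. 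I would then substitute these into the definition~\eqref{eq: betamax forward} of $\maxbetaT{\kappa}{\lambda_\varepsilon}$ and simplify to match the claimed rational expression $\tfrac{(8-\kappa)^2}{2\kappa^2 + 2^{4/\kappa + 5/2}(3\kappa+8)\lambda_\varepsilon}\big(1 - \tfrac{\lambda_\varepsilon}{\maxjumpT{\kappa}}\big)$, where $\maxjumpT{\kappa}$ is as in~\eqref{eq: lambda max ass 2}; the factor $(1 - \lambda_\varepsilon/\maxjumpT{\kappa})$ makes the positivity claim $0 \le \lambda_\varepsilon < \maxjumpT{\kappa} \Rightarrow \maxbetaT{\kappa}{\lambda_\varepsilon} > 0$ immediate, since the prefactor is manifestly positive for $\kappa \in (0,8)$.

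Next I would establish the two-sided implication~\eqref{eq: alpha bounds for vartheta}. For the second line, observe that $\maxbetaT{\kappa}{\lambda_\varepsilon}$ is exactly defined (cf.~\eqref{eq: betamax forward}) as the value of $\alpha$ at which $p + q + \alpha\,\varsigma_{\rparamT(\kappa)}(\alpha)$ changes sign: writing $\varsigma_r(\alpha) = \tfrac{-2r}{\alpha - 2r}$, one has $\alpha\,\varsigma_r(\alpha) = \tfrac{-2r\alpha}{\alpha - 2r}$, and since $r < 0$ this is an increasing function of $\alpha$ on $[0,\infty)$ vanishing at $\alpha = 0$; so $p + q + \alpha\,\varsigma_r(\alpha)$ is increasing in $\alpha$, negative at $\alpha = 0$ (because $p + q < 0$, which is where the restriction $\lambda_\varepsilon < \maxjumpT{\kappa}$ enters), hence negative precisely for $\alpha < \maxbetaT{\kappa}{\lambda_\varepsilon}$ — I would check that the root of $p+q+\alpha\varsigma_r(\alpha)=0$ solved for $\alpha$ gives the stated formula. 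For the first line, $\ThetaTmaxbeta{\alpha}{\kappa} = \tfrac{\alpha\,\varsigma_{\rparamT(\kappa)}(\alpha)}{2 - p(\kappa,\rparamT(\kappa))} \wedge 1$; since $2 - p > 0$ for $\kappa \in (0,8)$ and $\alpha\,\varsigma > 0$ for $\alpha > 0$, this is positive, and the $\wedge 1$ caps it at $1$, giving $\ThetaTmaxbeta{\alpha}{\kappa} \in (0,1)$. The simplification to the closed rational form in~\eqref{eq: Thetatilde forward again small kappa} is a substitution of $\varsigma_{\rparamT(\kappa)}(\alpha) = \tfrac{-2\rparamT(\kappa)}{\alpha - 2\rparamT(\kappa)}$ and $p(\kappa,\rparamT(\kappa))$ and clearing denominators; I would note $-2\rparamT(\kappa) = \tfrac4\kappa - \tfrac12 = \tfrac{8-\kappa}{2\kappa}$ and $2 - p = 2 + \tfrac{(8-\kappa)(3\kappa-8)}{16\kappa} = \tfrac{\kappa + 48 + 64/\kappa}{16}$ (after expanding), which matches the factors appearing.

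Finally, for the three inequalities displayed at the end (used downstream in Section~\ref{subsec: Summability 2}), each is a one-line consequence of the definition of $\ThetaTmaxbeta{\alpha}{\kappa}$ and the sign facts just established: if $0 < \theta < \ThetaTmaxbeta{\alpha}{\kappa} \le \tfrac{\alpha\varsigma_{\rparamT(\kappa)}(\alpha)}{2 - p}$ then $\theta(2 - p) < \alpha\varsigma_{\rparamT(\kappa)}(\alpha)$, i.e.\ $\theta p + \alpha\varsigma_{\rparamT(\kappa)}(\alpha) > 2\theta$; the second uses $p + q < 0$ and $1 - \theta > 0$ directly; and the third combines $p + q < 0$ with $\theta q < \theta \cdot 0$ is not quite it — rather one writes $(1-\theta)p + q = (p + q) - \theta p$ and uses $p < 0$ together with $\theta < \ThetaTmaxbeta{\alpha}{\kappa}$ and the second line; I would be slightly careful here and instead derive it from $\theta p + \alpha\varsigma > 2\theta$ plus $p + q + \alpha\varsigma < 0$, subtracting to get $(1-\theta)p + q < -2\theta$. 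The assertion that these parameter choices satisfy $r \in (-\infty,0)$, $p < 0$, and inequalities~\eqref{eq: p and q forward inequality 1}--\eqref{eq: p and q forward inequality 2} of Proposition~\ref{prop: M forward supermgle case 2} is checked as remarked in the paragraph preceding the lemma: $\rparamT(\kappa) < 0$ for $\kappa < 8$, $p(\kappa,\rparamT(\kappa)) < 0$ since $r \in (\tfrac{\kappa-8}{2\kappa}, 0)$, and with~\eqref{eq: p and q forward nonzero kappa} the left-hand side of~\eqref{eq: p and q forward inequality 1} equals zero while that of~\eqref{eq: p and q forward inequality 2} equals $\kappa r < 0$. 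The only genuine obstacle is bookkeeping: making sure the algebraic simplifications of $\maxbetaT{\kappa}{\lambda_\varepsilon}$ and $\ThetaTmaxbeta{\alpha}{\kappa}$ to their stated closed forms come out exactly right, since the exponents $2^{4/\kappa + 3/2}$, $2^{4/\kappa + 5/2}$ and the polynomial $\kappa + 48 + 64/\kappa$ must be reproduced precisely; everything else is monotonicity of the elementary functions $\alpha \mapsto \tfrac{-2r\alpha}{\alpha - 2r}$ and affine maps.
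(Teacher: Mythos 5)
Your structural plan is the same as the paper's: substitute $r = \rparamT(\kappa)$ into~\eqref{eq: p and q forward nonzero kappa}, note that $p+q$ is increasing in $\lambda_\varepsilon$ (since $4^{1-r}r(r-1)>0$ for $r<0$) and vanishes at $\lambda_\varepsilon=\maxjumpT{\kappa}$, note that $\alpha \mapsto \alpha\varsigma_r(\alpha)$ is increasing from $0$ so that $p+q+\alpha\varsigma_r(\alpha)$ vanishes precisely at $\alpha=\maxbetaT{\kappa}{\lambda_\varepsilon}$, and read off the three displayed inequalities from the bound $\theta < \ThetaTmaxbeta{\alpha}{\kappa} \le \tfrac{\alpha\varsigma_{\rparamT(\kappa)}(\alpha)}{2-p}$. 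Your derivation of $(1-\theta)p + q < -2\theta$ by subtracting $\theta p + \alpha\varsigma > 2\theta$ from $p+q+\alpha\varsigma < 0$ is a clean way to make explicit what the paper leaves as ``follows from the definition.'' Your check of~(\ref{eq: p and q forward inequality 1},~\ref{eq: p and q forward inequality 2}) — that the first reduces to $0$ and the second to $\kappa r < 0$ — is also correct.

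However, the explicit arithmetic you record is wrong. Substituting $r=\tfrac14-\tfrac2\kappa = \tfrac{\kappa-8}{4\kappa}$ into $p(\kappa,r)=\tfrac14 r(8-\kappa+2\kappa r)$, one gets $8-\kappa+2\kappa r = 4 - \tfrac{\kappa}{2} = \tfrac{8-\kappa}{2}$ and hence
\begin{align*}
p(\kappa,\rparamT(\kappa)) = -\frac{(8-\kappa)^2}{32\kappa}, \qquad 2 - p = \frac{\kappa^2 + 48\kappa + 64}{32\kappa} = \frac{\kappa + 48 + 64/\kappa}{32},
\end{align*}
not $-\tfrac{(8-\kappa)(3\kappa-8)}{16\kappa}$ and not $\tfrac{\kappa+48+64/\kappa}{16}$. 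Your formula for $p$ looks like a misremembering of the $\kappa>8$ case (which uses~\eqref{eq: p and q forward alternative}, and even there the correct constant is $32$ in the denominator, not $16$), and your claimed equality $2 + \tfrac{(8-\kappa)(3\kappa-8)}{16\kappa} = \tfrac{\kappa+48+64/\kappa}{16}$ is false on its own terms: the left side expands to $\tfrac{-3\kappa^2 + 64\kappa - 64}{16\kappa}$, not $\tfrac{\kappa^2+48\kappa+64}{16\kappa}$. With the corrected $p$, one indeed gets $\alpha\varsigma_{\rparamT(\kappa)}(\alpha) = \tfrac{\alpha(8-\kappa)}{2\alpha\kappa+8-\kappa}$ and the stated closed form in~\eqref{eq: Thetatilde forward again small kappa}. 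Since the sign and monotonicity arguments you outline do not actually use the wrong value of $p$ (only its sign), the proof can be repaired by replacing the two displayed formulas; but as written it contains a genuine arithmetic error that would propagate into the closed forms~\eqref{eq: Thetatilde forward again small kappa} and~\eqref{eq: betamax forward} if carried through literally, which is precisely the bookkeeping you flagged as the real work.
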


\begin{proof}
The map $\lambda_\varepsilon \mapsto \maxbetaT{\kappa}{\lambda_\varepsilon}$~\eqref{eq: betamax forward} 
is decreasing when $\lambda_\varepsilon \geq 0$ and $\kappa \in (0,8)$. Moreover, we have
\begin{align*}
\maxbetaT{\kappa}{\lambda_\varepsilon} = 0 \qquad \Longleftrightarrow \qquad  
\lambda_\varepsilon = \maxjumpT{\kappa} .
\end{align*} 
With $\lambda_\varepsilon = 0$, the map 
$\kappa \mapsto \maxbetaT{\kappa}{0}$
\begin{align*}
\kappa \; \longmapsto \; 
\maxbetaT{\kappa}{0}
= \; & \frac{(8-\kappa )^2}{2 \kappa ^2} 
\end{align*}
is decreasing when $\kappa \in (0,8)$, and 
\begin{align*}
\lim_{\kappa \to 8} \maxbetaT{\kappa}{0}
= 0  . 
\end{align*}
Hence, for fixed $\kappa \in (0,8)$, 
the parameter~\eqref{eq: betamax forward} satisfies $\maxbetaT{\kappa}{\lambda_\varepsilon} > 0$ when $0 \leq \lambda_\varepsilon < \maxjumpT{\kappa}$.

Next, note that, for fixed $\kappa \in (0,8)$, the map 
\begin{align*}
\lambda_\varepsilon \; \longmapsto \; p(\kappa, \rparamT(\kappa) ) + q(\kappa, \lambda_\varepsilon, \rparamT(\kappa) ) 
\end{align*}
is increasing, and 
\begin{align*}
p(\kappa, \rparamT(\kappa) ) + q(\kappa, \lambda_\varepsilon, \rparamT(\kappa) ) = 0 
\qquad \Longleftrightarrow \qquad  
\lambda_\varepsilon = \maxjumpT{\kappa}.
\end{align*}
Also, for fixed $\kappa \in (0,8)$ and $\lambda_\varepsilon \in [0,\maxjumpT{\kappa})$, the map 
\begin{align*}
\alpha \; \longmapsto \; p(\kappa, \rparamT(\kappa) ) + q(\kappa, \lambda_\varepsilon, \rparamT(\kappa) ) + \alpha \, \varsigma_{\rparamT(\kappa)}(\alpha)
\end{align*}
is increasing, and 
\begin{align*}
p(\kappa, \rparamT(\kappa) ) + q(\kappa, \lambda_\varepsilon, \rparamT(\kappa) ) + \alpha \, \varsigma_{\rparamT(\kappa)}(\alpha) = 0 
\qquad \Longleftrightarrow \qquad  
\alpha = \maxbetaT{\kappa}{\lambda_\varepsilon}.
\end{align*}
This shows~\eqref{eq: alpha bounds for vartheta}.
Note also that 
if $\alpha < \maxbetaT{\kappa}{\lambda_\varepsilon}$, then by~\eqref{eq: alpha bounds for vartheta}, we have
\begin{align*}
\ThetaTmaxbeta{\alpha}{\kappa} 
\leq \; & 
\underbrace{ \Big( \frac{-\alpha \, \varsigma_{\rparamT(\kappa)}(\alpha)}{p(\kappa, \rparamT(\kappa) ) + q(\kappa, \lambda_\varepsilon, \rparamT(\kappa) )} \Big)}_{< 1} \, 
\frac{p(\kappa, \rparamT(\kappa) ) + q(\kappa, \lambda_\varepsilon, \rparamT(\kappa) )}{p(\kappa, \rparamT(\kappa) ) - 2} 
\; < \; \frac{p(\kappa, \rparamT(\kappa) ) + q(\kappa, \lambda_\varepsilon, \rparamT(\kappa) )}{p(\kappa, \rparamT(\kappa) ) - 2} .
\end{align*}
The other claims then follow using the definition~\eqref{eq: Thetatilde forward again small kappa} of $\ThetaTmax{\kappa}{\lambda_\varepsilon}$.
\end{proof}

\noindent 
\begin{figure}[ht!]
\includegraphics[width=.4\textwidth]{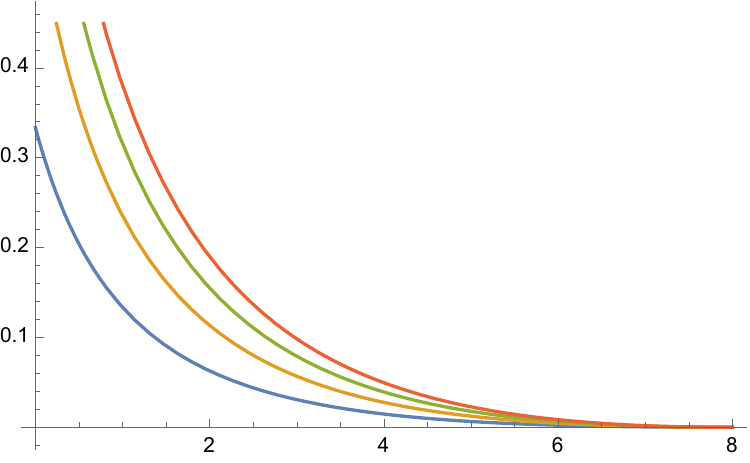}
\qquad
\includegraphics[width=.4\textwidth]{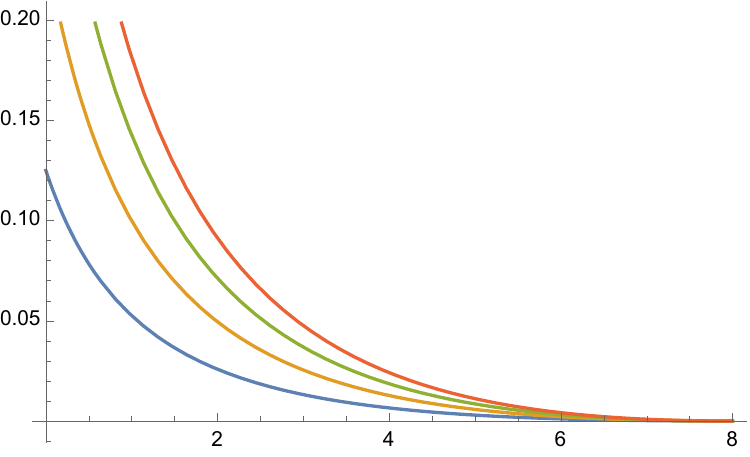}
\caption{\label{fig: Alphaprime and Thetaprime 5}
Illustrating quantities in Lemma~\ref{lem: alphaprimetilde and thetaprimetilde again}: 
Plots of $\kappa \mapsto \ThetaTmaxbeta{\alpha}{\kappa}$
with  $\lambda_\varepsilon = \frac{1}{2} \, \maxjumpH{\kappa}$ (left) and
$\lambda_\varepsilon = \frac{3}{4} \, \maxjumpH{\kappa}$ (right), 
and discrete values $\alpha = c \, \maxbetaT{\kappa}{\lambda_\varepsilon}$ for $c \in \{ \frac{1}{5}, \frac{2}{5} ,\frac{3}{5} , \frac{4}{5} \}$.
}
\end{figure}

\begin{prop} \label{prop: summability of event of interest 2}
Fix $T > 0$, $\kappa \in (0,8)$, and a L\'evy measure $\nu$ 
whose variance measure $\mu_\nu$ satisfies the local upper $\alpha_\nu$-Ahlfors regularity~\eqref{eq: Ahlfors regularity}. 
Fix $\varepsilon > 0$ such that $\lambda_\varepsilon < \maxjumpT{\kappa}$ as in~\eqref{eq: lambda max ass 2}. 
Then, for any $\alpha \in (0,\maxbetaT{\kappa}{\lambda_\varepsilon} \wedge \alpha_\nu)$ as in~\eqref{eq: betamax forward}, 
for any  
$0 < \theta < \ThetaTmaxbeta{\alpha}{\kappa}$ as in~\eqref{eq: Thetatilde forward again small kappa}, and for any $R > 0$, 
on the event $\{ R_\varepsilon^{\kappa}(T) \leq R \}$, 
the probabilities of events~\eqref{eq: event of interest} are summable\textnormal{:}
\begin{align} \label{eq: summability of event of interest 2}
\sum_{n=1}^\infty \sum_{z_0 \in \Grid_\varepsilon^{\kappa}(2^{-n \theta}, T, R) }
\PR_{R} [ E_n^\theta(z_0) ] 
\; < \; \infty .
\end{align}
\end{prop}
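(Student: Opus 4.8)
The plan is to follow the same Borel--Cantelli scheme already deployed in the proof of Proposition~\ref{prop: summability of event of interest 1}, but now invoking the supermartingale domination bound with the extra drift term $L$ from Proposition~\ref{prop: M forward supermgle case 2}, which is where the local Ahlfors regularity assumption enters. First I would fix the parameter choices $r = \rparamT(\kappa) = \tfrac14 - \tfrac 2\kappa \in (-\infty,0)$ and $p = p(\kappa,r)$, $q = q(\kappa, \lambda_\varepsilon, r)$ as in~\eqref{eq: p and q forward nonzero kappa}; by Lemma~\ref{lem: alphaprimetilde and thetaprimetilde again} these satisfy $p<0$ and the inequalities~(\ref{eq: p and q forward inequality 1},~\ref{eq: p and q forward inequality 2}) required by Proposition~\ref{prop: M forward supermgle case 2}, so that for any fixed $z_0 = x_0 + \ii y_0 \in \Grid_\varepsilon^{\kappa}(2^{-n\theta},T,R)$ we have $M(t) \leq \mgle(t) + L(t)$ for all $t \in [0,T]$ with $L = L_{p,q,r,\alpha}$ the explicit integral process appearing in~\eqref{eq: M forward supermgle case 2}.

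Next I would bound $\EX[\hat M(S_n) \, \one_{\{\ldots\}}]$, the quantity appearing on the right-hand side of Lemma~\ref{lem: forward bound}. Using the decomposition $\hat M \leq \hat{\mgle} + \hat L$ at the bounded stopping time $S_n$ of~\eqref{eq: bounded stopping time taun}: the optional stopping theorem controls the supermartingale part by $\EX[\hat{\mgle}(S_n)] \leq \hat{\mgle}(0) = M(0) = y_0^{q-2r}|z_0|^{2r}$, exactly as in Proposition~\ref{prop: summability of event of interest 1}. For the additional term, I would use the explicit time-changed form $\hat L(t) = 8(\lambda_\varepsilon + c_\nu)(y_0^2+4T)^{-p/2} y_0^{p+q+\alpha\varsigma_r(\alpha)} \int_0^{S(t)} e^{-2u(p+q+\alpha\varsigma_r(\alpha))}\,\ud u$ recorded right after Proposition~\ref{prop: M forward supermgle case 2}. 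Since $S_n \leq \log\sqrt{y_0/2^{-n-1}}$, the integral is at most $c \cdot 2^{-n(p+q+\alpha\varsigma_r(\alpha))} y_0^{p+q+\alpha\varsigma_r(\alpha)}$ up to a constant (using $p+q+\alpha\varsigma_r(\alpha) < 0$ from~\eqref{eq: alpha bounds for vartheta}), which after collecting powers of $y_0$ and $2^{-n}$ gives $\EX[\hat L(S_n)\one_{\{\ldots\}}] \lesssim (y_0^2+4T)^{-p/2} y_0^{2(p+q+\alpha\varsigma_r(\alpha))} 2^{-n(p+q+\alpha\varsigma_r(\alpha))}$; since $p<0$ and $y_0 \leq \sqrt{1+4T}$, the prefactor $(y_0^2+4T)^{-p/2}$ is bounded by a constant depending only on $T$, and $y_0^{2(p+q+\alpha\varsigma_r(\alpha))} = (2^{-n\theta})^{?}$-type bookkeeping folds into the grid sum. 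Plugging both bounds into~\eqref{eq: forward bound} and then summing over the grid via Lemma~\ref{lem: sum over grid} (with $a = 2^{-n\theta}$, and the relevant exponents $q - 2r \geq 0 > -1$ and $r < 0$, so $r > -1/2$ or $r < -1/2$ according to the sign, but in any case the $\chi_{q,r}(a)$ factor contributes only a fixed negative power of $2^{-n\theta}$), one obtains that $\sum_n \sum_{z_0} \PR_R[E_n^\theta(z_0)]$ is dominated by a geometric series $\sum_n 2^{-n\gamma}$ with exponents $\gamma$ controlled by the three inequalities at the end of Lemma~\ref{lem: alphaprimetilde and thetaprimetilde again}, namely $(1-\theta)p + q < -2\theta$ for the supermartingale part and $\theta p + \alpha\varsigma_r(\alpha) > 2\theta$ for the $L$-part. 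Both hold strictly whenever $0 < \theta < \ThetaTmaxbeta{\alpha}{\kappa}$, so the series converges.

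The main obstacle I anticipate is the careful bookkeeping of exponents in the $L$-term: unlike the pure supermartingale case, the initial value of $\hat L$ carries a power $y_0^{p+q+\alpha\varsigma_r(\alpha)}$ that is \emph{negative} (so it blows up as $y_0 \to 0$), and this must be beaten both by the positive power of $y_0$ coming from the constraint $|X(t)| \le Y(t)$ at the stopping time and, crucially, by a matching power of $2^{-n}$ that survives the grid summation of Lemma~\ref{lem: sum over grid}. Getting the two competing geometric-series exponents to be simultaneously negative is precisely what pins down the admissible range $\theta < \ThetaTmaxbeta{\alpha}{\kappa}$ and $\alpha < \maxbetaT{\kappa}{\lambda_\varepsilon}$, and verifying that Lemma~\ref{lem: alphaprimetilde and thetaprimetilde again} delivers exactly these inequalities (with the correct constants, including the $2^{3-2r}r(r-1)\lambda_\varepsilon$ corrections and the factor $\varsigma_r(\alpha) = \frac{-2r}{\alpha-2r}$) is the delicate computational heart of the argument. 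The remaining pieces --- optional stopping, Markov's inequality inside Lemma~\ref{lem: forward bound}, and the restriction to the cutoff event $\{R_\varepsilon^\kappa(T) \leq R\}$ to make the random grid deterministic --- are routine and identical to the $\kappa > 8$ case.
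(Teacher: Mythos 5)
The proposal identifies the same overall strategy as the paper's proof: it correctly invokes Lemma~\ref{lem: forward bound}, Proposition~\ref{prop: M forward supermgle case 2}, optional stopping, Lemma~\ref{lem: alphaprimetilde and thetaprimetilde again}, and Lemma~\ref{lem: sum over grid}, and it splits the estimate into a supermartingale term $\hat M(0)$ and a drift term $\hat L(S_n)$ exactly as the paper does. The treatment of Term~1 is fine.

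However, there is a sign error in the bookkeeping for the $L$-term that, if followed to the letter, would make the argument fail. Write $\gamma := p + q + \alpha\,\varsigma_r(\alpha) < 0$. The integral $\int_0^{S_n} e^{-2u\gamma}\,\ud u$ has a \emph{growing} integrand (since $-2u\gamma>0$), and with $S_n \leq \log\sqrt{y_0/2^{-n-1}}$ the dominant term of $e^{-2S_n\gamma}$ is bounded by $(y_0/2^{-n-1})^{-\gamma} = y_0^{-\gamma}\, 2^{-(n+1)\gamma}$. So the integral is $\lesssim y_0^{-\gamma}\, 2^{-n\gamma}$, \emph{not} $y_0^{\gamma}\, 2^{-n\gamma}$ as you wrote. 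When this is multiplied by the prefactor $y_0^{\gamma}$ in $\hat L$, the $y_0$-powers \emph{cancel exactly}, and the paper obtains $\EX[\hat L(S_n)] \lesssim 2^{-n\gamma}$ with no remaining $y_0$-dependence (the factor $(y_0^2+4T)^{-p/2}$ is already bounded, as you noted). Your version produces an extra $y_0^{2\gamma}$ with $\gamma < 0$, which diverges as $y_0 \to 0$; at the smallest grid height $y_0 \sim 2^{-n\theta}$ this costs an additional $2^{-2n\theta\gamma}$, a \emph{positive} power of $2^n$, which would break the summability under the stated constraints on $\theta$. The constraint $|X(t)| \leq Y(t)$ at the stopping time, which you suggest as the source of a compensating power of $y_0$, is not the right mechanism — it only produces the indicator — and it is instead the explicit upper bound $S_n \leq \log\sqrt{y_0/2^{-n-1}}$ that supplies the cancelling $y_0^{-\gamma}$ via the exponential. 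You should carry out this cancellation explicitly; it is the linchpin of Term~2.
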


\begin{proof}
Set $r = \rparamT(\kappa)$ as in~\eqref{eq: choice of r forward small kappa} and $p = p(\kappa, \rparamT(\kappa) )$ 
and $q = q(\kappa, \lambda_\varepsilon, \rparamT(\kappa) )$ 
as in~\eqref{eq: p and q forward nonzero kappa}:
\begin{align*}
p = \; & p(\kappa, \rparamT(\kappa) ) 
= -\frac{(8 - \kappa)^2}{32 \kappa } < 0 , \\
q = \; & q(\kappa, \lambda_\varepsilon, \rparamT(\kappa) ) 
= -\frac{(8 - \kappa)^2}{32 \kappa } 
+ 2^{\frac{4}{\kappa }-\frac{5}{2}}  \, \frac{(8 - \kappa) (3 \kappa +8) }{\kappa^2} \, \lambda_\varepsilon .
\end{align*}
Fix $z_0 = x_0 + \ii y_0 \in \Grid_\varepsilon^{\kappa}(2^{-n \theta}, T, R)$. 
Then, by Lemma~\ref{lem: forward bound}, Proposition~\ref{prop: M forward supermgle case 2}
(with $\hat{M}(0) = \mgle(0)$), and 
the Optional stopping theorem (OST) (e.g.~\cite[Theorem~3.21]{LeGall:BM_book}), we obtain 
\begin{align*} 
\PR [ E_n^\theta(z_0) ] 
\leq \; & c_0(p, q, r) \, 2^{n \beta} \, 
\EX \big[ \hat{M}(S_{n}) \; \one{\{|\hat{X}(S_{n})| \leq y_0 \, e^{-2S_{n}}\}} \big] 
&& \textnormal{[by~\eqref{eq: forward bound}]} \\
\leq \; & c_0(p, q, r) \, 2^{n \beta} \, 
\big( \hat{M}(0) + \EX [ \hat{L}(S_{n}) ] \big) ,
&& \textnormal{[by~\eqref{eq: M forward supermgle case 2} and OST]}
\end{align*}
where $\beta = (1-\theta) p + q$, 
and where 
$\hat{M}(0) = y_0^{q - 2r} |z_0|^{2r}$ and 
\begin{align*}
\hat{L}(S_{n})
= \; & 8  \, ( \lambda_\varepsilon +  c_\nu ) 
\, y_0^{q + \alpha \, \varsigma_r(\alpha)}
\int_0^{S_{n}} e^{-2u(p + q + \alpha \, \varsigma_r(\alpha))} \, \ud u .
\end{align*}

{\bf Term 1.}
We will first consider the first term $2^{n \beta} \, \hat{M}(0) = 2^{n \beta} \, y_0^{q - 2r} |z_0|^{2r}$. 
Using Lemma~\ref{lem: sum over grid} with $a=2^{-n \theta}$, 
and $r = \rparamT(\kappa) = \tfrac{1}{4} - \tfrac{2}{\kappa}$, 
and $q = q(\kappa, \lambda_\varepsilon, \rparamT(\kappa) )$, 
we obtain 
\begin{align*} 
\sum_{n=1}^\infty \sum_{z_0 \in \Grid_\varepsilon^{\kappa}(2^{-n \theta}, T, R) }
2^{n \beta} \, \hat{M}(0)
\; \leq \; \; & c_{\rm grid}(q, r, T, R) \, 
\sum_{n=1}^\infty 2^{n ((1-\theta) \, p + q)} \, \chi_{q, r}(2^{-n \theta}) \; < \; \infty ,
\end{align*}
where $\chi_{q, r}(2^{-n \theta})$ is defined in~\eqref{eq: grid chi} 
and where by the choice of $\theta < \ThetaTmaxbeta{\alpha}{\kappa}$ and the other parameters, 
Lemma~\ref{lem: alphaprimetilde and thetaprimetilde again} shows that
$(1-\theta) \, p + q < - 2 \theta$ and 
$(1-\theta) \, (p + q) < 0$, guaranteeing summability.

{\bf Term 2.} We then consider the second term. 
By~\eqref{eq: alpha bounds for vartheta} in Lemma~\ref{lem: alphaprimetilde and thetaprimetilde again}, we have
\begin{align*}
p(\kappa, \rparamT(\kappa) ) + q(\kappa, \lambda_\varepsilon, \rparamT(\kappa) ) + \alpha \, \varsigma_{\rparamT(\kappa)}(\alpha) < 0 .
\end{align*} 
Thus, computing the expected value of $\hat{L}(S_{n})$ gives
\begin{align*}
\EX [ \hat{L}(S_{n}) ]
= \; & - 8  \, ( \lambda_\varepsilon +  c_\nu ) 
\, \frac{y_0^{q + \alpha \, \varsigma_r(\alpha)}}{2(p + q + \alpha \, \varsigma_r(\alpha))} 
\, \EX \big[ e^{-2 S_{n}(p + q + \alpha \, \varsigma_r(\alpha))}  - 1 \big] \\
\leq \; & c(p,q,r,\alpha,T) \, 2^{-n(p + q + \alpha \, \varsigma_r(\alpha))} ,
\end{align*}
since $S_{n} \leq \smash{\log \sqrt{\tfrac{y_0}{2^{-n-1}}}}$,
where the constant $c(p,q,r,\alpha,T) \in (0,\infty)$ also depends on the L\'evy measure~$\nu$. 
Using Lemma~\ref{lem: sum over grid} with $a=2^{-n \theta}$, and with exponents zero ($\chi_{0,0}(2^{-n \theta}) = 2^{2n \theta}$ in~\eqref{eq: grid chi}), we obtain 
\begin{align*} 
\; & \sum_{n=1}^\infty \sum_{z_0 \in \Grid_\varepsilon^{\kappa}(2^{-n \theta}, T, R) } 2^{n \beta} \, \EX [ \hat{L}(S_{n}) ]
\\
\; \leq \; \; & c(p,q,r,\alpha,T) \,
\sum_{n=1}^\infty \sum_{z_0 \in \Grid_\varepsilon^{\kappa}(2^{-n \theta}, T, R) } 2^{n ((1-\theta) p + q)} \, 
2^{-n(p + q + \alpha \, \varsigma_{\rparamT(\kappa)}(\alpha))} \\
\; \leq \; \; & c(p,q,r,\alpha,T) \, c'(T, R)
\sum_{n=1}^\infty 2^{-n ( \theta p + \alpha \, \varsigma_{\rparamT(\kappa)}(\alpha) - 2 \theta )} 
\; < \; \infty ,
\end{align*}
where by the choice of $\theta < \ThetaTmaxbeta{\alpha}{\kappa}$ and the other parameters, 
Lemma~\ref{lem: alphaprimetilde and thetaprimetilde again} gives $\theta p + \alpha \, \varsigma_{\rparamT(\kappa)}(\alpha) - 2 \theta > 0$. 
\end{proof}

\subsection{Summability: $\kappa = 0$ and linear drift}
\label{subsec: Summability 3}

The above choices~(\ref{eq: choice of r forward small kappa},~\ref{eq: lambda max ass 2}) 
do not apply in the case where $\kappa = 0$. 
One could instead use a different choice. 
For later purposes, however, in this section we also modify the process~\eqref{eq: general forward martingale candidate} slightly and include the possibility of a linear drift to the driving function. 
Hence, in the rest of this section, we consider driving functions with no diffusion part $(\kappa=0)$ but allowing microscopic jumps and a linear drift:
\begin{align} \label{eq: BM with micro and drift kappa is zero again again}
\microDriver_\varepsilon^{0, a}(t) = a t + \int_{|\jump| \leq \varepsilon} \jump \, \PoissonComp(t, \ud \jump) , \qquad
a \in \bR, \, \varepsilon > 0 ,
\end{align}
where $\smash{\PoissonComp}(t, \ud \jump) := \Poisson(t, \ud \jump) - t \nu(\ud \jump)$ is the compensated Poisson point process
of a Poisson point process $\Poisson$ with L\'evy intensity measure $\nu$. As in~\eqref{eq: BM with micro supremum}, we write 
\begin{align} 
R_\varepsilon^{0,a}(T) := \sup_{t \in [0,T]} | \microDriver_\varepsilon^{0,a}(t) | ,
\end{align}
we let $(g_t)_{t \geq 0}$ be a Loewner chain driven by $\smash{\microDriver_\varepsilon^{0,a}}$, 
let $(K_t)_{t \geq 0}$ be the corresponding hulls (obtained by solving the Loewner equation~\eqref{eq: LE}), 
and we write $f_t:= g_t^{-1}$ and set $\tilde{f}_t(w) := f_t(w + \smash{\microDriver_\varepsilon^{0,a}}(t))$. 
We work under~\ref{item: ass2}: we suppose that 
the variance measure of the L\'evy measure $\nu$ is locally (upper) Ahlfors regular near the origin in the sense of Definition~\ref{def: Ahlfors regular}.
We shall consider the events 
\begin{align} 
\begin{split} \label{eq: event of interest kappa = 0}
E_n^\theta(z_0)
= E_n^\theta(z_0, T) \; := \; \big\{ & \; \textnormal{there exists }  t \in [0,T] 
\textnormal{ such that } z_0 \in \bH \setminus K_t \textnormal{ and } 
 \\
& \; | g_t(z_0) - \microDriver_\varepsilon^{0,a}(t) - \ii 2^{-n} | \leq 2^{-n-1}
\textnormal{ and } 
| g_t'(z_0) | \leq \tfrac{80}{27} \, 2^{-n (1-\theta)} \big\} ,
\qquad n \in \bN ,
\end{split}
\end{align}
where $z_0$ ranges over the grid
\begin{align*}
\Grid_\varepsilon^{0,a}
= \Grid_\varepsilon^{0,a} (2^{-n \theta}, T, R_\varepsilon^{0,a}(T)) 
:= \Big\{ z \in \bH \;\; | \;\; 
\re(z) = & \; \tfrac{1}{8} \, 2^{-n \theta} \, \ell \, \in \, [-R_\varepsilon^{0,a}(T), R_\varepsilon^{0,a}(T)] \textnormal{ and }
\\ 
\im(z) = & \; \tfrac{1}{8} \, 2^{-n \theta} \, (k + 8) \, \in \, (2^{-n \theta} , \sqrt{1+4T}] , \; 
\ell, k \in \bZ \Big\}  .
\end{align*}

Similarly as before, we fix a starting point $z_0 = x_0 + \ii y_0 \in \bH$ implicitly throughout, and consider 
\begin{align}
\nonumber
Z(t) = Z_\varepsilon^{0,a}(t,z_0) := \; & g_t(z_0) - \microDriver_\varepsilon^{0,a}(t) =: X(t) + \ii Y(t) , 
\\
\label{eq: general forward martingale candidate kappa=0}
M(t) = M_{p}^{a}(t,z_0) := \; & |g_t'(z_0)|^p \, ( \sin \arg Z(t) )^{2} \, e^{-a^2t} , \qquad 
p \in \bR , \;  t \in [0, \tau(z_0)) .
\end{align}
Note that $M(0) = y_0^{2} \, |z_0|^{-2}$.

\begin{lem} \label{lem: SDE forward M with drift} 
Fix $a \in \bR$, a L\'evy measure $\nu$, and $\varepsilon > 0$. 
Let $(g_t)_{t \geq 0}$ be the solution to~\eqref{eq: LE} driven by 
$\smash{\microDriver_\varepsilon^{0,a}}$~\eqref{eq: BM with micro and drift kappa is zero again again},
fix $z_0 \in \bH$, and consider the process 
$M$ defined in~\eqref{eq: general forward martingale candidate kappa=0}.
Then, we have
\begin{align*}
M(t) 
\; = \; \; & \mgle(t) 
\; + \; \int_0^t M(s-) \; \bigg( D_{p}(s) + \frac{2 a X(s-)}{|Z(s-)|^2} - a^2 \bigg) \, \ud s , \qquad t \in [0, \tau(z_0)) ,  \\[1em]
\textnormal{where} \qquad 
D_{p}(s)
= \; & \frac{- (8 + 2 p) X(s-)^2 + 2 p Y(s-)^2}{|Z(s-)|^4} 
\\
\; \; & 
+ \; \int_{|\jump| \leq \varepsilon} \bigg( \bigg| \frac{Z(s-) - \jump}{Z(s-)} \bigg|^{-2} - 1 - \frac{2 \jump X(s-)}{|Z(s-)|^2} \bigg) \nu(\ud \jump) , \qquad 
p \in \bR ,
\end{align*}
and where $\mgle$ is the right-continuous local martingale
\begin{align} \label{eq: forward supermgle N kappa = 0}
\begin{split}
\mgle(t) 
\; := \; \; & 
M(0) \; + \;  \int_0^t M(s-) \int_{|\jump| \leq \varepsilon}  \bigg( \bigg| \frac{Z(s-) - \jump}{Z(s-)} \bigg|^{-2} - 1 \bigg) \, \PoissonComp(\ud s, \ud \jump) , \qquad t \in [0, \tau(z_0)) .
\end{split}
\end{align}
\end{lem}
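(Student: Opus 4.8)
The plan is to apply It\^o's formula for semimartingales with jumps to the product $M(t) = |g_t'(z_0)|^p \, (\sin\arg Z(t))^{2} \, e^{-a^2t}$, exactly mirroring the argument of Lemma~\ref{lem: SDE forward M} but now with $r = -1$ in the angular factor and with the extra deterministic exponential weight $e^{-a^2 t}$ coming from the linear drift. First I would write $M$ as a product of three factors: the derivative factor $|g_t'(z_0)|^p$, the angular factor $A(t) := (\sin\arg Z(t))^{2}$, and the smooth weight $e^{-a^2 t}$; compute the differential of each; and then assemble them via the product rule for semimartingales.

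The derivative factor is handled exactly as before: from the Loewner equation~\eqref{eq: LE} and the chain rule, $t \mapsto \log|g_t'(z_0)|$ is absolutely continuous with $\partial_t^+ \log|g_t'(z_0)| = -2\,\mathrm{Re}\big(1/Z(t)^2\big) = -2\,(X(t)^2 - Y(t)^2)/|Z(t)|^4$, so $|g_t'(z_0)|^p = 1 - 2p\int_0^t |g_{s-}'(z_0)|^p \, (X(s-)^2 - Y(s-)^2)/|Z(s-)|^4 \, \ud s$. For the angular factor I would invoke Lemma~\ref{lem: SDE forward sin(arg(Z)) power} from Appendix~\ref{app: Ito calculus} with $r = -1$ and with the drift coefficient $a$ in the driving process; this gives a decomposition of $(\sin\arg Z(t))^2$ into a local martingale part (the compensated Poisson integral of $|(Z(s-)-\jump)/Z(s-)|^{-2} - 1$, and no Brownian term since $\kappa = 0$) plus a finite-variation part which produces the drift term $2a X(s-)/|Z(s-)|^2$, the term $-8 X(s-)^2/|Z(s-)|^4$ (the ``$\kappa = 0$'' specialization of the angular drift with $r=-1$), and the compensator integral $\int_{|\jump|\le\varepsilon}\big(|(Z(s-)-\jump)/Z(s-)|^{-2} - 1 - 2\jump X(s-)/|Z(s-)|^2\big)\,\nu(\ud\jump)$. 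The factor $e^{-a^2 t}$ contributes $-a^2$ to the drift. Multiplying out (the derivative factor and $e^{-a^2t}$ are continuous and of finite variation, so no extra bracket terms arise beyond the ones already in $D_p$), collecting the finite-variation terms into $D_p(s) + 2aX(s-)/|Z(s-)|^2 - a^2$ and the martingale terms into $\mgle(t)$, and using the algebraic identity $|(Z-\jump)/Z|^{-2} = |Z|^2/((X-\jump)^2 + Y^2)$ to simplify, yields exactly the claimed identity. The coefficient of $X(s-)^2$ in $D_p$ is $-8 + (-2p) \cdot (-1)\cdot$(sign bookkeeping): more carefully, the $-2p(X^2-Y^2)/|Z|^4$ from the derivative factor contributes $-2p$ to the $X^2$ coefficient and $+2p$ to the $Y^2$ coefficient, and the angular drift contributes $-8$ to the $X^2$ coefficient, giving $-(8+2p)X(s-)^2 + 2pY(s-)^2$ over $|Z(s-)|^4$, as stated.

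There is little genuine obstacle here; the lemma is a bookkeeping computation parallel to Lemma~\ref{lem: SDE M kappa = 0 with drift} (the backward analogue) and Lemma~\ref{lem: SDE forward M} (the $r$-general forward version), so the only care needed is getting signs right and correctly specializing the appendix It\^o lemma to $r = -1$, $\kappa = 0$, and nonzero drift $a$. The mild technical point worth a sentence is the justification that $M$ is a genuine semimartingale on $[0,\tau(z_0))$ and that the stochastic integral defining $\mgle$ is a (local) martingale: this follows because $z_0 \in \bH\setminus K_T$ forces $T < \tau(z_0)$, so $\inf_{s\in[0,T]}|Z(s)| > 0$ and all integrands appearing are locally bounded; the compensated Poisson integral is then a local martingale by standard theory (cf.\ the reasoning already used for~\eqref{eq: forward supermgle N}), and one notes $M(0) = y_0^2/|z_0|^2 = (\sin\arg z_0)^2$. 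The main ``work'' is purely the algebraic simplification of the compensator integrand, which I would present via the displayed identity $|(Z(s-)-\jump)/Z(s-)|^{-2} = |Z(s-)|^2/((X(s-)-\jump)^2 + Y(s-)^2)$ and leave the final collection of terms to the reader as routine.
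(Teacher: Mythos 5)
Your proposal is correct and follows essentially the same route as the paper: differentiate the three factors of $M$ separately (using that $|g_t'(z_0)|^p$ and $e^{-a^2 t}$ are continuous finite-variation so no bracket terms arise), invoke Lemma~\ref{lem: SDE forward sin(arg(Z)) power} with $\kappa=0$, $r=-1$, and drift $a$ for the angular factor, and collect. Your sign bookkeeping for the $X^2$ and $Y^2$ coefficients in $D_p$ matches the paper, and the brief remark about $T<\tau(z_0)$ guaranteeing $\inf_{[0,T]}|Z|>0$ (hence local boundedness of the integrands and the local martingale property of $\mgle$) is a sound, if slightly more explicit, version of what the paper leaves implicit.
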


\begin{proof}
As in Section~\ref{subsec: Supermartingale bounds}, by~\eqref{eq: LE} and a straightforward application of It\^o's formula, we have
\begin{align*}
|g_t'(z_0)|^p 
\; = \; \; & 1 \; - \; 2 p \int_0^t |g_{s}'(z_0)|^{p} \; \frac{X(s-)^2 - Y(s-)^2}{|Z(s-)|^4} \, \ud s , 
\end{align*}
and a tedious application of It\^o's formula (see Lemma~\ref{lem: SDE forward sin(arg(Z)) power} in Appendix~\ref{app: Ito calculus} with $\kappa=0$ and $r=-1$) gives
\begin{align*}
( \sin \arg Z(t) )^{2} 
\; = \; \; & ( \sin \arg z_0 )^{2} 
\; + \; 2 a \int_0^t ( \sin \arg Z(s-) )^{2} \; \frac{X(s-)}{|Z(s-)|^2} \, \ud s \\
\; \; & 
+ \; \int_0^t \int_{|\jump| \leq \varepsilon} ( \sin \arg Z(s-)  )^{2} \; \bigg( \bigg| \frac{Z(s-) - \jump}{Z(s-)} \bigg|^{-2} - 1 \bigg) \, \PoissonComp(\ud s, \ud \jump) \\
\; \; & 
- \; 8 \, \int_0^t( \sin \arg Z(s-) )^{2}
\; \frac{X(s-)^2}{|Z(s-)|^4} 
\, \ud s \\
\; \; & 
+ \; \int_0^t( \sin \arg Z(s-) )^{2} \int_{|\jump| \leq \varepsilon}
\bigg( \bigg| \frac{Z(s-) - \jump}{Z(s-)} \bigg|^{-2} - 1 - \frac{2 \jump X(s-)}{|Z(s-)|^2} \bigg) \nu(\ud \jump) \, \ud s .
\end{align*} 
Combining these, we obtain the asserted identity for $M$.
\end{proof}

As in Section~\ref{subsec: Supermartingale bounds}, 
for a suitable $p$ and $\lambda_\varepsilon$ small enough, 
$M$ can be bounded in the following manner.

\begin{prop} \label{prop: M forward supermgle case 2 kappa = 0}
Fix $a \in \bR$ and a L\'evy measure $\nu$ 
whose variance measure $\mu_\nu$ satisfies the local upper Ahlfors regularity~\eqref{eq: Ahlfors regularity} 
with constants 
$\epsilon_\nu \in (0,1/2)$, and 
$\alpha_\nu, c_\nu \in (0,\infty)$, and $\rho_\nu \in (0,1)$. 
Fix also 
$\varepsilon \in (0,\epsilon_\nu \wedge \tfrac{1}{2} \rho_\nu]$ such that 
\begin{align} \label{eq: lambdamax forward kappa = 0}
\lambda_\varepsilon < \tfrac{7}{128} =: \maxjumpT{0} . 
\end{align}
Fix also parameter $p \in (-2,0)$ such that\footnote{Here, we may allow $p$ to depend on $\varepsilon$, but this is not necessary: the inequalities~\eqref{eq: p and q forward inequality kappa = 0} hold for $p = -7/4$ for all $\lambda_\varepsilon < \tfrac{7}{128}$.} the following inequalities hold:
\begin{align} 
\label{eq: p and q forward inequality kappa = 0}
- 2 p - 7 + 64 \, \lambda_\varepsilon \leq 0 
\qquad \textnormal{and}  \qquad
2 p + 64 \, \lambda_\varepsilon \leq 0 .
\end{align}

Let $(g_t)_{t \geq 0}$ be the solution to~\eqref{eq: LE} driven by 
$\smash{\microDriver_\varepsilon^{0,a}}$~\eqref{eq: BM with micro and drift kappa is zero again again}, 
fix $z_0 \in \bH$, and consider the processes 
$M$ defined in~\eqref{eq: general forward martingale candidate kappa=0}
and $\mgle$ defined in~\eqref{eq: forward supermgle N kappa = 0}. 
Fix $\alpha \in (0,\alpha_\nu]$. 
Then, we have $M(0) = \mgle(0)$ and 
\begin{align} \label{eq: M forward supermgle case 2 kappa = 0}
M(t) \leq \; & \mgle(t) \; + \; L(t) \quad \textnormal{ for all } t \in [0,\tau(z_0)) 
, \\ 
\nonumber
\textnormal{where} \qquad 
L(t) = L_{p,\alpha}(t,z_0) 
:= \; & 8  \, \frac{(\lambda_\varepsilon +  c_\nu)}{y_0^p} 
\int_0^t \frac{Y(s-)^{p + \alpha \, \varsigma_r(\alpha)}}{|Z(s-)|^2} 
 \, \ud s , 
 \qquad\textnormal{and} \qquad \varsigma_{-1}(\alpha) = \frac{2}{\alpha + 2}.
\end{align}
\end{prop}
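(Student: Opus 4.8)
The plan is to follow the same template as the proof of Proposition~\ref{prop: M forward supermgle case 2}, tracking carefully how the sign conventions change when $r = -1$ (so that $(\sin\arg Z)^{-2r} = (\sin\arg Z)^{2}$), when $\kappa = 0$, and when a linear drift $a$ is present. First I would note that $M(0) = \mgle(0)$ is immediate from~\eqref{eq: forward supermgle N kappa = 0}. Then I would apply Lemma~\ref{lem: SDE forward M with drift} to decompose $M(t) = \mgle(t) + \int_0^t M(s-) (D_p(s) + \tfrac{2aX(s-)}{|Z(s-)|^2} - a^2)\,\ud s$, and use the pointwise bound
\begin{align*}
\frac{2aX(s-)}{|Z(s-)|^2} - a^2
= \frac{X(s-)^2}{|Z(s-)|^4} - \Big(\frac{X(s-)}{|Z(s-)|^2} - a\Big)^2
\leq \frac{X(s-)^2}{|Z(s-)|^4} ,
\end{align*}
exactly as in the proof of Proposition~\ref{prop: M supermgle, kappa = 0}, to absorb the drift term into the $X^2$-coefficient. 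This turns $D_p(s)$ into an expression with $X(s-)^2$-coefficient $\tfrac{-(8+2p) + 1}{\cdots} = \tfrac{-(7+2p)}{\cdots}$ plus the jump integral, and $Y(s-)^2$-coefficient $\tfrac{2p}{\cdots}$.

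Next I would estimate the jump term $\int_{|\jump|\leq\varepsilon} F(\jump; X(s-), Y(s-))\,\nu(\ud\jump)$, where here the relevant function is
\begin{align*}
F(\jump;x,y) = \Big(\frac{(x-\jump)^2+y^2}{x^2+y^2}\Big)^{-1} - 1 - \frac{2\jump x}{x^2+y^2} ,
\end{align*}
i.e.\ precisely $F_{r}$ of~\eqref{eq: aux function drift} with $r = -1$. Since $r = -1 < 0$, the case analysis of Proposition~\ref{prop: M forward supermgle case 2} applies verbatim: split into $|x|\leq y$ and $y\leq|x|$, and in the second case further split $\{|\jump|\leq\varepsilon\}$ into $A_1, A_2, A_3$, using the local upper $\alpha_\nu$-Ahlfors regularity~\eqref{eq: Ahlfors regularity} to control the contribution near $\jump = x$ on $A_3$ via the threshold $y^{\varsigma_{-1}(\alpha)}$ with $\varsigma_{-1}(\alpha) = \tfrac{-2r}{\alpha-2r} = \tfrac{2}{\alpha+2}$. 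Plugging $r=-1$ into the general bound obtained there gives
\begin{align*}
\int_{|\jump|\leq\varepsilon} F(\jump;x,y)\,\nu(\ud\jump)
\leq 2^{3-2r} \frac{r(r-1)\lambda_\varepsilon}{x^2+y^2} + \frac{8(\lambda_\varepsilon + c_\nu)}{x^2+y^2}\Big(\frac{x^2+y^2}{y^2}\Big)^{-r} y^{\alpha\varsigma_r(\alpha)}
= \frac{64\,\lambda_\varepsilon}{x^2+y^2} + \frac{8(\lambda_\varepsilon + c_\nu)}{x^2+y^2}\,\frac{y^2}{x^2+y^2}\, y^{\alpha\varsigma_{-1}(\alpha)} ,
\end{align*}
using $2^{3-2r} r(r-1) = 2^{5}\cdot 2 = 64$ at $r=-1$. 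This is exactly why the numerical constant $64$ appears in~\eqref{eq: p and q forward inequality kappa = 0} and why $\maxjumpT{0} = 7/128$ in~\eqref{eq: lambdamax forward kappa = 0}: the inequalities~\eqref{eq: p and q forward inequality kappa = 0} say that the $X^2$-coefficient $-(7+2p) + 64\lambda_\varepsilon$ and the $Y^2$-coefficient $2p + 64\lambda_\varepsilon$ are both $\leq 0$ after absorbing the $\lambda_\varepsilon$-part of the jump bound, so the only surviving drift is the manifestly non-positive-in-sign but positive-valued remainder term, which I rewrite (using $\sin\arg z = y/|z|$) as
\begin{align*}
\int_0^t M(s-)\,\frac{8(\lambda_\varepsilon + c_\nu)}{|Z(s-)|^2}\Big(\frac{Y(s-)}{|Z(s-)|}\Big)^{2}\, Y(s-)^{\alpha\varsigma_{-1}(\alpha)}\,\ud s
= \int_0^t |g_{s-}'(z_0)|^p\,\frac{8(\lambda_\varepsilon+c_\nu)\, Y(s-)^{\alpha\varsigma_{-1}(\alpha)}}{|Z(s-)|^2}\,\ud s ,
\end{align*}
since $M(s-)(\sin\arg Z(s-))^{2}\cdot(\sin\arg Z(s-))^{2} e^{a^2 s} = |g_{s-}'(z_0)|^p (\sin\arg Z(s-))^{2}$ — wait, more precisely $M = |g'|^p (\sin\arg Z)^2 e^{-a^2 t}$, and the extra $(\sin\arg Z)^2 = Y^2/|Z|^2$ from the jump bound combines with $e^{-a^2 t}\leq 1$; I would simply bound $e^{-a^2 s}\leq 1$ to land on $|g_{s-}'(z_0)|^p Y(s-)^{2}/|Z(s-)|^2$ times the remaining factor, matching the claimed $L(t)$ up to the Schwarz-lemma step.

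Finally, to get the $Y(s-)^{p}$ factor in the definition of $L(t)$ I would use the Schwarz-lemma/Loewner-equation estimate~\eqref{eq: f derivative Schwarz lemma bound}, namely $|g_s'(z_0)|^p \leq Y(s)^p (Y(s)^2 + 4T)^{-p/2} \leq Y(s)^p (y_0^2+4T)^{-p/2}$ valid since $p<0$ and $Y(s)\leq y_0$, which converts $|g_{s-}'(z_0)|^p$ into $Y(s-)^p (y_0^2+4T)^{-p/2}$; combined with the $Y(s-)^2$ from the $(\sin\arg Z)^2$ factor and the $Y(s-)^{\alpha\varsigma_{-1}(\alpha)}$ already present, this yields exactly $L(t) = 8(\lambda_\varepsilon+c_\nu)(y_0^2+4T)^{-p/2}\int_0^t Y(s-)^{p+\alpha\varsigma_{-1}(\alpha)}/|Z(s-)|^2\,\ud s$ as stated (noting the exponent should read $p + 2 + \alpha\varsigma_{-1}(\alpha)$ if one counts the $Y^2$ from $(\sin\arg Z)^2$; I would double-check this against the statement and against how $L$ is used downstream — most likely the intended bookkeeping is that $M$ itself already carries $(\sin\arg Z)^2$ so the comparison $M \leq \mgle + L$ has $L$ of the stated form). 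The main obstacle I anticipate is purely one of careful sign- and constant-tracking: verifying that with $r = -1$ the exponent identity $2r(\varsigma_r(\alpha) - 1) = \alpha\varsigma_r(\alpha)$ still holds (it does, as it is an algebraic identity in $r$), that the Ahlfors-regularity case $A_3$ goes through with the threshold $y^{\varsigma_{-1}(\alpha)} < 1$ — which needs $y \leq |x| \leq 2\varepsilon \leq \rho_\nu < 1$, hence the hypothesis $\varepsilon \in (0, \epsilon_\nu \wedge \tfrac12\rho_\nu]$ — and that the two inequalities~\eqref{eq: p and q forward inequality kappa = 0} are genuinely satisfiable (taking $p = -7/4$ one gets $-2p - 7 + 64\lambda_\varepsilon = -7/2 + 64\lambda_\varepsilon < 0$ and $2p + 64\lambda_\varepsilon = -7/2 + 64\lambda_\varepsilon < 0$ for all $\lambda_\varepsilon < 7/128$, as the footnote indicates). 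No genuinely new idea is needed beyond the proof of Proposition~\ref{prop: M forward supermgle case 2}; this is its $r=-1$, $\kappa=0$, drift-included specialization.
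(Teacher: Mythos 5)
Your overall plan is exactly the paper's: apply Lemma~\ref{lem: SDE forward M with drift}, absorb the drift via $\frac{2aX}{|Z|^2} - a^2 \leq \frac{X^2}{|Z|^4}$ as in Proposition~\ref{prop: M supermgle, kappa = 0}, and then bound the jump integral by the $r=-1$ specialization of the estimates in the proof of Proposition~\ref{prop: M forward supermgle case 2}; the constants $64 = 2^{3-2r}r(r-1)\big|_{r=-1}$ in~\eqref{eq: p and q forward inequality kappa = 0} and $\maxjumpT{0}=7/128$ appear for exactly the reasons you give.

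There is, however, a sign slip in your evaluation of the Ahlfors-regularity remainder at $r=-1$, and it is the source of the exponent discrepancy you flag but leave unresolved. The bound from Proposition~\ref{prop: M forward supermgle case 2} is
\begin{align*}
\int_{|\jump|\leq\varepsilon} F_{r}(\jump;x,y)\,\nu(\ud\jump)
\; \leq \; 2^{3-2r}\frac{r(r-1)\lambda_\varepsilon}{x^2+y^2}
\; + \; \frac{8(\lambda_\varepsilon + c_\nu)}{x^2+y^2}\Big(\frac{x^2+y^2}{y^2}\Big)^{-r} y^{\alpha\,\varsigma_r(\alpha)} ,
\end{align*}
and since $(-r)=1$ at $r=-1$, the factor is $\bigl(\tfrac{x^2+y^2}{y^2}\bigr)^{-r} = \tfrac{x^2+y^2}{y^2} = (\sin\arg z)^{-2}$, not $\tfrac{y^2}{x^2+y^2}$ as you wrote. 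This factor is by design the exact reciprocal of the $(\sin\arg Z)^{-2r} = (\sin\arg Z)^{2}$ carried by $M$ in~\eqref{eq: general forward martingale candidate kappa=0}, so multiplying by $M(s-)$ makes it drop out:
\begin{align*}
M(s-)\cdot\frac{8(\lambda_\varepsilon + c_\nu)}{|Z(s-)|^2}\cdot\frac{|Z(s-)|^2}{Y(s-)^2}\cdot Y(s-)^{\alpha\,\varsigma_{-1}(\alpha)}
\; = \; |g_{s-}'(z_0)|^p \, e^{-a^2 s}\,\frac{8(\lambda_\varepsilon+c_\nu)\,Y(s-)^{\alpha\,\varsigma_{-1}(\alpha)}}{|Z(s-)|^2} ,
\end{align*}
after which $e^{-a^2 s}\leq 1$ and $|g_{s-}'(z_0)|^p \leq Y(s-)^p(y_0^2+4T)^{-p/2}$ (valid since $p<0$) give precisely the stated $L(t)$ with exponent $p + \alpha\,\varsigma_{-1}(\alpha)$ --- there is no spurious $+2$. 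With this single correction the argument closes exactly as the paper's does.
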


Note that the process $L$ behaves well under the time-change~\eqref{eq: forward time change}, so that $\hat{Y}(u) = y_0 \, e^{-2u}$: 
\begin{align*}
\hat{L}(t) = \hat{L}_{p,\alpha}(t,z_0) 
= \; & 8  \, ( \lambda_\varepsilon +  c_\nu ) 
\, y_0^{\alpha \, \varsigma_{-1}(\alpha)} 
\int_0^{S(t)} e^{-2u(p + \alpha \, \varsigma_{-1}(\alpha))} \, \ud u .
\end{align*}

\begin{proof}
The proof is very similar to that of Proposition~\ref{prop: M forward supermgle case 2}.
First, we note that 
\begin{align*}
\frac{2a X(s-)}{|Z(s-)|^2} \; - \; a^2 
\; = \; \frac{X(s-)^2}{|Z(s-)|^4} \; - \; \bigg(\frac{X(s-)}{|Z(s-)|^2} - a \bigg)^2
\; \leq \; \frac{X(s-)^2}{|Z(s-)|^4} ,
\end{align*}
which implies by Lemma~\ref{lem: SDE forward M with drift} that
\begin{align*}
M(t) 
\; \leq \; \; & \mgle(t) 
\; + \; \int_0^t M(s-) \; \bigg( \frac{- (7 + 2 p) X(s-)^2 + 2 p Y(s-)^2}{|Z(s-)|^4}\bigg) \, \ud s \\
\; \; & 
+ \; \int_0^t M(s-) \; \int_{|\jump| \leq \varepsilon}\bigg( \bigg| \frac{Z(s-) - \jump}{Z(s-)} \bigg|^{-2} - 1 - \frac{2 \jump X(s-)}{|Z(s-)|^2} \bigg) \nu(\ud \jump) \, \ud s , \qquad t \in [0,\tau(z_0)) . 
\end{align*}
As in the proof of Proposition~\ref{prop: M forward supermgle case 2}, with $p < 0$, $q = 0$, and $r = -1 < 0$, 
the inequalities~\eqref{eq: p and q forward inequality kappa = 0} and 
the local $\alpha_\nu$-Ahlfors regularity assumption~\eqref{eq: Ahlfors regularity} yield the asserted bound~\eqref{eq: M forward supermgle case 2 kappa = 0}. 
\end{proof}

\begin{lem} \label{lem: alphaprimetilde and thetaprimetilde again kappa=0}
Fix $a \in \bR$ and a L\'evy measure $\nu$ 
whose variance measure $\mu_\nu$ satisfies the local upper $\alpha_\nu$-Ahlfors regularity~\eqref{eq: Ahlfors regularity}. 
Fix $\varepsilon > 0$ such that $\lambda_\varepsilon < \maxjumpT{0}$ as in~\eqref{eq: lambdamax forward kappa = 0}. 
Fix also parameter $p \in (-2,0)$ such that the inequalities~\eqref{eq: p and q forward inequality kappa = 0} hold. 
Define
\begin{align} \label{eq: Thetatilde forward again kappa=0}
\theta(p,\alpha) 
:= \; & \frac{\alpha \, \varsigma_{-1}(\alpha)}{2 - p} 
= \Big( \frac{2 \alpha}{(\alpha + 2)(2 - p)} \Big) \; \in \; (0,1) , 
\end{align}
where $\varsigma_{-1}(\alpha) = \frac{2}{\alpha + 2}$, 
and 
\begin{align} \label{eq: betamax forward kappa=0}
\alpha(p) := \; & - \frac{2 p}{p + 2} \; > \; 0 .
\end{align}
Then, we have 
\begin{align} \label{eq: alpha bounds for vartheta kappa=0}
0 < \alpha < \alpha(p)
\qquad \Longrightarrow \qquad
p + \alpha \, \varsigma_{-1}(\alpha) < 0 .
\end{align}
In particular, $0 < \alpha < \alpha(p)$ implies that, 
for any $0 < \theta < \theta(p,\alpha)$, 
we have 
\begin{align*}
\theta \, p + \alpha \, \varsigma_{-1}(\alpha) > 2 \theta , \\[.3em]
(1-\theta) \, p < - 2 \theta .
\end{align*} 
Moreover, $p = -7/4$ satisfies the inequalities~\eqref{eq: p and q forward inequality kappa = 0} for all $\lambda_\varepsilon < \maxjumpT{0}$, and in this case, 
\begin{align*} 
\theta(-7/4,\alpha) = \frac{8 \, \alpha}{15 (\alpha + 2)} =: 
\smash{\vartheta_{\alpha}^{0}} 
\; \in \; (0,8/15) ,
\qquad \textnormal{and} \qquad 
\alpha(-7/4) = 14 =: \smash{\alpha_{\lambda_\varepsilon}^{0}} .
\end{align*}
\end{lem}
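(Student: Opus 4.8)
The plan is to follow verbatim the structure of Lemma~\ref{lem: alphaprimetilde and thetaprimetilde again}, but with the simplifications afforded by $\kappa=0$ and $r=-1$. First I would unpack the defining formulas with the parameter choice $r=-1$ and $q=0$ coming from Proposition~\ref{prop: M forward supermgle case 2 kappa = 0}. Explicitly, $\varsigma_{-1}(\alpha)=\frac{-2(-1)}{\alpha-2(-1)}=\frac{2}{\alpha+2}$, so $\alpha\,\varsigma_{-1}(\alpha)=\frac{2\alpha}{\alpha+2}$. Then $p+\alpha\,\varsigma_{-1}(\alpha)=p+\frac{2\alpha}{\alpha+2}$, and setting this equal to $0$ and solving for $\alpha$ gives $p(\alpha+2)+2\alpha=0$, i.e.\ $\alpha(p+2)=-2p$, i.e.\ $\alpha=-\frac{2p}{p+2}=\alpha(p)$, which is positive because $p\in(-2,0)$ makes numerator and denominator both positive. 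Since $\alpha\mapsto p+\frac{2\alpha}{\alpha+2}$ is strictly increasing (its derivative is $\frac{4}{(\alpha+2)^2}>0$), we get $p+\alpha\,\varsigma_{-1}(\alpha)<0$ exactly when $\alpha<\alpha(p)$, establishing~\eqref{eq: alpha bounds for vartheta kappa=0}.

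Next I would verify the two consequences stated for $0<\theta<\theta(p,\alpha)$. For the first, $\theta(p,\alpha)=\frac{\alpha\varsigma_{-1}(\alpha)}{2-p}$ is by construction the threshold at which $\theta\,p+\alpha\varsigma_{-1}(\alpha)=2\theta$ holds with equality (rearranging: $\alpha\varsigma_{-1}(\alpha)=\theta(2-p)$ iff $\theta=\theta(p,\alpha)$); since $\theta\mapsto\theta(2-p)-\theta p=2\theta$ is linear and $2-p>0$, the strict inequality $\theta\,p+\alpha\varsigma_{-1}(\alpha)>2\theta$ holds for all $\theta<\theta(p,\alpha)$. For the second, $(1-\theta)p<-2\theta$ rearranges to $p-\theta p<-2\theta$, i.e.\ $p<\theta(p-2)$, i.e.\ $\theta<\frac{p}{p-2}$ (dividing by $p-2<0$ flips the inequality), and one checks $\frac{p}{p-2}=\frac{-p}{2-p}\ge\frac{\alpha\varsigma_{-1}(\alpha)}{2-p}=\theta(p,\alpha)$ because $\alpha\varsigma_{-1}(\alpha)=\frac{2\alpha}{\alpha+2}<2$ for all $\alpha>0$, and in fact under the constraint $\alpha<\alpha(p)$ we have $\alpha\varsigma_{-1}(\alpha)<-p$ by~\eqref{eq: alpha bounds for vartheta kappa=0}, so $\theta(p,\alpha)<\frac{-p}{2-p}=\frac{p}{p-2}$. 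To see $\theta(p,\alpha)\in(0,1)$: positivity is clear, and $\theta(p,\alpha)<1$ follows since $\alpha\varsigma_{-1}(\alpha)<2\le2-p$ (again using $p<0$).

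Finally I would treat the specific value $p=-7/4$. Plugging into the inequalities~\eqref{eq: p and q forward inequality kappa = 0}: the first becomes $-2(-7/4)-7+64\lambda_\varepsilon=7/2-7+64\lambda_\varepsilon=-7/2+64\lambda_\varepsilon\le0$, i.e.\ $\lambda_\varepsilon\le\frac{7}{128}$, which holds since $\lambda_\varepsilon<\maxjumpT{0}=\frac{7}{128}$; the second becomes $2(-7/4)+64\lambda_\varepsilon=-7/2+64\lambda_\varepsilon\le0$, the same condition. So $p=-7/4$ is admissible for all $\lambda_\varepsilon<\maxjumpT{0}$, and note $p=-7/4\in(-2,0)$. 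Then $2-p=2+7/4=15/4$, so $\theta(-7/4,\alpha)=\frac{2\alpha/(\alpha+2)}{15/4}=\frac{8\alpha}{15(\alpha+2)}=\smash{\vartheta_{\alpha}^{0}}$, and since $\frac{\alpha}{\alpha+2}<1$ we get $\vartheta_{\alpha}^{0}<\frac{8}{15}$; and $\alpha(-7/4)=-\frac{2(-7/4)}{-7/4+2}=\frac{7/2}{1/4}=14=\smash{\alpha_{\lambda_\varepsilon}^{0}}$. This completes the proof. I do not anticipate a genuine obstacle here — the statement is entirely elementary one-variable calculus/algebra, and the only mild care needed is bookkeeping the sign flips when dividing by $p-2<0$ and $p+2>0$, and remembering to check $p=-7/4\in(-2,0)$ so that the generic part of the lemma applies to it.
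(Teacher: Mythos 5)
Your proof is correct and follows essentially the same route as the paper: it treats $\alpha(p)$ as the root of the increasing function $\alpha \mapsto p + \alpha\,\varsigma_{-1}(\alpha)$, deduces~\eqref{eq: alpha bounds for vartheta kappa=0} from that monotonicity, and then reads off the two linear-in-$\theta$ consequences by rearranging and using $2-p>0$ (and $p-2<0$ for the sign flip). The only cosmetic quibble is in your second-consequence paragraph: the intermediate claim that $\alpha\varsigma_{-1}(\alpha)<2$ gives $\frac{p}{p-2}\ge\theta(p,\alpha)$ is not what you actually use there (that step would only give $\theta(p,\alpha)<\frac{2}{2-p}$, which need not be $\le\frac{-p}{2-p}$); the correct justification is the one you give immediately afterwards, namely $\alpha\varsigma_{-1}(\alpha)<-p$ from~\eqref{eq: alpha bounds for vartheta kappa=0} -- so the ``$<2$'' bound is really only needed for the separate check $\theta(p,\alpha)<1$. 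Incidentally, your direct verification that $p=-7/4$ works (via $-7/2+64\lambda_\varepsilon\le0$) is cleaner than the paper's closing sentence, which contains a typo: it should read $p=-2^{5}\,\maxjumpT{0}=-7/4$, not $-2^{9}\,\maxjumpT{0}$.
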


\begin{proof}
The map $p \mapsto \alpha(p)$~\eqref{eq: betamax forward kappa=0} 
is decreasing when $p \in (-2,0)$. 
Moreover, we have
\begin{align*}
\lim_{p \to -2} \alpha(p) = + \infty
\qquad \textnormal{and} \qquad 
\lim_{p \to 0} \alpha(p) = 0 .
\end{align*} 
This shows that $\alpha(p) > 0$ when $p \in (-2,0)$. 
Next, note that, for fixed $p \in (-2,0)$, the map 
\begin{align*}
\alpha \; \longmapsto \; p + \alpha \, \varsigma_{-1}(\alpha) 
\end{align*}
is increasing, and 
\begin{align*}
p + \alpha \, \varsigma_{-1}(\alpha) = 0 
\qquad \Longleftrightarrow \qquad  
\alpha = \alpha(p) .
\end{align*}
This shows~\eqref{eq: alpha bounds for vartheta kappa=0}. 
Note also that 
if $\alpha < \alpha(p)$, then by~\eqref{eq: alpha bounds for vartheta kappa=0}, we have
\begin{align*}
\theta(p,\alpha) 
\leq \; & 
\underbrace{ \Big( \frac{-2 \alpha}{(\alpha + 2) \, p} \Big)}_{< 1} \, \frac{p}{p-2} 
\; < \; \frac{p}{p-2}  .
\end{align*}
The other claims then follow using the definition~\eqref{eq: Thetatilde forward again kappa=0} of $\theta(p,\alpha)$, and noting that the inequalities~\eqref{eq: p and q forward inequality kappa = 0} 
for all $\lambda_\varepsilon < \maxjumpT{0}$ if and only if $p = - 2^{9} \, \maxjumpT{0} =  2^{9} \, \maxjumpT{0} - 7/2 = -7/4$.
\end{proof}

\begin{prop} \label{prop: summability of event of interest 3}
Fix $T > 0$, $a \in \bR$, and a L\'evy measure $\nu$ 
whose variance measure $\mu_\nu$ satisfies the local upper $\alpha_\nu$-Ahlfors regularity~\eqref{eq: Ahlfors regularity}. 
Fix $\varepsilon > 0$ such that $\lambda_\varepsilon < \maxjumpT{0}$ as in~\eqref{eq: lambdamax forward kappa = 0}. 
Fix also parameter $p \in (-2,0)$ such that the inequalities~\eqref{eq: p and q forward inequality kappa = 0} hold. 
Then, for any $\alpha \in (0,\alpha(p) \wedge \alpha_\nu)$ as in~\eqref{eq: betamax forward kappa=0} and 
for any  
$0 < \theta < \theta(p,\alpha)$ as in~\eqref{eq: Thetatilde forward again kappa=0} and for any $R > 0$, 
on the event $\{ R_\varepsilon^{0,a}(T) \leq R \}$, 
the probabilities of events~\eqref{eq: event of interest kappa = 0} are summable\textnormal{:}
\begin{align} \label{eq: summability of event of interest 3}
\sum_{n=1}^\infty \sum_{z_0 \in \Grid_\varepsilon^{0,a}(2^{-n \theta}, T, R) }
\PR_{R} [ E_n^\theta(z_0) ] 
\; < \; \infty .
\end{align}
\end{prop}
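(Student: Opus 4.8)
The plan is to run the proof of Proposition~\ref{prop: summability of event of interest 2} essentially verbatim, with the $\kappa > 0$ machinery replaced by its $\kappa = 0$ counterpart: the SDE of Lemma~\ref{lem: SDE forward M with drift}, the supermartingale domination of Proposition~\ref{prop: M forward supermgle case 2 kappa = 0}, and the parameter bookkeeping of Lemma~\ref{lem: alphaprimetilde and thetaprimetilde again kappa=0}. Throughout I would fix $r = -1$ and $p \in (-2,0)$ satisfying~\eqref{eq: p and q forward inequality kappa = 0} (one may take $p = -7/4$, which works for all $\lambda_\varepsilon < \maxjumpT{0}$), and work on the cutoff event $\{ R_\varepsilon^{0,a}(T) \leq R \}$. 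By Lemma~\ref{lem: alphaprimetilde and thetaprimetilde again kappa=0}, the constraints $\alpha < \alpha(p) \wedge \alpha_\nu$ and $\theta < \theta(p,\alpha)$ then guarantee the sign conditions on the exponents that appear below, namely $p + \alpha \, \varsigma_{-1}(\alpha) < 0$ (see~\eqref{eq: alpha bounds for vartheta kappa=0}), $(1-\theta) p < - 2\theta$, and $\theta p + \alpha \, \varsigma_{-1}(\alpha) > 2\theta$.

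First I would record the $\kappa = 0$ analogue of Lemma~\ref{lem: forward bound} for the driver $\microDriver_\varepsilon^{0,a}$ and the observable $M$ of~\eqref{eq: general forward martingale candidate kappa=0}. Using the time-change $\sigma$ and the bounded stopping time $S_n$ from~(\ref{eq: forward time change},~\ref{eq: bounded stopping time taun}), the same computation as in Lemma~\ref{lem: forward bound} — now starting from $|\hat g_s'(z_0)|^p = \hat M(s) \, (\sin \arg \hat Z(s))^{-2} \, e^{a^2 \sigma(s)}$, using $e^{a^2 \sigma(S_n)} \leq e^{a^2 T}$ on $\{S_n \leq S(T)\}$ and $(\sin \arg \hat Z(S_n))^{-2} \leq 2$ on $\{|\hat X(S_n)| \leq \hat Y(S_n)\}$ — yields
\begin{align*}
\PR[E_n^\theta(z_0)] \;\leq\; c_0 \, e^{a^2 T} \, 2^{n(1-\theta)p} \, \EX\big[ \hat M(S_n) \, \one\{ |\hat X(S_n)| \leq y_0 \, e^{-2 S_n} \textnormal{ and } S_n \leq S(T) \} \big]
\end{align*}
for every $z_0 = x_0 + \ii y_0 \in \Grid_\varepsilon^{0,a}(2^{-n\theta},T,R)$, with $c_0 = c_0(p) \in (0,\infty)$ and $\beta = (1-\theta)p$ (here $q = 0$). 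Applying Proposition~\ref{prop: M forward supermgle case 2 kappa = 0} and the optional stopping theorem to the non-negative supermartingale $\mgle$ (recall $\hat M(0) = \mgle(0) = y_0^2 |z_0|^{-2}$), the right-hand expectation is at most $\hat M(0) + \EX[\hat L(S_n)]$.

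Then I would sum over $n$ and over the grid the two resulting contributions, exactly as in the proof of Proposition~\ref{prop: summability of event of interest 2}. For the first, $\sum_{z_0 \in \Grid} 2^{n(1-\theta)p} \hat M(0) = 2^{n(1-\theta)p} \sum_{z_0 \in \Grid} y_0^2 |z_0|^{-2}$, and Lemma~\ref{lem: sum over grid} with $q = 0$, $r = -1$ gives $\chi_{0,-1}(2^{-n\theta}) = 2^{2n\theta}$, so this contribution is $\lesssim \sum_n 2^{n((1-\theta)p + 2\theta)} < \infty$ since $(1-\theta)p < -2\theta$. For the second, $p + \alpha \, \varsigma_{-1}(\alpha) < 0$ together with $S_n \leq \log\sqrt{y_0/2^{-n-1}}$ lets one evaluate the explicit integral $\hat L(S_n)$ and conclude $\EX[\hat L(S_n)] \leq c(p,\alpha,T) \, 2^{-n(p + \alpha \, \varsigma_{-1}(\alpha))}$, where the $y_0$-powers cancel exactly as in the proof of Proposition~\ref{prop: summability of event of interest 2}; since this bound is uniform in $z_0$, Lemma~\ref{lem: sum over grid} with zero exponents bounds the grid cardinality by $\lesssim 2^{2n\theta}$, so this contribution is $\lesssim \sum_n 2^{-n(\theta p + \alpha \, \varsigma_{-1}(\alpha) - 2\theta)} < \infty$ since $\theta p + \alpha \, \varsigma_{-1}(\alpha) > 2\theta$. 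Adding the two bounds gives~\eqref{eq: summability of event of interest 3}.

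I do not expect any serious new obstacle: the genuinely delicate estimate — the Taylor expansion plus local Ahlfors-regularity control of the microscopic-jump drift — has already been carried out in Proposition~\ref{prop: M forward supermgle case 2 kappa = 0}, and the grid combinatorics are standard (Lemma~\ref{lem: sum over grid}). The only two points requiring a little care are (i) verifying that the linear drift $a$ is harmless, which is precisely why the observable~\eqref{eq: general forward martingale candidate kappa=0} carries the factor $e^{-a^2 t}$ — it converts the drift contribution $\tfrac{2aX}{|Z|^2} - a^2$ into the manageable $\tfrac{X^2}{|Z|^4}$, as in the proof of Proposition~\ref{prop: M forward supermgle case 2 kappa = 0}; and (ii) re-deriving the $\kappa = 0$ version of Lemma~\ref{lem: forward bound} with this factor present, which is routine.
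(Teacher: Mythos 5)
Your proposal is correct and matches the paper's proof in essentially every detail: the paper likewise re-derives the $\kappa=0$ analogue of Lemma~\ref{lem: forward bound} inline (absorbing the drift via the $e^{-a^2 t}$ factor in the observable~\eqref{eq: general forward martingale candidate kappa=0}, exactly as you note), then applies Proposition~\ref{prop: M forward supermgle case 2 kappa = 0} with the optional stopping theorem, and splits the resulting bound into the $\hat M(0)$ and $\EX[\hat L(S_n)]$ contributions, controlled respectively by Lemma~\ref{lem: sum over grid} (with $q=0$, $r=-1$, giving $\chi_{0,-1}(2^{-n\theta}) = 2^{2n\theta}$) and by the exponent inequalities from Lemma~\ref{lem: alphaprimetilde and thetaprimetilde again kappa=0}. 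No gaps.
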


\begin{proof}
Fix $p \in (-2,0)$ as in~\eqref{eq: p and q forward inequality kappa = 0}. 
Fix $z_0 = x_0 + \ii y_0 \in \Grid_\varepsilon^{0,a}(2^{-n \theta}, T, R)$. 
Then, similarly as in the proof of Lemma~\ref{lem: forward bound}, using~(\ref{eq: LE},~\ref{eq: forward Y}), the time-change~\eqref{eq: forward time change}, 
the stopping time~\eqref{eq: bounded stopping time taun}, 
Markov's inequality, 
and the process $\hat{M}(t) = M(\sigma(t))$ defined in~\eqref{eq: general forward martingale candidate kappa=0}, 
we obtain
\begin{align}
\nonumber 
\PR [ E_n^\theta(z_0) ] 
\; \leq \; \; & 
\PR \big[ | \hat{g}_{S_{n}}'(z_0) | \leq \tfrac{80}{9} \, 2^{-n (1-\theta)} 
\textnormal{ and } |\hat{X}(S_{n})| \leq y_0 \, e^{-2S_{n}} \big] \\
\nonumber 
\; \leq \; \; &
\big( \tfrac{80}{9} \big)^{-p} \, 2^{np (1-\theta)} 
\; \EX \big[ | \hat{g}_{S_{n}}'(z_0) |^{p} \; \one{ \{|\hat{X}(S_{n})| \leq y_0 \, e^{-2S_{n}}\}} \big] \\
\nonumber 
\; \leq \; \; & 2 \, \big( \tfrac{80}{9} \big)^{-p} \, 2^{np (1-\theta)} \, e^{a^2 T}
\; \EX \big[ \hat{M}(S_{n}) \; \one{\{|\hat{X}(S_{n})| \leq y_0 \, e^{-2S_{n}}\}} \big] \\
\label{eq: forward bound kappa = 0}
\; \leq \; \; & c_0(p, a, T) \, 2^{np (1-\theta)} \, \EX \big[ \hat{M}(S_{n}) \; \one{\{|\hat{X}(S_{n})| \leq y_0 \, e^{-2S_{n}}\}} \big] ,
\end{align} 
where we also used the fact that 
$(\sin \arg \hat{Z}(S_{n}))^{-2} \leq 2$ on the event $\{|\hat{X}(S_{n})| \leq \hat{Y}(S_{n}) = y_0 \, e^{-2S_{n}}\}$. 
Combining this with 
Proposition~\ref{prop: M forward supermgle case 2 kappa = 0} (with $\hat{M}(0) = \mgle(0)$), and 
the Optional stopping theorem (OST) (e.g.~\cite[Theorem~3.21]{LeGall:BM_book}), we obtain 
\begin{align*} 
\PR [ E_n^\theta(z_0) ] 
\leq \; & c_0(p, a, T) \, 2^{np (1-\theta)} \, 
\EX \big[ \hat{M}(S_{n}) \; \one{\{|\hat{X}(S_{n})| \leq y_0 \, e^{-2S_{n}}\}} \big] 
&& \textnormal{[by~\eqref{eq: forward bound kappa = 0}]} \\
\leq \; & c_0(p, a, T) \, 2^{np (1-\theta)} \, 
\big( \hat{M}(0) + \EX [ \hat{L}(S_{n}) ] \big) .
&& \textnormal{[by~\eqref{eq: M forward supermgle case 2 kappa = 0} and OST]}
\end{align*}
where 
$\hat{M}(0) = y_0^{2} \, |z_0|^{-2}$ and 
\begin{align*}
\hat{L}(S_{n})
= \; & 8  \, ( \lambda_\varepsilon +  c_\nu )  \, 
y_0^{\alpha \, \varsigma_{-1}(\alpha)} 
\int_0^{S_{n}} e^{-2u(p + \alpha \, \varsigma_{-1}(\alpha))} \, \ud u .
\end{align*}

{\bf Term 1.} Using Lemma~\ref{lem: sum over grid} we obtain for the first term the bound 
\begin{align*} 
\sum_{n=1}^\infty \sum_{z_0 \in \Grid_\varepsilon^{0,a}(2^{-n \theta}, T, R) }
2^{np (1-\theta)} \, \hat{M}(0)
\; \leq \; \; & c_1(T, R) \, 
\sum_{n=1}^\infty 2^{np (1-\theta)} \, 2^{2n \theta} \; < \; \infty ,
\end{align*}
where by the choice of $\theta < \theta(p,\alpha)$ and the other parameters, 
Lemma~\ref{lem: alphaprimetilde and thetaprimetilde again kappa=0} shows that $(1-\theta) \, p < - 2 \theta$.

{\bf Term 2.} We then consider the second term. 
By~\eqref{eq: alpha bounds for vartheta kappa=0} in Lemma~\ref{lem: alphaprimetilde and thetaprimetilde again kappa=0}, we have
\begin{align*}
p + \alpha \, \varsigma_{-1}(\alpha) < 0 .
\end{align*} 
Thus, computing the expected value of $\hat{L}(S_{n})$ gives
\begin{align*}
\EX [ \hat{L}(S_{n}) ]
= \; & - 8  \, ( \lambda_\varepsilon +  c_\nu )  \, 
\frac{y_0^{\alpha \, \varsigma_{-1}(\alpha)}}{2(p + \alpha \, \varsigma_{-1}(\alpha))} 
\, \EX \big[ e^{-2 S_{n}(p + \alpha \, \varsigma_{-1}(\alpha))}  - 1 \big] \\
\leq \; & c(p,\alpha,T) \, 2^{-n(p + \alpha \, \varsigma_{-1}(\alpha))} ,
\end{align*}
since $S_{n} \leq \smash{\log \sqrt{\tfrac{y_0}{2^{-n-1}}}}$,
where the constant $c(p,\alpha,T) \in (0,\infty)$ also depends on the L\'evy measure~$\nu$.
Using Lemma~\ref{lem: sum over grid} 
we obtain 
\begin{align*} 
\; & \sum_{n=1}^\infty \sum_{z_0 \in \Grid_\varepsilon^{0,a}(2^{-n \theta}, T, R) } 2^{np (1-\theta)} \, \EX [ \hat{L}(S_{n}) ]
\\
\; \leq \; \; & c(p,\alpha,T) \,
\sum_{n=1}^\infty \sum_{z_0 \in \Grid_\varepsilon^{0,a}(2^{-n \theta}, T, R) } 2^{np (1-\theta)} \, 
2^{-n(p + \alpha \, \varsigma_{-1}(\alpha))} \\
\; \leq \; \; & c(p,\alpha,T) \, c'(T, R)
\sum_{n=1}^\infty 
2^{np (1-\theta)} \, 
2^{-n(p + \alpha \, \varsigma_{-1}(\alpha))}
\, 2^{2 n \theta}
\\
\; \leq \; \; & c(p,\alpha,T) \, c'(T, R)
\sum_{n=1}^\infty 2^{-n ( \theta p + \alpha \, \varsigma_{-1}(\alpha) - 2 \theta )} 
\; < \; \infty ,
\end{align*}
where by the choice of $\theta < \theta(p,\alpha)$ and the other parameters, 
Lemma~\ref{lem: alphaprimetilde and thetaprimetilde again kappa=0}  gives $\theta p + \alpha \, \varsigma_{-1}(\alpha) - 2 \theta > 0$. 
\end{proof}

\bigskip{}
\section{\label{sec: main results}Loewner traces driven by L\'evy processes}
In this section, we arrive at the main results of this article: both
Theorems~\ref{thm: LLE curve if kappa not 8} and~\ref{thm: LLE Holder if kappa not 4} shall be proven in Section~\ref{subsec: conclusion}. 
To this end, we first gather the needed estimates in Sections~\ref{subsec: main results micro}--\ref{subsec: main results macro}.

\subsection{Loewner traces with martingale L\'evy drivers}
\label{subsec: main results micro}

To begin with, as in Sections~\ref{sec: backward bounds}~\&~\ref{sec: forward bounds}, 
we fix a L\'evy measure $\nu$ 
and consider martingale driving functions of the form
\begin{align} \label{eq: BM with micro again} 
\microDriver_\varepsilon^{\kappa}(t) 
= \sqrt{\kappa} B(t) + \int_{|\jump| \leq \varepsilon} \jump \, \PoissonComp(t, \ud \jump) , \qquad
\kappa \geq 0 , \; \varepsilon > 0 . 
\end{align} 
Let $(g_t)_{t \geq 0}$ be a Loewner chain driven by $\microDriver_\varepsilon^{\kappa}$, let $f_t:= g_t^{-1}$ be the inverse Loewner chain, 
let $h_t$ be the solution to the mirror backward Loewner equation~\eqref{eq: mBLE} driven by $\microDriver_\varepsilon^{\kappa}$, and set
\begin{align*}  
\tilde{f}_t(w) := f_t(w + \microDriver_\varepsilon^{\kappa}(t)) , \qquad w \in \bH .
\end{align*}

\subsubsection{Boundary regularity}

H\"older continuity of the mirror backward Loewner chain (Proposition~\ref{prop: BLE micro jumps Holder}) implies via 
Lemma~\ref{lem: f' = h' in distribution} H\"older continuity of the inverse Loewner chain $f_t$. 
However, since Lemma~\ref{lem: f' = h' in distribution} only applies for a fixed time instant $t$, the result in Proposition~\ref{prop: LLE micro jumps Holder} only holds pointwise in time.

\begin{prop} \label{prop: LLE micro jumps Holder}
Fix $t \geq 0$, $\kappa \in [0,\infty) \setminus\{4\}$, a L\'evy measure $\nu$, and $\varepsilon > 0$ 
such that\footnote{Note that by Lemma~\ref{lem: alphaprime and thetaprime} and the assumptions, we have $\maxjumpH{\kappa} > 0$ and $\ThetaHmax{\kappa}{\lambda_\varepsilon} \in (0, 2/5)$ for $\kappa \neq 4$. 
Hence, it is possible to choose $\varepsilon > 0$ 
such that $\lambda_\varepsilon < \maxjumpH{\kappa}$ and $\theta = \theta(\kappa, \lambda_\varepsilon) \in (0, 2/5)$ as claimed.} 
$\lambda_\varepsilon < \maxjumpH{\kappa}$ as in~\eqref{eq: lambdatildemax}.
Then, 
the following hold almost surely for the Loewner chain driven by $\microDriver_\varepsilon^{\kappa}$~\eqref{eq: BM with micro again}.
\begin{enumerate}[label=\textnormal{(\alph*):}, ref=(\alph*)]
\item \label{item: LLE micro jumps locally connected}
The map $z \mapsto f_t(z)$ extends to a continuous function $f_t \colon \overline{\bH} \to \overline{\bH \setminus K_t}$. 

\medskip

\item \label{item: LLE micro jumps Holder}
$\bH \setminus K_t$ is a H\"older domain: 
there exists a constant $\theta = \theta(\kappa, \lambda_\varepsilon) \in (0, 2/5)$ and a random constant $H(\theta,t) \in (0,\infty)$ such that 
\begin{align} \label{eq: Holder property for f}
| f_t(z) - f_t(w) | \leq H(\theta,t) \, 
\big( \, | z-w |^{\theta} \, \vee \, | z-w | \, \big) 
\qquad \textnormal{for all } z,w \in \bH .
\end{align}

\medskip

\item \label{item: LLE micro jumps Hdim}
The Hausdorff dimension of $\bdry K_t$ satisfies $\mathrm{dim} (\bdry K_t) < 2$, and we have $\mathrm{area} (\bdry K_t) = 0$. 
\end{enumerate}
\end{prop}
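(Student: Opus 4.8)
\textbf{Proof plan for Proposition~\ref{prop: LLE micro jumps Holder}.}
The plan is to transfer the uniform-in-time H\"older continuity of the mirror backward Loewner flow, established in Proposition~\ref{prop: BLE micro jumps Holder}, to the (forward) inverse map $f_t$ at the fixed time $t$, using the distributional identity of Lemma~\ref{lem: f' = h' in distribution}~\ref{item: f' = h' in distribution}. First I would fix $R>0$ and apply Proposition~\ref{prop: BLE micro jumps Holder} with this $R$, the given $\kappa$ and $\varepsilon$, and any $\theta\in(0,\ThetaHmax{\kappa}{\lambda_\varepsilon})$; Lemma~\ref{lem: alphaprime and thetaprime} guarantees $\ThetaHmax{\kappa}{\lambda_\varepsilon}\in(0,2/5)$ for $\kappa\neq 4$, so such a $\theta\in(0,2/5)$ exists. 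By Lemma~\ref{lem: global Holder continuity}, the H\"older estimate on $h_t$ over $(-R,R)\times\ii(0,\infty)$ is qualitatively equivalent to the derivative bound $|h_t'(z)|\le C\,((\im z)^{\theta-1}\vee 1)$ on that domain. By Lemma~\ref{lem: f' = h' in distribution}~\ref{item: f' = h' in distribution}, the random map $z\mapsto\tilde f_t'(z)=f_t'(z+\microDriver_\varepsilon^\kappa(t))$ has the same law as $z\mapsto h_t'(z)$, so (restricting to the event $\{R_\varepsilon^\kappa(t)\le R/2\}$, say, so that $W(t)$ lies well inside $(-R/2,R/2)$) we obtain, with probability tending to $1$ as $R\to\infty$, a derivative bound $|f_t'(w)|\le C\,((\im w)^{\theta-1}\vee 1)$ for $w$ in a half-strip around $W(t)$; combining with the trivial bound $|f_t'|\lesssim 1$ away from $\bR$ (Koebe, or the analogue of~\eqref{eq: derh bound}) and letting $R\to\infty$ yields the derivative bound on all of $\bH$ almost surely, with an a.s.~finite random constant $H(\theta,t)$. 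Invoking the implication \ref{item: global derivative estimate}$\Rightarrow$\ref{item: global Holder continuity} of Lemma~\ref{lem: global Holder continuity} (with $R=+\infty$) then gives~\eqref{eq: Holder property for f}, proving~\ref{item: LLE micro jumps Holder}, and the ``in particular'' clause of that lemma gives the continuous extension $f_t\colon\overline\bH\to\overline{\bH\setminus K_t}$ in~\ref{item: LLE micro jumps locally connected}.

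For part~\ref{item: LLE micro jumps Hdim}, I would argue from~\eqref{eq: Holder property for f}: $\bdry K_t$ is contained in $f_t(\bR)$ together with (possibly) part of $\bR$, and a H\"older-$\theta$ image of $\bR\cap[-M,M]$ has Hausdorff dimension at most $1/\theta$ --- but this is $>1$, so one must do slightly better. The right statement is that $\bdry_{\mathrm{in}}K_t=f_t(\bR)\cap\bH$ (up to the usual boundary considerations) and, since $f_t$ is conformal on $\bH$ and extends $\theta$-H\"older continuously to $\bR$, its boundary values form a set of dimension strictly below $2$; the cleanest route is to cite that the boundary of a H\"older domain has dimension $<2$ and zero area --- this follows, e.g., from the fact that a conformal map whose derivative satisfies $|f_t'(z)|\lesssim (\im z)^{\theta-1}$ has integrable-enough derivative that $\mathrm{area}(f_t(\bR))=0$ (the image of the boundary has zero area because $\int_{\bH}|f_t'|^2<\infty$ controls the image area and the boundary contributes nothing), and standard estimates on the dimension of the boundary of a H\"older domain (e.g.\ via the relation between the H\"older exponent and the integrability exponent of $|f_t'|$, cf.\ Makarov-type bounds) give $\mathrm{dim}(\bdry K_t)<2$. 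I would phrase this by referencing the relevant result on boundaries of H\"older domains rather than reproving it.

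\textbf{Main obstacle.} The delicate point is the passage from the \emph{distributional} equality of Lemma~\ref{lem: f' = h' in distribution} (valid only at a fixed $t$) to an \emph{almost sure} statement about $f_t$ on the \emph{whole} half-plane simultaneously: one cannot simply say ``$f_t'$ and $h_t'$ agree a.s.''. The fix is that Proposition~\ref{prop: BLE micro jumps Holder} provides, for each fixed $R$, an a.s.\ bound of the form $\sup_{t\in[0,T]}|h_t'(z)|\le C_{\lambda_\varepsilon}^\kappa(\theta,T,R)((\im z)^{\theta-1}\vee 1)$ with $C$ an honest random variable; hence the \emph{event} $\{\,|h_t'(z)|\le C((\im z)^{\theta-1}\vee 1)\ \forall z\in(-R,R)\times\ii(0,\infty)\,\}$ has probability $1$, and by the distributional identity the corresponding event for $z\mapsto\tilde f_t'(z)$ also has probability $1$ --- now it is a single almost-sure event, not a pointwise-in-$z$ one, so it transfers correctly. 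One then needs only the (deterministic, a.s.-finite) bound on $R_\varepsilon^\kappa(t)$ to place $W(t)$ inside the strip and a union over $R\in\bZpos$ to remove the cutoff; the H\"older constant in~\eqref{eq: Holder property for f} is therefore genuinely random, as stated. A secondary, purely bookkeeping, obstacle is making the identification $\bdry K_t = (\text{closure of } f_t(\bR))\cap\overline\bH$ precise enough to run the dimension argument --- this uses that once $f_t$ extends continuously to $\overline\bH$, every boundary prime end of $\bH\setminus K_t$ is degenerate, so $\bdry(\bH\setminus K_t)=f_t(\overline\bR)$, and hence $\bdry K_t\subset f_t(\bR)\cup\bR$.
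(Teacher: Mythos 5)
Your approach is essentially the same as the paper's: transfer the uniform-in-time H\"older estimate for the mirror backward flow $h_t$ from Proposition~\ref{prop: BLE micro jumps Holder} to the fixed-time inverse map $f_t$ via the distributional identity of Lemma~\ref{lem: f' = h' in distribution}, using that $f_t$ is asymptotically the identity near $\infty$ to extend the estimate from a finite strip to all of $\bH$; the paper also derives items~\ref{item: LLE micro jumps locally connected} and~\ref{item: LLE micro jumps Hdim} as consequences of~\ref{item: LLE micro jumps Holder}. You correctly isolate the key subtlety --- Lemma~\ref{lem: f' = h' in distribution} is equality in law of the whole random map, so the a.s.\ event ``the derivative bound holds on the strip'' transfers to $\smash{\tilde f_t'}$ --- and your union over $R\in\bZpos$ to remove the cutoff is sound. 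One small caution on item~\ref{item: LLE micro jumps Hdim}: the paper cites \cite[Theorem~C.2]{Jones-Makarov:Density_properties_of_harmonic_measure}, and your instinct to cite a result on H\"older domains rather than reprove it is right, but the heuristic you sketch for it ($\int_\bH |f_t'|^2<\infty$ controlling the area of the boundary image) does not work as stated: $\int_\bH|f_t'|^2$ is the area of $f_t(\bH)=\bH\setminus K_t$, which is infinite. The actual Jones--Makarov argument uses the derivative bound $|f_t'(z)|\lesssim(\im z)^{\theta-1}$ to establish integrability of $|f_t'|$ to an exponent strictly larger than $2$ (after conjugating to the disc), and it is this higher integrability that yields $\dim(\bdry K_t)<2$ and $\mathrm{area}(\bdry K_t)=0$.
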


\begin{proof}
Item~\ref{item: LLE micro jumps Holder}
implies both item~\ref{item: LLE micro jumps locally connected}, 
giving the boundary of $\bH \setminus K_t$ a $\theta$-H\"older continuous parameterization, and 
item~\ref{item: LLE micro jumps Hdim} 
by the result \cite[Theorem~C.2]{Jones-Makarov:Density_properties_of_harmonic_measure} in the unit disc $\bD$, after conjugating with a conformal map between $\bD$~and~$\bH$. 
So, it suffices to prove~\ref{item: LLE micro jumps Holder}. 
Note that the conformal map $z \mapsto f_t(z)$ is asymptotically the identity near $z=\infty$ (by~\eqref{eq: inverse mof Laurent exp}). 
Hence,~\eqref{eq: Holder property for f} follows from Lemma~\ref{lem: f' = h' in distribution} together with 
Proposition~\ref{prop: BLE micro jumps Holder} 
with large $R > 1$: 
the map $z \mapsto \smash{\tilde{f}_t(z)} - \smash{\microDriver_\varepsilon^{\kappa}(t)}$ has the same distribution as $z \mapsto h_t(z)$, and since the translation by $\smash{\microDriver_\varepsilon^{\kappa}(t)}$ 
makes no difference to~\eqref{eq: Holder property for f},
we establish item~\ref{item: LLE micro jumps Holder}.
\end{proof}

\begin{rem}
When $\kappa = 4$, the 
arguments in Section~\ref{subsec: Holder continuity} 
are not strong enough to conclude the estimate in Lemma~\ref{lem: uniform spatial tail estimate for h}, 
and hence, not strong enough to obtain H\"older continuity of $h_t$ and $f_t$.
In fact, it is known that for $\SLEk$ with $\kappa = 4$,
the complements of the Loewner hulls are not H\"older domains~\cite{GMS-Almost_sure_multifractal_spectrum_of_SLE}, 
and we do not expect the Loewner chain driven by $\microDriver_\varepsilon^{\kappa}$ with $\kappa = 4$ to have this property either\footnote{However, under~\ref{item: ass2}, item~\ref{item: LLE micro jumps locally connected}
of Proposition~\ref{prop: LLE micro jumps Holder} also holds for $\kappa=4$, see Theorem~\ref{thm: LLE curve if kappa not 8}.}.
\end{rem}

\noindent 
\begin{figure}[ht!]
\includegraphics[width=.4\textwidth]{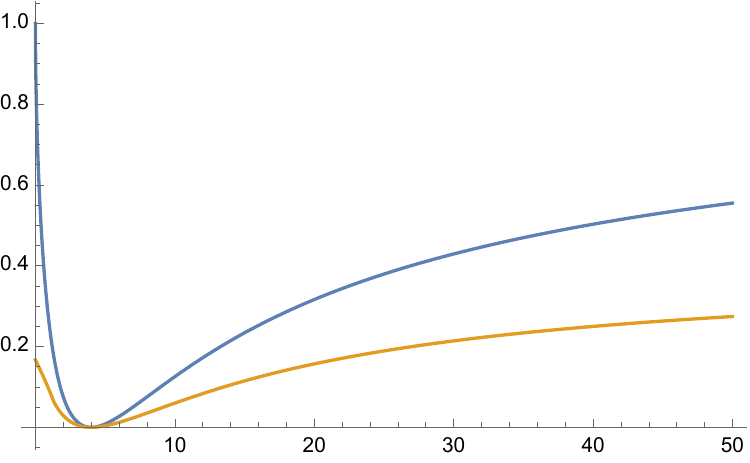}
\caption{\label{fig: Holder exp}
Plots of the quantities~\eqref{eq: inverse map param holder} (orange) and~\eqref{eq: inverse map param holder optimal} (blue), 
which is the known H\"older exponent for $f_t$ 
with driving function $W = \sqrt{\kappa} B$. }
\end{figure}

\begin{rem} \label{rem: inverse map param holder optimal}
In the case of no jumps \textnormal{(}$\lambda_\varepsilon = 0$\textnormal{)}, 
Proposition~\ref{prop: LLE micro jumps Holder} together with 
Lemma~\ref{lem: alphaprime and thetaprime} 
implies that the inverse Loewner map $f_t$ sending $\bH$ to the complement of the 
$\SLE_\kappa$ hull is H\"older continuous with any exponent strictly smaller than~\eqref{eq: Thetatilde}\textnormal{:} 
\begin{align} \label{eq: inverse map param holder}
\ThetaHmax{\kappa}{0}
= \; &
\begin{cases}
1 + \frac{10}{\kappa - 12} , &
0 \leq \kappa \leq 4/3 ,  \\[.3em]
\frac{2 (\kappa - 4 )^2 }{(\kappa + 4) (5 \kappa + 36)}  , &
4/3 < \kappa . 
\end{cases}
\end{align}
Comparing with the optimal exponent 
\begin{align} \label{eq: inverse map param holder optimal}
1 - \frac{4 \kappa + 2\sqrt{2}\sqrt{\kappa (\kappa+2) (\kappa+8)}}{(4+\kappa)^2} 
\end{align}
found by Lind~\cite{Lind:Holder_regularity_of_the_SLE_trace} and later 
proven by Gwynne, Miller, and Sun~\cite{GMS-Almost_sure_multifractal_spectrum_of_SLE}, 
we see that~\eqref{eq: inverse map param holder} is always less than~\eqref{eq: inverse map param holder optimal}, with equality only at $\kappa = 4$, where both exponents vanish.
See also Figure~\ref{fig: Holder exp}. 
Note that 
\begin{align*}
\underset{\kappa \to 0}{\lim} \ThetaHmax{\kappa}{0} = 1/6
\qquad \textnormal{and} \qquad
\underset{\kappa \to \infty}{\lim} \ThetaHmax{\kappa}{0} = 2/5 ,
\end{align*}
while 
\begin{align*}
\underset{\kappa \to 0}{\lim} \Big(\smash{1 - \frac{4 \kappa + 2\sqrt{2}\sqrt{\kappa (\kappa+2) (\kappa+8)}}{(4+\kappa)^2}}\Big) = 1 
\qquad \textnormal{and} \qquad
\underset{\kappa \to \infty}{\lim} \Big(\smash{1 - \frac{4 \kappa + 2\sqrt{2}\sqrt{\kappa (\kappa+2) (\kappa+8)}}{(4+\kappa)^2}}\Big) = 1 .
\end{align*}
\end{rem}

\begin{rem}
The boundary of any H\"older domain is conformally removable~\cite[Corollary~2]{Jones-Smirnov:Removability_theorems_for_Sobolev_functions_and_quasiconformal_maps},
but this is not at all clear for other kinds of fractals. 
For $\SLEk$ with $\kappa = 4$, it was proven only very recently 
that the $\SLE_4$ curve is indeed conformally removable, using couplings of $\SLE$ with the Gaussian free field~\cite{KMS:Conformal_removability_of_SLE4}.
We do not foresee that those techniques could be adapted as such to the present setup. 
\end{rem}

\subsubsection{C\`adl\`ag Loewner trace}

Recall that the existence of the  (c\`adl\`ag) Loewner trace (in the sense of Definition~\ref{def: Loewner trace}) 
is subject to one of the following assumptions:
\begin{enumerate}[label=\textnormal{\bf{Ass.~\arabic*.}}, ref=Ass.~\arabic*.]
\item 
either the diffusivity parameter $\kappa > 8$,

\medskip

\item 
or the diffusivity parameter $\kappa \in [0,8)$, and 
the variance measure of the L\'evy measure $\nu$ is locally (upper) Ahlfors regular near the origin in the sense of Definition~\ref{def: Ahlfors regular}.
\end{enumerate}
\noindent 
Under these assumptions, the main result of Section~\ref{sec: forward bounds} gives the existence of the Loewner trace:

\TraceExistsThm*

\noindent 
\begin{figure}[ht!]
\includegraphics[width=.4\textwidth]{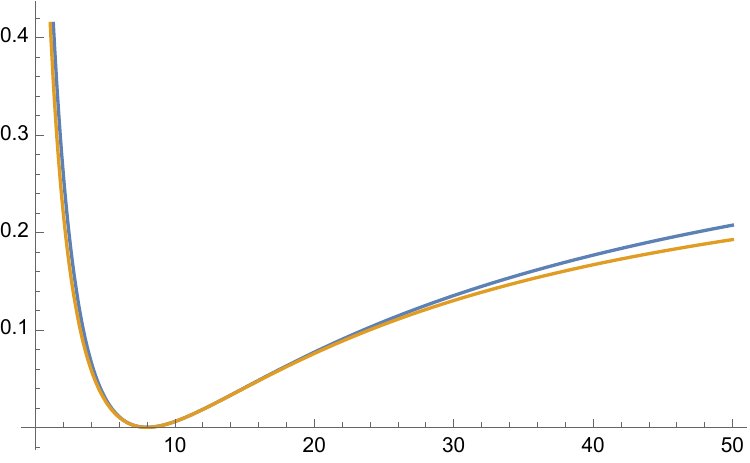}
\caption{\label{fig: Holder exp curve}
Plots of the quantities~\eqref{eq: capacity param holder} (orange) and~\eqref{eq: capacity param holder optimal} (blue), 
which is the known H\"older exponent for the (capacity parameterized) $\SLEk$ curve. 
}
\end{figure}

\begin{rem}
In the case of no jumps \textnormal{(}$\lambda_\varepsilon = 0$\textnormal{)}, 
the estimates derived in Propositions~\ref{prop: needed uniform bound for f' 1}~\&~\ref{prop: needed uniform bound for f' 2} while proving Theorem~\ref{thm: LLE micro jumps curve} 
hint that the $\SLEk$ curve parameterized by capacity would be H\"older continuous with any exponent strictly smaller than 
\begin{align} \label{eq: capacity param holder}
\tfrac{1}{2} \, \ThetaTmax{\kappa}{0}
= \begin{cases}
\frac{(8 - \kappa )^2}{\kappa  (\kappa + 48) + 64} , & 0 < \kappa < 8, \\[.5em]
\frac{(\kappa - 8)^2}{(\kappa + 8) (3 \kappa + 8)}  , & \kappa > 8 .
\end{cases}
\end{align}
Comparing with the optimal exponent 
\begin{align} \label{eq: capacity param holder optimal}
1 - \frac{\kappa}{2 \kappa + 24 - 8 \sqrt{\kappa+8}} 
\end{align}
found by Lind~\cite{Lind:Holder_regularity_of_the_SLE_trace} and proven by Johansson-Viklund \& Lawler~\cite{Lawler-Viklund:Optimal_Holder_exponent_for_the_SLE_path},
we see that~\eqref{eq: capacity param holder} is always less than~\eqref{eq: capacity param holder optimal}, with equality only at $\kappa = 8$, where both exponents vanish.
See also Figure~\ref{fig: Holder exp curve}.
Note that 
\begin{align*}
\underset{\kappa \to 0}{\lim} \tfrac{1}{2} \, \ThetaTmax{\kappa}{\lambda_\varepsilon} = 1
\qquad \textnormal{and} \qquad
\underset{\kappa \to \infty}{\lim} \tfrac{1}{2} \, \ThetaTmax{\kappa}{\lambda_\varepsilon} = 1/3 ,
\end{align*}
while 
\begin{align*}
\underset{\kappa \to 0}{\lim} \Big(1 - \frac{\kappa}{2 \kappa + 24 - 8 \sqrt{\kappa+8}}\Big) = 1
\qquad \textnormal{and} \qquad
\underset{\kappa \to \infty}{\lim} \Big(1 - \frac{\kappa}{2 \kappa + 24 - 8 \sqrt{\kappa+8}}\Big) = 1/2 .
\end{align*}
The exponent~\eqref{eq: capacity param holder optimal} 
was also derived recently by Yuan~\cite{Yuan:Refined_regularity_of_SLE} 
by arguments similar to those in Section~\ref{sec: forward bounds}.
\end{rem}

\subsection{Adding linear drift to the driver}
\label{subsec: main results micro with drift}

In this section, we consider driving functions with microscopic jumps and a linear drift:
\begin{align} \label{eq: BM with micro and drift}
\microDriver_\varepsilon^{\kappa, a}(t) = a t + \sqrt{\kappa} B(t) + \int_{|\jump| \leq \varepsilon} \jump \, \PoissonComp(t, \ud \jump) , \qquad
a \in \bR, \; \kappa \geq 0 , \;  \varepsilon > 0 . 
\end{align}
We will show that Proposition~\ref{prop: LLE micro jumps Holder} and Theorem~\ref{thm: LLE micro jumps curve} also hold for the driving function~$\microDriver_\varepsilon^{\kappa, a}$.

We first extend Theorem~\ref{thm: LLE micro jumps curve} to the case of~$\microDriver_\varepsilon^{\kappa, a}$.  
The most naive idea is to introduce the drift to~\eqref{eq: BM with micro and drift} in terms of a suitable absolutely continuous change of measure from the driftless case (Girsanov's theorem).
In this way, we can easily treat the case where $\kappa>0$, including a Brownian component. 
However, for a general pure jump process with $\kappa=0$, one cannot expect to recover a linear drift using a change of measure --- for increasing L\'evy processes, adding a negative drift would give a singular change of measure. We can use the arguments of Sections~\ref{subsec: kappa is zero} 
and~\ref{subsec: Summability 3} to deal with this case. 

\begin{prop} \label{prop: LLE micro jumps curve with drift}
Fix $T > 0$, $\kappa \in [0,\infty) \setminus\{8\}$, $a \in \bR$, and a L\'evy measure $\nu$. Suppose that either~\ref{item: ass1} or~\ref{item: ass2} holds, and fix $\varepsilon > 0$ 
such that $\lambda_\varepsilon < \maxjumpT{\kappa}$ as in~\textnormal{(\ref{eq: lambda max},~\ref{eq: lambdamax forward kappa = 0})} and $\varepsilon \in (0,\epsilon_\nu \wedge \tfrac{1}{2} \rho_\nu]$ under~\ref{item: ass2}. 
Then, the Loewner chain driven by $\microDriver_\varepsilon^{\kappa, a}$~\eqref{eq: BM with micro and drift} 
is almost surely generated by a c\`adl\`ag curve on $[0, T]$.
\end{prop}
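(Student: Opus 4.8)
The plan is to reduce Proposition~\ref{prop: LLE micro jumps curve with drift} to the already-established Theorem~\ref{thm: LLE micro jumps curve} (the driftless martingale case), handling the two mechanisms for the drift separately according to whether a Brownian component is present. When $\kappa>0$, I would use an absolutely continuous change of measure (Girsanov's theorem for L\'evy processes) to remove the linear drift: on the fixed interval $[0,T]$, the law of $\smash{\microDriver_\varepsilon^{\kappa, a}}$ is mutually absolutely continuous with respect to the law of $\smash{\microDriver_\varepsilon^{\kappa}}$ (the Radon--Nikodym derivative being $\exp(\tfrac{a}{\kappa}\int_0^T \ud B(s) - \tfrac{a^2}{2\kappa}T)$, which is finite and positive a.s.). Since ``the Loewner chain is generated by a c\`adl\`ag curve on $[0,T]$'' is a measurable event in the driving path, and it has full probability under the driftless law by Theorem~\ref{thm: LLE micro jumps curve}, it also has full probability under the law of $\smash{\microDriver_\varepsilon^{\kappa, a}}$. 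Under~\ref{item: ass1} this is the whole story for the regime $\kappa>8$, and under~\ref{item: ass2} it covers $\kappa\in(0,8)$.

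The remaining case is $\kappa=0$ under~\ref{item: ass2}, i.e.\ a pure-jump driver $\smash{\microDriver_\varepsilon^{0, a}(t)= a t + \int_{|\jump|\le\varepsilon} \jump\,\PoissonComp(t,\ud\jump)}$, where one cannot remove the drift by a change of measure (for an increasing subordinator, subtracting a positive multiple of $t$ is a singular operation). Here I would instead run the forward-flow argument of Section~\ref{sec: forward bounds} directly, exactly as in the proof of Theorem~\ref{thm: LLE micro jumps curve} via Proposition~\ref{prop: sufficient condition curve}: it suffices to verify the uniform derivative bound $\smash{\max_{t\in[0,T]}|f_t'(\microDriver_\varepsilon^{0, a}(t)+\ii 2^{-n})|\lesssim 2^{n(1-\theta)}}$ for some $\theta\in(0,1)$. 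The needed ingredients are already in place: the grid reduction of Lemma~\ref{lem: grid}, the stopping-time bound of Lemma~\ref{lem: forward bound}, and crucially the supermartingale-domination estimate of Proposition~\ref{prop: M forward supermgle case 2 kappa = 0}, whose process $\smash{M_p^a}$ in~\eqref{eq: general forward martingale candidate kappa=0} already incorporates the $e^{-a^2 t}$ factor that absorbs the linear drift via the elementary inequality $\tfrac{2aX}{|Z|^2}-a^2\le \tfrac{X^2}{|Z|^4}$. Then the summability of the probabilities of the events $E_n^\theta(z_0)$ is precisely Proposition~\ref{prop: summability of event of interest 3}, and a Borel--Cantelli argument over the random grid (with the cutoff $\{R_\varepsilon^{0,a}(T)\le R\}$, $R\in\bN$) identical to the one in the proof of Propositions~\ref{prop: needed uniform bound for f' 1} and~\ref{prop: needed uniform bound for f' 2} produces the a.s.\ finite random constant $\smash{C_{\lambda_\varepsilon}^{0,a}(\theta,T)}$. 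Finally I apply Proposition~\ref{prop: sufficient condition curve}~\ref{item: curve} and~\ref{item: generated caglad} to conclude that the chain is generated by a c\`adl\`ag curve on $[0,T]$.

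The main obstacle, and the only genuinely new point, is checking that every auxiliary statement invoked for $\kappa=0$ actually admits the linear drift: one must confirm that Lemma~\ref{lem: simple observation}, Lemma~\ref{lem: forward bound}, and the time-change~\eqref{eq: forward time change} go through verbatim with $\smash{Z(t)=g_t(z_0)-\microDriver_\varepsilon^{0,a}(t)}$ (they do, since their proofs use only the Loewner equation~\eqref{eq: LE} and the structure $Z = g_t(z_0) - W(t)$, not the specific form of $W$), and that the bounded stopping time~\eqref{eq: bounded stopping time taun} is unaffected. One also needs the Optional Stopping Theorem applicability for the non-negative supermartingale $\mgle$ of~\eqref{eq: forward supermgle N kappa = 0} at $S_n$, which holds because $S_n$ is bounded. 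Modulo these routine verifications — all of which were in fact anticipated in Section~\ref{subsec: Summability 3}, which is written with precisely the driver~\eqref{eq: BM with micro and drift kappa is zero again again} — the proof is a short assembly. I would write it as: for $\kappa>0$, apply Girsanov together with Theorem~\ref{thm: LLE micro jumps curve}; for $\kappa=0$, repeat the proof of Propositions~\ref{prop: needed uniform bound for f' 1}--\ref{prop: needed uniform bound for f' 2} using Proposition~\ref{prop: summability of event of interest 3} in place of Propositions~\ref{prop: summability of event of interest 1}--\ref{prop: summability of event of interest 2}, then invoke Proposition~\ref{prop: sufficient condition curve}.
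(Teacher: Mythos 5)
Your proof is correct and mirrors the paper's argument exactly: Girsanov's theorem to remove the drift when $\kappa>0$, and a direct appeal to the forward-flow machinery of Section~\ref{subsec: Summability 3} (Propositions~\ref{prop: M forward supermgle case 2 kappa = 0} and~\ref{prop: summability of event of interest 3}, then Proposition~\ref{prop: sufficient condition curve}) when $\kappa=0$. The only slip is in the explicit Radon--Nikodym derivative: for $\GirsB(t)=B(t)+\tfrac{a}{\sqrt{\kappa}}t$ the exponent should involve $\tfrac{a}{\sqrt{\kappa}}B(T)-\tfrac{a^2}{2\kappa}T$, not $\tfrac{a}{\kappa}\int_0^T\ud B(s)$, but this does not affect the argument.
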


\begin{proof}
We consider separately the cases where $\kappa > 0$ and $\kappa = 0$.
\begin{itemize}
\item[$\kappa>0$:]
We use Girsanov's theorem (cf.~\cite[Corollary~15.3.4]{Cohen-Elliott:Stochastic_calculus_and_applications})
to compare with the case of no drift ($a=0$). 
Specifically, if 
$\smash{\PR_\varepsilon^{a, \kappa}}$ is the probability measure of $\smash{\microDriver_\varepsilon^{\kappa, a}}$,
by Girsanov's theorem we find a new probability measure $\smash{\GirsPR_\varepsilon^{a, \kappa}}$, mutually absolutely continuous with $\smash{\PR_\varepsilon^{a, \kappa}}$, such that the process $\smash{\GirsB(t) := B(t) + \frac{a}{\sqrt{\kappa}} t}$
is a $\smash{\GirsPR_\varepsilon^{a, \kappa}}$-Brownian motion.
Such a measure change leaves Poisson point processes invariant, since they are independent of the Brownian motion in the L\'evy-It\^o decomposition. 
In particular, we see that under $\smash{\GirsPR_\varepsilon^{a, \kappa}}$, the process $\smash{\microDriver_\varepsilon^{\kappa, a}}$ has the same distribution as
$\smash{\microDriver_\varepsilon^{\kappa}}$. 
Thus, the claim follows from Theorem~\ref{thm: LLE micro jumps curve}.

\medskip

\item[$\kappa=0$:]
The claim follows similarly as 
Theorem~\ref{thm: LLE micro jumps curve} by arguments presented in Section~\ref{subsec: Summability 3}. 
\qedhere
\end{itemize}
\end{proof}

Next, we extend Proposition~\ref{prop: LLE micro jumps Holder} to the case of $\microDriver_\varepsilon^{\kappa, a}$.

\begin{prop} \label{prop: BLE micro jumps Holder with drift kappa > 0}
Fix $T, R > 0$, 
$\kappa \in (0,\infty) \setminus\{4\}$, $a \in \bR$, a L\'evy measure $\nu$, and $\varepsilon > 0$ 
such that 
$\lambda_\varepsilon < \maxjumpH{\kappa}$ as in~\eqref{eq: lambdatildemax}.
Let $(h_t)_{t \geq 0}$ be the solution to~\eqref{eq: mBLE} driven by 
$\microDriver_\varepsilon^{\kappa, a}$~\eqref{eq: BM with micro and drift}. 
Then, there exists a constant $\theta = \theta(\kappa, \lambda_\varepsilon) \in (0, 1)$ 
and an almost almost surely finite random constant $H(\theta,T,R)$ such that 
\begin{align*} 
| h_t(z) - h_t(w) | \leq H(\theta,T,R) \, 
\big( \, | z-w |^{\theta} \, \vee \, | z-w | \, \big) 
\qquad \textnormal{for all } t \in [0, T]
\textnormal{ and } (-R,R) \times \ii (0,\infty)  .
\end{align*}
In particular, almost surely, each $h_t$ extends to a continuous function on $\smash{\overline{(-R,R) \times \ii (0,\infty)}}$. 
\end{prop}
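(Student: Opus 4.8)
The plan is to reduce Proposition~\ref{prop: BLE micro jumps Holder with drift kappa > 0} to the no-drift case already settled in Proposition~\ref{prop: BLE micro jumps Holder} by a change-of-measure argument, exactly parallel to the $\kappa>0$ branch of the proof of Proposition~\ref{prop: LLE micro jumps curve with drift}. First I would fix $T,R>0$, $\kappa\in(0,\infty)\setminus\{4\}$, $a\in\bR$, a L\'evy measure $\nu$, and $\varepsilon>0$ with $\lambda_\varepsilon<\maxjumpH{\kappa}$, and let $\smash{\PR_\varepsilon^{\kappa,a}}$ denote the law of the driving process $\microDriver_\varepsilon^{\kappa,a}$ in~\eqref{eq: BM with micro and drift}. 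By Girsanov's theorem (cf.~\cite[Corollary~15.3.4]{Cohen-Elliott:Stochastic_calculus_and_applications}), restricting attention to the finite horizon $[0,T]$, there is a probability measure $\smash{\GirsPR_\varepsilon^{\kappa,a}}$, mutually absolutely continuous with $\smash{\PR_\varepsilon^{\kappa,a}}$ on $\mathcal{F}_T$, under which $\smash{\GirsB(t):=B(t)+\tfrac{a}{\sqrt{\kappa}}\,t}$ is a standard Brownian motion; this measure change leaves the Poisson point process $\Poisson$ untouched (it is independent of $B$ in the L\'evy--It\^o decomposition), so under $\smash{\GirsPR_\varepsilon^{\kappa,a}}$ the process $\microDriver_\varepsilon^{\kappa,a}$ has exactly the law of $\microDriver_\varepsilon^{\kappa}$.

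Next I would invoke Proposition~\ref{prop: BLE micro jumps Holder}: since $\lambda_\varepsilon<\maxjumpH{\kappa}$, for any $0<\theta<\ThetaHmax{\kappa}{\lambda_\varepsilon}$ (which lies in $(0,2/5)\subset(0,1)$ by Lemma~\ref{lem: alphaprime and thetaprime}) the solution $(h_t)_{t\ge0}$ to~\eqref{eq: mBLE} driven by $\microDriver_\varepsilon^{\kappa}$ satisfies, $\smash{\GirsPR_\varepsilon^{\kappa,a}}$-almost surely, the uniform H\"older bound
\begin{align*}
| h_t(z) - h_t(w) | \leq H_{\lambda_\varepsilon}^{\kappa}(\theta, T, R)\,\big(\,|z-w|^{\theta}\,\vee\,|z-w|\,\big)\qquad\textnormal{for all }t\in[0,T]\textnormal{ and }z,w\in(-R,R)\times\ii(0,\infty),
\end{align*}
with an a.s.\ finite random constant $H_{\lambda_\varepsilon}^{\kappa}(\theta,T,R)$. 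Because $h_t$ depends measurably only on the restriction of the driving function to $[0,T]$, and $\smash{\GirsPR_\varepsilon^{\kappa,a}}$ and $\smash{\PR_\varepsilon^{\kappa,a}}$ agree on null sets of $\mathcal{F}_T$, this event of full $\smash{\GirsPR_\varepsilon^{\kappa,a}}$-probability also has full $\smash{\PR_\varepsilon^{\kappa,a}}$-probability. Hence the same H\"older estimate holds a.s.\ for the solution to~\eqref{eq: mBLE} driven by the drifted process $\microDriver_\varepsilon^{\kappa,a}$, with $\theta=\theta(\kappa,\lambda_\varepsilon)$ any fixed value in $(0,\ThetaHmax{\kappa}{\lambda_\varepsilon})$ and $H(\theta,T,R):=H_{\lambda_\varepsilon}^{\kappa}(\theta,T,R)$. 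Finally, the continuous extension of each $h_t$ to $\smash{\overline{(-R,R)\times\ii(0,\infty)}}$ follows from this bound via Lemma~\ref{lem: global Holder continuity} (or directly as in the last paragraph of the proof of Proposition~\ref{prop: BLE micro jumps Holder}), which finishes the proof.

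There is essentially no serious obstacle here; the only point requiring a little care is the bookkeeping of filtrations and horizons in the Girsanov step. One must apply Girsanov on the finite interval $[0,T]$ (the Radon--Nikodym density is the stochastic exponential $\mathcal{E}(-\tfrac{a}{\sqrt\kappa}B)_T$, a genuine martingale since the drift is deterministic, so Novikov's condition is trivially satisfied), verify that the stochastic exponential construction does not disturb the independent Poisson part, and then transfer the almost-sure H\"older statement across the equivalent measures. All of this is standard and mirrors the $\kappa>0$ case of Proposition~\ref{prop: LLE micro jumps curve with drift} verbatim, so the proof can be kept short. Note also that this argument genuinely requires $\kappa>0$ (so that $\tfrac{a}{\sqrt\kappa}$ is finite and the Brownian component can absorb the drift); the degenerate case $\kappa=0$ is precisely why Proposition~\ref{prop: BLE micro jumps Holder with drift kappa = 0} was proved separately by a direct supermartingale computation in Section~\ref{subsec: kappa is zero}.
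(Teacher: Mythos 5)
Your proof is correct and follows exactly the route the paper takes: the paper's proof is the one-line remark that the result ``follows from Girsanov's theorem \& Proposition~\ref{prop: BLE micro jumps Holder} (as in the proof of Proposition~\ref{prop: LLE micro jumps curve with drift}).'' You have simply filled in the details of that same Girsanov reduction, including the correct observations about finite horizon, independence of the Poisson part, and equivalence of null sets.
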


\begin{proof}
This follows from Girsanov's theorem \& Proposition~\ref{prop: BLE micro jumps Holder} (as in the proof of Proposition~\ref{prop: LLE micro jumps curve with drift}).
\end{proof}

\begin{cor} \label{cor: LLE micro jumps Holder with drift}
Fix $t > 0$, $\kappa \in [0,\infty) \setminus\{4\}$, $a \in \bR$, a L\'evy measure $\nu$, and  $\varepsilon > 0$ 
such that 
$\lambda_\varepsilon < \maxjumpH{\kappa} \wedge 1$ as in~\eqref{eq: lambdatildemax}.
Then, the following hold almost surely for the Loewner chain driven by $\microDriver_\varepsilon^{\kappa, a}$~\eqref{eq: BM with micro and drift}.

\begin{enumerate}[label=\textnormal{(\alph*):}, ref=(\alph*)]
\item \label{item: locally connected with drift}
$z \mapsto f_t(z)$ extends to a continuous function $f_t \colon \overline{\bH} \to \overline{\bH \setminus K_t}$, 

\medskip

\item \label{item: Holder with drift}
$\bH \setminus K_t$ is a H\"older domain in the sense of~\eqref{eq: Holder property for f}, and 

\medskip

\item \label{item: Haus dim with drift}
$\mathrm{dim} (\bdry K_t) < 2$ and $\mathrm{area} (\bdry K_t) = 0$.
\end{enumerate}
\end{cor}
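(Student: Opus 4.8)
The plan is to reduce Corollary~\ref{cor: LLE micro jumps Holder with drift} to the cases already handled for the \emph{martingale} drivers $\microDriver_\varepsilon^{\kappa}$ (when $\kappa>0$) and $\microDriver_\varepsilon^{0,a}$ (when $\kappa=0$), exactly as in the proof of Proposition~\ref{prop: LLE micro jumps curve with drift}, but now chaining together the backward-flow H\"older estimates instead of the forward-flow trace estimates. As in Proposition~\ref{prop: LLE micro jumps Holder}, it suffices to prove item~\ref{item: Holder with drift}: the $\theta$-H\"older parameterization of $\bdry(\bH \setminus K_t)$ coming from~\eqref{eq: Holder property for f} immediately yields the continuous extension $f_t \colon \overline{\bH} \to \overline{\bH \setminus K_t}$ of item~\ref{item: locally connected with drift} (using that $f_t$ is asymptotically the identity near $\infty$ by~\eqref{eq: inverse mof Laurent exp}), and item~\ref{item: Haus dim with drift} follows by the result~\cite[Theorem~C.2]{Jones-Makarov:Density_properties_of_harmonic_measure} after conjugating $\bH$ with a conformal map onto $\bD$, just as in the driftless case.

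First I would treat $\kappa>0$. By Lemma~\ref{lem: f' = h' in distribution}, at the fixed time $t$ the map $z \mapsto \tilde f_t(z) - \microDriver_\varepsilon^{\kappa,a}(t)$ has the same law as the solution $h_t$ to~\eqref{eq: mBLE} driven by $\microDriver_\varepsilon^{\kappa,a}$; and a translation by a constant does not affect the H\"older bound~\eqref{eq: Holder property for f}. So it is enough to establish the H\"older continuity of $h_t$ under the driver $\microDriver_\varepsilon^{\kappa,a}$, and this is precisely Proposition~\ref{prop: BLE micro jumps Holder with drift kappa > 0}, which in turn is obtained from Girsanov's theorem (cf.~\cite[Corollary~15.3.4]{Cohen-Elliott:Stochastic_calculus_and_applications}) comparing $\microDriver_\varepsilon^{\kappa,a}$ to the driftless $\microDriver_\varepsilon^{\kappa}$ and invoking Proposition~\ref{prop: BLE micro jumps Holder}. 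Thus for $\kappa \in (0,\infty)\setminus\{4\}$ the result is immediate, with $\theta = \theta(\kappa,\lambda_\varepsilon) \in (0, \ThetaHmax{\kappa}{\lambda_\varepsilon})$ as produced by Lemma~\ref{lem: alphaprime and thetaprime}.

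For $\kappa=0$ the change-of-measure trick is unavailable (for increasing L\'evy processes, subtracting a drift would be a singular change of measure), so I would instead apply the direct estimates of Section~\ref{subsec: kappa is zero} for the mirror backward flow. Concretely, take $p \in (3,7/2)$ with $\lambda_\varepsilon \le \maxjumpH{p}$ as in~\eqref{eq: lambdamax kappa = 0}; the hypothesis $\lambda_\varepsilon < \maxjumpH{\kappa}\wedge 1 = \maxjumpH{0}\wedge 1 = 1$ (recalling $\maxjumpH{0} = 2$) makes the choice $p=3$, giving $\maxjumpH{3}=1$, admissible. Then Proposition~\ref{prop: BLE micro jumps Holder with drift kappa = 0} gives, for any $R>0$ and any $0<\theta<\theta(p)=(p-3)/p$, an almost surely finite random constant $H_{\lambda_\varepsilon}^{a}(\theta,t,R)$ with $|h_t(z)-h_t(w)| \le H_{\lambda_\varepsilon}^{a}(\theta,t,R)\,(|z-w|^\theta \vee |z-w|)$ for $z,w \in (-R,R)\times\ii(0,\infty)$. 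Applying Lemma~\ref{lem: f' = h' in distribution} at the fixed time $t$ as above (again absorbing the translation by $\microDriver_\varepsilon^{0,a}(t)$), and letting $R\to\infty$ using that $f_t$ is asymptotically the identity at $\infty$, transfers this to the inverse Loewner map and yields~\eqref{eq: Holder property for f}; then items~\ref{item: locally connected with drift} and~\ref{item: Haus dim with drift} follow as in the $\kappa>0$ case.

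The main obstacle is really just bookkeeping rather than a new idea: one must check that the parameter ranges line up (in particular that $\lambda_\varepsilon < \maxjumpH{\kappa}\wedge 1$ is compatible with the condition $\lambda_\varepsilon \le \maxjumpH{p}$ needed for Proposition~\ref{prop: BLE micro jumps Holder with drift kappa = 0} when $\kappa=0$, which is where the extra ``$\wedge 1$'' in the hypothesis is used), and that Lemma~\ref{lem: f' = h' in distribution} is genuinely applicable to $\microDriver_\varepsilon^{\kappa,a}$ — but this is fine since $\microDriver_\varepsilon^{\kappa,a}$ is still a L\'evy process, and that lemma only uses stationary independent increments. One subtlety worth flagging explicitly in the write-up: Lemma~\ref{lem: f' = h' in distribution} is an \emph{equality in distribution for each fixed $t$}, so the resulting statement is inherently pointwise in time (as the statement of Corollary~\ref{cor: LLE micro jumps Holder with drift} reflects), and the random H\"older constant $H(\theta,t)$ depends on $t$.
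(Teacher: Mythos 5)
Your proposal is correct and follows essentially the same route as the paper: reduce item~\ref{item: Holder with drift} to H\"older continuity of $h_t$ via Lemma~\ref{lem: f' = h' in distribution} at the fixed time $t$, then invoke Proposition~\ref{prop: BLE micro jumps Holder with drift kappa > 0} (Girsanov) for $\kappa>0$ and Proposition~\ref{prop: BLE micro jumps Holder with drift kappa = 0} for $\kappa=0$, with items~\ref{item: locally connected with drift} and~\ref{item: Haus dim with drift} following as in Proposition~\ref{prop: LLE micro jumps Holder}. One tiny imprecision: since Proposition~\ref{prop: BLE micro jumps Holder with drift kappa = 0} requires $p$ in the \emph{open} interval $(3,7/2)$, you cannot literally take $p=3$; rather, $\lambda_\varepsilon<1=\sup_{p\in(3,7/2)}\maxjumpH{p}$ guarantees that some $p\in(3,7/2)$ with $\lambda_\varepsilon\le\maxjumpH{p}=7-2p$ exists.
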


\begin{proof}
This follows similarly as Proposition~\ref{prop: LLE micro jumps Holder}, by 
using Lemma~\ref{lem: f' = h' in distribution} and Propositions~\ref{prop: BLE micro jumps Holder with drift kappa > 0} \&~\ref{prop: BLE micro jumps Holder with drift kappa = 0}. 
\end{proof}

\subsection{Adding macroscopic jumps to the driver}
\label{subsec: main results macro}

We now consider general driving functions\footnote{Note that with $\varepsilon \to 1$, we obtain the general driving function~\eqref{eq: general DF}.} 
\begin{align} \label{eq: BM with micro macro and drift}
\macroDriver_\varepsilon^{\kappa, a}(t) = at + \sqrt{\kappa} B(t) + \int_{|\jump| \leq \varepsilon} \jump \, \PoissonComp(t, \ud \jump) + \int_{|\jump| > \varepsilon} \jump \, \Poisson(t, \ud \jump) , \qquad
a \in \bR, \; \kappa \geq 0 , \;  \varepsilon > 0 . 
\end{align}

The next result extends Proposition~\ref{prop: LLE micro jumps curve with drift} to the case where the driving process may have macroscopic jumps.
This extension is essentially a direct consequence of the domain Markov property,  that follows from 
Lemma~\ref{lem: some markov property} and the strong Markov property of the driving function $\macroDriver_\varepsilon^{\kappa, a}$.
However, there is an important subtlety: 
we need in addition to make sure that the (inverse) mapping-out functions $f_t$ extend continuously to the boundary \emph{at all times simultaneously}, which is guaranteed by Proposition~\ref{prop: locally connected}.

\begin{prop} \label{prop: generated by a curve still containing epsilon}
Fix $T > 0$, $\kappa \in [0,\infty) \setminus\{8\}$, $a \in \bR$, and a L\'evy measure $\nu$. Suppose that either~\ref{item: ass1} or~\ref{item: ass2} holds, and fix $\varepsilon > 0$ 
such that $\lambda_\varepsilon < \maxjumpT{\kappa}$ as in~\eqref{eq: lambda max} and $\varepsilon \in (0,\epsilon_\nu \wedge \tfrac{1}{2} \rho_\nu]$ under~\ref{item: ass2}. 
Then, the following hold almost surely for the Loewner chain driven by $\macroDriver_\varepsilon^{\kappa, a}$~\eqref{eq: BM with micro macro and drift}.
\begin{enumerate}[label=\textnormal{(\alph*):}, ref=(\alph*)]
\item \label{item: LLE curve if kappa not 8 still containing epsilon}
The Loewner chain is generated by a c\`adl\`ag curve on $[0, T]$.

\medskip

\item \label{item: LLE locally conn if kappa not 8 still containing epsilon}
For each $t \in [0, T]$, the map $z \mapsto f_t(z)$ extends to a continuous function $f_t \colon \overline{\bH} \to \overline{\bH \setminus K_t}$. 
\end{enumerate}
\end{prop}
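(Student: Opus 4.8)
The plan is to combine the result for martingale drivers with microscopic jumps and a linear drift (Proposition~\ref{prop: LLE micro jumps curve with drift}) with the structure of the full process $\macroDriver_\varepsilon^{\kappa, a}$, which differs from $\microDriver_\varepsilon^{\kappa, a}$ only by the compound Poisson process of macroscopic jumps (size $> \varepsilon$). On any compact time interval $[0,T]$, this compound Poisson process has only finitely many jumps, almost surely, at times $0 < \sigma_1 < \sigma_2 < \cdots < \sigma_N \leq T$ (and $N < \infty$ a.s.). Between consecutive jump times, the driving function evolves exactly as a (translated) copy of $\microDriver_\varepsilon^{\kappa, a}$. So the strategy is an induction on the jump times $\sigma_j$, using the domain Markov property (Lemma~\ref{lem: some markov property}) to reduce each inter-jump segment to a fresh Loewner chain driven by $\microDriver_\varepsilon^{\kappa, a}$ started from a translated configuration.

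First I would set up the decomposition: write $\macroDriver_\varepsilon^{\kappa, a}(t) = \microDriver_\varepsilon^{\kappa, a}(t) + J(t)$ where $J(t) = \int_{|\jump| > \varepsilon} \jump \, \Poisson(t, \ud \jump)$ is a compound Poisson process. Condition on $N$ and on the jump times and sizes of $J$. On $[0,\sigma_1)$, the Loewner chain driven by $\macroDriver_\varepsilon^{\kappa, a}$ coincides with the one driven by $\microDriver_\varepsilon^{\kappa, a}$, so by Proposition~\ref{prop: LLE micro jumps curve with drift} it is generated by a c\`adl\`ag curve on $[0,\sigma_1)$, and the curve extends to $\sigma_1$ by taking the left limit (which exists by the c\`agl\`ad/c\`adl\`ag correspondence and the local growth property~\eqref{eq: shrink to points}). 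At time $\sigma_1$, the driving function jumps by an amount $\jump_1$ with $|\jump_1| > \varepsilon$; this produces a new growing end at $W(\sigma_1+) = W(\sigma_1-) + \jump_1$, i.e., a branch point. Then I would apply Lemma~\ref{lem: some markov property} with $\sigma = \sigma_1$: the rescaled/translated hulls $\mathring{K}^{\sigma_1}_t$ are driven by $\mathring{W}^{\sigma_1}(t) = W(\sigma_1 + t) - W(\sigma_1)$, which on $[0, \sigma_2 - \sigma_1)$ is again a copy of a process of the form $\microDriver_\varepsilon^{\kappa, a}$ (by the strong Markov property and stationary independent increments of the L\'evy process — noting that conditioning on the jump times of $J$ does not affect the law of $\microDriver_\varepsilon^{\kappa, a}$, which is independent of $J$). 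So the inner chain is again generated by a c\`adl\`ag curve, and pulling back through the conformal map $f_{\sigma_1}$ via~\eqref{eq: inverse markov property}, and using that $f_{\sigma_1}$ extends continuously to $\overline{\bH}$ (this is where item~\ref{item: LLE locally conn if kappa not 8 still containing epsilon} is needed — see below), the image curve generates the original chain on $[\sigma_1, \sigma_2)$ and is again c\`adl\`ag. Iterating $N$ times over $[0,\sigma_1), [\sigma_1,\sigma_2), \ldots, [\sigma_N, T]$ and concatenating the resulting c\`adl\`ag curves (glued at the branch points, which are left- and right-limit points by construction) yields a c\`adl\`ag curve generating the chain on all of $[0,T]$, proving~\ref{item: LLE curve if kappa not 8 still containing epsilon}.

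The main obstacle — and the subtlety flagged in the paragraph preceding the statement — is step~\ref{item: LLE locally conn if kappa not 8 still containing epsilon}, the \emph{simultaneous} continuous extension of $f_t$ to $\overline{\bH}$ for all $t \in [0,T]$. This does not follow from the pointwise-in-time H\"older results of Section~\ref{sec: backward bounds} (Proposition~\ref{prop: LLE micro jumps Holder} / Corollary~\ref{cor: LLE micro jumps Holder with drift}), which only give it for each fixed $t$. Instead I would argue: once we know (from~\ref{item: LLE curve if kappa not 8 still containing epsilon}, proven first) that the whole chain on $[0,T]$ is generated by a c\`adl\`ag curve $\cadlag$, Proposition~\ref{prop: locally connected} applies and shows that $\bdry(\bH \setminus K_t)$ is locally connected for \emph{every} $t \in [0,T]$. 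By Carath\'eodory's theorem, local connectedness of $\bdry(\bH \setminus K_t)$ is equivalent to $f_t = g_t^{-1}$ extending continuously to $\overline{\bH}$, which gives~\ref{item: LLE locally conn if kappa not 8 still containing epsilon}. There is a logical circularity to resolve carefully: the induction for~\ref{item: LLE curve if kappa not 8 still containing epsilon} at step $j$ uses continuity of $f_{\sigma_j}$ on $\overline{\bH}$. This is handled by doing the induction so that at each stage we have already established that the chain \emph{up to time $\sigma_j$} is generated by a c\`adl\`ag curve (on $[0,\sigma_j]$, including the endpoint via left limits), so Proposition~\ref{prop: locally connected} applies to $K_{\sigma_j}$ at that stage and gives continuity of $f_{\sigma_j}$ before we pass to the next segment. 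Thus the induction hypothesis should be stated as ``the chain is generated by a c\`adl\`ag curve on $[0,\sigma_j]$,'' which self-consistently supplies the needed boundary extension at each step. Finally, I would note the cosmetic point that as $\varepsilon \to 1$ (or any fixed $\varepsilon \in (0,1]$ with the stated constraints) the process $\macroDriver_\varepsilon^{\kappa,a}$ is exactly~\eqref{eq: general DF}, so the proposition covers the general driver, completing the proof of Theorem~\ref{thm: LLE curve if kappa not 8} in the next subsection.
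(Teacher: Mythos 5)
Your proposal reproduces the paper's argument essentially step for step: decompose $\macroDriver_\varepsilon^{\kappa,a}$ into the martingale-plus-drift part $\microDriver_\varepsilon^{\kappa,a}$ and the compound Poisson process of macroscopic jumps, work on the event of exactly $n$ (finitely many) large jumps, apply Proposition~\ref{prop: LLE micro jumps curve with drift} on each inter-jump interval via the domain Markov property (Lemma~\ref{lem: some markov property} and~\eqref{eq: inverse markov property}), and concatenate the resulting pieces. You also correctly identify the key subtlety that the concatenation requires $f_{\tau_\ell}$ to extend continuously to $\overline{\bH}$ at each intermediate jump time, and you resolve it exactly as the paper does: by a recursive argument in which the c\`adl\`ag generating curve established on $[0,\tau_\ell]$ at stage $\ell$ is fed into Proposition~\ref{prop: locally connected} to obtain local connectedness of $\bdry(\bH\setminus K_{\tau_\ell})$, hence continuous extension of $f_{\tau_\ell}$ via Carath\'eodory's theorem, before passing to the next segment. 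Item~\ref{item: LLE locally conn if kappa not 8 still containing epsilon} then follows from Proposition~\ref{prop: locally connected} applied to the full curve, as you say. This matches the paper's proof.
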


\begin{proof}
Note that
\begin{align} \label{eq: macroDriver in terms of microDriver}
\macroDriver_\varepsilon^{\kappa, a}(t)
= \microDriver_\varepsilon^{\kappa, a}(t)
+ \int_{|\jump| > \varepsilon} \jump \, \Poisson(t, \ud \jump)  , \qquad t \geq 0 ,
\end{align}
in distribution. 
Since $\nu$ is a L\'evy measure, $\smash{\macroDriver_\varepsilon^{\kappa, a}}$ has 
almost surely finitely many jumps of size larger than $\varepsilon$ on $[0, T]$. 
Fix $n \in \bN$.
Then, on the event $E_n$ that $\smash{\macroDriver_\varepsilon^{\kappa, a}}$ has exactly $n$ jumps of size larger than $\varepsilon$ on $[0, T]$ 
occurring at stopping times $0 \leq \tau_1 < \cdots < \tau_n < T$, the strong Markov property gives
\begin{align} \label{eq: macroDriver Markov property}
\begin{split}
\macroDriver_\varepsilon^{\kappa, a}(t) = 
\begin{cases}
\microDriver_\varepsilon^{\kappa, a}(t)
, & t \in [0, \tau_1) , \\
\macroDriver_\varepsilon^{\kappa, a}(\tau_1) + \microDriver_\varepsilon^{\kappa, a}(t - \tau_1)
, & t \in [\tau_1, \tau_2) , \\
\qquad\qquad \;\, \vdots & \quad \vdots \\
\macroDriver_\varepsilon^{\kappa, a}(\tau_n) + \microDriver_\varepsilon^{\kappa, a}(t - \tau_n)
, & t \in [\tau_n, T] ,
\end{cases}
\end{split}
\end{align}
in distribution, where the pieces are independent. 
For definiteness, we also write $\tau_0 := 0$ and $\tau_{n+1} := T$.
We then define for each $\ell \in \{0,1,\ldots,n\}$ the 
hulls 
\begin{align} \label{eq: macroDriver hulls}
\mathring{K}^{\tau_\ell}_s
:= & \; \overline{g_{\tau_\ell}(K_{\tau_\ell + s} \setminus K_{\tau_\ell}) - \macroDriver_\varepsilon^{\kappa, a}(\tau_\ell)} , \qquad s \geq 0 .
\end{align}
By Lemma~\ref{lem: some markov property}, each mapping-out function 
\begin{align} \label{eq: macroDriver Loewner map}
\mathring{g}^{\tau_\ell}_s(z) 
:= g_{\mathring{K}^{\tau_\ell}_s} (z)
= ( g_{\tau_\ell + s} \circ f_{\tau_\ell} )(z + \macroDriver_\varepsilon^{\kappa, a}(\tau_\ell)) - \macroDriver_\varepsilon^{\kappa, a}(\tau_\ell)
\end{align} 
solves~\eqref{eq: LE} with driving function  
$\smash{(\mathring{\macroDriver}_{\varepsilon}^{\kappa, a} )^{\tau_\ell}}(s) := \macroDriver_\varepsilon^{\kappa, a}(\tau_\ell + s) - \macroDriver_\varepsilon^{\kappa, a}(\tau_\ell)$.
Note that $\smash{(\mathring{\macroDriver}_{\varepsilon}^{\kappa, a} )^{\tau_\ell}}(s)$ has the same distribution as 
$\microDriver_\varepsilon^{\kappa, a}(s)$
when $s \in [0, \tau_{\ell+1} - \tau_\ell)$, 
and also their left limits at $\tau_{\ell+1} - \tau_\ell$
have the same distribution. 
Hence, Proposition~\ref{prop: LLE micro jumps curve with drift} implies that 
almost surely (on the event $E_n$), for all $\ell \in \{0,1,\ldots,n\}$, 
the Loewner chain driven by $\smash{(\mathring{\macroDriver}_{\varepsilon}^{\kappa, a} )^{\tau_\ell}}$ 
is  generated by a c\`adl\`ag curve $\smash{\newmathringcadlag_\ell}$ on $[0, \tau_{\ell+1} - \tau_\ell]$. 
We now define 
\begin{align*}
\cadlag(t) := \; & 
\begin{cases}
\cadlag_0(t)
, & t \in [0, \tau_1) , \\
\cadlag_1(t)
, & t \in [\tau_1, \tau_2) , \\
\quad \vdots & \quad \vdots  \\
\cadlag_{n}(t)
, & t \in [\tau_n, T] ,
\end{cases}
\end{align*}
which is a concatenation of the c\`adl\`ag curves (we shall verify below that these are well-defined)
\begin{align*}
\cadlag_\ell \colon [\tau_\ell, \tau_{\ell+1}] \to \bH , \qquad
\cadlag_\ell (t) := f_{\tau_\ell}(\smash{\newmathringcadlag_\ell}(t - \tau_\ell) + \macroDriver_\varepsilon^{\kappa, a}(\tau_\ell)) .
\end{align*}
By the strong Markov property and Lemma~\ref{lem: some markov property}, each $\cadlag_\ell$ generates the Loewner chain driven by $\macroDriver_\varepsilon^{\kappa, a}$ on $[\tau_\ell, \tau_{\ell+1}] \ni t$, and $\cadlag$ is a c\`adl\`ag curve that generates 
the Loewner chain driven by $\macroDriver_\varepsilon^{\kappa, a}$ on $[0, T]$.

To see that $\cadlag$ is well-defined, 
we argue recursively that almost surely (on the event $E_n$), 
the inverse Loewner map $f_\sigma = g_\sigma^{-1}$ extends continuously to $\overline{\bH}$ at each time $\sigma \in \{ \tau
_1, \tau_2, \ldots, \tau_n \}$.
Indeed, Proposition~\ref{prop: locally connected} implies that almost surely (on the event $E_n$), 
at each time $t \in [0, \tau_1]$, 
the boundary $\bdry (\bH \setminus K_t)$ 
for the hull $K_t$ generated by the c\`adl\`ag curve $\cadlag_0[0, t]$
is locally connected, which implies by Carath\'eodory's theorem 
(see~\cite[Chapter~2]{Pommerenke:Boundary_behaviour_of_conformal_maps})
that
$f_{\tau_1}$ extends continuously to the real line.
Knowing this, we see that $\cadlag$ is a well-defined c\`adl\`ag curve
on the time interval $[0, \tau_2]$.
Similarly, for $\ell = 2,3,\ldots,n$, applying Proposition~\ref{prop: locally connected} 
to the boundary $\bdry (\bH \setminus K_t)$ 
for the hull $K_t$ generated by the c\`adl\`ag curve 
$\cadlag[0, \tau_\ell]$, we see that $f_{\tau_\ell}$ extends continuously to the real line for each $\ell = 2,3,\ldots,n$.

This proves~\ref{item: LLE curve if kappa not 8 still containing epsilon} on the event $E_n$, and Proposition~\ref{prop: locally connected} then implies~\ref{item: LLE locally conn if kappa not 8 still containing epsilon} by Carath\'eodory's theorem.
Taking the disjoint union of the countably many events $E_n$ over $n \in \bZnn$ concludes the proof.
\end{proof}

Our next aim is to extend Corollary~\ref{cor: LLE micro jumps Holder with drift} 
to the case where the driving process may have macroscopic jumps.
This extension is again a consequence of the domain Markov property.
In addition, we need uniform H\"older continuity of the mirror backward Loewner chain $(h_t)_{t \geq 0}$
driven by $\macroDriver_\varepsilon^{\kappa, a}$ 
for $t$ in compact time intervals, given in 
Proposition~\ref{prop: BLE micro jumps Holder still containing epsilon} (see also~\cite[Corollary~5.4]{Chen-Rohde:SLE_driven_by_symmetric_stable_processes}). 
Note that we do not have uniformly-in-time H\"older continuity of the inverse Loewner chain $(f_t)_{t \geq 0}$ at our disposal.

\begin{prop} \label{prop: BLE micro jumps Holder still containing epsilon}
Fix $T, R > 0$, 
$\kappa \in [0,\infty) \setminus\{4\}$, $a \in \bR$, a L\'evy measure $\nu$, and $\varepsilon > 0$ 
such that $\lambda_\varepsilon < \maxjumpH{\kappa}  \wedge 1$ as in~\eqref{eq: lambdatildemax}.
Let $(h_t)_{t \geq 0}$ be the solution to~\eqref{eq: mBLE} driven by 
$\macroDriver_\varepsilon^{\kappa, a}$~\eqref{eq: BM with micro macro and drift}.  
Then, almost surely, there exist random 
constants $\theta = \theta(\kappa, \lambda_\varepsilon, T) \in (0, 1)$ 
and $H(\theta,T,R) \in (0,\infty)$ such that
\begin{align*} 
| h_t(z) - h_t(w) | \leq H(\theta,T,R) \, 
\big( \, | z-w |^{\theta} \, \vee \, | z-w | \, \big) 
\qquad \textnormal{for all } t \in [0, T]
\textnormal{ and } z,w \in (-R,R) \times \ii (0,\infty) . 
\end{align*}
In particular, almost surely, each $h_t$ extends to a continuous function on $\smash{\overline{(-R,R) \times \ii (0,\infty)}}$. 
\end{prop}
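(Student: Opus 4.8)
The plan is to reduce the general driver $\macroDriver_\varepsilon^{\kappa,a}$ with macroscopic jumps to the already-treated case of $\microDriver_\varepsilon^{\kappa,a}$ via the domain Markov property for the mirror backward flow (Lemma~\ref{lem: some markov property for h}), exactly in the spirit of the proof of Proposition~\ref{prop: generated by a curve still containing epsilon}. First I would condition on the event $E_n$ that $\macroDriver_\varepsilon^{\kappa,a}$ has exactly $n$ jumps of size larger than $\varepsilon$ on $[0,T]$, occurring at $0 = \tau_0 < \tau_1 < \cdots < \tau_n < \tau_{n+1} := T$; since $\nu$ is a L\'evy measure this covers a full-measure set. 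On $E_n$, the strong Markov property of $\macroDriver_\varepsilon^{\kappa,a}$ gives the decomposition~\eqref{eq: macroDriver Markov property}, and Lemma~\ref{lem: some markov property for h} shows that for each $\ell \in \{0,1,\dots,n\}$ the process $\smash{\mathring{h}^{\tau_\ell}_s}(z) := (h_{\tau_\ell+s} \circ h_{\tau_\ell}^{-1})(z - \macroDriver_\varepsilon^{\kappa,a}(\tau_\ell)) + \macroDriver_\varepsilon^{\kappa,a}(\tau_\ell)$ solves~\eqref{eq: mBLE} with driving function $\smash{(\mathring{\macroDriver}_\varepsilon^{\kappa,a})^{\tau_\ell}}$, which has the same law as $\microDriver_\varepsilon^{\kappa,a}$ on $[0, \tau_{\ell+1} - \tau_\ell)$ (including its left limit at the right endpoint, which follows from right-continuity of the driver and the fact that at $\tau_{\ell+1}$ the driver jumps by size greater than $\varepsilon$).

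Next I would invoke Proposition~\ref{prop: BLE micro jumps Holder with drift kappa > 0} (for $\kappa > 0$) or Proposition~\ref{prop: BLE micro jumps Holder with drift kappa = 0} (for $\kappa = 0$, using $p=3$ so that $\maxjumpH{3}=1$ and the hypothesis $\lambda_\varepsilon < \maxjumpH{\kappa}\wedge 1$ applies) to get: for any $R' > 0$ there exist a deterministic $\theta_\ell = \theta(\kappa,\lambda_\varepsilon) \in (0,1)$ and an a.s.\ finite random constant $H_\ell(\theta_\ell, T, R')$ such that $|\mathring{h}^{\tau_\ell}_s(z) - \mathring{h}^{\tau_\ell}_s(w)| \leq H_\ell (|z-w|^{\theta_\ell} \vee |z-w|)$ for all $s \in [0, \tau_{\ell+1}-\tau_\ell]$ and $z,w \in (-R', R')\times\ii(0,\infty)$. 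Taking $\theta := \min_\ell \theta_\ell$, each $\mathring{h}^{\tau_\ell}_s$ is $\theta$-H\"older (with exponent possibly raised to $\theta$ using $|z-w|\le 1$ or global bounds for $|z-w|\ge 1$). The key bookkeeping step is to unwind the composition $h_{\tau_\ell+s} = \mathring{h}^{\tau_\ell}_s(\,\cdot\, + c_\ell) - c_\ell'$ recursively across all the intervals $[\tau_0,\tau_1], [\tau_1,\tau_2], \dots$: a composition of finitely many H\"older maps is H\"older, with exponent the product $\theta^{n+1}$ of the individual exponents and an a.s.\ finite (but random, depending on $n$ and on the intermediate values $\macroDriver_\varepsilon^{\kappa,a}(\tau_\ell)$) prefactor. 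Here one needs to control the region: since the relevant points $h_{\tau_\ell}(z)$ for $z$ in a compact strip stay in a compact subset of $\overline\bH$ (by~\eqref{eq: derh bound} and the imaginary-part monotonicity of~\eqref{eq: imh}), one can choose the $R'$ at each level large enough, and this is where the exponent-shrinking and the randomness of $\theta$ and $H$ enter. This yields the stated uniform-in-$t$ H\"older bound on $[0,T]$ with $\theta = \theta(\kappa,\lambda_\varepsilon,T) := \theta^{N+1}$ on the event $E_N$.

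Finally, taking the disjoint union over $n \in \bZnn$ of the events $E_n$ (each of positive probability summing to one) gives the result almost surely, with the understanding that $\theta$ and $H$ are genuinely random because they depend on the (random) number of macroscopic jumps $n$ in $[0,T]$. The continuous extension of each $h_t$ to $\smash{\overline{(-R,R)\times\ii(0,\infty)}}$ is then immediate from Lemma~\ref{lem: global Holder continuity} (or directly from the H\"older estimate), as in Proposition~\ref{prop: BLE micro jumps Holder}.

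The main obstacle I expect is the composition step: keeping careful track of how the H\"older exponent degrades (multiplicatively) and, more delicately, ensuring that at each stage the H\"older estimate for $\mathring{h}^{\tau_\ell}_s$ is applied on a strip $(-R',R')\times\ii(0,\infty)$ that is genuinely large enough to contain all images of the compact set of interest under the previous stages — this requires an a priori bound showing that the backward flow does not escape to infinity (horizontally) in finite time on compact strips, which follows from the structure of~\eqref{eq: mBLE} together with $\smash{R_\varepsilon^{\kappa,a}(T)} < \infty$ a.s., but must be made quantitative so that the random constant $H(\theta,T,R)$ is well-defined. A secondary subtlety, already noted in the remark after Lemma~\ref{lem: radial limit exists} and in the text preceding Proposition~\ref{prop: BLE micro jumps Holder still containing epsilon}, is that we only obtain this for the \emph{backward} (mirror) flow $h_t$, not for the forward inverse chain $f_t$ simultaneously in $t$; but since the statement is only about $h_t$, this is not an issue here.
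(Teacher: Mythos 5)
Your proposal is correct and follows essentially the same route as the paper: condition on the event $E_n$ of exactly $n$ macroscopic jumps, use the domain Markov property from Lemma~\ref{lem: some markov property for h} and the strong Markov property of the driver to write $h_t$ as a composition of (shifted) mirror backward flows driven by copies of $\microDriver_\varepsilon^{\kappa,a}$, invoke Propositions~\ref{prop: BLE micro jumps Holder with drift kappa > 0} and~\ref{prop: BLE micro jumps Holder with drift kappa = 0} for the H\"older continuity of each factor, and conclude since a finite composition of H\"older maps is H\"older. Your elaboration on the exponent degradation ($\theta^{n+1}$ on $E_n$, making $\theta$ genuinely random) and on choosing the domain radii $R'$ large enough at each stage is correct and spells out what the paper leaves implicit; the only tiny slip is that Proposition~\ref{prop: BLE micro jumps Holder with drift kappa = 0} requires $p\in(3,7/2)$, so one should take $p$ slightly above $3$ rather than $p=3$ (which is what the hypothesis $\lambda_\varepsilon<1$ allows).
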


\begin{proof}
We proceed similarly 
as in the proof of Proposition~\ref{prop: generated by a curve still containing epsilon}, with the same notation.
Fix $n \in \bN$.
On the event $E_n$, using identities~(\ref{eq: macroDriver in terms of microDriver},~\ref{eq: macroDriver Markov property},~\ref{eq: macroDriver hulls}), 
we see that since $\smash{(\mathring{\macroDriver}_{\varepsilon}^{\kappa, a} )^{\tau_\ell}}(s)$ has the same distribution as 
$\microDriver_\varepsilon^{\kappa, a}(s)$
when $s = t - \tau_\ell \in [0, \tau_{\ell+1} - \tau_\ell)$, 
and also their left limits at $\tau_{\ell+1} - \tau_\ell$ 
have the same distribution, 
Lemma~\ref{lem: some markov property for h} shows that each 
\begin{align*}
\mathring{h}^{\tau_\ell}_{s}(z) := ( h_{\tau_\ell + s} \circ h_{\tau_\ell}^{-1} )(z - \macroDriver_\varepsilon^{\kappa, a}(\tau_\ell)) + \macroDriver_\varepsilon^{\kappa, a}(\tau_\ell) 
\end{align*}
solves~\eqref{eq: mBLE} with driving function
$\smash{(\mathring{\macroDriver}_{\varepsilon}^{\kappa, a} )^{\tau_\ell}}(s) := \macroDriver_\varepsilon^{\kappa, a}(\tau_\ell + s) - \macroDriver_\varepsilon^{\kappa, a}(\tau_\ell)$.
In other words, we have
$h_t (z) = \smash{\mathring{h}^{\tau_\ell}_{t - \tau_\ell}} ( h_{\tau_\ell} (z) + \macroDriver_\varepsilon^{\kappa, a}(\tau_\ell) )- \macroDriver_\varepsilon^{\kappa, a}(\tau_\ell)$ when
$t \in [\tau_\ell, \tau_{\ell+1})$. 
Iterating this observation, we obtain
\begin{align*}
h_t (z) 
= \; &  \Big( \smash{\mathring{h}^{\tau_\ell}_{t - \tau_\ell}} ( \cdot + \macroDriver_\varepsilon^{\kappa, a}(\tau_\ell) )- \macroDriver_\varepsilon^{\kappa, a}(\tau_\ell) \Big) 
\circ 
\Big( \smash{\mathring{h}^{\tau_{\ell-1}}_{\tau_\ell - \tau_{\ell-1}}} ( \cdot + \macroDriver_\varepsilon^{\kappa, a}(\tau_{\ell-1}) )- \macroDriver_\varepsilon^{\kappa, a}(\tau_{\ell-1}) \Big) 
\circ \cdots  \\
\; & \cdots \circ 
\Big( \smash{\mathring{h}^{\tau_{1}}_{\tau_2 - \tau_{1}}} ( \cdot + \macroDriver_\varepsilon^{\kappa, a}(\tau_{1}) )- \macroDriver_\varepsilon^{\kappa, a}(\tau_{1}) \Big) 
\circ 
h_{\tau_1} ( \cdot ), \qquad t \in [\tau_\ell, \tau_{\ell+1}) .
\end{align*}
Now, each map in this composition is H\"older continuous
almost surely (on the event $E_n$) by 
Propositions~\ref{prop: BLE micro jumps Holder with drift kappa > 0}
and~\ref{prop: BLE micro jumps Holder with drift kappa = 0}. 
Thus, almost surely on the event $E_n$,
the composed map is H\"older continuous as well. 
Taking the disjoint union of the countably many events $E_n$ over $n \in \bZnn$ concludes the proof.
\end{proof}

\begin{cor} \label{cor: locally connected still containing epsilon}
Fix $t > 0$, $\kappa \in [0,\infty) \setminus\{4\}$, $a \in \bR$, a L\'evy measure $\nu$, and  $\varepsilon > 0$ 
such that 
$\lambda_\varepsilon < \maxjumpH{\kappa} \wedge 1$ as in~\eqref{eq: lambdatildemax}.
Then, the following hold almost surely for the Loewner chain driven by $\macroDriver_\varepsilon^{\kappa, a}$~\eqref{eq: BM with micro macro and drift}.
\begin{enumerate}[label=\textnormal{(\alph*):}, ref=(\alph*)]
\item \label{item: LLE locally conn if kappa not 4 still containing epsilon}
$z \mapsto f_t(z)$ extends to a continuous function $f_t \colon \overline{\bH} \to \overline{\bH \setminus K_t}$, 

\medskip

\item \label{item: LLE Holder if kappa not 4 still containing epsilon}
$\bH \setminus K_t$ is a H\"older domain in the sense of~\eqref{eq: Holder property for f} 
\textnormal{(}with random $\theta(\kappa, \lambda_\varepsilon,t)$ and $H(\theta,t)$\textnormal{)}, and 

\medskip

\item \label{item: LLE Haus dim if kappa not 4 still containing epsilon}
$\mathrm{dim} (\bdry K_t) < 2$ and $\mathrm{area} (\bdry K_t) = 0$.
\end{enumerate}
\end{cor}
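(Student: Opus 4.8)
\textbf{Proof plan for Corollary~\ref{cor: locally connected still containing epsilon}.}
The plan is to mimic the concatenation argument used in the proof of Proposition~\ref{prop: BLE micro jumps Holder still containing epsilon}, but now at the level of the \emph{inverse} Loewner chain $f_t = g_t^{-1}$ rather than the mirror backward chain $h_t$. First I would fix $n \in \bN$ and work on the event $E_n$ that $\smash{\macroDriver_\varepsilon^{\kappa, a}}$ has exactly $n$ macroscopic jumps on $[0,t]$, occurring at stopping times $0 \leq \tau_1 < \cdots < \tau_n < t$ (with $\tau_0 := 0$), so that it suffices to establish all three claims on each $E_n$ and then take the disjoint union over $n \in \bZnn$. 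Since $t$ is a \emph{fixed} time instant here (in contrast to Proposition~\ref{prop: BLE micro jumps Holder still containing epsilon}, which is uniform in time), the case where $t \in [\tau_\ell, \tau_{\ell+1})$ for some $\ell$ is the only one to treat, and by~\eqref{eq: inverse markov property} and Lemma~\ref{lem: some markov property} we may decompose
\begin{align*}
f_t(z) \; = \; f_{\tau_1} \circ \Big( \mathring{f}^{\tau_1}_{\tau_2 - \tau_1}( \cdot - \macroDriver_\varepsilon^{\kappa, a}(\tau_1)) + \macroDriver_\varepsilon^{\kappa, a}(\tau_1) \Big) \circ \cdots \circ \Big( \mathring{f}^{\tau_\ell}_{t - \tau_\ell}( \cdot - \macroDriver_\varepsilon^{\kappa, a}(\tau_\ell)) + \macroDriver_\varepsilon^{\kappa, a}(\tau_\ell) \Big)(z) ,
\end{align*}
where each inner block $\mathring{f}^{\tau_m}_{\cdot}$ is the inverse mapping-out function of a Loewner chain driven by an independent copy of $\microDriver_\varepsilon^{\kappa, a}$ (up to the corresponding left limit), by the strong Markov property of the driving process and the distributional identity~\eqref{eq: macroDriver Markov property}.

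Next I would apply Corollary~\ref{cor: LLE micro jumps Holder with drift} to each of these blocks: for each $m \in \{1, \ldots, \ell\}$, almost surely (on $E_n$) the map $z \mapsto \mathring{f}^{\tau_m}_{\tau_{m+1} - \tau_m}(z)$ (for the last block $z \mapsto \mathring{f}^{\tau_\ell}_{t-\tau_\ell}(z)$) extends to a continuous function $\overline{\bH} \to \overline{\bH \setminus \mathring{K}^{\tau_m}_{\cdot}}$ which is H\"older continuous in the sense of~\eqref{eq: Holder property for f} with some random exponent $\theta_m \in (0,1)$ and random constant; likewise $f_{\tau_1}$ itself (the first block, driven by $\microDriver_\varepsilon^{\kappa, a}$ on $[0,\tau_1)$ and run up to and including time $\tau_1$, which equals a shifted copy of $\microDriver_\varepsilon^{\kappa, a}$ in distribution by the left-limit remark) is H\"older continuous by the same corollary. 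Translations by $\macroDriver_\varepsilon^{\kappa, a}(\tau_m)$ are isometries and do not affect H\"older continuity, so the composition of $\ell + 1$ H\"older maps is itself H\"older continuous on $\overline{\bH}$, with exponent $\theta := \theta_1 \cdots \theta_\ell \cdot (\textnormal{exponent of } f_{\tau_1}) \in (0,1)$ and an appropriate product constant $H(\theta, t)$ — this gives claims~\ref{item: LLE locally conn if kappa not 4 still containing epsilon} and~\ref{item: LLE Holder if kappa not 4 still containing epsilon}. Claim~\ref{item: LLE Haus dim if kappa not 4 still containing epsilon} then follows from~\ref{item: LLE Holder if kappa not 4 still containing epsilon} exactly as in the proof of Proposition~\ref{prop: LLE micro jumps Holder}, via~\cite[Theorem~C.2]{Jones-Makarov:Density_properties_of_harmonic_measure} applied after conjugating the H\"older parameterization of $\bdry(\bH \setminus K_t)$ by a conformal map between $\bD$ and $\bH$ (noting that $f_t$ is asymptotically the identity near $\infty$ by~\eqref{eq: inverse mof Laurent exp}).

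The main obstacle — a mild one — is bookkeeping around the block that ends exactly at a jump time: the inner block $\mathring{f}^{\tau_m}_{\tau_{m+1} - \tau_m}$ corresponds to a driving function $\smash{(\mathring{\macroDriver}_{\varepsilon}^{\kappa, a})^{\tau_m}}$ that agrees in distribution with $\microDriver_\varepsilon^{\kappa, a}$ on the half-open interval $[0, \tau_{m+1} - \tau_m)$ and whose \emph{left limit} at the right endpoint also matches that of $\microDriver_\varepsilon^{\kappa, a}$; one must be slightly careful that Corollary~\ref{cor: LLE micro jumps Holder with drift}, which is stated for a fixed time instant for the driving process $\microDriver_\varepsilon^{\kappa, a}$, indeed applies to the mapping-out function at the (random) time $\tau_{m+1} - \tau_m$ — this is handled exactly as in the corresponding step of the proof of Proposition~\ref{prop: generated by a curve still containing epsilon}, by conditioning on the jump times and using that, conditionally, the micro-driver piece is an ordinary (Girsanov-shifted) L\'evy process of the required type, so Corollary~\ref{cor: LLE micro jumps Holder with drift} applies to its time slice. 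Everything else is a routine composition-of-H\"older-maps argument, and the disjoint-union-over-$n$ step is immediate since $\bigcup_n E_n$ has full probability (the L\'evy process has finitely many macroscopic jumps on $[0,t]$ almost surely).
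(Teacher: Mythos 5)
Your overall architecture differs from the paper's proof, which is two lines: it cites Lemma~\ref{lem: f' = h' in distribution} at the \emph{fixed} time~$t$ to transfer the uniform-in-time H\"older estimate of Proposition~\ref{prop: BLE micro jumps Holder still containing epsilon} (which was proved for the mirror backward chain $h_t$ driven by $\smash{\macroDriver_\varepsilon^{\kappa, a}}$) over to $f_t$, exactly as in the proof of Proposition~\ref{prop: LLE micro jumps Holder}. You instead decompose $f_t$ directly into inverse Loewner blocks between macro-jump times and apply Corollary~\ref{cor: LLE micro jumps Holder with drift} to each block. The decomposition formula and the composition-of-H\"older-maps step are fine, but there is a genuine gap at the point you most need care: Corollary~\ref{cor: LLE micro jumps Holder with drift} is a \emph{pointwise-in-(deterministic)-time} statement, while each block must be evaluated at the \emph{random} time $\tau_{m+1}-\tau_m$. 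Your parenthetical claim that ``this is handled exactly as in the corresponding step of the proof of Proposition~\ref{prop: generated by a curve still containing epsilon}'' is not accurate: that proof invokes Proposition~\ref{prop: LLE micro jumps curve with drift}, which is \emph{uniform in time} on $[0,T]$ and hence applies at any (random) time $\leq T$ with no further argument — there is no analogous step there that handles a pointwise-in-time statement at a random time.

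The conditioning/Fubini idea you gesture at can in principle rescue the argument: the macro jump times are measurable with respect to the macro part of the L\'evy process, which is independent of the micro driver, so conditioning on the macro jump configuration leaves the micro driver unconditionally distributed and renders each $\tau_{m+1}-\tau_m$ deterministic; then Corollary~\ref{cor: LLE micro jumps Holder with drift} applies conditionally, and Fubini (together with joint measurability of the Loewner flow in $(t,z,\omega)$, which is not discussed) integrates this back to an unconditional a.s.\ statement. If you spell this out, your route goes through, and it is a reasonable alternative to the paper's. But as written the key step is handwaved and the reference you lean on does not justify it. This is exactly the subtlety the authors flag in the paragraph following the corollary — ``this subtlety also explains the necessity to use the mirror backward Loewner chain'' — and is the reason the paper redoes the concatenation at the level of $h$, where the block estimates (Propositions~\ref{prop: BLE micro jumps Holder with drift kappa > 0} and~\ref{prop: BLE micro jumps Holder with drift kappa = 0}) are uniform in time and the random-time issue simply does not arise; the single use of the distributional identity (Lemma~\ref{lem: f' = h' in distribution}) is then deferred to the very end, at the fixed time~$t$.
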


\begin{proof}
This can be proven similarly as Proposition~\ref{prop: LLE micro jumps Holder} by using
Proposition~\ref{prop: BLE micro jumps Holder still containing epsilon} instead of~\ref{prop: BLE micro jumps Holder}.
\end{proof}

As Lemma~\ref{lem: f' = h' in distribution} only applies for a fixed time instant, the result in Corollary~\ref{cor: locally connected still containing epsilon} also only holds pointwise in time. This subtlety also explains the necessity to use the mirror backward Loewner chain in Proposition~\ref{prop: BLE micro jumps Holder still containing epsilon}. Also, the proof of Proposition~\ref{prop: BLE micro jumps Holder still containing epsilon} only gives a random and almost surely positive H\"older constant $\theta = \theta(\kappa, \lambda_\varepsilon, T)$ in~\eqref{eq: Holder property for f}, 
depending on the number $n$ of macroscopic jumps of $\smash{\macroDriver_\varepsilon^{\kappa, a}}$ on $[0, T]$.

\subsection{General case}
\label{subsec: conclusion}

Now we gather the results obtained above for the driving function~\eqref{eq: general DF}:  
\begin{align*}
\macroDriver(t) = a t + \sqrt{\kappa} B(t) + \int_{|\jump| \leq 1} \jump \, \PoissonComp(t, \ud \jump) + \int_{|\jump| > 1} \jump \, \Poisson(t, \ud \jump) , \qquad
a \in \bR, \; \kappa \geq 0 .
\end{align*}
This will prove our main Theorems~\ref{thm: LLE curve if kappa not 8} \&~\ref{thm: LLE Holder if kappa not 4}
in full generality. Recall that we denote $f_t := g_t^{-1}$. 

\MainCurveThm*

\MainRegThm*

\begin{proof}[Proof of Theorems~\ref{thm: LLE curve if kappa not 8} and~\ref{thm: LLE Holder if kappa not 4}]
As in Remark~\ref{rem: tune epsilon}, we find $\varepsilon_{\textnormal{tr}}, \varepsilon_{\textnormal{h\"ol}} \in (0,1)$ 
such that 
$\lambda_{\varepsilon_{\textnormal{h\"ol}}} < \maxjumpH{\kappa} \wedge 1$ and 
$\lambda_{\varepsilon_{\textnormal{tr}}} < \maxjumpT{\kappa}$, 
and also $\varepsilon_{\textnormal{tr}} \in (0,\epsilon_\nu \wedge \tfrac{1}{2} \rho_\nu]$ under~\ref{item: ass2}.
Then, by Proposition~\ref{prop: generated by a curve still containing epsilon} and
Corollary~\ref{cor: locally connected still containing epsilon}, for the Loewner chain driven by
$\smash{\macroDriver_\delta^{\kappa, b}}$ as in~\eqref{eq: BM with micro macro and drift} with any $b \in \bR$,
the claim of Theorem~\ref{thm: LLE curve if kappa not 8} 
(resp.~Theorem~\ref{thm: LLE Holder if kappa not 4})
holds with $\delta = \varepsilon_{\textnormal{tr}}$ 
(resp.~$\delta = \varepsilon_{\textnormal{h\"ol}}$).
We can then change the cutoff of the jumps to be one: choosing 
\begin{align*}
b = a \, - \, \int_{\delta < |\jump| \leq 1} \jump \, \nu(\ud \jump) 
\end{align*}
and noticing that
\begin{align*}
\macroDriver(s) 
= \; & a s + \sqrt{\kappa} B(s) + \int_{|\jump| \leq 1} \jump \, \PoissonComp(s, \ud \jump) + \int_{|\jump| > 1} \jump \, \Poisson(s, \ud \jump) \\
= \; & b s + \sqrt{\kappa} B(s) + \int_{|\jump| \leq \delta} \jump \, \PoissonComp(s, \ud \jump) 
+ \int_{|\jump| > \delta} \jump \, \Poisson(s, \ud \jump) 
\; = \; \macroDriver_\delta^{\kappa, b}(s) , \qquad s \geq 0 ,
\end{align*}
we see that both asserted 
Theorems~\ref{thm: LLE curve if kappa not 8} and~\ref{thm: LLE Holder if kappa not 4}
hold for the Loewner chain driven by $\macroDriver$ as well.
\end{proof}


\appendix

\bigskip{}
\section{\label{app: inverse Loewner chain}Properties of Loewner chains}
In this appendix, we focus on basic properties of forward and inverse Loewner chains. We assume the notation and terminology from Section~\ref{sec: preli}. 
Recall that $t \mapsto g_t(z)$ is the unique absolutely continuous solution to~\eqref{eq: LE}, and
the map $(t, z) \mapsto g_t(z)$ is jointly continuous on
$\{ (t, z) \in  [0, \infty) \times \bH \; | \; t < \tau(z) \}$
(see Lemma~\ref{lem: LE properties} for a short proof). 
\eqref{eq: LE} also implies that
\begin{align} \label{eq: derivative of mof}
g_t'(z) = \exp \bigg( - \int_0^t \frac{2 \, \ud s}{(g_{s}(z) - W(s-))^2} \bigg) , \qquad t \geq 0 \; \textnormal{ and } \; z \in \bH \setminus K_t ,
\end{align}
and the map $(t, z) \mapsto g_t'(z)$ as well as  
the higher complex derivatives of $g_t(z)$ 
are also jointly continuous on
$\{ (t, z) \in  [0, \infty) \times \bH \; | \; t < \tau(z) \}$. 
We write $(f_t)_{t \geq 0}$ for the inverse Loewner chain 
\begin{align*}
f_t := g_t^{-1} \colon \bH \to \bH \setminus K_t .
\end{align*}
The main purpose of this appendix is to gather the following properties for the inverse Loewner chain:
\begin{itemize}[leftmargin=*]
\item (E.g.,~\cite[Lemma~4.3]{Kemppainen:SLE_book}):
Each inverse Loewner map $f_t$ has the Laurent expansion
\begin{align} \label{eq: inverse mof Laurent exp}
f_t \colon \bH \to \bH \setminus K_t , 
\qquad \qquad
f_t(z) = z + \sum_{n = 1}^{\infty} b_n(K_t) \, z^{-n} , \qquad |z| \to \infty ,
\end{align}
with real coefficients $b_n(K_t)$, where the first coefficient is $b_1(K_t) = -a_1(K_t) = -\mathrm{hcap}(K_t) = -2t$.

\medskip

\item (Lemma~\ref{lem: inverse LE}): 
The following \emph{inverse Loewner differential equation} holds for $(f_t)_{t \geq 0}$: for each $z \in \bH$,
\begin{align} \label{eq: inverse LE}
\tag{inv-LE}
\partial_t^{+} f_t(z) = \frac{-2 \, f_t'(z)}{z - W(t)} \qquad \textnormal{with initial condition} \qquad f_0(z) = z ,
\end{align}
where $\partial_t^{+}$ denotes the right derivative. 
The proof of~\eqref{eq: inverse LE} is roughly similar to the derivation of Loewner's equation~\eqref{eq: LE} for the mapping-out functions $(g_t)_{t \geq 0}$ associated to a locally growing family $(K_t)_{t \geq 0}$ of hulls parameterized by capacity. 
We present the proof in Section~\ref{subsec: inverse Loewner equation}.

\medskip

\item The following distortion estimate, well-known in the case of continuous drivers, holds:

\begin{restatable}{lem}{InverseFEstimates} 
\label{lem: derivative distortion}
We have $|f_t'(x + \ii y)| \asymp |f_{t+s}'(x + \ii y)|$
for all $x + \ii y \in \bH$, $t \geq 0$, and $s \in [0, y^2]$.
\end{restatable}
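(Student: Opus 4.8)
The plan is to compare $|f_t'(z)|$ and $|f_{t+s}'(z)|$ for $z = x + \ii y$ by writing their ratio as the modulus of the derivative of the intermediate map $\smash{\mathring{f}^t_s}$ from the domain Markov decomposition (Lemma~\ref{lem: some markov property}). Recall from~\eqref{eq: inverse markov property} that
\begin{align*}
f_{t+s}(z) = f_t \big( \mathring{f}^t_s(z - W(t)) + W(t) \big) ,
\end{align*}
so by the chain rule
\begin{align*}
|f_{t+s}'(z)| = | f_t'\big( \mathring{f}^t_s(z - W(t)) + W(t) \big) | \cdot | (\mathring{f}^t_s)'(z - W(t)) | .
\end{align*}
The idea is to show (i) that $|(\mathring{f}^t_s)'(z - W(t))| \asymp 1$ when $s \leq y^2$, and (ii) that $f_t'$ evaluated at the shifted point $\mathring{f}^t_s(z - W(t)) + W(t)$ is comparable to $f_t'(z)$, again using $s \leq y^2$ to keep the argument within a controlled distance of $z$.

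\textbf{Key steps.} First I would establish the imaginary-part and real-part control for the forward flow $\smash{\mathring{g}^t_s}$ (equivalently, for the hulls $\smash{\mathring{K}^t_s}$, which have half-plane capacity $2s \leq 2y^2$): standard half-plane-capacity estimates give that a hull of capacity $2s$ has diameter $\lesssim \sqrt{s}$ and that $\smash{|\mathring{f}^t_s(w) - w| \lesssim \sqrt{s}}$ uniformly in $w \in \bH$, as recorded in the spirit of~\eqref{eq: shrinking hulls out}. In particular, for $w = z - W(t)$ with $\im(w) = y$ and $s \leq y^2$, the point $\smash{\mathring{f}^t_s(w)}$ stays within distance $\lesssim y$ of $w$ and has imaginary part comparable to $y$. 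Second, I would bound $|(\mathring{f}^t_s)'(w)|$: since $\smash{\mathring{f}^t_s}$ is a conformal map $\bH \to \bH \setminus \mathring{K}^t_s$ fixing $\infty$, Schwarz lemma type bounds (as used in~\eqref{eq: f derivative Schwarz lemma bound}, namely $|(\mathring{f}^t_s)'(w)| \leq \im(\mathring{f}^t_s(w))/\im(w)$) give the upper bound $\lesssim 1$, and the lower bound $\gtrsim 1$ follows from applying the same inequality to the inverse map $\smash{\mathring{g}^t_s}$ together with the imaginary-part comparison from the first step (or directly from Koebe distortion, Lemma~\ref{lem: Koebe}\ref{item: Koebe1}, comparing $w$ to a point deep in $\bH$). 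Third, I would handle the change-of-argument term: writing $w' := \mathring{f}^t_s(w) + W(t)$, the point $w'$ lies in $B(z, C y)$ with $\im(w') \asymp y$, and I invoke Koebe distortion for $f_t$ on $\bH$ (Lemma~\ref{lem: Koebe}\ref{item: Koebe1},\ref{item: Koebe2}, or Lemma~\ref{lem: Koebe}\ref{item: Koebe3} applied to $\confmap^{-1} = g_t$ with base point $z$) to conclude $|f_t'(w')| \asymp |f_t'(z)|$, provided $C$ is chosen appropriately --- this may require first shrinking the constant in the capacity estimates so that $w'$ lands inside the Koebe ball $B(z, \tfrac18 y |g_t'(z)|^{-1}\cdot\ldots)$; alternatively one uses the hyperbolic-metric version of distortion, which is cleaner since $\mathrm{dist}_{\bH}(w', z)$ is bounded by an absolute constant.

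\textbf{Main obstacle.} The routine analytic ingredients (Koebe, Schwarz, half-plane-capacity diameter bounds) are all standard; the only real subtlety is bookkeeping the constants so that the ``$s \leq y^2$'' hypothesis genuinely forces $w'$ to lie in a region where $f_t'$ has bounded distortion relative to $z$ --- i.e., controlling $\mathrm{dist}_{\bH}(w', z)$ by an absolute constant rather than something that degrades. I expect this constant-tracking to be the main (though minor) point; it is cleanest to phrase everything in terms of the hyperbolic metric on $\bH$, noting that a Euclidean displacement of size $\lesssim y$ at height $\asymp y$ is a bounded hyperbolic displacement, and that conformal maps are hyperbolic-distance contractions while their derivatives have bounded distortion on hyperbolic balls of bounded radius (Koebe). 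One should also note the estimates are uniform in $t$, since all bounds depend only on $s$, $y$, and absolute constants, not on the driving function beyond what is already encoded in $f_t$ and $\smash{\mathring{f}^t_s}$.
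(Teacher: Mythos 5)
Your decomposition via the domain Markov property and the chain rule,
$|f_{t+s}'(z)| = |f_t'(\mathring{f}^t_s(z-W(t))+W(t))|\cdot|(\mathring{f}^t_s)'(z-W(t))|$,
is a natural and genuinely different route from the paper, which instead writes down a differential inequality for $\partial_u^+ f_u'(z)$ directly (via Lemma~\ref{lem: derivatives commute}), controls $|f_u''|$ by $|f_u'|$ through Bieberbach's coefficient bound, deduces $|\partial_u^+\log|f_u'(x+\ii y)||\leq 8y^{-2}$, and integrates over $[t,t+s]$. Your route is appealing, but as written it has two gaps, one minor and one substantive.

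The minor gap: you invoke ``standard half-plane-capacity estimates give that a hull of capacity $2s$ has diameter $\lesssim\sqrt{s}$.'' This is false in the present setting. Capacity controls diameter from \emph{below}, not above; a hull can have tiny capacity and large diameter, and this actually occurs here when the driving function $W$ has a jump of size $R$ on $[t,t+s]$ (the hull $\mathring{K}^t_s$ then contains a branch piece near the translated jump location, giving $\diam(\mathring{K}^t_s)\geq R$ while $\mathrm{hcap}=2s$ is arbitrarily small). Consequently the displacement bound $|\mathring{f}^t_s(w)-w|\lesssim\sqrt{s}$ that you cite from~\eqref{eq: shrinking hulls out} is not available quantitatively; that display only records convergence to $0$ as $s\to0+$ with a rate depending on the driving function. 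The displacement conclusion you actually need, $|\mathring{f}^t_s(w)-w|\lesssim y$ at height $\im(w)=y$ when $s\leq y^2$, is nonetheless true, but the correct reason is the Nevanlinna--Herglotz representation $\mathring{f}^t_s(w)-w=\int_\bR(x-w)^{-1}\mu(\ud x)$ with $\mu(\bR)=2s$, which gives $|\mathring{f}^t_s(w)-w|\leq 2s/\im(w)$; this uses capacity, not diameter.

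The substantive gap is the lower bound $|(\mathring{f}^t_s)'(w)|\gtrsim 1$. You propose to ``apply the same inequality to the inverse map $\mathring{g}^t_s$.'' But Schwarz--Pick applied to $\mathring{g}^t_s\colon\bH\setminus\mathring{K}^t_s\to\bH$ gives $|(\mathring{g}^t_s)'(z)|\geq\im(\mathring{g}^t_s(z))/\im(z)$ (because $\rho_{\bH\setminus K}\geq\rho_\bH$), which is a \emph{lower} bound on $|(\mathring{g}^t_s)'|$ and hence reproduces exactly the same \emph{upper} bound on $|(\mathring{f}^t_s)'|$ you already had; it does not give a lower bound. The lower bound is the real content of the lemma, and the Schwarz--Pick route is simply the wrong direction. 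One could salvage this part via Herglotz again: $(\mathring{f}^t_s)'(w)-1=\int_\bR(x-w)^{-2}\mu(\ud x)$, so $|(\mathring{f}^t_s)'(w)-1|\leq 2s/\im(w)^2$; for $s\leq c\,\im(w)^2$ with $c$ small this gives $|(\mathring{f}^t_s)'(w)|\geq 1/2$, and one then iterates over $O(1)$ subintervals of $[t,t+s]$, using that $\im$ only increases along the inverse flow. But this is a separate argument you would need to supply, and once written out it is of comparable weight to the paper's Bieberbach-plus-ODE proof. Step (iii) of your plan (Koebe distortion for $f_t$ at a point within bounded hyperbolic distance of $z$) is fine once the displacement is correctly controlled.
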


The proof of Lemma~\ref{lem: derivative distortion} follows the lines of~\cite[Proof of Lemma~6.7]{Kemppainen:SLE_book}, allowing however a possibly discontinuous driving function. 
We present the proof in Section~\ref{subsec: derivative distortion}.
\end{itemize} 

\subsection{Basic continuity properties of Loewner flows}
\label{subsec: g is continuous}

To begin, we record basic properties of the Loewner chain $(g_t)_{t \geq 0}$, which follow immediately from the local growth of the hulls. 

\begin{lem} \label{lem: LE properties}
The following hold for the Loewner chain $(g_t)_{t \geq 0}$. 
\begin{enumerate}[label=\textnormal{(\alph*):}, ref=(\alph*)]
\item \label{item: FTC for g}
For each $z \in \bH$, the map $t \mapsto g_t(z)$ is absolutely continuous on compact sub-intervals of~$[0,\tau(z))$ and 
\begin{align*}
g_t(z) \; =\;  z + \int_0^t \partial_s^{+} g_s(z) \, \ud s 
\; = \; z + \int_0^t \frac{2 \, \ud s }{g_{s}(z) - W(s-)} , \qquad t \in [0,\tau(z)) .
\end{align*}

\medskip

\item \label{item: g jointly continuous}
The Loewner chain $(t, z) \mapsto g_t(z)$ is jointly continuous on $\smash{\{ (t, z) \in  [0, \infty) \times \overline{\bH} \; | \; t < \tau(z) \}}$. 
\end{enumerate}
\end{lem}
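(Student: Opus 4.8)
\textbf{Plan of proof for Lemma~\ref{lem: LE properties}.}
The plan is to derive both items directly from the local growth property of the hulls $(K_t)_{t \geq 0}$ established in Section~\ref{subsec: LC preli}, together with the shrinking estimates in Remark~\ref{rem: shrinking hulls}. The two items are intertwined: joint continuity follows from continuity of $t \mapsto g_t(z)$ together with the joint continuity in $z$ coming from holomorphicity, and the integral formula in item~\ref{item: FTC for g} then follows once one knows $t \mapsto g_t(z)$ is continuous (so that the right-hand side of~\eqref{eq: LE} is locally Lebesgue-integrable). I would organize the proof around first establishing continuity in $t$ and then upgrading to joint continuity.

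First I would fix $z \in \bH$ with $\tau(z) > 0$ and show $t \mapsto g_t(z)$ is continuous on $[0, \tau(z))$. For a time $t$ and small $\delta > 0$, apply Lemma~\ref{lem: some markov property} with $\sigma = t$: one has $g_{t+\delta}(z) = \smash{\mathring{g}^{t}_\delta}(g_t(z) - W(t)) + W(t)$, so $|g_{t+\delta}(z) - g_t(z)| \leq |\smash{\mathring{g}^{t}_\delta}(w) - w|$ evaluated at $w = g_t(z) - W(t) \in \bH \setminus \mathring{K}^t_\delta$ for $\delta$ small (since $z$ is not yet swallowed), which by~\eqref{eq: shrinking hulls out} is bounded by a constant times $\diam(\mathring{K}^t_\delta) \to 0$ as $\delta \to 0+$. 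This gives right-continuity; left-continuity at $t$ is obtained analogously using $\sigma = t - \delta$ and~\eqref{eq: shrinking hulls in}. The same Markov-property estimates, applied uniformly over $z$ in a compact subset of $\bH \setminus K_t$ (on which $|g_t'|$ and $\dist(g_t(z), \bR)$ are bounded away from the degeneracies), give a modulus of continuity in $t$ that is locally uniform in $z$; combined with the fact that each $g_t$ is holomorphic (hence continuous in $z$), a standard $\varepsilon/2$ argument yields joint continuity, proving item~\ref{item: g jointly continuous}. The extension to $z \in \bR \setminus K_t$ uses Schwarz reflection as noted in the footnote to~\eqref{eq: LE}.

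Given continuity of $s \mapsto g_s(z)$ on any compact $[0, t] \subset [0, \tau(z))$, the quantity $\inf_{s \in [0,t]} |g_s(z) - W(s-)|$ is bounded below by a positive constant (here one uses that $W$ is c\`adl\`ag, so $W(s-)$ together with $W(s)$ ranges over a compact set, and the blow-up time has not been reached), hence $s \mapsto \frac{2}{g_{s-}(z) - W(s-)}$ is bounded and measurable on $[0,t]$, thus Lebesgue-integrable. Since $t \mapsto g_t(z)$ solves~\eqref{eq: LE} with this integrable right-hand side and has a continuous version, it is absolutely continuous and equals its integral, giving item~\ref{item: FTC for g}. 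The main obstacle I anticipate is being careful about the uniformity of the local-growth estimates over a compact family of starting points and over the time parameter simultaneously near a generic time $t$ --- in particular, controlling $\diam(\mathring{K}^{t}_\delta)$ and $\diam(\mathring{K}^{t-\delta}_\delta)$ in a way that is locally uniform in $z$ and does not degenerate as $z$ approaches $\bdry K_t$ --- but this is handled by Wolff's lemma and~\cite[Lemmas~4.5--4.6]{Kemppainen:SLE_book} as cited in Remark~\ref{rem: shrinking hulls}, since on a compact subset of $\bH \setminus K_t$ the relevant quantities stay uniformly controlled.
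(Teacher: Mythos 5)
Your proof is correct and follows essentially the same route as the paper's: apply the domain Markov property of Lemma~\ref{lem: some markov property} with $\sigma = t$ (resp.\ $\sigma = t-\delta$) together with the shrinking estimates~(\ref{eq: shrinking hulls out},~\ref{eq: shrinking hulls in}) to get right- and left-continuity of $t \mapsto g_t(z)$; observe that these estimates are uniform in $z$ (they are $\sup$ bounds over $\bH \setminus \mathring{K}^\sigma_\delta$) so that the triangle inequality with the holomorphicity of $g_t$ gives joint continuity; and deduce item~\ref{item: FTC for g} from Lebesgue integrability of the right-hand side of~\eqref{eq: LE} once continuity is in hand. The one place you gloss a little is the final step from "right-differentiable with integrable right derivative and continuous" to the integral identity — the paper points out that since~\eqref{eq: LE} only gives a right (Dini) derivative, one invokes the Dini-derivative version of the fundamental theorem of calculus (cf.~\cite{Hagood-Thomson:Recovering_function_from_Dini_derivative}) rather than the ordinary one, but this is a citation rather than a mathematical gap.
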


\begin{proof} 
(Recall that the existence and uniqueness of an absolutely continuous solution $t \mapsto g_t(z)$ to~\eqref{eq: LE} follows from general ODE theory~\cite[Chapter~I.5.,~Theorems~5.1--5.3]{Hale:Ordinary_differential_equations} 
(cf.~Remark~\ref{rem: existence and uniqueness ODE}).)
\begin{enumerate}[label=\textnormal{(\alph*):}, ref=(\alph*), leftmargin=*]
\item The right-hand side of~\eqref{eq: LE} as a function of $t$ is Lebesgue-integrable on any compact sub-interval of $[0,\tau(z))$. Therefore, by Lebesgue's differentiation theorem (for Dini derivatives, cf.~\cite{Hagood-Thomson:Recovering_function_from_Dini_derivative}), it suffices to prove that $t \mapsto g_t(z)$ is continuous. 
Using Lemma~\ref{lem: some markov property} together with~\eqref{eq: shrinking hulls out}, we find that
\begin{align} \label{eq: g right-continuous}
| g_{t+\delta}(z) - g_t(z) |
= | \smash{\mathring{g}^t_{\delta}}(g_t(z) - W(t)) - ( g_t(z) - W(t)) |
\lesssim \diam(\mathring{K}^t_\delta) 
\quad \overset{\delta \to 0+}{\longrightarrow} \quad 0 ,
\end{align}
so $t \mapsto g_t(z)$ is right-continuous. 
The left-continuity follows by a similar argument using~\eqref{eq: shrinking hulls in}: 
\begin{align} \label{eq: g left-continuous}
| g_t(z) - g_{t-\delta}(z) | 
\lesssim \diam(\mathring{K}^{t-\delta}_\delta) 
\quad \overset{\delta \to 0+}{\longrightarrow} \quad 0 .
\end{align}
This proves item~\ref{item: FTC for g}.
Note also that the limits~(\ref{eq: g right-continuous},~\ref{eq: g left-continuous}) are uniform in $z$. 

\medskip

\item 
Fix $(t, z) \in [0, \infty) \times \overline{\bH}$ such that $t < \tau(z)$. Then, as $g_t$ is a conformal map around $z$ and~(\ref{eq: g right-continuous},~\ref{eq: g left-continuous}) hold for $\delta \to 0+$, we have
\begin{align*}
|g_{t+\delta}(w) - g_t(z)| 
\leq |g_{t+\delta}(w) - g_t(w)| + |g_t(w)- g_t(z)| 
\lesssim \diam(\mathring{K}^t_\delta) + |g_t(w)- g_t(z)| 
\; & \quad \overset{w \to z}{\underset{\delta \to 0+}{\longrightarrow}} \quad 0 , \\
|g_{t-\delta}(w) - g_t(z)| 
\leq |g_{t-\delta}(w) - g_t(w)| + |g_t(w)- g_t(z)| 
\lesssim \diam(\mathring{K}^{t-\delta}_\delta) + |g_t(w)- g_t(z)| 
\; & \quad \overset{w \to z}{\underset{\delta \to 0+}{\longrightarrow}} \quad 0 .
\end{align*}
This proves item~\ref{item: g jointly continuous}. \qedhere
\end{enumerate}
\end{proof}

\subsection{Estimates for inverse Loewner flow}

Recall from Lemma~\ref{lem: f' = h' in distribution} and its proof the following facts:
\begin{itemize}[leftmargin=*]
\item (Item~\ref{item: f = h in distribution} of Lemma~\ref{lem: f' = h' in distribution}): The map $z \mapsto f_t(z + W(t)) - W(t)$ 
has the same distribution as $z \mapsto h_t(z)$, where $t \mapsto h_t(z)$ is the unique absolutely continuous solution to 
\begin{align} \label{eq: mBLE again} 
\tag{mBLE}
\partial_t^{+} h_t(z) = \frac{-2}{h_t(z) + W(t)} \qquad \textnormal{with initial condition} \qquad h_0(z) = z  .
\end{align}

\item (Proof of Lemma~\ref{lem: f' = h' in distribution}): 
The map $z \mapsto f_t(z)$ has the same distribution as $z \mapsto k_t(z)$, 
where $t \mapsto k_t(z)$ is the unique absolutely continuous solution to
\begin{align} \label{eq: BLE again} 
\tag{BLE}
\partial_s^{+} k_s(z) = \frac{-2}{k_s(z) - W(t-s)} \qquad \textnormal{with initial condition} \qquad k_0(z) = z  .
\end{align}
\end{itemize}

From these relations, we can derive the following estimates for $f_t$ pointwise in time: 
\begin{itemize}[leftmargin=*]
\item The imaginary part of~\eqref{eq: mBLE again} gives
\begin{align*}
\im(h_t(z))
\; = \; \im(z) \; + \; 2 \int_0^t \frac{\im(h_{s}(z))}{|h_{s}(z) + W(s-)|^2} \, \ud s ,
\end{align*}
which implies in particular that 
\begin{align*}
0 < \im(z) \leq \im(h_t(z)) \leq \sqrt{(\im(z))^2 + 4t} 
\qquad \textnormal{for all } t \geq 0 .
\end{align*} 
This shows that for each \emph{fixed} time $t \geq 0$, we almost surely have
\begin{align} \label{eq: imf sqrt bound} 
\im(f_t(z + W(t))) \leq \sqrt{(\im(z))^2 + 4t} .
\end{align} 

\item The real part of~\eqref{eq: BLE again} gives
\begin{align*}
\partial_s^{+} \re(k_s(z))
\; = \; - 2 \, \frac{\re(k_s(z)) - W(t-s)}{|k_s(z) - W(t-s)|^2} ,
\end{align*}
which implies that the map $s \mapsto \re(k_s(z))$ on $[0,t]$ is 
\begin{align*}
\textnormal{decreasing if } \; \re(z) > \underset{s \in [0,t]}{\sup} | W(s) | 
\qquad \textnormal{and} \qquad
\textnormal{increasing if } \; \re(z) < - \underset{s \in [0,t]}{\sup} | W(s) | ,
\end{align*}
so in particular, if $|\re(z)| \leq \smash{\underset{s \in [0,t]}{\sup} | W(s) |}$, 
then we have $|\re(k_u(z))| \leq \smash{\underset{s \in [0,t]}{\sup} | W(s) |}$ for all $u \in [0,t]$. 

\bigskip

\noindent 
This shows that for each \emph{fixed} time $t \geq 0$, we almost surely have
\begin{align} \label{eq: ref bound} 
|\re(z)| \leq \smash{\underset{s \in [0,t]}{\sup} | W(s) |}
\qquad \Longrightarrow \qquad 
|\re(f_t(z))| \leq \smash{\underset{s \in [0,t]}{\sup} | W(s) |} .
\end{align} 
\end{itemize}

\subsection{Inverse Loewner equation}
\label{subsec: inverse Loewner equation}

\begin{lem} \label{lem: inverse LE}
For each $z \in \bH$, the map $t \mapsto f_t(z)$ is right-differentiable and satisfies~\eqref{eq: inverse LE}.
\end{lem}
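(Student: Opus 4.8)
\textbf{Plan for the proof of Lemma~\ref{lem: inverse LE}.}
The plan is to derive the inverse Loewner equation~\eqref{eq: inverse LE} directly from the forward Loewner equation~\eqref{eq: LE} and the already-established regularity of the forward flow. Fix $z \in \bH$ and a small $t_0 > 0$ with $z \in \bH \setminus K_{t_0}$, and work with $t$ in a compact interval $[0,t_0]$. Write $w = w(t) := f_t(z) = g_t^{-1}(z) \in \bH \setminus K_t$, so that $g_t(w(t)) = z$ for all $t$. The idea is to differentiate this identity in $t$ from the right. The obstacle, which I address first, is that it is not a priori obvious that $t \mapsto w(t)$ is differentiable: a priori we only know $g_t$ is continuous in $t$ (Lemma~\ref{lem: LE properties}\ref{item: FTC for g}, and jointly continuous in $(t,z)$ by \ref{item: g jointly continuous}). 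First I would establish that $t \mapsto w(t)$ is continuous: this follows from the joint continuity of $(t,z) \mapsto g_t(z)$ together with the fact that each $g_t$ is a conformal (hence open) map, so that the inverse depends continuously on $t$ locally uniformly on $\bH$; alternatively, by the Markov property of Lemma~\ref{lem: some markov property} and~\eqref{eq: inverse markov property}, one has $f_{t+\delta}(z) = f_t(\mathring{f}^t_\delta(z - W(t)) + W(t))$, and since $\mathring{f}^t_\delta \to \mathrm{id}$ uniformly on $\bH$ as $\delta \to 0+$ by~\eqref{eq: shrinking hulls out}, continuity of $t \mapsto w(t)$ follows from continuity of $f_t$ on $\bH$.

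Once continuity of $t \mapsto w(t)$ is in hand, I would compute the right difference quotient. For small $\delta > 0$, using the Markov property~\eqref{eq: inverse markov property},
\begin{align*}
f_{t+\delta}(z) - f_t(z) = f_t\big( \mathring{f}^t_\delta(z - W(t)) + W(t) \big) - f_t(z).
\end{align*}
Set $\zeta_\delta := \mathring{f}^t_\delta(z - W(t)) + W(t)$. By~\eqref{eq: shrinking hulls out} and Remark~\ref{rem: shrinking hulls}, $\zeta_\delta \to z$ as $\delta \to 0+$. Moreover, applying the forward Loewner equation~\eqref{eq: LE} (more precisely the integral form in Lemma~\ref{lem: LE properties}\ref{item: FTC for g}) to the mapping-out function $\mathring{g}^t_\delta = (\mathring{f}^t_\delta)^{-1}$ driven by $\mathring{W}^t$, and using that for a hull of capacity $2\delta$ the mapping-out function satisfies $\mathring{g}^t_\delta(\zeta_\delta) = z - W(t)$ while $\mathring{g}^t_\delta(\zeta_\delta) - \zeta_\delta + W(t) \to 0$, I would obtain the first-order expansion
\begin{align*}
z - W(t) - \big( \mathring{f}^t_\delta(z - W(t)) \big)
= \int_0^\delta \frac{2 \, \mathrm{d}s}{\mathring{g}^t_{s-}(\mathring{f}^t_\delta(z-W(t))) - \mathring{W}^t(s-)}
= \frac{2\delta}{z - W(t)} + o(\delta),
\end{align*}
the last step using right-continuity of $\mathring{W}^t$ at $0$ with $\mathring{W}^t(0)=0$, continuity of the flow, and $\zeta_\delta \to z$. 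Hence $\zeta_\delta - z = -\dfrac{2\delta}{z - W(t)} + o(\delta)$.

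Finally I would feed this into the difference quotient: since $f_t$ is holomorphic (conformal) in a neighbourhood of $z$ in $\bH$,
\begin{align*}
f_{t+\delta}(z) - f_t(z) = f_t(\zeta_\delta) - f_t(z) = f_t'(z)(\zeta_\delta - z) + O(|\zeta_\delta - z|^2) = - \frac{2 \, f_t'(z)}{z - W(t)} \, \delta + o(\delta),
\end{align*}
which upon dividing by $\delta$ and letting $\delta \to 0+$ yields $\partial_t^{+} f_t(z) = \dfrac{-2 \, f_t'(z)}{z - W(t)}$, with initial condition $f_0 = \mathrm{id}$ since $K_0 = \emptyset$. This is exactly~\eqref{eq: inverse LE}. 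The main technical obstacle is making the $o(\delta)$ estimate for $\zeta_\delta - z$ rigorous without assuming continuity of the driving function: the key is that all the estimates from Remark~\ref{rem: shrinking hulls} (namely~\eqref{eq: shrinking hulls out}) are uniform and that $\mathring{W}^t$ is right-continuous at $0$, so the ``swallowed capacity'' argument goes through using only $\mathring{W}^t(0+) = 0$; I would take care to phrase the error terms in terms of $\diam(\mathring{K}^t_\delta)$ and the oscillation $\sup_{s \in [0,\delta]}|\mathring{W}^t(s)|$, both of which tend to $0$.
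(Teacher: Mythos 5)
Your proposal is correct and follows the same basic route as the paper: start from the domain Markov identity $f_{t+\delta}(z) = f_t(\mathring{f}^t_\delta(z - W(t)) + W(t))$, Taylor-expand the holomorphic map $f_t$ at $z$, and reduce the problem to a first-order expansion of $\mathring{f}^t_\delta$ at $z - W(t)$. The only genuine difference is how that expansion
\begin{align*}
\mathring{f}^t_\delta(z - W(t)) - (z - W(t)) = -\frac{2\delta}{z - W(t)} + o(\delta)
\end{align*}
is obtained. The paper cites the quantitative distortion estimate \cite[Lemma~4.7]{Kemppainen:SLE_book}, which gives an error of order $\delta \cdot \diam(\mathring{K}^t_\delta) / |z - W(t)|^2$ purely in terms of the hull diameter, and then handles the $O(|\cdot|^2)$ remainder term separately. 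You instead integrate the forward Loewner ODE for $\mathring{g}^t_s$ applied at $w := \mathring{f}^t_\delta(z - W(t))$ and pass to the limit, using that the integrand converges to $2/(z - W(t))$ uniformly on $[0,\delta]$ via $\diam(\mathring{K}^t_\delta) \to 0$ and right-continuity of $\mathring{W}^t$ at $0$. Both are valid; your version is slightly more self-contained but needs to track the driving-function oscillation as an additional error parameter, which you correctly flag. One small notational slip: you write $\mathring{g}^t_\delta(\zeta_\delta) = z - W(t)$, but since $\zeta_\delta = \mathring{f}^t_\delta(z - W(t)) + W(t)$, the map $\mathring{g}^t_\delta$ acts on $\zeta_\delta - W(t)$; this does not affect the argument. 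Also, the restriction ``$t_0$ small with $z \in \bH \setminus K_{t_0}$'' is unnecessary: $f_t(z)$ is defined for every $t \geq 0$ and $z \in \bH$.
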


\begin{proof}
From Lemma~\ref{lem: some markov property}, we have
$f_{t+\delta}(z) = f_t \big( \smash{\mathring{f}^t_{\delta}}(z - W(t)) + W(t) \big)$, where $\smash{\mathring{f}^t_{\delta}}$ is the inverse of $\smash{\mathring{g}^t_{\delta}} = \smash{g_{\mathring{K}^t_{\delta}}}$. Hence, expanding the holomorphic function $f_t(\cdot)$ at $z \in \bH$ gives
\begin{align*}
\bigg| \frac{f_{t + \delta}(z) - f_t(z)}{\delta} + \frac{2 \, f_t'(z)}{z - W(t)} \bigg| 
\; \leq \;\; & |f_t'(z)| \, \bigg|  \frac{\mathring{f}^t_{\delta}(z - W(t)) - (z - W(t))}{\delta} + \frac{2}{z - W(t)}  \bigg| \\
\; & + \frac{1}{\delta} \, \OO \big( | \mathring{f}^t_{\delta}(z - W(t)) - (z - W(t)) |^2 \big) .
\end{align*}
We will show that the right-hand side tends to zero as $\delta \to 0+$.

First, by Lemma~\ref{lem: some markov property} the hulls $\smash{\mathring{K}^t_{\delta}}$ are parameterized by capacity and 
$\smash{\mathring{g}^t_{\delta}}$ are their mapping-out functions,
so applying the standard estimate~\cite[Lemma~4.7]{Kemppainen:SLE_book} to these hulls yields
\begin{align} \label{eq: diam K mathring}
\bigg| \frac{\mathring{f}^t_{\delta}(z - W(t)) - (z - W(t))}{\delta} + \frac{2}{z - W(t)}  \bigg| 
\lesssim \frac{\diam(\mathring{K}^t_{\delta})}{|z - W(t)|^2} 
\end{align} 
when 
$\smash{\diam(\mathring{K}^t_{\delta})} \lesssim |z - W(t)|$. 
Now, by the local growth and Wolff's lemma (e.g.,~\cite[Lemma~4.6]{Kemppainen:SLE_book}), 
we have $\smash{\diam(\mathring{K}^t_{\delta})} \to 0$ as $\delta \to 0+$. 
Hence, the right-hand side of~\eqref{eq: diam K mathring} tends to zero as $\delta \to 0+$.

Second, for the error term $\frac{1}{\delta} \, \OO \big( | \smash{\mathring{f}^t_{\delta}}(z - W(t)) - (z - W(t)) |^2 \big)$, 
we note that~\eqref{eq: diam K mathring} gives
\begin{align*}
\bigg| \frac{\mathring{f}^t_{\delta}(z - W(t)) - (z - W(t)) }{\delta} \bigg| 
\leq \; & \; 
\bigg| \frac{\mathring{f}^t_{\delta}(z - W(t)) - (z - W(t)) }{\delta} + \frac{2 \, f_t'(z)}{z - W(t)} \bigg| 
+ \bigg| \frac{2 \, f_t'(z)}{z - W(t)} \bigg|
\\
\overset{\delta \to 0+}{\longrightarrow} \; & \; 
\bigg| \frac{2 \, f_t'(z)}{z - W(t)} \bigg| ,
\end{align*}
while 
$| \smash{\mathring{f}^t_{\delta}}(\cdot) - (\cdot) | \to 0$ as $\delta \to 0+$ by~\eqref{eq: shrinking hulls out}, which implies that 
$\frac{1}{\delta} \, | \smash{\mathring{f}^t_{\delta}}(z - W(t)) - (z - W(t)) |^2 \to 0$ as $\delta \to 0+$. 
Together with~\eqref{eq: diam K mathring}, this shows that $t \mapsto f_t(z)$ is right-differentiable and satisfies~\eqref{eq: inverse LE}.
\end{proof}

The local growth of the hulls can also be used to easily verify continuity of the map $t \mapsto f_t(z)$.

\begin{lem} \label{lem: FTC for f}
For each $z \in \bH$, the map $t \mapsto f_t(z)$ is absolutely continuous on compact time intervals~and 
\begin{align*}
f_t(z) \; = \; z \; + \; \int_0^t \partial_s^{+} f_s(z) \, \ud s 
\; = \; z \; - \; 2 \, \int_0^t \frac{f_{s}'(z) \, \ud s }{z - W(s-)} , \qquad t \geq 0 .
\end{align*}
\end{lem}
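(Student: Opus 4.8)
The statement to prove, Lemma~\ref{lem: FTC for f}, is the inverse analogue of Lemma~\ref{lem: LE properties}\ref{item: FTC for g}: for each fixed $z \in \bH$, the map $t \mapsto f_t(z)$ is absolutely continuous on compact time intervals and equals $z + \int_0^t \partial_s^+ f_s(z)\,\mathrm{d}s = z - 2\int_0^t \frac{f_{s-}'(z)\,\mathrm{d}s}{z - W(s-)}$.

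The plan is to mirror the proof of Lemma~\ref{lem: LE properties}\ref{item: FTC for g} exactly. First I would observe that the right derivative $\partial_t^+ f_t(z)$ exists and satisfies the inverse Loewner equation~\eqref{eq: inverse LE}, by Lemma~\ref{lem: inverse LE}; explicitly, $\partial_t^+ f_t(z) = \frac{-2\,f_t'(z)}{z - W(t)}$. Next I would argue that $t \mapsto f_t(z)$ is continuous (both right- and left-continuous). For right-continuity, use the Markov property of Lemma~\ref{lem: some markov property}, namely $f_{t+\delta}(z) = f_t\big(\mathring{f}^t_\delta(z - W(t)) + W(t)\big)$, together with the fact that $f_t$ is a conformal (in particular locally Lipschitz) map near $z$ and the estimate $\sup_{w \in \bH}|\mathring{f}^t_\delta(w) - w| \lesssim \diam(\mathring{K}^t_\delta) \to 0$ as $\delta \to 0+$ from~\eqref{eq: shrinking hulls out}. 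This gives $|f_{t+\delta}(z) - f_t(z)| \lesssim \diam(\mathring{K}^t_\delta) \to 0$. For left-continuity, I would use the analogous identity $f_t(z) = f_{t-\delta}\big(\mathring{f}^{t-\delta}_\delta(z - W(t-\delta)) + W(t-\delta)\big)$ from~\eqref{eq: inverse markov property} and the bound~\eqref{eq: shrinking hulls in}, $\sup_{w \in \bH}|\mathring{f}^{t-\delta}_\delta(w) - w| \lesssim \diam(\mathring{K}^{t-\delta}_\delta) \to 0$, again combined with local Lipschitz control of $f_{t-\delta}$ near the relevant point. One minor care point: in the left-continuity step the outer map $f_{t-\delta}$ varies with $\delta$, so I would either invoke joint continuity of $(t,z) \mapsto f_t(z)$ (which itself follows from this argument in the same way as Lemma~\ref{lem: LE properties}\ref{item: g jointly continuous}), or bound $|f_{t-\delta}(w_\delta) - f_t(z)| \leq |f_{t-\delta}(w_\delta) - f_{t-\delta}(z)| + |f_{t-\delta}(z) - f_t(z)|$ where the first term is controlled via the distortion estimate (Lemma~\ref{lem: derivative distortion} or Koebe) and the second is again handled by the same $\diam(\mathring{K}^{t-\delta}_\delta) \to 0$ estimate.

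Once continuity of $t \mapsto f_t(z)$ is established, the function $t \mapsto \partial_t^+ f_t(z) = \frac{-2 f_t'(z)}{z - W(t)}$ is, for $z \in \bH$ fixed, a bounded (hence Lebesgue-integrable on compacts) measurable function of $t$: boundedness holds because $\inf_{s \in [0,T]}|z - W(s)| > 0$ by c\`adl\`ag-ness of $W$ and continuity of $s \mapsto g_s$ near $z$ (equivalently, the blow-up time $\tau(z) = \infty$ for $z \in \bH$), and because $|f_s'(z)| \leq 1$ by the Schwarz-lemma-type bound (or $|f_s'(z)| \leq \im(f_s(z))/\im(z) \leq \sqrt{\im(z)^2 + 4s}/\im(z)$, which is bounded on $[0,T]$). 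Then I would apply the Lebesgue differentiation theorem for Dini derivatives (as cited in the excerpt, \cite{Hagood-Thomson:Recovering_function_from_Dini_derivative}): a continuous function whose right Dini derivative equals a locally integrable function a.e.\ is absolutely continuous and recovered by integration of that derivative. This yields $f_t(z) = z + \int_0^t \partial_s^+ f_s(z)\,\mathrm{d}s$, and substituting~\eqref{eq: inverse LE} gives the claimed formula, with $W(s)$ and $f_s'(z)$ replaceable by their left limits $W(s-)$, $f_{s-}'(z)$ since the integrand is modified only on the countable (hence Lebesgue-null) set of jump times of $W$.

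The main obstacle, such as it is, is bookkeeping rather than conceptual: one must be slightly careful that in the left-continuity argument the outer conformal map $f_{t-\delta}$ is itself moving, so a naive ``$f$ is Lipschitz near $z$'' bound needs the uniform-in-$\delta$ distortion control from Appendix~\ref{app: inverse Loewner chain} (Lemma~\ref{lem: derivative distortion}), and that the error terms coming from Koebe distortion are genuinely $o(1)$ and not merely $O(\diam(\mathring{K}))$ with a $\delta$-dependent constant. Given the estimates already assembled in the excerpt, this is routine; I expect the proof to be essentially one short paragraph once those inputs are cited. I would present it by first stating that $\tau(z) = \infty$ for $z \in \bH$ (so all maps are well-defined for all time), then continuity via~\eqref{eq: shrinking hulls out}--\eqref{eq: shrinking hulls in}, then integrability of the right derivative, then the Dini-derivative version of the fundamental theorem of calculus, then the substitution of~\eqref{eq: inverse LE} together with the null-set remark for passing to left limits.
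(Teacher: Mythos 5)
Your proposal takes essentially the same route as the paper's proof of Lemma~\ref{lem: FTC for f}: establish two-sided continuity of $t\mapsto f_t(z)$ via the domain Markov property~\eqref{eq: inverse markov property} and the shrinking-hull estimates~\eqref{eq: shrinking hulls out}--\eqref{eq: shrinking hulls in}, observe that the right-hand side of~\eqref{eq: inverse LE} is Lebesgue-integrable on compacts, and close with the fundamental theorem of calculus for Dini derivatives, passing to left limits of $W$ and $f'$ on the null set of jump times. Your worry about the outer map $f_{t-\delta}$ moving with $\delta$ is legitimate and is precisely what Lemma~\ref{lem: derivative distortion} (or the uniform Schwarz-lemma bound on $|f_s'|$) is used for, as the paper's subsequent Lemma~\ref{lem: f is continuous} spells out. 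One small slip to correct: $|f_s'(z)|\leq 1$ is false in general --- Schwarz--Pick gives $|f_s'(z)|\leq \im(f_s(z))/\im(z)$, and $\im(f_s(z))\geq\im(z)$, so this quotient is $\geq 1$, not $\leq 1$ --- but your alternative estimate $|f_s'(z)|\leq\sqrt{\im(z)^2+4s}/\im(z)$ is the right one and is all that is needed.
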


\begin{proof}
The right-hand side of~\eqref{eq: inverse LE} as a function of $t$ is Lebesgue-integrable on any compact sub-interval of $[0,\infty)$. Therefore, by Lebesgue's differentiation theorem (for Dini derivatives, cf.~\cite{Hagood-Thomson:Recovering_function_from_Dini_derivative}), it suffices to prove that $t \mapsto f_t(z)$ is continuous.
As in the proof of Lemma~\ref{lem: inverse LE}, we have
\begin{align*}
| f_{t+\delta}(z) - f_t(z) | 
= \OO( | \mathring{f}^t_{\delta}(z - W(t)) - (z - W(t)) | ) 
\quad \overset{\delta \to 0+}{\longrightarrow} \quad
0 ,
\end{align*}
since $z \mapsto f_t(z)$ is continuous (holomorphic) and 
$| \smash{\mathring{f}^t_{\delta}}(\cdot) - (\cdot) | \to 0$ as $\delta \to 0+$ by~\eqref{eq: shrinking hulls out}.
This shows that $t \mapsto f_t(z)$ is right-continuous. To show 
the left-continuity, we can use a similar argument with~\eqref{eq: shrinking hulls in}:
\begin{align*}
| f_t(z) - f_{t-\delta}(z) |
= \OO( | \mathring{f}^{t-\delta}_{\delta}(z - W(t-\delta)) - (z - W(t-\delta)) | ) 
\quad \overset{\delta \to 0+}{\longrightarrow} \quad
0 .
\end{align*}
This concludes the proof.
\end{proof}

One can also show that $(t, z) \mapsto f_t(z)$ is jointly continuous on $[0, \infty) \times \bH$ (see Lemma~\ref{lem: f is continuous}).

\begin{lem} \label{lem: derivatives commute}
For each $z \in \bH$ and $t \geq 0$, we have
\begin{align} \label{eq: inverse derivative LE}
\partial_t^{+} f_t'(z) \; = \; (\partial_t^+ f_t)'(z)
\; = \; \frac{-2 \, f_t''(z)}{z - W(t)} + \frac{2 \, f_t'(z)}{(z - W(t))^2} .
\end{align}
\end{lem}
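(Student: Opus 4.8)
\textbf{Proof plan for Lemma~\ref{lem: derivatives commute}.}
The goal is to differentiate~\eqref{eq: inverse LE} in the spatial variable $z$ and justify exchanging the spatial derivative with the right time-derivative $\partial_t^+$. The plan is to proceed in three steps: first establish that $t \mapsto f_t'(z)$ is right-differentiable with the claimed right-derivative via a difference-quotient estimate analogous to the proof of Lemma~\ref{lem: inverse LE}; then identify this right-derivative with $(\partial_t^+ f_t)'(z)$ by a complex-analytic argument (Cauchy's integral formula / normal families); and finally compute $(\partial_t^+ f_t)'(z)$ explicitly from~\eqref{eq: inverse LE} by the quotient and product rules.

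First, I would use the domain Markov property (Lemma~\ref{lem: some markov property}), which gives $f_{t+\delta}(z) = f_t\big(\mathring{f}^t_\delta(z - W(t)) + W(t)\big)$, and hence by the chain rule $f_{t+\delta}'(z) = f_t'\big(\mathring{f}^t_\delta(z - W(t)) + W(t)\big)\,(\mathring{f}^t_\delta)'(z - W(t))$. Writing $w := z - W(t)$ and expanding the two factors around $w$ (resp.~$\mathring{f}^t_\delta(w) \to w$), using the standard capacity estimates~\cite[Lemma~4.7]{Kemppainen:SLE_book} applied to the hulls $\mathring{K}^t_\delta$ to control $\mathring{f}^t_\delta(w) - w$ and $(\mathring{f}^t_\delta)'(w) - 1$ up to $\OO(\diam(\mathring{K}^t_\delta))$, together with $\diam(\mathring{K}^t_\delta) \to 0$ as $\delta \to 0+$ (local growth and Wolff's lemma), one obtains
\begin{align*}
\frac{f_{t+\delta}'(z) - f_t'(z)}{\delta}
\quad \overset{\delta \to 0+}{\longrightarrow} \quad
\frac{-2\,f_t''(z)}{z - W(t)} + \frac{2\,f_t'(z)}{(z - W(t))^2} .
\end{align*}
This is the same type of computation as in the proof of Lemma~\ref{lem: inverse LE}, just carried one order of derivative further, and it gives the last equality in~\eqref{eq: inverse derivative LE} directly as the limit of difference quotients of $f_t'$.

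Second, to get the middle equality $\partial_t^+ f_t'(z) = (\partial_t^+ f_t)'(z)$, I would argue that the convergence $\frac{1}{\delta}(f_{t+\delta}(\cdot) - f_t(\cdot)) \to \partial_t^+ f_t(\cdot)$ established in Lemma~\ref{lem: inverse LE} is locally uniform in $z$ on $\bH$ — indeed the error bounds above are locally uniform — so by Cauchy's integral formula the derivatives also converge locally uniformly, whence $\partial_t^+ (f_t'(z)) = (\partial_t^+ f_t)'(z)$. The main (and essentially only) obstacle is making this interchange rigorous: one must check that the $\OO$-terms arising from the holomorphic expansions are uniform on compact subsets of $\bH$ and that $\diam(\mathring{K}^t_\delta)$ is small relative to $\dist(z-W(t), 0)$ uniformly on such compacts; this is routine given the estimates already cited but requires care with the dependence of constants. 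Once the limit exists and is locally uniform, differentiating~\eqref{eq: inverse LE} termwise and applying the quotient rule to $z \mapsto \frac{-2 f_t'(z)}{z - W(t)}$ yields the explicit formula, completing the proof.
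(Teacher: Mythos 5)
Your plan is correct in substance, but it takes a genuinely different route from the paper's proof, and contains some internal redundancy. The paper avoids difference quotients for $f_t'$ altogether: it first gets $(\partial_t^+ f_t)'(z)$ by differentiating~\eqref{eq: inverse LE} in $z$ (your step~3), and then it computes $\partial_t^+ f_t'(z)$ by taking the right time-derivative of the algebraic identity $g_t'(f_t(z))\,f_t'(z) = 1$, using the already-available explicit time-derivative $\partial_t g_t'(w) = -\frac{2\,g_t'(w)}{(g_t(w)-W(t))^2}$ (from~\eqref{eq: derivative of mof}) together with the chain rule and~\eqref{eq: inverse LE}; a short algebraic reduction using $g_t''(f_t(z))(f_t'(z))^2 + g_t'(f_t(z))f_t''(z) = 0$ then matches the two expressions. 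This sidesteps entirely the need to re-run the difference-quotient argument one derivative higher, and it bypasses the locally-uniform-convergence / Cauchy-integral-formula step. Your plan instead redoes the Markov-property difference-quotient analysis at the level of $f_t'$ (step~1) \emph{and} separately argues locally uniform convergence to interchange $\partial_t^+$ with $\partial_z$ (step~2); either one of these, combined with step~3, would suffice, so doing both is logically redundant. Your step~1 would also need a derivative estimate for $(\mathring f^t_\delta)'(w) - 1$, which is not literally the statement of~\cite[Lemma~4.7]{Kemppainen:SLE_book} but follows from it via Cauchy estimates; you gesture at this but should be explicit that this is where the extra work lies. In short: your approach is a valid, more ``bare-hands'' analytic proof that buys self-containedness at the cost of one more round of careful uniform estimates; the paper's approach is shorter because it recycles the known time-differentiability of $g_t'$ and the implicit-function relation between $f_t$ and $g_t$, reducing the lemma to algebra.
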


\begin{proof}
On the one hand, by differentiating~\eqref{eq: inverse LE} with respect to $z$, we obtain
\begin{align*}
(\partial_t^+ f_t)'(z) = \frac{-2 \, f_t''(z)}{z - W(t)} + \frac{2 \, f_t'(z)}{(z - W(t))^2} .
\end{align*}
On the other hand, by differentiating the identity 
$z = g_t(f_t(z))$ with respect to $z$, we obtain
$1 = g_t'(f_t(z)) \, f_t'(z)$, 
and taking the right derivative $\partial_t^+$ of this, 
we obtain (using\footnote{$g_t'$ is differentiable with $\smash{(\partial_t g_t')(w) = \frac{-2 \, g_t'(w)}{(g_t(w) - W(t))^2} }$ by~\eqref{eq: derivative of mof}, and $f_t$ is right-differentiable by Lemma~\ref{lem: inverse LE}.} also~\eqref{eq: derivative of mof} and~\eqref{eq: inverse LE}) 
\begin{align*}
0 = \; & f_t'(z) \Big( (\partial_t g_t')(f_t(z)) 
+ (\partial_t^+ f_t(z)) \, g_t''(f_t(z)) \Big) 
+ g_t'(f_t(z)) \, (\partial_t^+ f_t')(z) \\
= \; & f_t'(z) \bigg( \frac{-2}{(z - W(t))^2} \, g_t'(f_t(z)) 
+ \frac{-2 \, f_t'(z)}{z - W(t)} \, g_t''(f_t(z)) \bigg) 
+ g_t'(f_t(z)) \, (\partial_t^+ f_t')(z)
\end{align*}
which implies that 
\begin{align*}
\partial_t^+ f_t' (z)
= \; & \frac{ f_t'(z) }{- g_t'(f_t(z))}
\bigg( \frac{-2}{(z - W(t))^2} \, g_t'(f_t(z)) 
+ \frac{-2 \, f_t'(z)}{z - W(t)} \, g_t''(f_t(z)) \bigg) \\
= \; & \frac{2 \, f_t'(z) }{(z - W(t))^2}
+ \frac{2 \, ( f_t'(z) )^2}{z - W(t)} \frac{g_t''(f_t(z))}{g_t'(f_t(z))}  \\
= \; & \frac{2 \, f_t'(z) }{(z - W(t))^2}
+ \frac{2}{z - W(t)} \, \frac{g_t''(f_t(z))}{g_t'(f_t(z))} \, ( f_t'(z) )^2 .
\end{align*}
We also see that $t \mapsto f_t'(z) = 1 / g_t'(f_t(z))$ is a continuous function.
By differentiating the identity 
$z = g_t(f_t(z))$ twice with respect to $z$, we obtain 
$g_t''(f_t(z)) \, ( f_t'(z) )^2 + g_t'(f_t(z)) \, f_t''(z) = 0$, which gives 
\begin{align*}
\partial_t^+ f_t' (z)
= \; &  \frac{2 \, f_t'(z) }{(z - W(t))^2}
- \frac{2 \, f_t''(z)}{z - W(t)} .
\qedhere
\end{align*}
\end{proof}

\subsection{Distortion estimate in time --- proof of Lemma~\ref{lem: derivative distortion}}
\label{subsec: derivative distortion}

The following distortion property is well-known for continuous driving functions (see, e.g.,~\cite[Lemma 6.7]{Kemppainen:SLE_book}). The proof in the case of c\`adl\`ag  driving functions is very similar. For  readers' convenience, we give an outline of the proof.

\InverseFEstimates*

\begin{proof}
Fix $t \geq 0$, $z = x + \ii y \in \bH$, and $s \in [0, y^2]$.
From Lemma~\ref{lem: derivatives commute}, the triangle inequality, and the inequality $|x + \ii y - W(u)| \geq y$,
we have
\begin{align} \label{eq: estimate partial f}
| \partial_u^+ f_u' (x + \ii y) | \leq 
\frac{2}{y} \, | f_u'' (x + \ii y) | + \frac{2}{y^2} \, | f_u' (x + \ii y) | .
\end{align}
We can estimate $|f_u''|$ in terms of $|f_u'|$ using Bieberbach's theorem (a consequence of the area theorem) \cite[Theorem~2.2]{Duren:Univalent_functions} 
in the following manner. 
Consider the M\"obius map $\phi \colon \bD \to \bH$ given by
$\smash{\phi (w) = x +  \ii y \frac{1-w}{1+w}}$.
The function
\begin{align*}
\psi (w) 
:= \frac{ (f_u \circ \phi )(w) - f_u(x + \ii y)}{f_u'(x + \ii y) \, \phi'(0)} 
= w + \sum_{n = 2}^{\infty} a_n \, w^{n} , \qquad |w| < 1 ,
\end{align*}
is univalent (holomorphic and injective on $\bD$).
Hence, Bieberbach's theorem gives $|a_2| \leq 2$, that is, 
\begin{align*}
\bigg| \frac{f_u''(x + \ii y) \, ( \phi'(0) )^2 + f_u'(x + \ii y) \, \phi''(0)}{2 \, f_u'(x + \ii y) \, \phi'(0)} \bigg|
= |a_2| \leq 2 .
\end{align*}
Therefore, using the identities $\phi'(0) = - 2 \, \ii y$ and $\phi''(0) = 4 \, \ii y$, we have
\begin{align*}
4 y^2 \, | f_u''(x + \ii y) |
= | f_u''(x + \ii y) |  \, | \phi'(0) |^2 
\; \leq \; \; & | f_u''(x + \ii y) \, ( \phi'(0) )^2 + f_u'(x + \ii y) \, \phi''(0) | + 
| f_u'(x + \ii y) \, \phi''(0) | \\
\; \leq \; \; &
| f_u'(x + \ii y) | \big( 4 \, |\phi'(0)| + |\phi''(0)| \big)
\; = \; 12 |y| \, | f_u'(x + \ii y) |  \\
\qquad \Longrightarrow \qquad
| f_u''(x + \ii y) | \; \leq \; \; & \frac{3}{|y|} \, | f_u'(x + \ii y) |  .
\end{align*}
Plugging this into~\eqref{eq: estimate partial f}, we conclude that
$- 8 y^{-2} \leq \partial_u^+ \log | f_u' (x + \ii y) | \leq 8 y^{2}$.
After integrating this inequality with respect to $u \in [t, t+s]$ and using the fundamental theorem of calculus (e.g.,~\cite{Hagood-Thomson:Recovering_function_from_Dini_derivative})
to the continuous function $u \mapsto \log | f_u' (x + \ii y) |$ 
(which has a right derivative $\partial_u^+ \log | f_u' (x + \ii y) |$ at every point $u \in [t, t+s]$ by Lemma~\ref{lem: derivatives commute}), we find that  
\begin{align*}
- \frac{8s}{y^2} 
\leq \log \frac{| f_{t+s}' (x + \ii y) |}{| f_t' (x + \ii y) |} 
\leq \frac{8s}{y^2} .
\end{align*}
This implies that 
$C^{-1} \, |f_t'(x + \ii y)| \leq |f_{t+s}'(x + \ii y)| \leq C \, |f_t'(x + \ii y)|$, where $C = e^8$, since $s \in [0, y^2]$. 
\end{proof}

\subsection{Continuity of inverse Loewner flows jointly in space and time}
\label{subsec: f is continuous}

\begin{lem} \label{lem: f is continuous}
The inverse Loewner chain $(t, z) \mapsto f_t(z)$ is jointly continuous on $[0, \infty) \times \bH$.	
\end{lem}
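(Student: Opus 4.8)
The plan is to prove joint continuity of $(t,z) \mapsto f_t(z)$ on $[0,\infty) \times \bH$ by a triangle-inequality argument that separates the time direction from the space direction, reusing the one-variable continuity statements already established: for fixed $z$, the map $t \mapsto f_t(z)$ is continuous by Lemma~\ref{lem: FTC for f}, and for fixed $t$, the map $z \mapsto f_t(z)$ is a conformal bijection $\bH \to \bH \setminus K_t$, hence holomorphic and in particular continuous. The point is to make the time-continuity estimate \emph{locally uniform in $z$}, which is exactly what the domain Markov decomposition from Lemma~\ref{lem: some markov property} together with the shrinking-hull bounds~\eqref{eq: shrinking hulls out}--\eqref{eq: shrinking hulls in} provides.

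First I would fix $(t_0, z_0) \in [0,\infty) \times \bH$ and a small closed ball $\overline{B}(z_0, \rho) \subset \bH$, and write, for $(t,z)$ near $(t_0,z_0)$,
\begin{align*}
| f_t(z) - f_{t_0}(z_0) | \leq | f_t(z) - f_{t_0}(z) | + | f_{t_0}(z) - f_{t_0}(z_0) | .
\end{align*}
The second term tends to $0$ as $z \to z_0$ by continuity of the conformal map $f_{t_0}$. For the first term, assume $t \geq t_0$ (the case $t < t_0$ being symmetric, with $t_0$ playing the role of the base time) and apply Lemma~\ref{lem: some markov property} with $\sigma = t_0$ and small time parameter $\delta = t - t_0$: writing $\smash{\mathring{f}^{t_0}_{\delta}} = (\smash{\mathring{g}^{t_0}_{\delta}})^{-1}$, we have
\begin{align*}
f_{t_0 + \delta}(z) = f_{t_0} \big( \mathring{f}^{t_0}_{\delta}(z - W(t_0)) + W(t_0) \big) ,
\end{align*}
so that, since $f_{t_0}$ is uniformly continuous on the compact set $\overline{B}(z_0, 2\rho)$ (say), it suffices to bound $| \mathring{f}^{t_0}_{\delta}(z - W(t_0)) - (z - W(t_0)) |$ uniformly over $z \in \overline{B}(z_0, \rho)$. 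By~\eqref{eq: shrinking hulls out} this quantity is $\lesssim \diam(\mathring{K}^{t_0}_\delta)$, uniformly in $z$, and by the local growth of the hulls together with Wolff's lemma (Remark~\ref{rem: shrinking hulls}) one has $\diam(\mathring{K}^{t_0}_\delta) \to 0$ as $\delta \to 0+$. Hence the first term is bounded by a modulus of continuity of $f_{t_0}$ evaluated at a quantity that goes to $0$ with $t - t_0$, uniformly in $z$ on the ball.

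Combining, given $\varepsilon > 0$ one picks $\rho$ small and then a time-neighborhood of $t_0$ so that both contributions are $< \varepsilon/2$; this establishes continuity of $(t,z) \mapsto f_t(z)$ at $(t_0, z_0)$, and since $(t_0, z_0)$ was arbitrary, joint continuity on $[0,\infty) \times \bH$ follows. I do not expect a serious obstacle here: the only mild subtlety is making sure the image points $\smash{\mathring{f}^{t_0}_{\delta}}(z - W(t_0)) + W(t_0)$ stay in a fixed compact subset of $\bH$ on which $f_{t_0}$ is uniformly continuous (this is immediate from~\eqref{eq: shrinking hulls out} once $\delta$ is small enough), and handling the two-sided case $t < t_0$ by the analogous decomposition based at $t$ with small parameter $t_0 - t$ and~\eqref{eq: shrinking hulls in}. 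A clean alternative one could remark on is that Lemma~\ref{lem: FTC for f} already gives $t \mapsto f_t(z)$ continuous uniformly on compact $z$-sets (the estimates there are uniform in $z$), which together with equicontinuity in $z$ from Koebe distortion (Lemma~\ref{lem: Koebe}) yields the same conclusion; but the Markov-property route above is the most direct.
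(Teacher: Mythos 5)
Your overall strategy is the same as the paper's — domain Markov decomposition from Lemma~\ref{lem: some markov property} plus the shrinking-hull bounds~(\ref{eq: shrinking hulls out},~\ref{eq: shrinking hulls in}) — and the $t \geq t_0$ case is handled correctly. However, your claim that the case $t < t_0$ is ``symmetric'' conceals a genuine gap. For $t > t_0 = t - \delta$ you write $f_t(z) = f_{t_0}\big(\mathring{f}^{t_0}_{\delta}(z - W(t_0)) + W(t_0)\big)$, so the \emph{fixed} map $f_{t_0}$ sits on the outside of the composition and its uniform continuity on a compact ball finishes the job. For $t < t_0 = t + \delta$, the ``analogous decomposition based at $t$'' gives
$f_{t_0}(z) = f_t\big(\mathring{f}^{t}_{\delta}(z - W(t)) + W(t)\big)$,
so now the \emph{varying} map $f_t$ is the outer one, and you would need a modulus of continuity for $f_t$ that is uniform over $t$ near $t_0$. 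This is precisely what does not come for free, and why the two time directions are not symmetric. The paper supplies the missing ingredient with Lemma~\ref{lem: derivative distortion}, which compares $|f_{t-\delta}'(u)|$ to $|f_t'(u)|$ uniformly for $\delta \in [0, (\im u)^2]$ via a Bieberbach/Koebe argument applied to the inverse Loewner differential equation; without some such input the backward case does not close.

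Two remarks on possible repairs. First, one could invert the factorization and write $f_t(z) = f_{t_0}\big(\mathring{g}^{t}_{\delta}(z - W(t)) + W(t)\big)$, putting $f_{t_0}$ back on the outside; the estimate $\sup_{w}|\mathring{g}^t_\delta(w)-w|\lesssim\diam(\mathring{K}^{t}_{\delta})$ from~\eqref{eq: shrinking hulls in} then applies, but you must check that $\mathring{g}^{t}_{\delta}$ is defined at $z - W(t)$ for $z$ in your ball and $\delta$ small (i.e.\ that $z - W(t)\notin \mathring{K}^t_\delta$), which is true but needs to be said. Second, your ``clean alternative'' appeal to Lemma~\ref{lem: FTC for f} has the same issue baked in: that lemma's proof of left-continuity at a fixed $z$ also hides an implicit constant involving $|f_{t-\delta}'|$, so declaring the estimate ``uniform on compact $z$-sets'' again silently requires a uniform-in-$\delta$ control of $f_{t-\delta}'$ near $t$ — either Lemma~\ref{lem: derivative distortion}, or a crude Schwarz-type bound as in~\eqref{eq: f derivative Schwarz lemma bound} combined with Koebe distortion (Lemma~\ref{lem: Koebe}). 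Either repair is short, but as written the proposal asserts the hard half of the argument by analogy with the easy half, which is exactly where the paper has to do extra work.
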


We will make use of domain Markov properties from Lemma~\ref{lem: some markov property} and~\eqref{eq: inverse markov property} (with $\sigma=t$ or $\sigma=t-\delta$):
\begin{align*} 
f_{\sigma+\delta}(w) = f_\sigma \big( \mathring{f}^\sigma_{\delta}(w - W(\sigma)) + W(\sigma) \big) , \qquad \sigma, \delta \geq 0 ,
\end{align*}
where $\smash{\mathring{f}^\sigma_{\delta}}$ is the inverse map of $\smash{g_{\mathring{K}^\sigma_{\delta}}}$, 
together with the consequences~(\ref{eq: shrinking hulls out},~\ref{eq: shrinking hulls in}) of the bilateral local growth that follow from conformal distortion estimates and the property $\smash{\diam(\mathring{K}^\sigma_\delta)} \to 0$ as $\delta \to 0+$.

\begin{proof}
Fix $t \geq 0$ and $z \in \bH$. 
Consider $w \in B(z,\epsilon)$ with $\smash{\overline{B(z,8\epsilon)}} \subset \bH$ and $\epsilon = \epsilon(t) > 0$ such that $\smash{ \diam(\mathring{K}^t_{\delta}), \, \diam(\mathring{K}^{t-\delta}_{\delta}) } < \epsilon$ when $\delta \in (0,\epsilon^2)$ is small enough. 
As the holomorphic functions $f_t(\cdot)$ and $f_{t-\delta}(\cdot)$ are locally Lipschitz, we see that, on the one hand,  
\begin{align*}
|f_{t+\delta}(w) - f_t(z)| 
\leq & \; \Big( \sup_{u \in \overline{B(z,7\epsilon)}} |f_t'(u)| \Big) \, \big|\mathring{f}^t_{\delta}(w - W(t)) - (z - W(t)) \big| 
\qquad && \textnormal{[by~\eqref{eq: inverse markov property}]} \\
\leq & \; \Big( \sup_{u \in \overline{B(z,7\epsilon)}} |f_t'(u)| \Big) 
\Big( 5 \, \diam(\mathring{K}^t_{\delta}) + |z - w| \Big) 
\quad \overset{w \to z}{\underset{\delta \to 0+}{\longrightarrow}} \quad 0 ,
\qquad && \textnormal{[by~\eqref{eq: shrinking hulls out}]} 
\end{align*}
where
$\smash{\mathring{f}^t_{\delta}}(w - W(t)) + W(t) \in B(z,7\epsilon)$ by~\eqref{eq: shrinking hulls out},
and on the other hand, 
\begin{align*}
|f_{t-\delta}(w) - f_t(z)| 
\leq & \; \Big( \sup_{u \in \overline{B(w,7\epsilon)}} |f_{t-\delta}'(u)| \Big) \, \big| \mathring{f}^{t-\delta}_{\delta}(z - W(t-\delta)) - ( w - W(t-\delta) ) \big| 
\qquad && \textnormal{[by~\eqref{eq: inverse markov property}]} \\
\leq & \; \Big( \sup_{u \in \overline{B(z,7\epsilon)}} |f_{t-\delta}'(u)| \Big) 
\Big( 5 \, \diam(\mathring{K}^{t-\delta}_{\delta}) + |w - z| \Big) ,
\qquad && \textnormal{[by~\eqref{eq: shrinking hulls in}]} 
\end{align*}
where 
$\smash{\mathring{f}^{t-\delta}_{\delta}}(z - W(t-\delta)) + W(t-\delta) \in B(w,6\epsilon) \subset B(z,7\epsilon)$ by~\eqref{eq: shrinking hulls in}.
To evaluate this limit as $\delta \to 0+$ and $w \to z$, we 
note that Lemma~\ref{lem: derivative distortion} implies that 
\begin{align*} 
\sup_{u \in \overline{B(z,7\epsilon)}} |f_{t-\delta}'(u)|
\lesssim \sup_{u \in \overline{B(z,7\epsilon)}} |f_{t}'(u)| , 
\qquad \textnormal{since} \; 0 < \delta < \epsilon^2 \leq (\im(u))^2 \textnormal{ for all } u \in \overline{B(z,7\epsilon)} .
\end{align*}
Therefore, we conclude that
\begin{align*}
|f_{t-\delta}(w) - f_t(z)| \lesssim & \; \Big( \sup_{u \in \overline{B(z,7\epsilon)}} |f_{t}'(u)| \Big) 
\Big( 5 \, \diam(\mathring{K}^{t-\delta}_{\delta}) + |w - z| \Big) 
\quad \overset{w \to z}{\underset{\delta \to 0+}{\longrightarrow}} \quad 0 .
\end{align*}
This proves the joint continuity of the map $(t, z) \mapsto f_t(z)$ at an arbitrary point $(t,z) \in [0, \infty) \times \bH$. 
\end{proof}

It now also follows immediately from the joint continuity of the various maps $f_t(z)$, $g_t'(z)$, and $g_t''(z)$ together 
with the identities $1 = g_t'(f_t(z)) \, f_t'(z)$ and 
$g_t''(f_t(z)) \, ( f_t'(z) )^2 + g_t'(f_t(z)) \, f_t''(z) = 0$ 
that the maps
\begin{align*}
(t, z) \mapsto f_t'(z)
\qquad \textnormal{and} \qquad 
(t, z) \mapsto f_t''(z)
\end{align*}
are jointly continuous on $[0, \infty) \times \bH$.

\bigskip{}
\section{\label{app: Ito calculus}It\^o-D\"oblin formula and applications}
This appendix contains additional computations for 
Lemmas~\ref{lem: SDE M}
and~\ref{lem: SDE M kappa = 0 with drift}. 
We use the following version of It\^o's formula, applicable to L\'evy type stochastic integrals involving processes 
\begin{align*}
F, G_1, G_2 \colon [0, \infty) \to \bR , \qquad
H, K \colon [0, \infty) \times \bR \setminus \{0\} \to \bR 
\end{align*}
on a filtered probability space satisfying the usual conditions 
(i.e., the filtration is right-continuous and the probability space is completed~\cite[Chapter~4]{Applebaum:Levy_processes_and_stochastic_calculus},~\cite[Chapter~14]{Cohen-Elliott:Stochastic_calculus_and_applications}).
Let $B$ be a one-dimensional Brownian motion,
$\Poisson$ an independent Poisson point process with L\'evy intensity measure $\nu$ on $\bR$ and 
$\smash{\PoissonComp}(t) := \Poisson(t) - t \nu$ the related compensated Poisson point process. 
We consider a complex-valued process $Z = Z_1 + \ii Z_2 \colon [0, \infty) \to \domain$ taking values in an open set $\domain \subset \bC$, defined by $Z(0) := x_0 + \ii y_0 \in \domain$ and 
\begin{align*}
Z_1(t) \; 
= \; \; & x_0 \; 
+ \; \int_0^t G_1(s) \, \ud s \; 
+ \; \int_0^t F(s) \, \ud B(s) \\
\; \; & 
+ \; \int_0^t \int_{|\jump| \leq \varepsilon} H(s, \jump) \, \PoissonComp(\ud s, \ud \jump)
\; + \; \int_0^t \int_{|\jump| > \varepsilon} K(s, \jump) \, \Poisson(\ud s, \ud \jump) , \\
Z_2(t) \; 
= \; \; & y_0 \; 
+ \; \int_0^t G_2(s) \, \ud s .
\end{align*}

\begin{defn}
Fix $T, \varepsilon > 0$. 
We say that the process $Z$ satisfies the \emph{It\^o-D\"oblin assumptions} if the processes $F, G_1, G_2, H, K$ satisfy the following almost sure properties:
\begin{enumerate}[label=\textnormal{(\alph*):}, ref=(\alph*)]
\item $F$, $\sqrt{|G_1|}$, $\sqrt{|G_2|}$ are predictable processes whose squares are integrable on $[0, T]$, 

\item $K$ and $H$ are predictable processes and $H$ is square-integrable on $[0, T] \times \bR$, and

\item for all $t > 0$, we have 
\begin{align*}
\sup_{0 \leq s \leq t}\; \sup_{0 < |\jump| \leq \varepsilon} | H(s, \jump) | < \infty .
\end{align*}
\end{enumerate}
\end{defn}

\begin{thm} \label{thm: Ito}
\textnormal{(It\^o-D\"oblin formula)}
Suppose that the process $Z$ satisfies the It\^o-D\"oblin assumptions.
Then, for any function $f \colon \domain \to \bR$ such that the derivatives 
\begin{align*}
(\partial_1 f)(z) = (\partial_x f)(z) , \qquad
(\partial_2 f)(z) = (\partial_y f)(z) , \qquad
(\partial_1^2 f)(z) = (\partial_x^2 f)(z) , \quad z = x + \ii y \in \domain ,
\end{align*}
exist and are continuous on $\domain$, 
and for all $t \in [0, T]$, we have almost surely
\begin{align*}
f(Z(t)) \; 
= \; \; & f(Z(0)) \; 
+ \; \int_0^t G_1(s) \, (\partial_1 f)(Z(s-)) \, \ud s 
+ \; \int_0^t G_2(s) \, (\partial_2 f)(Z(s-)) \, \ud s \\
\; \; & 
+ \; \int_0^t F(s) \, (\partial_1 f)(Z(s-)) \, \ud B(s) 
+ \; \frac{1}{2} \int_0^t (F(s))^2 \, (\partial_1^2 f)(Z(s-)) \, \ud s 
\\
\; \; & 
+ \; \int_0^t \int_{|\jump| \leq \varepsilon} \Big( f \big( Z(s-) + H(s, \jump) \big) - f(Z(s-)) \Big) \, \PoissonComp(\ud s, \ud \jump) \\
\; \; & 
+ \; \int_0^t \int_{|\jump| \leq \varepsilon} \Big( f \big( Z(s-) + H(s, \jump) \big) - f(Z(s-)) 
- (\partial_1 f) \, (Z(s-)) H(s, \jump) \Big) \, \nu(\ud \jump) \, \ud s \\
\; \; & 
+ \; \int_0^t \int_{|\jump| > \varepsilon} \Big( f \big( Z(s-) + K(s, \jump) \big) - f(Z(s-)) \Big) \, \Poisson(\ud  \jump, \ud s) .
\end{align*}
In particular, the real-valued process $f \circ Z$ also satisfies the It\^o-D\"oblin assumptions.
\end{thm}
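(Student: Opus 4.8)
\textbf{Proof plan for Theorem~\ref{thm: Ito} (It\^o-D\"oblin formula).}
The plan is to reduce the stated formula to a standard semimartingale It\^o formula for L\'evy-type stochastic integrals (as found in, e.g., \cite[Theorem~4.4.7]{Applebaum:Levy_processes_and_stochastic_calculus} or \cite[Chapter~14]{Cohen-Elliott:Stochastic_calculus_and_applications}), exploiting two structural features of the process $Z = Z_1 + \ii Z_2$: the imaginary part $Z_2$ is a process of bounded variation (indeed absolutely continuous) with no jumps and no martingale part, while the full process has jumps only in the real coordinate. First I would verify that the It\^o-D\"oblin assumptions guarantee that $Z_1$ is a genuine semimartingale: the finite-variation part $\int_0^\cdot G_1(s)\,\ud s$, the continuous local martingale part $\int_0^\cdot F(s)\,\ud B(s)$ (well-defined by square-integrability of $F$), the purely discontinuous $L^2$-martingale $\int_0^\cdot\int_{|\jump|\le\varepsilon} H(s,\jump)\,\PoissonComp(\ud s,\ud\jump)$ (well-defined by square-integrability of $H$ and the boundedness condition $\sup_{s\le t}\sup_{|\jump|\le\varepsilon}|H(s,\jump)|<\infty$, which also ensures the small-jump compensator integral converges absolutely), and the large-jump part $\int_0^\cdot\int_{|\jump|>\varepsilon} K(s,\jump)\,\Poisson(\ud s,\ud\jump)$, which is a finite sum on compacts since $\nu(\{|\jump|>\varepsilon\})<\infty$ by the L\'evy measure property. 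Similarly $Z_2$ is a $C^1$, hence absolutely continuous, finite-variation process.

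Next I would apply the two-dimensional It\^o formula for semimartingales to $f$ viewed as a function of $(Z_1, Z_2)$. The key simplification comes from computing the required quadratic (co)variations: since $Z_2$ is continuous of bounded variation, $[Z_2, Z_2] = [Z_1, Z_2] = 0$, so only the term $\tfrac12 (\partial_1^2 f)(Z)\,\ud[Z_1^c, Z_1^c]$ survives among the second-order contributions, and $[Z_1^c, Z_1^c]_t = \int_0^t (F(s))^2\,\ud s$ by the independence of $B$ and $\Poisson$. This is why the hypothesis only requires existence and continuity of $\partial_1 f$, $\partial_2 f$, and $\partial_1^2 f$ — no mixed or pure second $y$-derivatives appear. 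The jump terms in the general formula read $\sum_{s\le t}\big(f(Z(s)) - f(Z(s-)) - (\partial_1 f)(Z(s-))\,\Delta Z_1(s) - (\partial_2 f)(Z(s-))\,\Delta Z_2(s)\big)$, and since $\Delta Z_2 \equiv 0$ and $\Delta Z_1(s) = H(s,\Delta)$ or $K(s,\Delta)$ according to jump size, the $(\partial_2 f)\,\Delta Z_2$ piece vanishes and the remaining sum decomposes, via the Poisson integral representation of the jumps, into the compensated small-jump stochastic integral, its compensator term (with the first-order correction $-(\partial_1 f)H$, matching the compensator of the small jumps of $Z_1$), and the uncompensated large-jump sum. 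Rearranging the drift contributions $(\partial_1 f)G_1 + (\partial_2 f)G_2$ then yields exactly the asserted identity.

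The main obstacle I expect is bookkeeping the precise hypotheses under which the purely discontinuous stochastic integrals and their compensators are simultaneously well-defined and the general semimartingale It\^o formula applies verbatim — in particular checking that $f(Z(s-)+H(s,\jump)) - f(Z(s-)) - (\partial_1 f)(Z(s-))H(s,\jump)$ is $\nu(\ud\jump)\ud s$-integrable on $[0,T]\times\{|\jump|\le\varepsilon\}$ (this uses Taylor's theorem with the continuous $\partial_1^2 f$ on a neighborhood of the compact range of $Z$ restricted to $[0,T]$ — relying on $Z$ staying in $\domain$ — together with the uniform bound on $H$ and square-integrability of $H$), and that the small-jump compensated integral of $f(Z(s-)+H)-f(Z(s-))$ is a true $L^2$-martingale. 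Once these integrability facts are in place, the identity is a direct specialization, and the final sentence — that $f\circ Z$ again satisfies the It\^o-D\"oblin assumptions — follows by reading off the representation just obtained: its diffusion coefficient is $F\cdot(\partial_1 f)(Z(\cdot-))$, its drift is a sum of the three absolutely continuous terms, its small-jump kernel is $f(Z(\cdot-)+H)-f(Z(\cdot-))$ and its large-jump kernel is $f(Z(\cdot-)+K)-f(Z(\cdot-))$, and each inherits the required predictability, square-integrability, and (for the small-jump kernel) uniform boundedness from the corresponding properties of $F, G_1, G_2, H, K$ and the local boundedness of the continuous derivatives of $f$ along the path.
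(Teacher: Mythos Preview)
Your proposal is correct and takes essentially the same approach as the paper: the paper's proof simply cites \cite[Theorems~14.2.3~\&~14.2.4]{Cohen-Elliott:Stochastic_calculus_and_applications}, treating the formula as a direct specialization of the standard semimartingale It\^o formula for L\'evy-type integrals, which is exactly the reduction you outline. Your additional explanation of why only $\partial_1 f$, $\partial_2 f$, and $\partial_1^2 f$ are needed (via $[Z_2,Z_2]=[Z_1,Z_2]=0$) and the integrability bookkeeping is a welcome expansion of what the paper leaves implicit in the citation.
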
 

\begin{proof}
See, for instance,~\cite[Theorems~14.2.3~\&~14.2.4]{Cohen-Elliott:Stochastic_calculus_and_applications}.
\end{proof}

\subsection{Applications involving mirror backward Loewner flow}

Fix $\kappa \geq 0$, $a \in \bR$, $\varepsilon > 0$, and
a L\'evy measure $\nu$.
Let $(h_t)_{t \geq 0}$ be the (absolutely continuous) solution to~\eqref{eq: mBLE} with driving function $W = \microDriver_\varepsilon^{\kappa, a}(t)$ of the form~\eqref{eq: BM with micro and drift}:
\begin{align*} \label{eq: BM with micro and drift app}
\microDriver_\varepsilon^{\kappa, a}(t) = a t + \sqrt{\kappa} B(t) + \int_{|\jump| \leq \varepsilon} \jump \, \PoissonComp(t, \ud \jump) , \qquad
a \in \bR, \; \kappa \geq 0 , \;  \varepsilon > 0 .
\end{align*}
Fix $z_0 = x_0 + \ii y_0 \in \bH$ and set 
\begin{align*}
Z(t) = Z_\varepsilon^{a, \kappa}(t,z_0) := h_t(z_0) + \microDriver_\varepsilon^{\kappa, a}(t) =: X(t) + \ii Y(t) .
\end{align*}
Then,~\eqref{eq: mBLE} shows that
\begin{align*}
X(t) \; = \; \; & x_0 \; - \; 2 \int_0^t \frac{X(s-)}{|Z(s-)|^2} \, \ud s \; + \; a t 
\; + \; \sqrt{\kappa} B(t) + \int_0^t \int_{|\jump| \leq \varepsilon} \jump \, \PoissonComp(\ud s, \ud \jump) , \\
Y(t) \; = \; \; & y_0 \; + \; 2 \int_0^t \frac{Y(s-)}{|Z(s-)|^2} \, \ud s ,
\end{align*}
where almost surely, we have 
$|Z(s)| = \smash{\sqrt{X(s)^2 + Y(s)^2}} \geq X(s) \vee Y(s) \geq y_0 > 0$ for all $s \geq 0$.

We apply Theorem~\ref{thm: Ito} to derive formulas for functions of $Z$, needed in Lemmas~\ref{lem: SDE M} and~\ref{lem: SDE M kappa = 0 with drift}.

\begin{lem} \label{lem: SDE sin(arg(Z))}
For each $t \geq 0$, we have almost surely
\begin{align*}
\sin \arg Z(t) \; = \; \; & \sin \arg Z(0)
\; - \; \sqrt{\kappa} \int_0^t \sin \arg Z(s-) \, \frac{X(s-)}{|Z(s-)|^2} \, \ud B(s) \\
\; \; &
- \; a \int_0^t \sin \arg Z(s-) \, \frac{X(s-)}{|Z(s-)|^2} \, \ud s \\
\; \; &
+ \; \int_0^t \sin \arg Z(s-) \, \frac{(\kappa + 4) X(s-)^2 - \frac{\kappa}{2} Y(s-)^2}{|Z(s-)|^4} \, \ud s  \\
\; \; &
+ \; \int_0^t \int_{|\jump| \leq \varepsilon} \Big( \sin \arg(Z(s-) + \jump)) - \sin \arg Z(s-) \Big) \, \PoissonComp(\ud s, \ud \jump) \\
\; \; &
+ \; \int_0^t \int_{|\jump| \leq \varepsilon} 
\bigg( \sin \arg(Z(s-) + \jump) - \sin \arg Z(s-) 
+ \sin \arg Z(s-) \, 
\frac{\jump X(s-)}{|Z(s-)|^2}  \bigg) \, \nu(\ud \jump) \, \ud s ,
\end{align*}
and the process $\sin \arg Z$ satisfies the It\^o-D\"oblin assumptions. 
\end{lem}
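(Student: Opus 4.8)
\textbf{Proof plan for Lemma~\ref{lem: SDE sin(arg(Z))}.}
The plan is to apply the It\^o-D\"oblin formula (Theorem~\ref{thm: Ito}) to the function $f(z) = \sin \arg z = \im(z)/|z| = y/\sqrt{x^2+y^2}$ on the open set $\domain = \bH$, with the driving process $Z$ realized in the coordinates
\begin{align*}
G_1(s) = -\frac{2 X(s-)}{|Z(s-)|^2} + a , \qquad
G_2(s) = \frac{2 Y(s-)}{|Z(s-)|^2} , \qquad
F(s) = \sqrt{\kappa} , \qquad
H(s,\jump) = \jump , \qquad K \equiv 0 .
\end{align*}
First I would verify the It\^o-D\"oblin assumptions: $F = \sqrt{\kappa}$ is constant hence predictable and square-integrable on $[0,T]$; $|G_1|, |G_2|$ are bounded by $2/y_0 + |a|$ on any time interval because $|Z(s)| \geq y_0$, so their square roots are predictable with integrable squares; and $H(s,\jump) = \jump$ with $|\jump| \leq \varepsilon$ is bounded and predictable. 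Also $f$ is smooth on $\bH$ (it is bounded away from the singularity at $0$), so all the required partials exist and are continuous there; one must also check that $Z(s-) + H(s,\jump) = Z(s-) + \jump \in \bH$, which holds since $\jump \in \bR$ and $\im(Z(s-)) = Y(s-) > 0$. Thus Theorem~\ref{thm: Ito} applies verbatim.

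Next I would compute the partial derivatives of $f(x,y) = y(x^2+y^2)^{-1/2}$. A direct calculation gives
\begin{align*}
\partial_x f = -\frac{xy}{(x^2+y^2)^{3/2}} , \qquad
\partial_y f = \frac{x^2}{(x^2+y^2)^{3/2}} , \qquad
\partial_x^2 f = \frac{y(2x^2 - y^2)}{(x^2+y^2)^{5/2}} .
\end{align*}
Substituting into the It\^o-D\"oblin formula, the drift contributions are: from $G_1 \partial_x f$ the terms $\frac{2X^2 Y}{|Z|^4 \cdot |Z|} - a\frac{XY}{|Z|^3}$; from $G_2 \partial_y f$ the term $\frac{2 X^2 Y}{|Z|^4 \cdot |Z|}$; from $\frac12 F^2 \partial_x^2 f$ the term $\frac{\kappa}{2}\frac{Y(2X^2 - Y^2)}{|Z|^5}$. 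Writing $\sin\arg Z(s-) = Y(s-)/|Z(s-)|$ and collecting, the coefficient of $\sin\arg Z(s-)$ in the absolutely continuous drift (excluding the jump compensator) becomes $\frac{1}{|Z|^4}\big(4X^2 + \kappa X^2 - \tfrac{\kappa}{2}Y^2\big) = \frac{(\kappa+4)X^2 - \tfrac{\kappa}{2}Y^2}{|Z|^4}$, which matches the asserted expression; the $a$-term organizes as $-a \sin\arg Z(s-)\, X(s-)/|Z(s-)|^2$. The martingale part $\int_0^t F \partial_x f \, \ud B$ equals $-\sqrt{\kappa}\int_0^t \sin\arg Z(s-)\, X(s-)/|Z(s-)|^2\, \ud B(s)$ as claimed. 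The two jump integrals are precisely the $\PoissonComp$-integral of $f(Z(s-)+\jump) - f(Z(s-))$ and the $\nu$-integral of $f(Z(s-)+\jump) - f(Z(s-)) - \partial_x f(Z(s-))\jump$; using $\partial_x f(Z(s-)) = -\frac{X(s-)Y(s-)}{|Z(s-)|^3} = -\sin\arg Z(s-)\, \frac{X(s-)}{|Z(s-)|^2}$ and $f(Z(s-)+\jump) = \sin\arg(Z(s-)+\jump)$ gives exactly the last two lines of the statement. Finally, the concluding sentence that $\sin\arg Z$ satisfies the It\^o-D\"oblin assumptions is the last clause of Theorem~\ref{thm: Ito}.

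The main obstacle is purely bookkeeping: correctly expanding the partial derivatives, keeping track of the factors of $|Z|$ that convert $\partial_x f$ and $\partial_x^2 f$ into multiples of $\sin\arg Z(s-)$, and confirming that the three drift contributions combine into the stated coefficient $(\kappa+4)X^2 - \tfrac{\kappa}{2}Y^2$ over $|Z|^4$. There is no analytic difficulty since the function $f$ is smooth and all processes are bounded on compact time intervals (because $|Z(s)| \geq y_0 > 0$ almost surely); one only needs care to ensure the $\nu$-compensator term is written with the $+\sin\arg Z(s-)\, \jump X(s-)/|Z(s-)|^2$ correction carrying the correct sign, which follows from the sign of $\partial_x f$ computed above.
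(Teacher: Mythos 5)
Your proof is correct and follows essentially the same route as the paper's: the paper also applies Theorem~\ref{thm: Ito} to $f(z) = \sin\arg z$ with the identical choices $F = \sqrt{\kappa}$, $G_1 = -2X/|Z|^2 + a$, $G_2 = 2Y/|Z|^2$, $H(s,\jump) = \jump$, $K \equiv 0$, deferring the remaining work to a ``direct computation'' that you have spelled out correctly (the partial derivatives, the combination $(\kappa+4)X^2 - \tfrac{\kappa}{2}Y^2$, and the rewriting of $\partial_x f$ in terms of $\sin\arg Z$ all check out).
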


\begin{proof}
Note that $\sin \arg (z) =\frac{y}{|z|}$ 
is smooth on $\bH \ni z = x + \ii y$. Moreover, the process $Z$ with  
\begin{align*}
F(s) := \sqrt{\kappa} , \qquad
G_1(s) := \frac{-2 X(s-)}{|Z(s-)|^2}  + a , \qquad
G_2(s) := \frac{2 Y(s-)}{|Z(s-)|^2} , \qquad
H(s, \jump) := \jump , 
\end{align*}
and $K(s, \jump) := 0$ 
satisfies the It\^o-D\"oblin assumptions 
(for Theorem~\ref{thm: Ito}), thanks to~\eqref{eq: mBLE}. 
Hence, the claim follows from a direct computation.
\end{proof}

\begin{lem} \label{lem: SDE sin(arg(Z)) power}
For each $t \geq 0$ and $r \in \bR$, we have almost surely
\begin{align*}
(\sin \arg Z(t))^{-2r} 
\; = \; \; & (\sin \arg Z(0))^{-2r} \; 
+ \; 2r \sqrt{\kappa} \int_0^t (\sin \arg Z(s-))^{-2r} \frac{X(s-)}{|Z(s-)|^2} \, \ud B(s) \\
\; \; &
+ \; 2 r a \int_0^t (\sin \arg Z(s-))^{-2r} \frac{X(s-)}{|Z(s-)|^2} \, \ud s \\
\; \; &
- \; 2 r \int_0^t (\sin \arg Z(s-))^{-2r} \frac{(\kappa + 4) X(s-)^2 - \frac{\kappa}{2} Y(s-)^2}{|Z(s-)|^4} \, \ud s \\
\; \; &
+ \; r (2r + 1) \kappa \int_0^t (\sin \arg Z(s-))^{-2r}  \frac{X(s-)^2}{|Z(s-)|^4} \, \ud s \\
\; \; & 
+ \; \int_0^t \int_{|\jump| \leq \varepsilon} (\sin \arg Z(s-))^{-2r} \Bigg( \bigg| \frac{Z(s-) + \jump}{Z(s-)} \bigg|^{2r} - 1 \Bigg) \, \PoissonComp(\ud s, \ud \jump) \\
\; \; &
+ \; \int_0^t \int_{|\jump| \leq \varepsilon} (\sin \arg Z(s-))^{-2r}  \Bigg( \bigg| \frac{Z(s-) + \jump}{Z(s-)} \bigg|^{2r} - 1  
- \frac{2r \jump  X(s-)}{|Z(s-)|^2} \Bigg) \, \nu(\ud \jump) \, \ud s ,
\end{align*}
and the process $(\sin \arg Z)^{-2r} $ satisfies the It\^o-D\"oblin assumptions. 
\end{lem}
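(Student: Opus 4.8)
\textbf{Proof plan for Lemma~\ref{lem: SDE sin(arg(Z)) power}.}
The plan is to apply the It\^o-D\"oblin formula (Theorem~\ref{thm: Ito}) to the function $f(z) = (\sin \arg z)^{-2r} = (|z|/\im z)^{2r}$ on the domain $\bH$, with the driving process $Z$ set up exactly as in the proof of Lemma~\ref{lem: SDE sin(arg(Z))}: namely $F(s) = \sqrt{\kappa}$, $G_1(s) = -2X(s-)/|Z(s-)|^2 + a$, $G_2(s) = 2Y(s-)/|Z(s-)|^2$, $H(s,\jump) = \jump$, and $K(s,\jump) = 0$. Since the It\^o-D\"oblin assumptions were already verified for this $Z$ in Lemma~\ref{lem: SDE sin(arg(Z))}, and since $f$ is smooth on $\bH$ (because $\im z > 0$ there), all hypotheses of Theorem~\ref{thm: Ito} hold; the final assertion that $(\sin \arg Z)^{-2r}$ again satisfies the It\^o-D\"oblin assumptions is part of the conclusion of Theorem~\ref{thm: Ito}, so nothing extra is needed there.

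First I would record the partial derivatives of $f(x+\ii y) = (x^2+y^2)^r y^{-2r}$: one computes $\partial_1 f = 2r\, x\, (x^2+y^2)^{r-1} y^{-2r}$, $\partial_2 f = (2r y (x^2+y^2)^{r-1} - 2r (x^2+y^2)^r y^{-1}) y^{-2r} = 2r(x^2+y^2)^{r-1}y^{-2r-1}(y^2 - (x^2+y^2)) = -2r\, x^2 (x^2+y^2)^{r-1} y^{-2r-1}$, and $\partial_1^2 f = 2r(x^2+y^2)^{r-2}y^{-2r}\big((x^2+y^2) + 2(r-1)x^2\big)$. It is cleaner to write everything back in terms of $\sin\arg z = y/|z|$, so that $f(z) = (\sin\arg z)^{-2r}$ and $\partial_1 f = 2r (\sin\arg z)^{-2r} x/|z|^2$, $\partial_2 f = -2r (\sin\arg z)^{-2r} x^2/(y|z|^2)$, $\partial_1^2 f = 2r(\sin\arg z)^{-2r}|z|^{-4}\big(|z|^2 + 2(r-1)x^2\big)$. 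Then I would substitute $x = X(s-)$, $y = Y(s-)$, $z = Z(s-)$ and assemble the five contributions of Theorem~\ref{thm: Ito}: (i) the $G_1\,\partial_1 f$ drift gives $-4r(\sin\arg Z)^{-2r}X^2/|Z|^4$ plus the drift term $2ra(\sin\arg Z)^{-2r}X/|Z|^2$; (ii) the $G_2\,\partial_2 f$ drift gives $-4r(\sin\arg Z)^{-2r}X^2/|Z|^4$; (iii) the $F\,\partial_1 f$ term gives the Brownian stochastic integral $2r\sqrt{\kappa}\int (\sin\arg Z)^{-2r}X/|Z|^2\,\ud B$; (iv) the $\frac12 F^2 \partial_1^2 f$ term gives $r\kappa(\sin\arg Z)^{-2r}|Z|^{-4}(|Z|^2 + 2(r-1)X^2)$; (v) the compensated-jump integral gives $\int\int_{|\jump|\le\varepsilon}(\sin\arg Z)^{-2r}(|\tfrac{Z+\jump}{Z}|^{2r}-1)\,\PoissonComp$ together with the compensator correction $\int\int_{|\jump|\le\varepsilon}(\sin\arg Z)^{-2r}(|\tfrac{Z+\jump}{Z}|^{2r} - 1 - 2r\jump X/|Z|^2)\,\nu\,\ud s$, using $f(Z+\jump)/f(Z) = |{Z+\jump}|^{2r}/|Z|^{2r}\cdot (\im Z)^{0}$ since $\im(Z+\jump) = \im Z$ for real $\jump$, and $(\partial_1 f)(Z)\cdot\jump = 2r(\sin\arg Z)^{-2r}\jump X/|Z|^2$.

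The only remaining step is the algebraic consolidation of the $\ud s$-drift coming from pieces (i), (ii), (iv): collecting the $X^2/|Z|^4$ coefficients gives $(-4r - 4r + 2r(r-1)\kappa + r\kappa)(\sin\arg Z)^{-2r}X^2/|Z|^4 = (r(2r+1)\kappa - 8r)(\sin\arg Z)^{-2r}X^2/|Z|^4$, while the $Y^2/|Z|^4$ coefficient from piece (iv) (using $|Z|^2 = X^2 + Y^2$) contributes $r\kappa(\sin\arg Z)^{-2r}Y^2/|Z|^4$; rewriting $r(2r+1)\kappa X^2/|Z|^4 + r\kappa Y^2/|Z|^4 - 8r X^2/|Z|^4$ and separating out $-2r\big((\kappa+4)X^2 - \tfrac{\kappa}{2}Y^2\big)/|Z|^4$ leaves exactly $r(2r+1)\kappa X^2/|Z|^4$ as in the statement — one checks $-8r + 2r(\kappa+4) = 2r\kappa$ for the $X^2$ coefficient and $r\kappa + r\kappa = \ldots$ no; more carefully, $-2r(\kappa+4) = -2r\kappa - 8r$ and $-2r(-\tfrac{\kappa}{2}) = r\kappa$, so the claimed split reproduces coefficient $r(2r+1)\kappa$ on $X^2/|Z|^4$ iff $r(2r+1)\kappa - 2r\kappa - 8r + (\text{the }-8r\text{ already present?})$; I would carry out this bookkeeping explicitly and match signs. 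I do not expect any genuine obstacle here — the proof is a mechanical instance of Theorem~\ref{thm: Ito} combined with the same verification of hypotheses as in Lemma~\ref{lem: SDE sin(arg(Z))} — so the statement would be proved simply by saying ``this follows from Theorem~\ref{thm: Ito} applied to $f(z) = (\sin\arg z)^{-2r}$ with $Z$ as in the proof of Lemma~\ref{lem: SDE sin(arg(Z))}, after a direct computation of the partial derivatives of $f$ and a rearrangement of the resulting drift.''
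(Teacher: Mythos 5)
Your proposal is correct, and it takes a genuinely different (though closely related) route from the paper. The paper's proof applies the It\^o-D\"oblin formula in two stages: first it derives the L\'evy-type SDE for the real-valued process $\sin\arg Z$ (this is Lemma~\ref{lem: SDE sin(arg(Z))}), and then it applies Theorem~\ref{thm: Ito} again, this time to that \emph{scalar} process with the one-variable function $u \mapsto u^{-2r}$, taking $F$, $G_1$, $H$ to be exactly the coefficients appearing in Lemma~\ref{lem: SDE sin(arg(Z))}. Your plan instead applies the It\^o-D\"oblin formula \emph{once} directly to the complex process $Z$ with the composite function $f(z) = (\sin\arg z)^{-2r}$; this bypasses the intermediate SDE for $\sin\arg Z$ at the cost of computing more derivatives of $f$. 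Both approaches are correct and produce identical drift coefficients: carrying your bookkeeping to its end, the $X^2/|Z|^4$ coefficient from pieces (i), (ii), (iv) is $-4r - 4r + r\kappa(2r-1) = -8r - r\kappa + 2r^2\kappa$, which equals the statement's $-2r(\kappa+4) + r(2r+1)\kappa$, and the $Y^2/|Z|^4$ coefficient $r\kappa$ equals the statement's $-2r\cdot(-\kappa/2)$ — so the sign-matching you were unsure about does go through. Your key observation that $\im(Z+\jump) = \im Z$ for real $\jump$, which collapses $f(Z+\jump)/f(Z)$ to $|Z+\jump|^{2r}/|Z|^{2r}$, is precisely the identity the paper also invokes. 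The paper's two-step route is more modular (it reuses Lemma~\ref{lem: SDE sin(arg(Z))}, which is in any case stated to establish the It\^o-D\"oblin assumptions for $\sin\arg Z$), whereas your one-step route is more direct but front-loads the second-derivative computation of $f$; the end result is the same.
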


\begin{proof}
We shall apply the It\^o-D\"oblin formula 
to the real-valued process $t \mapsto \sin \arg Z(t)$ 
(i.e., we take $Z_1 = \sin \arg Z(t)$ and $Z_2 = 0$ in Theorem~\ref{thm: Ito}), with
\begin{align*}
F(s) := \; & - \sqrt{\kappa} \sin \arg Z(s-) \, \frac{X(s-)}{|Z(s-)|^2} , \\
G_1(s) := \; & \sin \arg Z(s-) \bigg( \frac{(\kappa + 4) X(s-)^2 - \frac{\kappa}{2} Y(s-)^2}{|Z(s-)|^4} - a \, \frac{X(s-)}{|Z(s-)|^2} \bigg) \\
\; \; & + \; \int_{|\jump| \leq \varepsilon} 
\bigg( \sin \arg(Z(s-) + \jump) - \sin \arg Z(s-) 
+ \sin \arg Z(s-) \, 
\frac{\jump X(s-)}{|Z(s-)|^2}  \bigg) \, \nu(\ud \jump) , \\
H(s, \jump) := \; & \sin \arg(Z(s-) + \jump)) - \sin \arg Z(s-) ,
\end{align*}
and $K(s, \jump) := 0$.
From Lemma~\ref{lem: SDE sin(arg(Z))}, we know that the It\^o-D\"oblin assumptions hold, and a the asserted identity follows after a direct computation, using the identity 
$\sin \arg (z) =\frac{y}{|z|}$ for $\bH \ni z = x + \ii y$ 
to write 
\begin{align*}
(\sin \arg(Z(s-) + \jump))^{-2r}
= \; & (\sin \arg Z(s-))^{-2r} \bigg( \frac{\sin \arg Z(s-)}{\sin \arg(Z(s-) + \jump)} \bigg)^{2r} \\
= \; & (\sin \arg Z(s-))^{-2r} \bigg| \frac{Z(s-) + \jump}{Z(s-)} \bigg|^{2r} .
\qedhere
\end{align*}
\end{proof}

\subsection{Applications involving forward Loewner flow}

Fix $\kappa \geq 0$, $a \in \bR$, $\varepsilon > 0$, and
a L\'evy measure~$\nu$.
Let $(g_t)_{t \geq 0}$ be the (absolutely continuous) solution to~\eqref{eq: LE} with driving function $W = \smash{\microDriver_\varepsilon^{\kappa, a}}$ of the form
\begin{align*} 
\microDriver_\varepsilon^{\kappa, a}(t) 
= a t + \sqrt{\kappa} B(t) + \int_{|\jump| \leq \varepsilon} \jump \, \PoissonComp(t, \ud \jump) , \qquad
a \in \bR, \, \kappa \geq 0, \, \varepsilon > 0 .
\end{align*}
Fix $z_0 = x_0 + \ii y_0 \in \bH$ and set 
\begin{align*}
Z(t) = Z_\varepsilon^{a, \kappa}(t,z_0) := g_t(z_0) - \microDriver_\varepsilon^{\kappa, a}(t) =: X(t) + \ii Y(t) .
\end{align*}
Then,~\eqref{eq: LE} shows that
\begin{align*}
X(t) \; = \; \; & x_0 \; + \; 2 \int_0^t \frac{X(s-)}{|Z(s-)|^2} \, \ud s \; - \; a t 
\; - \; \sqrt{\kappa} B(t) - \int_0^t \int_{|\jump| \leq \varepsilon} \jump \, \PoissonComp(\ud s, \ud \jump) , \\
Y(t) \; = \; \; & y_0 \; - \; 2 \int_0^t \frac{Y(s-)}{|Z(s-)|^2} \, \ud s ,
\end{align*}
for all $t < \tau(z_0)$.

We apply Theorem~\ref{thm: Ito} to derive formulas for functions of $Z$, needed in Lemmas~\ref{lem: SDE forward M} and~\ref{lem: SDE forward M with drift}.

\begin{lem} \label{lem: SDE forward sin(arg(Z))}
For each $t \in [0,\tau(z_0))$, we have almost surely
\begin{align*}
\sin \arg Z(t) \; = \; \; & \sin \arg Z(0)
\; + \; \sqrt{\kappa} \int_0^t \sin \arg Z(s-) \, \frac{X(s-)}{|Z(s-)|^2} \, \ud B(s) \\
\; \; &
+ \; a \int_0^t \sin \arg Z(s-) \, \frac{X(s-)}{|Z(s-)|^2} \, \ud s \\
\; \; &
+ \; \int_0^t \sin \arg Z(s-) \, \frac{(\kappa - 4) X(s-)^2 - \frac{\kappa}{2} Y(s-)^2}{|Z(s-)|^4} \, \ud s  \\
\; \; &
+ \; \int_0^t \int_{|\jump| \leq \varepsilon} \Big( \sin \arg(Z(s-) - \jump)) - \sin \arg Z(s-) \Big) \, \PoissonComp(\ud s, \ud \jump) \\
\; \; &
+ \; \int_0^t \int_{|\jump| \leq \varepsilon} 
\bigg( \sin \arg(Z(s-) - \jump) - \sin \arg Z(s-) 
- \sin \arg Z(s-) \, 
\frac{\jump X(s-)}{|Z(s-)|^2}  \bigg) \, \nu(\ud \jump) \, \ud s ,
\end{align*}
and the process $\sin \arg Z$ satisfies the It\^o-D\"oblin assumptions. 
\end{lem}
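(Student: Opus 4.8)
The plan is to apply the It\^o-D\"oblin formula (Theorem~\ref{thm: Ito}) directly to the real-valued function $f(z) = \sin \arg (z) = y/|z|$, viewed as a smooth function on $\bH$, evaluated along the complex-valued process $Z(t) = g_t(z_0) - \microDriver_\varepsilon^{\kappa, a}(t) = X(t) + \ii Y(t)$. The first step is to read off the It\^o-D\"oblin data for $Z$ from the forward Loewner equation~\eqref{eq: LE}: one has the diffusion coefficient $F(s) = -\sqrt{\kappa}$ (the sign flip relative to the mirror backward case comes from the $-\microDriver_\varepsilon^{\kappa, a}$ in the definition of $Z$), drift coefficients $G_1(s) = 2X(s-)/|Z(s-)|^2 - a$ and $G_2(s) = -2Y(s-)/|Z(s-)|^2$, jump coefficient $H(s,\jump) = -\jump$ for the compensated part, and $K(s,\jump) = 0$ (since we only allow microscopic jumps $|\jump| \le \varepsilon$ here). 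I would remark that these satisfy the It\^o-D\"oblin assumptions on any compact sub-interval of $[0,\tau(z_0))$, exactly as in the mirror backward case: $|Z(s)| \ge Y(s) > 0$ stays bounded below, so all the coefficients are locally bounded predictable processes.

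Next I would compute the partial derivatives of $f(z) = y/\sqrt{x^2+y^2}$ needed in Theorem~\ref{thm: Ito}: $\partial_x f = -xy/|z|^3 = -\frac{x}{|z|^2}\sin\arg(z)$, $\partial_y f = x^2/|z|^3 = \frac{x^2}{|z|^2}\cdot\frac{1}{|z|}\cdot\frac{|z|}{y}\sin\arg(z)$ --- more cleanly, $\partial_y f = (1/|z| - y^2/|z|^3) = x^2/|z|^3$, and $\partial_x^2 f = y(2x^2 - y^2)/|z|^5 = \frac{2x^2-y^2}{|z|^4}\sin\arg(z)$. Assembling the $\ud s$ terms: the $G_1 \partial_x f$ contribution gives $-(2X/|Z|^2 - a)\cdot \frac{X}{|Z|^2}\sin\arg Z = (-2X^2/|Z|^4 + aX/|Z|^2)\sin\arg Z$; the $G_2 \partial_y f$ contribution gives $-2Y/|Z|^2 \cdot X^2/|Z|^3 = -2X^2 Y/|Z|^5 = -2X^2/|Z|^4 \sin\arg Z$ (using $\sin\arg Z = Y/|Z|$); and the $\frac12 F^2 \partial_x^2 f$ contribution gives $\frac{\kappa}{2}\cdot\frac{2X^2-Y^2}{|Z|^4}\sin\arg Z$. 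Summing the drift coefficients of $\sin\arg Z$ from these three sources: $-2X^2/|Z|^4 - 2X^2/|Z|^4 + \kappa X^2/|Z|^4 - \tfrac{\kappa}{2}Y^2/|Z|^4 = \frac{(\kappa-4)X^2 - \tfrac{\kappa}{2}Y^2}{|Z|^4}$, which matches the claimed $\ud s$-integrand, while the $aX/|Z|^2 \sin\arg Z$ term is the drift term with coefficient $a$. The $\ud B$ term reads $F(s)\partial_x f(Z(s-)) = -\sqrt{\kappa}\cdot(-\frac{X(s-)}{|Z(s-)|^2}\sin\arg Z(s-)) = \sqrt{\kappa}\,\sin\arg Z(s-)\,\frac{X(s-)}{|Z(s-)|^2}$, again matching. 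Finally the jump integrals: the compensated Poisson integral contributes $f(Z(s-) + H(s,\jump)) - f(Z(s-)) = \sin\arg(Z(s-) - \jump) - \sin\arg Z(s-)$, and the compensator-correction integral contributes $f(Z(s-)-\jump) - f(Z(s-)) - \partial_x f(Z(s-))\cdot(-\jump) = \sin\arg(Z(s-)-\jump) - \sin\arg Z(s-) - \sin\arg Z(s-)\,\frac{\jump X(s-)}{|Z(s-)|^2}$, exactly the two claimed jump terms.

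The statement that $\sin\arg Z$ satisfies the It\^o-D\"oblin assumptions follows from the last sentence of Theorem~\ref{thm: Ito}, which asserts that the composed process $f\circ Z$ inherits these assumptions; alternatively one checks directly that the new coefficients ($\sqrt{\kappa}\sin\arg Z \cdot X/|Z|^2$ etc., and the jump increment which is bounded by $2$ in absolute value and square-integrable) satisfy the required predictability, local square-integrability, and uniform boundedness conditions, using $|\sin\arg Z| \le 1$ and $|Z| \ge Y > 0$ away from $\tau(z_0)$. This is the analogue of Lemma~\ref{lem: SDE sin(arg(Z))} for the forward flow and will be fed into Lemma~\ref{lem: SDE forward sin(arg(Z)) power} (raising to the power $-2r$) and then into Lemmas~\ref{lem: SDE forward M} and~\ref{lem: SDE forward M with drift}. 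I do not anticipate a genuine obstacle here --- the proof is a routine application of It\^o-D\"oblin once the sign conventions are fixed; the only point requiring a little care is tracking the sign of the diffusion coefficient and of the jump coefficient $H(s,\jump) = -\jump$, which propagates into the $\sin\arg(Z(s-) - \jump)$ terms and, upon raising to a power in the next lemma, into the factor $|(Z(s-) - \jump)/Z(s-)|^{2r}$ that appears in the forward supermartingale computations.

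\begin{proof}
We apply the It\^o-D\"oblin formula (Theorem~\ref{thm: Ito}) to the function $f(z) = \sin \arg(z) = \tfrac{y}{|z|}$, which is smooth on $\bH$, along the process $Z$. By~\eqref{eq: LE} and the definition of $Z = g_\cdot(z_0) - \microDriver_\varepsilon^{\kappa, a}$, the process $Z$ satisfies the It\^o-D\"oblin assumptions on compact sub-intervals of $[0,\tau(z_0))$ with coefficients
\begin{align*}
F(s) = -\sqrt{\kappa} , \qquad
G_1(s) = \frac{2 X(s-)}{|Z(s-)|^2} - a , \qquad
G_2(s) = \frac{-2 Y(s-)}{|Z(s-)|^2} , \qquad
H(s, \jump) = -\jump ,
\end{align*}
and $K(s,\jump) = 0$ (only microscopic jumps are present), using that $|Z(s)| \ge Y(s) \ge y_0 > 0$. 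A direct computation gives
\begin{align*}
(\partial_x f)(z) = - \frac{x}{|z|^2} \sin \arg(z) , \qquad
(\partial_y f)(z) = \frac{x^2}{|z|^3} , \qquad
(\partial_x^2 f)(z) = \frac{2 x^2 - y^2}{|z|^4} \sin \arg(z) .
\end{align*}
Substituting into Theorem~\ref{thm: Ito} and using $\sin \arg Z = Y/|Z|$ throughout, the $\ud B$-term equals $\sqrt{\kappa}\, \sin \arg Z(s-)\, \tfrac{X(s-)}{|Z(s-)|^2}$; the absolutely continuous terms $G_1 (\partial_x f)(Z(s-))$, $G_2 (\partial_y f)(Z(s-))$, and $\tfrac12 F^2 (\partial_x^2 f)(Z(s-))$ combine to
\begin{align*}
\sin \arg Z(s-) \, \bigg( \frac{(\kappa - 4) X(s-)^2 - \tfrac{\kappa}{2} Y(s-)^2}{|Z(s-)|^4} + a \, \frac{X(s-)}{|Z(s-)|^2} \bigg) ;
\end{align*}
the compensated Poisson integral contributes $\sin \arg(Z(s-) - \jump) - \sin \arg Z(s-)$; and the compensator correction contributes $\sin \arg(Z(s-) - \jump) - \sin \arg Z(s-) - \sin \arg Z(s-) \, \tfrac{\jump X(s-)}{|Z(s-)|^2}$. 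This is precisely the asserted identity. Finally, the last assertion of Theorem~\ref{thm: Ito} shows that $f \circ Z = \sin \arg Z$ satisfies the It\^o-D\"oblin assumptions.
\end{proof}
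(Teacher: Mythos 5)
Your proof takes essentially the same approach as the paper: read off the It\^o-D\"oblin data $F = -\sqrt{\kappa}$, $G_1 = 2X/|Z|^2 - a$, $G_2 = -2Y/|Z|^2$, $H(s,\jump) = -\jump$, $K \equiv 0$ for the process $Z$ from~\eqref{eq: LE}, apply Theorem~\ref{thm: Ito} to $f(z) = \sin\arg z = y/|z|$, and collect terms; your computation of the partials and the recombination into $(\kappa-4)X^2 - \tfrac{\kappa}{2}Y^2$ is correct. One small slip: for the \emph{forward} flow, $Y(t) = y_0 \exp\bigl(-\int_0^t 2\,|Z(s-)|^{-2}\,\ud s\bigr)$ is \emph{decreasing}, so the inequality $Y(s) \geq y_0$ you wrote (carried over from the mirror backward case, where $Y$ increases) is the wrong direction; what is actually needed and true is that $Y(s) > 0$ and $\inf_{u \in [0,t]} |Z(u)| > 0$ for each fixed $t < \tau(z_0)$, which suffices to keep the coefficients locally bounded.
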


\begin{proof}
Note that $\sin \arg (z) =\frac{y}{|z|}$ 
is smooth on $\bH \ni z = x + \ii y$. Moreover, the process $Z$ with  
\begin{align*}
F(s) :=  -\sqrt{\kappa} , \qquad
G_1(s) := \frac{2 X(s-)}{|Z(s-)|^2} - a , \qquad
G_2(s) := - \frac{2 Y(s-)}{|Z(s-)|^2} , \qquad
H(s, \jump) := - \jump , 
\end{align*}
and $K(s, \jump) := 0$ 
satisfies the It\^o-D\"oblin assumptions 
(for Theorem~\ref{thm: Ito}, with $t \in [0,\tau(z_0))$), thanks to~\eqref{eq: LE}. 
Hence, the claim follows from a direct computation.
\end{proof}

\begin{lem} \label{lem: SDE forward sin(arg(Z)) power}
For each $t \in [0,\tau(z_0))$ and $r \in \bR$, we have almost surely
\begin{align*}
(\sin \arg Z(t))^{-2r} 
\; = \; \; & (\sin \arg Z(0))^{-2r} \; 
- \; 2r \sqrt{\kappa} \int_0^t (\sin \arg Z(s-))^{-2r} \frac{X(s-)}{|Z(s-)|^2} \, \ud B(s) \\
\; \; &
- \; 2 r a \int_0^t (\sin \arg Z(s-))^{-2r} \frac{X(s-)}{|Z(s-)|^2} \, \ud s \\
\; \; &
- \; 2 r \int_0^t (\sin \arg Z(s-))^{-2r} \frac{(\kappa - 4) X(s-)^2 - \frac{\kappa}{2} Y(s-)^2}{|Z(s-)|^4} \, \ud s \\
\; \; &
+ \; r (2r + 1) \kappa \int_0^t (\sin \arg Z(s-))^{-2r}  \frac{X(s-)^2}{|Z(s-)|^4} \, \ud s \\
\; \; & 
+ \; \int_0^t \int_{|\jump| \leq \varepsilon} (\sin \arg Z(s-))^{-2r} \Bigg( \bigg| \frac{Z(s-) - \jump}{Z(s-)} \bigg|^{2r} - 1 \Bigg) \, \PoissonComp(\ud s, \ud \jump) \\
\; \; &
+ \; \int_0^t \int_{|\jump| \leq \varepsilon} (\sin \arg Z(s-))^{-2r}  \Bigg( \bigg| \frac{Z(s-) - \jump}{Z(s-)} \bigg|^{2r} - 1  
+ \frac{2r \jump  X(s-)}{|Z(s-)|^2} \Bigg) \, \nu(\ud \jump) \, \ud s ,
\end{align*}
and the process $(\sin \arg Z)^{-2r} $ satisfies the It\^o-D\"oblin assumptions. 
\end{lem}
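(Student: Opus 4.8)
The plan is to apply the It\^o--D\"oblin formula (Theorem~\ref{thm: Ito}) to the smooth function $f(w) = |w|^{-2r} = (x^2+y^2)^{-r}$ on $\bH \ni w = x + \ii y$, composed with the forward-flow process $Z(t) = Z_\varepsilon^{a,\kappa}(t,z_0) = X(t) + \ii Y(t)$, reusing the decomposition of $Z$ into its drift, diffusion, and jump components recorded just before Lemma~\ref{lem: SDE forward sin(arg(Z)) power}. Concretely, I would write $(\sin \arg Z(t))^{-2r} = Y(t)^{-2r}|Z(t)|^{2r} \cdot$ nothing --- actually, since $\sin \arg(w) = y/|w|$, we have $(\sin \arg Z(t))^{-2r} = Y(t)^{-2r} |Z(t)|^{2r}$, so the cleanest route is to expand $|Z(t)|^{2r} = (X(t)^2+Y(t)^2)^r$ directly via It\^o (now applied with $Z_1 = X$, $Z_2 = Y$, and the full L\'evy data $F(s) = -\sqrt{\kappa}$, $G_1(s) = 2X(s-)/|Z(s-)|^2 - a$, $G_2(s) = -2Y(s-)/|Z(s-)|^2$, $H(s,\jump) = -\jump$, $K \equiv 0$), then multiply by the purely-drift process $Y(t)^{-2r}$ using the ordinary product rule (no bracket term, since $Y$ has finite variation and is continuous). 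Alternatively, and more in the spirit of how Lemma~\ref{lem: SDE sin(arg(Z)) power} is proved from Lemma~\ref{lem: SDE sin(arg(Z))}, I would first invoke the already-established Lemma~\ref{lem: SDE forward sin(arg(Z))} giving the SDE for $\sin \arg Z$ itself, and then apply It\^o--D\"oblin to the one-dimensional real process $t \mapsto \sin \arg Z(t)$ through the scalar map $u \mapsto u^{-2r}$.

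Following the second route: set $U(t) := \sin \arg Z(t)$, which by Lemma~\ref{lem: SDE forward sin(arg(Z))} is a L\'evy-type semimartingale with continuous diffusion coefficient $\sqrt{\kappa}\, U(s-)X(s-)/|Z(s-)|^2$, an explicit absolutely continuous drift, and jumps $\Delta U(s) = \sin\arg(Z(s-) - \jump) - \sin\arg Z(s-)$ driven by $\PoissonComp$ and $\nu$; crucially $U(s-) = Y(s-)/|Z(s-)| \geq y_0/\sqrt{y_0^2 + 4T} > 0$ is bounded away from zero on $[0,\tau(z_0))$, so $u \mapsto u^{-2r}$ is smooth on the relevant range and the It\^o--D\"oblin assumptions are inherited (as asserted at the end of Theorem~\ref{thm: Ito}). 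Then I apply the formula: the $\ud B$ term picks up the factor $\frac{\ud}{\ud u}u^{-2r} = -2r\, u^{-2r-1}$, producing $-2r\sqrt{\kappa}\,(\sin\arg Z(s-))^{-2r} X(s-)/|Z(s-)|^2\, \ud B(s)$; the half-bracket term picks up $\frac12 \frac{\ud^2}{\ud u^2}u^{-2r} = r(2r+1)u^{-2r-2}$ against the squared diffusion coefficient $\kappa U(s-)^2 X(s-)^2/|Z(s-)|^4$, giving $r(2r+1)\kappa\,(\sin\arg Z(s-))^{-2r} X(s-)^2/|Z(s-)|^4\, \ud s$; the finite-variation drift of $U$ contributes $-2r\,U(s-)^{-2r-1}$ times the drift in Lemma~\ref{lem: SDE forward sin(arg(Z))}, yielding the $-2ra$ term and the $-2r\frac{(\kappa-4)X^2 - \frac\kappa2 Y^2}{|Z|^4}$ term (after cancelling one power of $U$); and the jump terms transform as $U(s-)^{-2r} \mapsto (U(s-) + \Delta U(s))^{-2r} = (\sin\arg(Z(s-)-\jump))^{-2r}$, which I rewrite using $\sin\arg(z) = y/|z|$ exactly as in the proof of Lemma~\ref{lem: SDE sin(arg(Z)) power} to get $(\sin\arg Z(s-))^{-2r}|Z(s-)-\jump|^{2r}/|Z(s-)|^{2r}$; the compensator correction in Theorem~\ref{thm: Ito} subtracts $-2r\, U(s-)^{-2r-1}\Delta U(s)$, which after the same algebraic rewriting produces precisely the $+\frac{2r\jump X(s-)}{|Z(s-)|^2}$ term inside the $\nu(\ud\jump)\,\ud s$ integral. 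Matching coefficients then gives the asserted identity, and the final sentence (that $(\sin\arg Z)^{-2r}$ satisfies the It\^o--D\"oblin assumptions) is the last clause of Theorem~\ref{thm: Ito}.

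The step I expect to require the most care is the bookkeeping of the drift term: the drift of $\sin\arg Z$ in Lemma~\ref{lem: SDE forward sin(arg(Z))} already contains a $\nu(\ud\jump)\,\ud s$ piece of the form $\sin\arg(Z(s-)-\jump) - \sin\arg Z(s-) - \sin\arg Z(s-)\frac{\jump X(s-)}{|Z(s-)|^2}$, and when this is multiplied by $-2r\,U(s-)^{-2r-1}$ and combined with the jump-compensator term produced by It\^o--D\"oblin applied to $u^{-2r}$, one must verify that the two $\nu$-integral contributions assemble into the single stated expression $(\sin\arg Z(s-))^{-2r}(|\tfrac{Z(s-)-\jump}{Z(s-)}|^{2r} - 1 + \tfrac{2r\jump X(s-)}{|Z(s-)|^2})$ with no residual terms. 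This is the same mechanism as in the (already proven) backward case Lemma~\ref{lem: SDE sin(arg(Z)) power}, with $\jump \mapsto -\jump$ and the sign of the drift flipped, so the calculation is routine but notationally delicate; everything else (integrability/predictability hypotheses, positivity of $U(s-)$, continuity of the derivatives of $u \mapsto u^{-2r}$) is immediate from the structure established in Lemma~\ref{lem: SDE forward sin(arg(Z))} and the observation $|Z(s-)| \geq Y(s-) \geq$ a positive constant on compact time intervals before blow-up. I would therefore present the proof as: invoke Lemma~\ref{lem: SDE forward sin(arg(Z))}, apply Theorem~\ref{thm: Ito} to $u \mapsto u^{-2r}$, and close with a one-line algebraic identity for the jump terms, exactly paralleling the proof of Lemma~\ref{lem: SDE sin(arg(Z)) power}.

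\begin{proof}
We apply the It\^o--D\"oblin formula (Theorem~\ref{thm: Ito}) to the real-valued process $t \mapsto \sin \arg Z(t)$, that is, we take $Z_1 = \sin \arg Z(t)$ and $Z_2 = 0$ in Theorem~\ref{thm: Ito}, with
\begin{align*}
F(s) := \; & \sqrt{\kappa} \sin \arg Z(s-) \, \frac{X(s-)}{|Z(s-)|^2} , \\
G_1(s) := \; & \sin \arg Z(s-) \bigg( \frac{(\kappa - 4) X(s-)^2 - \frac{\kappa}{2} Y(s-)^2}{|Z(s-)|^4} + a \, \frac{X(s-)}{|Z(s-)|^2} \bigg) \\
\; \; & + \; \int_{|\jump| \leq \varepsilon}
\bigg( \sin \arg(Z(s-) - \jump) - \sin \arg Z(s-)
- \sin \arg Z(s-) \,
\frac{\jump X(s-)}{|Z(s-)|^2}  \bigg) \, \nu(\ud \jump) , \\
H(s, \jump) := \; & \sin \arg(Z(s-) - \jump)) - \sin \arg Z(s-) ,
\end{align*}
and $K(s, \jump) := 0$.
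From Lemma~\ref{lem: SDE forward sin(arg(Z))}, we know that the It\^o-D\"oblin assumptions hold, and the asserted identity follows after a direct computation, using the identity
$\sin \arg (z) =\frac{y}{|z|}$ for $\bH \ni z = x + \ii y$
to write
\begin{align*}
(\sin \arg(Z(s-) - \jump))^{-2r}
= \; & (\sin \arg Z(s-))^{-2r} \bigg( \frac{\sin \arg Z(s-)}{\sin \arg(Z(s-) - \jump)} \bigg)^{2r} \\
= \; & (\sin \arg Z(s-))^{-2r} \bigg| \frac{Z(s-) - \jump}{Z(s-)} \bigg|^{2r} .
\end{align*}
Here we used that $\sin \arg Z(s-) = Y(s-)/|Z(s-)| \geq y_0 / \sqrt{y_0^2 + 4t} > 0$ for all $s \in [0,\tau(z_0))$,
so that the map $u \mapsto u^{-2r}$ is smooth on the range of $t \mapsto \sin \arg Z(t)$,
and the quadratic variation correction term in Theorem~\ref{thm: Ito} applied to this map,
namely $r(2r+1) \kappa \, (\sin \arg Z(s-))^{-2r} X(s-)^2 / |Z(s-)|^4$,
combines with $-2r$ times the drift in Lemma~\ref{lem: SDE forward sin(arg(Z))}
to give the stated drift, while the jump-compensator correction
assembles with the $\nu(\ud\jump)\,\ud s$ term appearing in $G_1$
into the stated $\nu(\ud\jump)\,\ud s$ integral.
\end{proof}
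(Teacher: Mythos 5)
Your proof takes exactly the same route as the paper's: invoke Lemma~\ref{lem: SDE forward sin(arg(Z))}, apply the It\^o--D\"oblin formula (Theorem~\ref{thm: Ito}) to the scalar map $u \mapsto u^{-2r}$ of the one-dimensional process $\sin \arg Z$ with the same choices of $F$, $G_1$, $H$, $K$, and close with the algebraic identity $(\sin \arg(Z(s-) - \jump))^{-2r} = (\sin \arg Z(s-))^{-2r}\,|\tfrac{Z(s-)-\jump}{Z(s-)}|^{2r}$.

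One slip to fix in the final sentence: the claimed lower bound $\sin \arg Z(s-) = Y(s-)/|Z(s-)| \geq y_0/\sqrt{y_0^2+4t}$ was imported from the backward-flow estimates and does not hold for the forward flow. Here $Y(s) = y_0\exp(-\int_0^s 2|Z(u)|^{-2}\,\ud u)$ is \emph{decreasing} (so bounded above, not below, by $y_0$), and $|Z(s)| = |g_s(z_0) - \microDriver_\varepsilon^{\kappa,a}(s)|$ admits no upper bound of the form $\sqrt{y_0^2+4t}$ since $X(s)$ can become large. The qualitative point you need is nonetheless true: for any fixed $t < \tau(z_0)$ one has $Y(s) \geq Y(t) > 0$ and $\sup_{s\leq t}|Z(s)| < \infty$ (continuity of $g_\cdot(z_0)$ together with boundedness of the c\`adl\`ag driver on $[0,t]$), so $\sin \arg Z(s-) \geq Y(t)/\sup_{s\leq t}|Z(s)| > 0$ on $[0,t]$, which suffices to apply It\^o--D\"oblin to $u \mapsto u^{-2r}$. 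Replace the explicit (wrong) bound with this argument, or simply drop the sentence as the paper does; the rest of the proof is fine.
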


\begin{proof}
We shall apply the It\^o-D\"oblin formula 
to the real-valued process $t \mapsto \sin \arg Z(t)$ 
(i.e., we take $Z_1 = \sin \arg Z(t)$ and $Z_2 = 0$ in Theorem~\ref{thm: Ito}), with
\begin{align*}
F(s) := \; & \sqrt{\kappa} \sin \arg Z(s-) \, \frac{X(s-)}{|Z(s-)|^2} , \\
G_1(s) := \; & \sin \arg Z(s-) \bigg( \frac{(\kappa - 4) X(s-)^2 - \frac{\kappa}{2} Y(s-)^2}{|Z(s-)|^4} + a \, \frac{X(s-)}{|Z(s-)|^2} \bigg) \\
\; \; & + \; \int_{|\jump| \leq \varepsilon} 
\bigg( \sin \arg(Z(s-) - \jump) - \sin \arg Z(s-) 
- \sin \arg Z(s-) \, 
\frac{\jump X(s-)}{|Z(s-)|^2}  \bigg) \, \nu(\ud \jump) , \\
H(s, \jump) := \; & \sin \arg(Z(s-) - \jump)) - \sin \arg Z(s-) , 
\end{align*}
and $K(s, \jump) := 0$.
From Lemma~\ref{lem: SDE forward sin(arg(Z))}, we know that the It\^o-D\"oblin assumptions hold, and a the asserted identity follows after a direct computation, using the identity 
$\sin \arg (z) =\frac{y}{|z|}$ for $\bH \ni z = x + \ii y$ 
to write 
\begin{align*}
(\sin \arg(Z(s-) - \jump))^{-2r}
= \; & (\sin \arg Z(s-))^{-2r} \bigg( \frac{\sin \arg Z(s-)}{\sin \arg(Z(s-) - \jump)} \bigg)^{2r} \\
= \; & (\sin \arg Z(s-))^{-2r} \bigg| \frac{Z(s-) - \jump}{Z(s-)} \bigg|^{2r} .
\qedhere
\end{align*}
\end{proof}


\bigskip{}
\bibliographystyle{annotate}
\newcommand{\etalchar}[1]{$^{#1}$}

\end{document}